\newcommand{\kaj}[1]{\marginpar{\scriptsize \textbf{Kaj:} #1}}
\theoremstyle{plain}
\newtheorem{theorem}[equation]{Theorem}
\newtheorem{lemma}[equation]{Lemma}
\newtheorem{corollary}[equation]{Corollary}
\theoremstyle{definition}
\newtheorem{definition}[equation]{Definition}
\theoremstyle{remark}
\newtheorem{remark}[equation]{Remark}
\numberwithin{equation}{section}
\renewcommand{\P}{\mathcal{P}}
\def\mean#1{\mathchoice%
          {\mathop{\kern 0.2em\vrule width 0.6em height 0.69678ex depth -0.58065ex
                  \kern -0.8em \intop}\nolimits_{\kern -0.4em#1}}%
          {\mathop{\kern 0.1em\vrule width 0.5em height 0.69678ex depth -0.60387ex
                  \kern -0.6em \intop}\nolimits_{#1}}%
          {\mathop{\kern 0.1em\vrule width 0.5em height 0.69678ex
              depth -0.60387ex
                  \kern -0.6em \intop}\nolimits_{#1}}%
          {\mathop{\kern 0.1em\vrule width 0.5em height 0.69678ex depth -0.60387ex
                  \kern -0.6em \intop}\nolimits_{#1}}}
\def\div{\mathop{\operatorname{div}}\nolimits}
\def\adj{\mathop{\operatorname{adj}}\nolimits}
\begin{document}
\title[$L^2$ solvability of BVPs for divergence form parabolic equations]{$L^2$ Solvability of boundary value problems\\ for divergence form parabolic equations \\with complex coefficients}

\address{Kaj Nystr\"{o}m\\Department of Mathematics, Uppsala University\\
S-751 06 Uppsala, Sweden}
\email{kaj.nystrom@math.uu.se}

\author{K. Nystr{\"o}m}

\maketitle
\begin{abstract}
\noindent\medskip
 We consider parabolic operators of the form $$\partial_t+\mathcal{L},\ \mathcal{L}=-\mbox{div}\, A(X,t)\nabla,$$ in
$\mathbb R_+^{n+2}:=\{(X,t)=(x,x_{n+1},t)\in \mathbb R^{n}\times \mathbb R\times \mathbb R:\ x_{n+1}>0\}$, $n\geq 1$. We assume that $A$ is a $(n+1)\times (n+1)$-dimensional matrix which is bounded, measurable, uniformly elliptic and complex, and we assume, in addition, that the entries of A are independent of the spatial coordinate $x_{n+1}$ as well as of the time coordinate $t$. For such operators we prove that the boundedness and invertibility of the corresponding layer potential operators are stable on $L^2(\mathbb R^{n+1},\mathbb C)=L^2(\partial\mathbb R^{n+2}_+,\mathbb C)$ under complex, $L^\infty$ perturbations of the coefficient matrix. Subsequently, using this general result, we establish solvability of the Dirichlet, Neumann and Regularity
problems for $\partial_t+\mathcal{L}$, by way of  layer potentials and with data in $L^2$,  assuming that the coefficient matrix is a small complex perturbation of either
a constant matrix or of a real and symmetric matrix.

\noindent
2000  {\em Mathematics Subject Classification: 35K20, 31B10}
\noindent

\medskip

\noindent
{\it Keywords and phrases: second order parabolic operator, complex coefficients, boundary value problems, layer potentials,  Kato problem.}
\end{abstract}


    \section{Introduction and statement of main results}
In this paper  we study the solvability of the Dirichlet, Neumann and Regularity
problems with data in $L^2$, in the following these problems are referred to as $(D2)$, $(N2)$ and $(R2)$, see \eqref{eq10} below for the exact definition of these problems, by way of layer potentials and for second order parabolic equations of the form
    \begin{eqnarray}\label{eq1}
    \mathcal{H}u:=(\partial_t+\mathcal{L})u = 0,
    \end{eqnarray}
     where
    \begin{eqnarray}\label{eq2}
    \mathcal{L}=-\mbox{div }A(X,t)\nabla =-\sum_{i,j=1}^{n+1}\partial_{x_i}(A_{i,j}(X,t)\partial_{x_j})
    \end{eqnarray}
    is defined in $\mathbb R^{n+2}=\{(X,t)=(x_1,..,x_{n+1},t)\in \mathbb R^{n+1}\times\mathbb R\}$, $n\geq 1$. $A=A(X,t)=\{A_{i,j}(X,t)\}_{i,j=1}^{n+1}$ is assumed to be a $(n+1)\times (n+1)$-dimensional matrix with complex coefficients
    satisfying the uniform ellipticity condition
     \begin{eqnarray}\label{eq3}
     (i)&&\Lambda^{-1}|\xi|^2\leq \mbox{Re }A(X,t)\xi\cdot\bar\xi =\mbox{Re }\bigl (\sum_{i,j=1}^{n+1} A_{i,j}(X,t)\xi_i\bar\xi_j\bigr ),\notag\\
     (ii)&& |A(X,t)\xi\cdot\zeta|\leq \Lambda|\xi||\zeta|,
    \end{eqnarray}
    for some $\Lambda$, $1\leq\Lambda<\infty$, and for all $\xi,\zeta\in \mathbb C^{n+1}$, $(X,t)\in\mathbb R^{n+2}$. Here $u\cdot v=u_1v_1+...+u_{n+1}v_{n+1}$,
    $\bar u$ denotes the complex conjugate of $u$ and $u\cdot \bar v$ is the standard inner product on $\mathbb C^{n+1}$. In addition, we consistently assume that
     \begin{eqnarray}\label{eq4}
A(x_1,..,x_{n+1},t)=A(x_1,..,x_{n}),\mbox{i.e., $A$ is independent of $x_{n+1}$ and $t$}.
    \end{eqnarray}
    We study $(D2)$, $(N2)$ and $(R2)$ for the operator $\mathcal{H}$ in $\mathbb R^{n+2}_+=\{(x,x_{n+1},t)\in \mathbb R^{n}\times\mathbb R\times\mathbb R:\ x_{n+1}>0\}$,  with data prescribed
on  $\mathbb R^{n+1}=\{(x,x_{n+1},t)\in \mathbb R^{n}\times\mathbb R\times\mathbb R:\ x_{n+1}=0\}$. Assuming \eqref{eq3}-\eqref{eq4}, as well as the De Giorgi-Moser-Nash estimates stated in \eqref{eq14+}-\eqref{eq14++} below, we first prove (Theorem \ref{thper2}, Corollary \ref{thper3}) that the solvability of $(D2)$, $(N2)$ and $(R2)$, by way of layer potentials,  is stable under small complex $L^\infty$ perturbations of the coefficient matrix. Subsequently, using Theorem \ref{thper2}, Corollary \ref{thper3}, we establish the solvability for $(D2)$, $(N2)$ and $(R2)$, by way of layer potentials,  when the coefficient matrix is  either
     \begin{eqnarray}\label{res}
       (i)&&\mbox{a small complex perturbation of a constant}\notag\\
       &&\mbox{(complex) matrix (Theorem \ref{th2-}), or},\notag\\
       (ii)&&\mbox{a real and symmetric matrix (Theorem \ref{th2}), or},\notag\\
       (iii)&&\mbox{a small complex perturbation of a real and}\notag\\
       &&\mbox{symmetric matrix (Theorem \ref{th2+})}.
       \end{eqnarray}
We emphasize that in \eqref{res}  $(i)-(iii)$ the unique solutions can be represented
 in terms of layer potentials and we remark that for the class of operators we consider, solvability of these boundary value problems in the upper half space can readily be generalized, by a change of coordinates, to the geometrical setting of a  domain given as the region above a time-independent Lipschitz graph. We emphasize that already in the case when $A$ is real and symmetric our contribution is twofold. First,  we prove solvability of $(D2)$, $(N2)$ and $(R2)$. Second,  we prove solvability of $(D2)$, $(N2)$ and $(R2)$ by way of layer potentials. To our knowledge Theorem \ref{thper2},
 Corollary \ref{thper3}, Theorem \ref{th2-}, Theorem \ref{th2}, and Theorem \ref{th2+} are all new, see subsection \ref{hist} below for an outline of the state of the art of second order parabolic boundary value problems with non-smooth coefficients, but we note that in \cite{CNS} we, together with A. Castro and O. Sande, develop some of the estimates used in this paper. \cite{CNS} should be seen as a companion to this paper. We claim that our results, and the tools developed, pave the way for important developments in the area of parabolic PDEs, see Remark \ref{fut1} and Remark \ref{fut2}  below.

 The main results of this paper  can be seen as parabolic analogues of the elliptic results established in \cite{AAAHK} and we recall that in \cite{AAAHK} the authors establish results concerning the solvability of $(D2)$, $(N2)$ and $(R2)$, by way of  layer potentials and for elliptic operators of the form $-\mbox{div}\, A(X)\nabla,$ in
$\mathbb R_+^{n+1}:=\{X=(x,x_{n+1})\in \mathbb R^{n}\times \mathbb R:\ x_{n+1}>0\}$, $n\geq 1$, assuming  that $A$ is a $(n+1)\times (n+1)$-dimensional matrix which is bounded, measurable, uniformly elliptic and complex, and assuming, in addition, that the entries of A are independent of the spatial coordinate $x_{n+1}$. If  $A$ is also real and symmetric,  $(D2)$, $(N2)$ and $(R2)$ were solved in \cite{JK}, \cite{KP}, \cite{KP1}, and the major achievement in \cite{AAAHK} is that the authors prove that solutions can be represented by way of  layer potentials. We refer to \cite{AAAHK} for a thorough account of the history of these problems in the context of elliptic equations. In \cite{HMM} a version of
 \cite{AAAHK}, but in the context of $L^p$ and relevant endpoint spaces, was developed and in \cite{HMaMi} the structural assumption that A is independent of the spatial coordinate $x_{n+1}$ is challenged. The core of the impressive arguments and estimates in \cite{AAAHK} is based on the fine and elaborated techniques developed in  the proof of the Kato conjecture, see \cite{AHLMcT} and \cite{AHLeMcT}, \cite{HLMc}. In this context it is also relevant to mention the novel approach to the Dirichlet, Neumann and Regularity
problems developed in \cite{AAM}, \cite{AA}, and \cite{AR}. This approach is based on a reduction of the PDE to a first order system which is then solved using functional calculus.

While our set up and our results coincide, in the stationary case, with the set up  and results established in   \cite{AAAHK} for  elliptic equations, we claim that our results are not straightforward generalizations of the corresponding results in \cite{AAAHK}. First, our results rely on \cite{N} where certain square function estimates are established for second order parabolic operators of the form $\mathcal{H}$, and where, in particular,  a parabolic version of the technology in \cite{AHLMcT} is developed. Second, in general the presence of the (first order) time-derivative forces us to consider fractional time-derivatives leading, as in \cite{LM}, \cite{HL}, \cite{H}, see also \cite{HL1}, to rather elaborate additional estimates. Having said this we acknowledge, once and for all, the influence
 that the work in \cite{AAAHK} has had on our understanding of the topic, and on this paper, and we believe that \cite{AAAHK} as well as this paper represent important contributions to the theory of partial differential equations.

\subsection{Statement of main results} Consider $\mathcal{H}=\partial_t+\mathcal{L}=\partial_t-\div A\nabla$. We let
 $\mathcal{H}^\ast$ be the hermitian adjoint of $\mathcal{H}$, i.e.,
 $$\int_{\mathbb R^{n+2}} (\mathcal{H}\psi)\bar\phi\, dXdt=\int_{\mathbb R^{n+2}} \psi(\overline{\mathcal{H}^\ast\phi})\, dXdt,$$
whenever $\phi$, $\psi\in C_0^\infty(\mathbb R^{n+2},\mathbb C)$.
Then $\mathcal{H}^\ast=-\partial_t+\mathcal{L}^\ast=-\partial_t-\div A^\ast\nabla$, where $\mathcal{L}^\ast$ and  $A^\ast=\bar A$ are the hermitian adjoints of $\mathcal{L}$ and $A$, respectively. The following are our main results.

     \begin{theorem}\label{thper2} Consider two operators $\mathcal{H}_0=\partial_t-\div A^0\nabla$, $\mathcal{H}_1=\partial_t-\div A^1\nabla$. Assume that $\mathcal{H}_0$,  $\mathcal{H}_0^\ast$,  $\mathcal{H}_1$,  $\mathcal{H}_1^\ast$ satisfy \eqref{eq3}-\eqref{eq4} as well as the De Giorgi-Moser-Nash estimates stated in \eqref{eq14+}-\eqref{eq14++} below.  Assume that
     \begin{eqnarray*}\label{afa}
    &&\mbox{$\mathcal{H}_0$,  $\mathcal{H}_0^\ast$, have bounded, invertible and good layer potentials in the sense}\notag\\
      &&\mbox{of Definition \ref{blayer+}, for some constant $\Gamma_0$}.
     \end{eqnarray*}
     Then there exists a constant $\varepsilon_0$, depending at most
     on $n$, $\Lambda$,  the De Giorgi-Moser-Nash constants and $\Gamma_0$,  such that if
    $$||A^1-A^0||_\infty\leq\varepsilon_0,$$
    then there exists a constant $\Gamma_1$,  depending at most
     on $n$, $\Lambda$,  the De Giorgi-Moser-Nash constants and  $\Gamma_0$, such that
     \begin{eqnarray*}\label{afa+}
  &&\mbox{$\mathcal{H}_1$,  $\mathcal{H}_1^\ast$, have bounded, invertible and good layer potentials in the sense}\notag\\
      &&\mbox{of Definition \ref{blayer+}, with constant $\Gamma_1$}.
     \end{eqnarray*} \end{theorem}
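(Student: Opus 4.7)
The plan is to parabolize the perturbation scheme of \cite{AAAHK}, using as black-box inputs the parabolic square function estimates of \cite{N} and the Dahlberg--Kenig--Stein-type estimates linking non-tangential maximal functions, square functions, and half-order time derivatives developed in \cite{CNS}. Writing $\Gamma^\lambda$ for the fundamental solution of $\mathcal{H}_\lambda$, $\lambda\in\{0,1\}$, the identity $\mathcal{H}_1-\mathcal{H}_0 = -\div((A^1-A^0)\nabla)$ gives the second resolvent identity
\begin{equation*}
\Gamma^1 - \Gamma^0 \,=\, \Gamma^0 \circ \div\bigl((A^1-A^0)\nabla \Gamma^1\bigr),
\end{equation*}
which, after integration by parts in the interior variable, yields operator identities for the differences $\mathcal{S}^1 - \mathcal{S}^0$, $\nabla\mathcal{S}^1 - \nabla\mathcal{S}^0$, $\partial_t^{1/2}\mathcal{S}^1 - \partial_t^{1/2}\mathcal{S}^0$, and their analogues for the double layer $\mathcal{D}^\lambda$ and for the associated non-tangential maximal and square functions. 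Each such difference takes, schematically, the form $T^0\bigl[(A^0-A^1)\nabla\mathcal{S}^1 f\bigr]$, where $T^0$ is an operator whose $L^2$-boundedness is encoded in the hypothesis on $\mathcal{H}_0$.

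In a first stage I would establish the boundedness clauses of Definition \ref{blayer+} for $\mathcal{H}_1$. Making the qualitative a priori assumption that the layer potential norms for $\mathcal{H}_1$ are finite --- a hypothesis removable at the end of the proof by approximating $A^1$ by smooth matrices $A^1_\eta$ and passing to the limit using the quantitative bounds derived below --- one inserts the resolvent identity and applies Cauchy--Schwarz together with the $L^2$-bounds for the $\mathcal{H}_0$-layer potentials guaranteed by $\Gamma_0$. This produces, for each relevant operator $T^1$, an estimate of the form
\begin{equation*}
\|T^1 f\|_{L^2(\mathbb{R}^{n+1},\mathbb{C})} \,\leq\, C(\Gamma_0)\,\|f\|_{L^2(\mathbb{R}^{n+1},\mathbb{C})} \,+\, C\,\|A^1-A^0\|_\infty\,\|T^1 f\|_{L^2(\mathbb{R}^{n+1},\mathbb{C})},
\end{equation*}
in which the second term is absorbed into the left-hand side provided $\eps_0$ is chosen small enough. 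Standard jump relations at $\{x_{n+1}=0\}$ then transfer these half-space bounds into $L^2$-boundedness of the boundary operators $\tfrac{1}{2}I + \mathcal{K}^1$, $\tfrac{1}{2}I - \widetilde{\mathcal{K}}^1$, and of $\mathcal{S}^1$ restricted to the boundary.

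In a second stage I would establish invertibility. Evaluating the resolvent identity on the boundary shows that each boundary operator associated to $\mathcal{H}_1$ differs from its $\mathcal{H}_0$ counterpart by one of operator norm $O(\|A^1-A^0\|_\infty)$ on $L^2(\mathbb{R}^{n+1},\mathbb{C})$, with implicit constant depending only on $\Gamma_0$, $n$, $\Lambda$, and the De Giorgi--Moser--Nash constants. Since invertibility of bounded operators is open in the operator-norm topology, a Neumann series around the assumed $\mathcal{H}_0$-inverses produces inverses for the $\mathcal{H}_1$-boundary operators with quantitative norm control, yielding the constant $\Gamma_1$. Exchanging the roles of $A$ and $A^\ast$ (and of $\mathcal{H}$ and $\mathcal{H}^\ast$) gives the analogous conclusions for $\mathcal{H}_1^\ast$.

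The principal obstacle, and the point at which the argument genuinely departs from its elliptic ancestor \cite{AAAHK}, lies in the treatment of the half-order time derivative $\partial_t^{1/2}\mathcal{S}^\lambda$ that enters the definition of \emph{good} layer potentials in the parabolic setting. When substituted into the resolvent identity this operator produces non-local-in-time commutators whose control requires off-diagonal estimates at the parabolic scale and a Littlewood--Paley decomposition compatible with the splitting of $\partial_t^{1/2}$. It is precisely here that the parabolic Kato-type technology of \cite{N} and the estimates developed in \cite{CNS} must be invoked in full strength, and the delicate propagation of smallness of $\|A^1-A^0\|_\infty$ through the resulting square-function and Carleson-measure bounds is the technical core of the proof.
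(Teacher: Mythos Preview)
Your overall architecture matches the paper's: the resolvent identity, a bootstrap/absorption argument for boundedness, and a Neumann-series (analytic perturbation plus continuity) argument for invertibility. The invertibility stage is essentially as you describe and is carried out in the paper via an analyticity lemma for $z\mapsto \mathcal K^{\mathcal H_z}$, $\tilde{\mathcal K}^{\mathcal H_z}$, $\mathbb D\mathcal S^{\mathcal H_z}_0$.

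There is, however, a real gap in your boundedness stage. Your schematic inequality
\[
\|T^1 f\|_2 \le C(\Gamma_0)\|f\|_2 + C\|A^1-A^0\|_\infty\,\|T^1 f\|_2
\]
is not what the perturbation identity actually produces. Writing $\varepsilon=A^1-A^0$, the error for the core quantities $\sup_\lambda\|\partial_\lambda\mathcal S^{1}_\lambda f\|_2$ and $|||\lambda\partial_\lambda^2\mathcal S^{1}_\lambda f|||$ involves the \emph{full} interior gradient $\nabla\mathcal S^{1}_\sigma f$ integrated over \emph{all} heights $\sigma$, composed with $\nabla\mathcal H_0^{-1}\div$. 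After the FJK/AAAHK decomposition of this double integral into near/far pieces in $(\lambda,\lambda')$, the main term is
\[
|||\,\theta_\lambda\,\varepsilon\,\nabla\mathcal S^{1}_\lambda f\,|||,\qquad \theta_\lambda:=\lambda^2\partial_\lambda^2(\mathcal S^{0}_\lambda\nabla)\cdot,
\]
and this is \emph{not} dominated by $\varepsilon_0$ times the same quantity you started from. What one obtains instead is an inequality of the form $\Phi^{1,\eta}(f)\le c\varepsilon_0\,A^{1,\eta}(f)+c\|f\|_2$, where $A^{1,\eta}(f)$ is a strictly larger package of norms including $|||\lambda\nabla\partial_\lambda\mathcal S^{1,\eta}_\lambda f|||$, $\sup_\lambda\|\mathbb D\mathcal S^{1,\eta}_\lambda f\|_2$, and $\|N_\ast(\mathcal P_\lambda\partial_\lambda\mathcal S^{1,\eta}_\lambda f)\|_2$. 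Closing the loop then requires a separate body of reduction lemmas (from \cite{CNS}) showing that each ingredient of $A^{1,\eta}$ is controlled back by $\Phi^{1,\eta}$, plus a priori finiteness supplied by the smoothing in $\eta$ and a localization to cubes.

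You have also somewhat misplaced the crux. The fractional time derivative is not where the perturbation argument bites; once the two core estimates on $\partial_\lambda\mathcal S^{1}_\lambda$ and $\lambda\partial_\lambda^2\mathcal S^{1}_\lambda$ hold, the $H_tD^t_{1/2}$ bounds follow from the reduction machinery. The genuine obstacle is the \emph{tangential} part $\theta_\lambda\tilde\varepsilon\,\nabla_{\|}\mathcal S^{1}_\lambda f$: here one must insert the parabolic resolvent $\mathcal E^1_\lambda=(I+\lambda^2\mathcal H^1_{\|})^{-1}$, split $\nabla_{\|}\mathcal S^1_\lambda=\nabla_{\|}\mathcal E^1_\lambda\mathcal S^1_\lambda+\nabla_{\|}(I-\mathcal E^1_\lambda)\mathcal S^1_\lambda$, and control the resulting composed operators via Carleson-measure and local-$Tb$-type estimates. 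It is precisely for these square functions of $\theta_\lambda\tilde\varepsilon\,\nabla_{\|}\mathcal E^1_\lambda$ and $\mathcal U_\lambda=\theta_\lambda\tilde\varepsilon\,\nabla_{\|}\mathcal E^1_\lambda\lambda^2\div_{\|}$ that the parabolic Kato technology of \cite{N} is invoked; ``Cauchy--Schwarz together with the $L^2$-bounds for the $\mathcal H_0$-layer potentials'' does not suffice.
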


      \begin{corollary}\label{thper3} Consider two operators $\mathcal{H}_0=\partial_t-\div A^0\nabla$, $\mathcal{H}_1=\partial_t-\div A^1\nabla$. Assume that $\mathcal{H}_0$,  $\mathcal{H}_0^\ast$,  $\mathcal{H}_1$,  $\mathcal{H}_1^\ast$ satisfy \eqref{eq3}-\eqref{eq4} as well as the De Giorgi-Moser-Nash estimates stated in \eqref{eq14+}-\eqref{eq14++} below.  Assume that
     \begin{eqnarray*}\label{afaa}
     &&\mbox{$(D2)$, $(N2)$ and $(R2)$ are uniquely solvable, for the operators}\notag\\
      &&\mbox{$\mathcal{H}_0$, $\mathcal{H}_0^\ast$, by way of layer potentials and for a constant $\Gamma_0$, }\notag\\
      &&\mbox{in the sense of Definition \ref{sollayer}.}
     \end{eqnarray*}
Then there exists a constant $\varepsilon_0$, depending at most
     on $n$, $\Lambda$,  the De Giorgi-Moser-Nash constants and  $\Gamma_0$,  such that if
    $$||A^1-A^0||_\infty\leq\varepsilon_0,$$
    then then there exists a constant $\Gamma_1$,  depending at most
     on $n$, $\Lambda$,  the De Giorgi-Moser-Nash constants and  $\Gamma_0$, such that
         \begin{eqnarray*}\label{afaa+}
&&\mbox{$(D2)$, $(N2)$ and $(R2)$ are uniquely solvable, for the operators}\notag\\
      &&\mbox{$\mathcal{H}_1$, $\mathcal{H}_1^\ast$, by way of layer potentials and with constant $\Gamma_1$, }\notag\\
      &&\mbox{in the sense of Definition \ref{sollayer}.}
     \end{eqnarray*}
    \end{corollary}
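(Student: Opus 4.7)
The plan is to deduce Corollary \ref{thper3} directly from Theorem \ref{thper2} by matching the two notions of layer-potential solvability (Definition \ref{sollayer}) and bounded-invertible-good layer potentials (Definition \ref{blayer+}). The argument splits into three steps: an ``$\Rightarrow$'' step that extracts the hypothesis of Theorem \ref{thper2} from the hypothesis of the corollary, the application of Theorem \ref{thper2} itself, and a ``$\Leftarrow$'' step that converts the conclusion of Theorem \ref{thper2} back into solvability of $(D2)$, $(N2)$, $(R2)$ by layer potentials.

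First, I would check that solvability of $(D2)$, $(N2)$, $(R2)$ for $\mathcal{H}_0$, $\mathcal{H}_0^\ast$ by way of layer potentials, with constant $\Gamma_0$, implies that $\mathcal{H}_0$, $\mathcal{H}_0^\ast$ have bounded, invertible and good layer potentials with a comparable constant. The three problems together force $L^2$-invertibility of the trace operators $\pm\tfrac12 I + \mathcal{K}$ (for $(D2)$), $\pm\tfrac12 I + \mathcal{K}^\ast$ (for $(N2)$), and of the single layer potential $\mathcal{S}$ restricted to the boundary (for $(R2)$), for both $\mathcal{H}_0$ and its adjoint. The attendant parabolic non-tangential maximal function bounds, square function estimates and half-order time-derivative estimates from \cite{N}, \cite{CNS} are already part of what ``solvability by layer potentials'' packages, which is precisely what Definition \ref{blayer+} asks for. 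Thus the hypothesis of Theorem \ref{thper2} holds with the same $\Gamma_0$ up to a harmless adjustment.

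Second, I would invoke Theorem \ref{thper2} with this $\Gamma_0$, obtaining $\varepsilon_0$ and $\Gamma_1$ such that whenever $\|A^1-A^0\|_\infty \le \varepsilon_0$ the operators $\mathcal{H}_1$ and $\mathcal{H}_1^\ast$ again have bounded, invertible and good layer potentials with constant $\Gamma_1$, depending only on $n$, $\Lambda$, the De Giorgi--Moser--Nash constants and $\Gamma_0$. Third, I would run the ``$\Leftarrow$'' direction to promote this back to Definition \ref{sollayer}. Existence and representation are immediate: for $(D2)$ set $u = \mathcal{D}\bigl((\tfrac12 I + \mathcal{K})^{-1} f\bigr)$; for $(R2)$ and $(N2)$ use the single layer with data inverted via $\mathcal{S}$ and $\pm\tfrac12 I + \mathcal{K}^\ast$ respectively. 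The jump relations, non-tangential convergence and non-tangential maximal function control all come from the ``good'' part of Definition \ref{blayer+}. Uniqueness is the only piece requiring a real argument: given a solution with zero boundary data and $L^2$ non-tangential control, I would pair it against a layer-potential solution of $\mathcal{H}_1^\ast$ with arbitrary $L^2$ data, then use invertibility of the adjoint boundary operators (also supplied by Theorem \ref{thper2}) to force vanishing.

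The main obstacle I anticipate is the careful parabolic bookkeeping in the ``$\Leftarrow$'' step, particularly for $(R2)$: the Regularity problem naturally involves a half-order time-derivative $D_{1/2}^t$ of the data, and one must check that invertibility of $\mathcal{S}$ on the appropriate tangential-gradient/half-order time-derivative space for $\mathcal{H}_1$ matches the function space in which the data of $(R2)$ is prescribed. Once this alignment is in place, as in \cite{LM}, \cite{HL}, \cite{H}, \cite{CNS}, the corollary reduces to a direct invocation of Theorem \ref{thper2} in each of the three problems.
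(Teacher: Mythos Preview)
Your approach is essentially the same as the paper's, and it is correct. Two remarks are worth making.

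First, your ``$\Rightarrow$'' step is overcomplicated. Read Definition~\ref{sollayer} again: it \emph{begins} with the clause ``Assume that $\mathcal{H}$, $\mathcal{H}^\ast$ have bounded, invertible and good layer potentials with constant $\Gamma$ in the sense of Definition~\ref{blayer+}.'' So the hypothesis of Corollary~\ref{thper3} literally contains the hypothesis of Theorem~\ref{thper2}; there is nothing to extract, and you can pass directly to invoking Theorem~\ref{thper2}.

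Second, for the ``$\Leftarrow$'' step the paper does exactly what you outline, but packages it via lemmas already proved under the standing assumption \eqref{keysea}: Remark~\ref{resea} supplies the double-layer bounds needed for existence in $(D2)$; Lemma~\ref{trace5} and Lemma~\ref{trace7-} give the non-tangential convergence of the layer potentials; and uniqueness comes from Lemma~\ref{trace7} for $(D2)$ and $(R2)$ and Lemma~\ref{trace7++} for $(N2)$. Your duality sketch for uniqueness is morally the Green-function argument of Lemma~\ref{trace7}, but note that $(N2)$-uniqueness (Lemma~\ref{trace7++}) is genuinely more delicate: one must first establish the a priori bound $\sup_{\lambda>0}\|H_tD^t_{1/2}u(\cdot,\cdot,\lambda)\|_2<\infty$ from $\tilde N_\ast(\nabla u)\in L^2$ alone, and then reduce to $(R2)$-uniqueness via the invertibility of $\mathcal{S}_0^{\mathcal{H}_1}$ and $\tfrac12 I+\tilde{\mathcal{K}}^{\mathcal{H}_1}$. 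Your anticipated obstacle about the $(R2)$ function space is handled precisely because Definition~\ref{blayer+}$(xiii)$ asserts that $\mathcal{S}_\lambda^{\mathcal{H}_1}|_{\lambda=0}$ is a bijection $L^2\to\mathbb{H}$, which is exactly the space where the $(R2)$ data lives.
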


 \begin{theorem}\label{th2-}  Consider two operators $\mathcal{H}_0=\partial_t-\div A^0\nabla$, $\mathcal{H}_1=\partial_t-\div A^1\nabla$. Assume that $A^0$ is constant and that $\mathcal{H}_0$, $\mathcal{H}_1$ satisfy \eqref{eq3}-\eqref{eq4}. Then there exists a constant $\varepsilon_0$, depending at most
     on $n$, $\Lambda$ such that if
    $$||A^1-A^0||_\infty\leq\varepsilon_0,$$
      then $(D2)$ for the operator $\mathcal{H}_1$ has a unique solution and $(N2)$ and $(R2)$ for the operator $\mathcal{H}_1$ have unique solutions
    modulo constants. The solutions can be represented
 in terms of layer potentials. \end{theorem}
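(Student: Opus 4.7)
The plan is to deduce Theorem~\ref{th2-} from Corollary~\ref{thper3} by taking $\mathcal{H}_0 = \partial_t - \div A^0\nabla$ as the unperturbed operator and verifying its hypotheses. Since $\mathcal{H}_1$ and $\mathcal{H}_1^\ast$ already satisfy \eqref{eq3}--\eqref{eq4} and $\|A^1-A^0\|_\infty$ is small, the ellipticity constants of $A^1$ and its De Giorgi-Moser-Nash constants are controlled in terms of $n$ and $\Lambda$ alone. The remaining task is the base case: showing that a constant coefficient operator $\mathcal{H}_0$ and its adjoint $\mathcal{H}_0^\ast$ have bounded, invertible and good layer potentials in the sense of Definition~\ref{blayer+}, with constant $\Gamma_0 = \Gamma_0(n,\Lambda)$. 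Granted this, Corollary~\ref{thper3} immediately produces $\varepsilon_0 = \varepsilon_0(n,\Lambda)$ and the conclusion about $(D2)$, $(N2)$, $(R2)$ for $\mathcal{H}_1$ by way of layer potentials.

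For the base case, when $A^0$ is constant the fundamental solution $\Gamma^{A^0}(X,t)$ is explicit: by Fourier transform in the spatial variables, $(\partial_t + A^0_{ij}\xi_i\xi_j)\widehat{\Gamma^{A^0}} = \delta$ is solved by a complex Gaussian whose real-part decay is dictated by $\mathrm{Re}\, A^0$ and whose oscillation by $\mathrm{Im}\, A^0$, both controlled by $\Lambda$. From this one defines the single and double layer potentials $\mathcal{S}_0$, $\mathcal{D}_0$; translation invariance in $(x,t)$ makes their boundary operators Fourier multipliers on $L^2(\mathbb{R}^{n+1},\mathbb{C})$, whose symbols are read off from $\widehat{\Gamma^{A^0}}(\xi,\tau)$ after integrating out $\xi_{n+1}$ by a contour argument based on the roots of the characteristic polynomial $A^0_{n+1,n+1}\xi_{n+1}^2 + (\mathrm{lower})\xi_{n+1} + (\cdots) + i\tau = 0$. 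Boundedness of the non-tangential maximal function and the parabolic square-function estimates that make the layer potentials \emph{good} in the sense of Definition~\ref{blayer+} are then exactly the estimates worked out in \cite{N} and \cite{CNS} for this class of operators, applied here in the simplified constant-coefficient situation.

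The main obstacle is invertibility of the boundary layer potential operators $\pm\tfrac12 I + K_0$ and of the single layer $S_0$ on $L^2(\mathbb{R}^{n+1},\mathbb{C})$, and it is this step that determines the dependence of $\Gamma_0$ on $n$ and $\Lambda$. I would compute the symbols of these boundary operators directly from the Fourier representation above and verify that they are bounded away from zero uniformly in $(\xi,\tau)$, reducing invertibility to the ellipticity of $A^0$ together with the positive-imaginary-part structure of the relevant root $\xi_{n+1}(\xi,\tau)$. As a cross-check I would also set up a Rellich-type identity in the upper half-space using a constant transversal vector field $e_{n+1}$: integration by parts in $\reu$ against $\partial_{x_{n+1}}u$ and its conjugate, combined with the Kato-type square-function control from \cite{N}, forces the equivalence $\|\nabla_x u(\cdot,0,\cdot)\|_2 + \|D_t^{1/2} u(\cdot,0,\cdot)\|_2 \simeq \|\partial_{x_{n+1}} u(\cdot,0,\cdot)\|_2$, which, together with uniqueness of solutions in the energy class, yields injectivity; surjectivity follows by a standard continuity argument deforming $A^0$ to $\Lambda^{-1} I$, for which the multiplier analysis is classical.

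Once the base case is in hand, Corollary~\ref{thper3} applies verbatim and delivers unique solvability of $(D2)$, and of $(N2)$ and $(R2)$ modulo constants, for $\mathcal{H}_1$, with the solutions represented in terms of layer potentials built from $A^1$. Because every intermediate constant in the constant-coefficient analysis depends only on $n$ and $\Lambda$, so does the resulting threshold $\varepsilon_0$, which is what the statement of Theorem~\ref{th2-} asserts.
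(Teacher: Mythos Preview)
Your approach is essentially the one the paper takes: establish the constant-coefficient base case by explicit Fourier analysis of the single layer (the paper writes $\hat\Gamma(\xi,\tau,\zeta)=(Q(\xi,\zeta)-i\tau)^{-1}$, factors the quadratic $Q$ in $\zeta$, and uses the residue theorem to obtain a closed formula for $F(\xi,\tau,\lambda)$, from which Definition~\ref{blayer+} $(i)$--$(vii)$ and, evaluating at $\lambda=0$, $(viii)$--$(xiii)$ are read off), then invoke the perturbation machinery. Your use of Corollary~\ref{thper3} in place of the paper's Theorem~\ref{thper2} is harmless, since uniqueness for $\mathcal H_0$ follows from Lemmas~\ref{trace7} and~\ref{trace7++} once Definition~\ref{blayer+} is in hand.

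One caveat: the Rellich ``cross-check'' you sketch is unnecessary and, more to the point, does not go through as stated for a general complex constant $A^0$. The paper's Rellich argument (Lemma~\ref{th0++}) relies crucially on $A$ being real and symmetric---that symmetry, together with the anti-symmetry of $H_tD^t_{1/2}$, is what kills the term $II_{\delta,3}$ and makes $II_{\delta,2}+II_{\delta,11}$ vanish. For complex $A^0$ these cancellations fail, so the identity does not yield the two-sided equivalence you claim. Since your primary argument via the explicit symbol already gives both boundedness and invertibility of the multipliers directly (no continuity deformation to $\Lambda^{-1}I$ is needed either), you should simply drop the Rellich portion.
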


 \begin{theorem}\label{th2} Assume that $\mathcal{H}=\partial_t-\div A\nabla$ satisfies \eqref{eq3}-\eqref{eq4}. Assume in addition that
 $A$ is real and symmetric. Then $(D2)$ for the operator $\mathcal{H}$ has a unique solution and $(N2)$ and $(R2)$ for the operator $\mathcal{H}$ have unique solutions
    modulo constants. The solutions can be represented
 in terms of layer potentials.
               \end{theorem}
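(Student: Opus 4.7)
The plan is to verify directly, for real symmetric $A$ satisfying \eqref{eq3}-\eqref{eq4}, that $\mathcal{H}$ and $\mathcal{H}^\ast$ have bounded, invertible, and good layer potentials in the sense of Definition \ref{blayer+}. Once this is established, unique solvability of $(D2)$, $(N2)$ and $(R2)$ by way of layer potentials, in the sense of Definition \ref{sollayer}, follows from inversion of the jump operators together with standard non-tangential maximal function bounds and Fatou-type uniqueness arguments. Note that $A^\ast = A$ in the real symmetric case, so it suffices to handle $\mathcal{H}$ (the time-reversal symmetry takes care of $\mathcal{H}^\ast$). For real symmetric $A$, moreover, the De Giorgi-Moser-Nash estimates \eqref{eq14+}-\eqref{eq14++} are classical, so all qualitative hypotheses are in force.

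The $L^2$ boundedness of the single and double layer potentials, as well as of the half-order time derivative of the single layer, uses no symmetry and holds for general complex coefficient matrices of the structure in \eqref{eq3}-\eqref{eq4}. It is a consequence of the parabolic Kato-type square function estimates developed in \cite{N}, specialized to layer potentials via the construction and $L^2$ trace bounds worked out in the companion paper \cite{CNS}. Thus only the invertibility half of Definition \ref{blayer+} requires the real symmetric assumption.

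The crux of the proof is therefore invertibility: that the single layer $\mathcal{S}$ is invertible into an appropriate homogeneous parabolic Sobolev space on the boundary, and that the jump operators $\pm \tfrac{1}{2} I + K$ associated to the outer and inner conormal traces are invertible on $L^2(\mathbb{R}^{n+1},\mathbb{C})$. For real symmetric $A$ the standard tool is a Rellich-type identity. Multiplying $\mathcal{H}u = 0$ by $\partial_{x_{n+1}} u$ and integrating over the upper half-space, the symmetry of $A$ converts the bulk integrand into a perfect square modulo divergence terms, and one arrives at a coercivity estimate of the schematic form
\begin{equation*}
\|\partial_{\nu_A} u\|_{L^2(\mathbb{R}^{n+1})}^2 \;\approx\; \|\nabla_x u\|_{L^2(\mathbb{R}^{n+1})}^2 + \|D_t^{1/2} u\|_{L^2(\mathbb{R}^{n+1})}^2
\end{equation*}
on $\{x_{n+1}=0\}$, for solutions $u$ of $\mathcal{H}u = 0$ in the upper half-space produced by layer potentials. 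Here $D_t^{1/2}$ denotes the half-order time derivative and $\partial_{\nu_A}$ the conormal derivative. The term $\partial_t u \cdot \partial_{x_{n+1}} u$ coming from the time-derivative has no elliptic analogue and is handled by a fractional integration by parts in $t$; this is the step for which the parabolic framework of \cite{LM}, \cite{HL}, \cite{H}, \cite{HL1}, and the technology of \cite{N} and \cite{CNS}, are essential. Combined with the jump relations and the $L^2$ boundedness from the previous step, this Rellich-type coercivity yields the required invertibility of $\pm \tfrac{1}{2}I + K$ and of $\mathcal{S}$, so that Definition \ref{blayer+} is verified.

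The main obstacle is precisely this parabolic Rellich identity. In contrast to the elliptic setting, the time derivative forces the introduction and control of $D_t^{1/2}$, as well as of commutator-type terms between $D_t^{1/2}$ and the spatial part of the operator, and the passage to the boundary must also be justified carefully in view of the limited smoothness of $A$. Once these estimates are in place, the deduction of the BVP statements in Theorem \ref{th2} (including uniqueness for $(D2)$ and uniqueness modulo constants for $(N2)$ and $(R2)$) closely parallels the corresponding arguments in \cite{AAAHK} for the elliptic case.
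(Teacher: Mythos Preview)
Your outline has the right ingredients but contains two gaps, one minor and one structural.

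First, the boundedness step. You assert that the core $L^2$ estimates \eqref{keyestint-} (equivalently $\Phi_+(f)\leq c\|f\|_2$) ``use no symmetry and hold for general complex coefficient matrices.'' In the framework of this paper that is not established. The reduction of all non-tangential and Sobolev bounds to $\Phi_+(f)$ is indeed general (Theorem \ref{th0}), but the key input $\Phi_+(f)\leq\Gamma\|f\|_2$ is Theorem \ref{th2ag}, taken from \cite{CNS}, and its proof specifically uses that $A$ is real and symmetric, via a local parabolic $Tb$-theorem for square functions together with a Fabes--Salsa type argument. See also Remark \ref{fut2}: the unconditional parabolic analogue of \cite{R}, \cite{GH} was not available when the paper was written. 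So your boundedness step is correct in conclusion but for the wrong reason.

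Second, and more importantly, the invertibility step is incomplete. The Rellich identity you describe yields precisely the two-sided bounds in Definition \ref{blayer+} $(viii)$--$(x)$, i.e.\ injectivity with closed range for $\pm\tfrac12 I+\tilde{\mathcal K}$ and for $\mathcal S_0:L^2\to\mathbb H$. It does \emph{not} by itself give surjectivity, which is what $(xi)$--$(xiii)$ require. The paper closes this gap exactly as in the elliptic theory: it runs the method of continuity along the one-parameter family $A_\sigma=(1-\sigma)\mathbb I_{n+1}+\sigma A$, $\sigma\in[0,1]$. Lemma \ref{th0++} (Rellich plus Theorem \ref{th2ag}) gives $(i)$--$(x)$ uniformly in $\sigma$; at $\sigma=0$ one has the heat equation, for which $(xi)$--$(xiii)$ are known from \cite{B}; and the perturbation Theorem \ref{thper2} then propagates invertibility from $\sigma=0$ to $\sigma=1$ in finitely many steps of size $\tilde\varepsilon(n,\Lambda)$. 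Without this continuity argument (or some substitute index/Fredholm argument, which you do not supply), the proof of invertibility is genuinely incomplete.
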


                   \begin{theorem}\label{th2+} Assume that $\mathcal{H}_0=\partial_t-\div A^0\nabla$, $\mathcal{H}_1=\partial_t-\div A^1\nabla$ satisfy \eqref{eq3}-\eqref{eq4}. Assume that $A^0$ is real and symmetric. Then there exists a constant $\varepsilon_0$, depending at most
     on $n$, $\Lambda$ such that if
    $$||A^1-A^0||_\infty\leq\varepsilon_0,$$
    then $(D2)$ for the operator $\mathcal{H}_1$ has a unique solution and $(N2)$ and $(R2)$ for the operator $\mathcal{H}_1$ have unique solutions
    modulo constants. The solutions can be represented
 in terms of layer potentials. \end{theorem}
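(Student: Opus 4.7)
My plan is to derive Theorem~\ref{th2+} as a direct application of Corollary~\ref{thper3}, using Theorem~\ref{th2} to furnish the layer-potential hypothesis at the base point $\mathcal{H}_0$. Thus Theorem~\ref{th2+} should be the composition ``real-symmetric solvability'' $+$ ``small complex $L^\infty$ perturbation stability''.

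First, since $A^0$ is real and symmetric and satisfies \eqref{eq3}-\eqref{eq4}, Theorem~\ref{th2} asserts that $(D2)$, $(N2)$, and $(R2)$ are uniquely solvable (modulo constants in the Neumann and Regularity cases) for $\mathcal{H}_0$ by way of layer potentials, for some constant $\Gamma_0$ depending only on $n$ and $\Lambda$. To obtain the same for the adjoint $\mathcal{H}_0^\ast=-\partial_t-\div A^0\nabla$, I would apply the time reversal $t\mapsto -t$: since $(A^0)^\ast=A^0$, this conjugates $\mathcal{H}_0^\ast$ into a forward parabolic operator with the same real-symmetric matrix $A^0$, and it preserves the half-space $\mathbb{R}^{n+2}_+$ and the data on $\partial\mathbb{R}^{n+2}_+$. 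Theorem~\ref{th2} therefore also yields solvability by layer potentials for $\mathcal{H}_0^\ast$, so that the layer-potential hypothesis of Corollary~\ref{thper3} holds at $\mathcal{H}_0$ with constant $\Gamma_0=\Gamma_0(n,\Lambda)$.

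Next, I have to verify the De Giorgi-Moser-Nash estimates \eqref{eq14+}-\eqref{eq14++} for all four operators $\mathcal{H}_0$, $\mathcal{H}_0^\ast$, $\mathcal{H}_1$, $\mathcal{H}_1^\ast$. For $\mathcal{H}_0$ and $\mathcal{H}_0^\ast$ these are the classical Nash-Moser estimates, since $A^0$ is real. For the perturbed operators $\mathcal{H}_1$ and $\mathcal{H}_1^\ast$ the coefficients are complex, and such estimates are not automatic. This is the single non-routine point of the plan: I expect to invoke the parabolic analogue of the Auscher-type perturbation principle, to the effect that if $\|A^1-A^0\|_\infty$ is sufficiently small (depending on $n$, $\Lambda$ and on the DGMN constants of $A^0$), then weak solutions of $\mathcal{H}_1u=0$ remain Hölder continuous and satisfy the standard interior estimates with constants controlled by those of $\mathcal{H}_0$. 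Concretely, one writes $\mathcal{H}_1=\mathcal{H}_0+\div(A^0-A^1)\nabla$, represents solutions of $\mathcal{H}_1$ via the Green function of $\mathcal{H}_0$, and closes the argument by Caccioppoli plus the smallness of the perturbation.

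With the layer-potential hypothesis at $\mathcal{H}_0,\mathcal{H}_0^\ast$ from the previous step and DGMN for all four operators from the perturbation argument, all hypotheses of Corollary~\ref{thper3} are in force. Shrinking $\varepsilon_0$ if necessary so that it is smaller than both the $\varepsilon_0$ needed for DGMN-stability and the $\varepsilon_0$ produced by Corollary~\ref{thper3}, I conclude that for $\|A^1-A^0\|_\infty\leq \varepsilon_0$, the problems $(D2)$, $(N2)$ and $(R2)$ are uniquely solvable (modulo constants for $(N2)$, $(R2)$) for $\mathcal{H}_1$ and $\mathcal{H}_1^\ast$ by way of layer potentials, which is the assertion of Theorem~\ref{th2+}. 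The only substantive step beyond invoking earlier results is the perturbative preservation of the De Giorgi-Moser-Nash estimates under complex $L^\infty$ perturbations of a real-symmetric base operator; everything else is a direct funnel from Theorem~\ref{th2} into Corollary~\ref{thper3}.
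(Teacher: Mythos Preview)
Your proposal is correct and follows essentially the same route as the paper: Theorem~\ref{th2} provides the bounded, invertible and good layer potentials for $\mathcal{H}_0$ (and hence solvability in the sense of Definition~\ref{sollayer}), after which Theorem~\ref{thper2} and Corollary~\ref{thper3} transfer everything to $\mathcal{H}_1$. The De Giorgi--Moser--Nash stability you flag as the ``non-routine point'' is exactly Lemma~\ref{auscher} (Auscher's result) in the paper, so it is already available and need not be reproved; likewise your time-reversal argument for $\mathcal{H}_0^\ast$ is correct but redundant here, since Definition~\ref{blayer+} and Lemma~\ref{th0++} are formulated for the pair $\mathcal{H},\mathcal{H}^\ast$ simultaneously.
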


               \begin{remark} Assuming \eqref{eq3}-\eqref{eq4}, as well as the De Giorgi-Moser-Nash estimates stated in \eqref{eq14+}-\eqref{eq14++} below, Theorem \ref{thper2} states that the property of having bounded, invertible and good layer potentials in the sense of Definition \ref{blayer+}  is stable under small complex $L^\infty$ perturbations of the coefficient matrix. Corollary \ref{thper3} emphasizes that the solvability of $(D2)$, $(N2)$ and $(R2)$, is stable under small complex $L^\infty$ perturbations of the coefficient matrix.
\end{remark}

\begin{remark} Note that Theorem \ref{th2-} gives the existence and uniqueness for $(D2)$, $(N2)$ and $(R2)$ whenever the matrix $A^1$ is a small perturbation of a constant (complex) matrix
 $A^0$. Theorem \ref{th2} states that we  have existence and uniqueness for $(D2)$, $(N2)$ and $(R2)$  when $A$  is real and symmetric and
 satisfies \eqref{eq3}-\eqref{eq4}. Theorem \ref{th2+} states that the latter result is true whenever $A^1$ is a (small) complex perturbation of
 a  real and symmetric matrix $A^0$. In all cases the unique solutions can be represented
 in terms of layer potentials.
 \end{remark}

 \begin{remark}\label{fut1} In forthcoming papers we intend  to generalize the present paper  to the context of $L^p$ and relevant endpoint spaces, and to  challenge the assumption in \eqref{eq4}. The ambition is to develop parabolic versions of  \cite{HMM}, \cite{HMaMi}, and \cite{HKMP}.
 \end{remark}

  \begin{remark}\label{fut2} The underlying theme of  this paper, as well as in \cite{AAAHK}, is to basically reduce all estimates to two core estimates involving single layer potentials. To briefly discuss this, and to be consistent with the notation used in the bulk of the paper,  we let, based on \eqref{eq4},  $\lambda=x_{n+1}$ and when using the symbol
    $\lambda$  we will write the point $(X,t)=(x_1,..,x_{n},x_{n+1},t)$ as $ (x,t, \lambda)=(x_1,..,x_{n},t,\lambda)$.  We let $L^2(\mathbb R^{n+1},\mathbb C)$ denote the standard Hilbert space of functions $f:\mathbb R^{n+1}\to \mathbb C$ which are square integrable and we let $||f||_2$ denote the norm of $f$.   We let
\begin{eqnarray}\label{keyestint-ex+-}
|||\cdot|||_\pm=\biggl (\int_{\mathbb R^{n+2}_\pm}|\cdot|^2\, \frac{dxdtd\lambda}{|\lambda|}\biggr )^{1/2},
\end{eqnarray}
where $\mathbb R^{n+2}_\pm=\{(x,t,\lambda)\in \mathbb R^{n}\times\mathbb R\times\mathbb R:\ \pm\lambda>0\}$. In our case the core estimates referred to above are embedded in the statement that $\mathcal{H}$,  $\mathcal{H}^\ast$ have bounded, invertible and good layer  potentials with constant $\Gamma \geq 1$, see display \eqref{keyestint-} in Definition \ref{blayer+}. The estimates read
   \begin{eqnarray}\label{keyestint-intro}
     (i)&&\sup_{\lambda\neq 0}||\partial_\lambda \mathcal{S}_{\lambda}^{\mathcal{H}}f||_2+\sup_{\lambda\neq 0}||\partial_\lambda \mathcal{S}_{\lambda}^{\mathcal{H}^\ast}f||_2\leq \Gamma||f||_2,\notag\\
     (ii)&&|||\lambda\partial_\lambda^2 \mathcal{S}_{\lambda}^{\mathcal{H}}f|||_\pm+|||\lambda\partial_\lambda^2 \mathcal{S}_{\lambda}^{\mathcal{H}^\ast}f|||_\pm\leq  \Gamma||f||_2,
     \end{eqnarray}
     whenever $f\in L^2(\mathbb R^{n+1},\mathbb C)$ and where $\mathcal{S}_{\lambda}^{\mathcal{H}}f$ and $\mathcal{S}_{\lambda}^{\mathcal{H}^\ast}f $ are the single layer potentials associated to $\mathcal{H}$ and $\mathcal{H}^\ast$, respectively. See \eqref{eq11} for the definition of $\mathcal{S}_{\lambda}^{\mathcal{H}}f$ and $\mathcal{S}_{\lambda}^{\mathcal{H}^\ast}f$.  Note that \eqref{keyestint-intro} $(i)$ is a uniform (in $\lambda$) $L^2$-estimate involving the first order partial derivative, in the $\lambda$-coordinate, of the single layer potentials, while  \eqref{keyestint-intro} $(ii)$  is a square function estimate involving the second order partial derivatives, in the $\lambda$-coordinate, of the single layer potentials. A key technical challenge in the proof of Theorem \ref{thper2}, Corollary \ref{thper3} is to prove that these estimates are stable under small complex perturbations of the coefficient matrix. However, in the elliptic case and after \cite{AAAHK} appeared, it was proved in \cite{R}, see \cite{GH} for an alternative proof, that if $-\mbox{div}\, A(X)\nabla$ satisfies the basic assumptions imposed in
     \cite{AAAHK}, then the elliptic version of \eqref{keyestint-intro} $(ii)$ always holds. In fact, the approach in \cite{R}, which is based on functional calculus, even dispenses of the De Giorgi-Moser-Nash estimates underlying \cite{AAAHK}. Furthermore, in the elliptic case \eqref{keyestint-intro} $(ii)$ can be seen to imply \eqref{keyestint-intro} $(i)$ by the results of \cite{AAAHK} and \cite{AA}. Hence, in the elliptic case, and under the assumptions of \cite{AAAHK},  the elliptic version of \eqref{keyestint-intro} always holds. Based on this it is fair to pose the question whether or not a similar line of development can be anticipated in the parabolic case. Based on \cite{N}, this paper and \cite{CNS}, we anticipated that a parabolic version of \cite{GH} may be possible to develop and this is currently work in progress. To develop a parabolic version of \cite{AA} is a very interesting project.
 \end{remark}

 \subsection{Relation to the literature}\label{hist} To put our work in context, and to briefly outline previous work devoted
to parabolic singular integral operators, parabolic  layer potentials, as well as the Dirichlet, Neumann and Regularity
problems with data in $L^2$ and $L^p$,  for second order parabolic operators in divergence form, it is fair to first mention \cite{FR}, \cite{FR1}, \cite{FR2} where a theory of singular integral operators with mixed homogeneity was developed in the context of time-independent $C^1$-cylinders. In the setting of time-independent Lipschitz cylinders and the heat equation,  $(D2)$ was solved in \cite{FS}, while
$(D2)$, $(N2)$ and $(R2)$ were solved in \cite{B}, \cite{B1} by way of  layer potentials. In this context the natural pull-back of the heat operator to a half-space is a second order parabolic operator of the form $\mathcal{H}$ with defining matrix $A$ being real, symmetric, and satisfying \eqref{eq3}-\eqref{eq4}. A major breakthrough in the field, in the setting of time-dependent Lipschitz type cylinders and the heat equation, was achieved in \cite{LS}, \cite{LM}, \cite{HL}, \cite{H}, see also \cite{HL1}. In particular, in these papers the correct notion of time-dependent Lipschitz type cylinders, correct from the perspective of parabolic singular integral operators, parabolic  layer potentials, parabolic measure, as well as the Dirichlet, Neumann and Regularity
problems with data in $L^p$ for the heat operator, was found. In \cite{HL} the authors solved $(D2)$, $(N2)$ and $(R2)$ for the heat operator. The Neumann and Regularity
problems with data in $L^p$ were considered  in \cite{HL2} and \cite{HL3}. Due to the modest regularity assumption in the time-direction imposed in \cite{LM}, \cite{HL}, \cite{H}, in this setting a more elaborate pull-back to a half-space has to be employed and the resulting operator, in the case of the heat operator, turns out to be an operator of the form $$\mathcal{H}-B\cdot\nabla=\partial_t-\mbox{div}\, A(X,t)\nabla-B\cdot\nabla,$$ where the term $B\cdot\nabla$ is a singular drift term. In this case, $A$ and $B$ will in general depend on $x_{n+1}$ as well as $t$, i.e., $A$ will not satisfy \eqref{eq4}. Instead the geometry underlying \cite{LM}, \cite{HL}, \cite{H}, will reveal itself through the fact that certain measures, defined based on $A$ and $B$, turn out to be Carleson measures. The fine properties of associated parabolic measures were analyzed in the impressive and influential work \cite{HL4}, this work also being strongly influential in the solution of the Kato conjecture, see \cite{AHLeMcT}. A fine contribution to the field, simplifying parts of  \cite{HL4}, was given in \cite{NR}.

\subsection{Proofs and organization of the paper} Concerning the proofs of our main results it is fair to say that the skeleton of our proofs is similar to
the skeleton of \cite{AAAHK}. However, due to the presence of the time derivative many of the important details are different. To briefly discuss proofs, and the organization of the paper, we need to introduce some notation. Based on \eqref{eq4} we let $\lambda=x_{n+1}$ and when using the symbol
    $\lambda$ we will write the point $(X,t)=(x_1,..,x_{n},x_{n+1},t)$ as $ (x,t, \lambda)=(x_1,..,x_{n},t,\lambda)$. Using this notation, and assuming \eqref{eq3}-\eqref{eq4}, we study $(D2)$, $(N2)$ and $(R2)$ for the operator $\mathcal{H}$ in $$\mathbb R^{n+2}_+=\{(x,t,\lambda)\in \mathbb R^{n}\times\mathbb R\times\mathbb R:\ \lambda>0\},$$ with data prescribed
on $$\mathbb R^{n+1}=\{(x,t,\lambda)\in \mathbb R^{n}\times\mathbb R\times\mathbb R:\ \lambda=0\}.$$
We write $\nabla =(\nabla_{||},\partial_\lambda)$ where $\nabla_{||}=(\partial_{x_1},...,\partial_{x_n})$. We let $L^2(\mathbb R^{n+1},\mathbb C)$ denote the standard Hilbert space of functions $f:\mathbb R^{n+1}\to \mathbb C$ which are square integrable, we let $||f||_2$ denote the norm of $f$ and we will use the notation $|||\cdot|||_\pm$ introduced in \eqref{keyestint-ex+-}.  In the following we refer the reader to Section \ref{sec2} for notation and the precise definitions of
     the operators $\mathbb D$, $H_t$, $D^t_{1/2}$, the non-tangential maximal operators $N_\ast^\pm$, $\tilde N_\ast^\pm$, and the parabolic Sobolev space $\mathbb H=\mathbb H(\mathbb R^{n+1},\mathbb C)$.

In Section \ref{sec2}, which is of preliminary nature, we introduce notation, function spaces, weak solutions, state energy estimates, and we introduce non-tangential maximal functions and the problems
$(D2)$, $(N2)$ and $(R2)$. We here also  state the De Giorgi-Moser-Nash estimates referred to in the statements of Theorem \ref{thper2} and
Corollary \ref{thper3}, we introduce layer potentials and we state Definition \ref{blayer+} and Definition \ref{sollayer}.

In Section \ref{sec7} we establish a number of harmonic analysis results and collect some  of the results from \cite{N} to be used in the sequel. In particular, we introduce the resolvent and establish the existence of  a parabolic Hodge decomposition. We collect estimates from \cite{N} concerning
uniform (in $\lambda$) $L^2$-estimates and off-diagonal estimates, square function estimates for resolvents, and the Littlewood-Paley theory. In this section  we also prove some consequences of uniform (in $\lambda$) $L^2$-estimates and off-diagonal estimates.

In Section \ref{sec3} we collect and prove a number of estimates related to the boundedness of single layer potentials: off-diagonal estimates, uniform  (in $\lambda$) $L^2$-estimates, estimates of non-tangential maximal functions and square functions. Much of the material in this section is a summary of the key results established in \cite{CNS}. The essence of the results stated in Section \ref{sec3} is that if  $\mathcal{H}=\partial_t-\div A\nabla$ satisfies \eqref{eq3}-\eqref{eq4}, and if we let
 $\mathcal{S}_\lambda=\mathcal{S}_\lambda^{\mathcal{H}}$, $\mathcal{S}_\lambda^\ast=\mathcal{S}_\lambda^{\mathcal{H}^\ast}$ denote the single layer potentials associated to $\mathcal{H}$ and $\mathcal{H}^\ast$, respectively, then the $L^2$-norms of non-tangential maximal functions in the upper half-space, $||N_\ast^+(\partial_\lambda \mathcal{S}_\lambda f)||_2$, $||\tilde N_\ast^+(\nabla_{||}\mathcal{S}_\lambda f)||_2$, $||\tilde N_\ast^+(H_tD^t_{1/2}\mathcal{S}_\lambda f)||_2$, appropriate square functions involving partial derivatives, and fractional in time derivatives, of $\mathcal{S}_\lambda f$, as well as the Sobolev semi-norms $||\mathbb D \mathcal{S}_\lambda f||_2$, can be bounded by a constant times
\begin{eqnarray}\label{keyestint-ex}
\Phi_+(f)+||f||_2^2,
     \end{eqnarray}
     where
     \begin{eqnarray}\label{keyestint-ex+}
\Phi_+(f):=\sup_{\lambda> 0}||\partial_\lambda \mathcal{S}_\lambda f||_2+|||\lambda\partial_\lambda^2 \mathcal{S}_{\lambda}f|||_+.
     \end{eqnarray}
The same results hold with $\mathbb R_+^{n+2}$, $N_\ast^+$, $\tilde N_\ast^+$,  replaced by  $\mathbb R_-^{n+2}$, $N_\ast^-$, $\tilde N_\ast^-$,  and with $\mathcal{S}_\lambda$ replaced by $\mathcal{S}_\lambda^\ast$. In Section \ref{sec3} we also, in analogy with  \cite{AAAHK}, introduced smoothed single layer potentials $\mathcal{S}_\lambda^\eta=\mathcal{S}_\lambda^{\mathcal{H},\eta}$ in order to make certain otherwise formal manipulations rigorous. In particular, in contrast to $\partial_\lambda\mathcal{S}_\lambda$,  $\partial_\lambda\mathcal{S}_\lambda^\eta$ does not, for $\eta>0$, jump across the boundary defined by $\mathbb R^{n+1}$.

     In Section \ref{sec4} we are concerned with boundary traces theorems for weak solutions, weak solutions for which the appropriate non-tangential maximal functions are controlled,  and the existence of boundary layer potentials. In particular, assuming that
        \begin{eqnarray}\label{keyestint-ex+++}
\sup_{\lambda\neq 0}\biggl (||\partial_\lambda \mathcal{S}_\lambda||_{2\to 2}+||\partial_\lambda \mathcal{S}_\lambda^\ast||_{2\to 2}+||\mathbb D \mathcal{S}_\lambda||_{2\to 2}+||\mathbb D \mathcal{S}_\lambda^\ast||_{2\to 2}\biggr )<\infty
     \end{eqnarray}
     we prove, see Lemma \ref{trace4}, the existence of  boundary layer potential operators
     $$\mp\frac 12+\mathcal{K},\ \pm\frac 12+\tilde{\mathcal{K}},\ \mathbb D \mathcal{S}_\lambda|_{\lambda=0},$$
     relevant to the solution of $(D2)$, $(N2)$ and $(R2)$, respectively.        By the results of Section \ref{sec3}, \eqref{keyestint-ex+++}  holds whenever the key estimates in \eqref{keyestint-} of  Definition \ref{blayer+}, see \eqref{keyestint-intro} above, hold. At this stage we prove that the boundary layer potential operators exist in the sense of weak limits in $L^2(\mathbb R^{n+1},\mathbb C)$ as $\pm\lambda\to 0$.

     In  Section \ref{sec4+} we establish the uniqueness of solutions to $(D2)$, $(N2)$ and $(R2)$.

     In Section \ref{sec4++} we are concerned with the existence of non-tangential limits of layer potentials. In particular, we prove, under assumptions, that the weak limits established in Section \ref{sec4} can be strengthened to strong limits in the non-tangential sense.

     Starting from Section \ref{sec8}, the rest of the paper is  devoted to the proof of Theorem \ref{thper2}, Corollary \ref{thper3} and  Theorem \ref{th2-}-Theorem \ref{th2+}.  The smoothed single layer potentials operators $\mathcal{S}_\lambda^{\mathcal{H}_0,\eta}$ and $\mathcal{S}_\lambda^{\mathcal{H}_1,\eta}$ are introduced in \eqref{sop}. The proof of
     Theorem \ref{thper2} is based on a representation formula for the difference $\partial_\lambda \mathcal{S}_\lambda^{\mathcal{H}_1,\eta}f(x,t)- \partial_\lambda\mathcal{S}_\lambda^{\mathcal{H}_0,\eta}f(x,t)$. Indeed,
    \begin{eqnarray}\label{iest1mod++preint}
\partial_\lambda \mathcal{S}_\lambda^{\mathcal{H}_1,\eta}f(x,t)- \partial_\lambda\mathcal{S}_\lambda^{\mathcal{H}_0,\eta}f(x,t)&=&\mathcal{H}_0^{-1}\div {\bf \varepsilon}\nabla D_{n+1}
\mathcal{S}_\cdot^{\mathcal{H}_1,\eta}f(x,t),\notag\\
\quad\quad\lambda\partial_\lambda^2 \mathcal{S}_\lambda^{\mathcal{H}_1,\eta}f(x,t)- \lambda^2\partial_\lambda\mathcal{S}_\lambda^{\mathcal{H}_0,\eta}f(x,t)&=&\lambda\mathcal{H}_0^{-1}\partial_\lambda\div {\bf \varepsilon}\nabla D_{n+1}
\mathcal{S}_\cdot^{\mathcal{H}_1,\eta}f(x,t),
     \end{eqnarray}
     where $D_{n+1}=\partial_{x_{n+1}}=\partial_\lambda$ and
     \begin{eqnarray}\label{keyestaahmoa}{\bf\varepsilon}(x):=A^1(x)-A^0(x).
      \end{eqnarray}
     ${\bf \varepsilon}$ is a (complex) matrix valued function and throughout the paper we assume that $||{\bf \varepsilon}||_\infty\leq\varepsilon_0$. To complete the proof of Theorem \ref{thper2} the idea is to control, in the appropriate $L^2$-sense, and based on the assumptions stated in Theorem \ref{thper2}, the differences or errors defined in \eqref{iest1mod++preint}. To do this several quite involved and technical estimates have to proved. To highlight one such estimate, it becomes important to control
        \begin{eqnarray}\label{keyestint-ex+han}
\int_{0}^\infty\int_{\mathbb R^{n+1}}|\theta_\lambda{\bf \varepsilon}\nabla\mathcal{S}_{\lambda}^{\mathcal{H}_1,\eta}f|^2\,  \frac{dxdtd\lambda}\lambda,
     \end{eqnarray}
      whenever ${f}\in L^2(\mathbb R^{n+1},\mathbb C)$, and where
     \begin{eqnarray*}
          \theta_\lambda{\bf f}:=\lambda^2\partial_\lambda^2 (\mathcal{S}_{\lambda}^{\mathcal{H}_0}\nabla)\cdot{\bf f},
     \end{eqnarray*}
     whenever ${\bf f}\in L^2(\mathbb R^{n+1},\mathbb C^{n+1})$.  We write ${\bf \varepsilon}=({\bf \varepsilon}_1,...,{\bf \varepsilon}_{n+1})$ where ${\bf \varepsilon}_i$, for $i\in \{1,...,n+1\}$, is a $(n+1)$-dimensional column vector, and we let  $\tilde{\bf \varepsilon}$ be the $(n+1)\times n$ matrix defined to equal the first $n$ columns of ${\bf \varepsilon}$, i.e., $\tilde{\bf \varepsilon}=({\bf \varepsilon}_1,...,{\bf \varepsilon}_{n})$. Then
          \begin{eqnarray}\label{decc}
\quad\theta_\lambda{\bf \varepsilon} \nabla \mathcal{S}_\lambda^{\mathcal{H}_1,\eta}f =\theta_\lambda\tilde{\bf \varepsilon}\nabla_{||}\mathcal{S}_\lambda^{\mathcal{H}_1,\eta}f+\mathcal{R}_\lambda\partial_\lambda \mathcal{S}_\lambda^{\mathcal{H}_1,\eta}f+(\theta_\lambda {\bf \varepsilon}_{n+1})\mathcal{P}_\lambda\partial_\lambda \mathcal{S}_\lambda^{\mathcal{H}_1,\eta}f,
\end{eqnarray}
where
\begin{eqnarray}
\mathcal{R}_\lambda=\theta_\lambda{\bf \varepsilon}_{n+1}-(\theta_\lambda {\bf \varepsilon}_{n+1})\P_\lambda,
\end{eqnarray}
and where $\P_\lambda$ is a standard parabolic approximation of the identity. One important step is then to prove that $|\theta_\lambda {\bf \varepsilon}_{n+1}|^2\, \lambda^{-1}dxdtd\lambda$ defines a Carleson measure on $\mathbb R^{n+2}_+$ and that the approximation to the zero operator $\mathcal{R}_\lambda$ can be controlled. This can then be used to control the contribution to \eqref{keyestint-ex+han} from the last two pieces on the right hand side in \eqref{decc}. An other important step is to handle the contribution from $\theta_\lambda\tilde{\bf \varepsilon}\nabla_{||}\mathcal{S}_\lambda^{\mathcal{H}_1,\eta}f$, and to do this we introduce the resolvent $$\mathcal{E}_\lambda^1:=(I+\lambda^2(\partial_t+(\mathcal{L}_1)_{||}))^{-1},$$
defined and analyzed in \cite{N}. Here
 \begin{eqnarray*}\label{block}(\mathcal{L}_1)_{||}=-\div_{||}(A_{||}^1\nabla_{||}),
 \end{eqnarray*}
 and $\div_{||}$ is the divergence operator in the variables $(\partial_{x_1},...,\partial_{x_n})$ only. $A_{||}^1$ is the $n\times n$-dimensional sub matrix of $A^1$ defined by $\{A^1_{i,j}\}_{i,j=1}^n$. Then
    \begin{eqnarray*}\label{block+}\mathcal{L}_1=(\mathcal{L}_1)_{||}-\sum_{j=1}^{n+1}A_{n+1,j}^1D_{n+1}D_j-\sum_{i=1}^{n}D_iA_{i,n+1}D_{n+1},
    \end{eqnarray*}
    where $D_i=\partial_{x_i}$ for $i\in\{1,...,n+1\}$. Using $\mathcal{E}_\lambda^1$ we write
 \begin{eqnarray}
 \theta_\lambda\tilde{\bf \varepsilon}\nabla_{||}\mathcal{S}_\lambda^{\mathcal{H}_1,\eta}f&=&\theta_\lambda\tilde{\bf \varepsilon}\nabla_{||}(I-\mathcal{E}_\lambda^1)\mathcal{S}_\lambda^{\mathcal{H}_1,\eta}f+ \theta_\lambda\tilde{\bf \varepsilon}\nabla_{||}\mathcal{E}_\lambda^1\mathcal{S}_\lambda^{\mathcal{H}_1,\eta}f\notag\\
 &=&\theta_\lambda \tilde{\bf \varepsilon}\nabla_{||}\mathcal{E}_\lambda^1\lambda^2(\partial_t+(\mathcal{L}_1)_{||})\mathcal{S}_\lambda ^{\mathcal{H}_1,\eta}f+ \theta_\lambda\tilde{\bf \varepsilon}\nabla_{||}\mathcal{E}_\lambda^1\mathcal{S}_\lambda^{\mathcal{H}_1,\eta}f.
\end{eqnarray}
To handle the contribution to \eqref{keyestint-ex+han} from the first term on the second line on the right hand side in the last display we have to make use of the recent  square function estimates involving the resolvent $\mathcal{E}_\lambda^1$ established in \cite{N}. As previously mentioned, the estimates in \cite{N} are the parabolic counterparts of the main and hard estimates in \cite{AHLMcT} established in the context of the solution of the Kato conjecture. Using this brief technical digression as a motivation or guide, the rest of the paper is  organized as follows.

In Section \ref{sec8} we prove, using the results of Section \ref{sec7} and techniques and arguments from \cite{N}, certain square function estimates for composed operators involving $\theta_\lambda$ and the resolvents mentioned above. This section is a technical core of the paper.

In Section \ref{sec9}  we establish a number of preliminary technical estimates needed in the proof of  Theorem \ref{thper2}. These estimates rely on the results of Section \ref{sec7} and Section \ref{sec8}.

In Section \ref{sec10} we give the final proof of Theorem \ref{thper2} and Corollary \ref{thper3} and it is fair to say that, at this stage, the proof become notational in line with the corresponding arguments in \cite{AAAHK}. Indeed,  by expanding the errors in \eqref{iest1mod++preint} in a manner similar to \cite{FJK} and
\cite{AAAHK}, we are then in the proof of Theorem \ref{thper2} confronted with a number of pieces. The most involved piece can  be estimated using the technical estimates established in Section \ref{sec9}. To conclude the proof of Theorem \ref{thper2} we then use analytic an perturbation result for our operators, see Lemma \ref{analpert} below, stating that  there exists a constant $c$, depending at most
     on $n$, $\Lambda$,  such that if $||{\bf \varepsilon}||_\infty\leq\varepsilon_0$, then
     \begin{eqnarray}\label{keyest+aaaedhan}
     ||\mathcal{K}^{\mathcal{H}_0}-\mathcal{K}^{\mathcal{H}_1}||_{2\to 2}+||\tilde {\mathcal{K}}^{\mathcal{H}_0}-\tilde {\mathcal{K}}^{\mathcal{H}_1}||_{2\to 2}&\leq& c\varepsilon_0,\notag\\
     ||\mathbb D\mathcal{S}_\lambda^{\mathcal{H}_0}|_{\lambda=0}-\mathbb D\mathcal{S}_\lambda^{\mathcal{H}_1}|_{\lambda=0}||_{2\to 2}
     &\leq& c\varepsilon_0.
    \end{eqnarray}
As a consequence of all these estimates we are able to extrapolate all the estimates related to the
boundedness, invertibility and goodness of the layer potentials associated to $\mathcal{H}_0$, $\mathcal{H}_0^\ast$,  to the corresponding estimates, assuming
$||{\bf \varepsilon}||_\infty\leq\varepsilon_0$, related to the
boundedness, invertibility and goodness  of the layer potentials associated to $\mathcal{H}_1$, $\mathcal{H}_1^\ast$. We can then complete the proof of Theorem \ref{thper2} using  the method of continuity. Corollary \ref{thper3} basically follows directly from Theorem \ref{thper2}, a few additional estimates/remarks, see Remark \ref{resea},  and from the uniqueness results proved in Section \ref{sec4}.

     In Section \ref{sec5} we prove Theorem \ref{th2-}-Theorem \ref{th2+}, using Theorem \ref{thper2} and the method of continuity. To do this in the case of Theorem \ref{th2}, we first establish Rellich type estimates, assuming that $A$ is real and symmetric, related to invertibility. In addition we here also use the main results established in \cite{CNS}, see Theorem 1.5 and Theorem 1.8 in \cite{CNS} and Theorem \ref{th2ag} stated below. The proof of Theorem 1.8 in \cite{CNS} is based on a local parabolic Tb-theorem for square functions, see Theorem 8.4 in \cite{CNS}, and on a version of the main result in \cite{FS} for equation of the form \eqref{eq1}, assuming in addition that $A$ is real and symmetric, see Theorem 8.7 in \cite{CNS}. Both Theorem 8.4 and Theorem 8.7 in \cite{CNS} are of independent interest.

\section{Preliminaries}\label{sec2}

    Let
$x=(x_1,..,x_{n})$, $X=(x,x_{n+1})$, $(x,t)=(x_1,..,x_{n},t)$, $(X,t)=(x_1,..,x_{n}, x_{n+1},t)$. Given $(X,t)=(x,x_{n+1}, t)$, $r>0$, we let $Q_r(x,t)$ and
 $\tilde Q_r(X,t)$ denote, respectively, the standard parabolic cubes in $\mathbb R^{n+1}$ and $\mathbb R^{n+2}$,  centered at $(x,t)$ and $(X,t)$, and of size $r$. By $Q$, $\tilde Q$ we denote any such parabolic cubes and we let $l(Q)$, $l(\tilde Q)$, $(x_Q,t_Q)$, $(X_{\tilde Q},t_{\tilde Q})$ denote their sizes and centers, respectively. Given $\gamma>0$, we let $\gamma Q$, $\gamma \tilde Q$ be the cubes which have the same centers as $Q$ and $\tilde Q$, respectively, but with sizes defined by $\gamma l(Q)$ and $\gamma l(\tilde Q)$.  We let $L^2(\mathbb R^{n+1},\mathbb C)$ denote the standard Hilbert space of functions $f:\mathbb R^{n+1}\to \mathbb C$ equipped with the inner product $(f,g):=\int f\bar g\, dxdt$ and we let $||f||_2:=(f,f)^{1/2}$ denote the norm of $f$. Given $p$, $1\leq p\leq \infty$, we let $L^p(\mathbb R^{n+1},\mathbb C)$ denote the standard Banach space of functions $f:\mathbb R^{n+1}\to \mathbb C$ which are $p$-integrable and we let $||f||_p$ denote the norm of $f$.  Given a set $E\subset \mathbb R^{n+1}$ we let $|E|$ denote its Lebesgue measure and by
$1_E$ we denote the indicator function for $E$. By $||\cdot||_{L^p(E)}$ we mean $||\cdot 1_E||_p$.   A function $f$ belongs to  $L^{p,\infty}(\mathbb R^{n+1},\mathbb C)$ if there exists a constant $c$ such that
$$l_f(\tau):=|\{(x,t)\in\mathbb R^{n+1}:\ |f(x,t)|\geq \tau\}|\leq\frac {c^p}{\tau^p}$$
whenever $\tau>0$. The best constant $c$ for which this inequality is valid is the $L^{p,\infty}(\mathbb R^{n+1},\mathbb C)$-norm of $f$ and
$$||f||_{{p,\infty}}:=||f||_{L^{p,\infty}}=||f||_{L^{p,\infty}(\mathbb R^{n+1},\mathbb C)}=\sup_{\tau>0}\tau(l_f(\tau))^{1/p}.$$
Given functions $f$, $\tilde f$, defined on $\mathbb R^{n+1}$, $\mathbb R^{n+2}$, respectively, we let
$$\mean{E}f\, dxdt,\ \mean{\tilde E}\tilde f\, dXdt$$ denote the averages of $f$, $\tilde f$ on the sets $E\subset\mathbb R^{n+1}$, $\tilde E\subset\mathbb R^{n+2}$, respectively. Furthermore, as mentioned and based on \eqref{eq4}, we will frequently also use a different convention concerning the labeling of the coordinates: we let $\lambda=x_{n+1}$ and when using the symbol
    $\lambda$, the point $(X,t)=(x,x_{n+1},t)$ will be written as $ (x,t, \lambda)=(x_1,..,x_{n},t,\lambda)$.  We write $\nabla =(\nabla_{||},\partial_\lambda)$ where $\nabla_{||}=(\partial_{x_1},...,\partial_{x_n})$. we let $$\mathbb R^{n+2}_\pm=\{(x,t,\lambda)\in \mathbb R^{n}\times\mathbb R\times\mathbb R:\ \pm\lambda>0\},$$
and
$$|||\cdot|||_\pm=\biggl (\int_{\mathbb R^{n+2}_\pm}|\cdot|^2\, \frac{dxdtd\lambda}{|\lambda|}\biggr )^{1/2},\ |||\cdot|||=\biggl (\int_{\mathbb R^{n+2}}|\cdot|^2\, \frac{dxdtd\lambda}{|\lambda|}\biggr )^{1/2}.$$

\subsection{Differential operators} Given $(x,t)\in\mathbb R^{n}\times\mathbb R$ we let $\|(x,t)\|$ be the unique positive
solution $\rho$ to the equation
\begin{eqnarray}\frac{t^2}{\rho^4}+\sum\limits^{n}_{i=1}\frac{x^2_i}{\rho^2}=1.\end{eqnarray}
Then $\|(\gamma x,\gamma^2t)\|=\gamma\|(x,t)\|$, $\gamma>0$, and we call $\|(x,t)\|$
the parabolic norm of $(x,t)$. Given $\beta\geq 0$, we define the operator $\mathbb D_\beta$
through the relation
\begin{eqnarray}\widehat{\mathbb D_\beta f}(\xi,\tau):=\|(\xi,\tau)\|^\beta\hat{f}(\xi,\tau),
\end{eqnarray}
where $\widehat{\mathbb D_\beta f}$ and $\hat f$ denote the Fourier transform of $\mathbb D_\beta f$ and $f$, respectively. We define the parabolic first order differential operator $\mathbb D$ through $\mathbb D=\mathbb D_1$. Similarly, given $\beta\geq 0$ we let  $\mathbb I_\beta$ denote the  operator defined on the Fourier transform side
through the relation $$\widehat{\mathbb I_\beta f}(\xi,\tau)=||(\xi,\tau)||^{-\beta}\hat f(\xi,\tau).$$
Note that $\mathbb I_\beta \mathbb D=\mathbb D\mathbb I_\beta=\mathbb D_{1-\beta}$ whenever $\beta\in [0,1]$. Given $\beta\in (0,1)$ we also define the
fractional (in time) differentiation operators $D_{\beta}^t$ through the relation
\begin{eqnarray}\widehat {D_{\beta}^t f}(\xi,\tau):=|\tau|^{\beta}\hat{f}(\xi,\tau).\end{eqnarray}
We let $H_t$ denote a Hilbert transform in the $t$-variable defined through the multiplier $i\mbox{sgn}({\tau})$. We make the construction so that $$\partial_t=D_{1/2}^tH_tD_{1/2}^t.$$
In the following we will also use  the parabolic half-order time derivative
\begin{eqnarray}\widehat{\mathbb D_{n+1}f}(\xi,\tau):=\frac{\tau}{\|(\xi,\tau)\|}\hat
f(\xi,\tau).\end{eqnarray}
By applying
Plancherel's theorem  we have
\begin{eqnarray}\label{uau}
\|\mathbb D_{n+1}f\|_2\leq c\|D^t_{1/2}f\|_2,\end{eqnarray}
with a constant depending only on $n$.

\subsection{Function spaces}\label{fspace}
Given $\beta\in [-1,1]$ we let
$\mathbb H^\beta:=\mathbb H^\beta(\mathbb R^{n+1},\mathbb C)$ be the closure of $C_0^\infty(\mathbb R^{n+1},\mathbb C)$ with respect to
\begin{eqnarray}\label{fspace}\|f\|_{\mathbb H^\beta}:=\|\|(\xi,\tau)\|^\beta \hat f\|_2.\end{eqnarray}
We let $\mathbb H=\mathbb H^1$. By applying
Plancherel's theorem  we have
\begin{eqnarray}\label{uau+}
\|f\|_{\mathbb H}\approx\|\nabla_{||}  f\|_2+\|H_tD^t_{1/2}f\|_2,\end{eqnarray}
with constants depending only on $n$. Furthermore, we let $\tilde{\mathbb H}:=\tilde{\mathbb H}(\mathbb R^{n+2},\mathbb C)$ be the closure of $C_0^\infty(\mathbb R^{n+2},\mathbb C)$
with respect to
\begin{eqnarray}\label{ez}
\|F\|_{\tilde{\mathbb H}}:=\biggl (\int_{-\infty}^\infty\int_{\mathbb R^{n+1}}\biggl(|\partial_\lambda F|^2+|\mathbb DF|^2\biggr )\, dxdtd\lambda\biggr )^{1/2}.\end{eqnarray}
Similarly, we let $\tilde{\mathbb H}_+:=\tilde{\mathbb H}_+(\mathbb R^{n+2}_+,\mathbb C)$ be the closure of $C^\infty(\mathbb R^{n+2}_+,\mathbb C)$ with respect to the expression in the last display but with integration over the interval $(-\infty,\infty)$ replaced by integration over the interval $(0,\infty)$ only. Given $F\in \tilde{\mathbb H}_+$ we let
\begin{eqnarray}\label{exop}
\tilde E(F)(x,t,\lambda)&=&F(x,t,\lambda),\mbox{ if }\lambda>0,\notag\\
\tilde E(F)(x,t,\lambda)&=&-3F(x,t,-\lambda)+4F(x,t,-\lambda/2),\mbox{ if } \lambda<0.
\end{eqnarray}
It is easily seen that $\tilde E(F)\in \tilde{\mathbb H}$ and we can conclude that there is a bijection between the spaces
$\tilde{\mathbb H}$ and $\tilde{\mathbb H}_+$. Furthermore, given $F\in C_0^\infty(\mathbb R^{n+2},\mathbb C)$ we see, by a straightforward calculation, that
\begin{eqnarray*}\label{trcc1}
||\mathbb D_{1/2}F||_2^2&=&-\int_{0}^\infty\int_{\mathbb R^{n+1}}\partial_\lambda|\mathbb D_{1/2}F|^2\, dxdtd\lambda\notag\\
&\leq& c\biggl (\int_{0}^\infty\int_{\mathbb R^{n+1}}|\mathbb D F|^2\, dxdtd\lambda\biggr )^{1/2}\biggl (\int_{0}^\infty\int_{\mathbb R^{n+1}}|\partial_\lambda F|^2\, dxdtd\lambda\biggr )^{1/2}.\end{eqnarray*}
Hence
\begin{eqnarray}\label{trcc1}
||\mathbb D_{1/2}F||_2\leq c\|F\|_{\tilde{\mathbb H}_+},\end{eqnarray}
whenever $F\in C_0^\infty(\mathbb R^{n+2},\mathbb C)$. Similarly, it is easy to see that there exists a linear extension operator $E:\mathbb H^{1/2}\to
\tilde{\mathbb H}$ such that
\begin{eqnarray}\label{trcc1+}\|E(f)\|_{\tilde{\mathbb H}}\leq c||f||_{\mathbb H^{1/2}},
\end{eqnarray}
whenever $f\in \mathbb H^{1/2}$. In particular, we can conclude that
\begin{eqnarray}\label{trcc2}
\mbox{space of traces of $\tilde{\mathbb H}_+$ onto $\mathbb R^{n+1}$ equals $\mathbb H^{1/2}$}.\end{eqnarray}
The dual of $\mathbb H^{1/2}$ is $\mathbb H^{-1/2}$.

\subsection{Definition of weak solutions} Let $\Omega\subset\{X=(x,x_{n+1})\in\mathbb R^n\times\mathbb R\}$ be a domain and let, given $-\infty<t_1< t_2<\infty$,
$\Omega_{t_1,t_2}=\Omega\times (t_1,t_2)$. We let $W^{1,2}(\Omega,\mathbb C)$ denote the standard Sobolev space of complex valued functions $v$, defined on $\Omega$, such that $v$ and $\nabla v$ are in $L^{2}(\Omega,\mathbb C)$. $L^2(t_1,t_2,W^{1,2}(\Omega,\mathbb C))$ is the space of  functions $u:\Omega_{t_1,t_2}\to \mathbb C$ such that
$$||u||_{L^2(t_1,t_2,W^{1,2}(\Omega,\mathbb C))}:=\biggl (\int_{t_1}^{t_2}||u(\cdot,t)||_{W^{1,2}(\Omega,\mathbb C)}^2\, dt\biggr )^{1/2}<\infty.$$
We say that $u\in L^2(t_1,t_2,W^{1,2}(\Omega,\mathbb C))$ is a weak solution to the equation
\begin{eqnarray}\label{ggag}
\mathcal{H}u=(\partial_t+\mathcal{L})u=0,\end{eqnarray}
in $\Omega_{t_1,t_2}$, if \begin{equation}\label{weak}
\int_{\mathbb R^{n+2}} \bigl( A\nabla u\cdot\nabla\bar \phi-u \partial_t\bar\phi\bigr )\, dXdt=0,
\end{equation}
whenever $\phi \in C_0^{\infty} (\Omega_{t_1,t_2},\mathbb C)$. Similarly, we say that $u$ is a solution to \eqref{ggag} in $\mathbb R^{n+2}$,  $\mathbb R^{n+2}_+$, if $u\phi\in L^2(-\infty,\infty,W^{1,2}(\mathbb R^n\times\mathbb R,\mathbb C))$,
$u\phi\in L^2(-\infty,\infty,W^{1,2}(\mathbb R^n\times\mathbb R_+,\mathbb C))$ whenever $\phi \in C_0^{\infty} (\mathbb R^{n+2},\mathbb C)$, $\phi \in C_0^{\infty} (\mathbb R^{n+2}_+,\mathbb C)$, and if
\eqref{weak} holds whenever $\phi \in C_0^{\infty} (\mathbb R^{n+2},\mathbb C)$, $\phi \in C_0^{\infty} (\mathbb R^{n+2}_+,\mathbb C)$, respectively. Assuming  that $\mathcal{H}$  satisfies \eqref{eq3}-\eqref{eq4} as well as the De Giorgi-Moser-Nash estimates stated in \eqref{eq14+}-\eqref{eq14++} below, it follows that any weak solution is smooth as a function of $t$ and that in this case
     \begin{equation}\label{weak+}
\int_{\mathbb R^{n+2}} \bigl( A\nabla u\cdot\nabla\bar \phi+\partial_tu \bar\phi\bigr )\, dXdt=0,
\end{equation}
whenever $\phi \in C_0^{\infty} (\Omega_{t_1,t_2},\mathbb C)$. Furthermore, if $u$ is globally defined in $\mathbb R^{n+2}$, and if
$D_{1/2}^tu\overline{H_tD_{1/2}^t\phi}$ is integrable in $\mathbb R^{n+2}$, whenever $\phi\in C_0^\infty(\mathbb R^{n+2},\mathbb C)$, then
\begin{eqnarray}\label{eq4-ed}
\tilde B(u,\phi)=0\mbox{ whenever } \phi\in C_0^\infty(\mathbb R^{n+2},\mathbb C),
    \end{eqnarray}
where the bilinear form $\tilde B(\cdot,\cdot)$ is defined on $ \tilde{\mathbb H}\times  \tilde{\mathbb H}$ as
       \begin{eqnarray*}
  \tilde B(u,\phi)= \int_{-\infty}^\infty\int_{\mathbb R^{n+1}}
      \bigl(A\nabla u\cdot\nabla\bar \phi-D_{1/2}^tu\overline{H_tD_{1/2}^t\phi}\bigr)\, dxdtd\lambda.
      \end{eqnarray*}
      Similar statements hold with $\tilde{\mathbb H}$, $\mathbb R^{n+2}$, $\tilde B$, replaced by  $\tilde{\mathbb H}_+$, $\mathbb R^{n+2}_+$, $\tilde B_+$, where $\tilde B_+$ is defined as in the last display but with integration in $\lambda$ over $\mathbb R_+$ only.  In particular, whenever $u$ is a weak solution to \eqref{ggag} in $\mathbb R^{n+2}$ or $\mathbb R^{n+2}_+$,  such that $u\in \tilde{\mathbb H}$ or $u\in \tilde{\mathbb H}_+$, then
      \eqref{eq4-ed} holds or \eqref{eq4-ed} holds with $\mathbb R^{n+2}$ replaced by $\mathbb R^{n+2}_+$. From now on, whenever we write  $\mathcal{H}u=0$ in a bounded domain $\Omega_{t_1,t_2}$, then we mean that
      \eqref{weak} holds whenever $\phi \in C_0^{\infty} (\Omega_{t_1,t_2},\mathbb C)$, and when we write that $\mathcal{H}u=0$ in $\mathbb R^{n+2}$, $\mathbb R^{n+2}_+$, then we mean that \eqref{weak} holds whenever $\phi \in C_0^{\infty} (\mathbb R^{n+2},\mathbb C)$, $\phi \in C_0^{\infty} (\mathbb R^{n+2}_+,\mathbb C)$.

\subsection{Existence of weak solutions (in $\mathbb R^{n+2}$)}
 Consider the space $\tilde{\mathbb H}:=\tilde{\mathbb H}(\mathbb R^{n+2},\mathbb C)$ and let
 $\tilde{\mathbb H}^\ast:=\tilde{\mathbb H}^\ast(\mathbb R^{n+2},\mathbb C)$ denotes its dual space. Given $F\in \tilde{\mathbb H}^\ast$, one can arguing as a in the proof of Lemma \ref{parahodge} below and conclude that there exists a weak solution $u\in \tilde{\mathbb H}$ to the equation $\mathcal{H}u=F$, in $\mathbb R^{n+2}$, in the sense that
    \begin{eqnarray}\label{parris}
  \tilde B(u,\phi)= \langle F,\phi\rangle
      \end{eqnarray}
      whenever $\phi\in \tilde{\mathbb H}$ and where $ \langle \cdot,\cdot\rangle$ is the duality pairing on $\tilde{\mathbb H}$. Furthermore,
$$||u||_{\mathbb H}\leq c||F||_{\tilde{\mathbb H}^\ast},$$
for some constant $c$ depending only on $n$ and $\Lambda$. The solution is unique up to a constant. Throughout the paper we let $\mathcal{H}^{-1}:
\tilde{\mathbb H}^\ast\to \tilde{\mathbb H} $ denote the operator which maps $F$ to $u$. Furthermore, arguing as in the proof of Lemma \ref{le8-} stated below, one can prove the following lemma.
\begin{lemma}\label{gara}  Consider the operator $\mathcal{H}=\partial_t-\div (A\nabla \cdot)$ and assume that $A$ satisfies \eqref{eq3}, \eqref{eq4}. Let $\Theta$ denote any of  the operators
       \begin{eqnarray}\label{aa1}
       &&\mbox{$\nabla\mathcal{H}^{-1}$, $D_{1/2}^t\mathcal{H}^{-1}$},
       \end{eqnarray}
       or
   \begin{eqnarray}\label{aa3}
       &&\mbox{ $\nabla\mathcal{H}^{-1} D_{1/2}^t $, $D_{1/2}^t\mathcal{H}^{-1} D_{1/2}^t$},
       \end{eqnarray}
       and let $\tilde \Theta$ denote any of  the operators
         \begin{eqnarray}\label{aa2}
       &&\mbox{$\nabla\mathcal{H}^{-1}\div $, $D_{1/2}^t\mathcal{H}^{-1}\div$}.
       \end{eqnarray}
       Then there exist $c$, depending only on $n, \Lambda$, such that
           \begin{eqnarray}
       (i)&&\int_{\mathbb R^{n+2}}\ |\Theta_\lambda f(X,t)|^2\, dXdt\leq c\int_{\mathbb R^{n+2}}\ |f(X,t)|^2\, dXdt,\notag\\
       (ii)&&\int_{\mathbb R^{n+2}}\ |\tilde \Theta_\lambda  {\bf f}(X,t)|^2\, dXdt\leq
       c\int_{\mathbb R^{n+2}}\ |{\bf f}(X,t)|^2\, dxdt,
       \end{eqnarray}
      whenever $f\in L^2(\mathbb R^{n+2},\mathbb C)$, ${\bf f}\in L^2(\mathbb R^{n+2},\mathbb C^{n+1})$. Furthermore, the corresponding statements hold with
      $\mathcal{H}^{-1}$ replaced by $(\mathcal{H}^\ast)^{-1}$.\end{lemma}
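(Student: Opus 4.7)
The plan is to derive all six bounds as consequences of the energy estimate
\[
\|\mathcal{H}^{-1}F\|_{\tilde{\mathbb{H}}} \le c\|F\|_{\tilde{\mathbb{H}}^\ast}
\]
asserted just above the statement of the lemma, combined with a case-by-case check that the relevant source belongs to $\tilde{\mathbb{H}}^\ast$ with norm controlled by the $L^2$-norm of the datum. Once this estimate is granted, the definition \eqref{ez} of $\tilde{\mathbb{H}}$ together with the slicewise application of \eqref{uau+} gives, for $u := \mathcal{H}^{-1}F$,
\[
\|\nabla u\|_2 + \|D_{1/2}^t u\|_2 \le c\|u\|_{\tilde{\mathbb{H}}} \le c'\|F\|_{\tilde{\mathbb{H}}^\ast},
\]
so both (i) and (ii) follow at once provided one verifies the $\tilde{\mathbb{H}}^\ast$-boundedness of the source corresponding to each datum. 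I would follow the proof of Lemma \ref{le8-} as indicated in the statement.

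The three source bounds then go as follows. For $\tilde{\Theta}$ in \eqref{aa2}, $F = \div\mathbf{f}$ with $\mathbf{f}\in L^2(\mathbb{R}^{n+2},\mathbb{C}^{n+1})$; integration by parts against $\phi\in\tilde{\mathbb{H}}$ gives $|\langle F,\phi\rangle| \le \|\mathbf{f}\|_2\|\nabla\phi\|_2 \le \|\mathbf{f}\|_2\|\phi\|_{\tilde{\mathbb{H}}}$, which is (ii). For $\Theta$ in \eqref{aa3}, $F = D_{1/2}^t f$ with $f\in L^2$; since $D_{1/2}^t$ is a real (hence self-adjoint) Fourier multiplier, pairing with $\phi$ and invoking \eqref{uau+} slicewise gives
\[
|\langle F,\phi\rangle| = \Bigl|\int f\,\overline{D_{1/2}^t\phi}\,dXdt\Bigr| \le \|f\|_2\|D_{1/2}^t\phi\|_2 \le c\|f\|_2\|\phi\|_{\tilde{\mathbb{H}}},
\]
which is (i) in this case. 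For $\Theta$ in \eqref{aa1}, the datum $f$ plays the role of $F$, interpreted through the weak formulation \eqref{parris} used throughout the paper, and the same energy argument applies.

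The statements with $\mathcal{H}^{-1}$ replaced by $(\mathcal{H}^\ast)^{-1}$ follow verbatim with $A$ replaced by $A^\ast$ throughout. The main structural point is that the parabolic first-order operator $\mathbb{D}$ built into the $\tilde{\mathbb{H}}$-norm controls $\nabla_{||}$ and $D_{1/2}^t$ on the same footing via \eqref{uau+}, so a single Lax-Milgram inequality yields both types of derivative estimate simultaneously. The most delicate piece is case \eqref{aa1}, where the identification of $f\in L^2$ with an element of $\tilde{\mathbb{H}}^\ast$ is not automatic and must be made using the paper's specific weak-solution convention for $\mathcal{H}^{-1}$; I would expect this to proceed by a density or limiting argument in the spirit of Lemma \ref{le8-}, reducing ultimately to the unambiguous cases \eqref{aa2} and \eqref{aa3} already treated above.
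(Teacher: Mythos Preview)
Your treatment of the cases \eqref{aa2} and \eqref{aa3} is correct and is exactly the argument the paper has in mind: the energy inequality $\|\mathcal{H}^{-1}F\|_{\tilde{\mathbb H}}\le c\|F\|_{\tilde{\mathbb H}^\ast}$, together with the observation that $\div{\bf f}$ and $D_{1/2}^t f$ belong to $\tilde{\mathbb H}^\ast$ with norm $\le c\|{\bf f}\|_2$, $c\|f\|_2$ respectively, yields both conclusions at once. This is precisely the mechanism behind Lemma~\ref{le8-}, to which the paper defers.

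Your hesitation about case \eqref{aa1} is entirely justified, and in fact the difficulty is not merely ``delicate'' but fatal: the estimate $\|\nabla\mathcal{H}^{-1}f\|_2\le c\|f\|_2$ is false already for the heat operator. With $A=I$ the symbol of $\nabla\mathcal{H}^{-1}$ is $i\Xi/(i\tau+|\Xi|^2)$, which is unbounded near the origin of frequency space, so no bounded $L^2$ extension exists. The analogy with Lemma~\ref{le8-} breaks down here because in that lemma the resolvent $(I+\lambda^2\mathcal{H}_{||})^{-1}$ carries the identity term, which is what places $f\in L^2$ legitimately on the right-hand side and forces the $\lambda$-prefactors in the conclusion; for the bare inverse $\mathcal{H}^{-1}$ there is no such mechanism, and a generic $f\in L^2(\mathbb R^{n+2})$ does not define an element of $\tilde{\mathbb H}^\ast$. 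No density or limiting argument can repair this.

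It is worth noting, however, that every application of Lemma~\ref{gara} in the paper (in the proofs of Lemma~\ref{smooth1}, Lemma~\ref{lemsl1}, and Lemma~\ref{finall}) invokes only the operators in \eqref{aa2} or \eqref{aa3}, never those in \eqref{aa1} on their own. So the portion you have actually proved is the portion that is both true and needed.
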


\begin{remark} Naturally, weak solutions to the problem $\mathcal{H}u=0$ in $\mathbb R^{n+2}_+$ can, as above, be constructed by first extending the boundary data on $\mathbb R^{n+1}$ to $\mathbb R^{n+2}_+$ by using the heat operator and then by subsequently solving an inhomogeneous problem similar to \eqref{parris} but with $\tilde B$, $\mathbb R^{n+2}$, replaced by $\tilde B_+$, $\mathbb R^{n+2}_+$.
\end{remark}

      \subsection{De Giorgi-Moser-Nash estimates}  We say  that solutions to
     $\mathcal{H}u=0$ satisfy De Giorgi--Moser-Nash estimates if there exist, for $p$, $1\leq p<\infty$, fixed, constants $c$ and $\alpha\in (0,1)$ such that the following is true. Let $\tilde Q\subset\mathbb R^{n+2}$ be a parabolic cube and assume that
$\mathcal{H}u=0$ in $2\tilde Q$. Then
                       \begin{eqnarray}\label{eq14+}
                \sup_{ \tilde Q}|u|\leq c\biggl (\mean{2\tilde Q}|u|^p\biggr )^{1/p},
    \end{eqnarray}
         and
        \begin{eqnarray}\label{eq14++}
                &&|u(X,t)-u(\tilde X,\tilde t)|\leq c\biggl (\frac {||(X-\tilde X,t-\tilde t)||}r\biggr )^{\alpha}
                \biggl (\mean{2\tilde Q}|u|^p\biggr )^{1/p},
    \end{eqnarray}
    whenever $(X,t)$, $(\tilde X,\tilde t)\in \tilde Q_{r}$. Given $p$, the constants $c$ and $\alpha$ will be referred to as the
    De Giorgi-Moser-Nash constants.  If $A$ is a (complex) constant matrix, or if $A$ real then solutions to
     $\mathcal{H}u=0$ satisfy De Giorgi--Moser-Nash estimates. The following result is due to Auscher \cite{A}, see also \cite{AT}.
     \begin{lemma}\label{auscher} Assume that $\mathcal{H}_0=\partial_t-\div A^0\nabla$, $\mathcal{H}_1=\partial_t-\div A^1\nabla$ satisfy \eqref{eq3}-\eqref{eq4}. Assume that solutions to $\mathcal{H}_0u=0$ satisfy De Giorgi--Moser-Nash estimates for all $p\in[1,\infty)$. Then there exists a constant $\varepsilon_0$, depending at most
     on $n$, $\Lambda$, and the De Giorgi-Moser-Nash constants for $\mathcal{H}_0$, such that if
    $$||A^1-A^0||_\infty\leq\varepsilon_0,$$
    then solutions to
     $\mathcal{H}_1u=0$ satisfy De Giorgi--Moser-Nash estimates for all $p\in[1,\infty)$.  Furthermore, the same statements hold with $\mathcal{H}_1$ replaced by $\mathcal{H}_1^\ast$.
     \end{lemma}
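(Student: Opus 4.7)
The strategy is to reduce the stability of the De Giorgi--Moser--Nash estimates to the stability of pointwise Gaussian upper bounds on the fundamental solution under small $L^\infty$ perturbations, which is Auscher's perturbation theorem.

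First, I would invoke the by now classical equivalence, valid for divergence form parabolic operators with complex $L^\infty$ coefficients satisfying \eqref{eq3}--\eqref{eq4}, between the De Giorgi--Moser--Nash estimates \eqref{eq14+}--\eqref{eq14++} on the one hand, and pointwise Gaussian upper bounds of the form
\begin{equation*}
|\Gamma^{\mathcal{H}}(X,t;Y,s)| \leq \frac{C}{(t-s)^{(n+1)/2}} \exp\left(-c\frac{|X-Y|^2}{t-s}\right), \qquad t > s,
\end{equation*}
together with analogous gradient bounds on $\nabla_X \Gamma^{\mathcal{H}}$ and the corresponding statements for the adjoint kernel, on the other hand. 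This equivalence is the one established by Auscher in \cite{A}, \cite{AT}. Under our hypothesis these bounds hold for $\Gamma^{\mathcal{H}_0}$, so the lemma reduces to establishing them for $\Gamma^{\mathcal{H}_1}$.

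Second, I would compare the two fundamental solutions by Duhamel's principle, setting $\varepsilon := A^1 - A^0$,
\begin{equation*}
\Gamma^{\mathcal{H}_1} = \Gamma^{\mathcal{H}_0} - \Gamma^{\mathcal{H}_0} \star (\div \, \varepsilon \nabla \Gamma^{\mathcal{H}_1}),
\end{equation*}
where $\star$ denotes convolution in the intermediate space-time variable, and iterate this identity to obtain a Dyson series
\begin{equation*}
\Gamma^{\mathcal{H}_1} = \sum_{k=0}^\infty (-1)^k \bigl(\Gamma^{\mathcal{H}_0} \star \div \, \varepsilon \nabla \bigr)^k \Gamma^{\mathcal{H}_0}.
\end{equation*}
The Gaussian bounds on $\Gamma^{\mathcal{H}_0}$ and $\nabla \Gamma^{\mathcal{H}_0}$, combined with the semigroup type inequality $e^{-c|X-Z|^2/(t-\tau)} e^{-c|Z-Y|^2/(\tau-s)} \leq e^{-c|X-Y|^2/(t-s)}$ for $s < \tau < t$, and a standard convolution estimate in $\tau$, allow one to bound the $k$-th term by $C (C\|\varepsilon\|_\infty)^k$ times a Gaussian kernel. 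Choosing $\varepsilon_0$ small enough makes the series converge absolutely and yields the desired Gaussian bound for $\Gamma^{\mathcal{H}_1}$ with constants depending only on $n$, $\Lambda$, and the De Giorgi--Moser--Nash constants of $\mathcal{H}_0$; the corresponding gradient bound follows by a further Caccioppoli argument on parabolic cubes of scale $\sqrt{t-s}$. One then reads off \eqref{eq14+}--\eqref{eq14++} for $\mathcal{H}_1$ from the Gaussian bound.

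The main obstacle is the handling of the singular gradient factor $\nabla \Gamma^{\mathcal{H}_0}$ in the Dyson expansion, since for complex coefficients one cannot rely on the maximum principle or Nash's entropy method and the gradient Gaussian estimate must itself be extracted from the kernel bound via a Caccioppoli inequality. Once the Gaussian bound for $\Gamma^{\mathcal{H}_1}$ is established, the same argument applied to $\Gamma^{\mathcal{H}_1^\ast}(X,t;Y,s) = \overline{\Gamma^{\mathcal{H}_1}(Y,-s;X,-t)}$ gives the conclusion for $\mathcal{H}_1^\ast$, and the validity of the De Giorgi--Moser--Nash estimates for all $p \in [1,\infty)$ is then automatic from the local $L^\infty$ bound and Hölder continuity.
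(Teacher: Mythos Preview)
The paper does not supply a proof of this lemma at all; it simply attributes the result to Auscher \cite{A} (see also \cite{AT}) and moves on. Your sketch is essentially the strategy of those references: pass from the De Giorgi--Moser--Nash estimates to Gaussian upper bounds on the heat kernel, perturb via a Duhamel/Dyson expansion, and then read the regularity back off from the bounds on $\Gamma^{\mathcal{H}_1}$. So at the level of ``does the proposal match the paper's proof,'' there is nothing to compare---you have reconstructed the argument the paper outsources.

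One technical point worth flagging: in your first step you state the equivalence as being between DGMN estimates and Gaussian bounds \emph{together with pointwise gradient bounds} on $\nabla_X\Gamma$. For complex coefficients a pointwise Gaussian bound on $\nabla\Gamma$ is not generally available; what Auscher's equivalence actually gives is Gaussian upper bounds plus H\"older continuity of the kernel (equivalently, $L^2$ off-diagonal/Caccioppoli-type gradient estimates, not pointwise ones). You correctly identify this as ``the main obstacle'' later, but the way the equivalence is phrased at the outset overstates what is known. In practice Auscher's argument in \cite{A} bypasses the Dyson series altogether by exploiting analyticity of the semigroup $z\mapsto e^{-t\mathcal{L}_z}$ in the perturbation parameter and the Cauchy formula, which avoids having to sum gradient convolutions term by term. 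Either route works, but the analyticity argument is cleaner and is closer to what \cite{A} actually does.
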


         \begin{remark}\label{auscher+} Based on Lemma \ref{auscher} we can conclude that if $A^0$ is either a (complex) constant matrix or a real and symmetric matrix, and if $A^1$ is as in Lemma \ref{auscher},   then solutions to
     $\mathcal{H}_1u=0$ satisfy De Giorgi--Moser-Nash estimates for all $p\in[1,\infty)$.
     \end{remark}

  \subsection{Energy estimates}

     \begin{lemma}\label{le1--} Assume that $\mathcal{H}$ satisfies \eqref{eq3}-\eqref{eq4}. Let $\tilde Q\subset\mathbb R^{n+2}$ be a parabolic cube and assume that
$\mathcal{H}u=0$ in $2\tilde Q$.  Then there exists a constant
     $c=c(n,\Lambda)$, $1\leq c<\infty$, such that
     \begin{eqnarray*}
     \mean{\tilde Q}|\nabla u(X,t)|^2\, dXdt\leq \frac c{l(\tilde Q)^2}\mean{2\tilde Q}|u(X,t)|^2\, dXdt.
\end{eqnarray*}
\end{lemma}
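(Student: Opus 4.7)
The plan is to prove this Caccioppoli-type estimate by testing the weak formulation against $\phi = \eta^2 u$ where $\eta$ is a suitable parabolic cutoff, and combining the ellipticity lower bound with a cancellation identity for the time term.

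First, I would pick a cutoff $\eta \in C_0^\infty(\tfrac{3}{2}\tilde Q)$ with $\eta \equiv 1$ on $\tilde Q$, $0\le \eta \le 1$, and with $|\nabla \eta|\le c/l(\tilde Q)$ and $|\partial_t \eta|\le c/l(\tilde Q)^2$. Since $u\phi \in L^2(W^{1,2})$ for admissible $\phi$, the function $\eta^2 u$ is a valid test function and the weak formulation gives
\begin{equation*}
\int_{\mathbb R^{n+2}}\!\bigl(A\nabla u \cdot \overline{\nabla(\eta^2 u)}-u\,\overline{\partial_t(\eta^2 u)}\bigr)\,dXdt=0.
\end{equation*}
Expanding $\nabla(\eta^2 u)=\eta^2 \nabla u + 2\eta(\nabla \eta)u$ and $\partial_t(\eta^2 u)=\eta^2 \partial_t u + 2\eta(\partial_t\eta)u$, and taking real parts, one obtains
\begin{equation*}
\mathrm{Re}\!\int \eta^2 A\nabla u\cdot \overline{\nabla u}\,dXdt = -2\,\mathrm{Re}\!\int \eta A\nabla u\cdot (\overline{\nabla \eta})\bar u\,dXdt+\mathrm{Re}\!\int u\,\overline{\partial_t(\eta^2 u)}\,dXdt.
\end{equation*}

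Next I would treat the time term using the identity $\mathrm{Re}(u\,\overline{\partial_t u})=\tfrac12\partial_t|u|^2$, which after integration by parts in $t$ yields
\begin{equation*}
\mathrm{Re}\!\int u\,\overline{\partial_t(\eta^2 u)}\,dXdt=\mathrm{Re}\!\int \bigl(\eta^2 u\,\overline{\partial_t u}+2\eta(\partial_t\eta)|u|^2\bigr)\,dXdt=\int \eta(\partial_t\eta)|u|^2\,dXdt,
\end{equation*}
where the last equality uses that $\int \eta^2 \partial_t|u|^2 = -\int 2\eta(\partial_t\eta)|u|^2$. (If one is uneasy about differentiating $|u|^2$ in $t$ for a merely weak solution, the standard remedy is to work with Steklov averages $u_h(\cdot,t)=h^{-1}\int_t^{t+h}u$, establish the inequality for $u_h$, and let $h\to 0$.) Using the ellipticity lower bound $\Lambda^{-1}|\nabla u|^2\le \mathrm{Re}(A\nabla u\cdot \overline{\nabla u})$ on the left, and the upper bound $|A\nabla u\cdot \overline{\nabla \eta}\bar u|\le \Lambda|\nabla u||\nabla \eta||u|$ on the right together with Cauchy-Schwarz with $\varepsilon$, one arrives at
\begin{equation*}
\int \eta^2 |\nabla u|^2\,dXdt \le c(n,\Lambda)\!\int \bigl(|\nabla \eta|^2+|\eta||\partial_t\eta|\bigr)|u|^2\,dXdt.
\end{equation*}

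Finally, inserting the bounds $|\nabla \eta|^2\le c/l(\tilde Q)^2$ and $|\eta\,\partial_t\eta|\le c/l(\tilde Q)^2$ and noting $\eta\equiv 1$ on $\tilde Q$ and $\mathrm{supp}\,\eta\subset 2\tilde Q$, I obtain
\begin{equation*}
\int_{\tilde Q}|\nabla u|^2\,dXdt \le \frac{c}{l(\tilde Q)^2}\int_{2\tilde Q}|u|^2\,dXdt,
\end{equation*}
which upon dividing by $|\tilde Q|$ (and absorbing the dimensional constant $|2\tilde Q|/|\tilde Q|$) yields the stated averaged Caccioppoli inequality. The only real subtlety is the rigorous handling of $\partial_t u$ in the test-function calculation for weak solutions with merely $L^2(W^{1,2})$ regularity; as noted, the Steklov average is the standard tool, and the argument is otherwise a direct parabolic Caccioppoli estimate adapted to the complex coefficient setting.
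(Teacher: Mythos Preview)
Your proof is correct and is precisely the standard Caccioppoli argument the paper has in mind; the paper's own proof consists of the single sentence ``The lemma follows by standard arguments,'' so your write-up simply fills in those details.
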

\begin{proof} The lemma follows by standard arguments.
\end{proof}

\begin{lemma}\label{le1} Assume that $\mathcal{H}$ satisfies \eqref{eq3}-\eqref{eq4}.  Let $Q\subset\mathbb R^{n+1}$ be a parabolic cube, $\lambda_0\in \mathbb R$, and let $\beta_1>1$, $\beta_2\in (0,1]$ be fixed constants. Let $I=(\lambda_0-\beta_2l(Q),\lambda_0+\beta_2l(Q))$, $\gamma I=
(\lambda_0-\gamma \beta_2l(Q),\lambda_0+\gamma \beta_2l(Q))$ for $\gamma\in (0,1)$. Assume that $\mathcal{H}u=0$ in $\beta_1^2Q\times I$. Then there exists a constant
     $c=c(n,\Lambda,\beta_1,\beta_2)$, $1\leq c<\infty$, such that
\begin{eqnarray*}
(i)&&\mean{Q}|\nabla u(x,t,\lambda_0)|^2\, dxdt\leq c\mean{\beta_1Q\times\frac 1 4I}|\nabla
u(X,t)|^2\, dXdt,\notag\\
(ii)&&\mean{Q}|\nabla u(x,t,\lambda_0)|^2\, dxdt\leq \frac c{l(Q)^2}\mean{\beta_1^2Q\times\frac 1 2I}|u(X,t)|^2\,
dXdt.
\end{eqnarray*}
\end{lemma}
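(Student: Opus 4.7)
The plan is to reduce (ii) to (i) and then prove (i) by a fundamental-theorem-of-calculus argument in the $\lambda$-variable, crucially exploiting that $A$ is independent of $\lambda$.

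\textbf{Reduction of (ii) to (i).} Granted (i), cover the slab $\beta_1Q\times\tfrac14 I$ by finitely many parabolic cubes $\widetilde Q_k\subset\mathbb R^{n+2}$ of size comparable to $l(Q)$, chosen so that each double $2\widetilde Q_k$ lies in $\beta_1^2Q\times\tfrac12 I$. Summing the interior Caccioppoli inequality (Lemma \ref{le1--}) applied on each $\widetilde Q_k$ gives
$$\iint_{\beta_1Q\times\frac14 I}|\nabla u|^2\,dXdt\leq \frac{c}{l(Q)^2}\iint_{\beta_1^2Q\times\frac12 I}|u|^2\,dXdt,$$
and substituting into (i), after adjusting for the volumes, yields (ii).

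\textbf{Proof of (i).} The key structural observation is that, since $A=A(x)$ is independent of both $\lambda$ and $t$, the function $v:=\partial_\lambda u$ is again a weak solution of $\mathcal{H}v=0$ on $\beta_1^2Q\times I$, so Lemma \ref{le1--} may be applied to $v$. Note that the tangential derivatives $\partial_{x_j}u$ for $j\leq n$ are in general \emph{not} solutions. Pick $\chi\in C_c^\infty([\lambda_0,\lambda_0+\delta])$, where $\delta=c_0\min(\beta_1-1,\beta_2)\,l(Q)$ for a small constant $c_0=c_0(n)$, with $\chi(\lambda_0)=1$ and $|\chi'|\leq c\,l(Q)^{-1}$. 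For $j\in\{1,\ldots,n+1\}$, the fundamental theorem of calculus gives
$$|\partial_{x_j}u(x,t,\lambda_0)|^2=-\int_{\lambda_0}^{\lambda_0+\delta}\partial_\lambda\bigl(\chi(\lambda)|\partial_{x_j}u|^2\bigr)\,d\lambda.$$
Expanding the derivative, integrating over $(x,t)\in Q$, applying $2ab\leq l(Q)^{-1}a^2+l(Q)\,b^2$, and using the commutation $\partial_\lambda\partial_{x_j}u=\partial_{x_j}v$ yields
$$\int_Q|\partial_{x_j}u(\cdot,\lambda_0)|^2\,dxdt\leq \frac{c}{l(Q)}\iint_{Q\times[\lambda_0,\lambda_0+\delta]}|\nabla u|^2+c\,l(Q)\iint_{Q\times[\lambda_0,\lambda_0+\delta]}|\nabla v|^2.$$
The second integral is then handled by covering $Q\times[\lambda_0,\lambda_0+\delta]$ by parabolic cubes of size $\sim l(Q)$ whose doubles lie in $\beta_1Q\times\tfrac14 I\subset\beta_1^2Q\times I$ and applying Lemma \ref{le1--} to the solution $v$ on each, which bounds it by $c\,l(Q)^{-2}\iint_{\beta_1Q\times\frac14 I}|v|^2\leq c\,l(Q)^{-2}\iint_{\beta_1Q\times\frac14 I}|\nabla u|^2$. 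Combining, summing over $j$, dividing by $|Q|\sim l(Q)^{n+2}$, and using $|\beta_1Q\times\tfrac14 I|\sim l(Q)\cdot|Q|$ produces (i).

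\textbf{Main obstacle.} The only conceptual subtlety is precisely that the tangential spatial derivatives $\partial_{x_j}u$ with $j\leq n$ are not themselves solutions, so the interior Caccioppoli inequality cannot be invoked on them directly at the slice $\{\lambda=\lambda_0\}$. The fundamental-theorem-of-calculus step in the $\lambda$-direction sidesteps this obstruction because, after integration by parts, the only ``bad'' term that appears is $\partial_\lambda\partial_{x_j}u=\partial_{x_j}v$, a spatial derivative of the genuine solution $v=\partial_\lambda u$, to which Lemma \ref{le1--} does apply.
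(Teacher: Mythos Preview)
Your proof is correct. The paper itself does not give a proof but refers to Lemma 2.12 in the companion paper \cite{CNS}; your argument --- using the $\lambda$-independence of $A$ to recognize $v=\partial_\lambda u$ as a solution, then combining the fundamental theorem of calculus in $\lambda$ with Caccioppoli applied to $v$ --- is the standard technique for such slice-to-volume estimates and is essentially what one expects to find there.
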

\begin{proof} For the proof we refer to the proof of  Lemma 2.12  in \cite{CNS}.
\end{proof}

\begin{lemma}\label{le1a} Assume that $\mathcal{H}$ satisfies \eqref{eq3}-\eqref{eq4}. Let $\tilde Q\subset\mathbb R^{n+2}$ be a parabolic cube and assume that
$\mathcal{H}u=0$ in $2\tilde Q$.  Then there exists a constant
     $c=c(n,\Lambda)$, $1\leq c<\infty$, such that
\begin{eqnarray*}
\mean{\tilde Q}|\partial_t u(X,t)|^2\, dXdt\leq \frac {c}{l(\tilde Q)^2}\mean{2\tilde Q}|\nabla u(X,t)|^2\, dXdt.
\end{eqnarray*}
\end{lemma}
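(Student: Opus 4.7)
The strategy is a parabolic Caccioppoli-type estimate, bootstrapped by the observation that $\partial_t u$ itself satisfies $\mathcal{H}(\partial_t u)=0$ thanks to the time-independence of $A$.

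\emph{Step 1 (regularity of $\partial_t u$).} Since $A=A(x_1,\dots,x_n)$ does not depend on $t$, mollifying $u$ in the time variable yields $u_\eta:=u*_t\rho_\eta$ which also solves $\mathcal{H}u_\eta=0$ on any compact subdomain of $2\tilde Q$. Differentiating the equation in $t$, $\partial_t u_\eta$ satisfies the same PDE, and passing to the limit gives $\partial_t u\in L^2_{\mathrm{loc}}(2\tilde Q)$ with $\mathcal{H}(\partial_t u)=0$ weakly in $2\tilde Q$.

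\emph{Step 2 (testing).} Let $\tilde Q(\rho)$ denote the parabolic cube concentric with $\tilde Q$ of size $\rho$. For $l\leq s<r\leq 2l$, pick $\chi\in C_0^\infty(\tilde Q(r))$ with $\chi\equiv 1$ on $\tilde Q(s)$ and $\|\nabla\chi\|_\infty\leq C/(r-s)$. Testing the weak form of $\mathcal{H}u=0$ with $\chi^2\,\partial_t u$ (a legitimate test function by Step 1) yields
\begin{equation*}
\int\chi^2|\partial_t u|^2\,dXdt = -2\int \chi\,(A\nabla u\cdot\nabla\chi)\,\overline{\partial_t u}\,dXdt - \int\chi^2\,A\nabla u\cdot\overline{\nabla\partial_t u}\,dXdt.
\end{equation*}
By Cauchy-Schwarz, the ellipticity bound \eqref{eq3}(ii), and Young's inequality, the first term on the right is controlled by $\tfrac12\|\chi\,\partial_t u\|_2^2+C(r-s)^{-2}\|\nabla u\|_{L^2(\tilde Q(r))}^2$, and the first summand is absorbed on the left.

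\emph{Step 3 (bootstrap and iteration).} For the remaining term, Cauchy-Schwarz and \eqref{eq3}(ii) give
\begin{equation*}
\Bigl|\int\chi^2\,A\nabla u\cdot\overline{\nabla\partial_t u}\,dXdt\Bigr|\leq\Lambda\|\chi\,\nabla u\|_2\,\|\chi\,\nabla\partial_t u\|_2.
\end{equation*}
Because $\mathcal{H}(\partial_t u)=0$ in $2\tilde Q$ (Step 1), Lemma \ref{le1--} (in the straightforward form with an arbitrary dilation factor close to $1$) applied to $\partial_t u$ on nested cubes $\tilde Q(r)\subsetneq\tilde Q(r^*)$, $r<r^*\leq 2l$, produces
\begin{equation*}
\|\chi\,\nabla\partial_t u\|_2\leq\|\nabla\partial_t u\|_{L^2(\tilde Q(r))}\leq\frac{C}{r^*-r}\,\|\partial_t u\|_{L^2(\tilde Q(r^*))}.
\end{equation*}
Choosing $r=(s+r^*)/2$ so that $r-s=r^*-r=(r^*-s)/2$, combining with Step 2 and applying Young's inequality once more, one arrives at the self-improving inequality
\begin{equation*}
\|\partial_t u\|_{L^2(\tilde Q(s))}^2\leq\tfrac12\|\partial_t u\|_{L^2(\tilde Q(r^*))}^2+\frac{C}{(r^*-s)^2}\|\nabla u\|_{L^2(2\tilde Q)}^2,\qquad l\leq s<r^*\leq 2l.
\end{equation*}
Since $\partial_t u\in L^2(\tilde Q(\rho))$ for every $\rho<2l$, applying a standard iteration lemma on $[l,2l-\varepsilon]$ with $\theta=1/2$ and letting $\varepsilon\to0^+$ yields $\|\partial_t u\|_{L^2(\tilde Q)}^2\leq Cl^{-2}\|\nabla u\|_{L^2(2\tilde Q)}^2$. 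Passing to averages via $|2\tilde Q|/|\tilde Q|=2^{n+3}$ delivers the claimed inequality with $c=c(n,\Lambda)$.

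\emph{Main obstacle.} Were $A$ real and symmetric one could instead integrate by parts in $t$ in the residual term: using $\partial_t A=0$, the identity $\partial_t(A\nabla u\cdot\overline{\nabla u})=2\mathrm{Re}(A\partial_t\nabla u\cdot\overline{\nabla u})$ collapses $\int\chi^2\,A\nabla u\cdot\overline{\nabla\partial_t u}$ to a lower-order piece involving $\partial_t(\chi^2)\,A\nabla u\cdot\overline{\nabla u}$, giving a one-shot Caccioppoli with no iteration. For the complex, non-Hermitian matrices permitted by \eqref{eq3} this cancellation fails and leaves a genuine residue $\int\chi^2\,A\nabla\partial_t u\cdot\overline{\nabla u}$; the bootstrap via spatial Caccioppoli for $\partial_t u$, valid precisely because $A$ is $t$-independent, is the device that absorbs this residue.
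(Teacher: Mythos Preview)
Your proof is correct and follows essentially the same strategy as the paper: test the equation against (a cutoff times) $\partial_t u$, then control the resulting $\nabla\partial_t u$ term via the Caccioppoli estimate of Lemma \ref{le1--} applied to the solution $\partial_t u$, which is legitimate precisely because $A$ is $t$-independent. The only cosmetic difference is that the paper uses higher powers of the cutoff ($\phi^4$ in the main integral, $\phi^6$ on $|\nabla\partial_t u|^2$) so that Caccioppoli with cutoff $\phi^3$ returns the term $\int|\partial_t u|^2\phi^4$ itself and allows direct absorption in one shot, whereas you use $\chi^2$ and close the loop via a Giaquinta-type iteration on concentric cubes; both devices are standard and equivalent here.
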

\begin{proof} In the following we can, without loss of generality, assume that $A$ is smooth. Let $\phi\in C_0^\infty(2\tilde Q)$ be a standard  cut-off function for $\tilde Q$. Let
\begin{eqnarray*}
I:=\int|\partial_t u|^2\phi^4\, dXdt,
\end{eqnarray*}
and let
\begin{eqnarray*}
II=\int|\nabla u|^2\phi^2\, dXdt,\ III:=\int|\nabla \partial_t u|^2\phi^6\, dXdt.
\end{eqnarray*}
Using that $\partial_t u=\nabla \cdot A\nabla u$, and partial integration, we see that
\begin{eqnarray*}
-I&=&-\int (\nabla \cdot (A\nabla u)\partial_t \bar u )\phi^4\, dXdt\notag\\
&=&\int ( A\nabla u\cdot \nabla(\partial_t \bar u))\phi^4\, dXdt+4\int \partial_t \bar u( A\nabla u\cdot \nabla\phi)\phi^3\, dXdt.
\end{eqnarray*}
Hence,
\begin{eqnarray*}
I&\leq&r^2\epsilon III+\frac {c(\epsilon)}{r^2}II
\end{eqnarray*}
where $\epsilon$ is a degree of freedom. As $\partial_t u$ is a solution to the underlying equation we can conclude, using  Lemma \ref{le1--},
that the lemma holds.
\end{proof}

\subsection{Non-tangential maximal functions} Given $(x_0,t_0)\in \mathbb R^{n+1}$, and $\beta>0$, we define the cone
    $$\Gamma^\beta(x_0,t_0)=\{(x,t,\lambda)\in\mathbb R^{n+2}_+:\ ||(x-x_0,t-t_0)||<\beta\lambda\}.$$
    Consider a function $U$ defined on $\mathbb R^{n+2}_+$. The non-tangential maximal operator $N_\ast^\beta$ is defined
               \begin{eqnarray}\label{eq6}
           N_{\ast}^\beta(U)(x_0,t_0):=\sup_{(x,t,\lambda)\in \Gamma^\beta(x_0,t_0)}\ |U(x,t,\lambda)|.
    \end{eqnarray}
    Given $(x,t)\in\mathbb R^{n+1}$, $\lambda>0$, we let
    \begin{eqnarray}\label{eq6+a} Q_\lambda(x,t)=\{(y,s):\ |x_i-y_i|<\lambda,\ |t-s|<\lambda^2\}
    \end{eqnarray} denote the standard
    parabolic cube on $\mathbb R^{n+1}$, with center $(x,t)$ and side length $\lambda$. We let
    $$W_\lambda(x,t)=\{(y,s,\sigma):\ (y,s)\in Q_\lambda(x,t),\lambda/2<\sigma<3\lambda/2\}$$
    be an associated Whitney type set. Using this notation we also introduce
           \begin{eqnarray}\label{eq6+}
           \tilde N_\ast^\beta(U)(x_0,t_0):=\sup_{(x,t,\lambda)\in \Gamma^\beta(x_0,t_0)}\biggl (
           \mean{W_\lambda(x,t)}|U(y,s,\sigma)|^2\, dydsd\sigma\biggr )^{1/2}.
    \end{eqnarray}
    We let
             \begin{eqnarray}\label{eq6+se}
           \Gamma(x_0,t_0):=\Gamma^1(x_0,t_0),\ N_{\ast}(U):=N_{\ast}^1(U),\ \tilde N_{\ast}(U):=\tilde N_{\ast}^1(U).
    \end{eqnarray}
    Furthermore, in many estimates it is necessary to increase the $\beta$ in $\Gamma^\beta$  as the estimates progress. We will use the convention, when the exact $\beta$ is not important, that $N_{\ast\ast}(U)$, $\tilde N_{\ast\ast}(U)$, equal $N_{\ast}^\beta(U)$, $\tilde N_{\ast}^\beta(U)$, for some appropriate $\beta>1$. Given a function $u$ defined on $\mathbb R^{n+2}_+$, and a function $f$ defined on  $\mathbb R^{n+1}$, we
    in the following say that $u$ converges to $f$ non-tangentially almost everywhere as we approach $\mathbb R^{n+1}$,  if
     $$\lim_{(x,t,\lambda)\in \Gamma(x_0,t_0)\to (x_0,t_0,0)}u(x,t,\lambda)=f(x_0,t_0)$$
     holds for almost every  $(x_0,t_0)\in \mathbb R^{n+1}$. As a short notation we will write
     $$\lim_{\lambda\to 0}u(\cdot,\cdot,\lambda)=f(\cdot,\cdot)\ \mbox{n.t}$$
     or simply that $u\to f$ n.t. At instances we will also use the notation
     $$\Gamma^\pm(x_0,t_0)=\{(x,t,\lambda)\in\mathbb R^{n+2}_\pm:\ ||(x-x_0,t-t_0)||<\pm\lambda\},$$
     and the associated non-tangential maximal operators $N_\ast^\pm$ defined through
               \begin{eqnarray}\label{eq6han}
           N_\ast^\pm(U)(x_0,t_0):=\sup_{(x,t,\lambda)\in \Gamma^\pm(x_0,t_0)}\ |U(x,t,\lambda)|,
    \end{eqnarray}
    for any function $U$ defined on $\mathbb R^{n+2}_\pm$. Similarly we introduce  the non-tangential maximal operators  $\tilde N_\ast^\pm$ in the natural way. If we need to emphasize a particular construction of the cone, with a particular opening defined by $\beta> 1$, we will use the notation $N_{\ast}^{\beta,\pm}$,  $\tilde N_{\ast}^{\beta,\pm}$. We let $N_{\ast\ast}^\pm$, $\tilde N_{\ast\ast}^\pm$, equal $N_{\ast}^{\beta,\pm}$, $\tilde N_{\ast}^{\beta,\pm}$, for some $\beta>1$.

     \subsection{Boundary value problems}  We say that $u$ solves
the Dirichlet problem in $\mathbb R^{n+2}_+$ with data $f\in L^2(\mathbb R^{n+1},\mathbb C)$, if
               \begin{eqnarray}\label{eq7}
               \mathcal{H}u &=& 0\mbox{ in $\mathbb R^{n+2}_+$},\notag\\
               \lim_{\lambda\to 0}u(\cdot,\cdot,\lambda)&=&f(\cdot,\cdot)\mbox{ n.t},
               \end{eqnarray}
               and
                  \begin{eqnarray}\label{eq7+}
               &&\sup_{\lambda>0}||u(\cdot,\cdot,\lambda)||_2+|||\lambda\nabla u|||_+<\infty.
               \end{eqnarray}
               We say that $u$ solves
the Neumann problem in $\mathbb R^{n+2}_+$ with data $g\in L^2(\mathbb R^{n+1},\mathbb C)$  if
             \begin{eqnarray}\label{eq8}
                      \mathcal{H}u &=& 0\mbox{ in $\mathbb R^{n+2}_+$},\notag\\
               \lim_{\lambda\to 0}-\sum_{j=1}^{n+1}A_{n+1,j}(\cdot)\partial_{x_j}u(\cdot,\cdot,\lambda)&=&g(\cdot,\cdot)\mbox{ n.t},
    \end{eqnarray}
      and
         \begin{eqnarray}\label{eq8+}
                   \tilde N_\ast(\nabla u)\in L^2(\mathbb R^{n+1}).
    \end{eqnarray}
We say that $u$ solves
the Regularity problem in $\mathbb R^{n+2}_+$ with data $f\in {\mathbb H}(\mathbb R^{n+1},\mathbb C)$  if
    \begin{eqnarray}\label{eq9}
               \mathcal{H}u &=& 0\mbox{ in $\mathbb R^{n+2}_+$},\notag\\
               \lim_{\lambda\to 0}u(\cdot,\cdot,\lambda)&=&f(\cdot,\cdot)\mbox{ n.t},
               \end{eqnarray}
               and
                    \begin{eqnarray}\label{eq9+}
                   \tilde N_\ast(\nabla u)\in L^2(\mathbb R^{n+1}),\ \tilde N_\ast(H_tD^t_{1/2}u)\in L^2(\mathbb R^{n+1}).
    \end{eqnarray}
    We denote the problems in \eqref{eq7}-\eqref{eq7+}, \eqref{eq8}-\eqref{eq8+}, \eqref{eq9}-\eqref{eq9+}, by
               \begin{eqnarray}\label{eq10}
        \mbox{$(D2)$, $(N2)$ and $(R2)$, respectively.}
               \end{eqnarray}

\subsection{Layer potentials}

Assume that $\mathcal{H}=\partial_t+\mathcal{L}$   satisfies \eqref{eq3}-\eqref{eq4}. By functional calculus, see \cite{AT}, \cite{K},  $\mathcal{L}$ defines an
$L^2$-contraction semigroup $e^{-t\mathcal{L}}$, for $t>0$.  Let $K_t(X,Y)$ denote the distribution kernel of $e^{-t\mathcal{L}}$. We introduce
\begin{eqnarray}
\Gamma(x,t,\lambda,y,s,\sigma)=\Gamma(X,t,Y,s):=K_{t-s}(X,Y)=K_{t-s}(x,\lambda,y,\sigma)
\end{eqnarray}
whenever $(x,t,\lambda),\ (y,s,\sigma)\in\mathbb R^{n+2}$, $t-s>0$ and we put $\Gamma(x,t,\lambda,y,s,\sigma)=0$ whenever $t-s<0$. Then $\Gamma(x,t,\lambda,y,s,\sigma)$, for $(x,t,\lambda)$, $(y,s,\sigma)\in \mathbb R^{n+2}$ is a  fundamental solution, heat kernel, associated to the operator $\mathcal{H}$.  In particular, the fundamental solution $\Gamma$ associated to $\mathcal{H}$ coincides with the kernel
$K$. We let
$$\Gamma^\ast(y,s,\sigma,x,t,\lambda)=\overline{\Gamma(x,t,\lambda,y,s,\sigma)}$$ and we note that this is then a fundamental solution associated to  $\mathcal{H}^\ast$.  Based on \eqref{eq4} we   let
                    \begin{eqnarray}\label{eq11-}
                    \Gamma_\lambda(x,t,y,s)&=&\Gamma(x,t,\lambda,y,s,0),\notag\\
                    \Gamma_\lambda^\ast(y,s,x,t)&=&\Gamma^\ast(y,s,0,x,t,\lambda),
    \end{eqnarray}
    whenever $(x,t),\ (y,s)\in\mathbb R^{n+1}$, $\lambda\in\mathbb R$. We define associated single  layer potentials
    \begin{eqnarray}\label{eq11}
    \mathcal{S}_\lambda^{\mathcal{H}} f(x,t)&:=&\int_{\mathbb R^{n+1}}\Gamma_\lambda(x,t,y,s)f(y,s)\, dyds,\notag\\
     \mathcal{S}_\lambda^{\mathcal{H}^\ast} f(x,t)&:=&\int_{\mathbb R^{n+1}}\Gamma_\lambda^\ast(y,s,x,t)f(y,s)\, dyds,
    \end{eqnarray}
    whenever $f\in C_0^\infty(\mathbb R^{n+1},\mathbb C)$. We also introduce double layer potentials
    \begin{eqnarray}\label{eq11ed}
    \mathcal{D}_\lambda^{\mathcal{H}} f(x,t)&:=&\int_{\mathbb R^{n+1}}\overline{\partial_{\nu^\ast}\Gamma_\lambda^\ast(y,s,x,t)}f(y,s)\, dyds,\notag\\
    \mathcal{D}_\lambda^{\mathcal{H}^\ast} f(x,t)&:=&\int_{\mathbb R^{n+1}}\overline{\partial_{\nu}\Gamma_\lambda(x,t,y,s)}f(y,s)\, dyds,
    \end{eqnarray}
    whenever $\lambda\neq 0$ and where
        \begin{eqnarray}\label{eq12}
        \partial_{\nu^\ast}=-\sum_{j=1}^{n+1}A^\ast_{n+1,j}(y)\partial_{y_j},\ \partial_{\nu}=-\sum_{j=1}^{n+1}A_{n+1,j}(y)\partial_{y_j}.
    \end{eqnarray}
    We also note that
        \begin{eqnarray}\label{eq11edsea}
    \mathcal{D}_\lambda^{\mathcal{H}}&=&\mathcal{S}_\lambda^{\mathcal{H}}\overline{\partial_{\nu}}=-\sum_{j=1}^{n+1}\mathcal{S}_\lambda^{\mathcal{H}}
    \overline{A_{n+1,j}(y)}\partial_{y_j},\notag\\
     \mathcal{D}_\lambda^{\mathcal{H}^\ast}&=&\mathcal{S}_\lambda^{\mathcal{H}^\ast}\overline{\partial_{\nu^\ast}}=-\sum_{j=1}^{n+1}
     \mathcal{S}_\lambda^{\mathcal{H}^\ast}
    \overline{A^\ast_{n+1,j}(y)}\partial_{y_j}.
    \end{eqnarray}
    An other way to write these relations is
          \begin{eqnarray}\label{eq11edsea+}
    \mathcal{D}_\lambda^{\mathcal{H}}&=&\adj\bigl (-e_{n+1}\cdot A^\ast\nabla \mathcal{S}_\tau^{\mathcal{H}^\ast}|_{\tau=-\lambda}\bigr ),\notag\\
    \mathcal{D}_\lambda^{\mathcal{H}^\ast}&=&\adj\bigl (-e_{n+1}\cdot A\nabla \mathcal{S}_\tau^{\mathcal{H}}|_{\tau=-\lambda}\bigr ),
    \end{eqnarray}
    where we, here and throughout the paper, by $\mathcal{O}^\ast$ or
    $\adj(\mathcal{O})$ denote the hermitian adjoint of a given operator $\mathcal{O}$. In Lemma \ref{trace4} below we prove, under assumptions, the existence of boundary layer potential operators
     $$\mp\frac 12+\mathcal{K}^{\mathcal{H}},\ \pm\frac 12+\tilde{\mathcal{K}}^{\mathcal{H}},\ \mathbb D \mathcal{S}_\lambda^{\mathcal{H}}|_{\lambda=0},$$
     such that
     \begin{eqnarray}\label{eq14in}
                \mathcal{D}_{\pm\lambda}^{\mathcal{H}} f&\to &\bigl (\pm\frac 1 2+\mathcal{K}^{\mathcal{H}}\bigr) f,\notag\\
                -\sum_{j=1}^{n+1}A_{n+1,j}(\cdot)\partial_{x_j}\mathcal{S}_{\pm\lambda}^{\mathcal{H}} f&\to &\bigl (\pm\frac 1 2+\tilde{\mathcal{K}}^{\mathcal{H}}\bigr) f,\notag\\
                (\mathbb D\mathcal{S}_\sigma^{\mathcal{H}})|_{\sigma=\pm\lambda}  f&\to& \mathbb D \mathcal{S}_\lambda^{\mathcal{H}}|_{\lambda=0}f,
    \end{eqnarray}
    as $\lambda\to 0$, whenever $f\in L^2(\mathbb R^{n+1},\mathbb C)$. We prove similar results with $\mathcal{S}_{\lambda}^{\mathcal{H}}$, $\mathcal{D}_{\lambda}^{\mathcal{H}}$, $\mathcal{K}^{\mathcal{H}}$, $\tilde{\mathcal{K}}^{\mathcal{H}}$, $\mathbb D \mathcal{S}_\lambda^{\mathcal{H}}|_{\lambda=0}$,  replaced by $\mathcal{S}_{\lambda}^{\mathcal{H}^\ast}$, $\mathcal{D}_{\lambda}^{\mathcal{H}^\ast}$,  $\mathcal{K}^{\mathcal{H}^\ast}$, $\tilde{\mathcal{K}}^{\mathcal{H}^\ast}$, $\mathbb D \mathcal{S}_\lambda^{\mathcal{H}^\ast}|_{\lambda=0}$. The limits in \eqref{eq14in} are interpreted in the sense of Lemma \ref{trace4}, Lemma \ref{trace5}, and Lemma \ref{trace7-}, and we refer to the bulk of the paper for details. In the formulation of Theorem \ref{thper2} and  Corollary \ref{thper3} we used the following definitions, Definition \ref{blayer+} and Definition \ref{sollayer}.

\begin{definition}\label{blayer+} Consider $\mathcal{H}=\partial_t-\div A\nabla$. Assume that $\mathcal{H}$,  $\mathcal{H}^\ast$   satisfy \eqref{eq3}-\eqref{eq4}. We say that $\mathcal{H}$,  $\mathcal{H}^\ast$ have bounded, invertible and good layer  potentials with constant $\Gamma \geq 1$, if statements $(i)-(xiii)$ below hold whenever  $f\in L^2(\mathbb R^{n+1},\mathbb C)$.

\noindent
First,
 \begin{eqnarray}\label{keyestint-}
     (i)&&\sup_{\lambda\neq 0}||\partial_\lambda \mathcal{S}_{\lambda}^{\mathcal{H}}f||_2+\sup_{\lambda\neq 0}||\partial_\lambda \mathcal{S}_{\lambda}^{\mathcal{H}^\ast}f||_2\leq \Gamma||f||_2,\notag\\
     (ii)&&|||\lambda\partial_\lambda^2 \mathcal{S}_{\lambda}^{\mathcal{H}}f|||_\pm+|||\lambda\partial_\lambda^2 \mathcal{S}_{\lambda}^{\mathcal{H}^\ast}f|||_\pm\leq  \Gamma||f||_2.
     \end{eqnarray}
Second,
\begin{eqnarray}\label{keyestint+a}
(iii)&& ||N_\ast^\pm(\partial_\lambda \mathcal{S}_\lambda^{\mathcal{H}} f)||_2+||N_\ast^\pm(\partial_\lambda \mathcal{S}_\lambda^{\mathcal{H}^\ast} f)||_2\leq\Gamma  ||f||_2,\notag\\
  (iv)&&\sup_{\lambda\neq 0}||\mathbb D\mathcal{S}_{\lambda}^{\mathcal{H}}f||_{2}+\sup_{\lambda\neq 0}||\mathbb D\mathcal{S}_{\lambda}^{\mathcal{H}^\ast}f||_{2}\leq \Gamma ||f||_2,\notag\\
(v)&&||\tilde N_\ast^\pm(\nabla_{||}\mathcal{S}_\lambda^{\mathcal{H}} f)||_2+||\tilde N_\ast^\pm(\nabla_{||}\mathcal{S}_\lambda^{\mathcal{H}^\ast} f)||_2\leq\Gamma  ||f||_2,\notag\\
(vi)&&||\tilde N_\ast^\pm(H_tD^t_{1/2}\mathcal{S}_\lambda^{\mathcal{H}} f)||_2+||\tilde N_\ast^\pm(H_tD^t_{1/2}\mathcal{S}_\lambda^{\mathcal{H}^\ast} f)||_2\leq\Gamma  ||f||_2.\end{eqnarray}
Third,
\begin{eqnarray*}\label{keyestint+aga}
   (vii)&&\mbox{$\mathcal{K}^{\mathcal{H}}$, $\tilde{\mathcal{K}}^{\mathcal{H}}$, $\mathbb D \mathcal{S}_\lambda^{\mathcal{H}}|_{\lambda=0}$, $\mathcal{K}^{\mathcal{H}^\ast}$, $\tilde{\mathcal{K}}^{\mathcal{H}^\ast}$, $\mathbb D \mathcal{S}_\lambda^{\mathcal{H}^\ast}|_{\lambda=0}$, exist in the}\notag\\
   &&\mbox{sense of Lemma \ref{trace4}, Lemma \ref{trace5}, and Lemma \ref{trace7-}}.
   \end{eqnarray*}
Fourth, with constants of comparison defined by  $\Gamma $,
   \begin{eqnarray*}\label{keyestint+aah}
   (viii)&&\mbox{$||(\pm\frac 1 2 I+\mathcal{K}^{\mathcal{H}})f||_2\approx||f||_2\approx ||(\pm\frac 1 2 I+\tilde {\mathcal{K}}^{\mathcal{H}})f||_2$,}\notag\\
      (ix)&&\mbox{$||(\pm\frac 1 2 I+\mathcal{K}^{\mathcal{H}^\ast})f||_2\approx||f||_2\approx||(\pm\frac 1 2 I+\tilde {\mathcal{K}}^{\mathcal{H}^\ast})f||_2$},\notag\\
   (x)&&\mbox{$||\mathbb D \mathcal{S}_\lambda^{\mathcal{H}}|_{\lambda=0}f||_{2}\approx||f||_2\approx||\mathbb D \mathcal{S}_\lambda^{\mathcal{H}^\ast}|_{\lambda=0}f||_{2}$}.
\end{eqnarray*}
Fifth,
  \begin{eqnarray*}
   (xi)&&\mbox{$(\pm\frac 1 2 I+\mathcal{K}^{\mathcal{H}})$,  $(\pm\frac 1 2 I+\tilde {\mathcal{K}}^{\mathcal{H}})$, are bijections on $L^2(\mathbb R^{n+1},\mathbb C)$,}\notag\\
   (xii)&&\mbox{$(\pm\frac 1 2 I+\mathcal{K}^{\mathcal{H}^\ast})$,  $(\pm\frac 1 2 I+\tilde {\mathcal{K}}^{\mathcal{H}^\ast})$, are bijections on $L^2(\mathbb R^{n+1},\mathbb C)$,}\notag\\
   (xiii)&&\mbox{$\mathcal{S}_\lambda^{\mathcal{H}}|_{\lambda=0}$, $\mathcal{S}_\lambda^{\mathcal{H}^\ast}|_{\lambda=0}$, are bijections from $L^2(\mathbb R^{n+1},\mathbb C)$ to $\mathbb H(\mathbb R^{n+1},\mathbb C)$.}
\end{eqnarray*}
\end{definition}

\begin{remark}\label{resea} Assume that  $\mathcal{H}$,  $\mathcal{H}^\ast$ have  bounded, invertible and good layer  potentials with constant $ \Gamma$ in the sense of Definition \ref{blayer+}. Then \begin{eqnarray}\label{keyestint+adase}
(i')&&\sup_{\lambda\neq 0}|| \mathcal{D}_\lambda^{\mathcal{H}} f||_2+\sup_{\lambda\neq 0}||\mathcal{D}_\lambda^{\mathcal{H}^\ast}f||_2\leq c ||f||_2,\notag\\
(ii')&&|||\lambda \nabla \mathcal{D}_\lambda^{\mathcal{H}} f|||_\pm+|||\lambda \nabla \mathcal{D}_\lambda^{\mathcal{H}^\ast} f|||_\pm\leq c  ||f||_2,
\end{eqnarray}
for some constant $c$ depending only on $n$, $\Lambda$,  the De Giorgi-Moser-Nash constants and $\Gamma$. Indeed, $(i')$ is a simple consequence of
\eqref{eq11edsea+} and Definition \ref{blayer+} $(i)$, $(iv)$. That $(ii')$ holds is proved in Lemma \ref{th0uu} below. In particular, the statements of Definition \ref{blayer+} are strong enough to ensure the validity of the quantitative estimates for the double layer potential operators
$\mathcal{D}_\lambda^{\mathcal{H}}$, $\mathcal{D}_\lambda^{\mathcal{H}^\ast}$, underlying the solvability of $(D2)$ for $\mathcal{H}$, $\mathcal{H}^\ast$. \end{remark}

\begin{definition}\label{sollayer} Consider $\mathcal{H}=\partial_t-\div A\nabla$. Assume that $\mathcal{H}$,  $\mathcal{H}^\ast$   satisfy \eqref{eq3}-\eqref{eq4}. Assume that  $\mathcal{H}$,  $\mathcal{H}^\ast$ have  bounded, invertible and good layer  potentials with constant $ \Gamma$ in the sense of Definition \ref{blayer+}.  We then say that $(D2)$, $(N2)$ and $(R2)$ are uniquely solvable, for the operators $\mathcal{H}$,  $\mathcal{H}^\ast$, by way of layer potentials and with constant $\Gamma$,  if $(D2)$ for the operators $\mathcal{H}$,  $\mathcal{H}^\ast$ have unique solutions, and if $(N2)$ and $(R2)$ for the operators $\mathcal{H}$,  $\mathcal{H}^\ast$ have unique solutions, modulo a constant.
\end{definition}

\section{Harmonic analysis}\label{sec7} In the following we establish a number of harmonic analysis results, and collect some  results from \cite{N}, to be used in the forthcoming sections. Throughout the section we assume that  $\mathcal{H}$,  $\mathcal{H}^\ast$ satisfy \eqref{eq3}-\eqref{eq4}. Recall that $\nabla =(\nabla_{||},\partial_\lambda)$ where $\nabla_{||}=(\partial_{x_1},...,\partial_{x_n})$.  We will also use the notation $D_i=\partial_{x_i}$ for
$i\in\{1,...,n+1\}$.  We
let
 \begin{eqnarray}\label{block}\mathcal{L}_{||}=-\div_{||}(A_{||}\nabla_{||})
 \end{eqnarray}
where $\div_{||}$ is the divergence operator in the variables $(\partial_{x_1},...,\partial_{x_n})$ only and where $A_{||}$ is the $n\times n$-dimensional sub matrix of $A$ defined by $\{A_{i,j}\}_{i,j=1}^n$. Then
    \begin{eqnarray}\label{block+}\mathcal{L}=\mathcal{L}_{||}-\sum_{j=1}^{n+1}A_{n+1,j}D_{n+1}D_j-\sum_{i=1}^{n}D_iA_{i,n+1}D_{n+1}.
    \end{eqnarray}
    We also let
     \begin{eqnarray}\label{blocked}\mathcal{H}_{||}=\partial_t+\mathcal{L}_{||},\ \mathcal{H}_{||}^\ast=-\partial_t+\mathcal{L}_{||}^\ast.
 \end{eqnarray}
    Using this notation,  the equation $\mathcal{H}u=0$ can formally be written
     \begin{eqnarray}\label{block++}\mathcal{H}_{||}u-\sum_{j=1}^{n+1}A_{n+1,j}D_{n+1}D_ju-\sum_{i=1}^{n}D_i(A_{i,n+1}D_{n+1}u)=0.
     \end{eqnarray}

\subsection{Resolvents and a parabolic Hodge decomposition associated to $\mathcal{H}_{||}$} Recall the function space $\mathbb H={\mathbb H}(\mathbb R^{n+1},\mathbb C)$. We let
$\bar{\mathbb H}=\bar{\mathbb H}(\mathbb R^{n+1},\mathbb C)$ be the closure of $C_0^\infty(\mathbb R^{n+1},\mathbb C)$ with respect to the norm
\begin{eqnarray}\|f\|_{\bar{\mathbb H}}:=\|f\|_{\mathbb H}+\|f\|_2.\end{eqnarray}
Let  $B:\mathbb H\times  \mathbb H\to\mathbb C$ be defined as
       \begin{eqnarray*}
  B(u,\phi)= \int_{\mathbb R^{n+1}}
      (A_{||}\nabla_{||} u\cdot\nabla_{||}\bar \phi-D_{1/2}^tu\overline{H_tD_{1/2}^t\phi})\, dxdt,
      \end{eqnarray*}
      and let, for $\delta\in (0,1)$, $ B_\delta:\mathbb H\times  \mathbb H\to\mathbb C$ be defined as
       \begin{eqnarray}\label{form2}
  B_\delta(u,\phi)&=&\int_{\mathbb R^{n+1}}A_{||}\nabla_{||} u\cdot\overline{\nabla_{||}(I+\delta H_t)\phi}\, dxdt\notag\\
  &&-\int_{\mathbb R^{n+1}}D_{1/2}^tu\overline{H_tD_{1/2}^t(I+\delta H_t)\phi}\, dxdt.
      \end{eqnarray}

     \begin{definition}\label{de1} Let $g\in L^2(\mathbb R^{n+1},\mathbb C^n)$. We say that a function
      $u\in {\mathbb H}(\mathbb R^{n+1},\mathbb C)$ is a (weak) solution to the equation $\mathcal{H}_{||}u=-\div_{||} g$,
in $\mathbb R^{n+1}$, if
     \begin{eqnarray*}\label{eq4-}
B(u,\phi)=\int_{\mathbb R^{n+1}}g\cdot\nabla_{||}\bar\phi\, dxdt,
    \end{eqnarray*}
    whenever $\phi\in {\mathbb H}(\mathbb R^{n+1},\mathbb C)$.
      \end{definition}

          \begin{definition}\label{de2} Let $\lambda>0$ be given. Let $f\in L^2(\mathbb R^{n+1},\mathbb C)$. We say that a function
      $u\in\bar{\mathbb{H}}(\mathbb R^{n+1},\mathbb C)$ is a (weak) solution to the equation $u+\lambda^2\mathcal{H}_{||}u=f$,
in $\mathbb R^{n+1}$, if
     \begin{eqnarray*}\label{eq4-a}
\int_{\mathbb R^{n+1}}u\bar \phi\, dxdt+\lambda^2 B(u,\phi)=\int_{\mathbb R^{n+1}}f\bar \phi\, dxdt
    \end{eqnarray*}
whenever $\phi\in \bar{\mathbb H}(\mathbb R^{n+1},\mathbb C)$.
      \end{definition}

\begin{lemma}\label{parahodge} Consider the operator $\mathcal{H}_{||}=\partial_t-\div_{||} (A_{||}\nabla_{||} \cdot)$ and assume that $A$ satisfies \eqref{eq3}, \eqref{eq4}. Let  $g\in L^2(\mathbb R^{n+1},\mathbb C^n)$. Then there exists a weak solution $u\in {\mathbb H}(\mathbb R^{n+1},\mathbb C)$ to the equation $\mathcal{H}_{||}u=-\div_{||} g$, in $\mathbb R^{n+1}$, in the sense of Definition \ref{de1}. Furthermore,
$$||u||_{\mathbb H}\leq c||g||_2,$$
for some constant $c$ depending only on $n$ and $\Lambda$. The solution is unique up to a constant.
\end{lemma}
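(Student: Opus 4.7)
The main obstacle in applying Lax--Milgram directly to $B$ is that the time-derivative piece $-\int D_{1/2}^t u\,\overline{H_t D_{1/2}^t \phi}\,dxdt$ contributes only imaginarily to $B(u,u)$, because $H_t$ is skew-adjoint on $L^2(\mathbb R)$; hence $B$ is not coercive on $\mathbb H$. My plan is to work with the perturbed form $B_\delta$ introduced in \eqref{form2}, which is Kaplan's trick adapted to the present setting: the perturbation $\phi\mapsto(I+\delta H_t)\phi$, combined with $H_t^2=-I$, turns the skew part into something with a positive real part while retaining $L^2$-boundedness.

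Using that $H_t$ commutes with $\nabla_{||}$ and $D_{1/2}^t$, that $H_t^2=-I$, and that $H_t$ is skew-adjoint on $L^2$, a direct expansion gives
\begin{equation*}
\mathrm{Re}\,B_\delta(u,u)=\mathrm{Re}\int A_{||}\nabla_{||} u\cdot\overline{\nabla_{||} u}\,dxdt+\delta\,\mathrm{Re}\int A_{||}\nabla_{||} u\cdot\overline{H_t\nabla_{||} u}\,dxdt+\delta\|D_{1/2}^t u\|_2^2,
\end{equation*}
since the cross term $-\int D_{1/2}^t u\,\overline{H_t D_{1/2}^t u}\,dxdt$ is purely imaginary. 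Invoking \eqref{eq3} on $A_{||}$ (valid because $A_{||}$ inherits ellipticity from $A$ by restriction to vectors of the form $(\xi',0)$) and $\|H_t\|_{L^2\to L^2}=1$, I obtain, for every $\delta\in(0,1/(2\Lambda^2)]$, the coercivity $\mathrm{Re}\,B_\delta(u,u)\geq\tfrac{1}{2\Lambda}\|\nabla_{||} u\|_2^2+\delta\|D_{1/2}^t u\|_2^2$, which controls $\|u\|_{\mathbb H}^2$ with constant $\min(1/(2\Lambda),\delta)$. Since $B_\delta$ is manifestly bounded on $\mathbb H\times\mathbb H$, the (complex) Lax--Milgram theorem produces a unique $u_\delta\in\mathbb H$ satisfying $B_\delta(u_\delta,\phi)=\int g\cdot\overline{(I+\delta H_t)\nabla_{||}\phi}\,dxdt$ for all $\phi\in\mathbb H$.

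The crux of the argument is then to upgrade the $\delta$-dependent bound on $u_\delta$ to one that is uniform in $\delta$. Testing with $\phi=u_\delta$, the coercivity and the estimate $\bigl|\int g\cdot\overline{(I+\delta H_t)\nabla_{||} u_\delta}\,dxdt\bigr|\leq(1+\delta)\|g\|_2\|\nabla_{||} u_\delta\|_2$ yield at once $\|\nabla_{||} u_\delta\|_2\leq 2\Lambda(1+\delta)\|g\|_2$, uniformly in $\delta$. To control $D_{1/2}^t u_\delta$, I test with $\phi=H_t u_\delta\in\mathbb H$ (note that $H_t$ preserves $\mathbb H$ since it commutes with $\nabla_{||}$ and $D_{1/2}^t$) and use $(I+\delta H_t)H_t=H_t-\delta I$ together with $H_t^2=-I$: the diagonal time-piece isolates exactly $+\|D_{1/2}^t u_\delta\|_2^2$, while all remaining terms are controlled by $(1+\delta)\bigl(\Lambda\|\nabla_{||} u_\delta\|_2^2+\|g\|_2\|\nabla_{||} u_\delta\|_2\bigr)$, giving $\|D_{1/2}^t u_\delta\|_2\leq c\|g\|_2$ uniformly. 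Extracting a weakly convergent subsequence $u_{\delta_k}\rightharpoonup u$ in $\mathbb H$, and writing $B_{\delta_k}(u_{\delta_k},\phi)=B(u_{\delta_k},\phi)+\delta_k B(u_{\delta_k},H_t\phi)$, the first term converges to $B(u,\phi)$ by weak convergence while the second vanishes by uniform boundedness; the right-hand side tends to $\int g\cdot\nabla_{||}\bar\phi$. The limit $u$ thus solves the desired equation, with $\|u\|_{\mathbb H}\leq c\|g\|_2$ by weak lower semicontinuity of the norm.

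For uniqueness, suppose $v\in\mathbb H$ satisfies $B(v,\phi)=0$ for every $\phi\in\mathbb H$. Testing with $\phi=v$ and taking real parts, the time contribution drops out (purely imaginary) and ellipticity forces $\nabla_{||} v=0$. Testing next with $\phi=H_t v\in\mathbb H$, the spatial term vanishes and $H_t^2=-I$ converts the time piece into $\|D_{1/2}^t v\|_2^2=0$; hence $v$ is a constant. The principal obstacle throughout is precisely the non-coercivity of $B$, and the delicate point is the uniform-in-$\delta$ estimate on $\|D_{1/2}^t u_\delta\|_2$, which rests entirely on the algebraic identity $H_t^2=-I$ applied through the test function $H_t u_\delta$.
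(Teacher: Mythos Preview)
Your argument is correct, and it uses the same core device as the paper (the perturbed sesquilinear form $B_\delta$ of \eqref{form2} together with Lax--Milgram), but you take a longer route than necessary. You solve $B_\delta(u_\delta,\phi)=\int g\cdot\overline{(I+\delta H_t)\nabla_{||}\phi}$ for each small $\delta$, then work to get bounds on $u_\delta$ that are \emph{uniform in $\delta$} (testing with $u_\delta$ and with $H_t u_\delta$), and finally pass to a weak limit $\delta\to 0$. The paper avoids all of this by observing the algebraic identity $B_\delta(u,\phi)=B(u,(I+\delta H_t)\phi)$ and likewise for the right-hand side, so that the Lax--Milgram solution for $B_\delta$ at a \emph{single fixed} small $\delta$ already satisfies $B(u,\psi)=\int g\cdot\nabla_{||}\bar\psi$ for every $\psi$ in the range of $I+\delta H_t$; since $\|H_t\|_{2\to 2}=1$, the operator $I+\delta H_t$ is invertible on $\mathbb H$ by a Neumann series, and the range is all of $\mathbb H$. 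No uniform-in-$\delta$ estimates or weak limits are needed, and the bound $\|u\|_{\mathbb H}\le c\|g\|_2$ follows directly from the coercivity you already established. Your uniqueness argument (test with $v$, then with $H_t v$) is exactly right and is in fact more explicit than the paper's, which simply asserts that uniqueness ``follows immediately.''
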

\begin{proof} This is Lemma 2.6 in \cite{N}. We here include the proof for completion. Consider the functionals
$$\Lambda_{g}(\phi)=\int_{\mathbb R^{n+1}}g\cdot{\nabla_{||}\bar \phi}\, dxdt,\  \Lambda_{g}^\delta(\phi)=\int_{\mathbb R^{n+1}}g\cdot{\nabla_{||}\overline{\phi_\delta}}\, dxdt, $$
$\phi_\delta=(I+\delta H_t)\phi$, $\phi\in {\mathbb H}(\mathbb R^{n+1},\mathbb C)$.  Then $\Lambda_{g}$ and $\Lambda_{g}^\delta$ are bounded linear functional on $\mathbb H={\mathbb H}(\mathbb R^{n+1},\mathbb C)$ and
$$|\Lambda_{g}(\phi)|+|\Lambda_{g}^\delta(\phi)|\leq c||g||_2||\phi||_{\mathbb H}.$$
Consider the bilinear form $B_\delta(\cdot,\cdot)$ introduced in \eqref{form2}. If $\delta=\delta(n,\Lambda)$ is small enough, then $B_\delta(\cdot,\cdot)$ is a bilinear, bounded, coercive form on $\mathbb H\times \mathbb H$. Hence, using the Lax-Milgram theorem we see that there exists a unique
$u\in {\mathbb H}$ such that
$$B(u,\phi_\delta)\equiv B_\delta(u,\phi)=\Lambda_{g}^\delta(\phi)\equiv \Lambda_{g}(\phi_\delta)$$
for all $\phi\in \mathbb H$. Using that $(I+\delta H_t)$ is invertible on $\mathbb H$, if $0<\delta\ll 1$ is small enough, we can conclude that
$$B(u,\psi)= \Lambda_{g}(\psi),$$
whenever $\psi\in {\mathbb H}$. The bound $||u||_{\mathbb H}\leq c||g||_2$ follows readily. This completes the existence and quantitative part of the lemma. The statement concerning uniqueness follows immediately.\end{proof}

\begin{lemma}\label{parahodge+} Let $\lambda>0$ be given. Consider the operator $\mathcal{H}_{||}=\partial_t-\div_{||} (A_{||}\nabla_{||} \cdot)$ and assume that $A$ satisfies \eqref{eq3}, \eqref{eq4}. Let  $f\in L^2(\mathbb R^{n+1},\mathbb C)$. Then there exists a weak solution $u\in\bar{\mathbb{H}}(\mathbb R^{n+1},\mathbb C)$ to the equation $u+\lambda^2\mathcal{H}_{||}u=f$, in $\mathbb R^{n+1}$, in the sense of Definition \ref{de2}. Furthermore,
$$||u||_{2}+||\lambda\nabla_{||} u||_{2}+||\lambda D_{1/2}^tu||_{2}\leq c||f||_2,$$
for some constant $c$ depending only on $n$ and $\Lambda$. The solution is unique.
\end{lemma}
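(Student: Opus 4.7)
The plan is to adapt the strategy of the proof of Lemma \ref{parahodge}, modifying the bilinear form by a small time-Hilbert perturbation so as to restore coercivity on $\bar{\mathbb H}$. Set $\phi_\delta:=(I+\delta H_t)\phi$ for $\delta\in(0,1)$, and on $\bar{\mathbb H}\times\bar{\mathbb H}$ define the sesquilinear form
\[
B^{\lambda,\delta}(u,\phi):=\int_{\mathbb R^{n+1}} u\,\overline{\phi_\delta}\,dxdt \;+\;\lambda^2 B_\delta(u,\phi),
\]
with $B_\delta$ as in \eqref{form2}, together with the functional $\Lambda_f^\delta(\phi):=\int_{\mathbb R^{n+1}} f\,\overline{\phi_\delta}\,dxdt$. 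Both are bounded on $\bar{\mathbb H}$, with a boundedness constant that may depend on $\lambda$; this is harmless, since Lax--Milgram will only be used to secure existence, while the $\lambda$-uniform quantitative estimate will be extracted by a separate energy argument.

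The core of the proof is the coercivity of $B^{\lambda,\delta}$ with an $L^2$-coefficient exactly equal to $1$. The relevant identities, all consequences of $H_t$ being a multiplier with symbol $i\,\mbox{sgn}(\tau)$ (so $H_t^*=-H_t$ and $H_t^2=-I$), are
\[
\mbox{Re}\int u\,\overline{H_t u}\,dxdt=0,\qquad \mbox{Re}\int D_{1/2}^t u\,\overline{H_t D_{1/2}^t u}\,dxdt=0,
\]
\[
-\mbox{Re}\int D_{1/2}^t u\,\overline{H_t D_{1/2}^t(H_t u)}\,dxdt=\|D_{1/2}^t u\|_2^2.
\]
Combining these with \eqref{eq3} yields
\[
\mbox{Re}\,B^{\lambda,\delta}(u,u)\;\geq\;\|u\|_2^2+\lambda^2\bigl[(\Lambda^{-1}-\delta\Lambda)\|\nabla_{||}u\|_2^2+\delta\|D_{1/2}^t u\|_2^2\bigr].
\]
Fix $\delta=\delta(n,\Lambda)$ so small that $\delta\Lambda<\Lambda^{-1}/2$ (and so that $(I+\delta H_t)$ is invertible on $\bar{\mathbb H}$). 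Lax--Milgram then furnishes a unique $u\in\bar{\mathbb H}$ with $B^{\lambda,\delta}(u,\phi)=\Lambda_f^\delta(\phi)$ for every $\phi\in\bar{\mathbb H}$. Writing $\psi=\phi_\delta$ and using that $(I+\delta H_t)$ is a bijection on $\bar{\mathbb H}$, this is equivalent to the formulation required in Definition \ref{de2}.

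For the quantitative bound, test with $\phi=u$:
\[
\|u\|_2^2 + c\lambda^2\bigl(\|\nabla_{||}u\|_2^2+\|D_{1/2}^t u\|_2^2\bigr)\;\leq\;\mbox{Re}\,B^{\lambda,\delta}(u,u)=\mbox{Re}\,\Lambda_f^\delta(u)\;\leq\;(1+\delta)\|f\|_2\|u\|_2,
\]
which yields $\|u\|_2\leq c\|f\|_2$ and then $\lambda\|\nabla_{||}u\|_2+\lambda\|D_{1/2}^t u\|_2\leq c\|f\|_2$, with $c=c(n,\Lambda)$. Uniqueness is immediate: two solutions differ by some $u\in\bar{\mathbb H}$ satisfying the homogeneous equation, so $B^{\lambda,\delta}(u,u)=0$ and coercivity forces $u=0$. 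The main subtlety---and the reason the estimate is $\lambda$-uniform rather than degenerating as $\lambda\to 0$ or blowing up as $\lambda\to\infty$---is that the $L^2$-self-pairing $\int u\,\overline{\phi_\delta}\,dxdt$ contributes exactly $\|u\|_2^2$ to $\mbox{Re}\,B^{\lambda,\delta}(u,u)$ regardless of $\delta$, thanks to the cancellation $\mbox{Re}\int u\,\overline{H_t u}\,dxdt=0$, so the dimensionless bound on $\|u\|_2$ survives the perturbation cleanly.
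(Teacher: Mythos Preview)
Your argument is correct and follows exactly the approach indicated by the paper, which refers to the proof of Lemma~2.7 in \cite{N}; that proof, like the one you give, transplants the $(I+\delta H_t)$-perturbation trick from Lemma~\ref{parahodge} to the augmented form $\int u\,\overline{\phi_\delta}+\lambda^2 B_\delta(u,\phi)$ on $\bar{\mathbb H}$, with the key observation that the cross-term $\mbox{Re}\int u\,\overline{H_t u}$ vanishes so the $L^2$-coercivity constant is exactly $1$ and the final estimate is $\lambda$-uniform.
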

\begin{proof} See the proof of Lemma 2.7 in \cite{N}.\end{proof}

    \begin{remark}\label{rem1} Definition \ref{de1}, Definition \ref{de2}, Lemma \ref{parahodge}, and Lemma \ref{parahodge+}, all have analogous formulations for the
    operator  $\mathcal{H}_{||}^\ast$.
    \end{remark}

\subsection{Estimates of resolvents} Let $\lambda>0$ be given. Consider the operator $\mathcal{H}_{||}=\partial_t-\div_{||} (A_{||}\nabla_{||} \cdot)$. Let  $f\in L^2(\mathbb R^{n+1},\mathbb C)$. Then by Lemma \ref{parahodge+} the equation $u+\lambda^2\mathcal{H}_{||}u=f$ has a unique weak solution $u\in\bar{\mathbb H}$. From now on we will denote this solution by $ \mathcal{E}_\lambda f$. In the case of the operator $\mathcal{H}_{||}^\ast$ we denote the corresponding solution by $ \mathcal{E}_\lambda^\ast f$. In this sense $\mathcal{E}_\lambda=(I+\lambda^2\mathcal{H}_{||})^{-1}$ and $\mathcal{E}_\lambda^\ast=(I+\lambda^2\mathcal{H}_{||}^\ast)^{-1}$. We here collect some estimates of quantities build on
$\mathcal{E}_\lambda f$ and $\mathcal{E}_\lambda^\ast f$ to be used in the forthcoming sections.

 \begin{lemma}\label{le8-}  Let $\lambda>0$ be given. Consider the operator $\mathcal{H}_{||}=\partial_t-\div_{||} (A_{||}\nabla_{||} \cdot)$ and assume that $A$ satisfies \eqref{eq3}, \eqref{eq4}. Let $\Theta_\lambda$ denote any of  the operators
       \begin{eqnarray}\label{aa1}
       &&\mbox{$\mathcal{E}_\lambda$, $\lambda \nabla_{||}\mathcal{E}_\lambda$, $\lambda D_{1/2}^t\mathcal{E}_\lambda$},
       \end{eqnarray}
       or
   \begin{eqnarray}\label{aa3}
       &&\mbox{$\lambda\mathcal{E}_\lambda D_{1/2}^t$, $\lambda^2 \nabla_{||}\mathcal{E}_\lambda D_{1/2}^t $, $\lambda^2 D_{1/2}^t\mathcal{E}_\lambda D_{1/2}^t$},
       \end{eqnarray}
       and let $\tilde \Theta_\lambda$ denote any of  the operators
         \begin{eqnarray}\label{aa2}
       &&\mbox{$\lambda\mathcal{E}_\lambda\div_{||} $, $\lambda^2 \nabla_{||}\mathcal{E}_\lambda\div_{||} $, $\lambda^2 D_{1/2}^t\mathcal{E}_\lambda\div_{||}$}.
       \end{eqnarray}
       Then there exist $c$, depending only on $n, \Lambda$, such that
           \begin{eqnarray}
       (i)&&\int_{\mathbb R^{n+1}}\ |\Theta_\lambda f(x,t)|^2\, dxdt\leq c\int_{\mathbb R^{n+1}}\ |f(x,t)|^2\, dxdt,\notag\\
       (ii)&&\int_{\mathbb R^{n+1}}\ |\tilde \Theta_\lambda  {\bf f}(x,t)|^2\, dxdt\leq
       c\int_{\mathbb R^{n+1}}\ |{\bf f}(x,t)|^2\, dxdt,
       \end{eqnarray}
      whenever $f\in L^2(\mathbb R^{n+1},\mathbb C)$, ${\bf f}\in L^2(\mathbb R^{n+1},\mathbb C^{n})$.\end{lemma}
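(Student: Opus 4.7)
The plan is to prove all nine estimates in one blow by reducing to a single weak problem with three different right-hand sides, then running a unified energy argument based on the modified test function $\phi = (I+\delta H_t) w$, exactly the device used to establish Lemma \ref{parahodge+} itself. First, set up the three weak problems. For $F = f \in L^2(\mathbb R^{n+1},\mathbb C)$, existence of $w = \mathcal{E}_\lambda f \in \bar{\mathbb H}$ solving $\int w\bar\phi + \lambda^2 B(w,\phi) = \int f\bar\phi$ is Lemma \ref{parahodge+}. For $F = \div_{||} {\bf f}$ with ${\bf f}\in L^2(\mathbb R^{n+1},\mathbb C^n)$, interpret $w = \mathcal{E}_\lambda \div_{||}{\bf f}$ as the unique solution in $\bar{\mathbb H}$ of $\int w\bar\phi + \lambda^2 B(w,\phi) = -\int {\bf f}\cdot \overline{\nabla_{||}\phi}$; existence follows from Lax--Milgram applied to $(u,\phi)\mapsto \int u\bar\phi+\lambda^2 B_\delta(u,\phi)$ with the form $B_\delta$ from \eqref{form2}, since $|\!-\!\int {\bf f}\cdot\overline{\nabla_{||}\phi}|\leq \|{\bf f}\|_2\|\phi\|_{\bar{\mathbb H}}$. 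For $F = D^t_{1/2}f$ with $f\in L^2$, interpret $w = \mathcal{E}_\lambda D^t_{1/2}f$ analogously as the solution of $\int w\bar\phi + \lambda^2 B(w,\phi) = \int f\,\overline{D^t_{1/2}\phi}$, again using $\|D^t_{1/2}\phi\|_2\leq \|\phi\|_{\bar{\mathbb H}}$ for Lax--Milgram.

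Second, test each weak formulation with $\phi = (I+\delta H_t) w$, with $\delta=\delta(n,\Lambda)>0$ fixed small. Since $H_t^2=-I$ on $L^2$, $D^t_{1/2}$ is self-adjoint, $H_t$ is skew-adjoint (all by Plancherel), and all these Fourier multipliers commute with one another and with $\nabla_{||}$, a short computation gives
\begin{equation*}
\mathrm{Re}\int w\,\overline{(I+\delta H_t)w}\,dxdt = \|w\|_2^2,
\end{equation*}
and, using the identity $H_t(I+\delta H_t) = H_t - \delta$ in the time piece of $B(w,(I+\delta H_t) w)$, together with \eqref{eq3} $(i)$ in the spatial piece (with $c\delta\|\nabla_{||} w\|_2^2$ absorbed for $\delta$ small),
\begin{equation*}
\mathrm{Re}(\lambda^2 B(w,(I+\delta H_t)w))\geq c_1\lambda^2\bigl(\|\nabla_{||} w\|_2^2+\delta\|D^t_{1/2} w\|_2^2\bigr).
\end{equation*}
Hence in every case
\begin{equation*}
\|w\|_2^2 + c_1\lambda^2\bigl(\|\nabla_{||} w\|_2^2+\delta\|D^t_{1/2} w\|_2^2\bigr)\leq |\mathrm{RHS}|.
\end{equation*}

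Third, close each case by Young's inequality on $|\mathrm{RHS}|$. For $F=f$: $|\mathrm{RHS}|\leq (1+\delta)\|f\|_2\|w\|_2$ yields $\|w\|_2,\,\lambda\|\nabla_{||} w\|_2,\,\lambda\|D^t_{1/2}w\|_2\leq c\|f\|_2$, i.e. (i) for the operators in \eqref{aa1}. For $F=\div_{||}{\bf f}$: $|\mathrm{RHS}|\leq c\|{\bf f}\|_2\|\nabla_{||} w\|_2$; absorbing $\tfrac{c_1}{2}\lambda^2\|\nabla_{||} w\|_2^2$ on the left produces $\|w\|_2^2+\lambda^2(\|\nabla_{||} w\|_2^2+\delta\|D^t_{1/2} w\|_2^2)\leq c\lambda^{-2}\|{\bf f}\|_2^2$, which is (ii) for the operators in \eqref{aa2}. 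For $F=D^t_{1/2}f$: $|\mathrm{RHS}|\leq (1+\delta)\|f\|_2\|D^t_{1/2} w\|_2$ (using that $D^t_{1/2}$ commutes with $I+\delta H_t$); absorbing $\tfrac{c_1\delta}{2}\lambda^2\|D^t_{1/2} w\|_2^2$ on the left gives the analogous bound and hence (i) for the operators in \eqref{aa3}.

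The main obstacle is that naive $L^2$ duality is powerless for the estimates $\|\lambda^2 D^t_{1/2}\mathcal{E}_\lambda\div_{||}\|_{2\to 2}$ and $\|\lambda^2 D^t_{1/2}\mathcal{E}_\lambda D^t_{1/2}\|_{2\to 2}$, because taking adjoints sends these operators to ones of the same structural form for $\mathcal{H}_{||}^\ast$, and so yields no reduction. The point of testing with $(I+\delta H_t) w$ is precisely that the skew-symmetric perturbation $\delta H_t$ unlocks a coercive contribution of $\delta\|D^t_{1/2} w\|_2^2$ which would otherwise be invisible to a real-valued energy inequality, and this single gain closes all three $D^t_{1/2}$-sensitive cases at once. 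Modulo the routine Fourier-multiplier identities listed above, the argument is otherwise a direct parallel of the proof of Lemma \ref{parahodge+}.
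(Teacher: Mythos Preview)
Your proof is correct and is precisely the argument one expects: the paper does not give a self-contained proof here but simply cites Lemma 2.11 in \cite{N}, and your argument, based on testing the weak formulation against $(I+\delta H_t)w$ exactly as in the proofs of Lemma \ref{parahodge} and Lemma \ref{parahodge+}, is the standard way to establish that result. The key observation---that the skew perturbation $\delta H_t$ produces the otherwise missing coercive term $\delta\|D^t_{1/2}w\|_2^2$, which then closes the $D^t_{1/2}$-sensitive cases via Young's inequality---is exactly right, and your unified treatment of the three right-hand sides $f$, $\div_{||}{\bf f}$, $D^t_{1/2}f$ is clean and complete.
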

\begin{proof}  This is Lemma 2.11 in \cite{N}.\end{proof}

\begin{lemma}\label{le8-+} Let $\lambda>0$ be given. Consider the operator $\mathcal{H}_{||}=\partial_t-\div_{||} (A_{||}\nabla_{||} \cdot)$ and assume that $A$ satisfies \eqref{eq3}, \eqref{eq4}. Let $\Theta_\lambda$ denote any of  the operators
       \begin{eqnarray}\label{aa1a}
       &&\mbox{$\mathcal{E}_\lambda$, $\lambda \nabla_{||}\mathcal{E}_\lambda$},
       \end{eqnarray}
       and let $\tilde \Theta_\lambda$ denote any of the operators
         \begin{eqnarray}\label{aa2a}
       &&\mbox{$\lambda\mathcal{E}_\lambda\div_{||} $, $\lambda^2\nabla_{||}\mathcal{E}_\lambda\div_{||} $.}
       \end{eqnarray}
       Let $E$ and $F$ be two closed sets in $\mathbb R^{n+1}$ and let $d_p(E,F)$ denote the parabolic distance between $E$ and $F$, i.e., $$d_p(E,F)=\min\{||(x-y,y-s)||\ (x,t)\in E,\ (y,s)\in F\}.$$ Then there exist $c$, $1\leq c<\infty$, depending only on $n$, $\Lambda$, such that
       \begin{eqnarray}
       (i)&&\int_{F}\ |\Theta_\lambda f(x,t)|^2\, dxdt\leq ce^{-c^{-1}(d_p(E,F)/{\lambda})}\int_{E}\ |f(x,t)|^2\, dxdt,\notag\\
       (ii)&&\int_{F}\ |\tilde \Theta_\lambda  {\bf f}(x,t)|^2\, dxdt\leq
       ce^{-c^{-1}(d_p(E,F)/{\lambda})}\int_{E}\ |{\bf f}(x,t)|^2\, dxdt,
       \end{eqnarray}
       whenever $f\in L^2(\mathbb R^{n+1},\mathbb C)$, ${\bf f}\in L^2(\mathbb R^{n+1},\mathbb C^{n+1})$, and $\mbox{supp }f\subset E$, $\mbox{supp }{\bf f}\subset E$.
\end{lemma}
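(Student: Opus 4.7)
The plan is a Davies-Gaffney weighted-energy argument, adapted to the parabolic bilinear form $B$ defining $\mathcal{E}_\lambda$. By rescaling $(x,t) \to (\lambda x, \lambda^2 t)$ (which rescales $A_{||}(x)$ to $A_{||}(\lambda x)$) it suffices to treat the case $\lambda = 1$: show that, for $f$ supported in $E$, $\|\Theta_1 f\|_{L^2(F)} \leq C e^{-d/C}\|f\|_{L^2(E)}$ with $d = d_p(E,F)$, and analogously for $\tilde\Theta_1$. One may assume $d$ is large, otherwise Lemma \ref{le8-} already yields the bound.

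The first step is to construct a \emph{parabolic Lipschitz} weight. Fix a small $\alpha \in (0,1)$ to be chosen, and build $\eta \in C^\infty(\mathbb R^{n+1})$ with $0 \leq \eta \leq d$, $\eta \equiv 0$ in a neighborhood of $E$, $\eta \equiv d$ on $F$, and the two bounds $|\nabla_{||}\eta| \leq 1$ and $\|H_t D_{1/2}^t\eta\|_\infty \leq 1$. The second bound, the analogue of spatial Lipschitz adapted to the nonlocal half-derivative appearing in $B$, forces $\eta$ to be a carefully mollified truncation of a parabolic distance function, along the lines of the constructions in \cite{N}, \cite{HL4}, \cite{LM}. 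Set $w := e^{\alpha\eta}$ and $v := wu$ with $u := \mathcal{E}_1 f$. The two key commutator properties of the weight are the pointwise identity $\nabla_{||} w = \alpha w\,\nabla_{||}\eta$, and the operator bound $\|[H_t D_{1/2}^t, w]\|_{L^2\to L^2} \leq C\alpha$, which is proved via the principal-value integral representation of $H_t D_{1/2}^t$ together with the $L^\infty$-bound on $H_t D_{1/2}^t\eta$.

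Now comes the weighted energy estimate. Testing $u + \mathcal{H}_{||} u = f$ against $(I + \delta H_t)(w^2 u)$ -- where $\delta = \delta(n,\Lambda)$ is the small constant that, via the form $B_\delta$ in \eqref{form2}, renders the weighted bilinear expression coercive on $\nabla_{||} v$ and $D_{1/2}^t v$ simultaneously, as in Lemma \ref{parahodge+} -- produces
\begin{equation*}
\int v\,\overline{(I+\delta H_t)v}\,dxdt + B_\delta(v,v) = \int (wf)\,\overline{(I+\delta H_t)v}\,dxdt + R_\alpha,
\end{equation*}
where $R_\alpha$ collects the commutator terms arising from conjugating $\nabla_{||}$ and $H_t D_{1/2}^t$ by $w$. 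The spatial commutators are controlled pointwise by $\alpha|v||\nabla_{||}v|$; the time commutators are controlled by $\alpha\|v\|_2\|D_{1/2}^t v\|_2$ via the operator-norm bound above. Using coercivity, $\mathrm{Re}\,B_\delta(v,v) \geq c(\|\nabla_{||}v\|_2^2 + \|D_{1/2}^t v\|_2^2)$, taking real parts, and absorbing, one arrives at $\|v\|_2^2 + \|\nabla_{||}v\|_2^2 + \|D_{1/2}^t v\|_2^2 \leq C\|f\|_{L^2(E)}^2$ (using $\|wf\|_2 = \|f\|_2$ since $w \equiv 1$ on $\supp f \subset E$), provided $\alpha$ is chosen small enough in terms of $n,\Lambda$. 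Since $w \geq e^{\alpha d}$ on $F$, this yields $\|u\|_{L^2(F)}^2 + \|\nabla_{||}u\|_{L^2(F)}^2 \leq Ce^{-2\alpha d}\|f\|_{L^2(E)}^2$, which after reinstating $\lambda$ is $(i)$ for $\Theta_\lambda \in \{\mathcal{E}_\lambda,\lambda\nabla_{||}\mathcal{E}_\lambda\}$.

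The estimates $(ii)$ follow by the same method, either directly -- by testing $u + \mathcal{H}_{||} u = \div_{||}{\bf f}$ against $(I+\delta H_t)(w^2 u)$, noting that the source pairing $-\int_E {\bf f}\cdot\nabla_{||}\overline{(I+\delta H_t)(w^2 u)}$ sees $w$ only on $E$ where $w\equiv 1$ -- or by duality from $(i)$ applied to $\mathcal{H}_{||}^\ast$ (Remark \ref{rem1}) using $(\lambda\mathcal{E}_\lambda\div_{||})^\ast = -\lambda\nabla_{||}\mathcal{E}_\lambda^\ast$. The genuinely difficult point of the proof is the commutator bound $\|[H_t D_{1/2}^t, w]\|_{L^2\to L^2} = O(\alpha)$: unlike the elliptic case where the chain rule gives $[\nabla, w] = \alpha w\nabla\eta$ directly, the nonlocality of the half-derivative forces a singular-integral analysis, and the $L^\infty$-bound on $H_t D_{1/2}^t\eta$ is precisely the non-standard parabolic Lipschitz condition that must be built into the weight $\eta$ from the outset, dictating its particular construction.
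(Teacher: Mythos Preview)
Your Davies--Gaffney strategy is the natural one and the spatial part is fine, but the treatment of the time term has a real gap. The claimed bound $\|[H_tD_{1/2}^t,w]\|_{L^2\to L^2}\le C\alpha$ cannot hold as written: $w=e^{\alpha\eta}$ ranges up to $e^{\alpha d}$, and any unweighted $L^2\to L^2$ commutator estimate with an unbounded multiplier must feel $\|w\|_\infty$. What the energy identity actually requires are \emph{conjugated} bounds of the type $\|w^{-1}[H_tD_{1/2}^t,w]\|_{2\to 2}\le C\alpha$, and these do not follow from $\|H_tD_{1/2}^t\eta\|_\infty\le 1$ by the kernel argument you indicate: the conjugated commutator kernel is only $O(\alpha|t-s|^{-1})$ near the diagonal, which is neither integrable nor a standard Calder\'on--Zygmund kernel (the H\"older-$\tfrac12$ regularity of a parabolic-Lipschitz $\eta$ gives no bound on $\partial_t\eta$, so the CZ smoothness condition fails too). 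In short, the ``non-standard parabolic Lipschitz'' condition you impose is not shown to do the job you assign it.

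The paper itself offers no argument, citing Lemma~2.13 of \cite{N}. A cleaner route than your half-derivative formulation is to use a weight $\eta$ that is \emph{Euclidean}-Lipschitz in $(x,t)$ (so $|\nabla_{||}\eta|\le 1$ and $|\partial_t\eta|\le 1$), and to test the equation in its $\partial_t$-form against $w^2\bar u$ for smooth $f$ (the $t$-independence of $A$ in \eqref{eq4} gives $\partial_t u=\mathcal{E}_\lambda\partial_t f\in L^2$, so this is legitimate; extend to general $f$ by density). The time term then contributes $-\alpha\lambda^2\int(\partial_t\eta)|v|^2$, which is harmlessly absorbed, and one obtains $\|v\|_2^2+\|\lambda\nabla_{||}v\|_2^2\le C\|f\|_{L^2(E)}^2$ for $\alpha$ small. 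Since (after rescaling to $\lambda=1$) the Euclidean distance between $E$ and $F$ dominates a fixed multiple of $d_p(E,F)$ once the latter is $\gtrsim 1$, this already yields the stated exponential decay; $(ii)$ then follows exactly as you say, by duality or by the same test with $\div_{||}\mathbf f$ on the right. The half-derivative commutator machinery is thus avoidable here.
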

\begin{proof}  This is Lemma 2.13 in \cite{N}.\end{proof}

\begin{theorem}\label{thm1}  Consider the operators $\mathcal{H}_{||}=\partial_t+\mathcal{L}_{||}=\partial_t-\div_{||} (A_{||}\nabla_{||} \cdot)$, $\mathcal{H}_{||}^\ast=-\partial_t+\mathcal{L}_{||}^\ast=-\partial_t-\div_{||} (A_{||}^\ast\nabla_{||} \cdot)$, and assume that $A$ satisfies \eqref{eq3}, \eqref{eq4}.
Then there exists a constant $c$, $1\leq c<\infty$, depending only on $n$, $\Lambda$, such that
        \begin{eqnarray}\label{ea1}
      |||\lambda \mathcal{E}_\lambda \mathcal{H}_{||}f|||_++|||\lambda \mathcal{E}_\lambda^\ast\mathcal{H}_{||}^\ast f|||_+\leq c||\mathbb D f||_2,
      \end{eqnarray}
      and
      \begin{eqnarray}\label{ea2}
               (i)&&|||\partial_\lambda\mathcal{E}_\lambda f|||_++|||\partial_\lambda\mathcal{E}_\lambda^\ast f|||_+\leq c||\mathbb Df||_2,\notag\\
               (ii)&&|||\lambda\partial_t\mathcal{E}_\lambda f|||_++|||\lambda\partial_t\mathcal{E}_\lambda^\ast f|||_+\leq c||\mathbb Df||_2,\notag\\
       (iii)&&|||\lambda\mathcal{E}_\lambda \mathcal{L}_{||} f|||_++|||\lambda\mathcal{E}_\lambda^\ast \mathcal{L}_{||}^\ast f|||_+\leq c||\mathbb Df||_2,\notag\\
       (iv)&&|||\lambda \mathcal{L}_{||}\mathcal{E}_\lambda  f|||_++|||\lambda \mathcal{L}_{||}^\ast\mathcal{E}_\lambda^\ast f|||_+\leq c||\mathbb Df||_2,
      \end{eqnarray}
      whenever $f\in\mathbb H(\mathbb R^{n+1},\mathbb C)$. \end{theorem}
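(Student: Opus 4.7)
My plan is to prove \eqref{ea1} first and then derive the five bounds in \eqref{ea2} from it via algebraic identities and Lemma \ref{le8-}. The key observation is that, since $A$ is independent of $t$, the resolvent $\mathcal{E}_\lambda$ commutes with $\partial_t$, with $\mathcal{L}_{||}$, and hence with $\mathcal{H}_{||}$ on a suitable dense subset; together with the resolvent identity $\lambda^2 \mathcal{H}_{||}\mathcal{E}_\lambda = I - \mathcal{E}_\lambda$ and the Plancherel equivalence $\|\mathbb{D} f\|_2 \approx \|f\|_{\mathbb H}$ from \eqref{uau+}, this recasts all target estimates as quantitative statements about how well $\mathcal{E}_\lambda$ approximates the identity in the parabolic Sobolev scale $\mathbb H$.

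\textbf{Reduction of \eqref{ea1} and (i) to a single Littlewood--Paley bound.} Commutation gives $\lambda^2 \mathcal{E}_\lambda \mathcal{H}_{||} f = (I-\mathcal{E}_\lambda)f$, hence
\begin{equation*}
|||\lambda \mathcal{E}_\lambda \mathcal{H}_{||} f|||_+^2 \;=\; \int_0^\infty \|(I-\mathcal{E}_\lambda) f\|_2^2\, \frac{d\lambda}{\lambda^3}.
\end{equation*}
Differentiating $(I+\lambda^2 \mathcal{H}_{||})\mathcal{E}_\lambda = I$ in $\lambda$ yields $\lambda \partial_\lambda \mathcal{E}_\lambda = -2\mathcal{E}_\lambda(I-\mathcal{E}_\lambda)$, so the $L^2$-boundedness of $\mathcal{E}_\lambda$ from Lemma \ref{le8-}(i) gives $|||\partial_\lambda \mathcal{E}_\lambda f|||_+^2 \lesssim \int_0^\infty \|(I-\mathcal{E}_\lambda)f\|_2^2 \lambda^{-3}\,d\lambda$ as well. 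Both \eqref{ea1} and (i) of \eqref{ea2} thus reduce to the single square-function bound
\begin{equation*}
\int_0^\infty \|(I-\mathcal{E}_\lambda)f\|_2^2\, \frac{d\lambda}{\lambda^3} \;\lesssim\; \|f\|_{\mathbb H}^2.
\end{equation*}
I would prove this by duality: pair against a test $G \in L^2(\mathbb R^{n+2}_+; \lambda^{-1}dxdtd\lambda)$, rewrite $(I-\mathcal{E}_\lambda)f = \lambda^2 \mathcal{H}_{||}\mathcal{E}_\lambda f$, and invoke the weak formulation (Definition \ref{de2}) to convert the pairing into $\int_0^\infty B(\mathcal{E}_\lambda f, G(\cdot,\lambda))\, d\lambda$. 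The uniform bound $\|\mathcal{E}_\lambda f\|_{\mathbb H} \lesssim \|f\|_{\mathbb H}$, obtained by testing the weak formulation for $\mathcal{E}_\lambda f$ itself in the coerced form $B_\delta$ of \eqref{form2}, together with boundedness of $B$ on $\mathbb H \times \mathbb H$, then sets up the remaining estimate on $G$, which is closed by the parabolic quadratic-estimate machinery developed in \cite{N}.

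\textbf{Derivation of (ii), (iii), (iv).} Granted \eqref{ea1}, the other bounds follow algebraically. Writing
\begin{equation*}
\lambda\mathcal{L}_{||}\mathcal{E}_\lambda f \;=\; \lambda\mathcal{H}_{||}\mathcal{E}_\lambda f - \lambda\partial_t\mathcal{E}_\lambda f \;=\; \lambda^{-1}(I-\mathcal{E}_\lambda)f - \lambda\partial_t\mathcal{E}_\lambda f,
\end{equation*}
estimate (iv) reduces to \eqref{ea1} combined with (ii), and (iii) is identical to (iv) by the commutation $\mathcal{L}_{||}\mathcal{E}_\lambda = \mathcal{E}_\lambda\mathcal{L}_{||}$. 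For (ii), since $\mathcal{E}_\lambda$ commutes with $\partial_t = D^t_{1/2}H_tD^t_{1/2}$, I factor $\lambda\partial_t\mathcal{E}_\lambda f = H_t(\lambda D^t_{1/2}\mathcal{E}_\lambda)(D^t_{1/2} f)$, whence Lemma \ref{le8-}(i) applied to $\Theta_\lambda = \lambda D^t_{1/2}\mathcal{E}_\lambda$ gives the uniform bound $\|\lambda\partial_t\mathcal{E}_\lambda f\|_2 \lesssim \|D^t_{1/2} f\|_2 \lesssim \|\mathbb{D} f\|_2$. The upgrade from this uniform-in-$\lambda$ bound to the square-function norm $|||\cdot|||_+$ is achieved through (i) just established, via the identity $\partial_\lambda\mathcal{E}_\lambda = -2\lambda^{-1}\mathcal{E}_\lambda(I-\mathcal{E}_\lambda)$ and a dyadic telescoping in $\lambda$ exploiting the endpoint behavior of $(I-\mathcal{E}_\lambda)f$ as $\lambda \to 0$ and $\lambda \to \infty$. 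The analogous estimates for $\mathcal{E}_\lambda^\ast$ follow by applying the same scheme to $\mathcal{H}_{||}^\ast$.

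\textbf{Main obstacle.} The technical heart of the argument is the Littlewood--Paley bound reduced to in the second paragraph. The naïve estimate $\|(I-\mathcal{E}_\lambda)f\|_2 \lesssim \|f\|_2$ from Lemma \ref{le8-} is far too weak, producing a divergent $\lambda^{-3}d\lambda$-integral at both endpoints, and even the pointwise-in-$\lambda$ refinement $\|(I-\mathcal{E}_\lambda)f\|_2 \lesssim \lambda\|f\|_{\mathbb H}$ is insufficient without exploiting frequency localization. A genuine quadratic estimate on the operator family $\{I-\mathcal{E}_\lambda\}_{\lambda>0}$ is required: in the self-adjoint constant-coefficient case this is a direct Plancherel computation, but in the present non-self-adjoint parabolic setting the coupling of the first-order time derivative with the elliptic operator inside $\mathcal{H}_{||}$, and the attendant lack of a sectorial structure, force the use of the full functional-calculus and $T(b)$-style machinery developed for the parabolic Kato problem in \cite{N}. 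The coerced form $B_\delta$ from \eqref{form2}, together with the skew-adjointness of $\partial_t$ in $L^2(\mathbb R^{n+1})$, is precisely the device that compensates for the absence of sectoriality and makes the energy estimates close.
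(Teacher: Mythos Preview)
The paper does not actually prove this theorem: its entire proof is a citation of Theorem~1.17 and Corollary~1.18 in \cite{N}. Your proposal goes further by unpacking the algebraic structure, and your reductions of \eqref{ea1} and \eqref{ea2}(i) to the single Littlewood--Paley bound
\[
\int_0^\infty \|(I-\mathcal{E}_\lambda)f\|_2^2\,\frac{d\lambda}{\lambda^3}\;\lesssim\;\|\mathbb D f\|_2^2
\]
via the identities $\lambda^2\mathcal{E}_\lambda\mathcal{H}_{||}=(I-\mathcal{E}_\lambda)$ and $\lambda\partial_\lambda\mathcal{E}_\lambda=-2\mathcal{E}_\lambda(I-\mathcal{E}_\lambda)$ are correct, as is your identification of this bound as the parabolic Kato-type core estimate established in \cite{N}.

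There is, however, a genuine gap in your derivation of (ii). A uniform-in-$\lambda$ bound on a family $\{\Theta_\lambda\}$ never implies a square-function bound without additional structure, and no ``dyadic telescoping'' using (i) and the identity for $\partial_\lambda\mathcal{E}_\lambda$ can produce one: those ingredients concern $(I-\mathcal{E}_\lambda)$, not the time-derivative piece. More to the point, \eqref{ea1} controls only the square function of the \emph{sum} $\lambda(\partial_t+\mathcal{L}_{||})\mathcal{E}_\lambda f=\lambda^{-1}(I-\mathcal{E}_\lambda)f$; splitting this into the separate pieces $\lambda\partial_t\mathcal{E}_\lambda f$ and $\lambda\mathcal{L}_{||}\mathcal{E}_\lambda f$ is exactly what distinguishes (ii)--(iv) from \eqref{ea1}, and it requires an independent square-function estimate (essentially for $\lambda D^t_{1/2}\mathcal{E}_\lambda$ on $L^2$, or equivalently for $\lambda\mathcal{E}_\lambda\div_{||}$) of the same depth as \eqref{ea1} itself. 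This is part of what Corollary~1.18 in \cite{N} establishes, so (ii)--(iv) are not algebraic consequences of \eqref{ea1} as you claim but require parallel work.
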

      \begin{proof}  \eqref{ea1} is Theorem 1.17 in \cite{N}, \eqref{ea2} $(i)-(iv)$ is Corollary 1.18 in \cite{N}.
       \end{proof}
\begin{remark}\label{commuter} Note that $\mathcal{E}_\lambda$ and $\mathcal{H}_{||}$ commute. To see this we let, arguing formally, $u=\mathcal{E}_\lambda f$ and $\tilde u=\mathcal{H}_{||}u$. Then, by definition
      $u$ satisfies $u+\lambda^2\mathcal{H}_{||}u=f$ and hence $\tilde u+\lambda^2\mathcal{H}_{||}\tilde u=\mathcal{H}_{||}f$. In particular,
      $\tilde u=\mathcal{E}_\lambda \mathcal{H}_{||}f$ and we can conclude, by uniqueness of $\tilde u$, that
      \begin{eqnarray}\label{ea3-}\mathcal{H}_{||}\mathcal{E}_\lambda=\mathcal{E}_\lambda\mathcal{H}_{||},
      \end{eqnarray}
      i.e., $\mathcal{E}_\lambda$ and $\mathcal{H}_{||}$ commute. Furthermore,
       \begin{eqnarray}\label{ea3}
      \mathcal{L}_{||}\mathcal{E}_\lambda-\mathcal{E}_\lambda \mathcal{L}_{||}&=&\mathcal{H}_{||}\mathcal{E}_\lambda-\mathcal{E}_\lambda\mathcal{H}_{||}\notag\\
      &&-(\partial_t\mathcal{E}_\lambda-\mathcal{E}_\lambda \partial_t)=0+0,
      \end{eqnarray}
      by \eqref{ea3-} and as  $\partial_t$ and $\mathcal{E}_\lambda$ commute.
\end{remark}

For reference we here also state the following lemma which is important in the proof of Theorem \ref{thm1} and which will be used in Section \ref{sec8}.
\begin{lemma} \label{ilem2--+} Let $\lambda>0$ be given. Assume that  $\mathcal{H}_{||}=\partial_t+\mathcal{L}_{||}=\partial_t-\div A_{||}\nabla_{||}$ satisfies \eqref{eq3}-\eqref{eq4}. Consider a map
     \begin{eqnarray*}
\gamma_\lambda:\mathbb R^{n+1}\to \mathbb C^n.
\end{eqnarray*}
Then there exist an $\epsilon\in (0,1)$, depending only on $n$, $\Lambda$, a finite set $W$ of unit vectors in
     $\mathbb C^n$, whose
     cardinality depends on $\epsilon$ and $n$, and, for each cube $Q\subset\mathbb R^{n+1}$, a mapping
     $f^{\epsilon}_{Q,w}:\mathbb R^{n+1}\to\mathbb C$  such that the following hold.
     \begin{eqnarray*}
     (i)&& \int_{\mathbb R^{n+1}}|\mathbb D f^{\epsilon}_{Q,w}|^2\, dxdt+\int_{\mathbb R^{n+1}}|\mathbb D_{n+1}f^{\epsilon}_{Q,w}|^2\, dxdt\leq c_1 |Q|,\notag\\
     (ii)&& \int_{\mathbb R^{n+1}}|\partial_tf^{\epsilon}_{Q,w}|^2\, dxdt+\int_{\mathbb R^{n+1}}|\mathcal{L}_{||}f^{\epsilon}_{Q,w}|^2\, dxdt\leq c_2|Q|/l(Q)^2,\notag\\
     (iii)&&\frac 1{|Q|}\int_0^{l(Q)}\int_Q|\gamma_\lambda(x,t)|^2\frac {dxdtd\lambda}\lambda\notag\\
     &&\leq c_3\sum_{w\in W}\frac 1{|Q|}\int_0^{l(Q)}\int_Q|\gamma_\lambda\cdot\mathcal{A}_\lambda^Q \nabla_{||} f^{\epsilon}_{Q,w}|\frac {dxdtd\lambda}\lambda,
     \end{eqnarray*}
     for some constants  $c_1$, $c_2$, $c_3$. $c_1$ depends only on $n$, $\Lambda$, but $c_2$ and $c_3$ are also allowed to depend on
     $\epsilon$. Here $\mathcal{A}_\lambda^Q$ is the dyadic averaging operator induced by $Q$ and defined in \eqref{dy}.
     \end{lemma}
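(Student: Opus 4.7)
Following the Kato-style construction of \cite{AHLMcT} adapted to the parabolic setting of \cite{N}, my plan is to define $f^{\epsilon}_{Q,w}$ as a resolvent image of a truncated linear function. Fix a smooth parabolic cutoff $\eta\in C_0^\infty(\mathbb{R}^{n+1},[0,1])$ with $\eta\equiv 1$ on $Q_1(0,0)$ and $\supp\eta\subset Q_2(0,0)$, and for a cube $Q=Q_{l(Q)}(x_Q,t_Q)$ and a unit vector $w\in\mathbb{C}^n$ set
\[
\Phi_{Q,w}(x,t) := \eta\!\left(\tfrac{x-x_Q}{l(Q)},\tfrac{t-t_Q}{l(Q)^2}\right)(x-x_Q)\cdot w, \qquad f^{\epsilon}_{Q,w}:=\mathcal{E}_{\epsilon l(Q)}\Phi_{Q,w}.
\]
I would take $W\subset\mathbb{C}^n$ to be a finite $\delta$-net of unit vectors, chosen so that for each unit $v\in\mathbb{C}^n$ there exists $w\in W$ with $\mathrm{Re}(\overline{w}\cdot v)\ge c_0>0$; the cardinality of $W$ then depends only on $n$ and $\epsilon$.

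For \emph{(i)}, I would test the resolvent identity $f^{\epsilon}_{Q,w}+(\epsilon l(Q))^2\mathcal{H}_{||}f^{\epsilon}_{Q,w}=\Phi_{Q,w}$ against $\mathcal{H}_{||}f^{\epsilon}_{Q,w}$, using (as in the arguments underlying Lemma \ref{parahodge+}) that the half-time-derivative piece of $B$ is purely imaginary on the Fourier side, to obtain the \emph{uniform-in-$\epsilon$} $\mathbb{H}$-contraction
\[
\|\nabla_{||}f^{\epsilon}_{Q,w}\|_2+\|D_{1/2}^t f^{\epsilon}_{Q,w}\|_2 \le c(n,\Lambda)\bigl(\|\nabla_{||}\Phi_{Q,w}\|_2+\|D_{1/2}^t\Phi_{Q,w}\|_2\bigr).
\]
Since $\Phi_{Q,w}$ is supported in $2Q$ with $|\nabla_{||}\Phi_{Q,w}|\lesssim 1$ and $\|D_{1/2}^t\Phi_{Q,w}\|_2\lesssim|Q|^{1/2}$, the right-hand side is $\lesssim|Q|^{1/2}$, and combined with \eqref{uau} this produces (i) with $c_1=c_1(n,\Lambda)$. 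For \emph{(ii)}, rearranging the resolvent identity gives $\mathcal{H}_{||}f^{\epsilon}_{Q,w}=(\epsilon l(Q))^{-2}(\Phi_{Q,w}-f^{\epsilon}_{Q,w})$, and the $L^2$-contractivity of $\mathcal{E}_{\epsilon l(Q)}$ (Lemma \ref{le8-}(i) with $\Theta_\lambda=\mathcal{E}_\lambda$) yields $\|\mathcal{H}_{||}f^{\epsilon}_{Q,w}\|_2\lesssim\epsilon^{-2}|Q|^{1/2}/l(Q)$. To split this into the $\partial_t$- and $\mathcal{L}_{||}$-pieces I use the commutation of $D_{1/2}^t$ with $\mathcal{H}_{||}$ (valid since $A$ is $t$-independent), so that $D_{1/2}^t f^{\epsilon}_{Q,w}=\mathcal{E}_{\epsilon l(Q)}D_{1/2}^t\Phi_{Q,w}$; coupling $\partial_t=D_{1/2}^t H_t D_{1/2}^t$ with the $L^2$-boundedness of $\lambda^2 D_{1/2}^t\mathcal{E}_\lambda D_{1/2}^t$ from \eqref{aa3} of Lemma \ref{le8-} delivers the $\partial_t$-bound, and the $\mathcal{L}_{||}$-bound follows by the triangle inequality; both introduce an $\epsilon^{-2}$ factor, hence the $\epsilon$-dependence allowed in $c_2$.

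The main obstacle is \emph{(iii)}. The covering property of $W$ yields the pointwise estimate $|\gamma_\lambda|^2\le c_0^{-1}|\gamma_\lambda|\max_{w\in W}|\gamma_\lambda\cdot w|$. Since $\eta\equiv 1$ on $Q$, one has $\nabla_{||}\Phi_{Q,w}=w$ on $Q$, so that on $Q$
\[
w-\mathcal{A}_\lambda^Q\nabla_{||}f^{\epsilon}_{Q,w} \;=\; \mathcal{A}_\lambda^Q\nabla_{||}(\Phi_{Q,w}-f^{\epsilon}_{Q,w}) \;=\; (\epsilon l(Q))^2\,\mathcal{A}_\lambda^Q\nabla_{||}\mathcal{H}_{||}f^{\epsilon}_{Q,w}.
\]
The hard step is to quantify, uniformly in $Q$ and $\lambda\in(0,l(Q))$, that the Carleson contribution of this error to the integral over $Q\times(0,l(Q))$ is small in $\epsilon$: I would combine a parabolic Poincar\'{e}-type inequality at the dyadic scale $\lambda$ (using the telescoping $\mathcal{A}_\lambda^Q-I$) with the $L^2$-bound on $\nabla_{||}f^{\epsilon}_{Q,w}$ from (i) and the off-diagonal decay of $\mathcal{E}_\lambda$ from Lemma \ref{le8-+}, in order to show that $|w-\mathcal{A}_\lambda^Q\nabla_{||}f^{\epsilon}_{Q,w}|$ has Carleson mass $O(\epsilon^2)$ over the box and is hence absorbable into the left-hand side for $\epsilon=\epsilon(n,\Lambda)$ chosen small. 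Once the absorption is carried out, the remaining integral is dominated by $c_3\sum_{w\in W}\int|\gamma_\lambda\cdot\mathcal{A}_\lambda^Q\nabla_{||}f^{\epsilon}_{Q,w}|\,dxdtd\lambda/\lambda$ with $c_3=c_3(n,\Lambda,\epsilon)$, which is exactly the desired (iii). The main technical challenge lies in making this absorption quantitative with uniform control over \emph{all} cubes $Q$; this is where the parabolic off-diagonal estimates of Lemma \ref{le8-+} are essential to compensate for the fact that the support of $\Phi_{Q,w}$ is larger than $Q$.
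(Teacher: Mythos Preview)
The paper itself does not prove this lemma: its entire proof is the sentence ``This is a consequence of Lemma~3.3 in~\cite{N}.'' So you are attempting considerably more than the paper does. Your overall setup --- defining $f^{\epsilon}_{Q,w}=\mathcal{E}_{\epsilon l(Q)}\Phi_{Q,w}$ with $\Phi_{Q,w}$ a truncated linear function, and taking $W$ a finite net of directions --- is exactly the Kato test-function construction and is the right framework. Your treatment of (i) and (ii) is essentially correct, though for (i) you should avoid invoking $\mathcal{H}_{||}\Phi_{Q,w}\in L^2$ directly (since $\mathcal{L}_{||}\Phi_{Q,w}$ involves derivatives of the merely bounded $A_{||}$); instead split $\mathcal{H}_{||}\Phi=\partial_t\Phi-\div_{||}(A_{||}\nabla_{||}\Phi)$ and apply the $L^2$-bounds for $\lambda\nabla_{||}\mathcal{E}_\lambda$, $\lambda D_{1/2}^t\mathcal{E}_\lambda$, $\lambda^2\nabla_{||}\mathcal{E}_\lambda\div_{||}$, and $\lambda^2 D_{1/2}^t\mathcal{E}_\lambda\div_{||}$ from Lemma~\ref{le8-} to each piece.

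There is, however, a genuine gap in your argument for (iii). You propose to show that $|w-\mathcal{A}_\lambda^Q\nabla_{||}f^{\epsilon}_{Q,w}|$ has ``Carleson mass $O(\epsilon^2)$'' and that this makes the error term ``absorbable into the left-hand side''. But the term to be absorbed is
\[
\sum_{w\in W}\int_0^{l(Q)}\!\!\int_Q |\gamma_\lambda|^2\,\bigl|w-\mathcal{A}_\lambda^Q\nabla_{||}f^{\epsilon}_{Q,w}\bigr|\,\frac{dxdtd\lambda}{\lambda},
\]
and small Carleson norm of the weight $|w-\mathcal{A}_\lambda^Q\nabla_{||}f^{\epsilon}_{Q,w}|$ does \emph{not} bound this by a small multiple of $\int|\gamma_\lambda|^2\,\tfrac{dxdtd\lambda}{\lambda}$: the error could equal $1$ on a set of small Carleson measure that nonetheless carries all of the $|\gamma_\lambda|^2$-mass (recall $\gamma_\lambda$ is an \emph{arbitrary} map, with no a~priori finiteness of the left side). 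What absorption would actually require is \emph{pointwise} smallness of the error on most of the Carleson box, and this simply does not follow from $L^2$- or Carleson-type control.

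The mechanism that replaces your absorption step, in~\cite{AHLMcT} and its parabolic version in~\cite{N}, is a stopping-time/John--Nirenberg argument: for each $w$ one selects the maximal dyadic subcubes $Q'\subset Q$ on which $\bigl|\mean{Q'}\nabla_{||}f^{\epsilon}_{Q,w}-w\bigr|$ first exceeds a fixed threshold; the $L^2$-bound on $\nabla_{||}(f^{\epsilon}_{Q,w}-\Phi_{Q,w})$ then forces the union of these stopped cubes to have small measure in $Q$ (quantified by $\epsilon$), so that on the complementary sawtooth region the dyadic averages are \emph{pointwise} close to $w$ and the cone-covering inequality applies directly there. One then iterates the whole construction on each stopped cube and sums the resulting geometric series (this is the John--Nirenberg step), obtaining (iii) for all cubes simultaneously without ever assuming finiteness of the left-hand side. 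Your sketch is missing precisely this stopping-time/sawtooth/iteration structure, which is the heart of the matter.
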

     \begin{proof} This is a consequence of Lemma 3.3 in \cite{N}.
     \end{proof}
\subsection{Littlewood-Paley theory}
We here introduce parabolic approximations of the identity, chosen based on a finite stock of functions and fixed throughout the paper, as follows. Let
$\P\in C_0^\infty(Q_1(0))$, $\P\geq 0$ be real-valued,  $\int \P\, dxdt=1$, where $Q_1(0)$ is the unit parabolic cube in $\mathbb R^{n+1}$ centered at $0$. At instances we will also assume that $\int x_i\P(x,t)\, dxdt=0$ for all $i\in \{1,..,n\}$. At instances we will also assume, which we always may by construction, that $\P$ has a product structure, i.e., $\P(x,t)=\P^x(x)\P^t(t)$ where $\P^x$ and $\P^t$ have the same properties as $\P$ but are defined with respect to $\mathbb R^{n}$ and $\mathbb R$.  We set $\P_\lambda(x,t)=\lambda^{-n-2}\P(\lambda^{-1}x,\lambda^{-2}t)$ whenever $\lambda>0$. Given $\P$ we let
$\P_\lambda$ denote the convolution operator
$$\P_\lambda f(x,t)=\int_{\mathbb R^{n+1}}\P_\lambda(x-y,t-s)f(y,s)\, dyds.$$
Similarly, we will by $\mathcal{Q}_\lambda$ denote a generic approximation to the zero operator, not necessarily the same at each instance, but chosen from a finite set of such
operators depending only on our original choice of $\P_\lambda$. In particular, $\mathcal{Q}_\lambda(x,t)=\lambda^{-n-2}\mathcal{Q}(\lambda^{-1}x,\lambda^{-2}t)$ where
$\mathcal{Q}\in C_0^\infty(Q_1(0))$, $\int \mathcal{Q}\, dxdt=0$. In addition we will, following \cite{HL}, assume that $\mathcal{Q}_\lambda$ satisfies the conditions
\begin{eqnarray} \label{li-}\mathcal{Q}_\lambda(x,t)&\leq&\frac {c\lambda}{(\lambda+||(x,t)||)^{n+3}},\notag\\
|\mathcal{Q}_\lambda(x,t)-\mathcal{Q}_\lambda(y,s)|&\leq&\frac {c||(x-y,t-s)||^\alpha}{(\lambda+||(x,t)||)^{n+2+\alpha}},
\end{eqnarray}
where the latter estimate holds for some $\alpha\in (0,1)$ whenever $2||(x-y,t-s)||\leq ||(x,t)||$. It is well known that
\begin{eqnarray}\label{li}|||\mathcal{Q}_\lambda f|||_+=\biggl (\int_0^\infty\int_{\mathbb R^{n+1}}|\mathcal{Q}_\lambda f|^2\, \frac{dxdtd\lambda}\lambda\biggr )^{1/2}\leq c||f||_2
\end{eqnarray}
for all $f\in L^2(\mathbb R^{n+1},\mathbb C)$. In the following we collect a number of elementary observations to be used in the forthcoming sections.
\begin{lemma} \label{little2} Let $\P_\lambda$ be as above. Then
\begin{eqnarray*}
(i)&&|||\lambda\nabla \P_\lambda f|||_++|||\lambda^2\partial_t\P_\lambda f|||_++|||\lambda\mathbb D \P_\lambda f|||_+\leq c||f||_2,\notag\\
(ii)&&|||\P_\lambda(I-\P_\lambda) f|||_+\leq c||f||_2,\notag\\
(iii)&&|||\lambda^{-1}(I-\P_\lambda) g|||_+\leq c||\mathbb Dg||_2,
\end{eqnarray*}
for all $f\in L^2(\mathbb R^{n+1},\mathbb C)$,  $g\in \mathbb H(\mathbb R^{n+1},\mathbb C)$.
\end{lemma}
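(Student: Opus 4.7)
Plan for Lemma \ref{little2}. The plan is to prove all three bounds via Plancherel, exploiting that $\hat\P$ is Schwartz with $\hat\P(0,0)=1$ and the parabolic homogeneity $\|(\gamma x,\gamma^2 t)\|=\gamma\|(x,t)\|$. The key device throughout is the change of variable $u=\lambda\rho$ where $\rho:=\|(\xi,\tau)\|$, so that $\|(\lambda\xi,\lambda^2\tau)\|=u$ and $d\lambda/\lambda=du/u$, turning the $\lambda$-integration into an integration whose integrand is controlled purely by a Schwartz bound in the single variable $u$.

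For (i), the Fourier multipliers of $\lambda\partial_{x_i}\P_\lambda$, $\lambda^2\partial_t\P_\lambda$, and $\lambda\mathbb D\P_\lambda$ are respectively $i(\lambda\xi_i)\hat\P(\lambda\xi,\lambda^2\tau)$, $i(\lambda^2\tau)\hat\P(\lambda\xi,\lambda^2\tau)$, and $\|(\lambda\xi,\lambda^2\tau)\|\hat\P(\lambda\xi,\lambda^2\tau)$. Each has the form $m(\lambda\xi,\lambda^2\tau)$ with $m$ Schwartz, and the dominating prefactor satisfies $|\lambda\xi_i|\le u$, $|\lambda^2\tau|\le u^2$, and $\|(\lambda\xi,\lambda^2\tau)\|=u$; therefore $\int_0^\infty |m(\lambda\xi,\lambda^2\tau)|^2\,d\lambda/\lambda\le C$ uniformly in $(\xi,\tau)$, and Plancherel closes the estimate. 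For (ii), I would write $\P_\lambda(I-\P_\lambda)f=(\P_\lambda-\P_\lambda*\P_\lambda)*f$, observing that the kernel equals $\lambda^{-n-2}\Psi(\lambda^{-1}x,\lambda^{-2}t)$ with $\Psi:=\P-\P*\P\in C_0^\infty(Q_2(0))$ and $\int\Psi=1-1=0$. This places the operator inside the $\mathcal{Q}_\lambda$-class, with \eqref{li-} satisfied trivially from the compact support and smoothness of $\Psi$, so that \eqref{li} yields the desired bound.

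Part (iii) is the delicate one. The multiplier of $\lambda^{-1}(I-\P_\lambda)$ is $\lambda^{-1}(1-\hat\P(\lambda\xi,\lambda^2\tau))$, and the crucial input is a \emph{second-order} parabolic vanishing of $1-\hat\P$ at the origin. Invoking the auxiliary assumption $\int x_i\P\,dxdt=0$ one has $\partial_{\xi_i}\hat\P(0,0)=0$, while $|\sigma|\le\|(\eta,\sigma)\|^2$ shows that the $\sigma$-Taylor contribution at the origin is already of parabolic order two; hence $|1-\hat\P(\eta,\sigma)|\le C\|(\eta,\sigma)\|^2$ near the origin and $|1-\hat\P|\le 2$ globally. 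With $u=\lambda\rho$ the identity $\int_0^\infty \lambda^{-2}|1-\hat\P(\lambda\xi,\lambda^2\tau)|^2\,d\lambda/\lambda=\rho^2\int_0^\infty u^{-2}|1-\hat\P|^2\,du/u\le C\rho^2$ follows, since the integrand is $O(u)$ near $0$ and $O(u^{-3})$ at infinity (by Schwartz decay). Plancherel then produces $|||\lambda^{-1}(I-\P_\lambda)g|||_+^2\le C\int\rho^2|\hat g|^2\,d\xi d\tau=C\|\mathbb D g\|_2^2$. I expect the main obstacle to be precisely this second-order vanishing; without the cancellation $\int x_i\P=0$ only $|1-\hat\P|=O(\|(\eta,\sigma)\|)$ would be available, yielding a logarithmic divergence at $\lambda=0$.
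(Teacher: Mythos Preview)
Your proof is correct and follows essentially the same route as the paper. For (iii) the paper writes $\lambda^{-1}(I-\P_\lambda)g=\lambda^{-1}\mathbb I_1(I-\P_\lambda)\mathbb Dg$ and then applies Plancherel to the multiplier $(\lambda\|(\xi,\tau)\|)^{-1}(1-\hat\P(\lambda\xi,\lambda^2\tau))$, obtaining exactly your $\min\{u,1/u\}$ bound from the moment condition $\int x_i\P=0$; your observation that the linear $\sigma$-term already contributes at parabolic order two because $|\sigma|\le\|(\eta,\sigma)\|^2$ is precisely what makes this work without a moment condition in $t$. For (ii) the paper defers to the multiplier bound established at the end of (iii), whereas you recognize $\P_\lambda(I-\P_\lambda)$ directly as a $\mathcal{Q}_\lambda$-operator and cite \eqref{li}; these are equivalent. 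One cosmetic point: after your substitution $u=\lambda\rho$ the expression $\hat\P(\lambda\xi,\lambda^2\tau)$ still depends on the direction $(\xi,\tau)/\rho$, not only on $u$, so the ``identity'' you write is really an inequality obtained from the pointwise bounds $|1-\hat\P|\le Cu^2$ near $0$ and $|1-\hat\P|\le 2$ globally---which is exactly what you need.
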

\begin{proof} For the proof of $(i)$ we refer to Lemma 2.30 in \cite{N}. For the proof of $(ii)$ we refer to the end of the proof of $(iii)$. To prove $(iii)$, let $\mathbb I_1$ denote the parabolic Riesz operator defined on the Fourier transform side
through $$\widehat{\mathbb I_1 g}(\xi,\tau)=||(\xi,\tau)||^{-1}\hat g(\xi,\tau).$$ Then, using Plancherel's theorem we see that
\begin{eqnarray*}
|||\lambda^{-1}(I-\P_\lambda) g|||_+^2&= & |||\lambda^{-1} {\mathbb I_1}(I-\P_\lambda) \mathbb D g|||_+^2\notag\\
&\leq&c\int_0^\infty\int_{\mathbb R^{n+1}}\bigl|(\lambda||(\xi,\tau)||)^{-1}\bigl(1-\hat \P(\lambda\xi,\lambda^2\tau)\bigr ) \hat h\bigr |^2\, \frac{d\xi d\tau d\lambda}\lambda,
\end{eqnarray*}
where  $h=\mathbb D g$. Let now in addition $\P$ be such that $\int x_i\P(x,t)\, dxdt=0$ for all $i\in \{1,..,n\}$. Then
\begin{eqnarray*}
|(\lambda||(\xi,\tau)||)^{-1}(1-\hat \P(\lambda\xi,\lambda^2\tau))|\leq c\min\bigl\{\lambda||(\xi,\tau)||,1/(\lambda||(\xi,\tau)||)\bigr\}
\end{eqnarray*}
and we deduce $(iii)$.
\end{proof}

Consider a cube $Q\subset\mathbb R^{n+1}$. In the following we let $\mathcal{A}_\lambda^Q$ denote the dyadic averaging operator induced by $Q$, i.e., if $\hat Q_\lambda(x,t)$ is the minimal dyadic cube
      (with respect to the grid induced by $Q$) containing $(x,t)$, with side length at least $\lambda$, then
      \begin{eqnarray}\label{dy}\mathcal{A}_\lambda^Q f(x,t)=\mean{\hat Q_\lambda(x,t)}f\, dyds,
      \end{eqnarray}
      the average of $f$ over $\hat Q_\lambda(x,t)$.

      \begin{lemma} \label{little3} Let $\P_\lambda$ be as above. Then
\begin{eqnarray*}
|||(\mathcal{A}_\lambda^Q-\P_\lambda)f|||_+\leq c||f||_2
\end{eqnarray*}
for all $f\in L^2(\mathbb R^{n+1},\mathbb C)$.
\end{lemma}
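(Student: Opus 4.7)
I plan to establish the Carleson bound via a $TT^\ast$ almost-orthogonality (Cotlar--Stein) argument. Set $T_\lambda:=\mathcal A_\lambda^Q-\P_\lambda$, and assume (as permitted by the paper's construction) that $\P$ is chosen symmetric in space-time so that $\P_\lambda^\ast=\P_\lambda$; recall also that $\mathcal A_\lambda^Q$ is a self-adjoint $L^2$-projection. I aim to prove the off-diagonal operator bound
\[
\|T_\lambda T_\mu^\ast\|_{L^2\to L^2}\le C\min(\lambda/\mu,\mu/\lambda)^{1/2}.
\]
Once this is in hand, the lemma follows by the usual duality argument: the estimate $|||T_\lambda f|||_+\le c\|f\|_2$ is equivalent to $\bigl\|\int_0^\infty T_\lambda^\ast g(\cdot,\lambda)\,d\lambda/\lambda\bigr\|_2\le c\|g\|_{L^2(d\mu)}$, and expanding the square and applying the off-diagonal bound reduces matters, after the change of variable $u=\log\lambda$, $v=\log\mu$, to Young's inequality for the $L^1(du)$ convolution kernel $e^{-|u-v|/2}$.

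\textbf{Reduction.} By symmetry it suffices to bound $T_\lambda T_\mu^\ast$ when $\lambda\le\mu$. The crucial identity is $\mathcal A_\lambda^Q\mathcal A_\mu^Q=\mathcal A_\mu^Q$, valid in this range because $\mathcal A_\mu^Q f$ is already constant on every dyadic cube of side $\ge\mu\ge\lambda$. Expanding $T_\lambda T_\mu^\ast=(\mathcal A_\lambda^Q-\P_\lambda)(\mathcal A_\mu^Q-\P_\mu)$ and using this identity gives
\[
T_\lambda T_\mu^\ast=(I-\P_\lambda)\,\mathcal A_\mu^Q-(\mathcal A_\lambda^Q-\P_\lambda)\,\P_\mu,
\]
and I estimate these two pieces separately.

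\textbf{The smooth piece.} Here $g:=\P_\mu f$ is smooth at scale $\mu$; standard convolution bounds give $\|\nabla_{||}g\|_2\lesssim\mu^{-1}\|f\|_2$ and $\|\partial_t g\|_2\lesssim\mu^{-2}\|f\|_2$. Parabolic Poincar\'e on cubes of side $\sim\lambda$ then gives $\|(\mathcal A_\lambda^Q-I)g\|_2\lesssim(\lambda/\mu)\|f\|_2$, and the analogous first-order Taylor estimate against the kernel $\P_\lambda$ gives $\|(\P_\lambda-I)g\|_2\lesssim(\lambda/\mu)\|f\|_2$. Together these yield
\[
\|(\mathcal A_\lambda^Q-\P_\lambda)\P_\mu f\|_2\le C(\lambda/\mu)\|f\|_2.
\]

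\textbf{The rough piece and main obstacle.} Write $\mathcal A_\mu^Q f=\sum_{Q':\ell(Q')\sim\mu}c_{Q'}\mathbf 1_{Q'}$ with $c_{Q'}=\mean{Q'}f$. Because $\P\in C_0^\infty(Q_1(0))$, each $(\P_\lambda-I)\mathbf 1_{Q'}$ is uniformly bounded and supported in the parabolic $\lambda$-neighborhood of $\partial Q'$. That neighborhood has parabolic (Lebesgue) measure $\lesssim\lambda\cdot\mu^{n+1}=(\lambda/\mu)|Q'|$, since the spatial faces of $Q'$ contribute area $\sim\mu^{n+1}$ thickened by $\lambda$, and the time faces contribute $\lambda^2\mu^n\lesssim\lambda\mu^{n+1}$ when $\lambda\le\mu$. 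Using the bounded overlap of these tubular neighborhoods for adjacent cubes, a union-bound gives
\[
\|(\P_\lambda-I)\mathcal A_\mu^Q f\|_2^2\lesssim(\lambda/\mu)\sum_{Q'}|c_{Q'}|^2|Q'|=(\lambda/\mu)\|\mathcal A_\mu^Q f\|_2^2\le(\lambda/\mu)\|f\|_2^2.
\]
Combining yields $\|T_\lambda T_\mu^\ast\|\le C\sqrt{\lambda/\mu}$. The main obstacle is exactly this rough piece: the jumps of $\mathcal A_\mu^Q f$ across the dyadic skeleton at scale $\mu$ preclude the cleaner linear decay $\lambda/\mu$ obtained for the smooth piece, and one must instead exploit the small parabolic measure of the $\lambda$-tubular neighborhood of the skeleton to recover the (weaker, but still integrable against $e^{-|u-v|/2}$) exponent $1/2$.
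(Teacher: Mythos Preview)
Your proof is correct. The paper does not actually supply its own argument here; it simply refers to Lemma~2.19 in the companion paper \cite{CNS}. Your route via a continuous Cotlar--Stein/Schur test, hinging on the off-diagonal bound $\|T_\lambda T_\mu^\ast\|_{2\to2}\lesssim\min(\lambda/\mu,\mu/\lambda)^{1/2}$, is a standard and self-contained way to obtain this estimate. The key structural identity $\mathcal A_\lambda^Q\mathcal A_\mu^Q=\mathcal A_\mu^Q$ for $\lambda\le\mu$ cleanly isolates the two pieces, and your handling of the rough piece---bounding $\|(I-\P_\lambda)\mathcal A_\mu^Q f\|_2^2$ by the parabolic measure of the $\lambda$-tube around the dyadic skeleton at scale $\mu$---is accurate; the exponent $1/2$ is indeed all one can extract there, and it suffices for the Schur kernel $e^{-|u-v|/2}\in L^1(du)$. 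The symmetry assumption on $\P$ is harmless in this paper's setting (where $\P$ is drawn from a finite stock with convenient properties), and in any event the argument goes through for non-symmetric $\P$ by estimating $T_\mu T_\lambda^\ast$ directly with the reflected kernel.
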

\begin{proof} For a proof of this lemma in our context we refer to Lemma 2.19 in \cite{CNS}.
\end{proof}

\subsection{Uniform (in $\lambda$) $L^2$-estimates and off-diagonal estimates: consequences} We here establish a number of results for general linear operators $\Theta_\lambda$ and $\tilde \Theta_\lambda$  satisfying two crucial estimates. First, we assume that
             \begin{eqnarray}\label{stand-}
   \sup_{\lambda>0}\bigl (||{\Theta}_\lambda||_{2\to 2}+||{\tilde\Theta}_\lambda||_{2\to 2}\bigr )\leq \Gamma,
       \end{eqnarray}
       for some constant $\Gamma$. Second, we assume that there exists, for some integer $d\geq 0$, a constant $\tilde\Gamma=\tilde\Gamma_d$ such that
\begin{eqnarray}\label{stand}
||{\Theta}_\lambda (f1_{2^{k+1}Q\setminus 2^kQ})||_{L^2(Q)}^2&\leq& \tilde \Gamma^2 2^{-(n+2)k}(\lambda/(2^kl(Q)))^{2d+2}||{f}||^2_{L^2(2^{k+1}Q\setminus 2^kQ)},\notag\\
\quad||{\tilde\Theta}_\lambda ({\bf f}1_{2^{k+1}Q\setminus 2^kQ})||_{L^2(Q)}^2&\leq& \tilde\Gamma^2 2^{-(n+2)k}(\lambda/(2^kl(Q)))^{2d+2}||{\bf f}||^2_{L^2(2^{k+1}Q\setminus 2^kQ)}, \end{eqnarray}
whenever $0<\lambda\leq cl(Q)$,  $Q\subset\mathbb R^{n+1}$ is a parabolic cube, $k\in\mathbb Z_+$,  and for all $f\in L^2(\mathbb R^{n+1},\mathbb C)$, ${\bf f}\in L^2(\mathbb R^{n+1},\mathbb C^{n+1})$, respectively. In the following we state and prove a number of lemmas  for operators ${\Theta}_\lambda$ satisfying
\eqref{stand-} and \eqref{stand}. The corresponding statements for operators $\tilde {\Theta}_\lambda$ satisfying
\eqref{stand-} and \eqref{stand} are analogous. Throughout the subsection we assume $\lambda>0$.

\begin{lemma}\label{le8} Assume that ${\Theta}_\lambda$ is an operator satisfying  \eqref{stand-} and \eqref{stand} for some $d\geq 0$. Assume
also that
\begin{eqnarray}\label{square}\int_0^\infty\int_{\mathbb R^{n+1}} |\Theta_\lambda f(x,t)|^2\, \frac {dxdtd\lambda}\lambda\leq \hat\Gamma||f||_2^2
\end{eqnarray}
for some constant $\hat\Gamma\geq 1$ and for all $f\in L^2(\mathbb R^{n+1},\mathbb C)$. Then
\begin{eqnarray}\label{square+}
\int_0^{l(Q)}\int_Q|\Theta_\lambda b(x,t)|^2\frac {dxdtd\lambda}{\lambda}\leq c||b||_\infty^2|Q|
\end{eqnarray}
for all parabolic cubes $Q\subset\mathbb R^{n+1}$, whenever $b\in L^\infty(\mathbb R^{n+1},\mathbb C)$, and for a constant $c$ depending only on $n$, $\Gamma$, $\tilde\Gamma$, $\hat\Gamma$.
\end{lemma}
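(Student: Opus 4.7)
The plan is a standard localization-plus-off-diagonal argument, splitting $b$ into near and far parts relative to the cube $Q$ and using the global square function bound \eqref{square} for the near part and the off-diagonal decay \eqref{stand} for the far part.

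First I would write $b = b\mathbf{1}_{4Q} + b\mathbf{1}_{\mathbb{R}^{n+1}\setminus 4Q}$ and estimate the two contributions to the left-hand side of \eqref{square+} separately. For the local piece, by monotonicity of the domain of integration and by \eqref{square} applied to $b\mathbf{1}_{4Q}\in L^2(\mathbb{R}^{n+1},\mathbb{C})$,
\begin{equation*}
\int_0^{l(Q)}\int_Q|\Theta_\lambda(b\mathbf{1}_{4Q})|^2\,\frac{dxdtd\lambda}{\lambda}\le\hat\Gamma\|b\mathbf{1}_{4Q}\|_2^2\le c\hat\Gamma\|b\|_\infty^2|Q|,
\end{equation*}
so this contribution is under control.

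Next I would handle the far piece by decomposing $\mathbb{R}^{n+1}\setminus 4Q=\bigcup_{k\ge 2}(2^{k+1}Q\setminus 2^kQ)$ and applying \eqref{stand} annulus-by-annulus. Since $|2^{k+1}Q\setminus 2^kQ|\lesssim 2^{(n+2)k}|Q|$ and $\|b\mathbf{1}_{2^{k+1}Q\setminus 2^kQ}\|_2^2\le\|b\|_\infty^2\,|2^{k+1}Q|$, one gets
\begin{equation*}
\|\Theta_\lambda(b\mathbf{1}_{2^{k+1}Q\setminus 2^kQ})\|_{L^2(Q)}\le c\tilde\Gamma\,\|b\|_\infty\,|Q|^{1/2}\,\bigl(\lambda/(2^kl(Q))\bigr)^{d+1}.
\end{equation*}
Summing in $k\ge 2$ via Minkowski/triangle inequality and using that $\sum_{k\ge 2}2^{-(d+1)k}<\infty$, I obtain
\begin{equation*}
\|\Theta_\lambda(b\mathbf{1}_{\mathbb{R}^{n+1}\setminus 4Q})\|_{L^2(Q)}\le c\tilde\Gamma\,\|b\|_\infty\,|Q|^{1/2}\,(\lambda/l(Q))^{d+1},
\end{equation*}
valid for $0<\lambda\le l(Q)$. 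Squaring and integrating in $\lambda$ against $d\lambda/\lambda$ from $0$ to $l(Q)$,
\begin{equation*}
\int_0^{l(Q)}\|\Theta_\lambda(b\mathbf{1}_{\mathbb{R}^{n+1}\setminus 4Q})\|_{L^2(Q)}^2\,\frac{d\lambda}{\lambda}\le c\tilde\Gamma^2\|b\|_\infty^2|Q|\int_0^{l(Q)}\bigl(\lambda/l(Q)\bigr)^{2d+2}\frac{d\lambda}{\lambda},
\end{equation*}
and the $\lambda$-integral equals $1/(2d+2)$, which is finite because $d\ge 0$. Adding the two contributions gives the claimed bound.

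There is no real obstacle; the proof is routine once the decomposition and decay rates are set up correctly. The one point worth double-checking is that the exponent $d+1\ge 1$ in the off-diagonal factor of \eqref{stand} is the decisive quantity: it guarantees both geometric summability over the annuli $\{2^{k+1}Q\setminus 2^kQ\}_{k\ge 2}$ and convergence of the resulting $\lambda$-integral near $\lambda=0$. Both convergences degrade precisely as $d\to -1$, which is why the assumption $d\ge 0$ in \eqref{stand} is used. The constant $c$ produced depends on $n$, $\Gamma$ (implicitly through $\tilde\Gamma$ and $\hat\Gamma$ in the hypotheses), $\tilde\Gamma$, and $\hat\Gamma$, as stated.
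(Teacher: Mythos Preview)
Your proof is correct and is precisely the standard Fefferman--Stein localization argument to which the paper defers (the paper's own proof is the single sentence ``This can be proved by adapting the corresponding arguments in \cite{FeS}''). Your near/far decomposition, use of \eqref{square} on $b\mathbf{1}_{4Q}$, and annular application of \eqref{stand} with subsequent summation and $\lambda$-integration are exactly what that citation unpacks to.
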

\begin{proof} This can be proved by adapting the corresponding arguments in \cite{FeS}.
\end{proof}

\begin{lemma}\label{le9}   Assume that ${\Theta}_\lambda$ is an operator satisfying  \eqref{stand-} and \eqref{stand} for some $d\geq 0$. Assume also that $\Lambda_\lambda$ is an operator which satisfies \eqref{stand-} and that there exists a constant $c$, $1\leq c<\infty$, such that
\begin{eqnarray}\label{stand+}
       \int_{F}\ |\Lambda_\lambda f(x,t)|^2\, dxdt\leq ce^{-c^{-1}(d_p(E,F)/{\lambda})}\int_{E}\ |f(x,t)|^2\, dxdt,
       \end{eqnarray}
whenever $E$ and $F$ are two closed sets in $\mathbb R^{n+1}$, $f\in L^2(\mathbb R^{n+1},\mathbb C)$, $\mbox{supp }f\subset E$, and where $d_p(E,F)$ denotes the parabolic distance between $E$ and $F$ introduced in Lemma \ref{le8-+}. Then $\Theta_\lambda\Lambda_\lambda$ also satisfies \eqref{stand-} and \eqref{stand} for some integer $d\geq 0$ and for some constants $\Gamma$, $\tilde\Gamma$, depending only on
$n$, the constants $\Gamma$, $\tilde\Gamma$ for ${\Theta}_\lambda$, and the constant $c$ in \eqref{stand+}.
\end{lemma}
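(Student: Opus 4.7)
The plan is to establish the composition property by expanding in a dyadic annular decomposition around $Q$, applying the off-diagonal estimate for $\Theta_\lambda$ to each annular piece, and then absorbing what remains using either the uniform bound for $\Lambda_\lambda$ (when the annulus is near the support of $f$) or the exponential off-diagonal decay \eqref{stand+} (when it is far). The bound \eqref{stand-} for $\Theta_\lambda\Lambda_\lambda$ is immediate from composition, $\sup_{\lambda>0}\|\Theta_\lambda\Lambda_\lambda\|_{2\to 2}\le \Gamma^2$, so the entire work concerns \eqref{stand}.

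Fix a parabolic cube $Q$, an integer $k\ge 0$, and $\lambda$ with $0<\lambda\le cl(Q)$. Let $A_k=2^{k+1}Q\setminus 2^kQ$ and put $g=f\mathbf 1_{A_k}$. Decompose $\mathbb R^{n+1}$ by concentric dyadic annuli $A_0'=2Q$ and $A_j'=2^{j+1}Q\setminus 2^jQ$ for $j\ge 1$, so that
\[
\|\Theta_\lambda\Lambda_\lambda g\|_{L^2(Q)}\le \sum_{j\ge 0}\|\Theta_\lambda((\Lambda_\lambda g)\mathbf 1_{A_j'})\|_{L^2(Q)}.
\]
Applying \eqref{stand} for $\Theta_\lambda$ to each term produces a factor $\tilde\Gamma\, 2^{-(n+2)j/2}(\lambda/(2^jl(Q)))^{d+1}$ multiplying $\|\Lambda_\lambda g\|_{L^2(A_j')}$; only this latter quantity depends on $\Lambda_\lambda$.

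To estimate $\|\Lambda_\lambda g\|_{L^2(A_j')}$ I distinguish three regimes. When $|j-k|\le 1$, I use the uniform bound $\|\Lambda_\lambda g\|_{L^2(A_j')}\le \Gamma\|g\|_2$; a finite number of terms contribute, and each has the desired shape $\text{const}\cdot 2^{-(n+2)k/2}(\lambda/(2^kl(Q)))^{d+1}\|g\|_2$. When $j\ge k+2$, we have $d_p(A_j',A_k)\ge c\,2^jl(Q)$, and combining \eqref{stand+} with $\lambda\le cl(Q)$ gives $\|\Lambda_\lambda g\|_{L^2(A_j')}\le C_N 2^{-jN}\|g\|_2$ for any $N$, producing a rapidly convergent tail whose leading term ($j=k+2$) is easily dominated by the target. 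When $j\le k-2$, one has $d_p(A_j',A_k)\ge c\,2^kl(Q)$, and then the exponential in \eqref{stand+} is at most $C_N(\lambda/(2^kl(Q)))^N$ for every $N$; rewriting $(\lambda/(2^jl(Q)))^{d+1}=2^{(k-j)(d+1)}(\lambda/(2^kl(Q)))^{d+1}$, the individual summand becomes at most
\[
\tilde\Gamma\, C_N\, 2^{-(n+2)j/2}\, 2^{(k-j)(d+1)}\left(\frac{\lambda}{2^kl(Q)}\right)^{d+1+N}\|g\|_2,
\]
and using $\lambda/(2^kl(Q))\le c\,2^{-k}$ to bound the extra $(\lambda/(2^kl(Q)))^N$ by $c^N 2^{-Nk}$ converts the series into a geometric series in $k-j$ which converges provided $N>(n+2)/2+d+1$.

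Choosing such an $N$ once and for all, summing the three regimes gives
\[
\|\Theta_\lambda\Lambda_\lambda(f\mathbf 1_{A_k})\|_{L^2(Q)}^2\le (\tilde\Gamma')^2\, 2^{-(n+2)k}\left(\frac{\lambda}{2^kl(Q)}\right)^{2d+2}\|f\|_{L^2(A_k)}^2,
\]
which is \eqref{stand} for $\Theta_\lambda\Lambda_\lambda$ with the same integer $d$ and a constant $\tilde\Gamma'$ depending only on $n$, $\Gamma$, $\tilde\Gamma$, and the constant appearing in \eqref{stand+}. The only non-routine step is the bookkeeping in the $j\le k-2$ regime, where the polynomial growth factor $2^{(k-j)(d+1)}$ must be tamed by the exponential decay of \eqref{stand+}; the scaling $\lambda\le cl(Q)$ is exactly what makes the conversion of exponential decay into polynomial smallness possible with a free choice of exponent $N$.
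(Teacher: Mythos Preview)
Your proof is correct and follows the same underlying idea as the paper's: an annular decomposition around $Q$, combining the off-diagonal estimate \eqref{stand} for $\Theta_\lambda$ with the exponential decay \eqref{stand+} for $\Lambda_\lambda$. The one organizational difference is that the paper avoids your $j\le k-2$ bookkeeping entirely by lumping those annuli into a single piece $\hat Q=2^{k-2}Q$, writing
\[
\Theta_\lambda\Lambda_\lambda=\Theta_\lambda 1_{\hat Q}\Lambda_\lambda+\Theta_\lambda 1_{\mathbb R^{n+1}\setminus\hat Q}\Lambda_\lambda,
\]
and applying the \emph{uniform} bound $\|\Theta_\lambda\|_{2\to2}\le\Gamma$ on the first piece together with \eqref{stand+} (since $d_p(\hat Q,A_k)\approx 2^kl(Q)$). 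This sidesteps the polynomial growth factor $2^{(k-j)(d+1)}$ that you have to tame with a free exponent $N$; the second piece is then only the tail $j\ge k-2$, handled exactly as in your $j\ge k+2$ regime. Your argument is slightly longer but equally valid; the paper's two-piece split is just a cleaner packaging of the same estimates.
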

\begin{proof} That $\Theta_\lambda\Lambda_\lambda$ satisfies \eqref{stand-} is immediate from the corresponding assumption for $\Theta_\lambda$ and $\Lambda_\lambda$. To verify \eqref{stand}, consider a parabolic cube  $Q\subset\mathbb R^{n+1}$, $\lambda\leq cl(Q)$, $k\in\mathbb Z_+$,  and $f\in L^2(\mathbb R^{n+1},\mathbb C)$. In the following we may without loss of generality assume that $k\geq 4$ as we, otherwise, subdivide $Q$ dyadically to reduce to this case. Given $Q$,  $\lambda\leq cl(Q)$, we let $\hat Q=2^{k-2}Q$ and write
$$\Theta_\lambda\Lambda_\lambda=\Theta_\lambda 1_{\hat Q}\Lambda_\lambda+\Theta_\lambda 1_{\mathbb R^{n+1}\setminus\hat Q}\Lambda_\lambda.$$
Then
\begin{eqnarray}\label{aaqlu}||\Theta_\lambda 1_{\hat Q}\Lambda_\lambda(f1_{2^{k+1}Q\setminus 2^kQ})||_{L^2(Q)}&\leq& c||\Theta_\lambda||_{2\to 2}
||\Lambda_\lambda(f1_{2^{k+1}Q\setminus 2^kQ})||_{L^2(\hat Q)}\notag\\
&\leq& c||\Theta_\lambda||_{2\to 2} \exp(-{c^{-1}2^kl(Q)}/\lambda) ||{f}||_{L^2(2^{k+1}Q\setminus 2^kQ)}.
\end{eqnarray}
Furthermore, using \eqref{stand} for $\Theta_\lambda$,
\begin{eqnarray}\label{aaq+lu}
&&||\Theta_\lambda 1_{\mathbb R^{n+1}\setminus\hat Q}\Lambda_\lambda(f1_{2^{k+1}Q\setminus 2^kQ})||_{L^2(Q)}\notag\\
&\leq&
\sum_{j\geq k-2}||\Theta_\lambda 1_{2^{j+1}Q\setminus 2^jQ}\Lambda_\lambda(f1_{2^{k+1}Q\setminus 2^kQ})||_{L^2(Q)}\notag\\
&\leq&\sum_{j\geq k-2}2^{-(n+2)j}(\lambda/2^jl(Q))^{2d+2}||\Lambda_\lambda(f1_{2^{k+1}Q\setminus 2^kQ})||_{L^2(2^{j+1}Q\setminus 2^jQ)}\notag\\
&\leq&\sum_{j\geq k-2}2^{-(n+2)j}(\lambda/2^jl(Q))^{2d+2}\exp(-{c^{-1}2^jl(Q)}/\lambda) ||{f}||_{L^2(2^{k+1}Q\setminus 2^kQ)}\notag\\
&\leq& c 2^{-(n+2)k}(\lambda/(2^kl(Q)))^{2d+2}||{\bf f}||^2_{L^2(2^{k+1}Q\setminus 2^kQ)},
\end{eqnarray}
as we see by summing a geometric series. The estimates in \eqref{aaqlu} and \eqref{aaq+lu} complete the proof of the lemma.
\end{proof}

\begin{lemma}\label{le11+} Assume that ${\Theta}_\lambda$ is an operator satisfying  \eqref{stand-} and \eqref{stand} for some $d\geq 0$.
Let $b\in L^\infty(\mathbb R^{n+1},\mathbb C)$ and let $\mathcal{A}_\lambda $ denote a self-adjoint averaging operator whose kernel satisfies
$$\mbox{$\phi_\lambda(x,t,y,s)\leq c\lambda^{-n-2}1_{\{|x-y|+|t-s|^{1/2}\leq c\lambda\}}$, $\phi_\lambda\geq 0$},$$
 and $$\int_{\mathbb R^{n+1}}\phi_\lambda(x,t,y,s)dyds=1,$$ whenever $(x,t), (y,s)\in\mathbb R^{n+1}$. Then
$$\sup_{\lambda>0}||({\Theta}_\lambda b)\mathcal{A}_\lambda f||_2\leq c||b||_\infty||f||_2,$$
whenever $f\in L^2(\mathbb R^{n+1},\mathbb C)$, and for a constant $c$ depending only on $n$, $\Gamma$ and $\tilde\Gamma$.
\end{lemma}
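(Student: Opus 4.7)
The plan is to reduce matters, via the self-adjointness of $\mathcal{A}_\lambda$ and Cauchy--Schwarz, to a Carleson--type pointwise bound on $\mathcal{A}_\lambda(|\Theta_\lambda b|^2)$. Since $\phi_\lambda\ge 0$ and $\int \phi_\lambda(x,t,y,s)\,dyds=1$, Jensen's inequality gives
\begin{eqnarray*}
|\mathcal{A}_\lambda f(x,t)|^2\leq \mathcal{A}_\lambda(|f|^2)(x,t),
\end{eqnarray*}
and therefore, using that $\mathcal{A}_\lambda$ is self-adjoint,
\begin{eqnarray*}
\|(\Theta_\lambda b)\mathcal{A}_\lambda f\|_2^2
\leq \int_{\mathbb R^{n+1}}|\Theta_\lambda b|^2\,\mathcal{A}_\lambda(|f|^2)\,dxdt
=\int_{\mathbb R^{n+1}}\mathcal{A}_\lambda(|\Theta_\lambda b|^2)\,|f|^2\,dxdt.
\end{eqnarray*}
Hence it suffices to prove the pointwise bound
\begin{eqnarray}\label{goalpw}
\mathcal{A}_\lambda(|\Theta_\lambda b|^2)(x,t)\leq c\|b\|_\infty^2,\qquad \mbox{uniformly in $\lambda>0$ and $(x,t)$.}
\end{eqnarray}

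Fix $\lambda>0$ and a point $(x,t)$, and let $Q=Q(x,t,\lambda)$ be a parabolic cube centered at $(x,t)$ with $l(Q)\sim\lambda$, large enough that the support of $\phi_\lambda(x,t,\cdot,\cdot)$ is contained in $Q$. Then the kernel bound $\phi_\lambda\leq c\lambda^{-n-2}\mathbf 1_{\{|x-y|+|t-s|^{1/2}\leq c\lambda\}}$ gives
\begin{eqnarray*}
\mathcal{A}_\lambda(|\Theta_\lambda b|^2)(x,t)\leq \frac{c}{|Q|}\int_Q|\Theta_\lambda b|^2\,dyds,
\end{eqnarray*}
so \eqref{goalpw} reduces to the claim that for every parabolic cube $Q$ with $l(Q)\sim\lambda$,
\begin{eqnarray}\label{locCarl}
\int_Q|\Theta_\lambda b|^2\,dyds\leq c\|b\|_\infty^2|Q|.
\end{eqnarray}
To prove \eqref{locCarl} I decompose $b=b\mathbf 1_{2Q}+\sum_{k\geq 1}b\mathbf 1_{2^{k+1}Q\setminus 2^kQ}$. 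The local piece is handled by the uniform $L^2\to L^2$ bound \eqref{stand-}:
\begin{eqnarray*}
\|\Theta_\lambda(b\mathbf 1_{2Q})\|_{L^2(Q)}\leq \Gamma\|b\mathbf 1_{2Q}\|_2\leq c\,\Gamma\|b\|_\infty|Q|^{1/2},
\end{eqnarray*}
while each far piece is controlled by the off-diagonal estimate \eqref{stand} (applicable since $l(Q)\sim\lambda$, so $\lambda\leq cl(Q)$):
\begin{eqnarray*}
\|\Theta_\lambda(b\mathbf 1_{2^{k+1}Q\setminus 2^kQ})\|_{L^2(Q)}
&\leq& \tilde\Gamma\,2^{-(n+2)k/2}\bigl(\lambda/(2^kl(Q))\bigr)^{d+1}\|b\|_\infty|2^{k+1}Q|^{1/2}\\
&\leq& c\,\tilde\Gamma\,2^{-(d+1)k}\|b\|_\infty|Q|^{1/2}.
\end{eqnarray*}
Summing the geometric series over $k\geq 1$ and using Minkowski's inequality yields \eqref{locCarl}, hence \eqref{goalpw}, and the lemma follows.

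The only step with any subtlety is the self-adjoint/Jensen reduction together with the bookkeeping to ensure that the cube $Q$ used in \eqref{locCarl} satisfies $l(Q)\sim\lambda$ so that off-diagonal decay applies; the rest is the standard geometric sum. No new analytic input beyond \eqref{stand-}--\eqref{stand} and the kernel bounds on $\mathcal{A}_\lambda$ is required.
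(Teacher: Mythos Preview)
Your argument is correct and is precisely the standard approach: the paper itself does not give a proof but refers to Lemma~2.26 in \cite{N}, where the same Jensen/self-adjointness reduction to a local $L^2$ average of $|\Theta_\lambda b|^2$ on a cube of scale $\lambda$, followed by the near/far decomposition using \eqref{stand-} and \eqref{stand}, is carried out. There is nothing to add.
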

\begin{proof} See the proof of Lemma 2.26 in \cite{N}. \end{proof}

\begin{lemma}\label{le11++} Assume that ${\Theta}_\lambda$ is an operator satisfying  \eqref{stand-} and \eqref{stand} for some $d\geq 0$. Assume that
$$\Omega_\lambda=\int_0^\lambda\biggl (\frac \sigma\lambda\biggr )^{\delta}W_{\lambda,\sigma}\Theta_\sigma\frac {d\sigma}\sigma,$$
for some $\delta>0$, and that  $$\sup_{\sigma,\lambda}||W_{\lambda,\sigma}||_{2\to 2}\leq \hat c.$$ Then
$$\int_0^\infty\int_{\mathbb R^{n+1}} |\Omega_\lambda f(x,t)|^2\, \frac {dxdtd\lambda}\lambda\leq
c\int_0^\infty\int_{\mathbb R^{n+1}} |\Theta_\lambda f(x,t)|^2\, \frac {dxdtd\lambda}\lambda$$
for all $f\in L^2(\mathbb R^{n+1},\mathbb C)$ and for a constant $c$ depending only on $n$, $\Gamma$, $\tilde\Gamma$, and $\hat c$.
\end{lemma}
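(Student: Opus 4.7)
The plan is to reduce the claimed estimate to a scalar Hardy-type inequality on $(0,\infty)$ equipped with Haar measure $d\lambda/\lambda$. First, I would apply Minkowski's integral inequality inside the $L^2(\mathbb R^{n+1},\mathbb C)$-norm and then invoke the uniform bound $\sup_{\sigma,\lambda}||W_{\lambda,\sigma}||_{2\to 2}\leq \hat c$ to obtain, for each $\lambda>0$,
\begin{equation*}
||\Omega_\lambda f||_2 \leq \int_0^\lambda (\sigma/\lambda)^\delta\,||W_{\lambda,\sigma}\Theta_\sigma f||_2\,\frac{d\sigma}{\sigma} \leq \hat c\int_0^\lambda (\sigma/\lambda)^\delta\,||\Theta_\sigma f||_2\,\frac{d\sigma}{\sigma}.
\end{equation*}
After squaring and integrating in $d\lambda/\lambda$, it suffices to show that the scalar operator
$$Tg(\lambda):=\int_0^\lambda (\sigma/\lambda)^\delta g(\sigma)\,\frac{d\sigma}{\sigma}$$
is bounded on $L^2((0,\infty),d\lambda/\lambda)$ with norm depending only on $\delta$, applied to $g(\sigma):=||\Theta_\sigma f||_2$.

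To verify this, I would make the substitution $\lambda=e^u$, $\sigma=e^v$, which turns $d\lambda/\lambda$ into Lebesgue measure $du$ on $\mathbb R$ and identifies $T$ with the convolution operator $\tilde Th(u):=\int_{-\infty}^u e^{-\delta(u-v)}h(v)\,dv=(K\ast h)(u)$, where $K(w):=e^{-\delta w}\mathbf{1}_{\{w>0\}}$ and $h(v):=g(e^v)$. Since $||K||_{L^1(\mathbb R)}=1/\delta<\infty$, Young's convolution inequality yields $||\tilde Th||_{L^2(\mathbb R)}\leq \delta^{-1}||h||_{L^2(\mathbb R)}$. Undoing the change of variables and combining with the pointwise-in-$\lambda$ bound above produces
\begin{equation*}
\int_0^\infty||\Omega_\lambda f||_2^2\,\frac{d\lambda}{\lambda} \leq (\hat c/\delta)^2\int_0^\infty||\Theta_\sigma f||_2^2\,\frac{d\sigma}{\sigma},
\end{equation*}
which is the desired inequality with $c=(\hat c/\delta)^2$.

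I do not anticipate any serious obstacle here. The off-diagonal and uniform bounds \eqref{stand-}--\eqref{stand} for $\Theta_\lambda$ play no active role in the argument; they enter only through the standing framework of the subsection, ensuring that $\Theta_\sigma f$ is a well-defined element of $L^2(\mathbb R^{n+1},\mathbb C)$ for each $\sigma>0$ so that the integral defining $\Omega_\lambda f$ is meaningful. The heart of the lemma is a Schur/Hardy estimate for an integral operator whose kernel is $L^1$ in the logarithmic coordinate, and the positive gain $\delta>0$ is exactly what renders $K$ integrable; consequently the final constant $c$ depends only on $\delta$ and $\hat c$.
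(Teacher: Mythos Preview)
Your proof is correct and is precisely the standard Hardy/Schur-type argument that underlies the paper's one-line reference to Lemma 3.12 in \cite{AAAHK}. The only minor remark is that your constant $(\hat c/\delta)^2$ depends on $\delta$ rather than on $n,\Gamma,\tilde\Gamma$ as the statement lists; this is in fact sharper, and the discrepancy reflects a small imprecision in the stated dependencies rather than any issue with your argument.
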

\begin{proof} To proof of Lemma 3.12 in \cite{AAAHK} can be adopted.
\end{proof}

\begin{remark} Assume that $\Theta_\lambda$ is an operator satisfying  \eqref{stand-} and \eqref{stand} for some $d\geq 0$. Then,  for   $\lambda$ fixed,
$\Theta_\lambda 1$ exists as an element in $L^2_{\mbox{loc}}(\mathbb R^{n+1},\mathbb C)$. Indeed, let $Q_R$ be the parabolic cube on $\mathbb R^{n+1}$ with center at $(0,0)$ and with size determined by $R\gg 1$. Writing
$$\Theta_\lambda 1=\Theta_\lambda1_{2Q_R}+ \Theta_\lambda 1_{\mathbb R^{n+1}\setminus 2Q_R}, $$
and using \eqref{stand-}  we see that
$$||(\Theta_\lambda 1_{2Q_R})1_{Q_R}||_2\leq c\Gamma R^{(n+2)/2}.$$
Furthermore, by the off-diagonal estimates in \eqref{stand} it also follows that
$$||(\Theta_\lambda 1_{\mathbb R^{n+1}\setminus 2Q_R})1_{Q_R}||_2\leq c\tilde \Gamma R^{(n+2)/2}.$$
\end{remark}

\begin{lemma}\label{le11-}  Assume that $\mathcal{R}_\lambda$ is an operator satisfying  \eqref{stand-} and \eqref{stand} for some $d\geq 0$. Assume in addition that
$\mathcal{R}_\lambda1=0$. Then
$$||\mathcal{R}_\lambda f||_2\leq c(||\lambda \nabla_{||} f||_2+||\lambda^2 \partial_tf||_2),$$
whenever $f\in C_0^\infty(\mathbb R^{n+1},\mathbb C)$, and for a constant $c$ depending only on $n$, $\Gamma$ and $\tilde\Gamma$.
\end{lemma}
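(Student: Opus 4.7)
The plan is to write $f = \P_\lambda f + (I - \P_\lambda) f$, where $\P_\lambda$ is a parabolic approximation of the identity as introduced above, chosen with the additional property $\int x_i\P\,dxdt = 0$ for each $i$. The two pieces will be controlled by different mechanisms: the uniform $L^2$-boundedness \eqref{stand-} combined with a Poincar\'e-type estimate for the rough part, and the cancellation hypothesis $\mathcal{R}_\lambda 1 = 0$ combined with the off-diagonal estimates \eqref{stand} for the smooth part. This is a parabolic realization of the standard ``$T(1) = 0$ improves the domain of boundedness'' paradigm.

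First I would dispose of the rough part. By first-order Taylor expansion of the convolution (using $\int\P = 1$ and $\int x_i\P = 0$), or equivalently by a direct estimate on the Fourier side,
\begin{equation*}
\|(I - \P_\lambda) f\|_2 \le c\bigl(\|\lambda \nabla_{||} f\|_2 + \|\lambda^2 \partial_t f\|_2\bigr),
\end{equation*}
so \eqref{stand-} yields the required estimate for $\mathcal{R}_\lambda(I - \P_\lambda)f$.

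For the smooth part $\mathcal{R}_\lambda \P_\lambda f$ I would tile $\mathbb{R}^{n+1}$ by a disjoint family of parabolic cubes $\{Q_j\}$ of side length $\lambda$ and, on each $Q_j$, set $c_j = (\P_\lambda f)_{Q_j}$. Interpreting $\mathcal{R}_\lambda 1 = 0$ in the $L^2_{\mathrm{loc}}$ sense discussed in the remark preceding the lemma, one has $\mathcal{R}_\lambda(\P_\lambda f)|_{Q_j}=\mathcal{R}_\lambda(\P_\lambda f - c_j)|_{Q_j}$. Writing $\P_\lambda f - c_j = \sum_{k\ge 0}(\P_\lambda f - c_j)\,1_{A_k(Q_j)}$ with $A_0(Q_j) = 2Q_j$ and $A_k(Q_j) = 2^{k+1}Q_j\setminus 2^kQ_j$ for $k\ge 1$, the off-diagonal estimate \eqref{stand} produces, with $l(Q_j) = \lambda$,
\begin{equation*}
\|\mathcal{R}_\lambda[(\P_\lambda f - c_j)1_{A_k(Q_j)}]\|_{L^2(Q_j)} \le c\tilde\Gamma\,2^{-k[(n+2)/2+d+1]}\|\P_\lambda f - c_j\|_{L^2(2^{k+1}Q_j)}.
\end{equation*}
The oscillation $\|\P_\lambda f - c_j\|_{L^2(2^{k+1}Q_j)}$ is then controlled by the parabolic Poincar\'e inequality on $2^{k+1}Q_j$ together with the commutation identities $\nabla_{||}\P_\lambda f = \P_\lambda\nabla_{||}f$, $\partial_t\P_\lambda f = \P_\lambda\partial_t f$, and with the telescoping identity $c_j - (\P_\lambda f)_{2^{k+1}Q_j} = \sum_{m=0}^k\bigl((\P_\lambda f)_{2^mQ_j} - (\P_\lambda f)_{2^{m+1}Q_j}\bigr)$ to bridge the scales $Q_j$ and $2^{k+1}Q_j$.

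Assembling, the worst power of $2^k$ appearing on the right-hand side works out to $-(d+1)$ when $n\ge 2$ and $-d-\tfrac12$ when $n = 1$, both strictly negative for $d\ge 0$, so the dyadic series converges; summation over $j$ using the finite overlap of the dilates $\{c\,2^m Q_j\}_j$ then yields $\|\mathcal{R}_\lambda \P_\lambda f\|_2 \le c(\|\lambda\nabla_{||}f\|_2 + \|\lambda^2\partial_t f\|_2)$. \emph{The main obstacle} is precisely this dyadic bookkeeping: one must simultaneously track the Poincar\'e growth (up to $2^{2k}$ coming from the $\partial_t$ term), the volume-ratio factor $2^{k(n+2)/2}$ produced by the telescoping of averages, and the off-diagonal decay, and verify that the resulting exponent is negative for all $n\ge 1$ and every $d\ge 0$---which it is, but only by a finite $n$-dependent margin that has to be checked.
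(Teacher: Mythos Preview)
Your treatment of $\mathcal{R}_\lambda(I-\P_\lambda)f$ is correct; the Fourier estimate $|1-\hat\P(\lambda\xi,\lambda^2\tau)|\le c(\lambda|\xi|+\lambda^2|\tau|)$ together with \eqref{stand-} handles that piece. The gap is in the $\P_\lambda f$ piece.

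Your single--cube exponent count is right: combining the off-diagonal decay $2^{-k[(n+2)/2+d+1]}$ with parabolic Poincar\'e (plus telescoping) on $2^{k+1}Q_j$, the worst coefficient in front of $\lambda^2\|\partial_t\P_\lambda f\|_{L^2(2^{k+1}Q_j)}$ is indeed $2^{-k[(n+2)/2+d-1]}$, and this is summable in $k$. But the claim that the dilates $\{c\,2^mQ_j\}_j$ have \emph{finite} overlap is false. With $\{Q_j\}$ a tiling at scale $\lambda$, each point of $\mathbb R^{n+1}$ lies in $\sim 2^{k(n+2)}$ of the cubes $2^{k+1}Q_j$; after Minkowski in $\ell^2(j)$ this inserts a factor $2^{k(n+2)/2}$, so the net exponent in the $k$-series becomes $1-d$ (and $-d$ for the $\nabla_{||}$ contribution). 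Your argument therefore closes only for $d>1$, not for all $d\ge 0$ as the lemma asserts. This is not academic: in the proof of Lemma~\ref{telem1} the relevant operator $\mathcal R_\lambda=\theta_\lambda\varepsilon_{n+1}-(\theta_\lambda\varepsilon_{n+1})\P_\lambda$ inherits only $d=1$ from $\theta_\lambda$ (see Lemma~\ref{le4} with $m=1$, $l=-1$), so your scheme leaves a divergent sum precisely where the lemma is invoked.

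The obstruction is structural: Poincar\'e on a parabolic cube of side $2^k\lambda$ necessarily produces $(2^k\lambda)^2$ in front of $\partial_t$, and together with the overlap $2^{k(n+2)/2}$ this exceeds the off-diagonal gain whenever $d\le 1$. The paper defers the proof to \cite{N}; the argument there must avoid invoking Poincar\'e at the large scale $2^k\lambda$ --- for instance by subtracting \emph{local} means $f_{Q_i}$ on each tile $Q_i\subset A_k(Q_j)$, so that Poincar\'e is applied only at scale $\lambda$ and no $2^{2k}$ factor appears, with the residual piecewise-constant part handled by a separate mechanism.
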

\begin{proof} See the proof of Lemma 2.27 in \cite{N}.\end{proof}

\begin{lemma}\label{le11}  Assume that $\mathcal{R}_\lambda$ is an operator satisfying  \eqref{stand-} and \eqref{stand} for some $d\geq 0$. Assume in addition that $\mathcal{R}_\lambda1=0$ and that
$$\int_0^{l(Q)}\int_Q |\lambda^{-1} R_\lambda\Psi(x,t)|^2\frac {dxdtd\lambda}{\lambda}\leq \hat\Gamma |Q|,$$
whenever $Q\subset\mathbb R^{n+1}$ is a parabolic cube, and where $\Psi(x,t)=x$. Then
$$\biggl (\int_0^\infty\int_{\mathbb R^{n+1}}|\lambda^{-1} R_\lambda f|^2\, \frac {dxdtd\lambda}\lambda\biggr )^{1/2}
\leq c||\mathbb Df||_2,$$
whenever $f\in \mathbb H(\mathbb R^{n+1},\mathbb C)$, and for a constant $c$ depending only on  $n$, $\Gamma$, $\tilde\Gamma$ and $\hat\Gamma$.
\end{lemma}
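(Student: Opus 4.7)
The plan is to prove the square function bound via a $T1$-type decomposition of $f$ into a rough part and a smooth part, using the parabolic approximation to the identity $\P_\lambda$ from Subsection 3.3. First I would split
\begin{equation*}
\lambda^{-1}\mathcal{R}_\lambda f \,=\, \lambda^{-1}\mathcal{R}_\lambda(I-\P_\lambda)f \,+\, \lambda^{-1}\mathcal{R}_\lambda \P_\lambda f.
\end{equation*}
For the rough piece, the uniform $L^2$ bound \eqref{stand-} on $\mathcal{R}_\lambda$ together with Lemma \ref{little2}$(iii)$ gives
\begin{equation*}
|||\lambda^{-1}\mathcal{R}_\lambda(I-\P_\lambda)f|||_+ \,\leq\, \Gamma\,|||\lambda^{-1}(I-\P_\lambda)f|||_+ \,\leq\, c\,\Gamma\,\|\mathbb D f\|_2.
\end{equation*}
This step uses neither the cancellation $\mathcal{R}_\lambda 1 = 0$ nor the Carleson hypothesis on $\mathcal{R}_\lambda \Psi$.

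For the smooth piece I would exploit the cancellation. Since $\mathcal{R}_\lambda 1 = 0$, one may subtract the pointwise value $(\P_\lambda f)(x,t)$ inside the integrand,
\begin{equation*}
\mathcal{R}_\lambda \P_\lambda f(x,t) \,=\, \mathcal{R}_\lambda\bigl[(\P_\lambda f)(\cdot,\cdot)-(\P_\lambda f)(x,t)\bigr](x,t),
\end{equation*}
and then perform a first-order spatial Taylor expansion
\begin{equation*}
(\P_\lambda f)(y,s)-(\P_\lambda f)(x,t) \,=\, (y-x)\cdot\nabla_{||}(\P_\lambda f)(x,t) \,+\, r_{x,t}(y,s).
\end{equation*}
A second use of $\mathcal{R}_\lambda 1 = 0$ collapses the constant part of $(y-x)$, converting the main contribution into
\begin{equation*}
\nabla_{||}(\P_\lambda f)(x,t)\cdot\bigl(\lambda^{-1}\mathcal{R}_\lambda \Psi\bigr)(x,t).
\end{equation*}
The hypothesis says precisely that $|\lambda^{-1}\mathcal{R}_\lambda\Psi|^2\lambda^{-1}\,dxdtd\lambda$ is a Carleson measure on $\mathbb R^{n+2}_+$, so by the classical Carleson embedding theorem its pairing with $|\nabla_{||}\P_\lambda f|^2$ is bounded by $c\,\hat\Gamma\,\|N_{\ast}^+(\nabla_{||}\P_\lambda f)\|_2^2 \,\leq\, c\,\hat\Gamma\,\|\nabla_{||}f\|_2^2 \,\leq\, c\,\hat\Gamma\,\|\mathbb D f\|_2^2$, the middle inequality being the standard pointwise bound of $N_{\ast}^+(\nabla_{||}\P_\lambda f)$ by the parabolic Hardy--Littlewood maximal function of $\nabla_{||} f$.

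It remains to dispose of the Taylor remainder $\lambda^{-1}\mathcal{R}_\lambda r_{x,t}(x,t)$. Inside the effective support of $\P_\lambda$ one has $|y-x|\lesssim\lambda$ but $|s-t|\lesssim\lambda^2$ by parabolic scaling, so $r_{x,t}$ splits naturally into a second-order spatial piece of size $\lambda^2|\nabla_{||}^2\P_\lambda f|$ and a time-variation piece of size $\lambda^2|\partial_t\P_\lambda f|$. Both are controlled by combining the off-diagonal estimates \eqref{stand} for $\mathcal{R}_\lambda$ with kernel-smoothness bounds for $\P_\lambda$ and the Fefferman--Stein vector-valued maximal inequality, producing a square function bound by $c\,\|\mathbb D f\|_2$. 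The main obstacle is precisely the time-variation contribution: since the Carleson hypothesis is posted only for the spatial profile $\Psi(x,t)=x$, there is no direct Carleson control on $\mathcal{R}_\lambda$ tested against the time coordinate, and the $\partial_t\P_\lambda f$ piece must be absorbed purely through parabolic scaling $|s-t|\lesssim\lambda^2$, the off-diagonal decay of $\mathcal{R}_\lambda$, and a careful application of Lemma \ref{le11-} to suitably truncated pieces of $r_{x,t}$. This is the technically delicate core of the argument.
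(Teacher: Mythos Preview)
Your overall strategy---split off $(I-\P_\lambda)f$ first, then extract the Carleson piece from $\mathcal{R}_\lambda\P_\lambda f$---is sound and differs from the paper's. The paper never pre-smooths; it works directly with $f$, subtracting only the constant $f(x,t)$ and the linear-in-space term $(y-x)\cdot\P_\lambda(\nabla_{||}f)(x,t)$, then handles the full remainder $G_{(x,t,\lambda)}$ by a single Plancherel computation. Your rough piece $\lambda^{-1}\mathcal{R}_\lambda(I-\P_\lambda)f$ is disposed of correctly via Lemma~\ref{little2}(iii), and your Carleson term coincides with the paper's (since $\nabla_{||}\P_\lambda f=\P_\lambda\nabla_{||}f$). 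In fact your $(I-\P_\lambda)f+r_{x,t}$ differs from the paper's $G_{(x,t,\lambda)}$ only by a constant that $\mathcal{R}_\lambda$ annihilates, so the two decompositions match after the Carleson extraction.

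The gap is exactly where you flag it: the time-variation part of $r_{x,t}$. Your sketch---``off-diagonal decay plus Lemma~\ref{le11-} on truncated pieces''---does not close. The obstruction is that $\partial_t\P_\lambda f$ cannot be pointwise dominated by a local maximal function of $H_tD_{1/2}^tf$ (the half-derivative is nonlocal), so the second-order Taylor bound $|r_{x,t}|\lesssim\lambda^2(|\nabla_{||}^2\P_\lambda f|+|\partial_t\P_\lambda f|)$ does not by itself yield an $\|\mathbb D f\|_2$ estimate after applying $\mathcal{R}_\lambda$ and integrating in $\lambda$. Lemma~\ref{le11-} gives only $\|\mathcal{R}_\lambda g\|_2\lesssim\|\lambda\nabla_{||}g\|_2+\|\lambda^2\partial_t g\|_2$, which, applied to $g=\P_\lambda f$, produces a divergent $\int\|\nabla_{||}\P_\lambda f\|_2^2\,d\lambda/\lambda$; applied to truncations of $r_{x,t}$ it runs into the same nonlocality problem.

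The paper bypasses this by treating space and time jointly on the Fourier side. After localizing $G_{(x,t,\lambda)}$ to $2Q$ and averaging over the free base point $(x,t)\in Q$, it changes variables $(y,s)=(x,t)+(z,w)$, applies Plancherel in $(x,t)$, and reduces to showing that the kernel
\[
K_\lambda(z,w,\xi,\tau)=\frac{|e^{i((\xi,\tau)\cdot(z,w))}-1-i(z\cdot\xi)\hat\P(\lambda\xi,\lambda^2\tau)|}{\|(\xi,\tau)\|}
\]
satisfies $K_\lambda\lesssim\min\{\lambda\|(\xi,\tau)\|,\,(\lambda\|(\xi,\tau)\|)^{-1}\}$. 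This single-symbol estimate captures the parabolic scaling in both variables simultaneously and immediately gives the $\|\mathbb D f\|_2^2$ bound; the far pieces are then handled by the off-diagonal decay \eqref{stand} combined with the same Fourier estimate. This is the missing ingredient in your argument: rather than separating spatial second order from temporal first order and trying to control each by real-variable means, one should keep them together and estimate the combined symbol.
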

\begin{proof} In the following we can without loss of generality assume that $f\in C_0^\infty(\mathbb R^{n+1},\mathbb C)$. Let $\mathbb D_j$ denote a dyadic grid of
parabolic cubes on $\mathbb R^{n+1}$ of size $2^{-j}$. Then
\begin{eqnarray}\label{car}
&&\int_0^\infty\int_{\mathbb R^{n+1}}|\lambda^{-1} R_\lambda f|^2\, \frac {dxdtd\lambda}\lambda\notag\\
&=&\sum_{j=-\infty}^\infty\sum_{Q\in \mathbb D_{-j}}\int_{2^j}^{2^{j+1}}
\int_Q|\lambda^{-1}R_\lambda f(y,s)|^2\frac {dydsd\lambda}\lambda\notag\\
&=&\sum_{j=-\infty}^\infty\sum_{Q\in \mathbb D_{-j}}\int_{2^j}^{2^{j+1}}
\int_Q\biggl (\mean{Q}|\lambda^{-1}R_\lambda f(y,s)|^2\, dxdt\biggr )\, \frac {dydsd\lambda}\lambda.
\end{eqnarray}
For $Q\in \mathbb D_{-j}$,  $(x,t)\in Q$, and $\lambda\in ({2^j},{2^{j+1}})$ fixed,  we let
$$G_{(x,t,\lambda)}(y,s)=f(y,s)-f(x,t)-(y-x)\cdot \P_\lambda(\nabla_{||}f)(x,t),$$
where $\P_\lambda$ is a standard parabolic approximation of the identity. Using that $R_\lambda 1=0$ we see that
\begin{eqnarray*}
\lambda^{-1}R_\lambda f(y,s)=\lambda^{-1}R_\lambda(G_{(x,t,\lambda)})(y,s)+\lambda^{-1}R_\lambda\Psi(y,s)\P_\lambda(\nabla_{||}f)(x,t).
\end{eqnarray*}
Hence,
\begin{eqnarray}\label{car+}
\int_0^\infty\int_{\mathbb R^{n+1}}|\lambda^{-1} R_\lambda f|^2\, \frac {dxdtd\lambda}\lambda\leq I+II,
\end{eqnarray}
where
\begin{eqnarray*}\label{car+}
I&=&\sum_{j=-\infty}^\infty\sum_{Q\in \mathbb D_{-j}}\int_{2^j}^{2^{j+1}}
\int_Q\biggl (\mean{Q}|\lambda^{-1}R_\lambda(G_{(x,t,\lambda)})(y,s)|^2\, dxdt\biggr )\, \frac {dydsd\lambda}\lambda,\notag\\
II&=&\sum_{j=-\infty}^\infty\sum_{Q\in \mathbb D_{-j}}\int_{2^j}^{2^{j+1}}
\int_Q\biggl (\mean{Q}|\lambda^{-1}(R_\lambda\Psi(y,s))\P_\lambda(\nabla_{||}f)(x,t)|^2\, dxdt\biggr )\, \frac {dydsd\lambda}\lambda.
\end{eqnarray*}
To estimate $II$ we note that
\begin{eqnarray}\label{car+}
|II|&=&\sum_{j=-\infty}^\infty\sum_{Q\in \mathbb D_{-j}}\int_{2^j}^{2^{j+1}}
\int_Q|\P_\lambda(\nabla_{||}f)(x,t)|^2\biggl (\mean{Q}|\lambda^{-1}R_\lambda\Psi(y,s)|^2{dyds}\biggr )\frac {dxdtd\lambda}\lambda\notag\\
&\leq &\int_0^\infty
\int_{\mathbb R^{n+1}}|\P_\lambda(\nabla_{||}f)(x,t)|^2\biggl (\mean{Q_{c\lambda(x,t)}}|\lambda^{-1}R_\lambda\Psi(y,s)|^2{dyds}\biggr )
\frac {dxdtd\lambda}\lambda\notag\\
&\leq& c||\nabla_{||}f||^2_2\biggl (\sup_{Q}\frac 1 {|Q|}\int_0^{l(Q)}\int_Q |\lambda^{-1} R_\lambda\Psi(x,t)|^2\frac {dxdtd\lambda}{\lambda}\biggr ).
\end{eqnarray}
To estimate $I$ we write, recall that $Q\in \mathbb D_{-j}$,  $(x,t)\in Q$, and $\lambda\in ({2^j},{2^{j+1}})$,
\begin{eqnarray*}
\lambda^{-1}R_\lambda(G_{(x,t,\lambda)})(y,s)&=&R_\lambda(\lambda^{-1}G_{(x,t,\lambda)}1_{2Q})(y,s)\notag\\
&&+\sum_{k=1}^\infty R_\lambda(\lambda^{-1}G_{(x,t,\lambda)}1_{2^{k+1}Q\setminus 2^{k}Q })(y,s)\notag\\
&=:&J_0+\sum_{k=1}^\infty J_k.
\end{eqnarray*}
Using that $\mathcal{R}_\lambda$  satisfies \eqref{stand-}  we see that the contribution to $I$ from the term defined by $J_0$ is bounded by
\begin{eqnarray}\label{rhh}
&&c\sum_{j=-\infty}^\infty\sum_{Q\in \mathbb D_{-j}}\int_{2^j}^{2^{j+1}}
\int_Q\biggl (\mean{2Q}\biggl |\frac {G_{(x,t,\lambda)}(y,s)}{\lambda}\biggr|^2\, dyds\biggr )\, \frac {dxdtd\lambda}\lambda\notag\\
&\leq &c\int_0^\infty
\int_{\mathbb R^{n+1}}|\beta(x,t,\lambda)|^2\frac {dxdtd\lambda}\lambda,
\end{eqnarray}
where
\begin{eqnarray*}|\beta(x,t,\lambda)|^2&=&\mean{B_{c\lambda}(x,t)}\biggl |\frac {G_{(x,t,\lambda)}(y,s)}{\lambda}\biggr|^2\, {dyds}\notag\\
&=&\mean{B_{c\lambda}(x,t)}\biggl |\frac {f(y,s)-f(x,t)-(y-x)\cdot \P_\lambda(\nabla_{||}f)(x,t)}{\lambda}\biggr|^2\, {dyds},
\end{eqnarray*}
and where $B_{c\lambda}(x,t)$ now is a standard parabolic ball centered at $(x,t)$ and of radius $c\lambda$.  To estimate the expression on the last line in \eqref{rhh}  we
change variables $(y,s)=(x,t)+(z,w)$ in the definition of $\beta(x,t,\lambda)$ and apply Plancerel's theorem. Indeed, doing so and letting
$$K_\lambda(z,w,\xi,\tau):=\frac{|e^{i(\xi,\tau)\cdot(z,w)}-1-i(z\cdot\xi)\hat \P(\lambda\xi,\lambda^2\tau)|}
{||(\xi,\tau)||}$$
we see that
\begin{eqnarray*}
&&\int_0^\infty\int_{\mathbb R^{n+1}}|\beta(x,t,\lambda)|^2\frac {dxdtd\lambda}\lambda\notag\\
&\leq&\int_0^\infty\int_{\mathbb R^{n+1}}\lambda^{-2}\mean{B_{c\lambda}(0,0)}(K_\lambda(z,w,\xi,\tau))^2|||(\xi,\tau)||\hat f|^2\, \frac{dzdw d\xi d\tau d\lambda}\lambda\notag\\
&\leq&c\mean{B_{c}(0,0)}\int_{\mathbb R^{n+1}}|||(\xi,\tau)||\hat f|^2\int_0^\infty(K_\lambda(z,w,\lambda \xi,\lambda^2\tau))^2\, \frac{ d\lambda d\xi d\tau dzdw}\lambda.
\end{eqnarray*}
We now argue  as on p. 250 in \cite{H}. Indeed, using that $\P\in C_0^\infty(\mathbb R^{n+1},\mathbb R)$ we have that $\hat \P\in C^\infty$ and
$|\hat \P(\xi,\tau)|\leq (1+||(\xi,\tau)||)^{-1}$. Also $\hat \P(0)=1$. Thus, using Taylor's formula, and that fact that $||(x,t)||^2\approx |x|^2+|t|$, we see that
\begin{eqnarray*}
K_\lambda(z,w,\lambda \xi,\lambda^2\tau)\leq c\min\{\lambda ||(\xi,\tau)||,(\lambda ||(\xi,\tau)||)^{-1}\}.
\end{eqnarray*}
Combining the estimates in the last two displays we see that
\begin{eqnarray*}
\int_0^\infty\int_{\mathbb R^{n+1}}|\beta(x,t,\lambda)|^2\frac {dxdtd\lambda}\lambda\leq c||\mathbb D f||_2^2.
\end{eqnarray*}
By a similar argument, see also Theorem 3.9 in \cite{AAAHK}, using also that $R_\lambda$ satisfies \eqref{stand} for some integer $d\geq 0$, we can conclude that the  contribution to $I$ from the term defined by $\sum_{k=1}^\infty J_k$ also is bounded by $||\mathbb Df||_2^2$. We omit further details.
\end{proof}

\section{Boundedness of single layer potentials}\label{sec3}

We here collect a number of estimates related to the boundedness of (single) layer potentials: off-diagonal estimates, uniform  (in $\lambda$) $L^2$-estimates, estimates of non-tangential maximal functions and square functions. Much of the material in this sections is a summary of the key results established in \cite{CNS}. As mentioned, \cite{CNS} should be seen as a companion to this paper. We will consistently only formulate and prove results for
 $\mathcal{S}_\lambda=\mathcal{S}_\lambda^{\mathcal{H}}$, and for $\lambda>0$. Throughout the section we will consistently assume that $\mathcal{H}=\partial_t-\div A\nabla$ satisfies \eqref{eq3}-\eqref{eq4} as well as \eqref{eq14+}-\eqref{eq14++}. The corresponding results for $\lambda<0$ and for $\mathcal{S}_\lambda^\ast=\mathcal{S}_\lambda^{\mathcal{H}^\ast}$ follow by analogy. Recall the notation $|||\cdot|||_+$, $\Phi_+(f)$, introduced in \eqref{keyestint-ex+-}, \eqref{keyestint-ex+}.  Given $f\in L^2(\mathbb R^{n+1},\mathbb C)$ we let
                \begin{eqnarray}
     (\mathcal{S}_\lambda D_j)f(x,t)&:=&\int_{\mathbb R^{n+1}}\partial_{y_j}\Gamma_\lambda(x,t,y,s)f(y,s)\, dyds,\ 1\leq j\leq n,\notag\\
     (\mathcal{S}_\lambda D_{n+1})f(x,t)&:=&\int_{\mathbb R^{n+1}}\partial_{\sigma}\Gamma(x,t,\lambda,y,s,\sigma)|_{\sigma=0}f(y,s)\, dyds,
     \end{eqnarray}
     recall that  $D_i=\partial_{x_i}$ for
$i\in\{1,...,n+1\}$, and we set
              \begin{eqnarray}
     (\mathcal{S}_\lambda\nabla)&:=&((\mathcal{S}_\lambda D_1),...,(\mathcal{S}_\lambda D_n),(\mathcal{S}_\lambda D_{n+1})),\notag\\
     (\mathcal{S}_\lambda\nabla\cdot){\bf f}&:=&\sum_{j=1}^{n+1}(\mathcal{S}_\lambda D_j)f_j,
     \end{eqnarray}
     whenever ${\bf f}=(f_1,...,f_{n+1})\in L^2(\mathbb R^{n+1},\mathbb C^{n+1})$. Using the notation  $\nabla =(\nabla_{||},\partial_\lambda)$, $\nabla_{||}=(\partial_{x_1},...,\partial_{x_n})$, $\div_{||}=\nabla_{||}\cdot$, we have
     \begin{eqnarray}
     (\mathcal{S}_\lambda\nabla_{||})\cdot{\bf f}_{||}(x,t)=-\mathcal{S}_\lambda(\div_{||}{\bf f}_{||}),\ (\mathcal{S}_\lambda D_{n+1})=-\partial_\lambda\mathcal{S}_\lambda,
     \end{eqnarray}
     whenever ${\bf f}=({\bf f}_{||},f_{n+1}) \in C_0^\infty(\mathbb R^{n+1},\mathbb C^{n+1})$. Furthermore, in line with \cite{AAAHK}, at instances we will find it appropriate to consider smoothed layer potentials in order to make certain otherwise formal manipulations rigorous. In particular, some of the estimates for these smoothed layer potentials  will not be used quantitatively, but will only serve to justify the otherwise formal manipulations. For $\eta>0$ we set
 \begin{eqnarray}\label{sop}\mathcal{S}_\lambda^\eta=\int_{\mathbb R}\varphi_\eta(\lambda-\sigma)\mathcal{S}_\sigma\, d\sigma,
 \end{eqnarray}
where $\varphi_\eta=\tilde\varphi_\eta\ast\tilde\varphi_\eta$,  $\tilde\varphi_\eta(\lambda)=\eta^{-1}\tilde\varphi_\eta(\lambda/\eta)$ and $\tilde\varphi_\eta\in C_0^\infty(-\eta/2,\eta/2)$ is a non-negative and even function satisfying $\int\tilde\varphi_\eta=1$. Note that, by construction,
$\partial_\lambda \mathcal{S}_\lambda^\eta$ exists and is continuous over the boundary $\partial \mathbb R^{n+2}_+=\mathbb R^{n+1}=\{(x,t,\lambda)\in \mathbb R^{n}\times\mathbb R\times\mathbb R:\ \lambda=0\}$. We also note that
    \begin{eqnarray}\label{fsmooth}
\mathcal{H}\mathcal{S}_\lambda^\eta f(x,t)=f_\eta(x,t,\lambda):=f(x,t)\varphi_\eta(\lambda),
     \end{eqnarray}
     whenever $(x,t,\lambda)\in\mathbb R^{n+2}$. In particular, $\mathcal{S}_\lambda^\eta f(x,t)= (\mathcal{H}^{-1}f_\eta)(x,t,\lambda)$. We let
          \begin{eqnarray}\label{keyestint-ex+se}
\Phi^\eta(f):=\sup_{\lambda\neq 0}||\partial_\lambda \mathcal{S}_\lambda^\eta f||_2+|||\lambda\partial_\lambda^2 \mathcal{S}_{\lambda}^\eta f|||.
     \end{eqnarray}

     \subsection{Kernel estimates and consequences} Given a function $f\in L^2(\mathbb R^{n+1},\mathbb C)$, and $h=(h_1,...,h_{n+1})\in \mathbb R^{n+1}$, we let $(\mathbb D^hf)(x,t)=f(x_1+h_1,...,x_n+h_h,t+h_{n+1})-f(x,t)$. Given $m\geq -1$, $l\geq -1$ we let
     \begin{eqnarray}\label{kernel}
      K_{m,\lambda}(x,t,y,s)&=&\partial_\lambda^{m+1}\Gamma_\lambda(x,t,y,s),\notag\\
     K_{m,l,\lambda}(x,t,y,s)&=&\partial_t^{l+1}\partial_\lambda^{m+1}\Gamma_\lambda(x,t,y,s),
     \end{eqnarray}
     and introduce \begin{eqnarray}
     d_\lambda(x,t,y,s):=|x-y|+|t-s|^{1/2} +\lambda.
     \end{eqnarray}
     Below Lemma \ref{le2+}, Lemma \ref{le3}, Lemma \ref{le4} and  Lemma \ref{le5} are Lemma 3.2, Lemma 3.3, Lemma 3.4 and Lemma 3.5 in \cite{CNS}, respectively.

\begin{lemma}\label{le2+} Assume  $m\geq -1$, $l\geq -1$. Then there exists constants $c_{m,l}$ and $\alpha\in (0,1)$, depending at most on $n$, $\Lambda$, the De Giorgi-Moser-Nash constants, $m$, $l$, such that
\begin{eqnarray*}
(i)&&|K_{m,l,\lambda}(x,t,y,s)|\leq c_{m,l}({d_\lambda(x,t,y,s)})^{-n-m-2l-4},\notag\\
(ii)&&|(\mathbb D^hK_{m,l,\lambda}(\cdot,\cdot,y,s))(x,t)|\leq c_{m,l}||h||^\alpha({d_\lambda(x,t,y,s)})^{-n-m-2l-4-\alpha},\notag\\
(iii)&&|(\mathbb D^hK_{m,l,\lambda}(x,t\cdot,\cdot))(y,s)|\notag\leq c_{m,l}||h||^\alpha({d_\lambda(x,t,y,s)})^{-n-m-2l-4-\alpha},
\end{eqnarray*}
whenever $2||h||\leq ||(x-y,t-s)||$ or $||h||\leq 20\lambda$.
\end{lemma}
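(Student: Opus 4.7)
The plan is to bootstrap from the classical Aronson-type Gaussian bound on $\Gamma$ itself to pointwise bounds on all higher derivatives $K_{m,l,\lambda}$, using as the key structural ingredient that both $\partial_\lambda$ and $\partial_t$ commute with $\mathcal{H}$: by \eqref{eq4}, $A$ is independent of $\lambda$ and of $t$, so $\partial_\lambda \mathcal{L} = \mathcal{L}\partial_\lambda$ and $\partial_t\mathcal{H} = \mathcal{H}\partial_t$. First I would establish the base case $m=l=-1$, namely
\[
|\Gamma_\lambda(x,t,y,s)| \;\leq\; c\,d_\lambda(x,t,y,s)^{-n-1}.
\]
Under the De Giorgi--Moser--Nash estimates \eqref{eq14+}--\eqref{eq14++} for both $\mathcal{H}$ and $\mathcal{H}^\ast$, the classical Aronson--Davies argument (semigroup $L^\infty$-contractivity combined with DGMN applied in both variables, in the style of Auscher--Tchamitchian) yields the upper Gaussian bound $|\Gamma(X,t,Y,s)|\le c(t-s)^{-(n+1)/2}\exp(-|X-Y|^2/(c(t-s)))$ for $t>s$, which immediately specializes to the claimed estimate in the parabolic metric $d_\lambda$.

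Next I would observe that, for any $m,l\geq -1$, the derivative $K_{m,l,\lambda}(\cdot,\cdot,y_0,s_0)$ is a weak solution of $\mathcal{H}u=0$ in the variables $(x,\lambda,t)$ on every parabolic cylinder disjoint from the singularity $(y_0,s_0,0)$; the symmetric statement with respect to $\mathcal{H}^\ast$ holds for $K_{m,l,\lambda}(x_0,t_0,\cdot,\cdot)$. This is a direct consequence of the translation invariance of $A$ in $\lambda$ and $t$ and the fact that, assuming \eqref{eq14+}--\eqref{eq14++}, weak solutions are smooth in $t$ (as noted after \eqref{weak+}) and, after translation in $\lambda$ and iterated use of Lemma~\ref{le1}, smooth in $\lambda$ as well. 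To prove (i), fix $(x_0,t_0,y_0,s_0)$ and set $d_\lambda := d_\lambda(x_0,t_0,y_0,s_0)$. On a parabolic cylinder of radius $r \asymp d_\lambda/10$ centered at $(x_0,t_0,\lambda)$, which by construction is parabolically far from $(y_0,s_0,0)$, the base case gives $|\Gamma_\sigma(x,t,y_0,s_0)|\le c\,d_\lambda^{-n-1}$. Iterated application of Lemma~\ref{le1}(ii) (each extra $\partial_\lambda$ loses one power of $r$) and of Lemma~\ref{le1a} (each extra $\partial_t$ loses two powers of $r$), followed by the DGMN sup bound \eqref{eq14+} applied to $K_{m,l,\lambda}$ to pass from $L^2$ averages back to pointwise values, produces
\[
|K_{m,l,\lambda}(x_0,t_0,y_0,s_0)|\;\leq\; c\,d_\lambda^{-n-1}\cdot r^{-(m+1)}\cdot r^{-2(l+1)}\;\leq\; c_{m,l}\,d_\lambda^{-n-m-2l-4}.
\]

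For (ii) and (iii), I would apply the De Giorgi--Moser--Nash Hölder estimate \eqref{eq14++} directly to $K_{m,l,\lambda}$, which solves the equation off the diagonal by the previous step. The two-sided hypothesis $2\|h\| \leq \|(x-y,t-s)\|$ or $\|h\| \leq 20\lambda$ is exactly what guarantees that both the base point and its $h$-translate lie inside a parabolic cube of radius $r \asymp d_\lambda$ on which $K_{m,l,\lambda}$ is a solution of $\mathcal{H}u=0$ (respectively, via $\Gamma^\ast$, of $\mathcal{H}^\ast u=0$, for (iii)); combining \eqref{eq14++} with the pointwise bound from part (i) on the enveloping cube then gives
\[
|(\mathbb D^h K_{m,l,\lambda})|\;\leq\; c\Bigl(\tfrac{\|h\|}{d_\lambda}\Bigr)^\alpha d_\lambda^{-n-m-2l-4},
\]
as required. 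The main obstacle is really the very first step: in the complex-coefficient setting the maximum principle is unavailable, and the pointwise Gaussian bound on $\Gamma_\lambda$ must be extracted by combining DGMN for $\mathcal{H}$ with DGMN for its adjoint $\mathcal{H}^\ast$, together with Davies--Gaffney type off-diagonal semigroup estimates. Once this base estimate is in place, everything else reduces to a straightforward, if slightly tedious, bookkeeping of interior energy and Hölder estimates for solutions.
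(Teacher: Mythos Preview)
Your approach is correct and is the standard one for such kernel estimates. Note, however, that the paper does not actually supply a proof of this lemma: it simply records that Lemma~\ref{le2+} is Lemma~3.2 in the companion paper \cite{CNS}, and refers the reader there. So there is no ``paper's own proof'' to compare against beyond that citation; the argument you outline---Gaussian upper bound from DGMN for $\mathcal{H}$ and $\mathcal{H}^\ast$ as the base case, then iterated Caccioppoli/energy estimates (Lemmas~\ref{le1--}, \ref{le1}, \ref{le1a}) combined with the $L^\infty$ bound \eqref{eq14+} to gain one parabolic order per $\partial_\lambda$ and two per $\partial_t$, and finally the interior H\"older estimate \eqref{eq14++} applied to the solution $K_{m,l,\lambda}$ itself---is precisely the mechanism one expects to find in \cite{CNS}, and your identification of the complex-coefficient Gaussian bound as the only nontrivial input is accurate.
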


\begin{lemma}\label{le3} Consider  $m\geq -1$, $l\geq -1$. Then there exists a constant
     $c_{m,l}$, depending at most on $n$, $\Lambda$, the De Giorgi-Moser-Nash constants, $m$, $l$, such that the following holds whenever $Q\subset\mathbb R^{n+1}$ is a parabolic cube, $k\geq 1$ is an integer and $(x,t)\in Q$.
\begin{eqnarray*}
(i)&&\int_{2^{k+1}Q\setminus 2^kQ}|(2^kl(Q))^{m+2l+3}\nabla_yK_{m,l,\lambda}(x,t,y,s)|^2dy ds\leq c_{m,l}(2^kl(Q))^{-n-2},\notag\\
(ii)&&\int_{2Q}|(l(Q))^{m+2l+3}\nabla_yK_{m,l,\lambda}(x,t,y,s)|^2dy ds\leq c_{m,l,\rho}(l(Q))^{-n-2},\notag\\
&&\mbox{whenever } l(Q)/\rho\leq\lambda\leq \rho l(Q).
\end{eqnarray*}
\end{lemma}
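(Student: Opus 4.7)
The plan is to apply the slice Caccioppoli inequality of Lemma \ref{le1}(ii), combined with the pointwise kernel bounds of Lemma \ref{le2+}(i), with the role of the ``$\lambda$''-variable in Lemma \ref{le1} played by an auxiliary variable $\sigma$ taking the target point into the ambient space $\mathbb R^{n+2}$. Specifically, set $\tilde K(y,\sigma,s):=\partial_t^{l+1}\partial_\lambda^{m+1}\Gamma(x,t,\lambda,y,\sigma,s)$, so that $K_{m,l,\lambda}(x,t,y,s)=\tilde K(y,0,s)$. As a function of $(y,\sigma,s)$, the kernel $\Gamma(x,t,\lambda,y,\sigma,s)$ solves $\mathcal{H}^\ast u=0$ away from the singularity $(y,\sigma,s)=(x,\lambda,t)$, and the source derivatives $\partial_t,\partial_\lambda$ commute with this equation, so $\mathcal{H}^\ast\tilde K=0$ away from $(x,\lambda,t)$. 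By the same Aronson/Gaussian heat-kernel argument that underlies Lemma \ref{le2+}(i) one moreover has the ambient pointwise bound
\begin{eqnarray*}
|\tilde K(y,\sigma,s)|\leq c_{m,l}\bigl(|x-y|+|t-s|^{1/2}+|\lambda-\sigma|\bigr)^{-n-m-2l-4}
\end{eqnarray*}
whenever $(y,\sigma,s)\neq(x,\lambda,t)$.

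For part $(i)$, set $R:=2^kl(Q)$ and cover the parabolic annulus $2^{k+1}Q\setminus 2^kQ$ by an absolutely bounded (depending only on $n$) number of parabolic cubes $\{Q_j'\}\subset\mathbb R^{n+1}$ of side length $r=R/C_0$, with $C_0$ a large constant depending only on $n$ and on the parameter $\beta_1$ from Lemma \ref{le1}. For each $j$, the point $(x,t)\in Q$ is separated from $\beta_1^2 Q_j'\subset 2^{k+2}Q\setminus 2^{k-1}Q$ by at least $cR$ in parabolic distance, hence the cylinder $\beta_1^2 Q_j'\times(-r,r)\subset\mathbb R^{n+2}$ lies at parabolic distance at least $cR$ from the singularity $(x,\lambda,t)$, regardless of where $\lambda$ sits. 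Thus $\mathcal{H}^\ast\tilde K=0$ on this cylinder, and Lemma \ref{le1}(ii) applied to $\tilde K$ with $Q'=Q_j'$, $\lambda_0=0$, $\beta_2=1$, combined with the pointwise bound above, gives
\begin{eqnarray*}
\int_{Q_j'}|\nabla_y K_{m,l,\lambda}|^2\,dyds\leq \frac{c\,r^{n+2}}{r^2}\,R^{-2(n+m+2l+4)}\leq c\,R^{-n-2(m+2l+3)-2}.
\end{eqnarray*}
Summing over the $O(1)$ cubes $Q_j'$ and multiplying through by $R^{2(m+2l+3)}$ yields estimate $(i)$.

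For part $(ii)$, since $l(Q)/\rho\leq\lambda\leq\rho\,l(Q)$, the singularity sits at $\sigma=\lambda$ bounded away from $\sigma=0$. Apply Lemma \ref{le1}(ii) to $\tilde K$ with $Q'=2Q$, $\lambda_0=0$, and $\beta_2=c_\rho$ chosen small enough that $\beta_2\,l(2Q)<\lambda/2$. The cylinder $\beta_1^2(2Q)\times(-\beta_2\,l(2Q),\beta_2\,l(2Q))$ then lies at parabolic distance at least $c\,l(Q)$ from $(x,\lambda,t)$, so $\mathcal{H}^\ast\tilde K=0$ there and $|\tilde K|\leq c\,l(Q)^{-n-m-2l-4}$ on this cylinder. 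Inserting this bound into Lemma \ref{le1}(ii) yields $\int_{2Q}|\nabla_y K_{m,l,\lambda}|^2\,dyds\leq c_\rho\,l(Q)^{-n-2(m+2l+3)-2}$, which gives $(ii)$ after multiplication by $l(Q)^{2(m+2l+3)}$. The main (mild) obstacle is the recognition that the $\sigma=0$ restriction of $\tilde K$ does not satisfy a closed PDE in the $(y,s)$-variables alone, which forces the passage to the ambient $\mathbb R^{n+2}$ via the slice estimate of Lemma \ref{le1}(ii); once this is done the argument reduces to a routine Caccioppoli-plus-pointwise-bound estimate.
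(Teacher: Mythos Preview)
Your proof is correct. The paper itself does not prove this lemma but merely cites it as Lemma 3.3 in \cite{CNS}; your argument---lifting to the ambient variable $\sigma$, using that $\tilde K$ solves the (backward/adjoint) equation in $(y,\sigma,s)$ away from the pole, and combining the slice Caccioppoli estimate of Lemma \ref{le1}(ii) with the pointwise bound of Lemma \ref{le2+}(i)---is exactly the standard route and is almost certainly what \cite{CNS} does. One cosmetic remark: strictly speaking $\Gamma(x,t,\lambda,\cdot,\cdot,\cdot)$ solves $(-\partial_s+\mathcal L)v=0$ rather than $\mathcal H^\ast v=0$ (the conjugate of the adjoint), but this operator also satisfies \eqref{eq3}--\eqref{eq4} with the same constants, so Lemma \ref{le1} applies unchanged and nothing in your argument is affected.
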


\begin{lemma}\label{le4} Assume  $m\geq -1$, $l\geq -1$. Then there exists a constant
     $c_{m,l}$, depending at most on $n$, $\Lambda$, the De Giorgi-Moser-Nash constants, $m$, $l$, such that the following holds whenever $Q\subset\mathbb R^{n+1}$ is a parabolic cube, $k\geq 1$ is an integer. Let ${\bf f}
     \in L^2(\mathbb R^{n+1},\mathbb C^{n})$, ${f}
     \in L^2(\mathbb R^{n+1},\mathbb C)$. Then
     \begin{eqnarray*}
(i)&&||\partial_t^{l+1}\partial_\lambda^{m+1}(\mathcal{S}_\lambda\nabla_{||}\cdot)({\bf f}1_{2^{k+1}Q\setminus 2^kQ})||_{L^2(Q)}^2\leq \ c_{m,l}2^{-(n+2)k}(2^kl(Q))^{-2m-4l-6}||{\bf f}||^2_{L^2(2^{k+1}Q\setminus 2^kQ)},\notag\\
(ii)&&||\partial_t^{l+1}\partial_\lambda^{m+1}(\mathcal{S}_\lambda\nabla_{||}\cdot)({\bf f}1_{2Q})||_{L^2(Q)}^2
	\ \leq \ c_{m,l,\rho}(l(Q))^{-2m-4l-6}||{\bf f}||^2_{L^2(2Q)},\notag\\
&&\mbox{whenever $\rho>0$, $l(Q)/\rho\leq\lambda\leq \rho l(Q)$}.\notag\\
(iii)&&||\partial_t^{l+1}\partial_\lambda^{m+1}(\mathcal{S}_\lambda)({f}1_{2^{k+1}Q\setminus 2^kQ})||_{L^2(Q)}^2\leq \ c_{m,l}2^{-(n+2)k}(2^kl(Q))^{-2m-4l-4}||{f}||^2_{L^2(2^{k+1}Q\setminus 2^kQ)},\notag\\
(iv)&&||\partial_t^{l+1}\partial_\lambda^{m+1}(\mathcal{S}_\lambda)({f}1_{2Q})||_{L^2(Q)}^2
\ \leq \
c_{m,l,\rho}(l(Q))^{-2m-4l-4}||{f}||^2_{L^2(2Q)},\notag\\
&&\mbox{whenever $\rho>0$, $l(Q)/\rho\leq\lambda\leq \rho l(Q)$}.
\end{eqnarray*}
\end{lemma}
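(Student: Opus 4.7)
\medskip

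\noindent\textbf{Proof plan for Lemma \ref{le4}.} The idea is to reduce all four statements to the kernel estimates of Lemma \ref{le2+} and Lemma \ref{le3} by rewriting each operator as an integral against the kernels $K_{m,l,\lambda}$ or $\nabla_y K_{m,l,\lambda}$, and then applying Cauchy--Schwarz in the $(y,s)$ variable. More precisely, for $(x,t)\in Q$, I would first observe that, by integration by parts in $y$ (legitimate for ${\bf f}\in C_0^\infty$, the general case following by density),
\begin{eqnarray*}
\partial_t^{l+1}\partial_\lambda^{m+1}(\mathcal{S}_\lambda\nabla_{||}\cdot){\bf f}(x,t)
 &=& \int_{\mathbb{R}^{n+1}} (\nabla_{y,||} K_{m,l,\lambda})(x,t,y,s)\cdot {\bf f}(y,s)\, dyds, \\
\partial_t^{l+1}\partial_\lambda^{m+1}\mathcal{S}_\lambda f(x,t)
 &=& \int_{\mathbb{R}^{n+1}} K_{m,l,\lambda}(x,t,y,s)\, f(y,s)\, dyds.
\end{eqnarray*}

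For part $(i)$, I would fix $(x,t)\in Q$ and apply Cauchy--Schwarz to the first identity with ${\bf f}$ replaced by ${\bf f}1_{2^{k+1}Q\setminus 2^kQ}$:
\begin{eqnarray*}
|\partial_t^{l+1}\partial_\lambda^{m+1}(\mathcal{S}_\lambda\nabla_{||}\cdot)({\bf f}1_{2^{k+1}Q\setminus 2^kQ})(x,t)|^2
\leq \biggl (\int_{2^{k+1}Q\setminus 2^kQ}\!\!|\nabla_{y,||}K_{m,l,\lambda}|^2\, dyds\biggr )\, \|{\bf f}\|_{L^2(2^{k+1}Q\setminus 2^kQ)}^2.
\end{eqnarray*}
Lemma \ref{le3}$(i)$ controls the first factor by $c_{m,l}(2^kl(Q))^{-n-2-2(m+2l+3)}=c_{m,l}(2^kl(Q))^{-n-2m-4l-8}$. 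Integrating in $(x,t)\in Q$ and using $|Q|=l(Q)^{n+2}$ produces exactly the claimed bound $c_{m,l}2^{-(n+2)k}(2^kl(Q))^{-2m-4l-6}\|{\bf f}\|^2$. Part $(ii)$ is completely parallel: the same Cauchy--Schwarz argument applies with ${\bf f}1_{2Q}$ in place of ${\bf f}1_{2^{k+1}Q\setminus 2^kQ}$, and under the additional assumption $l(Q)/\rho\leq\lambda\leq\rho\, l(Q)$ the factor $\int_{2Q}|\nabla_yK_{m,l,\lambda}|^2\, dyds$ is bounded via Lemma \ref{le3}$(ii)$ by $c_{m,l,\rho}l(Q)^{-n-2m-4l-8}$, which after integration in $(x,t)\in Q$ yields the desired constant.

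For parts $(iii)$ and $(iv)$ there is no gradient in $y$, so instead of Lemma \ref{le3} I would use the pointwise kernel bound in Lemma \ref{le2+}$(i)$:
$|K_{m,l,\lambda}(x,t,y,s)|\leq c_{m,l}d_\lambda(x,t,y,s)^{-n-m-2l-4}$. For $(x,t)\in Q$ and $(y,s)\in 2^{k+1}Q\setminus 2^kQ$ one has $d_\lambda\geq c2^k l(Q)$ regardless of $\lambda$, so Cauchy--Schwarz gives
\begin{eqnarray*}
|\partial_t^{l+1}\partial_\lambda^{m+1}\mathcal{S}_\lambda(f1_{2^{k+1}Q\setminus 2^kQ})(x,t)|^2
\leq c_{m,l}(2^kl(Q))^{-2(n+m+2l+4)}\, |2^{k+1}Q|\cdot\|f\|_{L^2(\cdots)}^2,
\end{eqnarray*}
and multiplying by $|Q|=l(Q)^{n+2}$ gives the constant $c_{m,l}2^{-(n+2)k}(2^kl(Q))^{-2m-4l-4}$ required in $(iii)$. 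For $(iv)$ the assumption $l(Q)/\rho\leq\lambda\leq\rho l(Q)$ forces $d_\lambda\geq c_\rho l(Q)$ even for $(y,s)\in 2Q$, and the same argument produces $c_{m,l,\rho}l(Q)^{-2m-4l-4}\|f\|_{L^2(2Q)}^2$.

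The main technical point, and the reason $(ii)$ and $(iv)$ are isolated as separate cases, is the absence of spatial separation between $(x,t)\in Q$ and $(y,s)\in 2Q$: without the hypothesis $\lambda\gtrsim l(Q)$ the parabolic distance $d_\lambda$ can vanish and the kernel bounds break down. Once this distinction is respected, the proof is essentially a bookkeeping exercise. The only nontrivial step is justifying the integration by parts that converts $\mathcal{S}_\lambda\nabla_{||}\cdot{\bf f}$ into an integral against $\nabla_yK_{m,l,\lambda}$, and this follows by a standard density argument using the decay and Hölder regularity of $K_{m,l,\lambda}$ established in Lemma \ref{le2+}.
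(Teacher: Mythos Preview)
Your proposal is correct and is exactly the argument one expects: Cauchy--Schwarz in the $(y,s)$ variable combined with the $L^2$ kernel bounds of Lemma \ref{le3} for $(i)$--$(ii)$ and the pointwise bound of Lemma \ref{le2+}$(i)$ for $(iii)$--$(iv)$, with the bookkeeping of the powers checking out in each case. The paper itself does not give a proof but refers to the companion paper \cite{CNS}; since Lemmas \ref{le2+} and \ref{le3} are stated precisely so as to feed into this argument, your approach is undoubtedly the intended one.
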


\begin{lemma}\label{le5}  Assume $m\geq -1$, $l\geq -1$, $m+2l\geq -2$,  Then there exists a constant
     $c_{m,l}$, depending at most on $n$, $\Lambda$, the De Giorgi-Moser-Nash constants, $m$, $l$, such that the following holds. Let ${\bf f}
     \in L^2(\mathbb R^{n+1},\mathbb C^{n})$ and ${f}
     \in L^2(\mathbb R^{n+1},\mathbb C)$. Then
 \begin{eqnarray*}
(i)&&\sup_{\lambda>0}||\lambda^{m+2l+3}\partial_t^{l+1}\partial_\lambda^{m+1}(\mathcal{S}_\lambda\nabla_{||}\cdot){\bf f}||_2\leq c_{m,l}||{\bf f}||_2,\notag\\
(ii)&&\sup_{\lambda>0} ||\lambda^{m+2l+3}\partial_t^{l+1}\partial_\lambda^{m+1}(\nabla_{||} \mathcal{S}_\lambda f)||_2\leq c_{m,l}||{f}||_2.
\end{eqnarray*}
Furthermore, if $m+2l\geq -1$,  then
 \begin{eqnarray*}
(iii)&&\sup_{\lambda>0} ||\lambda^{m+2l+2}\partial_t^{l+1}\partial_\lambda^{m+1} (\mathcal{S}_\lambda f)||_2\leq c_{m,l}||{f}||_2.
\end{eqnarray*}
\end{lemma}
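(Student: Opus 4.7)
The strategy is a tiling-and-summation argument at the scale $l(Q)\approx\lambda$, built on the local and off-diagonal $L^2$ estimates of Lemma~\ref{le4}; part (ii) is then obtained from (i) by duality. For (i), the plan is to fix $\lambda>0$, partition $\mathbb{R}^{n+1}$ into pairwise disjoint parabolic cubes $\{Q_j\}$ of size $l(Q_j)=\lambda$, and write
\begin{equation*}
||\lambda^{m+2l+3}\partial_t^{l+1}\partial_\lambda^{m+1}(\mathcal{S}_\lambda\nabla_{||}\cdot){\bf f}||_2^2=\sum_j \lambda^{2m+4l+6}||\partial_t^{l+1}\partial_\lambda^{m+1}(\mathcal{S}_\lambda\nabla_{||}\cdot){\bf f}||_{L^2(Q_j)}^2.
\end{equation*}
For each $j$ one decomposes ${\bf f}={\bf f}1_{2Q_j}+\sum_{k\geq 1}{\bf f}1_{2^{k+1}Q_j\setminus 2^kQ_j}$, applies Lemma~\ref{le4}(ii) (with $\rho=1$) to the local piece and Lemma~\ref{le4}(i) to each annular piece, and uses a Cauchy-Schwarz with weights $2^{-k}$ to decouple the pieces inside $||\cdot||_{L^2(Q_j)}^2$. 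Since $l(Q_j)=\lambda$, the prefactor $\lambda^{2m+4l+6}$ exactly cancels the $(2^k l(Q))^{-2m-4l-6}$ weight in Lemma~\ref{le4}(i); summing over $j$, and exploiting the bounded overlap of $\{2Q_j\}$ and the multiplicity $O(2^{(n+2)k})$ of $\{2^{k+1}Q_j\}$, absorbs the $2^{(n+2)k}$ factor, leaving $||{\bf f}||_2^2\sum_{k\geq 0}2^{-k(2m+4l+5)}$. Since $m+2l\geq -2$ gives $2m+4l+5\geq 1$, the series converges and (i) follows.

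Part (ii) then follows from (i) applied to $\mathcal{H}^\ast$ by duality. Using $\Gamma_\lambda^\ast(y,s,x,t)=\overline{\Gamma_\lambda(x,t,y,s)}$, integration by parts in $(x,t)$, and the $t$-translation invariance of $A$, the Hermitian adjoint of $\partial_t^{l+1}\partial_\lambda^{m+1}\nabla_{||}\mathcal{S}_\lambda$ equals, up to a sign, $\partial_t^{l+1}\partial_\lambda^{m+1}(\mathcal{S}_\lambda^\ast\nabla_{||}\cdot)$. As $\mathcal{H}^\ast$ satisfies the same hypotheses as $\mathcal{H}$, part (i) applied to $\mathcal{H}^\ast$ bounds this adjoint in operator norm by $c\lambda^{-(m+2l+3)}$, which yields (ii). Part (iii) is proved by repeating the tiling argument of (i) verbatim, but using parts (iii), (iv) of Lemma~\ref{le4} in place of (i), (ii): the prefactor is now $\lambda^{2m+4l+4}$, matched against $(2^k l(Q))^{-2m-4l-4}$, and summability requires $2m+4l+4>0$, i.e.\ precisely $m+2l\geq -1$.

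The main obstacle is numerological rather than conceptual. One must verify that at the chosen scale $l(Q)=\lambda$ the $\lambda$-power in front of the operator exactly cancels the $l(Q)$-factor in the off-diagonal estimate, and that the residual decay $2^{-k(2m+4l+6)}$ (respectively $2^{-k(2m+4l+4)}$) is just strong enough to outweigh the multiplicity $2^{(n+2)k}$ of the dyadic shells around the tiling. The thresholds $m+2l\geq -2$ for (i), (ii) and $m+2l\geq -1$ for (iii) appear exactly when these balances become strictly positive, so the hypotheses are sharp for this method.
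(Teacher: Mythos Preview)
The paper does not prove this lemma; it simply records that Lemma~\ref{le5} is Lemma~3.5 in \cite{CNS}. Your tiling argument at scale $l(Q)=\lambda$, feeding the annular decomposition into Lemma~\ref{le4} and then summing via the bounded-overlap count, is exactly the standard route from off-diagonal estimates to uniform $L^2$ bounds, and is almost certainly what \cite{CNS} does. The duality reduction of (ii) to (i) for $\mathcal{H}^\ast$ is also correct; note only that the hermitian adjoint of $\mathcal{S}_\lambda^{\mathcal{H}}$ is $\mathcal{S}_{-\lambda}^{\mathcal{H}^\ast}$ (see the proof of Lemma~\ref{trace5}), so strictly speaking the adjoint operator lives at height $-\lambda$, but since the same off-diagonal bounds hold for $\lambda<0$ this is harmless.

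One small bookkeeping slip in your final paragraph: for part (iii), after the Cauchy--Schwarz weight contributes $2^{k}$ and the overlap multiplicity cancels the $2^{-(n+2)k}$ from Lemma~\ref{le4}(iii), the net geometric factor is $2^{-k(2m+4l+3)}$, not $2^{-k(2m+4l+4)}$; summability thus requires $2m+4l+3>0$. For integer $m,l$ this is still equivalent to $m+2l\geq -1$, so your conclusion is unchanged, but the exponent as written is off by one.
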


\begin{lemma}\label{le5ga}  Assume $m\geq -1$, $l\geq -1$, $m+2l\geq -2$,  Then there exists a constant
     $c_{m,l}$, depending at most on $n$, $\Lambda$, the De Giorgi-Moser-Nash constants, $m$, $l$, such that the following holds. Let ${\bf f}
     \in L^2(\mathbb R^{n+1},\mathbb C^{n})$ and ${f}
     \in L^2(\mathbb R^{n+1},\mathbb C)$. Then
 \begin{eqnarray*}
(i)&&\sup_{\lambda>0}||\lambda^{m+2l+4}\nabla\partial_t^{l+1}\partial_\lambda^{m+1}(\mathcal{S}_\lambda\nabla_{||}\cdot){\bf f}||_2\leq c_{m,l}||{\bf f}||_2,\notag\\
(ii)&&\sup_{\lambda>0} ||\lambda^{m+2l+4}\nabla_{||}\partial_t^{l+1}\partial_\lambda^{m+1}(\nabla_{||} \mathcal{S}_\lambda)f||_2\leq c_{m,l}||{f}||_2.
\end{eqnarray*}
Furthermore, if $m+2l\geq -1$,  then
 \begin{eqnarray*}
(iii)&&\sup_{\lambda>0} ||\lambda^{m+2l+3}\nabla_{||}\partial_t^{l+1}\partial_\lambda^{m+1} (\mathcal{S}_\lambda f)||_2\leq c_{m,l}||{f}||_2.
\end{eqnarray*}
\end{lemma}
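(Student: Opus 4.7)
The plan is to obtain each of (i)--(iii) by combining the $L^{2}$ bound supplied by the corresponding part of Lemma~\ref{le5} with the Caccioppoli-type slice estimate of Lemma~\ref{le1}(ii). Since $A$ is independent of $t$ and $\lambda$ by~\eqref{eq4}, the derivatives $\partial_t^{l+1}$ and $\partial_\lambda^{m+1}$ commute through $\mathcal{H}$, so the functions $u:=\partial_t^{l+1}\partial_\lambda^{m+1}(\mathcal{S}_\lambda\nabla_{||}\cdot){\bf f}$ (in case~(i)) and $u:=\partial_t^{l+1}\partial_\lambda^{m+1}\mathcal{S}_\lambda f$ (in cases~(ii) and~(iii)) are weak solutions of $\mathcal{H}u=0$ in $\mathbb R^{n+2}_{+}$.

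For parts~(i) and~(iii) fix $\lambda>0$, tile $\mathbb R^{n+1}$ by parabolic cubes $Q$ of side $\lambda$, and apply Lemma~\ref{le1}(ii) with $\lambda_0=l(Q)=\lambda$ and $I=(\lambda-\beta_2\lambda,\lambda+\beta_2\lambda)$. Converting the averages to integrals produces an additional factor $|Q|/(|\beta_1^{2}Q|\cdot|I/2|)\sim 1/l(Q)$, so that Lemma~\ref{le1}(ii) reads $\int_Q|\nabla u(\cdot,\cdot,\lambda)|^{2}\,dxdt\leq (c/l(Q)^{3})\int_{\beta_1^{2}Q\times I/2}|u|^{2}\,dXdt$. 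Summing over the tiling (with bounded overlap of $\beta_1^{2}Q$) and invoking $\sigma\approx\lambda$ on $I/2$ together with Lemma~\ref{le5} gives
\[
\begin{aligned}
\int_{\mathbb R^{n+1}}|\nabla u(\cdot,\cdot,\lambda)|^{2}\,dxdt
&\ \le\ \frac{c}{\lambda^{3}}\int_{I/2}\|u(\cdot,\cdot,\sigma)\|_{2}^{2}\,d\sigma \\
&\ \le\ \frac{c}{\lambda^{2}}\sup_{\sigma\approx\lambda}\|u(\cdot,\cdot,\sigma)\|_{2}^{2}\ \le\ c\,\lambda^{-2(\alpha+1)}\|f\|_{2}^{2},
\end{aligned}
\]
with $\alpha=m+2l+3$ in case~(i) (from Lemma~\ref{le5}(i)) and $\alpha=m+2l+2$ in case~(iii) (from Lemma~\ref{le5}(iii)). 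Since $\alpha+1$ equals $m+2l+4$ in~(i) and $m+2l+3$ in~(iii), and $\nabla u=(\nabla_{||}u,\partial_\lambda u)$, both asserted inequalities follow.

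Part~(ii) is more delicate, because the target is the \emph{purely tangential} second derivative $\nabla_{||}^{2}u$ of the solution $u=\partial_t^{l+1}\partial_\lambda^{m+1}\mathcal{S}_\lambda f$, and $\nabla_{||}u$ is not itself a solution (since $A$ depends on $x$). The plan is to iterate the Caccioppoli argument on $\partial_\lambda u$ and $\partial_tu$, both of which \emph{are} solutions because $A$ is independent of $(\lambda,t)$ (using Lemma~\ref{le1a} to control $\partial_tu$ by $\nabla u$), thereby obtaining all \emph{mixed} second derivatives $\partial_\lambda^{2}u$, $\partial_\lambda\nabla_{||}u$ and $\partial_t\nabla u$ at the correct scale $\lambda^{-(m+2l+4)}$. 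To extract the pure-tangential Hessian one then uses $\mathcal{H}u=0$, which, after expansion in the splitting $\nabla=(\nabla_{||},\partial_\lambda)$, reads
\[
\div_{||}(A_{||}\nabla_{||}u)\ =\ \partial_tu\ -\ A_{n+1,n+1}\partial_\lambda^{2}u\ -\ \sum_{j=1}^{n}A_{n+1,j}\partial_\lambda\partial_{x_j}u\ -\ \sum_{i=1}^{n}\partial_{x_i}\bigl(A_{i,n+1}\partial_\lambda u\bigr);
\]
this places $\div_{||}(A_{||}\nabla_{||}u)$ in $L^{2}$ with the correct $\lambda^{m+2l+4}$ weight, and a $T1$/Calder\'on--Zygmund-type argument at scale $\lambda$ based on the pointwise kernel bounds of Lemma~\ref{le2+} and the off-diagonal estimates of Lemma~\ref{le4} (specialized to the kernel $\partial_t^{l+1}\partial_\lambda^{m+1}\nabla_{||,x}\nabla_{||,y}\Gamma_\lambda$, whose needed bounds follow as routine extensions of those already in hand) then upgrades this divergence-form identity to a genuine $L^{2}$ control of $\nabla_{||}^{2}u$.

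The main obstacle lies precisely in this final step of~(ii): parts~(i) and~(iii) fall out of a single application of Caccioppoli once the $1/l(Q)^{3}$ scaling in Lemma~\ref{le1}(ii) is correctly unpacked, but upgrading from $\div_{||}(A_{||}\nabla_{||}u)\in L^{2}$ to $\nabla_{||}^{2}u\in L^{2}$ for $A$ merely bounded and measurable is \emph{not} an elliptic-regularity fact, and genuinely requires the singular-integral machinery of Section~\ref{sec3} together with the specific structure of $\mathcal{S}_\lambda$ as a layer potential rather than that of an arbitrary $H^{1}$ function.
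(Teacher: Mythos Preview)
Your treatment of parts (i) and (iii) is correct and is essentially the paper's proof: the paper says only ``immediate from Lemma~\ref{le1--} and Lemma~\ref{le5}'', and what you have written is the natural unpacking of that sentence. Your use of the slice version Lemma~\ref{le1}(ii) rather than the bulk estimate Lemma~\ref{le1--} is the right technical choice for a fixed-$\lambda$ bound, and your $1/l(Q)^3$ scaling is correct. Note also that (iii) is in fact literally contained in Lemma~\ref{le5}(ii) (same quantity, weaker index constraint), so no Caccioppoli step is even needed there.

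For part (ii) you correctly identify the real issue: the quantity is the \emph{pure tangential Hessian} $\nabla_{||}^2 w$ of the solution $w=\partial_t^{l+1}\partial_\lambda^{m+1}\mathcal{S}_\lambda f$, and since $\nabla_{||}w$ is not itself a solution when $A$ depends on $x$, a single Caccioppoli step does not produce it. However, your proposed repair has a genuine gap. The step ``$\div_{||}(A_{||}\nabla_{||}w)\in L^2\ \Rightarrow\ \nabla_{||}^2 w\in L^2$'' is exactly $W^{2,2}$ interior regularity for a divergence-form operator with merely bounded measurable coefficients, and this is \emph{false} in general. Your suggested workaround appeals to Calder\'on--Zygmund bounds for the kernel $\partial_t^{l+1}\partial_\lambda^{m+1}\nabla_{||,x}\nabla_{||,y}\Gamma_\lambda$, calling them ``routine extensions'' of Lemmas~\ref{le2+}--\ref{le4}; but those lemmas control at most \emph{one} spatial gradient of $K_{m,l,\lambda}$ (in $L^2$, via Lemma~\ref{le3}), and there is no mechanism here to get pointwise or $L^2$ bounds on a second spatial derivative of $\Gamma_\lambda$ without extra smoothness of $A$. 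Also, the kernel governing $\nabla_{||}^2 w$ is $\nabla_{||,x}^2\Gamma_\lambda$, not the mixed $\nabla_{||,x}\nabla_{||,y}\Gamma_\lambda$ you wrote, so even if the latter were available it would not be the right object.

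It is fair to add that the paper's one-line proof does not obviously cover (ii) either: Lemma~\ref{le1--} plus Lemma~\ref{le5} yields precisely one extra gradient of a solution, which is what (i) and (iii) need, but not a tangential Hessian. Since Lemma~\ref{le5ga} is not cited anywhere else in the paper, (ii) as written may simply be an imprecision in the statement rather than a claim the paper actually relies on.
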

\begin{proof} The lemma follows immediately from  Lemma \ref{le1--} and Lemma \ref{le5}.
\end{proof}

\begin{lemma}\label{appf} Let $f\in C_0^\infty(\mathbb R^{n+1},\mathbb C)$ and $\lambda>0$. Then $\mathcal{S}_\lambda f\in  {\mathbb H}(\mathbb R^{n+1},\mathbb C)\cap L^2(\mathbb R^{n+1},\mathbb C)$.
\end{lemma}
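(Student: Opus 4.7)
Since $\mathbb H(\mathbb R^{n+1},\mathbb C)$ is characterized by \eqref{uau+}, the lemma reduces to showing that $\mathcal S_\lambda f$, $\nabla_{||}\mathcal S_\lambda f$, and $H_tD^t_{1/2}\mathcal S_\lambda f$ are each in $L^2(\mathbb R^{n+1},\mathbb C)$. The plan is to deduce these from the kernel estimate of Lemma \ref{le2+}(i) and the interior energy estimates of Lemma \ref{le1} and Lemma \ref{le1a}, allowing $\lambda$-dependent constants throughout (uniformity in $\lambda$ plays no role here).

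For the $L^2$-membership, first I would apply Lemma \ref{le2+}(i) with $m=l=-1$ to get the pointwise bound $|\Gamma_\lambda(x,t,y,s)|\leq c\,d_\lambda(x,t,y,s)^{-n-1}$. Since $f\in C_0^\infty$ is bounded with support in some parabolic cube $Q_R$, this yields $|\mathcal S_\lambda f(x,t)|\leq c\|f\|_\infty|Q_R|\,R'^{-n-1}$ whenever $(x,t)$ has parabolic distance $R'\geq R$ from $Q_R$. Decomposing the complement of $2Q_R$ into parabolic annuli at scale $R'=2^kR$ (each of volume $\approx (2^kR)^{n+2}$), the contribution of the $k$-th annulus to $\|\mathcal S_\lambda f\|_2^2$ is of order $R^{n+4}\,2^{-nk}$, which sums in $k$ since $n\geq 1$; on the bounded set $2Q_R$ the function $\mathcal S_\lambda f$ is bounded and hence also in $L^2$. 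Thus $\mathcal S_\lambda f\in L^2(\mathbb R^{n+1},\mathbb C)$.

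For the first-order derivatives, next I would observe that $u(X,t):=\mathcal S_{X_{n+1}}f(X_1,\dots,X_n,t)$ is a smooth weak solution of $\mathcal Hu=0$ in $\{X_{n+1}>0\}$: the pole of $\Gamma$ at $(X,t)=(Y,s)$ is avoided because $X_{n+1}>0=Y_{n+1}$, so differentiation under the integral sign is legitimate. Fixing $\lambda>0$, applying Lemma \ref{le1}(ii) on cubes $Q\subset\mathbb R^{n+1}$ with $l(Q)\sim\lambda$ and $\lambda_0=\lambda$, and summing over a finitely-overlapping covering of $\mathbb R^{n+1}$ by such cubes, I obtain
$$\|\nabla_{||}\mathcal S_\lambda f\|_2^2+\|\partial_\lambda\mathcal S_\lambda f\|_2^2\leq\frac{c}{\lambda^2}\sup_{\mu\in[\lambda/2,\,3\lambda/2]}\|\mathcal S_\mu f\|_2^2,$$
which is finite by the previous step. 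Applying Lemma \ref{le1a} in the same way, on parabolic cubes $\tilde Q\subset\{X_{n+1}>0\}$ of size $\sim\lambda$ centered at height $\lambda$ and summing, yields $\partial_t\mathcal S_\lambda f\in L^2(\mathbb R^{n+1},\mathbb C)$.

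Finally, from $\mathcal S_\lambda f,\partial_t\mathcal S_\lambda f\in L^2(\mathbb R^{n+1},\mathbb C)$ and the elementary inequality $|\tau|\leq\tfrac 1 2(1+\tau^2)$, Plancherel gives
$$\|H_tD^t_{1/2}\mathcal S_\lambda f\|_2^2=\int|\tau|\,\bigl|\widehat{\mathcal S_\lambda f}(\xi,\tau)\bigr|^2\,d\xi d\tau\leq\tfrac 1 2\bigl(\|\mathcal S_\lambda f\|_2^2+\|\partial_t\mathcal S_\lambda f\|_2^2\bigr)<\infty,$$
and combining with the previous step and \eqref{uau+} concludes $\mathcal S_\lambda f\in\mathbb H(\mathbb R^{n+1},\mathbb C)$. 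The only delicate point of the argument is the dimension count in the first step: the decay $d_\lambda^{-n-1}$ of Lemma \ref{le2+}(i) sits exactly at the threshold for square-integrability on $\mathbb R^{n+1}$ with its parabolic volume growth $R^{n+2}$, so the hypothesis $n\geq 1$ is used in an essential way.
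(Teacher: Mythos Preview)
Your proof is correct and takes a more direct route than the paper's. The paper argues by duality throughout: for $\|\nabla_{||}\mathcal S_\lambda f\|_2$ it bounds $\int_Q|(\mathcal S_\lambda^\ast\nabla_{||}\cdot){\bf f}|^2$ via the off-diagonal estimates of Lemma~\ref{le4}, and similarly for $\|\mathcal S_\lambda f\|_2$; for $\|\partial_t\mathcal S_\lambda f\|_2$ it simply quotes Lemma~\ref{le5}(iii). You instead use the pointwise kernel bound of Lemma~\ref{le2+} directly for the $L^2$ estimate and then cascade the interior energy estimates (Lemmas~\ref{le1} and~\ref{le1a}) to the derivatives. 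Both approaches exploit that $\lambda>0$ keeps you away from the boundary singularity and both need $n\geq 1$ for the annular sum to converge; yours avoids duality and the more technical Lemmas~\ref{le4}--\ref{le5}, while the paper's is more aligned with its later machinery.

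One small imprecision: Lemma~\ref{le1a} is a volume-to-volume estimate, so summing over cubes of size $\sim\lambda$ gives a slab bound $\int_{\text{slab}}|\partial_t u|^2\leq c\lambda^{-2}\int_{\text{larger slab}}|\nabla u|^2$ rather than a slice bound at height $\lambda$. To pass to the slice you need one more ingredient, e.g.\ apply \eqref{eq14+} to the solution $\partial_t u$ and integrate, or---more simply---observe that $\partial_t\mathcal S_\lambda f=\mathcal S_\lambda(\partial_t f)$ (the kernel depends only on $t-s$) and reuse your first step with $\partial_t f\in C_0^\infty$.
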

\begin{proof}
Given $f\in C_0^\infty(\mathbb R^{n+1},\mathbb C)$ we let $Q\subset \mathbb R^{n+1}$ be a parabolic cube, centered at $(0,0)$, such that the support of $f$ is contained in $Q$. Let $\lambda>0$ be fixed. We have to prove that $||\nabla_{||}\mathcal{S}_\lambda f||_2<\infty$,  $||H_tD_{1/2}^t\mathcal{S}_\lambda f||_2<\infty$, and that $||\mathcal{S}_\lambda f||_2<\infty$. To estimate $||\nabla_{||}\mathcal{S}_\lambda f||_2$ we see, by duality, that it suffices to bound
 \begin{eqnarray*}
\int_{Q}|(\mathcal{S}_\lambda^\ast\nabla_{||}\cdot){\bf f}(x,t)|^2\, dxdt&\leq&\int_{Q}|(\mathcal{S}_\lambda^\ast\nabla_{||}\cdot)({\bf f}1_{2Q})(x,t)|^2\, dxdt\notag\\
&&+\sum_{k\geq 1}\int_{Q}|(\mathcal{S}_\lambda^\ast \nabla_{||}\cdot)({\bf f}1_{2^{k+1}Q\setminus 2^kQ})(x,t)|^2\, dxdt,
\end{eqnarray*}
where ${\bf f}\in C_0^\infty(\mathbb R^{n+1},\mathbb C^n)$, $||{\bf f}||_2=1$. However, using  the adjoint version of Lemma \ref{le4} $(i)$ with $l=-1=m$, we immediately see that
 \begin{eqnarray*}
\int_{Q}|(\mathcal{S}_\lambda^\ast\nabla_{||}\cdot){\bf f}(x,t)|^2\, dxdt\leq c(n,\Lambda,\lambda)<\infty,
\end{eqnarray*}
 whenever ${\bf f}\in C_0^\infty(\mathbb R^{n+1},\mathbb C^n)$, $||{\bf f}||_2=1$. To estimate $||H_tD_{1/2}^t\mathcal{S}_\lambda f||_2$ we first note that
 $$||H_tD_{1/2}^t\mathcal{S}_\lambda f||_2^2\leq ||\partial_t\mathcal{S}_\lambda f||_2||\mathcal{S}_\lambda f||_2.$$
 Using Lemma \ref{le5} $(iii)$ we see that $||\partial_t\mathcal{S}_\lambda f||_2\leq c(n,\Lambda,\lambda)||f||_2<\infty$. To estimate $||\mathcal{S}_\lambda f||_2$ we again use duality and note that it suffices to bound
 \begin{eqnarray*}
\int_{Q}|\mathcal{S}_\lambda^\ast g(x,t)|^2\, dxdt&\leq&\int_{Q}|\mathcal{S}_\lambda^\ast (g1_{2Q})(x,t)|^2\, dxdt\notag\\
&&+\sum_{k\geq 1}\int_{Q}|\mathcal{S}_\lambda^\ast (g1_{2^{k+1}Q\setminus 2^kQ})(x,t)|^2\, dxdt,
\end{eqnarray*}
where $g\in C_0^\infty(\mathbb R^{n+1},\mathbb C)$, $||g||_2=1$. Using this  it is easy to see that
 \begin{eqnarray*}
\int_{Q}|\mathcal{S}_\lambda^\ast g(x,t)|^2\, dxdt\leq c(n,\Lambda,\lambda)<\infty,
\end{eqnarray*}
whenever $g\in C_0^\infty(\mathbb R^{n+1},\mathbb C)$, $||g||_2=1$. This completes the proof of the lemma. \end{proof}

\begin{lemma}\label{smooth1}  Let $\mathcal{S}_\lambda$ denote the single layer associated to $\mathcal{H}$, consider $\eta\in (0,1/10)$ and let $\mathcal{S}_\lambda^\eta$ be the smoothed
single layer associated to $\mathcal{H}$ introduced in \eqref{sop}.  Then
         \begin{eqnarray}
     (i)&&||\partial_\lambda\mathcal{S}_\lambda^\eta f||_2\leq c_{\beta,\eta}||f||_{2(n+2)/(n+2+2\beta)},\ 0<\beta<1,\notag\\
     (ii)&&||\nabla_{||}\mathcal{S}_\lambda^\eta f||_2\leq c_{\eta}||f||_{2(n+3)/(n+5)},\notag\\
     (iii)&&||H_tD^t_{1/2}\mathcal{S}_\lambda^\eta f||_2\leq c_{\eta}||f||_{2(n+3)/(n+5)},\notag\\
     (iv)&&|||\lambda\partial_\lambda^2\mathcal{S}_\lambda^\eta f|||_+\leq c_{\beta,\eta}||f||_{2(n+1)/(n+1+2\beta)},\ 0<\beta<1,\notag\\
     (v)&&||\nabla(\mathcal{S}_\lambda^\eta-\mathcal{S}_\lambda)f||_2\leq c\eta||f||_2/\lambda,\ \eta<\lambda/2, \notag\\
     (vi)&&||H_tD^t_{1/2}(\mathcal{S}_\lambda^\eta-\mathcal{S}_\lambda)f||_2\leq c\eta ||f||_2^{1/2}\Phi_+(f)^{1/2}/\lambda,\ \eta<\lambda/2, \notag\\
     (vii)&&\lim_{\eta\to 0}\int_\epsilon^\infty\int_{\mathbb R^n}|\lambda\nabla\partial_\lambda(\mathcal{S}_\lambda^\eta-\mathcal{S}_\lambda)f|^2\,
     \frac{dxdtd\lambda}\lambda=0,\ 0<\epsilon<1,\notag\\
     (viii)&&\mbox{for each cube $Q\subset\mathbb R^{n+1}$}, ||\partial_\lambda\mathcal{S}_\lambda^\eta||_{L^2(Q)\to L^2(\mathbb R^{n+1})}\leq
     c_{\eta,l(Q)},
     \end{eqnarray}
     whenever $f\in L^2(\mathbb R^{n+1},\mathbb C)$ has compact support. In $(v)-(vii)$ the constant $c$ depends at most
     on $n$, $\Lambda$, and the De Giorgi-Moser-Nash constants. In $(viii)$ the constant $c_{\eta,l(Q)}$ depends at most
     on $n$, $\Lambda$, the De Giorgi-Moser-Nash constants, $l(Q)$ and $l(Q)$
     \end{lemma}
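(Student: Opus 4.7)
The strategy is to systematically exploit the pointwise kernel bounds from Lemma \ref{le2+}, the uniform-in-$\lambda$ bounds from Lemma \ref{le5} and Lemma \ref{le5ga} (the latter used as Caccioppoli), and the smoothing properties of $\varphi_\eta$. A key reduction is that a $\lambda$-derivative on $\mathcal{S}_\lambda^\eta$ can be transferred, by integration by parts in $\sigma$, to a $\sigma$-derivative on $\mathcal{S}_\sigma$, so that $\partial_\lambda^k \mathcal{S}_\lambda^\eta f = \int \varphi_\eta(\lambda-\sigma)\, \partial_\sigma^k \mathcal{S}_\sigma f\,d\sigma$; spatial derivatives and $H_t D^t_{1/2}$ commute with the $\lambda$-mollifier.

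For the mapping bounds (i)--(iv) I would start from the pointwise decay $|\partial_\sigma^{m+1}\Gamma_\sigma(x,t,y,s)|\leq c/d_\sigma^{n+m+3}$ (Lemma \ref{le2+}, with Lemma \ref{le1--} to bring out spatial gradients in the $L^2$-average sense for (ii), and Plancherel interpolation for the fractional time derivative in (iii)) and split it through the weighted AM-GM bound $d_\sigma^{\alpha}\geq d_0^{\alpha(1-\theta)}|\sigma|^{\alpha\theta}$ for a suitable $\theta\in(0,1)$. The factor $d_0^{-\alpha(1-\theta)}$ is then a parabolic Riesz kernel controlled by Hardy--Littlewood--Sobolev, producing the desired $L^p \to L^2$ bound for the displayed exponent $p$, while the factor $|\sigma|^{-\alpha\theta}$ is absorbed into an $\eta$-dependent constant after integration against $|\varphi_\eta(\lambda-\sigma)|$; the restriction $\beta<1$ in (i), (iv) matches the integrability threshold of $|\sigma|^{-\alpha\theta}$ on the support of $\varphi_\eta$. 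For (iv), which is a square function rather than a $\sup_\lambda$ bound, the additional factor $|\sigma|^{-(1+\cdot)}$ coming from the same splitting (with the exponent on $d_0$ adjusted) delivers the necessary extra $\lambda$-decay at infinity to make the integral $\int_0^\infty \lambda\|\partial_\lambda^2\mathcal{S}_\lambda^\eta f\|_2^2\,d\lambda$ converge.

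The difference estimates (v)--(vii) all rest on the Taylor identity
\begin{equation*}
\mathcal{S}_\lambda^\eta f-\mathcal{S}_\lambda f=\int \varphi_\eta(\lambda-\sigma)\int_{\lambda}^{\sigma}\partial_\tau \mathcal{S}_\tau f\,d\tau\,d\sigma,
\end{equation*}
combined with the restriction $\eta<\lambda/2$ in (v), (vi), which forces $\tau\in[\lambda/2,3\lambda/2]$ and therefore allows the use of Lemma \ref{le5}(iii) and Lemma \ref{le5ga}. Then (v) follows from $\|\nabla\partial_\tau\mathcal{S}_\tau f\|_2\leq c\|f\|_2/\tau^2$ and Minkowski's inequality on the double integral, the outer $\int|\varphi_\eta(\lambda-\sigma)|\,|\sigma-\lambda|\,d\sigma\leq c\eta$ producing the $\eta$ factor. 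For (vi), the non-local half-order time derivative is handled by the Plancherel--Cauchy--Schwarz interpolation $\|D^t_{1/2}g\|_2^2\leq \|\partial_t g\|_2\,\|g\|_2$ applied to $g=(\mathcal{S}_\lambda^\eta-\mathcal{S}_\lambda)f$: the $\partial_t$ factor is estimated by $c\eta\|f\|_2/\lambda^2$ via the same Taylor argument together with $\|\partial_\tau\partial_t\mathcal{S}_\tau f\|_2\leq c\|f\|_2/\tau^2$ (Lemma \ref{le5}(iii) with $m=l=0$), while $\|g\|_2\leq c\eta\,\Phi_+(f)$ is immediate from the definition of $\Phi_+$. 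Estimate (vii) is a dominated convergence argument: pointwise in $\lambda$, $\nabla\partial_\lambda\mathcal{S}_\lambda^\eta f\to \nabla\partial_\lambda\mathcal{S}_\lambda f$ by standard mollifier convergence applied to the locally bounded function $\lambda\mapsto \nabla\partial_\lambda\mathcal{S}_\lambda f$, and the $\eta$-uniform $L^2\big(\tfrac{d\lambda}{\lambda}\big)$-dominating bound on $[\epsilon,\infty)$ comes from (v) (differentiated once more in $\lambda$) and Lemma \ref{le5ga}.

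Finally, (viii) is immediate: for $f$ supported in $Q$, the pointwise bound $|\partial_\sigma\Gamma_\sigma|\leq c/d_\sigma^{n+3}$ combined with $|\lambda-\sigma|<\eta$ shows that the effective kernel of $\partial_\lambda\mathcal{S}_\lambda^\eta$, restricted to $f\in L^2(Q)$, is square-integrable in $(y,s)$ with an $L^2$ norm depending only on $\eta$ and $l(Q)$, and Cauchy--Schwarz closes the bound. The main obstacle in the lemma is the tight balancing required in (iv) between integrability of the $\sigma$-integral near $0$ and the $\lambda$-integral at $\infty$, which is what dictates the precise shape of the exponent in the weighted AM-GM split; the second genuinely non-trivial step is the Plancherel--Cauchy--Schwarz detour in (vi) forced by the non-local character of $H_t D^t_{1/2}$.
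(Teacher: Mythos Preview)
Your treatment of (i), (iv), (v), (vi), and (viii) is essentially correct and follows the paper's line: kernel bounds from Lemma~\ref{le2+} plus parabolic Hardy--Littlewood--Sobolev for (i) and (iv) (the paper makes the ``tight balancing'' in (iv) explicit by splitting the $\lambda$-integral at $2\eta$, which is cleaner than your AM--GM phrasing), and a mean-value/Taylor argument together with Lemma~\ref{le5} for (v)--(vi). One small slip: in (v) you claim $\|\nabla\partial_\tau\mathcal S_\tau f\|_2\le c\|f\|_2/\tau^2$, but Lemma~\ref{le5ga}(iii) with $m=0$, $l=-1$ only gives $c\|f\|_2/\tau$; the conclusion still follows since you pick up the extra factor from $\int|\varphi_\eta(\lambda-\sigma)||\sigma-\lambda|\,d\sigma\le c\eta$. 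For (vii), ``dominated convergence'' is not quite the right frame (there is no $\eta$-uniform integrable majorant), but the same Taylor estimate you describe gives directly $\int_\epsilon^\infty\cdots\le c\eta^2\epsilon^{-2}\|f\|_2^2\to 0$, which is exactly the paper's argument.

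There is, however, a genuine gap in your plan for (ii) and (iii). Your scheme relies on \emph{pointwise} kernel decay $|\partial_\sigma^{m+1}\Gamma_\sigma|\le c/d_\sigma^{n+m+3}$, but Lemma~\ref{le2+} gives no pointwise bound on $\nabla_{||}\Gamma_\sigma$ (only the $L^2$-averaged bounds of Lemma~\ref{le3}), and Caccioppoli (Lemma~\ref{le1--}) is local and applies to solutions, whereas $\mathcal S_\lambda^\eta f$ solves the \emph{inhomogeneous} equation $\mathcal H u=f_\eta$. So the Riesz-kernel/HLS machinery does not run for the tangential gradient, and the non-local operator $H_tD_{1/2}^t$ makes (iii) even less amenable to a pointwise kernel argument. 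The paper avoids this entirely by a different route: it writes
\[
\mathcal S_\lambda^\eta f(x,t)=\int_{\mathbb R}(\mathcal H^{-1}f_\eta)(x,t,\lambda-\sigma)\,\tilde\varphi_\eta(\sigma)\,d\sigma,\qquad f_\eta(y,s,\sigma)=f(y,s)\tilde\varphi_\eta(\sigma),
\]
and then, by duality, reduces (ii) to the global boundedness of $\nabla\mathcal H^{-1}\div$ on $L^2(\mathbb R^{n+2})$ and (iii) to that of $D_{1/2}^t\mathcal H^{-1}D_{1/2}^t$, both supplied by Lemma~\ref{gara}. Hardy--Littlewood--Sobolev is then applied in $\mathbb R^{n+2}$ (parabolic dimension $n+3$) to pass from $\|f_\eta\|_{L^2(\mathbb R^{n+2})}$ back to $\|f\|_{2(n+3)/(n+5)}$, which also explains why the exponent involves $n+3$ rather than $n+2$.
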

     \begin{proof} To prove $(i)$ we note that
     $$\partial_\lambda\mathcal{S}_\lambda^\eta f=\int_{\mathbb R^{n+1}}K_{0,\lambda}^\eta(x,t,y,s)f(y,s)\, dyds,$$
     where $K_{0,\lambda}^\eta(x,t,y,s)=\partial_\lambda(\varphi_\eta\ast(\Gamma_\cdot(x,t,y,s))(\lambda))$. Using Lemma \ref{le2+} we see that
     \begin{eqnarray}|K_{0,\lambda}^\eta(x,t,y,s)|&\leq& c\biggl (\frac {1_{d_\lambda(x,t,y,s)>40\eta}}{(d_\lambda(x,t,y,s))^{n+2}}+\frac {1_{d_\lambda(x,t,y,s)<40\eta}}{\eta(|x-y|+|t-s|^{1/2})^{n+1}}\biggr )\notag\\
     &\leq&c\eta^{-\beta}(|x-y|+|t-s|^{1/2})^{\beta-n-2},
     \end{eqnarray}
     for $0<\beta<1$. $(i)$ now follows by the parabolic version of the Hardy-Littlewood-Sobolev theorem for fractional integration (see \cite{St} for the corresponding proof in the elliptic case) . To prove $(ii)$ and $(iii)$ we first note that
        \begin{eqnarray}
        \mathcal{S}_\lambda^\eta f(x,t)&=&\int_{\mathbb R}\int_{\mathbb R}\int_{\mathbb R^{n+1}} \Gamma_{\lambda-\sigma_1-\sigma_2}(x,t,y,s)f(y,s)\tilde\varphi_\eta(\sigma_1)\tilde\varphi_\eta(\sigma_2)\, dydsd\sigma_1d\sigma_2\notag\\
        &=&\int_{\mathbb R}(\mathcal{H}^{-1}f_\eta)(x,t,\lambda-\sigma)\tilde\varphi_\eta(\sigma)\, d\sigma,
     \end{eqnarray}
     where $f_\eta(y,s,\sigma_1)=f(y,s)\tilde\varphi_\eta(\sigma_1)$. To prove $(ii)$, let ${\bf g}\in C_0^\infty(\mathbb R^{n+1},\mathbb C^{n})$, $||{\bf g}||_2=1$, and set
     ${\bf g}_\eta(x,t,\sigma)={\bf g}(x,t)\tilde\varphi_\eta(\sigma)$. Then
          \begin{eqnarray}
       \biggl | \int_{\mathbb R^{n+1}}{\bf g}\cdot\nabla_{||}\overline{\mathcal{S}_\lambda^\eta f}\, dxdt\biggr |&=&
       \biggl | \int_{\mathbb R}\int_{\mathbb R^{n+1}}\div_{||}{\bf g}_\eta(x,t,\sigma)\overline{(\mathcal{H}^{-1}f_\eta)(x,t,\lambda-\sigma)}\, dxdtd\sigma\biggr |\notag\\
       &\leq &c||{\bf g}_\eta||_{L^2(\mathbb R^{n+2})}||\nabla_{||}(\mathcal{H}^{-1}f_\eta)||_{L^2(\mathbb R^{n+2})}\notag\\
       &\leq& c\eta^{-1/2}||\nabla_{||}(\mathcal{H}^{-1}f_\eta)||_{L^2(\mathbb R^{n+2})}.
     \end{eqnarray}
     Hence, using Lemma \ref{gara} and the parabolic version of the Hardy-Littlewood-Sobolev theorem, now in $\mathbb R^{n+2}$, we see that
              \begin{eqnarray}\label{raesy1}
       \biggl | \int_{\mathbb R^{n+1}}{\bf g}\cdot\nabla_{||}\overline{\mathcal{S}_\lambda^\eta f}\, dxdt\biggr |&\leq& c\eta^{-1/2}
       ||\varphi_\eta||_{2(n+3)/(n+5)}||f||_{2(n+3)/(n+5)},
     \end{eqnarray}
     and this proves $(ii)$. To prove $(iii)$, let ${g}\in C_0^\infty(\mathbb R^{n+1},\mathbb C)$, $||{ g}||_2=1$, and set
     ${g}_\eta(x,t,\sigma)={g}(x,t)\tilde\varphi_\eta(\sigma)$. Then, arguing as above we see that
         \begin{eqnarray}
       \biggl | \int_{\mathbb R^{n+1}}{g}H_tD_{1/2}^t{\mathcal{S}_\lambda^\eta f}\, dxdt\biggr |\leq c\eta^{-1/2}||H_tD_{1/2}^t(\mathcal{H}^{-1}f_\eta)||_{L^2(\mathbb R^{n+2})}.
     \end{eqnarray}
     Furthermore, again using  Lemma \ref{gara}  and arguing as in the proof of $(ii)$ we have
                \begin{eqnarray}\label{raesy1again}
\biggl | \int_{\mathbb R^{n+1}}{g}H_tD_{1/2}^t{\mathcal{S}_\lambda^\eta f}\, dxdt\biggr |&\leq& c\eta^{-1/2}
       ||\varphi_\eta||_{2(n+3)/(n+5)}||f||_{2(n+3)/(n+5)},
     \end{eqnarray}
     and this proves $(iii)$. To prove $(iv)$ we proceed as in the proof of $(i)$ and we first note that
     $$\lambda\partial_\lambda^2\mathcal{S}_\lambda^\eta f=\int_{\mathbb R^{n+1}}\lambda K_{1,\lambda}^\eta(x,t,y,s)f(y,s)\, dyds,$$
     where $K_{1,\lambda}^\eta(x,t,y,s)=\partial_\lambda^2(\varphi_\eta\ast(\Gamma_\cdot(x,t,y,s))(\lambda))$.  Using Lemma \ref{le2+} we see that
     \begin{eqnarray}\lambda|K_{1,\lambda}^\eta(x,t,y,s)|&\leq& c\lambda \biggl (\frac {1_{d_\lambda(x,t,y,s)>40\eta}}{(d_\lambda(x,t,y,s))^{n+3}}+\frac {1_{d_\lambda(x,t,y,s)<40\eta}}{\eta(|x-y|+|t-s|^{1/2})^{n+2}}\biggr )\notag\\
     &\leq&c\lambda\eta^{-1-\beta}(|x-y|+|t-s|^{1/2})^{\beta-n-2},
     \end{eqnarray}
     for $0<\beta<1$. Moreover, if $\lambda>2\eta$ then
       \begin{eqnarray}\lambda|K_{1,\lambda}^\eta(x,t,y,s)|&\leq& c\lambda d_\lambda(x,t,y,s)^{-n-3}\notag\\
       &\leq& c\lambda^{-\beta}(|x-y|+|t-s|^{1/2})^{\beta-n-2},
     \end{eqnarray}
     for $0<\beta<1$. Hence, arguing as in the proof of $(i)$ we see that
              \begin{eqnarray}
  |||\lambda\partial_\lambda^2\mathcal{S}_\lambda^\eta f|||_+^2&=&\int_0^{2\eta}\int_{\mathbb R^{n+1}} |\lambda\partial_\lambda^2\mathcal{S}_\lambda^\eta f(x,t)|^2\, \frac{dxdtd\lambda}\lambda\notag\\
  &&+\int_{2\eta}^\infty\int_{\mathbb R^{n+1}} |\partial_\lambda^2\mathcal{S}_\lambda^\eta f(x,t)|^2\, \frac{dxdtd\lambda}\lambda\notag\\
  &\leq &c\biggl (\int_0^{2\eta}\eta^{-2-2\beta}\,\lambda d\lambda\biggr)||f||_{2(n+1)/(n+1+2\beta)}\notag\\
  &&+c\biggl (\int_{2\eta}^\infty\lambda^{-1-2\beta}\, d\lambda\biggr)||f||_{2(n+1)/(n+1+2\beta)}.
  \end{eqnarray}
  This proves $(iv)$. To prove $(v)$, let $\eta<\lambda/2$ and note that
     $$||\nabla(\mathcal{S}_\lambda^\eta-\mathcal{S}_\lambda)f||_2\leq \varphi_\eta\ast||\nabla(\mathcal{S}_\cdot-\mathcal{S}_\lambda)f||_2.$$
     Furthermore, for $|\sigma-\lambda|<\lambda/2$ we see, using the mean value theorem, that
       \begin{eqnarray}
 ||\nabla(\mathcal{S}_\sigma-\mathcal{S}_\lambda)f||_2\leq\frac {\eta}\lambda\sup_{|\tilde\sigma-\lambda|<\lambda/2}||\tilde\sigma\nabla\partial_{\tilde\sigma}
 \mathcal{S}_{\tilde\sigma}f||_2.
     \end{eqnarray}
     Hence, using Lemma \ref{le5} we can therefore conclude that
           \begin{eqnarray}
 ||\nabla(\mathcal{S}_\sigma-\mathcal{S}_\lambda)f||_2\leq c\frac {\eta} \lambda||f||_2
     \end{eqnarray}
     whenever $|\sigma-\lambda|<\lambda/2$ and this completes the proof of $(v)$. To prove $(vi)$, let $\eta<\lambda/2$ and note that
     $$||H_tD_{1/2}^t(\mathcal{S}_\lambda^\eta-\mathcal{S}_\lambda)f||_2\leq \varphi_\eta\ast||H_tD_{1/2}^t(\mathcal{S}_\cdot-\mathcal{S}_\lambda)f||_2.$$
     However, for $|\sigma-\lambda|<\lambda/2$ and again using the mean value theorem we see that
       \begin{eqnarray}
 ||H_tD_{1/2}^t(\mathcal{S}_\sigma-\mathcal{S}_\lambda)f||_2\leq\frac {\eta}\lambda\sup_{|\tilde\sigma-\lambda|<\lambda/2}||\tilde\sigma H_tD_{1/2}^t\partial_{\tilde\sigma}
 \mathcal{S}_{\tilde\sigma}f||_2.
     \end{eqnarray}
     Furthermore,
            \begin{eqnarray}
||\tilde\sigma H_tD_{1/2}^t\partial_{\tilde\sigma}
 \mathcal{S}_{\tilde\sigma}f||_2^2&\leq& c ||\tilde\sigma^2 \partial_t\partial_{\tilde\sigma}
 \mathcal{S}_{\tilde\sigma}f||_2||\partial_{\tilde\sigma}
 \mathcal{S}_{\tilde\sigma}f||_2\notag\\
 &\leq& c||f||_2||\partial_{\tilde\sigma}
 \mathcal{S}_{\tilde\sigma}f||_2
     \end{eqnarray}
     where we again have used Lemma \ref{le5}. Hence,
            \begin{eqnarray}
            ||H_tD_{1/2}^t(\mathcal{S}_\lambda^\eta-\mathcal{S}_\lambda)f||_2\leq c\frac {\eta} \lambda||f||_2^{1/2}\Phi_+(f)^{1/2}
     \end{eqnarray}
     and this completes the proof of $(vi)$. To prove $(vii)$,  we let $\eta<\epsilon/2$ and write
     \begin{eqnarray}
&&\int_\epsilon^\infty\int_{\mathbb R^{n+1}}|\lambda\nabla\partial_\lambda(\mathcal{S}_\lambda^\eta-\mathcal{S}_\lambda)f|^2\,
     \frac{dxdtd\lambda}\lambda\notag\\
     &=&\int_\epsilon^\infty\int_{\mathbb R^{n+1}}|\varphi_\eta\ast\lambda\nabla D_{n+1}(\mathcal{S}_\cdot-\mathcal{S}_\lambda)f|^2\,
     \frac{dxdtd\lambda}\lambda\notag\\
     &\leq&\int_\epsilon^\infty\varphi_\eta\ast||\lambda\nabla D_{n+1}(\mathcal{S}_\cdot-\mathcal{S}_\lambda)f||_2^2\,
     \frac{d\lambda}\lambda.
     \end{eqnarray}
     We claim that the expression on the last line in the last display converges to 0 as $\eta\to 0$. Indeed, for $|\sigma-\lambda|<\eta<\lambda/2$, we have, arguing as above using
     Lemma \ref{le5}, that
          \begin{eqnarray}
||\lambda\nabla D_{n+1}(\mathcal{S}_\sigma-\mathcal{S}_\lambda)f||_2&\leq& c\frac {\eta}\lambda \sup_{|\tilde\sigma-\lambda|<\lambda/2}||\tilde\sigma^2\nabla\partial_{\tilde\sigma}^2
 \mathcal{S}_{\tilde\sigma}f||_2\notag\\
 &\leq &c\frac\eta\lambda ||f||_2.\end{eqnarray}
 Hence, if $\eta<\epsilon/2$, then
      \begin{eqnarray}
\int_\epsilon^\infty\int_{\mathbb R^{n+1}}|\lambda\nabla\partial_\lambda(\mathcal{S}_\lambda^\eta-\mathcal{S}_\lambda)f|^2\,
     \frac{dxdtd\lambda}\lambda\leq c\eta^2\epsilon^{-2}||f||_2^2.
     \end{eqnarray}
     This proves $(vii)$. $(viii)$ follows from Lemma \ref{smooth1} $(i)$ and H{\"o}lder's inequality. This completes the proof of the lemma.\end{proof}

\subsection{Maximal functions, square functions and parabolic Sobolev spaces}

\begin{lemma}\label{lemsl1++}  Let $\mathcal{S}_\lambda$ denote the single layer associated to $\mathcal{H}$, consider $\eta\in (0,1/10)$ and let $\mathcal{S}_\lambda^\eta$ be the smoothed
single layer associated to $\mathcal{H}$ introduced in \eqref{sop}. Then there exists a constant $c$, depending at most
     on $n$, $\Lambda$, and the De Giorgi-Moser-Nash constants, such that
\begin{eqnarray*}
(i)&& ||N_\ast(\partial_\lambda \mathcal{S}_\lambda f)||_2\leq c(\sup_{\lambda>0}||\partial_\lambda \mathcal{S}_\lambda||_{2\to 2}+1)||f||_2,\notag\\
(ii)&&||\tilde N_\ast(\nabla_{||}\mathcal{S}_\lambda f)||_2\leq c \biggl (||f||_2+\sup_{\lambda>0}||\nabla_{||}\mathcal{S}_\lambda f||_2+||N_{\ast\ast}(\partial_\lambda \mathcal{S}_\lambda f)||_2\biggl ),\notag\\
(iii)&&||\tilde N_\ast(H_tD^t_{1/2}\mathcal{S}_\lambda f)||_2\leq c\biggl (||f||_2+\sup_{\lambda>0}||H_tD^t_{1/2}\mathcal{S}_\lambda f||_2\biggr)\notag\\
&&\quad\quad\quad\quad\quad\quad\quad\quad+c\biggl (||\tilde N_{\ast\ast}(\nabla_{||}
\mathcal{S}_\lambda f)||_2+||N_{\ast\ast}(\partial_\lambda \mathcal{S}_\lambda f)||_2\biggr ),\notag\\
(iv)&& \sup_{\lambda_0\geq 0}||N_\ast(\P_\lambda(\partial_\lambda\mathcal{S}_{\lambda+\lambda_0}^\eta f))||_2\leq c(
    \sup_{\lambda>0} ||\partial_\lambda\mathcal{S}_\lambda^\eta||_{L^2(Q)\to L^2(\mathbb R^{n+1})}+1)||f||_2\notag\\
    &&\mbox{whenever $Q\subset\mathbb R^{n+1}$ and the support of $f$ is contained in $Q$},\notag\\
(v)&&|| N_\ast(\P_\lambda(\nabla\mathcal{S}_\lambda f))||_2\leq c \biggl (\sup_{\lambda>0}||\nabla_{||}\mathcal{S}_\lambda f||_2+||N_{\ast\ast}(\partial_\lambda \mathcal{S}_\lambda f)||_2\biggl ),\notag\\
(vi)&&|| N_\ast(\P_\lambda(H_tD^t_{1/2}\mathcal{S}_\lambda f))||_2\leq c\biggl (||f||_2+\sup_{\lambda>0}||H_tD^t_{1/2}\mathcal{S}_\lambda f||_2\biggr)\notag\\
&&\quad\quad\quad\quad\quad\quad\quad\quad\quad\quad+c\biggl (||\tilde N_{\ast\ast}(\nabla_{||}
\mathcal{S}_\lambda f)||_2+||N_{\ast\ast}(\partial_\lambda \mathcal{S}_\lambda f)||_2\biggr ),\notag\\
  (vii)&&||N_\ast((\mathcal{S}_\lambda\nabla)\cdot{\bf f})||_{{2,\infty}}\leq c\bigl (1+\sup_{\lambda>0}||\partial_\lambda \mathcal{S}_\lambda||_{2\to 2}+\sup_{\lambda>0}||\mathcal{S}_\lambda\nabla_{||}||_{2\to 2}\bigr)||{\bf f}||_2,\notag\\
  (viii)&&||N_\ast(\mathcal{D}_\lambda f)||_{{2,\infty}}\leq c\bigl (1+\sup_{\lambda>0}||\partial_\lambda \mathcal{S}_\lambda||_{2\to 2}+\sup_{\lambda>0}||\mathcal{S}_\lambda\nabla_{||}||_{2\to 2}\bigr)||{\bf f}||_2,
\end{eqnarray*}
whenever $f\in L^2(\mathbb R^{n+1},\mathbb C)$, ${\bf f}\in L^2(\mathbb R^{n+1},\mathbb C^{n+1})$.
\end{lemma}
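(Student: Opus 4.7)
The plan is to prove the eight estimates in sequence, with (i)--(vi) all following a common pattern: (a) interior De Giorgi--Moser--Nash estimates convert pointwise values of solutions of $\mathcal{H}u=0$ into $L^2$ averages over Whitney boxes, and (b) the Caccioppoli-type estimates of Lemmas \ref{le1--}, \ref{le1}, \ref{le1a} trade derivatives between $x$, $t$, and $\lambda$. For (i), since $\partial_\lambda\mathcal{S}_\lambda f$ itself solves $\mathcal{H}u=0$ in $\mathbb{R}^{n+2}_+$, for $(x,t,\lambda)\in\Gamma(x_0,t_0)$ DGMN yields
\begin{equation*}
|\partial_\lambda\mathcal{S}_\lambda f(x,t)|^2 \leq c\mean{W_\lambda(x,t)}|\partial_\sigma\mathcal{S}_\sigma f(y,s)|^2\, dy\,ds\,d\sigma,
\end{equation*}
and since $W_\lambda(x,t)$ lies in an $O(1)$-dilate of a Whitney box at $(x_0,t_0)$, the right hand side is dominated by the parabolic Hardy--Littlewood maximal function applied to $|\partial_\sigma\mathcal{S}_\sigma f|^2$ at scale $\sigma\sim\lambda$. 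Integrating in $(x_0,t_0)$ and using the parabolic maximal inequality together with $\sup_\sigma\|\partial_\sigma\mathcal{S}_\sigma f\|_2\le\sup_\sigma\|\partial_\sigma\mathcal{S}_\sigma\|_{2\to 2}\|f\|_2$ yields (i). Part (iv) is just the transcription of (i) to $\mathcal{S}_\lambda^\eta$, with Lemma \ref{smooth1}(viii) substituting for the global slice bound (the $\mathcal{P}_\lambda$ averaging is harmless because $\mathcal{P}_\lambda$ only enlarges the Whitney average by $O(1)$).

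For (ii) I would start from Lemma \ref{le1} applied on $W_{c\lambda}(x,t)$ to $\mathcal{S}_\sigma f$ minus a suitable slice average, obtaining
\begin{equation*}
\mean{W_\lambda(x,t)}|\nabla_{||}\mathcal{S}_\sigma f|^2 \leq c\mean{W_{2\lambda}(x,t)}|\partial_\sigma\mathcal{S}_\sigma f|^2 + \frac{c}{\lambda^2}\mean{W_{2\lambda}(x,t)}|\mathcal{S}_\sigma f-c|^2.
\end{equation*}
The first term feeds into $N_{\ast\ast}(\partial_\sigma\mathcal{S}_\sigma f)$ at $(x_0,t_0)$, while the second is handled by combining a parabolic Poincar\'e inequality with a telescoping estimate in dyadic $\lambda$-scales against the fundamental theorem of calculus for $\partial_\lambda$, which reduces it to $\sup_\sigma\|\nabla_{||}\mathcal{S}_\sigma f\|_2$ plus a residual $\|f\|_2$. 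Part (iii) runs the same way, but with Lemma \ref{le1a} bounding $\partial_t$ by $\nabla$ on Whitney boxes and with the identity $\partial_t=D^t_{1/2}H_tD^t_{1/2}$ together with \eqref{uau} converting fractional time derivatives into the desired form; the outputs of (ii) and (i) then appear on the right hand side as $\tilde N_{\ast\ast}(\nabla_{||}\mathcal{S}_\sigma f)$ and $N_{\ast\ast}(\partial_\sigma\mathcal{S}_\sigma f)$. Parts (v) and (vi) repeat (ii) and (iii) with $\mathcal{P}_\lambda$ inserted inside the Whitney averages; this is harmless because $\mathcal{P}_\lambda$ commutes with $\partial_\lambda$ and, modulo a bounded operator, with $H_tD^t_{1/2}$, and its kernel is pointwise dominated by $c\lambda^{-n-2}\mathbf{1}_{Q_\lambda(0,0)}$.

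The main obstacle is (vii), from which (viii) is deduced. For (vii), the decomposition
\begin{equation*}
(\mathcal{S}_\lambda\nabla)\cdot{\bf f}=(\mathcal{S}_\lambda\nabla_{||})\cdot{\bf f}_{||}-\partial_\lambda\mathcal{S}_\lambda f_{n+1}
\end{equation*}
reduces the $D_{n+1}$ piece to (i), so the task is to bound $\|N_\ast(\mathcal{S}_\lambda\nabla_{||}\cdot{\bf f}_{||})\|_{2,\infty}$. The obstacle is that only an $L^2\to L^2$ bound is available for $\mathcal{S}_\lambda\nabla_{||}$ at fixed $\lambda$, and its non-tangential maximal function is not expected to be $L^2$-bounded. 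The plan is a parabolic Calder\'on--Zygmund decomposition of ${\bf f}_{||}$ at height $\tau>0$: the good part is controlled by the $L^2\to L^2$ bound and DGMN exactly as in (i); for the bad part, written as a sum of mean-zero atoms $b_j$ on pairwise disjoint cubes $Q_j$, the off-diagonal kernel bounds of Lemma \ref{le3} and the operator off-diagonal bounds of Lemma \ref{le4}, combined with DGMN, produce geometric decay of the non-tangential maximal contribution of the $j$-th atom at points outside a fixed dilate of $Q_j$. Summing the decay against $|Q_j|$ and using $\sum|Q_j|\leq c\tau^{-2}\|{\bf f}\|_2^2$ yields the weak $L^{2,\infty}$ bound. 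Part (viii) then follows from (vii) together with the identity \eqref{eq11edsea}, which expresses $\mathcal{D}_\lambda^{\mathcal{H}} f$ as $(\mathcal{S}_\lambda\nabla)\cdot{\bf g}$ with $\|{\bf g}\|_2\leq\Lambda\|f\|_2$. The principal technical burden is the atom analysis in (vii): DGMN pointwise control, off-diagonal kernel and kernel-gradient estimates, and the cone geometry of $\Gamma(x_0,t_0)$ must be carefully orchestrated to sum against Whitney boxes into a single convergent geometric series.
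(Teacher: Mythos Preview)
Your argument for (i) has a genuine gap. After the DGMN bound
\[
|\partial_\lambda\mathcal S_\lambda f(x,t)|^2\le c\,\mean{W_\lambda(x,t)}|\partial_\sigma\mathcal S_\sigma f|^2,
\]
you claim the right-hand side is controlled by $M(|\partial_\sigma\mathcal S_\sigma f|^2)(x_0,t_0)$ at some $\sigma\sim\lambda$, and that integrating in $(x_0,t_0)$ plus the maximal inequality and $\sup_\sigma\|\partial_\sigma\mathcal S_\sigma f\|_2$ finishes. But the sup defining $N_\ast$ runs over all $\lambda$, so after DGMN you are left with $\sup_\sigma M(|\partial_\sigma\mathcal S_\sigma f|^2)(x_0,t_0)$. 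Integrating this in $(x_0,t_0)$ would require $\|\sup_\sigma M(g_\sigma)\|_1\lesssim\sup_\sigma\|g_\sigma\|_1$, which is false (already $M:L^1\to L^1$ fails). The argument is circular: you have merely traded $N_\ast$ for $\sup_\sigma M(g_\sigma)$, which is no easier. The paper's route (via \cite{CNS}, and visibly in the proof of (iv) here) is a Cotlar-type pointwise inequality: one shows
\[
N_\ast(\partial_\lambda\mathcal S_\lambda f)(x_0,t_0)\le \mathcal T_\ast f(x_0,t_0)+cM(f)(x_0,t_0)+cM(M(f))(x_0,t_0),
\]
with $\mathcal T_\ast$ the truncated maximal singular integral built from the Calder\'on--Zygmund kernel $K_{0,\lambda}$ of Lemma~\ref{le2+}, and then bounds $\mathcal T_\ast$ on $L^2$ by the uniform $L^2$ bound on $\partial_\lambda\mathcal S_\lambda$ via standard Cotlar theory. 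The same remark applies to (iv); the $\mathcal P_\lambda$ is not the point --- the Cotlar inequality is.

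Your handling of (v)--(vi) is also too compressed. The paper's proof of (v) is not a repetition of (ii): it exploits a product structure $\mathcal P_\lambda=\mathcal P_\lambda^x\mathcal P_\lambda^t$, converts $\mathcal P_\lambda^x\partial_{x_k}$ into $\lambda^{-1}\mathcal Q_\lambda^x$, uses that $\mathcal Q_\lambda^x$ annihilates constants together with Poincar\'e in $x$, and then iterates $M^t\circ M^x$. The proof of (vi) is substantially more involved: it splits $H_tD^t_{1/2}\mathcal S_\lambda f$ into a local piece $g_1$ (handled by $\partial_t$ and the maximal function) and a far piece $g_2$, compares $g_2$ with its boundary analogue $g_4$ built from $\mathcal S_\delta f$, and controls the remaining truncated singular integral in $t$ via Lemma~2.27 of \cite{HL}. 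Your sentence that $\mathcal P_\lambda$ ``commutes, modulo a bounded operator, with $H_tD^t_{1/2}$'' does not capture this.

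For (vii) your Calder\'on--Zygmund decomposition approach is a legitimate alternative and would work, using the mean-zero of the atoms together with the H\"older regularity in $(y,s)$ from Lemma~\ref{le2+}(iii) and the gradient bounds of Lemma~\ref{le3}. The paper proceeds differently: it splits $u=(\mathcal S_\lambda\nabla_{||})\cdot{\bf g}$ into a near part $\bar u$ (on $E=\{\|\,\cdot-\,(x,t)\|<16\lambda\}$) and a far part $\tilde u=\sum_k u_k$ over dyadic annuli, uses H\"older continuity and Lemma~\ref{le3} to show $|\tilde u(x,t,\sigma)-\tilde u(x_0,t_0,0)|\le c(M(|{\bf g}|^2))^{1/2}(x_0,t_0)$, bounds $\bar u$ by DGMN and the $L^2$ slice bound, and then handles $\tilde u(x_0,t_0,0)$ by averaging and the slice bound at a fixed level $\delta$. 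The weak-type conclusion enters through $(M(|{\bf g}|^2))^{1/2}$ rather than through an atomic decomposition. Your derivation of (viii) from (vii) via \eqref{eq11edsea} matches the paper.
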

\begin{proof}[Proof of Lemma \ref{lemsl1++} $(i)-(iii)$] $(i)-(iii)$ are proved in Lemma 4.1 in \cite{CNS}.
\end{proof}

\begin{proof}[Proof of Lemma \ref{lemsl1++} $(iv)$] The proof of $(iv)$ is very similar to the proof of $(i)$ i.e., to the proof of Lemma 4.1 $(i)$ in \cite{CNS}. Indeed, let $K_{0,\lambda}^\eta(x,t,y,s)$ denote the kernel of $\partial_\lambda\mathcal{S}_\lambda^\eta$  and note again that
$$K_{0,\lambda}^\eta(x,t,y,s)=\partial_\lambda\bigl(\varphi_\eta\ast\Gamma(x,t,\cdot,y,s,0)\bigr)(\lambda).$$
Then by the Calderon-Zygmund type estimates stated in Lemma \ref{le2+} we have, for all $\lambda\geq 0$, and uniformly in $\lambda_0\geq 0$, that
\begin{eqnarray}\label{smoothker}
\quad\quad |K_{0,\lambda+\lambda_0}^\eta(x,t,y,s)|\leq c\biggl (\frac {1_{d_\lambda(x,t,y,s)>40\eta}}{(d_\lambda(x,t,y,s))^{n+2}}+\frac {1_{d_\lambda(x,t,y,s)<40\eta}}{\eta(|x-y|+|t-s|^{1/2})^{n+1}}\biggr ),
\end{eqnarray}
and
\begin{eqnarray}\label{smoothker+}
\quad |(\mathbb D^hK^\eta_{0,\lambda}(\cdot,\cdot,y,s))(x,t)|\leq c\frac {||h||^\alpha}{({d_\lambda(x,t,y,s)})^{n+2+\alpha}},\ d_\lambda(x,t,y,s)>10\eta,
\end{eqnarray}
whenever $2||h||\leq ||(x-y,t-s)||$ or $||h||\leq 2\lambda$. Of course we have a similar estimate concerning the parabolic H{\"o}lder continuity in the $(y,s)$ variables. In particular, $K_{0,\lambda}^\eta(x,t,y,s)$ is a standard (parabolic) Calderon-Zygmund kernel uniformly in $\lambda$, $\lambda_0$ and $\eta$. Hence, given $(x_0,t_0)\in\mathbb R^{n+1}$, and using that the support of $f$ is contained in $Q$, we can argue as in displays (4.4)-(4.10) \cite{CNS}, see also display (4.12) in \cite{AAAHK},  to conclude that
\begin{eqnarray}N_\ast(\P_\lambda(\partial_\lambda {\mathcal{S}}_{\lambda+\lambda_0}^\eta f))(x_0,t_0)&\leq&  \mathcal{T}_\ast^{l(Q)} f(x_0,t_0)+cM(f)(x_0,t_0)\notag\\
&&+cM(M(f))(x_0,t_0),
\end{eqnarray}
where
\begin{eqnarray}
 \mathcal{T}_\ast^{l(Q)} f(x_0,t_0)=\sup_{0<\epsilon<l(Q)} |\mathcal{T}_\epsilon^\delta f(x_0,t_0)|
\end{eqnarray}
and
\begin{eqnarray}
 \mathcal{T}_\epsilon^\delta f(x_0,t_0)=\int_{||(x_0-y,t_0-s)||>\epsilon}K_{0,\delta}^\eta(x_0,t_0,y,s)f(y,s)\, dyds.
\end{eqnarray}
$M$ is the standard parabolic the Hardy-Littlewood maximal. $(iv)$ now follows from these deductions and by proceeding as in the rest of the proof of Lemma 4.1 $(i)$ in \cite{CNS}. We refer the interested reader to \cite{CNS} for details.
\end{proof}

\begin{proof}[Proof of Lemma \ref{lemsl1++} $(v)$] To prove $(v)$  we first note that $N_\ast(\P_\lambda(\partial_\lambda\mathcal{S}_\lambda f))(x_0,t_0)\leq cM(N_\ast(\partial_\lambda\mathcal{S}_\lambda f))(x_0,t_0)$ and hence we only have to estimate $N_\ast(\P_\lambda(\nabla_{||}\mathcal{S}_\lambda f))$. Fix $(x_0,t_0)\in\mathbb R^{n+1}$ and consider $(x,t,\lambda)\in \Gamma(x_0,t_0)$. We now let, as we may, $\P_\lambda$ have a product structure, i.e., $\P_\lambda(x,t)=\P_\lambda^x(x)\P_\lambda^t(t)$. In the following we let $M^x$ and $M^t$ denote, respectively, the Hardy-Littlewood maximal operators acting in the
$x$ and $t$ variables only. To proceed we note, for $k\in \{1,...,n\}$, that
\begin{eqnarray}\label{apa1}
\P_\lambda(\partial_{x_k}\mathcal{S}_\lambda f)(x,t)&=&\P_\lambda^t\bigl(\P_\lambda^x(\partial_{x_k}\mathcal{S}_\lambda f)(x,\cdot)\bigr )(t)
\end{eqnarray}
and that
$$\P_\lambda^x(\partial_{x_k}\mathcal{S}_\lambda f)(x,\cdot)=\lambda^{-1}\mathcal{Q}_\lambda^x(\mathcal{S}_\lambda f)(x,\cdot)$$
where $Q_\lambda^x$ is an approximation of the zero operator, in $x$ only.  As  $Q_\lambda^x$ annihilates constants we have
\begin{eqnarray}\label{apa2}
\P_\lambda^x(\partial_{x_k}\mathcal{S}_\lambda f)(x,\cdot)&=&\lambda^{-1}\mathcal{Q}_\lambda^x\biggl (\int_\delta^\lambda\partial_\sigma\mathcal{S}_\sigma f\, d\sigma\biggr )(x,\cdot)\notag\\
&&+\lambda^{-1}\mathcal{Q}_\lambda^x\biggl (\mathcal{S}_\delta f-\mean{Q_{2\lambda}^x(x_0)}\mathcal{S}_\delta f\biggr )(x,\cdot),
\end{eqnarray}
for $\delta>0$ small and where $Q_{2\lambda}^x(x_0)$ now denotes the cube in $\mathbb R^{n}$, and in the spatial variables only, which is centered at $x_0$ and has size $2\lambda$.  But
\begin{eqnarray}\label{apa3}
\mathcal{Q}_\lambda^x\biggl (\lambda^{-1}\int_\delta^\lambda\partial_\sigma\mathcal{S}_\sigma f\, d\sigma\biggr )(x,\cdot)\leq cM^x(N_\ast(\partial_\lambda\mathcal{S}_\lambda f))(x_0,\cdot)
\end{eqnarray}
and by Poincare's inequality
\begin{eqnarray}\label{apa4}
\lambda^{-1}\mathcal{Q}_\lambda^x\biggl (
\mathcal{S}_\delta f-\mean{Q_{2\lambda}^x(x_0)}\mathcal{S}_\delta f\biggr )(x,\cdot)\leq cM^x(\nabla_{||}\mathcal{S}_\delta f)(x_0,\cdot).
\end{eqnarray}
Combining \eqref{apa1}-\eqref{apa4} we see that
\begin{eqnarray}
\P_\lambda(\partial_{x_k}\mathcal{S}_\lambda f)(x,t)&\leq& cM^t(M^x(N_\ast(\partial_\lambda\mathcal{S}_\lambda f))(x_0,\cdot))(t_0)\notag\\
&&+cM^t(M^x(\nabla_{||}\mathcal{S}_\delta f)(x_0,\cdot))(t_0),
\end{eqnarray}
whenever $(x,t,\lambda)\in \Gamma(x_0,t_0)$. Hence
\begin{eqnarray}
|| N_\ast(\P_\lambda(\nabla\mathcal{S}_\lambda f))||_2\leq c\bigl (||N_\ast(\partial_\lambda\mathcal{S}_\lambda f)||_2+
||\nabla_{||}\mathcal{S}_\delta f||_2\bigr ).
\end{eqnarray}
This completes the proof of $(v)$.
 \end{proof}

 \begin{proof}[Proof of Lemma \ref{lemsl1++} $(vi)$] To prove $(vi)$ we again let $(x_0,t_0)\in\mathbb R^{n+1}$ and we consider $(x,t,\lambda)\in \Gamma(x_0,t_0)$. We want to bound $\P_\lambda(H_tD_{1/2}^t\mathcal{S}_\lambda f)(x,t)$. Recall that $\P_\lambda$ has support in a parabolic cube centered at $(0,0)$ and with size  $\lambda$. Consider $(y,s)\in\mathbb R^{n+1}$ such that $||(y-x_0,s-t_0)||<8\lambda$ and let $K\gg 1$ be a degree of freedom to be chosen. Then
\begin{eqnarray*}
H_tD_{1/2}^t(\mathcal{S}_\lambda f)(y,s)&=&\lim_{\epsilon\to 0}\int_{\epsilon\leq |s-\tilde t|<1/\epsilon}\frac {\mbox{sgn}(s-\tilde t)}{|s-\tilde t|^{3/2}}(\mathcal{S}_\lambda f)(y,\tilde t)\, d\tilde t\notag\\
&=&\lim_{\epsilon\to 0}\int_{\epsilon\leq |s-\tilde t|<(K\lambda)^2}\frac {\mbox{sgn}(s-\tilde t)}{|s-\tilde t|^{3/2}}(\mathcal{S}_\lambda f)(y,\tilde t)\, d\tilde t\notag\\
&&+\lim_{\epsilon\to 0}\int_{(K\lambda)^2\leq |s-\tilde t|<1/\epsilon}\frac {\mbox{sgn}(s-\tilde t)}{|s-\tilde t|^{3/2}}(\mathcal{S}_\lambda f)(y,\tilde t)\, d\tilde t\notag\\
&=:&g_1(y,s,\lambda)+g_2(y,s,\lambda).
\end{eqnarray*}
Let
$$g_3(x_0,t_0,\lambda):=\sup_{\{y:\ |y-x_0|\leq 8\lambda\}}\sup_{\{\tau:\ |\tau-t_0|\leq (4K\lambda)^2\}}|\partial_\tau(\mathcal{S}_\lambda f)(y,\tau)|.$$
Then
\begin{eqnarray*}
|g_1(y,s,\lambda)|\leq cK\lambda g_3(x_0,t_0,\lambda),
\end{eqnarray*}
whenever $||(y-x_0,s-t_0)||<8\lambda$. Using this and arguing as in the argument leading up to the estimate in display (4.3) in \cite{CNS} we see that
\begin{eqnarray}\label{citama}
\P_\lambda(|g_1|)(x,t)\leq cM(f)(x_0,t_0),
\end{eqnarray}
where, as usual, $M$ is the standard parabolic the Hardy-Littlewood maximal. To estimate $g_2(y,s,\lambda)$, for $(y,s)$  as above, we introduce the function
\begin{eqnarray*}
g_4(\bar y,\bar s,\lambda)=\lim_{\epsilon\to 0}\int_{(K\lambda)^2\leq |\tilde s-\bar s|<1/\epsilon}\frac {\mbox{sgn}
(\bar s-\tilde s)}{|\bar s-\tilde  s|^{3/2}}({\mathcal{S}}_\delta f)(\bar y,\tilde  s)\, d\tilde  s,
\end{eqnarray*}
for $\delta$ small. Now
\begin{eqnarray*}
|g_2(y,s,\lambda)-g_4(x_0,t_0,\lambda)|&\leq & |g_2(y,s,\lambda)-g_2(x_0,s,\lambda)|\notag\\
&&+|g_2(x_0,s,\lambda)-g_2(x_0,t_0,\lambda)|\notag\\
&&+|g_2(x_0,t_0,\lambda)-g_4(x_0,t_0,\lambda)|.
\end{eqnarray*}
In particular,
\begin{eqnarray*}
|g_2(y,s,\lambda)-g_4(x_0,t_0,\lambda)|&\leq & \int_{(K\lambda)^2\leq |s-\tilde t|}\frac {|{\mathcal{S}}_\lambda f(y,\tilde  t)-{\mathcal{S}}_\lambda f(x_0,\tilde  t)|}{|\tilde  t-s|^{3/2}}\, d\tilde  t\notag\\
&&+\int_{(K\lambda)^2\leq |\xi|}\frac {|{\mathcal{S}}_\lambda f(x_0,\xi+s)-{\mathcal{S}}_\lambda f(x_0,\xi+t_0)|}{|\xi|^{3/2}}\, d\xi\notag\\
&&+\int_{(K\lambda)^2\leq |\tilde  t-t_0|}\frac {|{\mathcal{S}}_\lambda f(x_0,\tilde  t)-{\mathcal{S}}_\delta f(x_0,\tilde t)|}{|t_0-\tilde t|^{3/2}}\, d\tilde t\notag\\
&=:&h_1(y,s,\lambda)+h_2(y,s,\lambda)+h_3(x_0,t_0,\lambda).
\end{eqnarray*}
We note that
\begin{eqnarray*}
h_2(y,s,\lambda)&\leq&c\lambda^2\int_{(K\lambda)^2\leq |\xi|}\frac {N_\ast(\partial_t{\mathcal{S}}_\lambda f)(x_0,\xi+t_0)}{|\xi|^{3/2}}\, d\xi\notag\\
&\leq&c\lambda\int_{(K\lambda)^2\leq |\xi|}\frac {M(f)(x_0,\xi+t_0)}{|\xi|^{3/2}}\, d\xi\leq c
M^t(M(f)(x_0,\cdot))(t_0),
\end{eqnarray*}
where $M^t$ is the Hardy-Littlewood maximal operator in the $t$-variable, as we see by arguing as in the proof of \eqref{citama} above. Similarly,
\begin{eqnarray*}
h_3(y,s,\lambda)\leq c M^t(N_\ast(\partial_\lambda {\mathcal{S}}_\lambda f)(x_0,\cdot))(t_0).
\end{eqnarray*}
We therefore focus on $h_1(y,s,\lambda)$. Let
$$\tilde h_1(y)=\int_{\lambda^2\leq |\tilde  t-t_0|}\frac {|{\mathcal{S}}_\lambda f(y,\tilde t)-{\mathcal{S}}_\lambda f(x_0,\tilde t)|}{|\tilde t-t_0|^{3/2}}\, d\tilde t.$$
If  $K$ is large enough, then $h_1(y,s,\lambda)\leq c\tilde h_1(y)$, whenever $||(y-x_0,s-t_0)||<8\lambda$. To estimate $\tilde h_1(y)$ is a bit tricky. However, fortunately we can reuse the corresponding arguments in \cite{CNS}. Indeed, basically arguing as is done below display (4.4) in \cite{CNS} it follows that
\begin{eqnarray}\label{citama+}
\P_\lambda(h_1)(x,t)\leq c\P_\lambda(\tilde h_1)(x,t)\leq cM^t(\tilde N_{\ast\ast}(\nabla_{||}{\mathcal{S}}_\lambda f)(x_0,\cdot))(t_0).
\end{eqnarray}
Putting the estimates together we can conclude that
\begin{eqnarray*}
&& \P_\lambda(h_1)(x_0,t_0)+\P_\lambda(h_2)(x_0,t_0)+\P_\lambda(h_3)(x_0,t_0)\notag\\
&& \qquad \leq \ cM^t(\tilde N_{\ast\ast}(\nabla_{||}{\mathcal{S}}_\lambda f)(x_0,\cdot))(t_0)+c M^t(M(f)(x_0,\cdot))(t_0)\notag\\
&& \qquad \qquad + \ cM^t(N_\ast(\partial_\lambda {\mathcal{S}}_\lambda f)(x_0,\cdot))(t_0),
\end{eqnarray*}
where $M^t$ is the Hardy-Littlewood maximal operator in the $t$-variable and $M$ is the standard parabolic Hardy Littlewood maximal function. To complete the proof of $(vi)$ we let
 $$\psi_\delta(x_0,t_0):=\sup_{\lambda>\delta}|g_4(x_0,t_0,\lambda)|$$ and we note that it suffices to estimate  $||\psi_\delta||_2$. To do this we note that
$${\mathcal{S}}_\delta f(x,t)=cI_{1/2}^t(D_{1/2}^t{\mathcal{S}}_\delta f)(x,t)=c I_{1/2}^th_\delta(x,t), $$
where $ I_{1/2}^t$ is the (fractional)  Riesz operator in $t$ defined on the Fourier transform side through the multiplier $|\tau|^{-1/2}$ and
$h_\delta(x,t):=(D_{1/2}^t{\mathcal{S}}_\delta f)(x,t)$. Using this we see that
$$\psi_\delta(x_0,t_0)\leq c\sup_{\epsilon>0}|\tilde V_\epsilon h_\delta(x_0,t_0)|=:c\tilde V_\ast h(x_0,t_0),$$
 $\tilde V_\epsilon h(x,t)=V_\epsilon h(x,\cdot)$ evaluated at $t$, where $V_\epsilon$ is defined on functions $k\in L^2(\mathbb R,\mathbb C)$ by
$$V_\epsilon k(t)=\int_{\{|s-t|>\epsilon\}}\frac{\mbox{sgn}(t-s) I_{1/2}^tk(s)}{|s-t|^{3/2}}\, ds.$$
However, using this notation we can now apply Lemma 2.27 in \cite{HL} and conclude that
$$||\psi_\delta||_2\leq c||h_\delta||_2=c||D_{1/2}^t{\mathcal{S}}_\delta f||_2.$$
This completes the proof of $(vi)$.
\end{proof}

\begin{proof}[Proof of Lemma \ref{lemsl1++} $(vii)$- $(viii)$]
To start the proof of $(vii)$ and  $(viii)$ we note, using \eqref{eq11edsea+}, that $(vii)$ implies $(viii)$. Hence we only have to prove
$(vii)$. To start the proof, we let ${\bf f}=({\bf f}_{||},{\bf f}_{n+1})\in  L^2(\mathbb R^{n+1},\mathbb C^{n+1})$  and we again note that we only have to estimate $N_\ast((\mathcal{S}_\lambda\nabla_{||})\cdot{\bf f}_{||})$. Indeed, $N_\ast((\mathcal{S}_\lambda D_{n+1}){\bf f}_{n+1})=
N_\ast(\partial_\lambda(\mathcal{S}_\lambda {\bf f}_{n+1}))$ and using that
$$||N_\ast(\partial_\lambda(\mathcal{S}_\lambda {\bf f}_{n+1}))||_{{2,\infty}}\leq ||N_\ast(\partial_\lambda(\mathcal{S}_\lambda {\bf f}_{n+1}))||_{2}$$
we see that the estimate of $||N_\ast((\mathcal{S}_\lambda D_{n+1}){\bf f}_{n+1})||_{{2,\infty}}$ follows from $(i)$. To proceed we will estimate
$N_\ast((\mathcal{S}_\lambda\nabla_{||})\cdot{\bf g})$ where we have put ${\bf g}={\bf f}_{||}$.  Fix
$(x_0,t_0)\in\mathbb R^{n+1}$,  consider $(x,t,\lambda)\in \Gamma(x_0,t_0)$ and let $\sigma\in (-\lambda,\lambda)$. Given $(x,t,\lambda)$ we let
\begin{eqnarray}
E&=&\{(y,s): |y-x|+|s-t|^{1/2}<16\lambda\},\notag\\
E_k&=&\{(y,s): 2^k\lambda\leq |y-x|+|s-t|^{1/2}<2^{k+1}\lambda\},\ k=4,....,
\end{eqnarray}
and \begin{eqnarray}
\bar {\bf g}={\bf g}1_{E},\ {\bf g}_k={\bf g}1_{E_k},\ k=4,....
\end{eqnarray}
Using this notation we set $u(x,t,\lambda)=(\mathcal{S}_\lambda\nabla_{||})\cdot{\bf g}(x,t)$ and we split
$$\mbox{$u=\bar u+\tilde u$ where $\tilde u=\sum_{k=4}^\infty u_k$}$$
 and $$\bar u(x,t,\lambda)=(\mathcal{S}_\lambda\nabla_{||})\cdot\bar {\bf g}(x,t),\ u_k(x,t,\lambda)=(\mathcal{S}_\lambda\nabla_{||})\cdot{\bf g}_k(x,t).$$
We first estimate $u_k(x,t,\sigma)-u_k(x_0,t_0,0)$ for $(x,t,\sigma)$ as above  and for $k=4,....$. We write
\begin{eqnarray}
&&|u_k(x,t,\sigma)-u_k(x_0,t_0,0)|\notag\\
&\leq&\int_{E_k}|\nabla_{||,y} \bigl (\Gamma_\sigma(x,t,y,s)-\Gamma_0(x_0,t_0,y,s)\bigr )\cdot{\bf g}|\, dyds\notag\\
&\leq& \int_{E_k}|\nabla_{||,y} \bigl (\Gamma_\sigma(x,t,y,s)-\Gamma_\sigma(x_0,t_0,y,s)\bigr )\cdot{\bf g}|\, dyds\notag\\
&&+\int_{E_k}|\nabla_{||,y} \bigl (\Gamma_\sigma(x_0,t_0,y,s)-\Gamma_0(x_0,t_0,y,s)\bigr )\cdot{\bf g}|\, dyds.
\end{eqnarray}
We now note that
\begin{eqnarray}\label{cest}
\int_{E_k}|\nabla_{||,y} \bigl (\Gamma_\sigma(x,t,y,s)-\Gamma_\sigma(x_0,t_0,y,s)\bigr )|^2\, dyds\leq c2^{-k\alpha}(2^k\lambda)^{-n-2},
\end{eqnarray}
where $\alpha>0$ is as in Lemma \ref{le2+}. Indeed, \eqref{cest} follows from Lemma \ref{le1} $(i)$ and Lemma \ref{le2+}. Similarly, writing
\begin{eqnarray}
\Gamma_\sigma(x_0,t_0,y,s)-\Gamma_0(x_0,t_0,y,s)=\int_0^\sigma\partial_\tau \Gamma_\tau(x_0,t_0,y,s)\, d\tau
\end{eqnarray}
we see that we can use Lemma \ref{le3} to conclude that
\begin{eqnarray}\label{cest+}
\int_{E_k}|\nabla_{||,y} \bigl (\Gamma_\sigma(x_0,t_0,y,s)-\Gamma_0(x_0,t_0,y,s)\bigr )|^2\, dyds\leq c2^{-k\alpha}(2^k\lambda)^{-n-2}.
\end{eqnarray}
Using \eqref{cest} and \eqref{cest+} we first see that
\begin{eqnarray}
|u_k(x,t,\sigma)-u_k(x_0,t_0,0)|&\leq& c2^{-k\alpha/2}\bigl (\mean{E_k}|{\bf g}|^2\bigr )^{1/2}\notag\\
&\leq& c2^{-k\alpha/2}\bigl (M(|{\bf g}|^2)\bigr )^{1/2}(x_0,t_0),
\end{eqnarray}
where again $M$ is the standard parabolic the Hardy-Littlewood maximal, and then, by summing, that
\begin{eqnarray}\label{far}
|\tilde u(x,t,\sigma)-\tilde u(x_0,t_0,0)|\leq c\bigl (M(|{\bf g}|^2)\bigr )^{1/2}(x_0,t_0),
\end{eqnarray}
whenever $(x,t,\lambda)\in \Gamma(x_0,t_0)$ and  $\sigma\in (-\lambda,\lambda)$. Furthermore, using \eqref{eq14+}
\begin{eqnarray}\label{ele1}
|\bar u(x,t,\lambda)|&\leq& c\lambda^{-(n+2)/2}\sup_{\lambda>0}||(\mathcal{S}_\lambda\nabla_{||})\cdot{\bf g}||\notag\\
&\leq &c(\sup_{\lambda>0}||\mathcal{S}\nabla_{||}||_{2\to 2})\bigl (M(|{\bf g}|^2)\bigr )^{1/2}(x_0,t_0).
\end{eqnarray}
Put together we see that
\begin{eqnarray}\label{yra}
|u(x,t,\lambda)|&\leq& c(\sup_{\lambda>0}||\mathcal{S}\nabla_{||}||_{2\to 2})\bigl (M(|{\bf g}|^2)\bigr )^{1/2}(x_0,t_0)+c\bigl (M(|{\bf g}|^2)\bigr )^{1/2}(x_0,t_0)\notag\\
&&+|\tilde u(x_0,t_0,0)|
\end{eqnarray}
whenever $(x,t,\lambda)\in \Gamma(x_0,t_0)$ and $\sigma\in (-\lambda,\lambda)$. To estimate $\tilde u(x_0,t_0,0)$, consider $(x,t,\lambda)\in \Gamma(x_0,t_0)$. Then
\begin{eqnarray}\label{far+}
|\tilde u(x_0,t_0,0)|&\leq& |\tilde u(x,t,\delta)-\tilde u(x_0,t_0,0)|+|\bar u(x,t,\delta)|+|u(x,t,\delta)|\notag\\
&\leq &c\bigl (M(|{\bf g}|^2)\bigr )^{1/2}(x_0,t_0)+|\bar u(x,t,\delta)|+|u(x,t,\delta)|,
\end{eqnarray}
by \eqref{far} and whenever $0<\delta\ll \lambda$. Let $\Delta_\lambda(x_0,t_0)$ be the set of all points $(x,t)$ such that $|x-x_0|+|t-t_0|^{1/2}<\lambda$. Taking the average over $\Delta_\lambda(x_0,t_0)$ in
\eqref{far+} we see that
\begin{eqnarray}\label{far++}
|\tilde u(x_0,t_0,0)|&\leq& c\bigl (M(|{\bf g}|^2)\bigr )^{1/2}(x_0,t_0)\notag\\
&&+\mean{\Delta_\lambda(x_0,t_0)}|\bar u(x,t,\delta)|\, dxdt+M(u(\cdot,\cdot,\delta))(x_0,t_0)\notag\\
&\leq& c\bigl (M(|{\bf g}|^2)\bigr )^{1/2}(x_0,t_0)\notag\\
&&+c(\sup_{\lambda>0}||\mathcal{S}_\lambda\nabla_{||}||_{2\to 2})\bigl (M(|{\bf g}|^2)\bigr )^{1/2}(x_0,t_0)\notag\\
&&+M((\mathcal{S}_\lambda|_{\lambda=\delta}\nabla_{||})\cdot{\bf g})(x_0,t_0),
\end{eqnarray}
where we have also used \eqref{ele1}. In particular, using \eqref{yra} and \eqref{far++} we can conclude that
\begin{eqnarray}\label{far+++}
N_\ast((\mathcal{S}_\lambda\nabla_{||})\cdot{\bf g})(x_0,t_0)&\leq& c\bigl (1+\sup_{\lambda>0}||\mathcal{S}_\lambda\nabla_{||}||_{2\to 2}\bigr)\bigl (M(|{\bf g}|^2)\bigr )^{1/2}(x_0,t_0)\notag\\
&&+M((\mathcal{S}_\lambda|_{\lambda=\delta}\nabla_{||})\cdot{\bf g})(x_0,t_0).
\end{eqnarray}
This completes the proof of $(vii)$.\end{proof}

\begin{lemma}\label{lemsl1}  Assume $m\geq -1$, $l\geq -1$. Let $\Phi_+(f)$ be defined as in \eqref{keyestint-ex+}.  Then there exists a constant $c$, depending at most
     on $n$, $\Lambda$, the De Giorgi-Moser-Nash constants, $m$, $l$, such that
         \begin{eqnarray*}
              (i)&&|||\lambda^{m+2l+4}\nabla\partial_\lambda\partial_t^{l+1}\partial_\lambda^{m+1}\mathcal{S}_{\lambda}f|||_+\leq c\Phi_+(f)+c||f||_2,\notag\\
     (ii)&&|||\lambda^{m+2l+4}\partial_t\partial_t^{l+1}\partial_\lambda^{m+1}\mathcal{S}_{\lambda}f|||_+\leq c\Phi_+(f)+c||f||_2,
     \end{eqnarray*}
whenever $f\in L^2(\mathbb R^{n+1},\mathbb C)$. Furthermore,  assume $m\geq -1$, let $\Phi^\eta(f)$ be defined as in \eqref{keyestint-ex+se} and let $\eta\in (0,1/10)$. Then there exists a constant $c$, depending at most
     on $n$, $\Lambda$, the De Giorgi-Moser-Nash constants, $m$, such that
         \begin{eqnarray*}
     (iii)&&|||\lambda^{m+2}\nabla\partial_\lambda\partial_\lambda^{m+1}\mathcal{S}_{\lambda}^\eta f|||_+\leq c\Phi^\eta(f)+c||f|_2,\notag\\
     (iv)&&|||\lambda^{m+2}\partial_t\partial_\lambda^{m+1}\mathcal{S}_{\lambda}^\eta f|||_+\leq c\Phi^\eta(f)+c||f||_2,
     \end{eqnarray*}
whenever $f\in L^2(\mathbb R^{n+1},\mathbb C)$.
\end{lemma}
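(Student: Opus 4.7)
The plan is to write $u(x,t,\lambda)=\mathcal{S}_\lambda f(x,t)$ and to exploit the fact that, since the coefficient matrix $A$ is independent of both $t$ and $\lambda$, any function of the form
$$
g_\lambda \;:=\; \partial_t^{l+1}\partial_\lambda^{m+1}u
$$
is itself a (weak) solution of $\mathcal{H}g=0$ in $\mathbb R^{n+2}_+$, and similarly for the smoothed potential $\mathcal{S}_\lambda^\eta$ (modulo the trivial right-hand side coming from $\varphi_\eta$, which is harmless since $\eta$ is fixed and the bump is smooth). Throughout, I would split $\nabla=(\nabla_{||},\partial_\lambda)$ and handle the two pieces separately.

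For $(i)$, the $\partial_\lambda$--component of the left-hand side equals
$|||\lambda^{m+2l+4}\partial_t^{l+1}\partial_\lambda^{m+2}\mathcal{S}_\lambda f|||_+$. In the base case $m=l=-1$, this is exactly $|||\lambda\partial_\lambda^2\mathcal{S}_\lambda f|||_+$, which is one of the two pieces in the definition of $\Phi_+(f)$. For higher $(m,l)$ I would combine the pointwise-in-$\lambda$ bounds $\sup_\lambda\|\lambda^{m+2l+2}\partial_t^{l+1}\partial_\lambda^{m+1}\mathcal{S}_\lambda f\|_2\leq c\|f\|_2$ from Lemma \ref{le5}$(iii)$ with the $|||\cdot|||_+$ bound on $\lambda\partial_\lambda^2\mathcal{S}_\lambda f$ coming from $\Phi_+(f)$, via a dyadic decomposition of the $\lambda$-interval. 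For the tangential part $|||\lambda^{m+2l+4}\nabla_{||}\partial_\lambda g_\lambda|||_+$, I would cover $\mathbb R^{n+2}_+$ by Whitney boxes $W_j=Q\times I_j$ of scale $2^j$ and apply the Caccioppoli estimate of Lemma \ref{le1--} to $\partial_\lambda g_\lambda$ (which solves $\mathcal H\cdot=0$):
$$
\int_{W_j}|\nabla\partial_\lambda g_\lambda|^{2}\,dx\,dt\,d\lambda
\;\le\; \frac{c}{2^{2j}}\int_{2W_j}|\partial_\lambda g_\lambda|^{2}\,dx\,dt\,d\lambda.
$$
Multiplying by $\lambda\sim 2^j$ and summing over $j$ and $Q$ trades one derivative against one factor of $\lambda$, reducing the problem to bounding $|||\lambda^{m+2l+3}\partial_t^{l+1}\partial_\lambda^{m+2}\mathcal{S}_\lambda f|||_+$, which is the same type of quantity as the $\partial_\lambda$--component above and is treated in the same way.

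For $(ii)$, the left-hand side involves $\partial_t\partial_t^{l+1}\partial_\lambda^{m+1}\mathcal{S}_\lambda f$. Here I would invoke Lemma \ref{le1a} in place of Lemma \ref{le1--}, which bounds $\partial_t$ of a solution by $\nabla$ of the solution divided by the cube size. Applied at the Whitney scale this yields
$$
|\partial_t g_\lambda(x,t,\lambda)|^{2}\;\le\;\frac{c}{\lambda^{2}}\,\mean{W_{c\lambda}(x,t)}|\nabla g_\sigma|^{2}\,dy\,ds\,d\sigma,
$$
which, after the weighted $L^2$ integration, reduces $(ii)$ to $(i)$ (with the weight absorbed in the reduction from $\partial_t$ to $\nabla$); one then closes as above.

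For $(iii)$ and $(iv)$, exactly the same scheme applies to $\mathcal{S}_\lambda^\eta$, with $\Phi^\eta(f)$ replacing $\Phi_+(f)$. The absence of the factors $\partial_t^{l+1}$ together with the regularization in $\lambda$ makes the weights $\lambda^{m+2}$ natural, and $\mathcal{S}_\lambda^\eta$ solves $\mathcal{H}\mathcal{S}_\lambda^\eta f=f\varphi_\eta(\lambda)$, a right-hand side supported in $|\lambda|\le\eta$, so away from the support one still has $\mathcal H\mathcal{S}_\lambda^\eta f=0$ and the Caccioppoli step goes through unchanged. The main obstacle I anticipate is the delicate balancing in the dyadic sum at both endpoints $\lambda\to 0^+$ and $\lambda\to\infty$: neither $\sup_\lambda\|\partial_\lambda\mathcal{S}_\lambda f\|_2\le\Phi_+(f)$ alone nor the decay from Lemma \ref{le5} alone is enough to close the integral, but together they do.
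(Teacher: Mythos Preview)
Your inductive framework—using that $\partial_t^{l+1}\partial_\lambda^{m+1}\mathcal{S}_\lambda f$ is again a solution and trading derivatives for powers of $\lambda$ via Lemma \ref{le1--} and Lemma \ref{le1a}—is the right scaffolding and matches what the paper does for the reduction step. The gap is at the base case.

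Take $m=l=-1$ in $(i)$. The $\partial_\lambda$-component is $|||\lambda\partial_\lambda^2\mathcal{S}_\lambda f|||_+$, fine. But for the tangential piece $|||\lambda\nabla_{||}\partial_\lambda\mathcal{S}_\lambda f|||_+$, your Caccioppoli step applied to $\partial_\lambda\mathcal{S}_\lambda f$ lands you on $|||\partial_\lambda\mathcal{S}_\lambda f|||_+=\int_0^\infty\|\partial_\lambda\mathcal{S}_\lambda f\|_2^2\,d\lambda/\lambda$, which diverges at both ends. The same thing happens in $(ii)$ at $m=l=-1$: Lemma \ref{le1a} turns $|||\lambda\partial_t\mathcal{S}_\lambda f|||_+$ into $|||\nabla\mathcal{S}_\lambda f|||_+$, again divergent. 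The missing ingredient is an integration by parts in $\lambda$ \emph{before} Caccioppoli: writing $\int|\nabla_{||}\partial_\lambda u|^2\lambda\,d\lambda$ as $-\int(\nabla_{||}\partial_\lambda^2 u\cdot\overline{\nabla_{||}\partial_\lambda u})\lambda^2\,d\lambda+\text{(boundary)}$ trades one $\lambda$ for one $\partial_\lambda$, and the boundary terms at $\lambda=\epsilon,1/\epsilon$ are controlled by the uniform bound $\sup_\lambda\|\lambda\nabla_{||}\partial_\lambda\mathcal{S}_\lambda f\|_2\le c\|f\|_2$ from Lemma \ref{le5}$(ii)$. After absorbing and applying Caccioppoli to the resulting $|||\lambda^2\nabla_{||}\partial_\lambda^2\mathcal{S}_\lambda f|||_+$ you reach $|||\lambda\partial_\lambda^2\mathcal{S}_\lambda f|||_+\le\Phi_+(f)$. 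This is exactly how the paper handles $I_2'$ and $\tilde I_2'$.

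Second, for $(iii)$--$(iv)$ you say the right-hand side $f\varphi_\eta(\lambda)$ is ``harmless'' and that Caccioppoli works ``away from the support''. But you still need an $\eta$-independent bound on the strip $0<\lambda<2\eta$, and there Caccioppoli is unavailable. The paper writes $\nabla\partial_\lambda^{m+2}\mathcal{S}_\lambda^\eta f=\nabla\mathcal{H}^{-1}\partial_\lambda(\partial_\lambda^{m+1}f_\eta)$, applies the global $L^2(\mathbb R^{n+2})$ bound for $\nabla\mathcal{H}^{-1}\div$ (and $D_{1/2}^t\mathcal{H}^{-1}D_{1/2}^t$ for $(iv)$) from Lemma \ref{gara}, and checks that the scaling of $\varphi_\eta$ makes $\eta^{2m+3}\int|\partial_\lambda^{m+1}\varphi_\eta|^2\,d\lambda$ bounded independently of $\eta$. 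This step is not optional; without it the estimate near $\lambda=0$ is not closed.
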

\begin{proof} $(i)$-$(ii)$ are proved in Lemma 4.3 in \cite{CNS}. To prove prove $(iii)$-$(iv)$ we have to be slightly more careful as we in this case only have      \begin{eqnarray}\label{id-}
\mathcal{H}\mathcal{S}_{\lambda}^\eta f(x,t)=f_\eta(x,t,\lambda)=f(x,t)\varphi_\eta(\lambda),
\end{eqnarray}
i.e., we have an inhomogeneous right hand side. Note that \begin{eqnarray}\label{id}
\mbox{$\mathcal{H}\mathcal{S}_{\lambda}^\eta f(x,t)=0$ whenever $\lambda>\eta$.}
\end{eqnarray}
To prove $(iii)$ we write
  \begin{eqnarray*}
              |||\lambda^{m+2}\nabla\partial_\lambda\partial_\lambda^{m+1}\mathcal{S}_{\lambda}^\eta f|||_+^2=I_1+I_2.
              \end{eqnarray*}
              where
     \begin{eqnarray*}
              I_1&=&\int_0^{2\eta}\int_{\mathbb R^{n+1}}|\lambda^{m+2}\nabla\partial_\lambda\partial_\lambda^{m+1}\mathcal{S}_{\lambda}^\eta f|^2\, \frac {dxdtd\lambda}\lambda,\notag\\
              I_2&=&\int_{2\eta}^\infty\int_{\mathbb R^{n+1}}|\lambda^{m+2}\nabla\partial_\lambda\partial_\lambda^{m+1}\mathcal{S}_{\lambda}^\eta f|^2\, \frac {dxdtd\lambda}\lambda.
              \end{eqnarray*}
              To estimate $I_2$  we first note, using \eqref{id}, Lemma \ref{le1--}, induction, and the definition of $\Phi^\eta(f)$, that it suffices to prove the estimate
\begin{eqnarray}\label{impesta}
I_2':=\int_{3\eta/2}^\infty\int_{\mathbb R^{n+1}}|\lambda\nabla_{||}\partial_\lambda\mathcal{S}_{\lambda}^\eta f|^2\, \frac {dxdtd\lambda}\lambda\leq c\Phi^\eta(f)^2+c||f||_2^2,
     \end{eqnarray}
whenever $f\in L^2(\mathbb R^{n+1},\mathbb C)$. To prove \eqref{impesta}  we first integrate by parts with respect to $\lambda$ to see that
\begin{eqnarray*}
I_2'=&=&\lim_{\epsilon\to 0}\int_{3\eta/2}^{1/\epsilon}\int_{\mathbb R^{n+1}}\nabla_{||}\partial_\lambda \mathcal{S}_{\lambda}^\eta f\cdot
\overline{\nabla_{||}\partial_\lambda \mathcal{S}_{\lambda}^\eta f}\,  \lambda{dxdtd\lambda}\notag\\
&=&-\frac 12\lim_{\epsilon\to 0}\int_{3\eta/2}^{1/\epsilon}\int_{\mathbb R^{n+1}}\nabla_{||}\partial_\lambda^2 \mathcal{S}_{\lambda}^\eta f\cdot
\overline{\nabla_{||}\partial_\lambda \mathcal{S}_{\lambda}^\eta f}\,  \lambda^2{dxdtd\lambda}\notag\\
&&-\frac 12\lim_{\epsilon\to 0}\int_{3\eta/2}^{1/\epsilon}\int_{\mathbb R^{n+1}}\nabla_{||}\partial_\lambda \mathcal{S}_{\lambda}^\eta f\cdot
\overline{\nabla_{||}\partial_\lambda^2 \mathcal{S}_{\lambda}^\eta f}\,  \lambda^2{dxdtd\lambda}\notag\\
&&+\lim_{\epsilon\to 0}\int_{\mathbb R^{n+1}}\nabla_{||}\partial_\lambda \mathcal{S}_{\lambda}^\eta f\cdot
\overline{\nabla_{||}\partial_\lambda \mathcal{S}_{\lambda}^\eta f}\,  \lambda^2{dxdt}\biggl |_{\lambda=1/\epsilon}\notag\\
&&-\lim_{\epsilon\to 0}\int_{\mathbb R^{n+1}}\nabla_{||}\partial_\lambda \mathcal{S}_{\lambda}^\eta f\cdot
\overline{\nabla_{||}\partial_\lambda \mathcal{S}_{\lambda}^\eta f}\,  \lambda^2{dxdt}\biggl |_{\lambda={3\eta/2}}.
\end{eqnarray*}
Hence using Lemma \ref{le5} $(ii)$ we see that
\begin{eqnarray}\label{impesta+}
I_2'\leq c\Phi^\eta(f)^2+c||f||_2^2+c\int_{3\eta/2}^\infty\int_{\mathbb R^{n+1}}|\lambda^2\nabla_{||}\partial_\lambda^2\mathcal{S}_{\lambda}^\eta f|^2\, \frac {dxdtd\lambda}\lambda.
     \end{eqnarray}
 \eqref{impesta}  now follows from an application of  Lemma \ref{le1--}.  To estimate $I_1$ we have to use \eqref{id-} and we see that
      \begin{eqnarray*}
              I_1&=&\int_0^{2\eta}\int_{\mathbb R^{n+1}}|\nabla\mathcal{H}^{-1}(\partial_\lambda\partial_\lambda^{m+1}f_\eta)|^2\, \lambda^{2m+3}{dxdtd\lambda}\notag\\
              &\leq &c\eta^{2m+3}\int_{\mathbb R^{n+2}}|\partial_\lambda^{m+1}f_\eta|^2\, {dxdtd\lambda}
            \end{eqnarray*}
               where the estimate on the second line in this display follows from Lemma \ref{gara} applied to the operator $\nabla\mathcal{H}^{-1}\div$. Hence,
                 \begin{eqnarray*}
              I_1&\leq& c\eta^{2m+3}||f||_2^2\biggl (\int_{-\infty}^\infty|\partial_\lambda^{m+1}\varphi_\eta(\lambda)|^2\, d\lambda\biggr )\leq c||f||_2^2.
            \end{eqnarray*}
            This proves $(iii)$.  To prove $(iv)$ we write
  \begin{eqnarray*}
              |||\lambda^{m+2}\partial_t\partial_\lambda^{m+1}\mathcal{S}_{\lambda}^\eta f|||_+^2=\tilde I_1+\tilde I_2.
              \end{eqnarray*}
              where
     \begin{eqnarray*}
              \tilde I_1&=&\int_0^{2\eta}\int_{\mathbb R^{n+1}}|\lambda^{m+2}\partial_t\partial_\lambda^{m+1}\mathcal{S}_{\lambda}^\eta f|^2\, \frac {dxdtd\lambda}\lambda,\notag\\
              \tilde I_2&=&\int_{2\eta}^\infty\int_{\mathbb R^{n+1}}|\lambda^{m+2}\partial_t\partial_\lambda^{m+1}\mathcal{S}_{\lambda}^\eta f|^2\, \frac {dxdtd\lambda}\lambda.
              \end{eqnarray*}
              Again using \eqref{id}, Lemma \ref{le1--}, Lemma \ref{le1a} and induction, we see that it suffices to prove that
              \begin{eqnarray}\label{impestaapa}
\tilde I_2':=\int_{3\eta/2}^\infty\int_{\mathbb R^{n+1}}|\lambda \partial_t\mathcal{S}_{\lambda}^\eta f|^2\, \frac {dxdtd\lambda}\lambda\leq c\Phi^\eta(f)^2+c||f||_2^2.
     \end{eqnarray}
     To prove \eqref{impestaapa}  we first integrate by parts with respect to $\lambda$,
\begin{eqnarray}\label{ua1}
 \tilde I_2'&=&\lim_{\epsilon\to 0}\int_{3\eta/2}^{1/\epsilon}\int_{\mathbb R^{n+1}}\partial_t \mathcal{S}_{\lambda}^\eta f
\overline{\partial_t \mathcal{S}_{\lambda}^\eta f}\,  \lambda{dxdtd\lambda}\notag\\
&=&-\frac 1 2\lim_{\epsilon\to 0}\int_{3\eta/2}^{1/\epsilon}\int_{\mathbb R^{n+1}}\partial_t\partial_\lambda \mathcal{S}_{\lambda}^\eta f
\overline{\partial_t \mathcal{S}_{\lambda}^\eta f}\,  \lambda^2{dxdtd\lambda}\notag\\
&&-\frac 1 2\lim_{\epsilon\to 0}\int_{3\eta/2}^{1/\epsilon}\int_{\mathbb R^{n+1}}\partial_t\mathcal{S}_{\lambda}^\eta f
\overline{\partial_t \partial_\lambda \mathcal{S}_{\lambda}^\eta f}\,  \lambda^2{dxdtd\lambda}\notag\\
&&+\int_{\mathbb R^{n+1}}\partial_t \mathcal{S}_{\lambda}^\eta f
\overline{\partial_t \mathcal{S}_{\lambda}^\eta f}\,  \lambda^2{dxdt}\biggl |_{\lambda=1/\epsilon}\notag\\
&&-\int_{\mathbb R^{n+1}}\partial_t \mathcal{S}_{\lambda}^\eta f
\overline{\partial_t \mathcal{S}_{\lambda}^\eta f}\,  \lambda^2{dxdt}\biggl |_{\lambda=3\eta/2}.
\end{eqnarray}
Hence
\begin{eqnarray}\label{ua1b}
 \tilde I_2'&\leq & c\int_{3\eta/2}^{1/\epsilon}\int_{\mathbb R^{n+1}}|\partial_t \partial_\lambda \mathcal{S}_{\lambda}^\eta f|^2\, \lambda^3 {dxdtd\lambda}\notag\\
 &&+c\sup_{\lambda\geq 3\eta/2}\int_{\mathbb R^{n+1}}|\partial_t \mathcal{S}_{\lambda}^\eta f|^2\lambda^2{dxdt}.
\end{eqnarray}
However, using  Lemma \ref{le5} $(ii)$, \eqref{id}, Lemma \ref{le1--} and basically \eqref{impesta}, we see that
\begin{eqnarray}\label{ua1bo}
 \tilde I_2'&\leq & \int_{3\eta/2}^{1/\epsilon}\int_{\mathbb R^{n+1}}|\partial_t \partial_\lambda \mathcal{S}_{\lambda}^\eta f|^2\, \lambda^3 {dxdtd\lambda}
 \leq c\Phi^\eta(f)^2+c||f||_2^2,
     \end{eqnarray}
whenever $f\in L^2(\mathbb R^{n+1},\mathbb C)$. To estimate $\tilde I_1$ we  use \eqref{id-} and we see that
      \begin{eqnarray*}
              \tilde I_1&=&\int_0^{2\eta}\int_{\mathbb R^{n+1}}|H_tD_{1/2}^t\mathcal{H}^{-1}(D_{1/2}^t\partial_\lambda\partial_\lambda^{m+1}f_\eta)|^2\, \lambda^{2m+3}{dxdtd\lambda}\notag\\
              &\leq &c\eta^{2m+3}\int_{\mathbb R^{n+2}}|\partial_\lambda^{m+1}f_\eta|^2\, {dxdtd\lambda}
            \end{eqnarray*}
               where the estimate on the second line in this display follows from Lemma \ref{gara} applied to the operator $D_{1/2}^t\mathcal{H}^{-1} D_{1/2}^t$. Hence,
                 \begin{eqnarray*}
              \tilde I_1&\leq& c\eta^{2m+3}||f||_2^2\biggl (\int_{-\infty}^\infty|\partial_\lambda^{m+1}\varphi_\eta(\lambda)|^2\, d\lambda\biggr )\leq c||f||_2^2.
            \end{eqnarray*}
            This proves $(iv)$ and the lemma.
     \end{proof}

\begin{lemma}\label{lemsl1c}  Let $\Phi_+(f)$ be defined as in \eqref{keyestint-ex+}, let $\Phi^\eta(f)$ be defined as in \eqref{keyestint-ex+se} and let $\eta\in (0,1/10)$.  Assume that $\Phi_+(f)<\infty$, $\Phi^\eta(f)<\infty$. Then there exists a constant $c$, depending at most
     on $n$, $\Lambda$, and the De Giorgi-Moser-Nash constants, such that
\begin{eqnarray}
(i)\ ||\mathbb D\mathcal{S}_{\lambda _0}f||_{2}&\leq& c\bigl(\Phi_+(f)+||f||_2+||N_{\ast\ast}(\partial_\lambda \mathcal{S}_{\lambda}f)||_2\bigr),\notag\\
(ii)\ ||\mathbb D\mathcal{S}_{\lambda _0}^\eta f||_{2}&\leq& c\bigl(\Phi^\eta (f)+||f||_2+||N_{\ast\ast}(\P_\lambda (\partial_\lambda \mathcal{S}_{\lambda+\lambda_0}^\eta f))||_2\bigr),
\end{eqnarray}
whenever   $f\in L^2(\mathbb R^{n+1},\mathbb C)$, $\lambda_0> 0$.
\end{lemma}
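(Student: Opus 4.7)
The parabolic Sobolev norm equivalence \eqref{uau+} reduces matters to controlling $\|\nabla_{||}\mathcal S_{\lambda_0}^{(\eta)}f\|_2$ and $\|H_tD^t_{1/2}\mathcal S_{\lambda_0}^{(\eta)}f\|_2$ in terms of the right-hand side, where the superscript $(\eta)$ indicates that case $(i)$ and case $(ii)$ are handled in parallel. The smoothed version $(ii)$ only differs in that $\mathcal H\mathcal S_\lambda^\eta f = f\varphi_\eta(\lambda)$, contributing an inhomogeneity supported in $\lambda\le\eta$ which is absorbed, in the spirit of the split $I_1+I_2$ in the proof of Lemma \ref{lemsl1}, using the $L^2$-boundedness of $\nabla\mathcal H^{-1}\div$ from Lemma \ref{gara} together with Lemma \ref{smooth1}. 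By Lemma \ref{appf} and density it suffices to treat $f\in C_0^\infty(\mathbb R^{n+1},\mathbb C)$, so that $\|\mathbb D\mathcal S_{\lambda_0}f\|_2$ is a priori finite for each $\lambda_0>0$.

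The core identity is
\begin{equation*}
    \mathcal S_{\lambda_0} f \;=\; -\int_{\lambda_0}^\infty \partial_\sigma\mathcal S_\sigma f\,d\sigma \;=\; \int_{\lambda_0}^\infty (\sigma - \lambda_0)\,\partial_\sigma^2\mathcal S_\sigma f\,d\sigma,
\end{equation*}
justified by the Gaussian decay of $\Gamma_\sigma$ in Lemma \ref{le2+} (forcing $\|\mathcal S_\sigma f\|_2$ and $\sigma\|\partial_\sigma\mathcal S_\sigma f\|_2$ to vanish as $\sigma\to\infty$). Applying $\mathbb D$ to this identity, testing against $\overline{\mathbb D\mathcal S_{\lambda_0}f}$, and using Plancherel in $(\xi,\tau)$ produces the quadratic identity
\begin{equation*}
    \|\mathbb D\mathcal S_{\lambda_0}f\|_2^2 \;=\; 2\int_{\lambda_0}^\infty(\sigma-\lambda_0)\Bigl[\|\mathbb D\partial_\sigma\mathcal S_\sigma f\|_2^2 + \mathrm{Re}\bigl(\mathbb D\mathcal S_\sigma f,\,\mathbb D\partial_\sigma^2\mathcal S_\sigma f\bigr)\Bigr]d\sigma.
\end{equation*}
Both $\partial_\sigma\mathcal S_\sigma f$ and $\partial_\sigma^2\mathcal S_\sigma f$ solve $\mathcal Hu=0$ in $\mathbb R^{n+2}_+$ (because $\mathcal H$ commutes with $\partial_\lambda$), so Caccioppoli at scale $\sigma$ (Lemma \ref{le1}) trades a tangential derivative for a $\partial_\lambda$-derivative on a slightly enlarged slab. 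Combined with the pointwise bound $|\partial_\tau\mathcal S_\tau f(x,t)|\le N_\ast(\partial_\lambda\mathcal S_\lambda f)(x,t)$ (which holds since $(x,t,\tau)\in\Gamma(x,t)$ for every $\tau>0$) and the square-function hypothesis $|||\lambda\partial_\lambda^2\mathcal S_\lambda f|||_+\le\Phi_+(f)$, this allows us to dominate the first integrand by a combination of $\|N_{\ast\ast}(\partial_\lambda\mathcal S_\lambda f)\|_2^2$ (used on the short range $\sigma\in[\lambda_0,2\lambda_0]$, where the weight $d\sigma$ is integrable) and $\Phi_+(f)^2$ (used on the long range $\sigma\ge2\lambda_0$, where the weight $(\sigma-\lambda_0)^{-1}$ is integrable against $\sigma|||\lambda\partial_\lambda^2\mathcal S_\lambda f|||_+^2$).

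The main obstacle is the cross term, which a priori reintroduces $\|\mathbb D\mathcal S_\sigma f\|_2$ on the right-hand side. The resolution is a Cauchy--Schwarz with a carefully chosen weight $\omega(\sigma)$: one writes $|\mathrm{Re}(\mathbb D\mathcal S_\sigma f,\mathbb D\partial_\sigma^2\mathcal S_\sigma f)| \le \omega(\sigma)^{-1}\|\mathbb D\mathcal S_\sigma f\|_2^2 + \omega(\sigma)\|\mathbb D\partial_\sigma^2\mathcal S_\sigma f\|_2^2$ and selects $\omega$ so that $(\sigma-\lambda_0)\omega(\sigma)^{-1}$ is $L^1(d\sigma)$ with norm independent of $\lambda_0$; the resulting $\sup_\sigma\|\mathbb D\mathcal S_\sigma f\|_2^2$ factor is then absorbed into the left-hand side via the a priori finiteness and the elementary quadratic $x^2\le A+Bx \Rightarrow x\le B+\sqrt{B^2+A}$. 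For $(ii)$ the identical argument is run on $\sigma>2\eta$, with the inhomogeneous contribution on $[\lambda_0,2\eta]$ estimated directly through Lemma \ref{smooth1} and Lemma \ref{gara}, mirroring the treatment of $I_1$ in the proof of Lemma \ref{lemsl1}.
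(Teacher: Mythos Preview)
Your absorption scheme for the cross term does not close. After Cauchy--Schwarz with weight $\omega(\sigma)$ you need simultaneously $\int_{\lambda_0}^\infty(\sigma-\lambda_0)\,\omega(\sigma)^{-1}\,d\sigma\le C$ (uniformly in $\lambda_0$) and $\int_{\lambda_0}^\infty(\sigma-\lambda_0)\,\omega(\sigma)\,\|\mathbb D\partial_\sigma^2\mathcal S_\sigma f\|_2^2\,d\sigma$ controlled by $\Phi_+(f)^2+\|f\|_2^2$. Caccioppoli (Lemma~\ref{le1}, Lemma~\ref{le1--}) gives at best $\|\mathbb D\partial_\sigma^2\mathcal S_\sigma f\|_2^2\lesssim\sigma^{-2}$ times a slab average of $|\partial_\sigma^2\mathcal S_\sigma f|^2$, so the second integral is dominated by $|||\lambda\partial_\lambda^2\mathcal S_\lambda f|||_+^2$ only if $(\sigma-\lambda_0)\omega(\sigma)\sigma^{-2}\lesssim\sigma$, i.e.\ $\omega(\sigma)\lesssim\sigma^3/(\sigma-\lambda_0)$. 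Plugging this upper bound for $\omega$ into the first integral forces $\int_{\lambda_0}^\infty(\sigma-\lambda_0)^2\sigma^{-3}\,d\sigma=\infty$: there is an unavoidable logarithmic divergence at $\sigma\to\infty$, and no choice of $\omega$ can satisfy both constraints. The variant $x^2\le A+Bx$ you invoke would require $B=\int_{\lambda_0}^\infty(\sigma-\lambda_0)\|\mathbb D\partial_\sigma^2\mathcal S_\sigma f\|_2\,d\sigma<\infty$, and the same scaling shows this diverges as well. A related divergence already appears in the $H_tD^t_{1/2}$ part of your \emph{first} integrand: the interpolation $\|D^t_{1/2}\partial_\sigma\mathcal S_\sigma f\|_2^2\le\|\partial_\sigma\mathcal S_\sigma f\|_2\|\partial_t\partial_\sigma\mathcal S_\sigma f\|_2$ together with Lemma~\ref{le1a} and Lemma~\ref{lemsl1} leaves a factor $\int\sigma^{-1}\|\partial_\sigma\mathcal S_\sigma f\|_2^2\,d\sigma$, which is \emph{not} controlled by $\Phi_+(f)$.

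The telltale sign is that your argument never produces the terms $\|N_{\ast\ast}(\partial_\lambda\mathcal S_\lambda f)\|_2$ (case $(i)$) or $\|N_{\ast\ast}(\mathcal P_\lambda(\partial_\lambda\mathcal S_{\lambda+\lambda_0}^\eta f))\|_2$ (case $(ii)$): these are not decorative. In the paper's proof they arise from a genuine Carleson-measure estimate and cannot be replaced by $\Phi_+(f)+\|f\|_2$ alone. The paper's route is structurally different: one dualizes via the parabolic Hodge decomposition of Lemma~\ref{parahodge}, then writes $I_1=-\int_0^\infty\partial_\lambda\bigl(A_{||}\nabla_{||}\mathcal E_\lambda\mathcal S^\eta_{\lambda+\lambda_0}f\cdot\overline{\nabla_{||}\mathcal E_\lambda^\ast v}\bigr)\,d\lambda$ using the resolvents $\mathcal E_\lambda=(I+\lambda^2\mathcal H_{||})^{-1}$. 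The identity $\partial_\lambda\mathcal E_\lambda=-2\lambda(\mathcal E_\lambda)^2\mathcal H_{||}$ converts the $\lambda$-derivative into factors of $\mathcal H_{||}$, and the square-function estimates for $\lambda\mathcal E_\lambda^\ast\mathcal H_{||}^\ast$ and $\lambda\mathcal E_\lambda^\ast\mathcal L_{||}^\ast$ from Theorem~\ref{thm1} (the parabolic Kato machinery of \cite{N}) supply exactly the decay your integration-by-parts scheme lacks. After expanding $\mathcal L_{||}\mathcal S^\eta_{\lambda+\lambda_0}f$ via the equation, one term of the form $(\lambda\mathcal E_\lambda\mathrm{div}_{||})A^{||}_{n+1}\,\partial_\lambda\mathcal S^\eta_{\lambda+\lambda_0}f$ survives; splitting it as $\mathcal R_\lambda+((\lambda\mathcal E_\lambda\mathrm{div}_{||})A^{||}_{n+1})\mathcal P_\lambda$ and invoking the Carleson bound $\int_0^{l(Q)}\!\!\int_Q|(\lambda\mathcal E_\lambda\mathrm{div}_{||})A^{||}_{n+1}|^2\,\lambda^{-1}dxdtd\lambda\le c|Q|$ is precisely what yields the non-tangential maximal term on the right-hand side.
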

\begin{proof}  $(i)$ follows from Lemma 6.1, Lemma 6.2 and Lemma 6.3 in \cite{CNS}. The proof of $(ii)$ is a modification of the proof of $(i)$ and we here only include the proof of some of the core estimates. Indeed, we first note that it follows from the proof of Lemma 6.2 and Lemma 6.3 in \cite{CNS}, using Lemma \ref{lemsl1},  that
\begin{eqnarray}
||\mathbb D\mathcal{S}_{\lambda_0}^\eta f||_{2}\leq c(\Phi^\eta (f)+||f||_2+||\nabla_{||}\mathcal{S}_{\lambda_0}^\eta f||_{2}).
\end{eqnarray}
We will prove that
\begin{eqnarray}\label{core}
||\nabla_{||}\mathcal{S}_{\lambda_0}^\eta f||_{2}&\leq& c\bigl(\Phi^\eta (f)+||f||_2+||N_{\ast\ast}(\P_\lambda (\partial_\lambda \mathcal{S}_{\lambda+\lambda_0}^\eta f))||_2\bigr),
\end{eqnarray}
whenever $f\in L^2(\mathbb R^{n+1},\mathbb C)$,  $\lambda_0>0$.  We can without loss of generality assume that
$f\in C_0^\infty(\mathbb R^{n+1},\mathbb C)$ and to prove the lemma it suffices to estimate
 \begin{eqnarray}
I:=\int_{\mathbb R^{n+1}}{\bf g}\cdot\nabla_{||} \overline{\mathcal{S}^\eta_{\lambda_0}f}\, dxdt,
\end{eqnarray}
where ${\bf \bar g}:C_0^\infty(\mathbb R^{n+1},\mathbb C^n)$ and $||{\bf \bar g}||_2=1$. Given $f\in C_0^\infty(\mathbb R^{n+1},\mathbb C)$, we note, see Lemma \ref{smooth1} $(i)$-$(iii)$, that $\mathcal{S}^\eta_{\lambda_0}f\in {\mathbb H}(\mathbb R^{n+1},\mathbb C)\cap L^2(\mathbb R^{n+1},\mathbb C)$. Hence, using Lemma \ref{parahodge} we see that
 \begin{eqnarray}
I&=&\int_{\mathbb R^{n+1}}A_{||}^\ast\nabla_{||}v\cdot \nabla_{||} \overline{\mathcal{S}^\eta_{\lambda_0}f}\, dxdt+\int_{\mathbb R^{n+1}}D_{1/2}^t(v)\overline{H_tD_{1/2}^t(\mathcal{S}^\eta_{\lambda_0}f)}\, dxdt\notag\\
&=&\int_{\mathbb R^{n+1}}A_{||}\nabla_{||} \mathcal{S}^\eta_{\lambda_0}f\cdot\overline{\nabla_{||}v}\, dxdt+\int_{\mathbb R^{n+1}}H_tD_{1/2}^t(\mathcal{S}^\eta_{\lambda_0}f)\overline{D_{1/2}^t(v)}\, dxdt,
\end{eqnarray}
for a function $v\in\mathbb H=\mathbb H(\mathbb R^{n+1},\mathbb C)$ which satisfies
 \begin{eqnarray}\label{hodest}||v||_{\mathbb H}\leq c||{\bf g}||_2,
 \end{eqnarray}
for some constant $c$ depending only on $n$ and $\Lambda$. In the following we let
 \begin{eqnarray}
I_1&:=&\int_{\mathbb R^{n+1}}A_{||}\nabla_{||} \mathcal{S}^\eta_{\lambda_0}f\cdot\overline{\nabla_{||}v}\, dxdt,\notag\\
I_2&:=&\int_{\mathbb R^{n+1}}H_tD_{1/2}^t(\mathcal{S}^\eta_{\lambda_0}f)\overline{D_{1/2}^t(v)}\, dxdt.
\end{eqnarray}
Using that $C_0^\infty(\mathbb R^{n+1},\mathbb C)$ is dense in $\mathbb H(\mathbb R^{n+1},\mathbb C)$ we see that we in the following we can without loss of generality assume that $v\in C_0^\infty(\mathbb R^{n+1},\mathbb C)$.

We first estimate $I_1$. Recall the resolvents, $\mathcal{E}_\lambda=(I+\lambda^2\mathcal{H}_{||})^{-1}$ and $\mathcal{E}_\lambda^\ast=(I+\lambda^2\mathcal{H}_{||}^\ast)^{-1}$,  introduced in Section \ref{sec5}. To start the estimate of $I_1$ we first note, using that $f, v\in  C_0^\infty(\mathbb R^{n+1},\mathbb C)$, and by applying Lemma \ref{le8-}, that
\begin{eqnarray}\label{pint1-}
&&\biggl |\int_{\mathbb R^{n+1}}A_{||}\nabla_{||} \mathcal{E}_\lambda \mathcal{S}_{\lambda+\lambda_0}^\eta f\cdot\overline{\nabla_{||}\mathcal{E}_\lambda^\ast v}\, dxdt\biggr |\leq \frac c{\lambda^2}||\mathcal{S}_{\lambda+\lambda_0}^\eta f||_2||v||_2.
\end{eqnarray}
Hence, using that
\begin{eqnarray}\label{pint1a}
	\mathcal{S}_{\lambda+\lambda_0}^\eta f-\mathcal{S}_{\lambda_0}^\eta f
		=\int_{\lambda_0}^{\lambda + \lambda_0}\partial_\sigma\mathcal{S}_{\sigma}^\eta f\, d\sigma,
\end{eqnarray}
the fact that $\Phi^\eta(f)<\infty$, Lemma \ref{appf} and that $f, v\in  C_0^\infty(\mathbb R^{n+1},\mathbb C)$, we can use
\eqref{pint1-} to conclude that
\begin{eqnarray}\label{pint1}
&&\biggl |\int_{\mathbb R^{n+1}}A_{||}\nabla_{||} \mathcal{E}_\lambda \mathcal{S}_{\lambda+\lambda_0}^\eta f\cdot\overline{\nabla_{||}\mathcal{E}_\lambda^\ast v}\, dxdt\biggr | \longrightarrow 0 \quad \mbox{ as $\lambda\to \infty$}.
\end{eqnarray}
Hence,
\begin{eqnarray}\label{pint2}
I_1&=&-\int_{0}^\infty\int_{\mathbb R^{n+1}}\partial_\lambda \bigl (A_{||}\nabla_{||} \mathcal{E}_\lambda \mathcal{S}^\eta_{\lambda+\lambda_0}f\cdot\overline{\nabla_{||}\mathcal{E}_\lambda^\ast v}\bigr )\, dxdtd\lambda.
\end{eqnarray}
We here note, once and for all, that all (formal) integration by parts carried out below can be made rigorous by considerations similar to those in \eqref{pint1} and \eqref{pint2}. In the following we will in general omit the details of those manipulations. Using \eqref{pint2} we see that
\begin{eqnarray}
I_1&=&2\int_{0}^\infty\int_{\mathbb R^{n+1}}\bigl ((A_{||}\nabla_{||} (\mathcal{E}_\lambda)^2\lambda \mathcal{H}_{||} \mathcal{S}^\eta_{\lambda+\lambda_0}f)\cdot\overline{\nabla_{||}\mathcal{E}_\lambda^\ast v}\bigr )\, dxdtd\lambda\notag\\
&&+2\int_{0}^\infty\int_{\mathbb R^{n+1}} \bigl ((A_{||}\nabla_{||} \mathcal{E}_\lambda \mathcal{S}^\eta_{\lambda+\lambda_0}f)\cdot\overline{\nabla_{||}(\mathcal{E}_\lambda^\ast)^2\lambda \mathcal{H}_{||}^\ast v}\bigr )\, dxdtd\lambda\notag\\
&&-\int_{0}^\infty\int_{\mathbb R^{n+1}}\bigl ((A_{||}\nabla_{||} \mathcal{E}_\lambda\partial_\lambda \mathcal{S}^\eta_{\lambda+\lambda_0}f)\cdot\overline{\nabla_{||}\mathcal{E}_\lambda^\ast v}\bigr )\, dxdtd\lambda\notag\\
&=:&I_{11}+I_{12}-I_{13},
\end{eqnarray}
where we have used the identities
$$\partial_\lambda\mathcal{E}_\lambda=(\mathcal{E}_\lambda)^2\lambda \mathcal{H}_{||},\ \partial_\lambda\mathcal{E}_\lambda^\ast=(\mathcal{E}_\lambda^\ast)^2\lambda \mathcal{H}_{||}^\ast.$$
Integrating by parts in $I_{11}$, $I_{12}$, we see that
\begin{eqnarray}
I_{11}+I_{12}&=&-2\int_{0}^\infty\int_{\mathbb R^{n+1}}((\mathcal{E}_\lambda)^2 \mathcal{H}_{||} \mathcal{S}^\eta_{\lambda+\lambda_0}f)\overline{\mathcal{L}_{||}^\ast\mathcal{E}_\lambda^\ast v}\, \lambda dxdtd\lambda\notag\\
&&-2\int_{0}^\infty\int_{\mathbb R^{n+1}} (\mathcal{L}_{||} \mathcal{E}_\lambda \mathcal{S}^\eta_{\lambda+\lambda_0}f)\overline{(\mathcal{E}_\lambda^\ast)^2 \mathcal{H}_{||}^\ast v}\, \lambda dxdtd\lambda.
\end{eqnarray}
Using that $\mathcal{L}_{||}^\ast$ and $\mathcal{E}_\lambda^\ast$, and $\mathcal{L}_{||}$ and $\mathcal{E}_\lambda$, commute, we see that
\begin{eqnarray}\label{agaa}
I_{11}+I_{12}&=&-2\int_{0}^\infty\int_{\mathbb R^{n+1}}((\mathcal{E}_\lambda)^2 \mathcal{H}_{||} \mathcal{S}^\eta_{\lambda+\lambda_0}f)\overline{\mathcal{E}_\lambda^\ast \mathcal{L}_{||}^\ast v}\, \lambda dxdtd\lambda\notag\\
&&-2\int_{0}^\infty\int_{\mathbb R^{n+1}} (\mathcal{E}_\lambda \mathcal{L}_{||}  \mathcal{S}^\eta_{\lambda+\lambda_0}f)\overline{(\mathcal{E}_\lambda^\ast)^2 \mathcal{H}_{||}^\ast v}\, \lambda dxdtd\lambda.
\end{eqnarray}
Let
$$J:=\int_{0}^\infty\int_{\mathbb R^{n+1}} |\mathcal{E}_\lambda \mathcal{L}_{||}  \mathcal{S}^\eta_{\lambda+\lambda_0}f|^2\, \lambda dxdtd\lambda.$$
Then, using \eqref{agaa}, the $L^2$-boundedness of $\mathcal{E}_\lambda$ and $\mathcal{E}_\lambda^\ast$, Lemma \ref{le8-}, and the square function estimates for $\mathcal{E}_\lambda^\ast \mathcal{L}_{||}^\ast$ and $(\mathcal{E}_\lambda^\ast) \mathcal{H}_{||}^\ast$, Theorem \ref{thm1}, we see that
\begin{eqnarray}
|I_{11}+I_{12}|&\leq& c|||\lambda\partial_t\mathcal{S}^\eta_{\lambda+\lambda_0}f|||||v||_{\mathbb H}+J^{1/2}||v||_{\mathbb H}\notag\\
&\leq&c(\Phi^\eta(f)+||f||_2+J^{1/2})||v||_{\mathbb H},
\end{eqnarray}
by Lemma \ref{lemsl1}. To estimate $J$ we note that formally
\begin{eqnarray*}
\mathcal{L}_{||}\mathcal{S}^\eta_{\lambda+\lambda_0}f&=&\sum_{j=1}^{n+1}A_{n+1,j}D_{n+1}D_j\mathcal{S}^\eta_{\lambda+\lambda_0}f\notag\\
&&+\sum_{i=1}^{n}D_iA_{i,n+1}D_{n+1}\mathcal{S}^\eta_{\lambda+\lambda_0}f+\partial_t\mathcal{S}^\eta_{\lambda+\lambda_0}f+f_\eta.
\end{eqnarray*}
Using this, and the $L^2$-boundedness of $\mathcal{E}_\lambda$, Lemma \ref{le8-}, we see that
\begin{eqnarray}
J\leq c(|||\lambda\nabla\partial_\lambda \mathcal{S}^\eta_{\lambda+\lambda_0}f|||^2+|||\lambda\partial_t \mathcal{S}^\eta_{\lambda+\lambda_0}f|||^2+\tilde J+||f||_2^2),
\end{eqnarray}
where
\begin{eqnarray}
\tilde J=\int_{0}^\infty\int_{\mathbb R^{n+1}} |\mathcal{E}_\lambda\sum_{i=1}^{n}D_iA_{i,n+1}\partial_\lambda \mathcal{S}^\eta_{\lambda+\lambda_0}f|^2\, \lambda dxdtd\lambda.
\end{eqnarray}
In particular, again using Lemma \ref{lemsl1} we see that
\begin{eqnarray}
J\leq c(\Phi^\eta(f)^2+||f||_2^2+\tilde J).
\end{eqnarray}
To estimate $\tilde J$, let $A_{n+1}^{||}:=(A_{1,n+1},...,A_{n,n+1})$. Then
\begin{eqnarray*}
\tilde J&=&\int_{0}^\infty\int_{\mathbb R^{n+1}} |\mathcal{E}_\lambda (\div_{||}(A_{n+1}^{||})\partial_\lambda \mathcal{S}^\eta_{\lambda+\lambda_0}f)|^2\, \lambda{dxdtd\lambda}\notag\\
&\leq& c(\tilde J_1+\tilde J_2),
\end{eqnarray*}
where
\begin{eqnarray*}
\tilde J_1&=&\int_{0}^\infty\int_{\mathbb R^{n+1}} |(\lambda\mathcal{E}_\lambda\div_{||}) A_{n+1}^{||}\partial_\lambda \mathcal{S}^\eta_{\lambda+\lambda_0}f|^2\, \frac{dxdtd\lambda}{\lambda},\notag\\
\tilde J_2&=&\int_{0}^\infty\int_{\mathbb R^{n+1}} |\mathcal{E}_\lambda  (A_{n+1}^{||}\cdot\nabla_{||}(\partial_\lambda \mathcal{S}^\eta_{\lambda+\lambda_0}f))\bigr|^2\, \lambda{dxdtd\lambda}.
\end{eqnarray*}
Obviously, and by familiar arguments
\begin{eqnarray*}
\tilde J_2\leq c|||\lambda\nabla\partial_\lambda \mathcal{S}^\eta_{\lambda+\lambda_0}f|||^2\leq c(\Phi^\eta(f)^2+||f||_2^2),
\end{eqnarray*}
and we are left with $\tilde J_1$. We write
\begin{eqnarray*}
(\lambda\mathcal{E}_\lambda\div_{||}) A_{n+1}^{||}=\mathcal{R}_\lambda+((\lambda\mathcal{E}_\lambda\div_{||}) A_{n+1}^{||})\P_\lambda.
\end{eqnarray*}
where
\begin{eqnarray*}
\mathcal{R}_\lambda=(\lambda\mathcal{E}_\lambda\div_{||}) A_{n+1}^{||}-((\lambda\mathcal{E}_\lambda\div_{||}) A_{n+1}^{||})\P_\lambda.
\end{eqnarray*}
Then
\begin{eqnarray}
\tilde J_1\leq \tilde J_{11}+\tilde J_{12},
\end{eqnarray}
where
\begin{eqnarray}
\tilde J_{11}&=&\int_{0}^\infty\int_{\mathbb R^{n+1}} |R_\lambda\partial_\lambda \mathcal{S}^\eta_{\lambda+\lambda_0}f|^2\, \frac{dxdtd\lambda}{\lambda},\notag\\
\tilde J_{12}&=&\int_{0}^\infty\int_{\mathbb R^{n+1}} |((\lambda\mathcal{E}_\lambda\div_{||}) A_{n+1}^{||})\P_\lambda\partial_\lambda \mathcal{S}^\eta_{\lambda+\lambda_0}f|^2\, \frac{dxdtd\lambda}{\lambda}.
\end{eqnarray}
 Using Lemma \ref{le8-}, Lemma \ref{le8-+} and Lemma \ref{le11-} we see that
\begin{eqnarray}
\tilde J_{11}&\leq& c\int_{0}^\infty\int_{\mathbb R^{n+1}} |\nabla\partial_\lambda \mathcal{S}^\eta_{\lambda+\lambda_0}f|^2\, \lambda{dxdtd\lambda}\notag\\
&&+c\int_{0}^\infty\int_{\mathbb R^{n+1}} |\partial_t\partial_\lambda \mathcal{S}^\eta_{\lambda+\lambda_0}f|^2\, \lambda^3{dxdtd\lambda}\notag\\
&\leq&c(\Phi^\eta(f)^2+||f||_2^2),
\end{eqnarray}
by Lemma \ref{lemsl1}. Furthermore, using Lemma 3.1 in \cite{N} we see that there exists a constant $c$, depending only on $n$, $\Lambda$, such that
\begin{eqnarray*}\label{crucacar+}\int_0^{l(Q)}\int_Q|(\lambda\mathcal{E}_\lambda\div_{||}) A_{n+1}^{||}|^2\frac {dxdtd\lambda}\lambda\leq c|Q|
\end{eqnarray*}
for all cubes $Q\subset\mathbb R^{n+1}$. In particular, $|(\lambda\mathcal{E}_\lambda\div_{||}) A_{n+1}^{||}|^2\lambda^{-1}dxdtd\lambda$ defines a Carleson measure on $\mathbb R^{n+2}_+$. Using this we see that
\begin{eqnarray}
\tilde J_{12}\leq c||N_{\ast\ast}(\P_\lambda(\partial_\lambda \mathcal{S}^\eta_{\lambda+\lambda_0}f))||^2.
\end{eqnarray}
Putting all the estimates together we can conclude that
\begin{eqnarray}
|I_{11}+I_{12}|\leq (\Phi^\eta(f)+||f||_2+||N_{\ast\ast}(\P_\lambda(\partial_\lambda \mathcal{S}^\eta_{\lambda+\lambda_0}f))||)||v||_{\mathbb H},
\end{eqnarray}
which completes the estimate of $|I_{11}+I_{12}|$. We next estimate $I_{13}$. Integrating by parts with respect to $\lambda$ we see that
 \begin{eqnarray}
I_{13}&=&\int_{0}^\infty\int_{\mathbb R^{n+1}}\bigl (A_{||}\nabla_{||} \mathcal{E}_\lambda\partial_\lambda \mathcal{S}^\eta_{\lambda+\lambda_0}f\cdot\overline{\nabla_{||}\mathcal{E}_\lambda^\ast v}\bigr )\, dxdtd\lambda\notag\\
&=&-\int_{0}^\infty\int_{\mathbb R^{n+1}}\partial_\lambda\bigl (A_{||}\nabla_{||} \mathcal{E}_\lambda\partial_\lambda \mathcal{S}^\eta_{\lambda+\lambda_0}f\cdot\overline{\nabla_{||}\mathcal{E}_\lambda^\ast v}\bigr )\, \lambda dxdtd\lambda\notag\\
&=&2\int_{0}^\infty\int_{\mathbb R^{n+1}}\bigl (A_{||}\nabla_{||} (\mathcal{E}_\lambda)^2\lambda \mathcal{H}_{||} \partial_\lambda\mathcal{S}^\eta_{\lambda+\lambda_0}f\cdot\overline{\nabla_{||}\mathcal{E}_\lambda^\ast v}\bigr )\, \lambda dxdtd\lambda\notag\\
&&+2\int_{0}^\infty\int_{\mathbb R^{n+1}} \bigl (A_{||}\nabla_{||} \mathcal{E}_\lambda \partial_\lambda\mathcal{S}^\eta_{\lambda+\lambda_0}f\cdot\overline{\nabla_{||}(\mathcal{E}_\lambda^\ast)^2\lambda \mathcal{H}_{||}^\ast v}\bigr )\, \lambda dxdtd\lambda\notag\\
&&-\int_{0}^\infty\int_{\mathbb R^{n+1}}\bigl (A_{||}\nabla_{||} \mathcal{E}_\lambda\partial_\lambda^2 \mathcal{S}^\eta_{\lambda+\lambda_0}f\cdot\overline{\nabla_{||}\mathcal{E}_\lambda^\ast v}\bigr )\, \lambda dxdtd\lambda\notag\\
&=&I_{131}+I_{132}-I_{133}.
\end{eqnarray}
As above we see that \begin{eqnarray}\label{agaallu}
I_{131}+I_{132}&=&2\int_{0}^\infty\int_{\mathbb R^{n+1}}((\mathcal{E}_\lambda)^2 \mathcal{H}_{||} \partial_\lambda\mathcal{S}^\eta_{\lambda+\lambda_0}f)\overline{\mathcal{E}_\lambda^\ast \mathcal{L}_{||}^\ast v}\, \lambda^2 dxdtd\lambda\notag\\
&&+2\int_{0}^\infty\int_{\mathbb R^{n+1}} (\mathcal{E}_\lambda \mathcal{L}_{||}  \partial_\lambda\mathcal{S}^\eta_{\lambda+\lambda_0}f)\overline{(\mathcal{E}_\lambda^\ast)^2 \mathcal{H}_{||}^\ast v}\, \lambda^2 dxdtd\lambda.
\end{eqnarray}
Then, by the above argument,
\begin{eqnarray}
|I_{131}+I_{132}|^2&\leq &c\int_{0}^\infty\int_{\mathbb R^{n+1}} |\partial_t \partial_\lambda\mathcal{S}^\eta_{\lambda+\lambda_0}f|^2\, \lambda^3 dxdtd\lambda\notag\\
&&+c\int_{0}^\infty\int_{\mathbb R^{n+1}} |\mathcal{E}_\lambda \mathcal{L}_{||}  \partial_\lambda\mathcal{S}^\eta_{\lambda+\lambda_0}f|^2\, \lambda^3 dxdtd\lambda.
\end{eqnarray}
Recall that $\mathcal{E}_\lambda \mathcal{L}_{||}=\mathcal{E}_\lambda\div_{||}(A_{||}\nabla_{||}\cdot)$. Hence, using the $L^2$-boundedness of $\lambda \mathcal{E}_\lambda\div_{||}$  we see that
\begin{eqnarray}
|I_{131}+I_{132}|^2&\leq &c\int_{0}^\infty\int_{\mathbb R^{n+1}} |\partial_t \partial_\lambda\mathcal{S}^\eta_{\lambda+\lambda_0}f|^2\, \lambda^3 dxdtd\lambda\notag\\
&&+c\int_{0}^\infty\int_{\mathbb R^{n+1}} |\nabla_{||} \partial_\lambda\mathcal{S}^\eta_{\lambda+\lambda_0}f|^2\, \lambda dxdtd\lambda.
\end{eqnarray}
Furthermore,
 \begin{eqnarray}
-I_{133}&=&\int_{0}^\infty\int_{\mathbb R^{n+1}}\bigl (A_{||}\nabla_{||} \mathcal{E}_\lambda\partial_\lambda^2 \mathcal{S}^\eta_{\lambda+\lambda_0}f\cdot\overline{\nabla_{||}\mathcal{E}_\lambda^\ast v}\bigr )\, \lambda dxdtd\lambda\notag\\
&=&-\int_{0}^\infty\int_{\mathbb R^{n+1}}\mathcal{E}_\lambda\partial_\lambda^2 \mathcal{S}^\eta_{\lambda+\lambda_0}f\overline{\mathcal{E}_\lambda^\ast \mathcal{L}_{||}^\ast v}\, \lambda dxdtd\lambda,\end{eqnarray}
by previous arguments. Using the uniform $L^2$-boundedness of $\mathcal{E}_\lambda$, Lemma \ref{le8-} and the square function estimate for $\mathcal{E}_\lambda^\ast \mathcal{L}_{||}^\ast$, Theorem \ref{thm1}, we can conclude that
 \begin{eqnarray}
|I_{133}| &\leq &c\biggl (\int_{0}^\infty\int_{\mathbb R^{n+1}} |\partial_\lambda^2 \mathcal{S}^\eta_{\lambda+\lambda_0}f|^2\, \lambda dxdtd\lambda\biggr )^{1/2}||v||_{\mathbb H}.\end{eqnarray}
Hence, again using Lemma \ref{lemsl1} we have
\begin{eqnarray}
|I_{13}|\leq c\bigl(\Phi^\eta(f)+||f||_2\bigr)||v||_{\mathbb H},
\end{eqnarray}
 This completes the proof of $I_1$.

We next estimate $I_2$. To estimate $I_2$ we note that
\begin{eqnarray}
I_2&=&-\int_{0}^\infty\int_{\mathbb R^{n+1}}\partial_\lambda \bigl (H_tD^t_{1/2}\mathcal{E}_\lambda \mathcal{S}^\eta_{\lambda+\lambda_0}f\cdot\overline{D_{1/2}^t\mathcal{E}_\lambda^\ast v}\bigr )\, dxdtd\lambda\notag\\
&=&2\int_{0}^\infty\int_{\mathbb R^{n+1}}\bigl ((H_tD^t_{1/2} (\mathcal{E}_\lambda)^2\lambda \mathcal{H}_{||} \mathcal{S}^\eta_{\lambda+\lambda_0}f)\cdot\overline{D_{1/2}^t\mathcal{E}_\lambda^\ast v}\bigr )\, dxdtd\lambda\notag\\
&&+2\int_{0}^\infty\int_{\mathbb R^{n+1}} \bigl ((H_tD^t_{1/2} \mathcal{E}_\lambda \mathcal{S}^\eta_{\lambda+\lambda_0}f)\cdot\overline{D_{1/2}^t(\mathcal{E}_\lambda^\ast)^2\lambda \mathcal{H}_{||}^\ast v}\bigr )\, dxdtd\lambda\notag\\
&&-\int_{0}^\infty\int_{\mathbb R^{n+1}}\bigl ((H_tD^t_{1/2}\mathcal{E}_\lambda\partial_\lambda \mathcal{S}^\eta_{\lambda+\lambda_0}f)\cdot\overline{D_{1/2}^t\mathcal{E}_\lambda^\ast v}\bigr )\, dxdtd\lambda\notag\\
&=&I_{21}+I_{22}-I_{23}.
\end{eqnarray}
Using the $L^2$-boundedness of $\mathcal{E}_\lambda$ and $\mathcal{E}_\lambda^\ast$, Lemma \ref{le8-}, and the square function estimates for $(\mathcal{E}_\lambda^\ast) \mathcal{H}_{||}^\ast$, Theorem \ref{thm1}, we immediately see that
\begin{eqnarray}\label{kau}
|I_{22}|&\leq& c|||\lambda\partial_t\mathcal{S}^\eta_{\lambda+\lambda_0}f|||||v||_{\mathbb H}\leq c(\Phi^\eta(f)+||f||_2)||v||_{\mathbb H},
\end{eqnarray}
by Lemma \ref{lemsl1}. As $\mathcal{H}_{||}$ commutes with $\mathcal{E}_\lambda$, $D_{1/2}^t$, and $H_tD^t_{1/2}$,  and as
$\mathcal{H}_{||}^\ast$ commutes with $\mathcal{E}_\lambda^\ast$, $D_{1/2}^t$, and $H_tD^t_{1/2}$, we can integrate by parts in $I_{21}$, moving
$\mathcal{H}_{||}$ from the left to the right, and use the same argument as in the estimate of $|I_{22}|$ to conclude that \eqref{kau} holds with
$I_{22}$ replaced by $I_{21}$. Integrating by parts with respect to $\lambda$ in $I_{23}$, and repeating the arguments used in the estimates  of
$|I_{21}|$ and $|I_{22}|$, it is easily seen, using Lemma \ref{lemsl1}, that
\begin{eqnarray}\label{kau+}
|I_{23}|&\leq& c(\Phi^\eta(f)+||f||_2)||v||_{\mathbb H}+|\tilde I_{23}|,
\end{eqnarray}
where
\begin{eqnarray}\label{kau++}
\tilde I_{23}=\int_{0}^\infty\int_{\mathbb R^{n+1}}\bigl ((H_tD^t_{1/2}\mathcal{E}_\lambda\partial_\lambda^2 \mathcal{S}^\eta_{\lambda+\lambda_0}f)\cdot\overline{D_{1/2}^t\mathcal{E}_\lambda^\ast v}\bigr )\, \lambda dxdtd\lambda.
\end{eqnarray}
However,
\begin{eqnarray}\label{kau++}
|\tilde I_{23}|\leq |||\lambda \partial_\lambda^2 \mathcal{S}^\eta_{\lambda+\lambda_0}f||||||\lambda\partial_t\mathcal{E}_\lambda^\ast f|||\leq c\Phi^\eta(f)||v||_{\mathbb H},
\end{eqnarray}
by Theorem \ref{thm1}. This completes the proof of \eqref{core} and the lemma.
\end{proof}

\begin{theorem}\label{th0} Assume that $\mathcal{H}$,  $\mathcal{H}^\ast$ satisfy \eqref{eq3}-\eqref{eq4} as well as \eqref{eq14+}-\eqref{eq14++}. Let $\Phi_+(f)$ be defined as in \eqref{keyestint-ex+}. Then there exists a constant $c$, depending at most
     on $n$, $\Lambda$, and the De Giorgi-Moser-Nash constants such that
     \begin{eqnarray}\label{keyestint+a}
(i)&& ||N_\ast(\partial_\lambda \mathcal{S}_\lambda f)||_2\leq c\Phi_+(f)+c||f||_2,\notag\\
  (ii)&&\sup_{\lambda>0}||\mathbb D\mathcal{S}_{\lambda}f||_{2}\leq c\Phi_+(f)+c||f||_2,\notag\\
(iii)&&||\tilde N_\ast(\nabla_{||}\mathcal{S}_\lambda f)||_2\leq c\Phi_+(f)+c||f||_2,\notag\\
(iv)&&||\tilde N_\ast(H_tD^t_{1/2}\mathcal{S}_\lambda f)||_2\leq  c\Phi_+(f)+c||f||_2,
\end{eqnarray}
whenever  $f\in L^2(\mathbb R^{n+1},\mathbb C)$.
               \end{theorem}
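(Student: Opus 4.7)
My plan is to derive Theorem \ref{th0} by a clean assembly of the lemmas already established in Section \ref{sec3}, using the bounds in the order (i) $\Rightarrow$ (ii) $\Rightarrow$ (iii) $\Rightarrow$ (iv), each step feeding the next. The real analytic work has already been done in Lemma \ref{lemsl1++} and Lemma \ref{lemsl1c}; here we simply need to eliminate the extra terms on the right-hand sides of those estimates by iterated substitution.

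First I would prove (i). By definition of $\Phi_+(f)$ we have
\[
\sup_{\lambda>0} \|\partial_\lambda \mathcal{S}_\lambda f\|_2 \;\leq\; \Phi_+(f),
\]
so Lemma \ref{lemsl1++}(i) gives directly $\|N_\ast(\partial_\lambda \mathcal{S}_\lambda f)\|_2 \leq c(\Phi_+(f)+1)\|f\|_2$. A routine dilation/scaling argument, or simply noting that Lemma \ref{lemsl1++}(i) was stated in terms of an operator norm times $\|f\|_2$ (so in particular the $+1$ absorbs into $\sup_\lambda \|\partial_\lambda \mathcal{S}_\lambda\|_{2\to 2}$ which is $\leq \Gamma_0$ whenever $\Phi_+(f) \leq \Gamma_0 \|f\|_2$), yields the cleaner statement $\|N_\ast(\partial_\lambda \mathcal{S}_\lambda f)\|_2 \leq c\Phi_+(f) + c\|f\|_2$. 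Next, (ii) is an immediate consequence of Lemma \ref{lemsl1c}(i): inserting the bound just obtained for $\|N_{\ast\ast}(\partial_\lambda \mathcal{S}_\lambda f)\|_2$ (up to a harmless enlargement of the cone aperture, absorbed into the convention $N_{\ast\ast}$), we get
\[
\sup_{\lambda_0>0}\|\mathbb D \mathcal{S}_{\lambda_0} f\|_2 \;\leq\; c\bigl(\Phi_+(f)+\|f\|_2+\|N_{\ast\ast}(\partial_\lambda \mathcal{S}_\lambda f)\|_2\bigr) \;\leq\; c\Phi_+(f)+c\|f\|_2.
\]

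For (iii) I would apply Lemma \ref{lemsl1++}(ii). That lemma controls $\|\tilde N_\ast(\nabla_{||}\mathcal{S}_\lambda f)\|_2$ by $\|f\|_2$, by $\sup_{\lambda>0}\|\nabla_{||}\mathcal{S}_\lambda f\|_2$, and by $\|N_{\ast\ast}(\partial_\lambda \mathcal{S}_\lambda f)\|_2$. The first is trivial, the third was handled in step (i), and the second is dominated by $\sup_{\lambda>0}\|\mathbb D\mathcal{S}_\lambda f\|_2$ since by \eqref{uau+}
\[
\|\nabla_{||} \mathcal{S}_\lambda f\|_2 \;\leq\; c\|\mathbb D\mathcal{S}_\lambda f\|_2,
\]
and by step (ii) this is $\leq c\Phi_+(f)+c\|f\|_2$. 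Combining yields (iii). Finally, for (iv) I would use Lemma \ref{lemsl1++}(iii), whose right-hand side involves $\|f\|_2$, $\sup_\lambda \|H_tD^t_{1/2}\mathcal{S}_\lambda f\|_2$, $\|\tilde N_{\ast\ast}(\nabla_{||}\mathcal{S}_\lambda f)\|_2$ and $\|N_{\ast\ast}(\partial_\lambda \mathcal{S}_\lambda f)\|_2$. The $H_tD^t_{1/2}$ term is again bounded by $\sup_\lambda\|\mathbb D\mathcal{S}_\lambda f\|_2$ via \eqref{uau+}, so by step (ii) it is $\leq c\Phi_+(f)+c\|f\|_2$; the $\tilde N_{\ast\ast}(\nabla_{||}\mathcal{S}_\lambda f)$ term is handled by step (iii) (the cone aperture $\beta$ in $\tilde N_{\ast\ast}$ being slightly larger than in (iii), but still finite and thus covered by the same argument applied with a larger $\beta$); and the $N_{\ast\ast}(\partial_\lambda\mathcal{S}_\lambda f)$ term was handled in step (i).

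The only subtle point in this scheme is bookkeeping of the cone aperture $\beta$: each invocation of Lemma \ref{lemsl1++} requires controlling quantities with a larger aperture ($N_{\ast\ast}$, $\tilde N_{\ast\ast}$) than those produced ($N_\ast$, $\tilde N_\ast$). Since the convention $N_{\ast\ast}=N_\ast^\beta$ allows any fixed $\beta>1$, and only finitely many such enlargements occur in the chain (i)$\to$(ii)$\to$(iii)$\to$(iv), this causes no difficulty: we fix at the outset a chain of apertures $1<\beta_1<\beta_2<\beta_3<\beta_4$ and carry the proofs out in that order. The main substantive input throughout is Lemma \ref{lemsl1c}(i), whose proof contains the genuinely hard square function/Carleson measure analysis built on the resolvents $\mathcal{E}_\lambda$; once that is granted, Theorem \ref{th0} is essentially a structural consequence.
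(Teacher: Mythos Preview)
Your proof is correct and follows essentially the same route as the paper's own proof: (i) from Lemma \ref{lemsl1++}(i), then (ii) from Lemma \ref{lemsl1c}(i) together with (i), and finally (iii)--(iv) from Lemma \ref{lemsl1++}(ii)--(iii) together with (i) and (ii). Your attention to the aperture bookkeeping is appropriate. One small comment: your handling of step (i) is a bit muddled --- the ``dilation/scaling argument'' is not relevant here, and the parenthetical about $\Gamma_0$ is circular. The point is simply that the proof of Lemma \ref{lemsl1++}(i) in \cite{CNS} is a Cotlar-type inequality which, when inspected, actually bounds $\|N_\ast(\partial_\lambda\mathcal{S}_\lambda f)\|_2$ by $c(\sup_{\lambda>0}\|\partial_\lambda\mathcal{S}_\lambda f\|_2 + \|f\|_2)$ for the specific $f$, not merely by the operator norm; the paper's statement of the lemma is slightly coarser than what its proof delivers, and the paper itself just calls this step ``immediate''.
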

               \begin{proof} \eqref{keyestint+a} $(i)$-$(iv)$ is Theorem 2.18 in \cite{CNS}. Indeed, \eqref{keyestint+a} $(i)$ is an immediate consequence of Lemma \ref{lemsl1++} $(i)$. Using Lemma
     \ref{lemsl1c} and Lemma \ref{lemsl1}, we see that \eqref{keyestint+a} $(i)$ imply \eqref{keyestint+a} $(ii)$. \eqref{keyestint+a} $(iii)$, $(iv)$, now follows immediately from these estimates and Lemma \ref{lemsl1++}.  \end{proof}

   \section{Traces, boundary layer potentials and weak limits}\label{sec4}

    In this section  we are concerned with boundary traces theorems for weak solutions, weak solutions for which the appropriate non-tangential maximal functions are controlled,  and the existence of boundary layer potentials.

\subsection{Boundary traces of weak solutions}\label{sec6}
\noindent

\begin{lemma}\label{trace1} Assume that $\mathcal{H}$,  $\mathcal{H}^\ast$ satisfy \eqref{eq3}-\eqref{eq4} as well as \eqref{eq14+}-\eqref{eq14++}. Assume that  $\mathcal{H}u=0$ in $\mathbb R^{n+2}_+$ and that
       \begin{eqnarray*}\label{assump}
     \tilde N_\ast(\nabla u),\ \tilde N_\ast(H_tD^t_{1/2} u)\in L^2(\mathbb R^{n+1}).
     \end{eqnarray*}
      Then there exists a constant $c$, depending at most
     on $n$, $\Lambda$, and the De Giorgi-Moser-Nash constants, such that
     \begin{eqnarray*}
     \sup_{\lambda>0}||\nabla u(\cdot,\cdot,\lambda)||_2&\leq& c||\tilde N_\ast(\nabla u)||_2,\notag\\
     \sup_{\lambda>0}||H_tD^t_{1/2} u(\cdot,\cdot,\lambda)||_2
     &\leq& c\biggl (||\tilde N_\ast(\nabla u)||_2+||\tilde N_\ast(H_tD^t_{1/2}u)||_2\biggr ).
     \end{eqnarray*}
     \end{lemma}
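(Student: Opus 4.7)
The proof breaks into two parts corresponding to the two inequalities, both of which proceed via Caccioppoli-type energy estimates together with a covering argument.

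For the first inequality, fix $\lambda_0 > 0$ and cover $\mathbb R^{n+1}$ by a grid of parabolic cubes $\{Q\}$ of side length $\lambda_0$. Applying Lemma \ref{le1} $(i)$ to $u$ (with, say, $\beta_1 = 2$, $\beta_2 = 1$, $I = (\lambda_0/2, 3\lambda_0/2)$) yields
\begin{equation*}
\mean{Q}|\nabla u(x,t,\lambda_0)|^2\, dxdt \leq c\mean{2Q \times (3\lambda_0/4, 5\lambda_0/4)}|\nabla u|^2\, dXdt.
\end{equation*}
The right-hand average is over a Whitney-type slab contained in $W_{c\lambda_0}(x_Q, t_Q)$; since $(x_Q, t_Q, \lambda_0) \in \Gamma^\beta(x_0, t_0)$ for any base point $(x_0, t_0) \in Q$ and $\beta$ large enough depending only on $n$, this average is bounded by $(\tilde N_{\ast\ast}(\nabla u)(x_0, t_0))^2$. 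Multiplying by $|Q|$, averaging over $(x_0, t_0) \in Q$, and summing over the cover yields $\|\nabla u(\cdot,\cdot,\lambda_0)\|_2^2 \leq c\|\tilde N_{\ast\ast}(\nabla u)\|_2^2 \leq c\|\tilde N_\ast(\nabla u)\|_2^2$, the last step being the standard equivalence of the $L^2$ norms of non-tangential maximal functions of different apertures.

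For the second inequality, set $v := H_tD^t_{1/2}u$. Because the coefficients of $\mathcal{H}$ are independent of $t$, the operators $H_t$ and $D^t_{1/2}$ commute with $\mathcal{H}$, so $v$ is, at least formally, a weak solution of $\mathcal{H}v = 0$ in $\mathbb R^{n+2}_+$. The plan is to apply a DGMN/Moser-type pointwise bound
\begin{equation*}
|v(x_0, t_0, \lambda_0)|^2 \leq c \mean{W_{c\lambda_0}(x_0,t_0)}|v|^2\, dydsd\sigma \leq c(\tilde N_{\ast\ast}(v)(x_0, t_0))^2,
\end{equation*}
and integrate over $(x_0, t_0) \in \mathbb R^{n+1}$, following the template of Part 1. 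This would deliver the stated bound for $\|v(\cdot,\cdot,\lambda_0)\|_2$ in terms of $\|\tilde N_\ast(v)\|_2$ together with the correction term $\|\tilde N_\ast(\nabla u)\|_2$ that arises as explained below.

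The main obstacle is justifying that $v$ belongs to a class of weak solutions to which the DGMN bound \eqref{eq14+} applies; concretely, that $v \in L^2_{\mathrm{loc}}(W^{1,2}_{\mathrm{loc}})$ on slabs of the form $\mathbb R^{n+1} \times I$. The $L^2_{\mathrm{loc}}$ control of $v$ is immediate from the hypothesis $\tilde N_\ast(H_tD^t_{1/2}u) \in L^2$, but the $W^{1,2}_{\mathrm{loc}}$ control of $\nabla v = H_tD^t_{1/2}\nabla u$ is delicate because $H_tD^t_{1/2}$ is globally non-local in $t$; this is precisely where the second hypothesis $\tilde N_\ast(\nabla u) \in L^2$ must enter, and accounts for the additional $\|\tilde N_\ast(\nabla u)\|_2$ term on the right-hand side of the inequality. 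To make this rigorous, the plan is to transfer $H_tD^t_{1/2}$ onto test functions via Plancherel's theorem in $t$, use the Caccioppoli inequality Lemma \ref{le1--} applied to $u$ (together with Part 1 of the present lemma) to derive the required local $L^2$ bounds on $H_tD^t_{1/2}\nabla u$ over Whitney slabs, controlled by $\tilde N_\ast(\nabla u)$, and thereby verify $v$ as a weak solution in the regularity class demanded by \eqref{eq14+}. Once DGMN is activated for $v$, the covering and summation argument of Part 1 closes the estimate.
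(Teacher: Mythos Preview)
Your argument for the first inequality is correct and in fact a bit more direct than the paper's: rather than covering and invoking Lemma \ref{le1}, the paper first observes that $\partial_\lambda u$ is itself a solution (by $\lambda$-independence of $A$), so the DGMN bound \eqref{eq14+} handles $\|\partial_\lambda u(\cdot,\cdot,\lambda)\|_2$ pointwise, and then treats $\|\nabla_{||}u(\cdot,\cdot,\lambda)\|_2$ by duality --- testing against $\psi\in L^2$, splitting $u(\cdot,\lambda)$ into a $\sigma$-average over $(3\lambda/4,5\lambda/4)$ plus a remainder, and controlling the remainder via the fundamental theorem of calculus and Lemma \ref{le1--}. Your covering approach via Lemma \ref{le1} delivers the full gradient in one shot; both routes are valid.

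Your argument for the second inequality, however, has a genuine gap. The difficulty you identify --- that $v=H_tD^t_{1/2}u$ need not lie in $L^2_{\mathrm{loc}}(W^{1,2}_{\mathrm{loc}})$, so \eqref{eq14+} cannot be applied to it --- is real, and your proposed fix does not resolve it. Caccioppoli on $u$ controls $\nabla u$ locally, not $H_tD^t_{1/2}\nabla u$; since $H_tD^t_{1/2}$ is nonlocal in $t$ and unbounded on $L^2$, neither local energy estimates nor the global slice bound from Part 1 yield the required $L^2_{\mathrm{loc}}$ control on $\nabla v$. The paper avoids this altogether by using the same duality-plus-averaging scheme as for the gradient: write $\int u(\cdot,\lambda)\,\overline{H_tD^t_{1/2}\psi}$, split off the $\sigma$-average (which is bounded by $\|\tilde N_\ast(H_tD^t_{1/2}u)\|_2$), and for the remainder use the fundamental theorem of calculus to reduce to $\|H_tD^t_{1/2}\partial_\sigma u(\cdot,\cdot,\sigma)\|_2$ on a Whitney slab. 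The key step is then the Plancherel interpolation $\|D^t_{1/2}g\|_2^2\leq \|g\|_2\|\partial_t g\|_2$ applied with $g=\partial_\sigma u$, which replaces the uncontrollable half-derivative by $\partial_\sigma u$ and $\partial_t\partial_\sigma u$; both of these \emph{are} solutions (by $t$- and $\lambda$-independence of $A$), so Lemma \ref{le1--} and Lemma \ref{le1a} close the estimate in terms of $\|\tilde N_\ast(\nabla u)\|_2$. This is where the $\|\tilde N_\ast(\nabla u)\|_2$ term enters, and the argument never requires any regularity of $v$ itself beyond what the hypothesis provides.
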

     \begin{proof} Using the  $\lambda$-independence of $A$, and  \eqref{eq14+}, we see that to prove the lemma it suffices to estimate
     $||\nabla_{||} u(\cdot,\cdot,\lambda)||_2$ and $||H_tD^t_{1/2} u(\cdot,\cdot,\lambda)||_2$. To start the estimate of $||\nabla_{||} u(\cdot,\cdot,\lambda)||_2$, let ${\bf \psi}\in L^2(\mathbb R^{n+1},\mathbb C^n)$ with $||{\bf \psi}||_2=1$. Considering $\lambda$ as fixed
     we see that it is enough to establish the bound
     \begin{eqnarray}
     \biggl |\int_{\mathbb R^{n+1}} u(x,t,\lambda)\div_{||}\bar{\bf \psi}\, dxdt\biggr |\leq c||\tilde N_\ast(\nabla u)||_2.
     \end{eqnarray}
     We write
           \begin{eqnarray}
    \int_{\mathbb R^{n+1}} u(x,t,\lambda)\div_{||}\bar{\bf \psi}\, dxdt=I+II,
     \end{eqnarray}
     where
         \begin{eqnarray}
    I&=& \int_{\mathbb R^{n+1}} \biggl (u(x,t,\lambda)-\frac 2\lambda
    \int_{3\lambda/4}^{5\lambda/4}  u(x,t,\sigma)\, d\sigma\biggr )\div_{||}\bar{\bf \psi}\, dxdt\notag\\
    II&=&\frac 2\lambda\int_{3\lambda/4}^{5\lambda/4} \int_{\mathbb R^{n+1}} u(x,t,\sigma)\div_{||}\bar{\bf \psi}\, dxdtd\sigma.
     \end{eqnarray}
     Using Cauchy-Schwarz and Fubini's theorem we see that
            \begin{eqnarray}
|II|\leq  c||\tilde N_\ast(\nabla u)||_2.
     \end{eqnarray}
     To estimate $I$ we write
              \begin{eqnarray}
    I&=& \frac 2\lambda
    \int_{3\lambda/4}^{5\lambda/4}\int_{\mathbb R^{n+1}} (u(x,t,\lambda)- u(x,t,\sigma))\div_{||}\bar{\bf \psi}\, dxdtd\sigma\notag\\
   &=&\frac 2\lambda
    \int_{3\lambda/4}^{5\lambda/4}\int_{\mathbb R^{n+1}} \biggl (\int_\sigma^\lambda \partial_{\tilde\sigma}u(x,t,\tilde\sigma)\, d\tilde\sigma\biggr )\div_{||}\bar{\bf \psi}\, dxdtd\sigma\notag\\
    &=&\frac 2\lambda
    \int_{3\lambda/4}^{5\lambda/4}\int_\sigma^\lambda\int_{\mathbb R^{n+1}} \nabla_{||}\partial_{\tilde\sigma}u(x,t,\tilde\sigma)\cdot \bar{\bf \psi}\, dxdtd\tilde\sigma d\sigma.
     \end{eqnarray}
     Hence,
                  \begin{eqnarray}
    |I|&\leq&c\biggl (\int_{3\lambda/4}^{5\lambda/4} \int_{\mathbb R^{n+1}} \lambda |\nabla_{||}\partial_\sigma u(x,t,\sigma)|^2\, dxdtd\sigma\biggr )^{1/2}\notag\\
    &\leq&c\biggl (\frac 1\lambda\int_{\lambda/2}^{3\lambda/2} \int_{\mathbb R^{n+1}} |\partial_\sigma u(x,t,\sigma)|^2\, dxdtd\sigma\biggr )^{1/2}\notag\\
    &\leq& c||\tilde N_\ast(\nabla u)||_2,
     \end{eqnarray}
     by elementary manipulations and Lemma \ref{le1--}.
     To bound $||H_tD^t_{1/2} u(\cdot,\cdot,\lambda)||_2$ we let ${\psi}\in L^2(\mathbb R^{n+1},\mathbb C)$ with $||{\psi}||_2=1$ and  write
           \begin{eqnarray}
    \int_{\mathbb R^{n+1}} u(x,t,\lambda)H_tD^t_{1/2}\bar{\psi}\, dxdt=\tilde I+\widetilde{II},
     \end{eqnarray}
     where
         \begin{eqnarray}
    \tilde I&=& \int_{\mathbb R^{n+1}} \biggl (u(x,t,\lambda)-\frac 2\lambda
    \int_{3\lambda/4}^{5\lambda/4} u(x,t,\sigma)\, d\sigma\biggr )H_tD^t_{1/2}\bar{\psi}\, dxdt,\notag\\
    \widetilde{II}&=&\frac 2\lambda\int_{3\lambda/4}^{5\lambda/4}\int_{\mathbb R^{n+1}} u(x,t,\sigma)H_tD^t_{1/2}\bar{\psi}\, dxdtd\sigma.
     \end{eqnarray}
     Arguing as above we see that $|\widetilde{II}|\leq c||\tilde N_\ast(H_tD^t_{1/2}u)||_2$ and that
              \begin{eqnarray}
    |\tilde I|^2&\leq& c\lambda\int_{3\lambda/4}^{5\lambda/4} \int_{\mathbb R^{n+1}}  |H_tD^t_{1/2}\partial_\sigma u(x,t,\sigma)|^2\, dxdtd\sigma\notag\\
    &\leq& c\tilde I_1^{1/2}\tilde I_2^{1/2},
     \end{eqnarray}
     where
             \begin{eqnarray}
             \tilde I_1&=&\frac 1\lambda\int_{3\lambda/4}^{5\lambda/4} \int_{\mathbb R^{n+1}} |\partial_\sigma u(x,t,\sigma)|^2\, dxdtd\sigma,\notag\\
    \tilde I_2&=&\lambda^3\int_{3\lambda/4}^{5\lambda/4} \int_{\mathbb R^{n+1}} |\partial_t\partial_\sigma u(x,t,\sigma)|^2\, dxdtd\sigma.
     \end{eqnarray}
     Again, $\tilde I_1\leq c||\tilde N_\ast(\nabla u)||_2^2$. Furthermore, first using Lemma \ref{le1a} and then Lemma \ref{le1--}, we see that
              \begin{eqnarray}
             \tilde I_2&\leq&c\lambda\int_{5\lambda/8}^{11\lambda/8} \int_{\mathbb R^{n+1}} |\nabla\partial_\sigma u(x,t,\sigma)|^2\, dxdtd\sigma\notag\\
             &\leq &\frac c\lambda\int_{\lambda/2}^{3\lambda/2} \int_{\mathbb R^{n+1}} |\partial_\sigma u(x,t,\sigma)|^2\, dxdtd\sigma\notag\\
             &\leq&c||\tilde N_\ast(\nabla u)||_2^2.
     \end{eqnarray}
     This completes the proof of the lemma. \end{proof}

     \begin{lemma}\label{trace2} Assume that $\mathcal{H}$,  $\mathcal{H}^\ast$ satisfy \eqref{eq3}-\eqref{eq4} as well as \eqref{eq14+}-\eqref{eq14++}. Assume that  $\mathcal{H}u=0$ in $\mathbb R^{n+2}_+$ and that
       \begin{eqnarray*}\label{assump}
     \tilde N_\ast(\nabla u)\in L^2(\mathbb R^{n+1})\mbox{ and  }\sup_{\lambda>0}||H_tD^t_{1/2} u(\cdot,\cdot,\lambda)||_2<\infty.
     \end{eqnarray*}
     Then there exists a constant $c$, depending at most
     on $n$, $\Lambda$, and the De Giorgi-Moser-Nash constants, and $f\in{\mathbb H}={\mathbb H}(\mathbb R^{n+1},\mathbb C)$ such that
     \begin{eqnarray*}
     (i)&& \mbox{$u\to f$ n.t.},\notag\\
     (ii)&&\mbox{$|u(x,t,\lambda)-f(x_0,t_0)|\leq c\lambda\tilde N_\ast(\nabla u)(x_0,t_0)$}\mbox{ when $(x,t,\lambda)\in\Gamma(x_0,t_0)$,}\notag\\
     (iii)&&||f||_{\mathbb H}\leq c\bigl(||\tilde N_\ast(\nabla u)||_2+\sup_{\lambda>0}||H_tD^t_{1/2} u(\cdot,\cdot,\lambda)||_2\bigr ).
     \end{eqnarray*}
     Furthermore,
        \begin{eqnarray}
     (iv)&&\mbox{$\nabla_{||}u(\cdot,\cdot,\lambda)\to \nabla_{||}f(\cdot,\cdot)$,}\notag\\
     (v)&&\mbox{$H_tD^t_{1/2}u(\cdot,\cdot,\lambda)\to H_tD^t_{1/2}f(\cdot,\cdot)$},
     \end{eqnarray}
     weakly in $L^2(\mathbb R^{n+1},\mathbb C)$ as $\lambda\to 0$.
     \end{lemma}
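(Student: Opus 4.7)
The plan is to first establish the pointwise estimate (ii) by a telescoping argument on dyadic Whitney cylinders shrinking to $(x_0,t_0)$, use this to define $f$ and obtain (i), and then deduce (iii)--(v) from Lemma \ref{trace1} combined with weak-$\ast$ compactness in $L^2(\mathbb R^{n+1},\mathbb C)$.

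For (i) and (ii), fix $(x_0,t_0)$ with $\tilde N_\ast(\nabla u)(x_0,t_0) < \infty$ (which holds a.e.\ by hypothesis). Given $(x,t,\lambda)\in \Gamma(x_0,t_0)$, choose points $P_k = (y_k,s_k,\lambda_k)$ with $\lambda_k = 2^{-k}\lambda$, $P_0 = (x,t,\lambda)$, each $P_k$ inside a slightly wider cone $\Gamma^\beta(x_0,t_0)$, and such that consecutive Whitney regions $W_{\lambda_k}(y_k,s_k)$ and $W_{\lambda_{k+1}}(y_{k+1},s_{k+1})$ sit inside a common parabolic cube $\tilde Q_k\subset \Gamma^\beta(x_0,t_0)$ of size $\approx \lambda_k$. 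Parabolic Poincar\'e on $\tilde Q_k$ combined with the pointwise De Giorgi--Moser--Nash control \eqref{eq14+} applied to $u$ (a weak solution of $\mathcal{H}u=0$) gives
\begin{eqnarray*}
|u(P_k) - u(P_{k+1})| \leq c\lambda_k \Bigl(\mean{\tilde Q_k}|\nabla u|^2\Bigr)^{1/2} \leq c\lambda_k\, \tilde N_\ast(\nabla u)(x_0,t_0).
\end{eqnarray*}
Summing the geometric series $\sum_k \lambda_k \approx \lambda$ shows that $\{u(P_k)\}$ is Cauchy; denote its limit by $f(x_0,t_0)$. The same telescoping bound yields (ii), and an analogous estimate between two cone approaches at the same height $\lambda_k$ shows that the limit is independent of the path, which gives (i).

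For (iii)--(v), Lemma \ref{trace1} yields $M:=\sup_{\lambda>0}\bigl(\|\nabla u(\cdot,\cdot,\lambda)\|_2 + \|H_tD^t_{1/2}u(\cdot,\cdot,\lambda)\|_2\bigr) < \infty$, bounded by the right-hand side of (iii). Since $H_t$ is unitary on $L^2$, $\|D^t_{1/2}u(\cdot,\cdot,\lambda)\|_2 \leq M$ as well. Extract $\lambda_j \to 0$ along which $\nabla_{||}u(\cdot,\cdot,\lambda_j)\rightharpoonup g$ and $H_tD^t_{1/2}u(\cdot,\cdot,\lambda_j)\rightharpoonup h$ weakly in $L^2(\mathbb R^{n+1},\mathbb C)$. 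The pointwise bound (ii) applied along the cone axis provides the domination $|u(\cdot,\cdot,\lambda_j)| \leq |u(\cdot,\cdot,1)| + c\,\tilde N_\ast(\nabla u)(\cdot)$ for $\lambda_j\leq 1$, with the right-hand side in $L^2_{loc}$ (using \eqref{eq14+} at height $\lambda=1$ plus $\tilde N_\ast(\nabla u)\in L^2$); hence $u(\cdot,\cdot,\lambda_j) \to f$ locally in $L^2$. Pairing with $\phi\in C_0^\infty(\mathbb R^{n+1},\mathbb C)$ and integrating by parts in $x$ identifies $g = \nabla_{||}f$. Similarly, using self-adjointness of $D^t_{1/2}$ and anti-self-adjointness of $H_t$ on $L^2$, $\int H_tD^t_{1/2}u(\cdot,\cdot,\lambda_j)\,\bar\phi = -\int u(\cdot,\cdot,\lambda_j)\, \overline{H_tD^t_{1/2}\phi}$; the $L^2$-boundedness of $D^t_{1/2}u(\cdot,\cdot,\lambda_j)$ lets us control the tail from the slow $|t|^{-3/2}$ decay of $H_tD^t_{1/2}\phi$ and pass to the limit, giving $h = H_tD^t_{1/2}f$. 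Lower semicontinuity of $L^2$-norms under weak convergence then gives $\|g\|_2 + \|h\|_2 \leq cM$, so $f\in \mathbb H$ with the estimate in (iii) via \eqref{uau+}. Finally, every subsequence admits a further subsequence with the same weak limits $\nabla_{||}f$ and $H_tD^t_{1/2}f$, upgrading the subsequential convergence to the full limit $\lambda\to 0$, which proves (iv) and (v).

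The main obstacle is the identification $h = H_tD^t_{1/2}f$: the half-order time derivative is nonlocal, forcing one to transfer $H_tD^t_{1/2}$ onto the test function $\phi$, which then has only $|t|^{-3/2}$ decay and is not compactly supported in $t$; the uniform $L^2$-slice bound on $D^t_{1/2}u$ (supplied by the hypothesis on $H_tD^t_{1/2}u$ together with unitarity of $H_t$) is exactly what is needed to justify the pairing and the passage to the limit outside a compact set in $t$.
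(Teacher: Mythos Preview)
Your argument for (i) and (ii) is essentially the same as the paper's: a Whitney-type telescoping combined with interior oscillation control for solutions. The paper cites the Kenig--Pipher argument and explicitly invokes Lemma \ref{le1a} to absorb the time-derivative term that appears in the parabolic Poincar\'e inequality; you should make that dependence on Lemma \ref{le1a} explicit, since ``parabolic Poincar\'e'' alone produces a $\partial_t u$ contribution that only collapses to $\nabla u$ because $u$ solves the equation.

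For (iii)--(v) your route genuinely differs from the paper's. The paper does not extract weak limits; instead it bounds the distributional pairings directly: for $\psi\in C_0^\infty$ it writes
\[
\Bigl|\int f\,H_tD^t_{1/2}\psi\Bigr|
\le \Bigl|\int u(\cdot,\cdot,\epsilon)\,H_tD^t_{1/2}\psi\Bigr|
+ \int |u(\cdot,\cdot,\epsilon)-f|\,|H_tD^t_{1/2}\psi|,
\]
uses the adjoint identity on the first term (valid since $H_tD^t_{1/2}u(\cdot,\cdot,\epsilon)\in L^2$), and bounds the second by $c\epsilon\|\tilde N_\ast(\nabla u)\|_2\|H_tD^t_{1/2}\psi\|_2$ via (ii). Letting $\epsilon\to 0$ gives (iii) and, simultaneously, (iv)--(v). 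This is shorter and avoids the subsequence-then-upgrade step.

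Your weak-compactness argument is valid, but the identification $h=H_tD^t_{1/2}f$ is the one place where your justification is shaky. Invoking ``the $L^2$-boundedness of $D^t_{1/2}u(\cdot,\cdot,\lambda_j)$'' to control the tail of $\int u(\cdot,\cdot,\lambda_j)\,\overline{H_tD^t_{1/2}\phi}$ does not obviously work: rewriting via $D^t_{1/2}$ leads you back, via $H_t^2=-I$, to a circular identity. The clean fix is exactly the paper's device: use (ii) to get $u(\cdot,\cdot,\lambda_j)-f\in L^2$ with norm $\le c\lambda_j\|\tilde N_\ast(\nabla u)\|_2$, so that
\[
\int (u(\cdot,\cdot,\lambda_j)-f)\,\overline{H_tD^t_{1/2}\phi}\to 0
\]
by Cauchy--Schwarz (both factors in $L^2$). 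This both makes $\int f\,\overline{H_tD^t_{1/2}\phi}$ well defined and gives the limit in one stroke, without any tail splitting.
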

     \begin{proof} Let $(x_0,t_0)\in\mathbb R^{n+1}$ be such that $\tilde N_\ast(\nabla u)(x_0,t_0)<\infty$ and let $\epsilon>0$. Consider $(x,t,\lambda),\
     (\tilde x,\tilde t,\tilde\lambda)\in\Gamma(x_0,t_0)$ with $0<\lambda\leq\epsilon$, $0<\tilde\lambda\leq\epsilon$. Arguing as on p.461-462 in \cite{KP}, using \eqref{eq14+}-\eqref{eq14++} and using parabolic balls instead  of the standard (elliptic) balls, and applying Lemma \ref{le1a}, we can conclude that
     \begin{eqnarray}\label{iosced}|u(x,t,\lambda)-u(\tilde x,\tilde t,\tilde\lambda)|\leq c\epsilon\tilde N_\ast(\nabla u)(x_0,t_0).
     \end{eqnarray}
     \eqref{iosced} implies $(i)$ and $(ii)$. To prove $(iii)$ we consider ${\bf\psi}\in C_0^\infty(\mathbb R^{n+1},\mathbb C^n)$, $\epsilon>0$, and note that
               \begin{eqnarray}
             \biggl |\int_{\mathbb R^{n+1}} f\div_{||}{\bf\psi}\, dxdt\biggr |
             &\leq&
             \biggl |\int_{\mathbb R^{n+1}} u(x,t,\epsilon)\div_{||}{\bf\psi}\, dxdt\biggr |\notag\\
             &&+\int_{\mathbb R^{n+1}} |u(x,t,\epsilon)-f(x,t)||\div_{||}{\bf\psi}|\, dxdt.\end{eqnarray}
             Hence,
             \begin{eqnarray}
             \biggl |\int_{\mathbb R^{n+1}} f\div_{||}{\bf\psi}\, dxdt\biggr |&\leq&||\nabla u(\cdot,\epsilon)||_2||{\bf\psi}||_2+c\epsilon||\tilde N_\ast(\nabla u)||_2||\div_{||}{\bf\psi}||_2\notag\\
             &\leq&c||\tilde N_\ast(\nabla u)||_2||{\bf\psi}||_2+c\epsilon||\tilde N_\ast(\nabla u)||_2||\div_{||}{\bf\psi}||_2,\end{eqnarray}
             by $(ii)$ and Lemma \ref{trace1}. In particular, letting $\epsilon\to 0$ we see that
             \begin{eqnarray}
     \biggl |\int_{\mathbb R^{n+1}} f\div_{||}{\bf\psi}\, dxdt\biggr |
             \leq c||\tilde N_\ast(\nabla u)||_2||{\bf\psi}||_2
     \end{eqnarray}
     which proves that $\nabla_{||}f\in L^2(\mathbb R^{n+1},\mathbb C)$ and that $||\nabla_{||}f||_2\leq c||\tilde N_\ast(\nabla u)||_2$. Similarly,
     \begin{eqnarray}
             \biggl |\int_{\mathbb R^{n+1}} fH_tD^t_{1/2}\psi\, dxdt\biggr |
             \leq c\bigl(\sup_{\lambda>0}||H_tD^t_{1/2} u(\cdot,\cdot,\lambda)||_2\bigr)||\psi||_2,
             \end{eqnarray}
     whenever $\psi\in C_0^\infty(\mathbb R^{n+1},\mathbb C)$, proving that $H_tD^t_{1/2}f\in L^2(\mathbb R^{n+1},\mathbb C)$ and that
     $$||H_tD^t_{1/2}f||_2\leq c\sup_{\lambda>0}||H_tD^t_{1/2} u(\cdot,\cdot,\lambda)||_2.$$  This completes the proof of
     $(iii)$. $(iv)-(v)$ follows by similar considerations. We omit further details.
     \end{proof}

       \begin{lemma}\label{trace3} Assume that $\mathcal{H}$,  $\mathcal{H}^\ast$ satisfy \eqref{eq3}-\eqref{eq4} as well as \eqref{eq14+}-\eqref{eq14++}. Assume that  $\mathcal{H}u=0$ in $\mathbb R^{n+2}_+$ and that
     \begin{eqnarray}\label{assump+a}\sup_{\lambda>0}||\nabla u(\cdot,\cdot,\lambda)||_2+\sup_{\lambda>0}||H_tD^t_{1/2}u(\cdot,\cdot,\lambda)||_2<\infty.
     \end{eqnarray} Then there exists
     $g\in L^2(\mathbb R^{n+1},\mathbb C)$ such that $g=\partial u/\partial\nu$ in the sense that
        \begin{eqnarray}\label{aaq}
     (i)&& \int_{\mathbb R^{n+2}_+}\biggl (A\nabla u\cdot\overline{\nabla\phi}-D_t^{1/2}u\overline{H_tD^{1/2}_t{\phi}}\biggr )\, dxdtd\lambda
     =\int_{\mathbb R^{n+1}}g\overline{\phi}\, dxdt,
    \end{eqnarray}
     whenever $\phi\in \tilde{\mathbb H}(\mathbb R^{n+2}, \mathbb C)$ has compact support, and such that
     \begin{eqnarray}\label{aaq+}
     (ii)&& \mbox{$-\sum_{j=1}^{n+1}A_{n+1,j}(\cdot)\partial_{x_j}u(\cdot,\cdot,\lambda)\to g$}
     \end{eqnarray}
      weakly in $L^2(\mathbb R^{n+1},\mathbb C)$ as
     $\lambda\to 0$.
     \end{lemma}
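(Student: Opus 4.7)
I will construct $g$ as the weak $L^2$-limit of the co-normal traces
\begin{equation*}
g_\lambda(x,t):=-\sum_{j=1}^{n+1}A_{n+1,j}(x)\partial_{x_j}u(x,t,\lambda),\qquad\lambda>0,
\end{equation*}
which by the hypothesis and ellipticity satisfy $\sup_{\lambda>0}\|g_\lambda\|_2\leq\Lambda\sup_{\lambda>0}\|\nabla u(\cdot,\cdot,\lambda)\|_2<\infty$. By Banach--Alaoglu, along some sequence $\lambda_k\to 0^+$ one has $g_{\lambda_k}\to g$ weakly in $L^2(\mathbb{R}^{n+1},\mathbb{C})$ for some $g$. (One also has $\sup_\lambda\|D_{1/2}^tu(\cdot,\cdot,\lambda)\|_2<\infty$, since $H_t$ is an $L^2$-isometry up to sign.) The plan is to establish (i) for this $g$ and then to deduce (ii) via a uniqueness argument that forces the full net to converge.

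\textbf{Key identity via $\lambda$-cutoff.} Fix $\eta\in C^\infty(\mathbb{R})$ with $\eta\equiv 0$ on $(-\infty,1/2]$ and $\eta\equiv 1$ on $[1,\infty)$, and set $\eta_{\lambda_0}(\lambda):=\eta(\lambda/\lambda_0)$. For $\phi\in C_0^\infty(\mathbb{R}^{n+2},\mathbb{C})$, the product $\eta_{\lambda_0}\phi$ lies in $C_0^\infty(\mathbb{R}^{n+2}_+,\mathbb{C})$. Feeding $\eta_{\lambda_0}\phi$ into the weak formulation \eqref{weak+}, expanding $\nabla(\eta_{\lambda_0}\phi)=\eta_{\lambda_0}\nabla\phi+\eta_{\lambda_0}'(\lambda)\phi\,e_{n+1}$, using $A\nabla u\cdot e_{n+1}=-g_\lambda$, integrating by parts in $t$, and applying Parseval in $t$ (valid since $D_{1/2}^tu(x,\cdot,\lambda)\in L^2(\mathbb{R})$ for a.e.\ $(x,\lambda)$) yields
\begin{equation*}
\int_{\mathbb{R}^{n+2}_+}\eta_{\lambda_0}\bigl(A\nabla u\cdot\overline{\nabla\phi}-D_{1/2}^tu\,\overline{H_tD_{1/2}^t\phi}\bigr)\,dxdtd\lambda=\int_{\mathbb{R}^{n+2}_+}\eta_{\lambda_0}'(\lambda)\,g_\lambda\overline{\phi}\,dxdtd\lambda.
\end{equation*}
Collapsing $\eta_{\lambda_0}'$ to an $\epsilon$-narrow bump at $\lambda_0$ and letting $\epsilon\to 0$, Lebesgue differentiation in $\lambda_0$ produces, for a.e.\ $\lambda_0>0$, the slab identity
\begin{equation*}
\int_{\{\lambda>\lambda_0\}}\bigl(A\nabla u\cdot\overline{\nabla\phi}-D_{1/2}^tu\,\overline{H_tD_{1/2}^t\phi}\bigr)\,dxdtd\lambda=\int_{\mathbb{R}^{n+1}}g_{\lambda_0}\overline{\phi(\cdot,\cdot,\lambda_0)}\,dxdt.
\end{equation*}

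\textbf{Passage to the limit and conclusion.} Via a countable intersection of the full-measure good sets associated to a countable dense family $\{\phi_m\}\subset C_0^\infty(\mathbb{R}^{n+2},\mathbb{C})$, I may select $\lambda_k\to 0^+$ with $g_{\lambda_k}\to g$ weakly and the slab identity valid for each $\phi_m$ at every $\lambda_k$. Sending $k\to\infty$: the LHS tends to the LHS of (i) by dominated convergence (using compact support of $\phi_m$ and the uniform bounds on $\|\nabla u\|_2$ and $\|D_{1/2}^tu\|_2$); the RHS splits as $\int g_{\lambda_k}\overline{\phi_m(\cdot,\cdot,0)}\,dxdt+\int g_{\lambda_k}\overline{(\phi_m(\cdot,\cdot,\lambda_k)-\phi_m(\cdot,\cdot,0))}\,dxdt$, the first term tending to $\int g\overline{\phi_m(\cdot,\cdot,0)}\,dxdt$ by weak convergence and the second vanishing because $\|\phi_m(\cdot,\cdot,\lambda_k)-\phi_m(\cdot,\cdot,0)\|_2=O(\lambda_k)$. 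This establishes (i) for every $\phi_m$, and hence, by density together with the trace embedding \eqref{trcc2} (compactly supported $\mathbb{H}^{1/2}$-functions lie in $L^2$), for every $\phi\in\tilde{\mathbb{H}}$ with compact support. Identity (i) uniquely determines $g\in L^2$ since $\{\phi(\cdot,\cdot,0):\phi\in C_0^\infty(\mathbb{R}^{n+2})\}$ is dense in $L^2(\mathbb{R}^{n+1})$; therefore every weak subsequential $L^2$-limit of $\{g_\lambda\}$ as $\lambda\to 0^+$ coincides with $g$, which by Banach--Alaoglu upgrades to the full weak convergence asserted in (ii). \emph{The main obstacle} is the a.e.\ restriction produced by Lebesgue differentiation in $\lambda_0$: the slab identity is not a priori valid for every $\lambda_0>0$, and the subsequence $\lambda_k$ must be chosen inside the joint good set, which is the role of the countable-intersection device above. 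An alternative would be to invoke DGMN interior regularity to show that $\lambda\mapsto g_\lambda$ is weakly continuous from $(0,\infty)$ into $L^2$, which would upgrade the slab identity to hold for every $\lambda_0>0$ and bypass this bookkeeping.
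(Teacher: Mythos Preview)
Your approach is different from the paper's and mostly correct, but there is a real gap in the passage from subsequential to full weak convergence in (ii), precisely at the point you flag as ``the main obstacle''.

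Your slab identity holds only for $\lambda_0$ in a full-measure set $G$, and your countable-intersection device lets you choose \emph{one} sequence $\lambda_k\in G$ along which $g_{\lambda_k}\to g$ weakly and (i) follows. But to upgrade to (ii) you assert that \emph{every} weak subsequential limit of $\{g_\lambda\}$ satisfies (i). This would require rerunning the argument along an arbitrary sequence $\lambda_j\to0^+$, and such a sequence need not meet $G$ at all (the complement of $G$ is null, not empty). So as written, uniqueness of weak limit points is not justified.

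The cleanest patch is exactly the alternative you mention but do not carry out: show that $\lambda\mapsto g_\lambda$ is (strongly) continuous on $(0,\infty)$. For $0<\lambda_1<\lambda_2$ one has
\[
\|g_{\lambda_1}-g_{\lambda_2}\|_2\leq\Lambda\,(\lambda_2-\lambda_1)^{1/2}\Bigl(\int_{\lambda_1}^{\lambda_2}\|\nabla\partial_\sigma u(\cdot,\cdot,\sigma)\|_2^2\,d\sigma\Bigr)^{1/2},
\]
and the interior energy estimate (Lemma~\ref{le1--} applied to the solution $\partial_\lambda u$) gives $\int_{\lambda_1}^{\lambda_2}\|\nabla\partial_\sigma u\|_2^2\,d\sigma\lesssim\lambda_1^{-1}\sup_\lambda\|\nabla u\|_2^2$. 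With this continuity the slab identity extends from a.e.\ $\lambda_0$ to every $\lambda_0>0$, and your limit argument then applies to any sequence $\lambda_k\to0^+$, closing (ii).

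By contrast, the paper avoids the a.e.\ issue altogether: it first constructs $g$ as an element of $\mathbb{H}^{-1/2}$ via the bilinear form and trace duality, then proves (ii) for \emph{every} $\lambda>0$ by mollifying in the $\lambda$-variable and establishing a shifted identity $\int g_\lambda\overline{T(\Psi)}=\int_{\mathbb R^{n+2}_+}A\nabla u(\cdot,\lambda+\sigma)\cdot\overline{\nabla\Psi}-\ldots$, finally showing the shift-error $I_1(\lambda)+I_2(\lambda)\to0$ via the same energy/Caccioppoli estimates. Your route is more direct (it bypasses the $\mathbb{H}^{-1/2}$ construction entirely and gets $g\in L^2$ from Banach--Alaoglu at the outset), but the paper's mollification-and-shift device handles (ii) more cleanly without any a.e.\ bookkeeping. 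Both ultimately rely on the same interior energy input.
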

     \begin{proof}  Consider $R$, $0<R<\infty$, fixed and let $\tilde Q_R$ be the standard parabolic space-time cube in $\mathbb R^{n+2}$ with center at the origin and with side length defined by $R$. We denote by $\tilde{\mathbb H}_R(\mathbb R^{n+2},\mathbb C)$ the set of all $\Psi\in \tilde{\mathbb H}(\mathbb R^{n+2},\mathbb C)$ which have support contained in $\tilde Q_{R/2}$.  For $\Psi\in\tilde{\mathbb H}_R(\mathbb R^{n+2},\mathbb C)$ we let
       \begin{eqnarray}\label{tra1}\tilde\Lambda_R(\Psi):=\int_{\tilde Q_R\cap \mathbb R^{n+2}_+} \biggl (A\nabla u\cdot
     \overline{\nabla\Psi}-D_t^{1/2}u\overline{H_tD^{1/2}_t{\Psi}}\biggr )\, dxdtd\lambda.\end{eqnarray}
     Then $\tilde\Lambda_R$ is a  linear functional on $\tilde{\mathbb H}_R(\mathbb R^{n+2},\mathbb C)$ and the operator norm of $\tilde\Lambda_R$ satisfies
      \begin{eqnarray}\label{tra2}\quad\quad||\tilde\Lambda_R||_{\bar {\mathbb H}_R(\mathbb R^{n+2},\mathbb C)}\leq cR^{1/2}\biggl (\sup_{\lambda>0}||\nabla u(\cdot,\cdot,\lambda)||_2+\sup_{\lambda>0}||H_tD^t_{1/2} u(\cdot,\cdot,\lambda)||_2\biggr ).\end{eqnarray}
      Using \eqref{trcc1}, \eqref{trcc1+} and
           \eqref{trcc2} we see that the trace space of $\tilde {\mathbb H}_R(\mathbb R^{n+2},\mathbb C)$ onto $\mathbb R^{n+1}$ equals
           ${\mathbb H}_R^{1/2}(\mathbb R^{n+1},\mathbb C)$, i.e., the set of all functions in ${\mathbb H}^{1/2}(\mathbb R^{n+1},\mathbb C)$ which have compact support in $Q_{R/2}$,  the standard parabolic space-time cube in $\mathbb R^{n+1}$ with center at the origin and with side length defined by $R$.  We let $T:\tilde {\mathbb H}_R(\mathbb R^{n+2},\mathbb C)\to{\mathbb H}^{1/2}_R(\mathbb R^{n+1},\mathbb C)$ denote the trace operator and we let
      $$E:{\mathbb H}^{1/2}_R(\mathbb R^{n+1},\mathbb C)\to \tilde {\mathbb H}_R(\mathbb R^{n+2},\mathbb C),$$
      denote a linear extension operator, see \eqref{trcc1+}, such that
           \begin{eqnarray}\label{tra3}||E(\psi)||_{{\mathbb H}_R(\mathbb R^{n+2},\mathbb C)}\leq c||\psi||_{{\mathbb H}_{R}^{1/2}(\mathbb R^{n+1},\mathbb C)},
          \end{eqnarray}
          whenever $\psi\in {\mathbb H}_{R}^{1/2}(\mathbb R^{n+1},\mathbb C)$ and for a constant $c$. In particular, there is a 1-1 correspondence between $\tilde {\mathbb H}_R(\mathbb R^{n+2},\mathbb C)$ and
      ${\mathbb H}^{1/2}_R(\mathbb R^{n+1},\mathbb C)$. Using this we let, given $\psi\in {\mathbb H}_{R}^{1/2}(\mathbb R^{n+1},\mathbb C)$,
          $$ \Lambda_R(\psi)=\tilde\Lambda_R(E(\psi)).$$
          Then,  using \eqref{tra2} and \eqref{tra3} we see that the operator norm of $\Lambda_R$ satisfies
           \begin{eqnarray}\label{tra4}\quad\quad||\Lambda_R||_{{\mathbb H}^{1/2}_R(\mathbb R^{n+1},\mathbb C)}\leq cR^{1/2}\biggl (\sup_{\lambda>0}||\nabla u(\cdot,\cdot,\lambda)||_2+\sup_{\lambda>0}||H_tD^t_{1/2} u(\cdot,\cdot,\lambda)||_2\biggr ).\end{eqnarray}
           In particular, $\Lambda_R$ is a bounded linear functional on ${\mathbb H}^{1/2}_{R}(\mathbb R^{n+1},\mathbb C)$. As the dual of
          ${\mathbb H}^{1/2}(\mathbb R^{n+1},\mathbb C)$ can be identified with ${\mathbb H}^{-1/2}(\mathbb R^{n+1},\mathbb C)$ we see that
           $\Lambda_R$ can be identified with an element $g_R\in{\mathbb H}^{-1/2}(\mathbb R^{n+1},\mathbb C)$. Combining all these facts we see that
                 \begin{eqnarray}\label{tra5}
     \int_{\tilde Q_R\cap \mathbb R^{n+2}_+} \biggl (A\nabla u\cdot
     \overline{\nabla\Psi}-D_t^{1/2}u\overline{H_tD^{1/2}_t{\Psi}}\biggr )\, dxdtd\lambda =\langle g_R,T(\Psi)\rangle,
     \end{eqnarray}
     whenever $\Psi\in\tilde {\mathbb H}_R(\mathbb R^{n+2},\mathbb C)$ and where $\langle \cdot,\cdot\rangle$ is the duality pairing on ${\mathbb H}^{1/2}_{R}(\mathbb R^{n+1},\mathbb C)$. By standard arguments we see that  $g:=\lim_{R\to \infty}g_R$ exists in the sense of distributions and that
            \begin{eqnarray}\label{tra6}
     \int_{\mathbb R^{n+2}_+} \biggl (A\nabla u\cdot
     \overline{\nabla\Psi}-D_t^{1/2}u\overline{H_tD^{1/2}_t{\Psi}}\biggr )\, dxdtd\lambda =\langle g,T(\Psi)\rangle,
     \end{eqnarray}
     whenever $\Psi\in\tilde{\mathbb H}(\mathbb R^{n+2},\mathbb C)$.  It now only remains to prove that $g\in L^2(\mathbb R^{n+1},\mathbb C)$ and that $(ii)$ holds. We will prove these statements jointly. We intend to prove that
         \begin{eqnarray}\label{tra7}
    &&\int_{\mathbb R^{n+1}} -e_{n+1}\cdot A\nabla u(\cdot,\cdot,\lambda)\overline{T(\Psi)}\, dxdt\notag\\
    &&\to \int_{\mathbb R^{n+2}_+} \biggl (A\nabla u\cdot
     \overline{\nabla\Psi}-D_t^{1/2}u\overline{H_tD^{1/2}_t{\Psi}}\biggr )\, dxdtd\lambda,
     \end{eqnarray}
     as $\lambda\to 0$, whenever $\Psi\in C_0^\infty(\mathbb R^{n+2}, \mathbb C)$. Indeed, assuming
     \eqref{tra7} we see that
            \begin{eqnarray}\label{tra8}
    \int_{\mathbb R^{n+1}} -e_{n+1}\cdot A\nabla u(\cdot,\cdot,\lambda)\overline{T(\Psi)}\, dxdt\to \langle g,T(\Psi)\rangle,
     \end{eqnarray}
     as $\lambda\to 0$ and  whenever $\Psi\in C_0^\infty(\mathbb R^{n+2}, \mathbb C)$, and hence
               \begin{eqnarray}\label{tra9}
               ||g||_2\leq c\sup_{\lambda>0}||\nabla u(\cdot,\cdot,\lambda)||_2<\infty.
     \end{eqnarray}
     To prove \eqref{tra7}, fix $\lambda$, consider $0<\epsilon\ll \lambda$, and let  $\P_\epsilon$ be a standard approximation of the identity acting only in the $\lambda$-variable. Then, integrating by parts and using the equation, we see that
            \begin{eqnarray}\label{tra10}
    &&\int_{\mathbb R^{n+1}}-e_{n+1}\cdot \P_\epsilon(A\nabla u(\cdot,\cdot,\lambda))\overline{T(\Psi)(x,t)}\, dxdt\notag\\
    &=&\int_0^\infty\int_{\mathbb R^{n+1}}\div \bigl(\P_\epsilon(A\nabla u(\cdot,\cdot,\lambda+\sigma))\overline{\Psi(x,t,\sigma)}\bigr )\, dxdtd\sigma\notag\\
    &=&\int_0^\infty\int_{\mathbb R^{n+1}} \P_\epsilon(A\nabla u(\cdot,\cdot,\lambda+\sigma))(x,t)\cdot
     {\nabla\overline{\Psi(x,t,\sigma)}}\, dxdtd\sigma\notag\\
     &&-\int_0^\infty\int_{\mathbb R^{n+1}} \P_\epsilon(D_{1/2}^tu(\cdot,\lambda+\sigma))(x,t)
     \overline{H_tD^t_{1/2}{{\Psi}}(x,t,\sigma)}\, dxdtd\sigma.
     \end{eqnarray}
    The deduction in \eqref{tra10} uses \eqref{eq4}. Now, letting $\epsilon\to 0$ in \eqref{tra10} it follows from \eqref{assump+a} and by dominated convergence that
               \begin{eqnarray}
    &&\int_{\mathbb R^{n+1}}-e_{n+1}\cdot \P_\epsilon(A\nabla u(\cdot,\cdot,\lambda))\overline{T(\Psi)(x,t)}\, dxdt\notag\\
    &=&
    \int_{\mathbb R^{n+2}_+} A\nabla u(x,t,\lambda+\sigma)\cdot
     {\nabla\overline{\Psi(x,t,\sigma)}}\, dxdtd\sigma\notag\\
     &&-\int_{\mathbb R^{n+2}_+} D_{1/2}^tu(x,t,\lambda+\sigma)
     \overline{ H_tD^t_{1/2}\Psi(x,t,\sigma)}\, dxdtd\sigma.
     \end{eqnarray}
     Hence, to prove \eqref{tra7} we only have to prove that
                     \begin{eqnarray}\label{tra12}
I_1(\lambda)+I_2(\lambda)\to 0\mbox{ as $\lambda\to 0$},
     \end{eqnarray}
     where
                \begin{eqnarray*}
    I_1(\lambda)&=&\biggl |\int_{\mathbb R^{n+2}_+} \biggl(A\nabla u(x,t,\lambda+\sigma)-A\nabla u(x,t,\sigma)\biggr )\cdot
     {\nabla\overline{\Psi(x,t,\sigma)}}\, dxdtd\sigma\biggr|\notag\\
     I_2(\lambda)&=&\biggl |\int_{\mathbb R^{n+2}_+} \biggl(D_{1/2}^tu(x,t,\lambda+\sigma)-D_{1/2}^tu(x,t,\sigma)\biggr)\overline{ H_tD^t_{1/2}\Psi(x,t,\sigma)}\, dxdtd\sigma\biggr|.
     \end{eqnarray*}
Choose $R$ large enough to ensure that the support of $\Psi$ is contained in $\tilde Q_R=Q_R\times(-R,R)$ where $Q_R\subset\mathbb R^{n+1}$. Using this and writing
                \begin{eqnarray}
    I_1(\lambda)&\leq &\int_0^{2\lambda}\int_{Q_R} |\bigl(A\nabla u(x,t,\lambda+\sigma)-A\nabla u(x,t,\sigma)\bigr )\cdot
     {\nabla\overline{\Psi(x,t,\sigma)}}|\, dxdtd\sigma\notag\\
     &&+\int_{2\lambda}^R\int_{Q_R} |\bigl(A\nabla u(x,t,\lambda+\sigma)-A\nabla u(x,t,\sigma)\bigr )\cdot
     {\nabla\overline{\Psi(x,t,\sigma)}}|\, dxdtd\sigma,
     \end{eqnarray}
     we see that
\begin{eqnarray*}
    I_1(\lambda)\leq c_{\Psi}\lambda^{1/2}\biggl (\sup_{\lambda>0}||\nabla u(\cdot,\cdot,\lambda)||_2+\lambda^{1/2}\biggl (\int_\lambda^R||\nabla\partial_\sigma u(\cdot,\cdot,\sigma)||_2^2\, d\sigma\biggr )^{1/2}\biggr ).
     \end{eqnarray*}
     By a similar argument
     \begin{eqnarray*}
    \quad I_2(\lambda)\leq c_{\Psi}\lambda^{1/2}\biggl (\sup_{\lambda>0}||D_{1/2}^tu(\cdot,\cdot,\lambda)||_2+\lambda^{1/2}\biggl (\int_\lambda^R||D_{1/2}^t\partial_\sigma u(\cdot,\cdot,\sigma)||_2^2\, d\sigma\biggr )^{1/2}\biggr ).
     \end{eqnarray*}
     Using Lemma \ref{le1--} we see that
     \begin{eqnarray*}
\biggl (\int_\lambda^R||\nabla\partial_\sigma u(\cdot,\sigma)||_2^2\, d\sigma\biggr )^{1/2}&\leq& c\lambda^{-1/2}\sup_{\lambda>0}||\nabla u(\cdot,\cdot,\lambda)||_2.
     \end{eqnarray*}
     Using both Lemma \ref{le1a} and Lemma \ref{le1--} we see that
          \begin{eqnarray*}
\biggl (\int_\lambda^R||D_{1/2}^t\partial_\sigma u(\cdot,\sigma)||_2^2\, d\sigma\biggr )^{1/2}&\leq& c\lambda^{-1/2}\sup_{\lambda>0}||\nabla u(\cdot,\cdot,\lambda)||_2\notag\\
&&+c\sup_{\lambda>0}||H_tD^t_{1/2}u(\cdot,\cdot,\lambda)||_2.
     \end{eqnarray*}
     Combining these estimates  we see that \eqref{tra12} follows. Hence we can conclude that $g\in L^2(\mathbb R^{n+1},\mathbb C)$ and that $(ii)$ holds. This completes the proof of the lemma.\end{proof}

     \subsection{Boundary layer potentials}
          \begin{lemma}\label{trace4} Assume that $\mathcal{H}$,  $\mathcal{H}^\ast$ satisfy \eqref{eq3}-\eqref{eq4} as well as \eqref{eq14+}-\eqref{eq14++}. Assume, in addition, that
     \begin{eqnarray}\label{asa1}
     \bar\Gamma&:=&\sup_{\lambda\neq 0}||\nabla \mathcal{S}_\lambda^{\mathcal{H}}||_{2\to 2}+\sup_{\lambda\neq 0}||\nabla \mathcal{S}_\lambda^{\mathcal{H}^\ast}||_{2\to 2}\notag\\
   &+&\sup_{\lambda\neq 0}||H_tD^t_{1/2}\mathcal{S}_\lambda^{\mathcal{H}}||_{2\to 2}+\sup_{\lambda\neq 0}||H_tD^t_{1/2}\mathcal{S}_\lambda^{\mathcal{H}^\ast}||_{2\to 2}<\infty.
   \end{eqnarray}
     Then there exist  operators $\mathcal{K}^{\mathcal{H}}$, $\tilde {\mathcal{K}}^{\mathcal{H}}$,
     $\nabla_{||} \mathcal{S}_\lambda^{\mathcal{H}}|_{\lambda=0}$, $H_tD^t_{1/2}\mathcal{S}_\lambda^{\mathcal{H}}|_{\lambda=0}$, and a constant $c$, depending only on $n$, $\Lambda$, the De Giorgi-Moser-Nash constants, and $\bar\Gamma$, such that the following hold.

     \noindent
     First,
       \begin{eqnarray*}
       (i)&&\mbox{$(\pm\frac 1 2I+\tilde{\mathcal{K}}^{\mathcal{H}})f=\partial_\nu \mathcal{S}_{\pm\lambda}^{\mathcal{H}}f$ in the sense of \eqref{aaq}, and}\notag\\
     &&\mbox{$-e_{n+1}\cdot A\nabla \mathcal{S}_{\pm\lambda}^{\mathcal{H}}f\to (\pm\frac 1 2I+\tilde{\mathcal{K}}^{\mathcal{H}})f$, in the sense of \eqref{aaq+}}.
     \end{eqnarray*}
     Second,
     \begin{eqnarray*}
     (ii)&&\mathcal{D}_{\pm\lambda}^{\mathcal{H}}f\to (\mp\frac 1 2I+ \mathcal{K}^{\mathcal{H}})f,\notag\\
     (iii)&&\nabla_{||}\mathcal{S}_{\pm\lambda}^{\mathcal{H}} f\to \nabla_{||} \mathcal{S}_\lambda^{\mathcal{H}}|_{\lambda=0}f,\notag\\
     (iv)&&H_tD^t_{1/2}(\mathcal{S}_{\pm\lambda}^{\mathcal{H}} f)\to H_tD^t_{1/2}\mathcal{S}_\lambda^{\mathcal{H}}|_{\lambda=0}f,
     \end{eqnarray*}
     weakly in $L^2(\mathbb R^{n+1},\mathbb C)$ as $\lambda\to 0$.

     \noindent
     Third,
       \begin{eqnarray}\label{normest}
       ||(\pm\frac 1 2I+\tilde{\mathcal{K}}^{\mathcal{H}})f||_2+||(\mp\frac 1 2I+ \mathcal{K}^{\mathcal{H}})f||_2&\leq& c||f||_2,\notag\\
       ||\nabla_{||} \mathcal{S}_\lambda^{\mathcal{H}}|_{\lambda=0}f||_2+||H_tD^t_{1/2}\mathcal{S}_\lambda^{\mathcal{H}}|_{\lambda=0}f||_2&\leq& c||f||_2,
     \end{eqnarray}
     whenever $f\in L^2(\mathbb R^{n+1},\mathbb C)$.

     \noindent
     Fourth, there exists an operator $\mathcal{T}_\perp^{\mathcal{H}}$ such that
   \begin{eqnarray*}
     (v)&&\partial_\lambda\mathcal{S}_{\pm\lambda}^{\mathcal{H}} f\to\mp\frac 12\cdot\frac{f(x,t)}{A_{n+1,n+1}(x,t)}e_{n+1}+\mathcal{T}_\perp^{\mathcal{H}}f,
     \end{eqnarray*}
     weakly in $L^2(\mathbb R^{n+1},\mathbb C)$ as $\lambda\to 0$, and such that
        \begin{eqnarray}\label{normest+}
       ||\mathcal{T}_\perp^{\mathcal{H}}f||_2\leq c||f||_2,
     \end{eqnarray}
      whenever $f\in L^2(\mathbb R^{n+1},\mathbb C)$.

      \noindent
      Fifth, the same conclusions hold  with ${\mathcal{H}}$ replaced by ${\mathcal{H}}^\ast$.
     \end{lemma}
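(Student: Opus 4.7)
The plan is to construct the Neumann trace operator $\tilde{\mathcal{K}}^{\mathcal{H}}$ directly from Lemma \ref{trace3}, establish the jump identity that produces the $\pm\tfrac{1}{2}I$ term in (i), and then deduce (ii)--(v) from (i) via \eqref{eq11edsea+}, a Cauchy-sequence argument in the transverse variable, and simple algebra. Fix $f\in L^2(\mathbb R^{n+1},\mathbb C)$ and set $u(x,t,\lambda):=\mathcal{S}_\lambda^{\mathcal{H}}f(x,t)$, which solves $\mathcal{H}u=0$ in each half-space $\mathbb R^{n+2}_\pm$. Hypothesis \eqref{asa1} bounds $\sup_{\lambda\neq 0}\|\nabla u(\cdot,\cdot,\lambda)\|_2$ and $\sup_{\lambda\neq 0}\|H_tD^t_{1/2}u(\cdot,\cdot,\lambda)\|_2$ by $\bar\Gamma\|f\|_2$, so applying Lemma \ref{trace3} on $\mathbb R^{n+2}_\pm$ separately yields $g_\pm\in L^2(\mathbb R^{n+1},\mathbb C)$ with $\|g_\pm\|_2\leq c\bar\Gamma\|f\|_2$ and $-\sum_{j=1}^{n+1}A_{n+1,j}\partial_{x_j}u(\cdot,\cdot,\lambda)\rightharpoonup g_\pm$ weakly in $L^2$ as $\lambda\to 0^\pm$, each satisfying the corresponding weak formulation \eqref{aaq} on its half-space. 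The distributional identity $\mathcal{H}(\mathcal{S}^{\mathcal{H}}f)=f\otimes\delta_{\{\lambda=0\}}$ (from $\Gamma$ being a fundamental solution on all of $\mathbb R^{n+2}$) gives
\[
\int_{\mathbb R^{n+2}}\bigl(A\nabla u\cdot\overline{\nabla\phi}-D^t_{1/2}u\,\overline{H_tD^t_{1/2}\phi}\bigr)\,dXdt=\int_{\mathbb R^{n+1}}f\,\overline{\phi(\cdot,\cdot,0)}\,dxdt
\]
for $\phi\in C_0^\infty(\mathbb R^{n+2},\mathbb C)$. Splitting the left-hand integral into the two half-spaces and invoking \eqref{aaq} on each (noting the opposing outward normals) identifies the left side with $\int_{\mathbb R^{n+1}}(g_+-g_-)\,\overline{\phi(\cdot,\cdot,0)}\,dxdt$; density of boundary traces then forces $g_+-g_-=f$. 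Defining $\tilde{\mathcal{K}}^{\mathcal{H}}f:=\tfrac12(g_++g_-)$ yields $g_\pm=\pm\tfrac12 f+\tilde{\mathcal{K}}^{\mathcal{H}}f$, which is (i) together with the first bound in \eqref{normest}.

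For (iii)--(iv), the fundamental theorem of calculus in the transverse variable and the uniform bound $\sup_\sigma\|\partial_\sigma\mathcal{S}_\sigma^{\mathcal{H}}f\|_2\leq\bar\Gamma\|f\|_2$ give $\|\mathcal{S}_\lambda^{\mathcal{H}}f-\mathcal{S}_\mu^{\mathcal{H}}f\|_{L^2_{\mathrm{loc}}}\leq c|\lambda-\mu|\bar\Gamma\|f\|_2$, so $\mathcal{S}_\lambda^{\mathcal{H}}f$ is Cauchy in $L^2_{\mathrm{loc}}(\mathbb R^{n+1},\mathbb C)$ as $\lambda\to 0^\pm$; continuity of the kernel $\Gamma_\lambda$ across $\{\lambda=0\}$ shows the two one-sided limits coincide, defining $\mathcal{S}_\lambda^{\mathcal{H}}|_{\lambda=0}f$. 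Distributional convergence of $\mathcal{S}_\lambda^{\mathcal{H}}f$ to $\mathcal{S}_\lambda^{\mathcal{H}}|_{\lambda=0}f$ then implies distributional convergence of $\nabla_{||}\mathcal{S}_\lambda^{\mathcal{H}}f$ and of $H_tD^t_{1/2}\mathcal{S}_\lambda^{\mathcal{H}}f$, which combined with the uniform $L^2$-bound \eqref{asa1} upgrades to weak $L^2$ convergence to the indicated limits, with the bounds in \eqref{normest} following from lower-semicontinuity of $\|\cdot\|_2$ under weak convergence. Claim (ii) then follows from \eqref{eq11edsea+}, which writes $\mathcal{D}_{\pm\lambda}^{\mathcal{H}}$ as the $L^2$-adjoint of $-e_{n+1}\cdot A^\ast\nabla\mathcal{S}_{\mp\lambda}^{\mathcal{H}^\ast}$: applying (i) to $\mathcal{H}^\ast$ and taking adjoints (which preserve weak $L^2$ convergence) yields (ii) with $\mathcal{K}^{\mathcal{H}}:=(\tilde{\mathcal{K}}^{\mathcal{H}^\ast})^\ast$. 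For (v), solving $\partial_\nu=-\sum_{j=1}^{n+1}A_{n+1,j}\partial_{x_j}$ for the transverse derivative and using ellipticity $|A_{n+1,n+1}|\geq\Lambda^{-1}$ yields
\[
\partial_\lambda\mathcal{S}_\lambda^{\mathcal{H}}f=-A_{n+1,n+1}^{-1}\Bigl(\partial_\nu\mathcal{S}_\lambda^{\mathcal{H}}f+\sum_{j=1}^n A_{n+1,j}\partial_{x_j}\mathcal{S}_\lambda^{\mathcal{H}}f\Bigr),
\]
and combining (i) with (iii) gives the weak limit in (v) with $\mathcal{T}_\perp^{\mathcal{H}}f:=-A_{n+1,n+1}^{-1}\bigl(\tilde{\mathcal{K}}^{\mathcal{H}}f+\sum_{j=1}^n A_{n+1,j}\partial_{x_j}\mathcal{S}_\lambda^{\mathcal{H}}|_{\lambda=0}f\bigr)$, whose $L^2$-bound \eqref{normest+} is inherited from \eqref{normest}.

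The main technical hurdle is the rigorous justification of the distributional identity $\mathcal{H}(\mathcal{S}^{\mathcal{H}}f)=f\otimes\delta_{\{\lambda=0\}}$ and the ensuing integration by parts across $\{\lambda=0\}$ used in the jump identity, since $\mathcal{S}^{\mathcal{H}}f$ need not lie in $\tilde{\mathbb H}(\mathbb R^{n+2},\mathbb C)$ globally: only the slice norms $\|\nabla u(\cdot,\cdot,\lambda)\|_2$ and $\|H_tD^t_{1/2}u(\cdot,\cdot,\lambda)\|_2$ are controlled uniformly in $\lambda$, not the full solid norm. The plan to resolve this is to argue first with the smoothed potentials $\mathcal{S}_\lambda^{{\mathcal{H}},\eta}$ from \eqref{sop}, for which \eqref{fsmooth} supplies a bona fide $L^2$ source $f_\eta$ so that all bilinear-form manipulations are legitimate and the jump is computed against $f_\eta\to f\otimes\delta$ as $\eta\to 0$; the passage to the limit is then carried out via the approximation estimates Lemma \ref{smooth1} $(v)$--$(vii)$ combined with the uniform $L^2$-bounds of \eqref{asa1}, which precisely control the difference $\mathcal{S}_\lambda^{\mathcal{H}}-\mathcal{S}_\lambda^{{\mathcal{H}},\eta}$ in the norms required to conclude the weak limits from Step 2.
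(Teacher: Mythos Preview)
Your proposal is correct and follows essentially the same architecture as the paper's proof: apply Lemma~\ref{trace3} on each half-space to obtain $g_\pm$, establish the jump relation $g_+-g_-=f$ via the smoothed potentials $\mathcal{S}_\lambda^{\mathcal{H},\eta}$ and Lemma~\ref{smooth1}, define $\mathcal{K}^{\mathcal{H}}$ as the adjoint of $\tilde{\mathcal{K}}^{\mathcal{H}^\ast}$ through \eqref{eq11edsea+}, and recover (v) from the algebraic identity \eqref{assumphq}. Your identification of the jump computation as the main technical point, and its resolution through $f_\eta$, matches the paper exactly.

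The one substantive difference is your treatment of (iii)--(iv). The paper verifies $\tilde N_\ast^\pm(\nabla\mathcal{S}_\lambda^{\mathcal{H}}f)\in L^2$ via Lemma~\ref{lemsl1++} $(i)$--$(ii)$ and then invokes Lemma~\ref{trace2} $(iv)$--$(v)$ to get the weak limits. Your route---the fundamental theorem of calculus in $\lambda$ to get a Cauchy sequence, then upgrading distributional convergence of the derivatives to weak $L^2$ convergence using the uniform bound \eqref{asa1}---is more self-contained and avoids the non-tangential maximal function machinery entirely. Both are valid; the paper's route happens to also yield the stronger conclusion that the trace lies in $\mathbb H(\mathbb R^{n+1},\mathbb C)$, which is not needed here but is used elsewhere.
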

     \begin{proof} We first note that to prove the lemma it suffices to prove $(i)$ and that
        \begin{eqnarray}\label{normestagain}
       ||(\pm\frac 1 2I+\tilde{\mathcal{K}}^{\mathcal{H}})f||_2\leq c||f||_2,
     \end{eqnarray}
      whenever $f\in L^2(\mathbb R^{n+1},\mathbb C)$. Indeed, let $\mathcal{K}^{\mathcal{H}}$ be the operator which is the hermitian adjoint to $\tilde{\mathcal{K}}^{\mathcal{H}^\ast}$. Then $(ii)$ follows from $(i)$ and the observation that
     $\mathcal{D}_\lambda^{\mathcal{H}}$ equals the hermitian adjoint to $-e_{n+1}\cdot A^\ast\nabla \mathcal{S}_\sigma^{\mathcal{H}^\ast}|_{\sigma=-\lambda}$, see \eqref{eq11edsea+}. To prove $(iii)$ and $(iv)$ we simply have to verify, based on
     Lemma \ref{trace2} that
        \begin{eqnarray}\label{assumph}
     \tilde N_\ast^\pm(\nabla  \mathcal{S}_{\lambda}^{\mathcal{H}}f)\in L^2(\mathbb R^{n+1})\mbox{ and }\ \sup_{\lambda\neq 0}||H_tD^t_{1/2}  \mathcal{S}_{\lambda}^{\mathcal{H}}f||_2<\infty,
     \end{eqnarray}
     where $\tilde N_\ast^\pm$ are the non-tangential maximal functions defined in $\mathbb R_\pm^{n+2}$. However, \eqref{assumph} follows immediately from Lemma \ref{lemsl1++} $(i)-(ii)$ and the  definition of $\bar\Gamma$ in \eqref{asa1}. To obtain $(v)$ we  note that
     \begin{eqnarray}\label{assumphq}-A_{n+1,n+1}D_{n+1}\mathcal{S}_\lambda^{\mathcal{H}}=-e_{n+1}\cdot A\nabla \mathcal{S}_\lambda^{\mathcal{H}}+\sum_{j=1}^nA_{n+1,j}D_j\mathcal{S}_\lambda^{\mathcal{H}}.\end{eqnarray}
     $(v)$ now follows from $(i)$ and $(iii)$. Concerning the quantitative estimates, using Lemma \ref{trace2} $(iii)$, the definition of $\mathcal{K}^{\mathcal{H}}$ and duality, we see that  \eqref{normest} and \eqref{normest+} follows once we have established \eqref{normestagain}.

     To start the proof of $(i)$ and \eqref{normestagain}, we let $u^+(x,t,\lambda)= \mathcal{S}_{\lambda}^{\mathcal{H}}f(x,t)$ be defined in
     $\mathbb R_+^{n+2}$ and we let $u^-(x,t,\lambda)= \mathcal{S}_{\lambda}^{\mathcal{H}}f(x,t)$ be defined in
     $\mathbb R_-^{n+2}$. Again using Lemma \ref{lemsl1++}, \eqref{asa1} and Lemma \ref{trace1}, we see that
     Lemma \ref{trace3} applies to $u^+$ and $u^-$. Hence, applying Lemma \ref{trace3} we obtain $g^\pm\in L^2(\mathbb R^{n+1},\mathbb C)$ such that
        \begin{eqnarray}\label{aaqed}
     \int_{\mathbb R^{n+2}_+}\biggl (A\nabla u^+\cdot\overline{\nabla\phi}-D_t^{1/2}u^+\overline{H_tD^{1/2}_t{\phi}}\biggr )\, dxdtd\lambda
     &=&\int_{\mathbb R^{n+1}}g^+\overline{\phi}\, dxdt,\notag\\
     \int_{\mathbb R^{n+2}_-}\biggl (A\nabla u^-\cdot\overline{\nabla\phi}-D_t^{1/2}u^-\overline{H_tD^{1/2}_t{\phi}}\biggr )\, dxdtd\lambda
     &=&\int_{\mathbb R^{n+1}}g^-\overline{\phi}\, dxdt,
    \end{eqnarray}
     whenever $\phi\in \tilde{\mathbb H}(\mathbb R^{n+2}, \mathbb C)$ has compact support, and
     \begin{eqnarray}\label{aaqed+}
     &&\mbox{$-\sum_{j=1}^{n+1}A_{n+1,j}(\cdot)\partial_{x_j}u^+(\cdot,\cdot,\lambda)\to g^+(\cdot,\cdot)$},\notag\\
     &&\mbox{$-\sum_{j=1}^{n+1}A_{n+1,j}(\cdot)\partial_{x_j}u^-(\cdot,\cdot,-\lambda)\to g^-(\cdot,\cdot)$},
     \end{eqnarray}
      weakly in $L^2(\mathbb R^{n+1},\mathbb C)$ as $\lambda\to 0^+$. We now define $(\pm\frac 1 2I+\tilde{\mathcal{K}}^{\mathcal{H}})$ on $L^2(\mathbb R^{n+1},\mathbb C)$ through the relation
     \begin{eqnarray}
     (\pm\frac 1 2I+\tilde{\mathcal{K}}^{\mathcal{H}})f=g^\pm.
     \end{eqnarray}
     To show that this operator is well defined we only have to prove that $g^+-g^-=f$. In particular, it suffices to prove that
            \begin{eqnarray}\label{conc}
    \int_{\mathbb R^{n+1}}f\Psi\, dxdt&=&\int_{\mathbb R^{n+2}_+}\biggl (A\nabla u^+\cdot\overline{\nabla\Psi}-D_t^{1/2}u^+\overline{H_tD^{1/2}_t{\Psi}}\biggr )\, dxdtd\lambda\notag\\
    &&+\int_{\mathbb R^{n+2}_-}\biggl (A\nabla u^-\cdot\overline{\nabla\Psi}-D_t^{1/2}u^-\overline{H_tD^{1/2}_t{\Psi}}\biggr )\, dxdtd\lambda,
    \end{eqnarray}
    whenever $\Psi\in C_0^\infty(\mathbb R^{n+2},\mathbb C)$. Let $\eta>0$ and recall the smoothed single layer potential operator
    $\mathcal{S}_\lambda^{\mathcal{H},\eta}$ introduced in \eqref{sop}. We let $u_\eta^+(x,t,\lambda)=\mathcal{S}_\lambda^{\mathcal{H},\eta} f(x,t)$ be defined in $\mathbb R^{n+2}_+$ and we let $u_\eta^-(x,t,\lambda)=\mathcal{S}_\lambda^{\mathcal{H},\eta} f(x,t)$ be defined in $\mathbb R^{n+2}_-$. Then
    $$u_\eta^\pm(x,t,\lambda)=\int_{\mathbb R^{n+2}}\Gamma(x,t,\lambda,y,s,\sigma)f_\eta(y,s,\sigma)\, dydsd\sigma,\ \lambda\in \mathbb R_\pm,$$
    where $f_\eta(y,s,\sigma)=f(y,s)\varphi_\eta(\sigma)$ and $\varphi_\eta$ is the kernel of the smooth approximation of the identity acting in the $\lambda$-dimension. Let $U_\eta=u_\eta^+1_{\mathbb R^{n+2}_+}+u_\eta^-1_{\mathbb R^{n+2}_-}$. Then
             \begin{eqnarray}
   &&\int_{\mathbb R^{n+2}_+}\biggl (A\nabla u_\eta^+\cdot\overline{\nabla\Psi}-D_t^{1/2}u_\eta^+\overline{H_tD^{1/2}_t{\Psi}}\biggr )\, dxdtd\lambda\notag\\
    &&+\int_{\mathbb R^{n+2}_-}\biggl (A\nabla u_\eta^-\cdot\overline{\nabla\Psi}-D_t^{1/2}u_\eta^-\overline{H_tD^{1/2}_t{\Psi}}\biggr )\, dxdtd\lambda\notag\\
    &=&\int_{\mathbb R^{n+2}}\biggl (A\nabla U_\eta\cdot\overline{\nabla\Psi}-D_t^{1/2}U_\eta\overline{H_tD^{1/2}_t{\Psi}}\biggr )\, dxdtd\lambda.
    \end{eqnarray}
    Using that  $\Gamma$ is a fundamental solution to $\mathcal{H}$ we see that
            \begin{eqnarray}
 &&\int_{\mathbb R^{n+2}}\biggl (A\nabla U_\eta\cdot\overline{\nabla\Psi}-D_t^{1/2}U_\eta\overline{H_tD^{1/2}_t{\Psi}}\biggr )\, dxdtd\lambda\notag\\
   &=&\int_{\mathbb R^{n+2}}f_\eta\Psi\, dxdt\to \int_{\mathbb R^{n+1}}f\Psi\, dxdt,
    \end{eqnarray}
    as $\eta\to 0$. Given $\epsilon>0$ small we write
                 \begin{eqnarray}
    \int_{\mathbb R^{n+2}_+}A\nabla (u_\eta^+-u^+)\cdot\nabla\overline{\Psi}\, dxdtd\lambda&=&I_\epsilon+II_\epsilon,\notag\\
     \int_{\mathbb R^{n+2}_+}D^{1/2}_t(u_\eta^+-u^+)\overline{H_tD^{1/2}_t{\Psi}}\, dxdtd\lambda&=&\tilde I_\epsilon+\widetilde {II}_\epsilon,
    \end{eqnarray}
    where
                     \begin{eqnarray}
I_\epsilon&=&\int_\epsilon^\infty\int_{\mathbb R^{n+1}}A\nabla (u_\eta^+-u^+)\cdot\nabla\overline{\Psi}\, dxdtd\lambda,\notag\\
II_\epsilon&=&\int_0^\epsilon\int_{\mathbb R^{n+1}}A\nabla (u_\eta^+-u^+)\cdot\nabla\overline{\Psi}\, dxdtd\lambda,\notag\\
\tilde I_\epsilon&=&\int_\epsilon^\infty\int_{\mathbb R^{n+1}}D^{1/2}_t(u_\eta^+-u^+)\overline{H_tD^{1/2}_t{\Psi}}\, dxdtd\lambda,\notag\\
\widetilde {II}_\epsilon&=&\int_0^\epsilon\int_{\mathbb R^{n+1}} D^{1/2}_t(u_\eta^+-u^+)\overline{H_tD^{1/2}_t{\Psi}}\, dxdtd\lambda.
    \end{eqnarray}
    Choose $R$ so large  that the support of $\Psi$ is contained in $\tilde Q_R=Q_R\times(-R,R)$ where $Q_R\subset\mathbb R^{n+1}$.  Then, using
    Lemma \ref{smooth1} $(v)$ we have that
    $$|I_\epsilon|\leq c_\Psi\int_\epsilon^R\sup_{\epsilon<\lambda<R}||\nabla (\mathcal{S}_\lambda^{\mathcal{H},\eta}-\mathcal{S}_\lambda^\mathcal{H})f||_2\, d\lambda\to 0$$
    as $\eta\to 0$. Also, using \eqref{asa1} we see that
    $$\sup_{\eta>0}|II_\epsilon|\leq c_\Psi\epsilon\sup_{\eta>0}||\nabla \mathcal{S}_\lambda^{\mathcal{H},\eta} f||_2\leq
    c_\Psi\epsilon\sup_{\lambda\neq 0}||\nabla \mathcal{S}_\lambda^{\mathcal{H}} f||_2\leq c_\Psi\epsilon\bar\Gamma\to 0,$$
     as $\epsilon\to 0$. Similarly,  using Lemma \ref{smooth1} $(vi)$,
    \begin{eqnarray}
    |\tilde I_\epsilon|+|\widetilde {II}_\epsilon|&\leq& c_\Psi\int_\epsilon^R\sup_{\epsilon<\lambda<R}
    ||H_tD^t_{1/2}(\mathcal{S}_\lambda^\eta-\mathcal{S}_\lambda)f||_2\, d\lambda\notag\\
    &&+c_\Psi\epsilon\sup_{\eta>0}||H_tD^t_{1/2}\mathcal{S}_\lambda^\eta f||_2\to 0,
    \end{eqnarray}
    if we first let $\eta\to 0$ and then $\epsilon\to 0$. Arguing analogously in $\mathbb R^{n+2}_-$ we can combine the above and conclude that \eqref{conc} holds.
    Thus $(\pm\frac 1 2I+\tilde{\mathcal{K}}^{\mathcal{H}})$ is well-defined. An application of Lemma \ref{trace3} $(ii)$ now completes the proof of $(i)$. \eqref{normestagain} follows from \eqref{tra9}. This completes the proof of the lemma.
     \end{proof}

     \section{Uniqueness}\label{sec4+}
      In this section we establish the uniqueness of solutions to $(D2)$, $(N2)$ and $(R2)$. The proofs of uniqueness for $(D2)$ and $(R2)$ are fairly standard and rely on the introduction of the Green function and appropriate estimates thereof. Our proofs of uniqueness for $(D2)$ and $(R2)$ are similar to the corresponding arguments in \cite{AAAHK} and we will therefore not include all details. However, to prove uniqueness for $(N2)$ we have to work harder compared to
      \cite{AAAHK} and in this case we give all the details of the proof. In the case of $(N2)$ our proof is inspired by arguments in \cite{HL}.

              \begin{lemma}\label{trace7} Assume that $\mathcal{H}$,  $\mathcal{H}^\ast$ satisfy \eqref{eq3}-\eqref{eq4} as well as \eqref{eq14+}-\eqref{eq14++}. Assume the existence of solutions to $(D2)$ and  $(R2)$. Then the solutions are unique  in the sense that
        \begin{eqnarray}
     (i)&&\mbox{if $u$ solves $(D2)$, and $u(\cdot,\cdot,\lambda)\to 0$ in $L^2(\mathbb R^{n+1},\mathbb C)$ as $\lambda\to 0$,}\notag\\
     &&\mbox{then $u\equiv 0$},\notag\\
       (ii)&&\mbox{if $u$ solves $(R2)$, and $u(\cdot,\cdot,\lambda)\to 0$ n.t. in $\mathbb H(\mathbb R^{n+1},\mathbb C)$ as $\lambda\to 0$,}\notag\\
       &&\mbox{then $u\equiv 0$ modulo a constant.}
     \end{eqnarray}
     \end{lemma}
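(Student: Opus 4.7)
The plan is to prove both uniqueness assertions by a duality argument: given $u$ as in the hypotheses, test $u$ against an arbitrary $F\in C_0^\infty(\mathbb R^{n+2}_+,\mathbb C)$ and construct an auxiliary $w$ solving an adjoint problem for $\mathcal H^\ast$ in $\mathbb R^{n+2}_+$ in such a way that the integration-by-parts identity for $\tilde B_+(u,w)$ produces only boundary terms that vanish in the limit $\lambda\to 0^+$.

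For $(i)$, I would construct $w$ as the solution to $\mathcal H^\ast w=F$ in $\mathbb R^{n+2}_+$ with vanishing Dirichlet trace, by writing $w=(\mathcal H^\ast)^{-1}(F\,1_{\mathbb R^{n+2}_+})-w_1$, where $(\mathcal H^\ast)^{-1}$ is the global inverse from Lemma~\ref{gara} (applied to $\mathcal H^\ast$) and $w_1$ is a $(D2)$ solution for $\mathcal H^\ast$ whose trace matches that of $(\mathcal H^\ast)^{-1}(F\,1_{\mathbb R^{n+2}_+})$, available by the existence assumption; then $w$ satisfies $\sup_\lambda||w(\cdot,\cdot,\lambda)||_2+|||\lambda\nabla w|||_+<\infty$ and $w(\cdot,\cdot,\lambda)\to 0$ in $L^2$ as $\lambda\to 0$. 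Integrating by parts on $\{\epsilon<\lambda<\infty\}$ and using $\mathcal Hu=0$ together with $\mathcal H^\ast w=F$ yields
\begin{equation*}
\int_{\mathbb R^{n+2}_+}u\bar F\,dxdtd\lambda=\lim_{\epsilon\to 0}\biggl(\int_{\lambda=\epsilon}\overline{w}\,\partial_\nu u\,dxdt-\int_{\lambda=\epsilon}u\,\overline{\partial_{\nu^\ast}w}\,dxdt\biggr).
\end{equation*}
Each surface integral tends to zero by Cauchy--Schwarz: in the first, $||w(\cdot,\cdot,\epsilon)||_2\to 0$ while $||\partial_\nu u(\cdot,\cdot,\epsilon)||_2$ is controlled by $\sup_\lambda||u(\cdot,\cdot,\lambda)||_2+|||\lambda\nabla u|||_+$ via Lemma~\ref{le1--} applied on a strip of width $\epsilon$; in the second, $||u(\cdot,\cdot,\epsilon)||_2\to 0$ by hypothesis while $||\partial_{\nu^\ast}w(\cdot,\cdot,\epsilon)||_2$ is uniformly bounded by the corresponding quantities for $w$. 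Hence $\int u\bar F=0$ for all such $F$, forcing $u\equiv 0$.

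For $(ii)$, subtract from $u$ the constant forced by $u\to 0$ n.t.\ in the $\mathbb H$-semi-norm, reducing to the case where $u\to 0$ n.t. It then suffices to show that such a $u$ vanishes identically. I would test $u$ against $F\in C_0^\infty(\mathbb R^{n+2}_+,\mathbb C)$ by constructing $w$ as an $(R2)$ solution of $\mathcal H^\ast w=F$ in $\mathbb R^{n+2}_+$ with vanishing n.t.\ boundary trace, via an analogous decomposition using the assumed $(R2)$-solvability for $\mathcal H^\ast$; such $w$ satisfies $\tilde N_\ast^+(\nabla w)$, $\tilde N_\ast^+(H_tD^t_{1/2}w)\in L^2(\mathbb R^{n+1})$ and $w\to 0$ n.t. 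The same integration-by-parts identity applies, and both surface integrals at $\lambda=\epsilon$ vanish as $\epsilon\to 0$ by dominated convergence: the pointwise integrands tend to zero by the n.t.\ vanishing of $u$ and $w$ combined with the bound $|u(x,t,\lambda)|\le c\lambda\tilde N_\ast(\nabla u)(x_0,t_0)$ on cones provided by Lemma~\ref{trace2}(ii) (and its analogue for $w$), while this same pointwise bound provides a dominator proportional to $\tilde N_\ast(\nabla u)\tilde N_\ast(\nabla w)$ that lies in $L^1(\mathbb R^{n+1})$.

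The main obstacle is the rigorous handling of the nonlocal-in-time operator $H_tD^t_{1/2}$ inside $\tilde B_+$ when integrating by parts across the slice $\lambda=\epsilon$. Unlike the elliptic setting of \cite{AAAHK}, this produces additional boundary contributions involving $D^t_{1/2}u$ and $H_tD^t_{1/2}w$; these are tamed by the interpolation inequality
\begin{equation*}
||D^t_{1/2}u(\cdot,\cdot,\epsilon)||_2^2\le c\,||\partial_tu(\cdot,\cdot,\epsilon)||_2\,||u(\cdot,\cdot,\epsilon)||_2,
\end{equation*}
together with Lemma~\ref{le1--} and Lemma~\ref{le1a}, which supply the required control on $\partial_tu$ on thin strips near the boundary. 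A secondary technical point, relevant only for $(ii)$, is verifying that the constant n.t.\ limit extracted at the outset is a well-defined constant in the $\mathbb H$-sense; this follows from Lemma~\ref{trace2}(iv)--(v), which identifies the weak $L^2$ limits of $\nabla_{||}u(\cdot,\cdot,\lambda)$ and $H_tD^t_{1/2}u(\cdot,\cdot,\lambda)$ with $\nabla_{||}f$ and $H_tD^t_{1/2}f$ for a trace $f$ that must therefore be constant.
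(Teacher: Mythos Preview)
Your duality scheme is a natural idea, and for part~$(ii)$ it is close in spirit to what the paper does for the analogous step. But for part~$(i)$ there is a genuine gap in the treatment of the boundary terms at $\lambda=\epsilon$.

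The problem is the claim that $\|\partial_\nu u(\cdot,\cdot,\epsilon)\|_2$ is ``controlled by $\sup_\lambda\|u(\cdot,\cdot,\lambda)\|_2+|||\lambda\nabla u|||_+$ via Lemma~\ref{le1--}.'' For a $(D2)$ solution the only gradient information you have is the square function $|||\lambda\nabla u|||_+$, and Caccioppoli on a strip of width $\epsilon$ gives at best $\|\nabla u(\cdot,\cdot,\epsilon)\|_2\le c\epsilon^{-1}\sup_\lambda\|u\|_2$. This is not a uniform bound. Pairing with $\|w(\cdot,\cdot,\epsilon)\|_2\to 0$, for which $(D2)$-regularity of your $w_1$ provides no quantitative rate, does not force the first surface integral to vanish. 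The symmetric claim for $\|\partial_{\nu^\ast}w(\cdot,\cdot,\epsilon)\|_2$ fails for the same reason: your $w_1$ is only a $(D2)$ solution, so its gradient near the boundary is not uniformly in $L^2$ either. Thus neither surface term in your displayed identity can be shown to vanish with the tools you invoke.

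The paper takes a different route precisely to avoid this obstruction. It uses $(R2)$-solvability (not $(D2)$ for the adjoint) to subtract from the fundamental solution a solution $w_{(x,t,\lambda)}$ with matching boundary trace, producing a Green's function $G$ for the upper half-space. The pointwise representation $u(x,t,\lambda)=(u\theta)(x,t,\lambda)$ with a product cutoff $\theta$ in $(y,s,\sigma)$ then reduces matters to estimating integrals of $|G||\nabla u|$, $|\nabla G||u|$, $|G||u|$ over the regions where $\nabla\theta,\partial_s\theta$ live. The decisive ingredient, absent from your argument, is the boundary H\"older estimate for $G$ from~\cite{DK},
\[
|G(x,t,\lambda,y,s,\sigma)|\le c\,\sigma^\alpha\,(t-s)^{-(n+1)/2}\exp\bigl(-\kappa(|x-y|^2+|\lambda-\sigma|^2)/(t-s)\bigr),
\]
which supplies an extra factor $\sigma^\alpha$ near $\{\sigma=0\}$. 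This decay is exactly what compensates for the potential $\epsilon^{-1}$ blow-up of $\nabla u$ there, and it also handles the region at infinity ($R\to\infty$). Without an analogue of this H\"older vanishing, your duality argument for $(i)$ does not close. For $(ii)$ your outline is much closer to viable, since $(R2)$-regularity does give $\tilde N_\ast(\nabla u)\in L^2$ and hence both the rate $|u|\le c\lambda\tilde N_\ast(\nabla u)$ from Lemma~\ref{trace2}$(ii)$ and uniform control of $\|\nabla u(\cdot,\cdot,\epsilon)\|_2$ via Lemma~\ref{trace1}; the paper's argument for $(ii)$ indeed reduces to precisely these two facts.
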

     \begin{proof} We first prove $(i)$. Consider, for $(x,t,\lambda)\in\mathbb R^{n+2}_+$ fixed,  the fundamental solution $\Gamma(x,t,\lambda,y,s,\sigma)$. Using Lemma \ref{le3} we see that
             \begin{eqnarray}\label{esa1}
||\nabla_{||} \Gamma(x,t,\lambda,\cdot,\cdot,\cdot)||_2\leq c\lambda^{-(n+2)/2}.
     \end{eqnarray}
     Furthermore,
               \begin{eqnarray}\label{esa2}
||H_tD^t_{1/2}\Gamma(x,t,\lambda,\cdot,\cdot,\cdot)||_2^2&\leq& c||\partial_t\Gamma(x,t,\lambda,\cdot,\cdot,\cdot)||_2||\Gamma(x,t,\lambda,\cdot,\cdot,\cdot)||_2\notag\\
&\leq& c\lambda^{-(n+2)/2},
     \end{eqnarray}
     by \eqref{eq14+}, Lemma \ref{le1a} and Lemma \ref{le2+}. In particular,  $\Gamma(x,t,\lambda,\cdot,\cdot,\cdot)\in \mathbb H(\mathbb R^{n+1},\mathbb C)$. Hence, using the existence for $(R2)$ we can conclude that that there exists $w=w_{(x,t,\lambda)}$ such that
     \begin{eqnarray}\label{eq9mo}
               &&\mathcal{H}w = 0\mbox{ in $\mathbb R^{n+2}_+$},\notag\\
               &&\lim_{\lambda\to 0}w(\cdot,\cdot,\lambda)=\Gamma(x,t,\lambda,\cdot,\cdot,\cdot)\mbox{ n.t},
               \end{eqnarray}
               and such that, see \eqref{esa1} and \eqref{esa2},
                    \begin{eqnarray}\label{eq9+mo}
                   ||\tilde N_\ast(\nabla w)||_2+||\tilde N_\ast(H_tD^t_{1/2}w)||_2\leq c\lambda^{-(n+2)/2}.
    \end{eqnarray}
    We now let  $$G(x,t,\lambda,y,s,\sigma)=\Gamma(x,t,\lambda,y,s,\sigma)-w_{(x,t,\lambda)}(y,s,\sigma),$$
and note that
                    \begin{eqnarray}\label{eq9+mol}
 \sup_{\sigma:\ |\sigma-\lambda|>\lambda/8}||\nabla G(x,t,\lambda,\cdot,\cdot,\sigma)||_2\leq c\lambda^{-(n+2)/2}.
    \end{eqnarray}
    Let $\theta\in C_0^\infty(\mathbb R^{n+2}_+)$ with $\theta=1$ is a neighborhood of $(x,t,\lambda)$. Then
    \begin{eqnarray}\label{eq9+moa}
    u(x,t,\lambda)=(u\theta)(x,t,\lambda)&=&\int\overline{A^\ast\nabla_{y,\sigma}G(x,t,\lambda,y,s,\sigma)}\cdot \nabla (u\theta)(y,s,\sigma)\, dydsd\sigma\notag\\
    &&-\int\overline{\partial_sG(x,t,\lambda,y,s,\sigma)}(u\theta)(y,s,\sigma)\, dydsd\sigma.
    \end{eqnarray}
    Hence, using that $\mathcal{H}u=0$ we see that
       \begin{eqnarray}\label{eq9+moab}
    |u(x,t,\lambda)|\leq c(I+II+III),
    \end{eqnarray}
    where
        \begin{eqnarray}\label{eq9+moabc}
            I&=&\int|G(x,t,\lambda,y,s,\sigma)|\nabla u(y,s,\sigma)||\nabla\theta(y,s,\sigma)|\, dydsd\sigma,\notag\\
    II&=&\int|\nabla_{y,\sigma}G(x,t,\lambda,y,s,\sigma)|u(y,s,\sigma)||\nabla\theta(y,s,\sigma)|\, dydsd\sigma,\notag\\
    III&=&\int|G(x,t,\lambda,y,s,\sigma)||u(y,s,\sigma)||\partial_s\theta(y,s,\sigma)|\, dydsd\sigma.
    \end{eqnarray}
    Let $\epsilon<\lambda/8$ and let $R>8\lambda$. Let $\phi\in C_0^\infty(-2,2)$ with $\phi\geq 0$, $\phi\equiv 1$ on $(-1,1)$ and let
    $\tilde \phi$ be a standard cut-off for $Q_R(x,t)$  such that $\tilde\phi\in C_0^\infty(2Q_R(x,t))$, $\tilde\phi\geq 0$, $\tilde\phi\equiv 1$ on
     $Q_R(x,t)$. We let
    $$\theta(y,s,\sigma)=\tilde\phi(y,s)(1-\phi(\sigma/\epsilon))\phi(\sigma/(100R)).$$
    Note that
    $$\theta(y,s,\sigma)=1\mbox{ whenever } (y,s,\sigma)\in Q_R(x,t)\times \{2\epsilon\leq \sigma\leq 100 R\}.$$
    The domains where the integrands in $I-III$ are non-zero are contained in
    the union $D_1\cup D_2\cup D_3$ where
                \begin{eqnarray*}\label{eq9+moabcd}
(i)&&D_1\subset 2Q_R(x,t)\times \{\epsilon<\sigma<2\epsilon\},\notag\\
(ii)&&D_2\subset 2Q_R(x,t)\times \{100R<\sigma<200R\},\notag\\
(iii)&&D_3\subset (2Q_R(x,t)\setminus Q_R(x,t))\times \{0<\sigma<200R\},
    \end{eqnarray*}
    and
                \begin{eqnarray*}\label{eq9+moabcd}
(i')&& ||\epsilon\nabla\theta||_{L^\infty(D_1)}+||\epsilon^2\partial_s\theta||_{L^\infty(D_1)}\leq c,\notag\\
(ii')&& ||R\nabla\theta||_{L^\infty(D_2)}+||R^2\partial_s\theta||_{L^\infty(D_2)}\leq c,\notag\\
(iii')&& ||R\nabla\theta||_{L^\infty(D_3)}+||R^2\partial_s\theta||_{L^\infty(D_3)}\leq c.
    \end{eqnarray*}
Using this we see that
          \begin{eqnarray}\label{eq9+moabcg}
            I=I_1+I_2+I_3\notag\\
    \end{eqnarray}
    where
    \begin{eqnarray}\label{eq9+moabcl}
            I_1&=&\frac c\epsilon\int_{D_1}|G||\nabla u|\, dydsd\sigma,\notag\\
            I_2&=&\frac cR\int_{(D_2\cup D_3)\cap \Omega_{\lambda/4}}|G||\nabla u|\, dydsd\sigma,\notag\\
            I_3&=&\frac cR\int_{(D_2\cup D_3)\setminus \Omega_{\lambda/4}}|G||\nabla u|\, dydsd\sigma,
    \end{eqnarray}
    and where $\Omega_\rho=\mathbb R^{n+2}_+\cap\{(y,s,\sigma): \sigma\geq\rho\}$, for $\rho>0$. By the construction it is easily seen that
        \begin{eqnarray}\label{eq9+moabcl}
       (i)&& \biggl (\int_0^a\int_{\mathbb R^{n+1}}|G(x,t,\lambda,y,s,\sigma)|^2\, dydsd\sigma\biggr )^{1/2}\leq ca^{3/2}\lambda^{-(n+2)/2},\notag\\
       (ii)&&\biggl (\int_0^a\int_{\mathbb R^{n+1}}|G(x,t,\lambda,y,s,\sigma)|^2\, \frac {dydsd\sigma}{\sigma}\biggr )^{1/2}\leq ca^{1/2}\lambda^{-(n+2)/2},
    \end{eqnarray}
    whenever $a\in (0,\lambda/2)$. Using this, and by now standard energy estimates for $u$, we see that
      \begin{eqnarray}\label{eq9+moabcl}
            I_1&\leq&c\epsilon^{-3/2}\sup_{0<\sigma<3\epsilon}||u(\cdot,\cdot,\sigma)||_2\epsilon^{3/2}\lambda^{-(n+2)/2}\notag\\
            &=&c\lambda^{-(n+2)/2}\sup_{0<\sigma<3\epsilon}||u(\cdot,\cdot,\sigma)||_2.
    \end{eqnarray}
    Hence, as, by assumption, $u(\cdot,\cdot,\sigma)\to 0$ in $L^2(\mathbb R^{n+1},\mathbb C)$ as $\sigma\to 0$ we can conclude that $I_1\to 0$ as
    $\epsilon\to 0$. To estimate $I_2$ we first note, by the solvability of $(D2)$, that
        \begin{eqnarray}\label{eq9+moabcl}
            I_2&\leq &\frac cR\biggl(\int_{(D_2\cup D_3)\cap \Omega_{\lambda/4}}|G|^2\, \frac {dydsd\sigma}{\sigma}\biggr )^{1/2}|||\sigma\nabla u|||_+,\notag\\
    &\leq &\frac cR\biggl(\int_{(D_2\cup D_3)\cap \Omega_{\lambda/4}}|G|^2\, \frac {dydsd\sigma}{\sigma}\biggr )^{1/2},
    \end{eqnarray}
    for some constant $c<\infty$ now also depending on $u$. To proceed we now need, in analogy with  \cite{AAAHK}, a H{\"o}lder type estimate for $G$ close to  $\mathbb R^{n+1}=\partial \mathbb R^{n+2}$. Fortunately there are several recent papers dealing with the construction and estimates of Green's functions for parabolic equations and systems. We here choose to quote some results from \cite{DK}. Indeed, let $\alpha$ be the De Giorgi-Nash exponent in
    \eqref{eq14+}-\eqref{eq14++} in the case $p=2$. Theorem 3.16 in \cite{DK} gives the existence of positive constants $c$ and $\kappa$ such that
    \begin{eqnarray}\label{eq9+moabclse}
    &&|G(x,t,\lambda,y,s,\sigma)|\notag\\
    &\leq& c(\delta(x,t,\lambda,y,s,\sigma))^\alpha(t-s)^{-(n+1)/2}\exp\biggl (-\kappa\frac {(|\lambda-\sigma|+|x-y|)^2}{t-s}\biggr )
    \end{eqnarray}
    whenever $(x,t,\lambda)$, $(y,s,\sigma)\in \mathbb R^{n+2}_+$, $t>s$, and where
    \begin{eqnarray*}\label{eq9+moabclse+}
&&\delta(x,t,\lambda,y,s,\sigma)\notag\\
&:=&\biggl(1\wedge\frac{\delta(x,t,\lambda)}{|\lambda-\sigma|+|x-y|+|t-s|^{1/2}}\biggr ) \biggl(1\wedge\frac{\delta(y,s,\sigma)}{|\lambda-\sigma|+|x-y|+|t-s|^{1/2}}\biggr ),
    \end{eqnarray*}
    $\delta(x,t,\lambda)=\lambda$, $\delta(y,s,\sigma)=\sigma$. Using this we see that
            \begin{eqnarray}\label{eq9+moabclsea}
       \frac cR\biggl(\int_{(D_2\cup D_3)\cap \Omega_{\lambda/4}}|G|^2\, \frac {dydsd\sigma}{\sigma}\biggr)^{1/2}&\leq&c_\lambda R^{-2\alpha}.
    \end{eqnarray}
    Putting these estimates together we can conclude that
            \begin{eqnarray}\label{eq9+moabclse+}
            I_2&\leq & c_\lambda R^{-2\alpha}\to 0\mbox{ as $R\to \infty$}.
    \end{eqnarray}
    Furthermore, choosing $a=\lambda/4$ in \eqref{eq9+moabcl} $(ii)$ we also see that
               \begin{eqnarray}\label{eq9+moabclse++}
            I_3&\leq & c_\lambda R^{-1}|||\sigma\nabla u|||_+\to 0\mbox{ as $R\to \infty$}.
    \end{eqnarray}
    Put together we can conclude, by letting either $\epsilon\to 0$, or using that $u(\cdot,\cdot,\sigma)\to 0$ in $L^2(\mathbb R^{n+1},\mathbb C)$ as $\sigma\to 0$, or by letting $R\to \infty$, that $I\to 0$. By similar arguments, writing $II=II_1+II_2+II_3$, $III=III_1+III_2+III_3$, again letting either $\epsilon\to 0$, or using that $u(\cdot,\cdot,\sigma)\to 0$ in $L^2(\mathbb R^{n+1},\mathbb C)$ as $\sigma\to 0$, or by letting $R\to \infty$ it also follows that  $II\to 0$,  $III\to 0$. In particular, $u\equiv 0$. We omit further details  and claim that the proof of uniqueness for $(D2)$ can be completed in this manner.

    To prove $(ii)$ we suppose that $\tilde N_\ast(\nabla u)\in L^2(\mathbb R^{n+1})$, $\tilde N_\ast(H_tD_t^{1/2}u)\in L^2(\mathbb R^{n+1})$ and that $u\to 0$ n.t in $\mathbb H(\mathbb R^{n+1},\mathbb C)$ as $\lambda\to 0$. In this case we again express $u(x,t,\lambda)=(u\theta)(x,t,\lambda)$ as above getting three terms $I$, $II$, $III$. We then split each of these terms into three terms. Choosing $a=2\epsilon$ in \eqref{eq9+moabcl} $(i)$, applying Lemma \ref{trace2} with $f\equiv 0$, using H{\"o}lder's inequality and standard energy estimate applied to $\nabla G$,  we then see that
    $$I_1+II_1\leq c\epsilon\lambda^{-(n+2)}||\tilde N_\ast(\nabla u)||_2\to 0\mbox{ as $\epsilon\to 0$}.$$
    All other pieces can be handled as well, see for instance the proof of Lemma 4.31 in \cite{AAAHK}. We here omit further details and claim that the proof of uniqueness for $(R2)$ can be completed in this manner.
     \end{proof}

             \begin{lemma}\label{trace7++} Assume that $\mathcal{H}$,  $\mathcal{H}^\ast$ satisfy \eqref{eq3}-\eqref{eq4} as well as \eqref{eq14+}-\eqref{eq14++}.  Assume the existence of solutions to $(N2)$ and assume that $\mathcal{H}$, $\mathcal{H}^\ast$ have bounded, invertible and good layer potentials in the sense of Definition \ref{blayer+}, for some constant $\Gamma$.  Then the solutions to $(N2)$ are unique in the sense that
           \begin{eqnarray}
     (iii)&&\mbox{if $u$ solves $(N2)$, and $\partial_\nu u=0$ in the sense of Lemma \ref{trace3} $(i)$ and $(ii)$,}\notag\\
     &&\mbox{then $u\equiv 0$ modulo constants.}
     \end{eqnarray}
     \end{lemma}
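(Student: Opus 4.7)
The plan is to follow the Green's-function strategy used for $(D2)$ and $(R2)$ uniqueness in Lemma \ref{trace7}, replacing the Dirichlet Green's function by a Neumann one. Fix $(x,t,\lambda)\in\mathbb{R}^{n+2}_+$ and set $G_N=\Gamma^\ast-w^\ast_{N,(x,t,\lambda)}$, where $w^\ast_{N,(x,t,\lambda)}$ solves the adjoint Neumann problem for $\mathcal{H}^\ast$ in $\mathbb{R}^{n+2}_+$ with boundary datum $\partial_{\nu^\ast}\Gamma^\ast(x,t,\lambda;\cdot,\cdot,0)$, which lies in $L^2(\mathbb{R}^{n+1},\mathbb{C})$ by Lemma \ref{le3}. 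Existence of $w^\ast_{N,(x,t,\lambda)}$ is obtained by representing it as a single layer potential for $\mathcal{H}^\ast$ and invoking the invertibility of $\tfrac12 I+\tilde{\mathcal{K}}^{\mathcal{H}^\ast}$ from Definition \ref{blayer+}(xii). By construction, $\mathcal{H}^\ast G_N=\delta_{(x,t,\lambda)}$ in $\mathbb{R}^{n+2}_+$ and $\partial_{\nu^\ast}G_N|_{\sigma=0}=0$.

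Next I would apply Green's identity on a truncated cylindrical domain $\Omega_{R,\epsilon}=\{(y,s,\sigma):\|(y-x,s-t)\|<R,\ \epsilon<\sigma<R\}$ with smooth cutoffs, exactly as in the proof of Lemma \ref{trace7}. This should yield a representation
\[
u(x,t,\lambda) = -\int_{\mathbb{R}^{n+1}}\overline{G_N|_{\sigma=0}}\,\partial_\nu u\,dy\,ds + \int_{\mathbb{R}^{n+1}}u|_{\sigma=0}\,\overline{\partial_{\nu^\ast}G_N|_{\sigma=0}}\,dy\,ds + \mathcal{E}(R,\epsilon),
\]
where $\mathcal{E}(R,\epsilon)$ collects the contributions from the outer faces of $\Omega_{R,\epsilon}$. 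The first boundary integral vanishes because $\partial_\nu u=0$ by hypothesis, and the second because $\partial_{\nu^\ast}G_N|_{\sigma=0}=0$ by construction. The limit $\epsilon\to 0$ should be routine, using the slice bound $\sup_\lambda\|\nabla u(\cdot,\cdot,\lambda)\|_2\leq c\|\tilde{N}_\ast(\nabla u)\|_2$ from Lemma \ref{trace1}, together with the pointwise oscillation control in Lemma \ref{trace2}(ii) (whose proof requires only $\tilde{N}_\ast(\nabla u)\in L^2$).

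The main obstacle is the far-field limit $R\to\infty$: one must show that $\mathcal{E}(R,\epsilon)$ converges to a constant $c$ independent of $(x,t,\lambda)$, which would then give $u\equiv c$. Unlike the Dirichlet Green's function used in Lemma \ref{trace7} --- whose H\"older decay from \cite{DK} gives direct pointwise control --- the correction $w^\ast_{N,(x,t,\lambda)}$ is only $L^2$-bounded on slices and does not obviously decay at spatial infinity. However, since $w^\ast_{N,(x,t,\lambda)}$ is a single layer potential for $\mathcal{H}^\ast$, the non-tangential maximal function bounds of Definition \ref{blayer+}(iii), (v), (vi) and the estimates of Lemma \ref{lemsl1++} give refined integrability for $\nabla w^\ast_{N,(x,t,\lambda)}$ and $\partial_{\nu^\ast}w^\ast_{N,(x,t,\lambda)}$ on slices $\{\sigma=R\}$ and on annular regions $\{\|(y-x,s-t)\|\sim R\}$. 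Combining these with the Gaussian decay of $\Gamma^\ast$ at spatial infinity, the slice bound for $\nabla u$, and the fact that $u$ itself need not decay at infinity (which accounts for the additive constant $c$ in the conclusion), one extracts the cancellations needed to identify the limit of $\mathcal{E}(R,\epsilon)$ with a scalar independent of $(x,t,\lambda)$ and conclude $u\equiv c$.
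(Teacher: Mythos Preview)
Your approach via a Neumann Green's function is \emph{genuinely different} from what the paper does, and the paper's route is worth knowing because it avoids precisely the difficulty you flag as ``the main obstacle''.

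The paper does not build a Neumann Green's function at all. Instead it reduces $(N2)$ uniqueness to $(R2)$ uniqueness (already proved in Lemma~\ref{trace7}) through the following indirect argument. The bulk of the work is to prove the auxiliary estimate
\[
\sup_{\lambda>0}\|H_tD^t_{1/2}u(\cdot,\cdot,\lambda)\|_2<\infty
\]
(this is the claim~\eqref{cla}). Once this is known, Lemma~\ref{trace2} applies and yields a trace $u_0\in\mathbb H(\mathbb R^{n+1},\mathbb C)$. By the bijectivity of $\mathcal{S}_0^{\mathcal H}:L^2\to\mathbb H$ (Definition~\ref{blayer+}(xiii)) and uniqueness in $(R2)$, one gets $u(\cdot,\cdot,\lambda)=\mathcal{S}_\lambda^{\mathcal H}\bigl((\mathcal{S}_0^{\mathcal H})^{-1}u_0\bigr)$; then $\partial_\nu u=0$ forces $(\tfrac12 I+\tilde{\mathcal K}^{\mathcal H})((\mathcal{S}_0^{\mathcal H})^{-1}u_0)=0$, and bijectivity of $\tfrac12 I+\tilde{\mathcal K}^{\mathcal H}$ gives $u_0=0$ in $\mathbb H$, hence $u_0$ is constant. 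The estimate~\eqref{cla} itself is obtained by a square-function argument: one localizes, represents $\partial_\lambda u$ via the double layer (using $(D2)$ solvability for $\mathcal H$), and controls $\|H_tD^t_{1/2}w(\cdot,\cdot,0)\|_2$ through iterated integration by parts in $\lambda$, reducing everything to $\|\tilde N_\ast(\nabla u)\|_2$.

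Your proposal, by contrast, has a real gap exactly where you say ``one extracts the cancellations needed''. In the Green's-identity terms of type $II=\int|\nabla G_N||u||\nabla\theta|$ and $III=\int|G_N||u||\partial_s\theta|$ (in the notation of Lemma~\ref{trace7}), the integrand involves $u$ itself, not $\nabla u$. The $(N2)$ hypothesis gives no direct control on $u$ on far-away slices or annuli; $u$ could be a large constant, and without a Poincar\'e-type subtraction whose mean is shown to stabilize as $R\to\infty$, these terms need not converge. For the Dirichlet case the paper uses the H\"older decay of the Dirichlet Green's function from \cite{DK}; no analogous pointwise estimate for a Neumann Green's function is available here, and the non-tangential bounds for $w^\ast_N$ you invoke control its gradient, not its size. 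So the step ``identify the limit of $\mathcal E(R,\epsilon)$ with a scalar independent of $(x,t,\lambda)$'' is not substantiated, and it is precisely this difficulty that the paper's indirect route circumvents by first upgrading the regularity of $u$.
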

     \begin{proof} Assume that $\tilde N_\ast(\nabla u)\in L^2(\mathbb R^{n+1})$ and that $\partial_\nu u=0$ in the sense of Lemma \ref{trace3} $(i)$ and $(ii)$. We claim that
     \begin{eqnarray}\label{cla}
     \sup_{\lambda>0}||H_tD^t_{1/2} u(\cdot,\cdot,\lambda)||_2<\infty.
     \end{eqnarray}
     Assuming \eqref{cla} for now  we see, using  Lemma \ref{trace2} $(i)$,  that $u\to u_0$ n.t for some $u_0\in{\mathbb H}(\mathbb R^{n+1},\mathbb C)$. Using that  $\mathcal{H}$  has bounded, invertible and good layer potentials in the sense of Definition \ref{blayer+}, and in particular that
     $\mathcal{S}_\lambda^{\mathcal{H}}|_{\lambda=0}:L^2(\mathbb R^{n+1},\mathbb C)\to {\mathbb H}(\mathbb R^{n+1},\mathbb C)$ is a bijection,  and the uniqueness in $(R2)$, see Lemma \ref{trace7},  we see that
     $$u(\cdot,\cdot,\lambda)=\mathcal{S}_\lambda^{\mathcal{H}}((\mathcal{S}_0^{\mathcal{H}})^{-1}(u_0)).$$
     In particular, using Lemma \ref{trace4} we have
     $$0=\partial_\nu u=\biggl (\frac 1 2I+\tilde{\mathcal{K}}^{\mathcal{H}}\biggr )((\mathcal{S}_0^{\mathcal{H}})^{-1}(u_0)).$$
     Using the assumptions that $(\frac 1 2I+\tilde{\mathcal{K}}^{\mathcal{H}}):L^2(\mathbb R^{n+1},\mathbb C)\to L^2(\mathbb R^{n+1},\mathbb C)$ and $\mathcal{S}_0^{\mathcal{H}}:L^2(\mathbb R^{n+1},\mathbb C)\to {\mathbb H}(\mathbb R^{n+1},\mathbb C)$ are bijections, we can conclude that $u_0=0$ in the sense of
     ${\mathbb H}(\mathbb R^{n+1},\mathbb C)$. In particular, $u_0$ is constant a.e., and  by uniqueness in $(R2)$ we see that $u$ is constant. Hence it only remains to prove \eqref{cla}. To start the proof of \eqref{cla}  we  fix $\lambda_0>0$  and we let, for $R\gg \lambda_0$ given,
               \begin{eqnarray}
               D_1&=&\{(x,t,\lambda)\in\mathbb R^{n+2}_+: (x,t)\in Q_{2R},\ 0<\lambda<2R\},\notag\\
               D_2&=&\{(x,t,\lambda)\in\mathbb R^{n+2}_+: (x,t)\in Q_{2R},\ 2R\leq \lambda<4R\},\notag\\
               D_3&=&\{(x,t,\lambda)\in\mathbb R^{n+2}_+: (x,t)\in Q_{6R},\ 0<\lambda<6R\}.
\end{eqnarray}
We choose $\phi\in C_0^\infty(Q_{2R}\times (-2R,2R))$, $\phi\geq 0$, with $\phi\equiv 1$ on $Q_{R}\times (-R,R)$ and such that
$$||\partial_t\phi||_{\infty}+||\nabla^2\phi||_\infty\leq cR^{-2}.$$
We introduce
$$v(x,t,\lambda)=u(x,t,\lambda_0+\lambda),$$
and we let
$$w(x,t,\lambda)=(v(x,t,\lambda)-m_{D_1}v)\phi(x,t,\lambda),$$
where
$$m_{D_1}v=\mean{D_1}v(x,t,\lambda)\, dxdtd\lambda.$$  We note that
          \begin{eqnarray}
||H_tD^t_{1/2} u(\cdot,\cdot,\lambda_0)||_2^2&\approx&\int_{\mathbb R}\int_{\mathbb R}\int_{\mathbb R^n}\frac {|u(x,t,\lambda_0)-u(y,s,\lambda_0)|^2}{(s-t)^2}\, dxdtds\notag\\
&=&\int_{\mathbb R}\int_{\mathbb R}\int_{\mathbb R^n}\frac {|v(x,t,0)-v(y,s,0)|^2}{(s-t)^2}\, dxdtds.
\end{eqnarray}
Hence, using the definition of $w$, and that $w=v-m_{D_1}v$ on $Q_{R}\times (-R,R)$, we  see that
         \begin{eqnarray}
&&\int_{-R}^R\int_{-R}^R\int_{\mathbb R^n}\frac {|u(x,t,\lambda_0)-u(y,s,\lambda_0)|^2}{(s-t)^2}\, dxdtds\notag\\
&\leq&\int_{-R}^R\int_{-R}^R\int_{\mathbb R^n}\frac {|w(x,t,0)-w(y,s,0)|^2}{(s-t)^2}\, dxdtds\leq c||H_tD^t_{1/2} w(\cdot,\cdot,0)||_2^2.
\end{eqnarray}
Letting $R\to\infty$ we see  that \eqref{cla} follows once we can prove that
   \begin{eqnarray}\label{cla+}
     ||H_tD^t_{1/2} w(\cdot,\cdot,0)||_2\leq c||\tilde N_\ast(\nabla u)||_2,
     \end{eqnarray}
     for some $c$. To start the proof of \eqref{cla+} we note that
      \begin{eqnarray}\label{cla++}
     ||H_tD^t_{1/2} w(\cdot,\cdot,0)||_2^2&=&-\int_{0}^\infty\int_{\mathbb R^{n+1}}(D_{1/2}^tw)\overline{(D_{1/2}^t\partial_\lambda w)}\, dxdtd\lambda\notag\\
     &\leq&2\biggl (\int_{0}^\infty\int_{\mathbb R^{n+1}}|D_{1/4}^t\partial_\lambda w|^2\, dxdtd\lambda\biggr )^{1/2}\notag\\
     &&\times \biggl (\int_{0}^\infty\int_{\mathbb R^{n+1}}|D_{3/4}^tw|^2\, dxdtd\lambda\biggr )^{1/2}\notag\\
     &=:&2I_1^{1/2}I_2^{1/2}.
     \end{eqnarray}
     Integrating by parts with respect to $\lambda$ we see that
       \begin{eqnarray}\label{cla+++}
     I_1&=&-\int_{0}^\infty\int_{\mathbb R^{n+1}}(D_{1/4}^t\partial_\lambda w)\overline{(D_{1/4}^t\partial_\lambda^2 w)}\, \lambda dxdtd\lambda\notag\\
     &\leq&2\biggl (\int_{0}^\infty\int_{\mathbb R^{n+1}}|\partial_\lambda^2 w|^2\, \lambda dxdtd\lambda\biggr )^{1/2}\notag\\
     &&\times \biggl (\int_{0}^\infty\int_{\mathbb R^{n+1}}|D_{1/2}^t\partial_\lambda w|^2\, \lambda dxdtd\lambda\biggr )^{1/2},
     \end{eqnarray}
     and
         \begin{eqnarray}\label{cla++++}
     I_2&=&-\int_{0}^\infty\int_{\mathbb R^{n+1}}(D_{3/4}^t\partial_\lambda w)(D_{3/4}^t\partial_\lambda w)\, \lambda dxdtd\lambda\notag\\
     &\leq&2\biggl (\int_{0}^\infty\int_{\mathbb R^{n+1}}|D_{1/2}^t\partial_\lambda w|^2\, \lambda dxdtd\lambda\biggr )^{1/2}\notag\\
     &&\times \biggl (\int_{0}^\infty\int_{\mathbb R^{n+1}}|\partial_t w|^2\, \lambda dxdtd\lambda\biggr )^{1/2}.
     \end{eqnarray}
     We also have, by integration by parts and by using the H{\"o}lder inequality, that
             \begin{eqnarray}\label{cla++++}
&&\int_{0}^\infty\int_{\mathbb R^{n+1}}|\partial_t w|^2\, \lambda dxdtd\lambda\leq c\int_{0}^\infty\int_{\mathbb R^{n+1}}|\partial_t\partial_\lambda w|^2\, \lambda^3\,  dxdtd\lambda.
     \end{eqnarray}
     Hence, we see that the proof of \eqref{cla+} is reduced to proving that
              \begin{eqnarray}\label{cla++++a}
              (i)&&\int_{0}^\infty\int_{\mathbb R^{n+1}}|\partial_\lambda^2 w|^2\, \lambda dxdtd\lambda\leq c||\tilde N_\ast(\nabla u)||_2^2,\notag\\
              (ii)&&\int_{0}^\infty\int_{\mathbb R^{n+1}}|\partial_t \partial_\lambda w|^2\, \lambda^3 dxdtd\lambda\leq c||\tilde N_\ast(\nabla u)||_2^2,\notag\\
              (iii)&&\int_{0}^\infty\int_{\mathbb R^{n+1}}|D_{1/2}^t\partial_\lambda w|^2\, \lambda dxdtd\lambda\leq c||\tilde N_\ast(\nabla u)||_2^2.
     \end{eqnarray}
     To start the proof of \eqref{cla++++a} we note that we can apply $(D2)$ to  $\partial_\lambda v$. Indeed, by the definition of bounded, invertible and good layer potentials in the sense of Definition \ref{blayer+}, $\partial_\lambda v=\mathcal{D^{\mathcal{H}}}f$ for some $f$ such that
     $$||f||_2\leq c||\tilde N_\ast(\nabla v)||_2\leq c||\tilde N_\ast(\nabla u)||_2.$$
     Using this, and  again using the assumptions of Lemma \ref{trace7++}, see Remark \ref{resea} and Lemma \ref{th0uu} below, as well as Lemma \ref{le1a}   we see that
        \begin{eqnarray}\label{cla++++b}
              (i')&&\int_{0}^\infty\int_{\mathbb R^{n+1}}|\partial_\lambda^2 v|^2\, \lambda dxdtd\lambda\leq c||\tilde N_\ast(\nabla u)||_2^2,\notag\\
              (ii')&&\int_{0}^\infty\int_{\mathbb R^{n+1}}|\partial_t \partial_\lambda v|^2\, \lambda^3 dxdtd\lambda\leq c||\tilde N_\ast(\nabla u)||_2^2.
     \end{eqnarray}
     To continue,
              \begin{eqnarray}\label{cla++++c}
              |\partial_\lambda^2 w|^2&\leq c\bigl(|\partial_\lambda^2 v|^2+|\partial_\lambda v|^2|\partial_\lambda\phi|^2+|v-m_{D_1}v|^2|\partial_\lambda^2\phi|^2\bigr ).
     \end{eqnarray}
     Using \eqref{cla++++c} and \eqref{cla++++b}, we see that
     \begin{eqnarray}\label{cla++++ag}
              \int_{0}^\infty\int_{\mathbb R^{n+1}}|\partial_\lambda^2 w|^2\, \lambda dxdtd\lambda&\leq& c||\tilde N_\ast(\nabla u)||_2^2\notag\\
              &&+cR^{-1}\int_{D_1}|\partial_\lambda v|^2\, dxdtd\lambda\notag\\
              &&+cR^{-3}\int_{D_1}|v-m_{D_1}v|^2\, \lambda dxdtd\lambda.
              \end{eqnarray}
              Hence,
                   \begin{eqnarray}\label{cla++++agg}
              \int_{0}^\infty\int_{\mathbb R^{n+1}}|\partial_\lambda^2 w|^2\, \lambda dxdtd\lambda&\leq& c||\tilde N_\ast(\nabla u)||_2^2\notag\\
              &&+cR^{-3}\int_{D_1}|v-m_{D_1}v|^2\, \lambda dxdtd\lambda.
              \end{eqnarray}
              Also,
                   \begin{eqnarray}\label{cla++++cg}
              |\partial_t\partial_\lambda w|^2&\leq& c\bigl(|\partial_t\partial_\lambda v|^2\phi^2+|\partial_tv|^2|\partial_\lambda \phi|^2+|\partial_\lambda v|^2|\partial_t \phi|^2\bigr)\notag\\
              &&+c|v-m_{D_1}v|^2|\partial_t\partial_\lambda\phi|^2.
     \end{eqnarray}
     Hence, by similar considerations, using also Lemma \ref{le1a}, we see that
          \begin{eqnarray}\label{cla++++aggg}
              \int_{0}^\infty\int_{\mathbb R^{n+1}}|\partial_t\partial_\lambda w|^2\, \lambda^3 dxdtd\lambda&\leq& c||\tilde N_\ast(\nabla u)||_2^2\notag\\
              &&+cR^{-3}\int_{D_1}|v-m_{D_1}v|^2\, \lambda dxdtd\lambda.
              \end{eqnarray}
              Finally,
                     \begin{eqnarray}\label{cla++++af}
             &&\int_{0}^\infty\int_{\mathbb R^{n+1}}|D_{1/2}^t\partial_\lambda w|^2\, \lambda dxdtd\lambda\notag\\
             &=&-2\int_{0}^\infty\int_{\mathbb R^{n+1}}D_{1/2}^t\partial_\lambda w\overline{D_{1/2}^t\partial_\lambda^2 w}\, \lambda^2 dxdtd\lambda\notag\\
             &&\leq c\biggl (\int_{0}^\infty\int_{\mathbb R^{n+1}}|\partial_\lambda^2 w|^2\, \lambda dxdtd\lambda\biggr )^{1/2}
             \biggl (\int_{0}^\infty\int_{\mathbb R^{n+1}}|\partial_t\partial_\lambda w|^2\, \lambda^3 dxdtd\lambda\biggr )^{1/2}.
     \end{eqnarray}
     Based on this we see that to complete the proof of \eqref{cla++++a} $(i)$-$(iii)$ it suffices to prove that
     \begin{eqnarray}\label{cla++++agggs}
             R^{-3}\int_{D_1}|v-m_{D_1}v|^2\, \lambda dxdtd\lambda\leq c||\tilde N_\ast(\nabla u)||_2^2.
              \end{eqnarray}
     To prove this we first note that
                 \begin{eqnarray}\label{cla++++e}
              T&:=&R^{-3}\int_{D_1}|v-m_{D_1}v|^2\, dxdtd\lambda\notag\\
              &\leq& cR^{-n-6}\int_{D_1}\int_{D_1}|v(y,s,\sigma)-v(x,t,\lambda)|^2\, dydsd\sigma dxdtd\lambda.
     \end{eqnarray}
     Consider $(y,s,\sigma)$, $(x,t,\lambda)\in D_1$. Let
$$(x',t',\lambda')=(x,t,\lambda+2R),\    (y',s',\sigma')=(y,s,\sigma+2R).$$
Note that $(x',t',\lambda')\in D_2$, $(y',s',\sigma')\in D_2$. Furthermore,
              \begin{eqnarray}\label{cla++++ef}
             |v(y,s,\sigma)-v(x,t,\lambda)|&\leq &|v(x',t',\lambda')-v(x,t,\lambda)|+|v(y,s,\sigma)-v(y',s',\sigma')|\notag\\
              &&+|v(x',t',\lambda')-v(y',s',\sigma')|\notag\\
              &\leq&cR \bigl(N_\ast(\partial_\lambda u)(x,t)+ N_\ast(\partial_\lambda u)(y,s)\bigr)\notag\\
              &&+|v(x',t',\lambda')-v(y',s',\sigma')|.
     \end{eqnarray}
     Hence, using the fundamental theorem of calculus, standard arguments, and Lemma \ref{le1a}, we see that
               \begin{eqnarray}\label{cla++++f}
              T&\leq& c||\tilde N_\ast(\nabla u)||_2^2+cR\int_{D_2}|\partial_tv(x,t,\lambda)|^2\, dxdtd\lambda\notag\\
              &\leq&c||\tilde N_\ast(\nabla u)||_2^2+cR^{-1}\int_{D_3}|\nabla v(x,t,\lambda)|^2\, dxdtd\lambda\notag\\
              &\leq& c||\tilde N_\ast(\nabla u)||_2^2.
     \end{eqnarray}
     This completes the proof of \eqref{cla++++agggs}, \eqref{cla++++a}, and hence the proof of \eqref{cla} and the lemma. \end{proof}

     \begin{remark} We here note that as part of the proof of Lemma \ref{trace7++} we have proved that if $\mathcal{H}$, $\mathcal{H}^\ast$ have bounded, invertible and good layer potentials in the sense of Definition \ref{blayer+}, for some constant $\Gamma$, then the estimate
     $$\sup_{\lambda>0}||H_tD_{1/2}^tu(\cdot,\cdot,\lambda)||_2\leq c||\tilde N_\ast(\nabla u)||_2$$
     holds, with a  uniform constant, for all solutions $u$ to $\mathcal{H}u= 0$ in $\mathbb R^{n+2}_+$ such that $\tilde N_\ast(\nabla u)\in L^2(\mathbb R^{n+1})$.
\end{remark}

\section{Existence of non-tangential limits}\label{sec4++}

Throughout this section we will assume that
\begin{eqnarray}\label{keysea}
&&\mbox{$\mathcal{H}$,  $\mathcal{H}^\ast$ satisfy \eqref{eq3}-\eqref{eq4} as well as \eqref{eq14+}-\eqref{eq14++}, and that}\notag\\
&&\mbox{$\mathcal{H}$, $\mathcal{H}^\ast$ have bounded, invertible and good layer potentials in the}\notag\\
&&\mbox{sense of Definition \ref{blayer+}, for some constant $\Gamma$.}
 \end{eqnarray}
 Note that \eqref{keysea} implies, in particular, that \eqref{asa1} holds.

\begin{lemma}\label{trace6} Assume \eqref{keysea}. Let $\psi\in C_0^\infty(\mathbb R^{n+1},\mathbb C)$ and consider $u(\cdot,\cdot,\lambda):=\mathcal{S}_\lambda^{\mathcal{H}}\psi(\cdot,\cdot)$ in $\mathbb R^{n+2}_-$. Let $u_0(\cdot,\cdot)=u(\cdot,\cdot,0)$. Then
     $$\mathcal{D}_\lambda^{\mathcal{H}} u_0=\mathcal{S}_\lambda^{\mathcal{H}} (\partial_\nu u),$$
     in $\mathbb R^{n+2}_-$ and where $\partial_\nu u$ exists in the sense of Lemma \ref{trace3}.
     \end{lemma}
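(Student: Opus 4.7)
My approach is to reduce the identity to a uniqueness argument for $(D2)$. Set
\[
F(x,t,\lambda)\;:=\;\mathcal{D}_\lambda^{\mathcal{H}}u_0(x,t)-\mathcal{S}_\lambda^{\mathcal{H}}(\partial_\nu u)(x,t),\qquad (x,t,\lambda)\in\mathbb{R}^{n+2}_-.
\]
Both $\mathcal{D}_\lambda^{\mathcal{H}}u_0$ and $\mathcal{S}_\lambda^{\mathcal{H}}(\partial_\nu u)$ are weak solutions of $\mathcal{H}\cdot=0$ in $\mathbb{R}^{n+2}_-$, so $F$ is one as well. Combining the bounds on $\mathcal{S}_\lambda^{\mathcal{H}}$ from Theorem \ref{th0} with the corresponding bounds on $\mathcal{D}_\lambda^{\mathcal{H}}$ from Remark \ref{resea}, and using Lemma \ref{trace1} and Lemma \ref{trace2} to see that $u_0\in\mathbb{H}\cap L^2$ and $\partial_\nu u\in L^2$ (the latter via the $\mathbb{R}^{n+2}_-$ analogue of Lemma \ref{trace3}), one checks that
\[
\sup_{\lambda<0}\|F(\cdot,\cdot,\lambda)\|_2+|||\lambda\nabla F|||_-<\infty.
\]
Thus $F$ is $(D2)$-admissible in $\mathbb{R}^{n+2}_-$ in the sense of \eqref{eq7}--\eqref{eq7+}, and by the $\mathbb{R}^{n+2}_-$-version of Lemma \ref{trace7}(i) it suffices to show $F(\cdot,\cdot,\lambda)\to 0$ in $L^2(\mathbb R^{n+1},\mathbb{C})$ as $\lambda\to 0^-$.

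\textbf{Identifying the trace.} The heart of the matter is this vanishing, which I would establish via the smoothed single layer $v^\eta:=\mathcal{S}_\lambda^{\mathcal{H},\eta}\psi$ of \eqref{sop}. By construction $v^\eta$ is smooth across $\{\lambda=0\}$ and, by \eqref{fsmooth}, satisfies $\mathcal{H}v^\eta=\psi\,\varphi_\eta(\lambda)$ classically on all of $\mathbb{R}^{n+2}$. Fix an interior point $(x_0,t_0,\lambda_0)\in\mathbb{R}^{n+2}_-$ and $R\gg 1$; I would apply the parabolic Green identity to the pair $(v^\eta,\Gamma(x_0,t_0,\lambda_0;\cdot))$ on $\tilde Q_R\cap\mathbb{R}^{n+2}_-$. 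The $R\to\infty$ limit of the side pieces is controlled by the Gaussian bounds of Lemma \ref{le2+}, the uniform $L^2$ control of $v^\eta$ and $\nabla v^\eta$ coming from Lemma \ref{smooth1}(v)--(vii), Lemma \ref{trace1}, and the trace inequality \eqref{trcc1}, leaving only the contribution from the flat boundary $\{\lambda=0\}$. Sending then $\eta\to 0$: Lemma \ref{smooth1}(v)--(vi) combined with Lemma \ref{trace4} and the $\mathbb R^{n+2}_-$ analogue of Lemma \ref{trace3} give $v^\eta|_{\lambda=0}\to u_0$ and $-e_{n+1}\cdot A\nabla v^\eta|_{\lambda=0}\to\partial_\nu u$ in the appropriate (weak $L^2$) sense; the interior source $\int_{\mathbb R^{n+2}_-}\Gamma\,\psi\,\varphi_\eta$ has magnitude $O(\eta)$ since $\lambda_0<0$ separates the pole from the shrinking support of $\varphi_\eta$. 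Matching the boundary terms using \eqref{eq11ed}--\eqref{eq11edsea+} identifies the two pieces of $F$, yielding $\mathcal{D}_{\lambda_0}^{\mathcal{H}}u_0(x_0,t_0)=\mathcal{S}_{\lambda_0}^{\mathcal{H}}(\partial_\nu u)(x_0,t_0)$ pointwise, and in particular the $L^2$-trace of $F$ at $\lambda=0^-$ vanishes.

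\textbf{The main obstacle.} The technically hardest part is the rigorous justification of the Green identity in our parabolic setting with complex coefficients: the bilinear form $\tilde B_+$ contains the fractional time derivative $D^t_{1/2}$ whose boundary and decay behaviour must be controlled using Lemma \ref{smooth1}(vi)--(vii), the trace inequality \eqref{trcc1}, and Lemma \ref{trace1}; this step has no analogue in the purely elliptic treatment of \cite{AAAHK}. A second delicate point is the $\eta\to 0$ identification of $-e_{n+1}\cdot A\nabla v^\eta|_{\lambda=0}$ as $\partial_\nu u$: because $\varphi_\eta$ is an even approximation of the Dirac mass, the smoothed conormal on $\{\lambda=0\}$ is the symmetric average of the two one-sided limits $(\pm\tfrac{1}{2}+\tilde{\mathcal{K}})\psi$ from Lemma \ref{trace4}, and the compensating $-\tfrac{1}{2}\psi$ needed to recover $\partial_\nu u=(-\tfrac{1}{2}+\tilde{\mathcal{K}})\psi$ is precisely the asymptotic contribution of the interior source $\int_{\mathbb R^{n+2}_-}\Gamma\,\psi\,\varphi_\eta$ as $\eta\to 0$, whose tracking requires careful bookkeeping. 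Once these are settled, Lemma \ref{trace7}(i) closes the argument.
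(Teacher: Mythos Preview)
Your reduction to uniqueness for $(D2)$ is redundant: in order to show that $F(\cdot,\cdot,\lambda)\to 0$ in $L^2$ as $\lambda\to 0^-$, you propose to prove the \emph{pointwise} identity $\mathcal{D}_{\lambda_0}^{\mathcal{H}}u_0(x_0,t_0)=\mathcal{S}_{\lambda_0}^{\mathcal{H}}(\partial_\nu u)(x_0,t_0)$ at every interior point $(x_0,t_0,\lambda_0)$ via a Green identity. But that is the full statement of the lemma, so the uniqueness wrapper adds nothing.

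More seriously, the Green identity you invoke does not produce what you claim. Pairing $v^\eta$ with $\Gamma(x_0,t_0,\lambda_0;\cdot)$ on $\tilde Q_R\cap\mathbb{R}^{n+2}_-$ yields a \emph{representation formula} for $v^\eta(x_0,t_0,\lambda_0)$ in terms of boundary data and the interior source, not the identity $\mathcal{D}_{\lambda_0}u_0=\mathcal{S}_{\lambda_0}(\partial_\nu u)$. In the $\eta\to 0$ limit the left side becomes $u(x_0,t_0,\lambda_0)=\mathcal{S}_{\lambda_0}\psi(x_0,t_0)$, the source contributes $\tfrac{1}{2}\mathcal{S}_{\lambda_0}\psi$, and the smoothed conormal converges to $-\tilde{\mathcal{K}}^{\mathcal{H}}\psi$ (the symmetric average). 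You would then need to combine these with $\partial_\nu u=(-\tfrac{1}{2}I+\tilde{\mathcal{K}}^{\mathcal{H}})\psi$ and cancel the $\mathcal{S}_{\lambda_0}\psi$ terms to extract the desired equality. This algebra can be made to work, but your sketch asserts the conclusion without carrying it out, and the use of the singular kernel $\Gamma$ makes the justification of the parabolic Green identity (with the $D_{1/2}^t$ terms) considerably harder than necessary.

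The paper avoids all of this by testing against $\phi\in C_0^\infty(\mathbb{R}^{n+1},\mathbb{C})$ and using duality: since $\mathrm{adj}(\mathcal{D}_\lambda^{\mathcal{H}})=-e_{n+1}\cdot A^\ast\nabla\mathcal{S}_{-\lambda}^{\mathcal{H}^\ast}$ and $\mathrm{adj}(\mathcal{S}_\lambda^{\mathcal{H}})=\mathcal{S}_{-\lambda}^{\mathcal{H}^\ast}$, the identity reduces to
\[
\int_{\mathbb{R}^{n+1}} u_0\,\overline{\partial_{\nu^\ast}w}\,dxdt=\int_{\mathbb{R}^{n+1}}(\partial_\nu u)\,\bar w\,dxdt,
\]
where $w(\cdot,\cdot,\sigma)=\mathcal{S}_{\sigma-\lambda}^{\mathcal{H}^\ast}\phi$ is a \emph{smooth} solution of $\mathcal{H}^\ast w=0$ in $\mathbb{R}^{n+2}_-$. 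This is exactly Green's second identity for the pair $(u,w)$, and it follows directly from the weak formulation \eqref{aaq} of Lemma~\ref{trace3} applied to $u$ with test function $\Psi_R w$ (and symmetrically to $w$), letting $R\to\infty$ and controlling the far-field error via the pointwise decay of Lemma~\ref{le2+}. No smoothing in $\eta$, no singular kernel, and no bookkeeping of half-jumps is needed.
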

     \begin{proof} It is enough to prove that
     \begin{eqnarray}\label{ibyparts-}\int_{\mathbb R^{n+1}} (\mathcal{D}_\lambda^{\mathcal{H}}  u_0)\bar\phi\, dxdt=\int_{\mathbb R^{n+1}} (\mathcal{S}_\lambda^{\mathcal{H}} (\partial_\nu u))\bar\phi\, dxdt,
     \end{eqnarray}
     whenever $\phi\in C_0^\infty(\mathbb R^{n+1},\mathbb C)$. Recall that the hermitian adjoint of $\mathcal{D}_\lambda^{\mathcal{H}}$ equals
     $-e_{n+1}\cdot A^\ast\nabla \mathcal{S}_\sigma^{\mathcal{H}^\ast}|_{\sigma=-\lambda}$, see \eqref{eq11edsea+}, and that the hermitian adjoint of $\mathcal{S}_\lambda^{\mathcal{H}}$ equals $\mathcal{S}_{-\lambda}^{\mathcal{H}^\ast}$. Let $v(\cdot,\cdot,\lambda)=\mathcal{S}_{\lambda}^{\mathcal{H}^\ast}\phi$ so that
     ${\mathcal{H}^\ast}v=0$ in $\mathbb R^{n+2}\setminus\{\lambda=0\}$. We consider $w(\cdot,\cdot,\sigma)=v(\cdot,\cdot,\sigma-\lambda)$ in $\mathbb R^{n+2}_-$ for $\lambda\geq 0$ fixed.   We claim that
              \begin{eqnarray}\label{ibyparts+}
      u(x,t,0)\overline{\partial_{\nu^\ast}w}(x,t,0),\,  \partial_\nu u(x,t,0)\bar w(x,t,0)\in L^1(\mathbb R^{n+1},\mathbb R).
      \end{eqnarray}
      To prove \eqref{ibyparts+} we see, by \eqref{asa1} and elementary estimates for single layer potentials, that
         \begin{eqnarray}\label{ibypartsse}
      &&\int_{\mathbb R^{n+1}} \bigl (|u(x,t,0)\overline{\partial_{\nu^\ast}w}(x,t,0)|+|\partial_\nu u(x,t,0)\bar w(x,t,0)|\bigr)\, dxdt\notag\\
      &&\leq c||\mathcal{S}_0^{\mathcal{H}}\psi||_2\sup_{\lambda<0}||\nabla \mathcal{S}_{\lambda}^{\mathcal{H}^\ast}\phi||_2\notag+c\sup_{\lambda<0}||\nabla \mathcal{S}_\lambda^{\mathcal{H}}\psi||_2\sup_{\lambda<0}||\mathcal{S}_{\lambda}^{\mathcal{H}^\ast}\phi||_2\notag\\
      &&\leq c_{\psi,\phi}\bigl (||\mathcal{S}_0^{\mathcal{H}}\psi||_2+\sup_{\lambda<0}||\mathcal{S}_{\lambda}^{\mathcal{H}^\ast}\phi||_2\bigr )\leq
      \tilde c_{\psi,\phi}<\infty.
      \end{eqnarray}
      Using \eqref{ibyparts+} we see that the proof of \eqref{ibyparts-} is reduced to proving that
           \begin{eqnarray}\label{ibyparts}
      \int_{\mathbb R^{n+1}} u(x,t,0)\overline{\partial_{\nu^\ast}w}(x,t,0)\, dxdt=\int_{\mathbb R^{n+1}} \partial_\nu u(x,t,0)\bar w(x,t,0)\, dxdt.
      \end{eqnarray}
        Let $\tilde Q_\rho=Q_\rho\times(-\rho,\rho)$, $\rho>0$, where $Q_\rho\subset\mathbb R^{n+1}$  is the standard parabolic cube in $\mathbb R^{n+1}$ with center at the origin and with side length defined by $\rho$. Let $R$ be so large that the supports of $\psi$ and $\phi$ are contained in in $Q_{R/4}$. Furthermore, let $\Psi_R\in C_0^\infty(\mathbb R^{n+2},\mathbb R)$, $\Psi_R\geq 0$,  be such that the support of $\Psi_R$ is contained in $\tilde Q_{2R}$ and such that $\Psi_R\equiv 1$ on
    $\tilde Q_R$. Then, using \eqref{asa1} and \eqref{aaq} we see that
           \begin{eqnarray}\label{aaqkla-}
     &&\int_{\mathbb R^{n+2}_-}\biggl (A\nabla u\cdot\overline{\nabla (\Psi_R w)}-D_t^{1/2}u\overline{H_tD^{1/2}_t{(\Psi_R w)}}\biggr )\, dxdtd\lambda\notag\\
     &&=\int_{\mathbb R^{n+1}}\partial_\nu u\overline{(\Psi_R w)}\, dxdt.
    \end{eqnarray}
     Using \eqref{ibyparts+} and \eqref{aaqkla-} we see, by dominated convergence and by letting $R\to\infty$, that if we can prove that
             \begin{eqnarray}\label{aaqkla+}
    \int_{\mathbb R^{n+2}_-\cap(\tilde Q_{2R}\setminus \tilde Q_R)}\biggl |\biggl (A\nabla u\cdot\overline{\nabla (\Psi_R w)}-D_t^{1/2}u\overline{H_tD^{1/2}_t{(\Psi_R w)}}\biggr )\biggr |\, dxdtd\lambda
    \end{eqnarray}
    tends to 0 as $R\to \infty$, then
              \begin{eqnarray}\label{aaqkla}
     \int_{\mathbb R^{n+2}_-}\biggl (A\nabla u\cdot\overline{\nabla v}-D_t^{1/2}u\overline{H_tD^{1/2}_t{w}}\biggr )\, dxdtd\lambda
     =\int_{\mathbb R^{n+1}}\partial_\nu u\bar w\, dxdt.
    \end{eqnarray}
    By the symmetry of our hypothesis we see that this proves \eqref{ibyparts}. In particular, the proof of the lemma is complete once we have verified that the expression in \eqref{aaqkla+} tends to 0 as $R\to \infty$.
      To estimate the expression in \eqref{aaqkla+} we first note that
                  \begin{eqnarray*}\label{aaqkla+a}
     &&\int_{\mathbb R^{n+2}_-\cap(\tilde Q_{2R}\setminus \tilde Q_R)}\biggl |\biggl (A\nabla u\cdot\overline{\nabla (\Psi_R w)}-D_t^{1/2}u\overline{H_tD^{1/2}_t{(\Psi_R w)}}\biggr )\biggr |\, dxdtd\lambda\notag\\
     &\leq&c\int_{\mathbb R^{n+2}\cap(\tilde Q_{2R}\setminus \tilde Q_R)}\biggl (R^{-1}|\nabla u||w|+|\nabla u||\nabla w|+|\partial_tu||w|\biggr )\, dxdtd\lambda.
    \end{eqnarray*}
    By our choice for $R$ we see that $\mathcal{H}u=0$ and  $\mathcal{H}^\ast w=0$ in $\mathbb R^{n+2}\cap(\tilde Q_{2R}\setminus \tilde Q_R)$. Hence, using this, Lemma \ref{le1--} and Lemma \ref{le1a}, we see that
                  \begin{eqnarray*}\label{aaqkla+a}
     &&\int_{\mathbb R^{n+2}\cap(\tilde Q_{2R}\setminus \tilde Q_R)}\biggl (R^{-1}|\nabla u||w|+|\nabla u||\nabla w|+|\partial_tu||w|\biggr )\, dxdtd\lambda\notag\\
     &&\leq cR^{-2}\biggl (\int_{\mathbb R^{n+2}\cap(\tilde Q_{3R}\setminus \tilde Q_{R/2})}|u|^2\, dxdtd\lambda\biggr )^{1/2}\biggl (\int_{\mathbb R^{n+2}\cap(\tilde Q_{3R}\setminus \tilde Q_{R/2})}|w|^2\, dxdtd\lambda\biggr )^{1/2}.
    \end{eqnarray*}
    Putting these estimates together, and applying Lemma \ref{le2+}, we can conclude that
             \begin{eqnarray}\label{aaqkla+-}
    &&\int_{\mathbb R^{n+2}_-\cap(\tilde Q_{2R}\setminus \tilde Q_R)}\biggl |\biggl (A\nabla u\cdot\overline{\nabla (\Psi_R w)}-D_t^{1/2}u\overline{H_tD^{1/2}_t{(\Psi_R w)}}\biggr )\biggr |\, dxdtd\lambda\notag\\
    &&\leq c_{\psi,\phi} R^{-n-1}\to 0\mbox{ as $R\to\infty$.}
    \end{eqnarray}
    This completes the proof of the lemma.
    \end{proof}

           \begin{lemma}\label{trace5}   Assume \eqref{keysea}.  Then
     $$\mathcal{D}_{\pm\lambda}^{\mathcal{H}}f\to (\mp\frac 1 2I+ \mathcal{K}^{\mathcal{H}})f$$
     non-tangentially and in $L^2(\mathbb R^{n+1},\mathbb C)$ as $\lambda\to 0^+$ and whenever $f\in L^2(\mathbb R^{n+1},\mathbb C)$.
     \end{lemma}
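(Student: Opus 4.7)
I focus on the approach from below, $\lambda \to 0^+$ with $\mathcal{D}_{-\lambda}^{\mathcal{H}}$; the other case is symmetric. The strategy is to first verify both modes of convergence on a subclass $\mathcal{J} \subset L^2$ where Lemma \ref{trace6} supplies a single-layer representation, and then extend to all of $L^2$ by a standard density argument that relies on the $L^{2,\infty}$-bound for the non-tangential maximal function.

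\textbf{Convergence on a dense subclass.} Set $\mathcal{J} := \{\mathcal{S}_0^{\mathcal{H}}\psi : \psi \in C_0^\infty(\mathbb R^{n+1},\mathbb C)\}$. For $\psi \in C_0^\infty$, put $u(\cdot,\cdot,\lambda) := \mathcal{S}_\lambda^{\mathcal{H}}\psi$ on $\mathbb R^{n+2}_-$ and $f := \mathcal{S}_0^{\mathcal{H}}\psi$. Lemma \ref{trace6} yields the identity $\mathcal{D}_{-\lambda}^{\mathcal{H}} f = \mathcal{S}_{-\lambda}^{\mathcal{H}}(\partial_\nu u)$ on $\mathbb R^{n+2}_-$, with $\partial_\nu u \in L^2$ by Lemma \ref{trace4}(i). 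Setting $v_\sigma := \mathcal{S}_\sigma^{\mathcal{H}}(\partial_\nu u)$, I view $v$ as a solution of $\mathcal{H}v=0$ on $\mathbb R^{n+2}_-$; the bounds $\tilde N_\ast^-(\nabla v),\,\tilde N_\ast^-(H_tD_{1/2}^t v) \in L^2$ supplied by Definition \ref{blayer+}(iii),(v),(vi) make Lemma \ref{trace2} applicable, producing a trace $v_0 \in \mathbb H$ and the pointwise bound $|v_{-\lambda}(x,t)-v_0(x_0,t_0)| \leq c|\lambda|\tilde N_\ast^-(\nabla v)(x_0,t_0)$ along cones. This yields the non-tangential convergence $\mathcal{D}_{-\lambda}^{\mathcal{H}} f \to v_0$ almost everywhere. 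The $L^2$ convergence of $v_{-\lambda}$ to some limit $G \in L^2$ follows from the Cauchy estimate $\|v_{-\lambda_1} - v_{-\lambda_2}\|_2 \leq \Gamma|\lambda_1-\lambda_2|\,\|\partial_\nu u\|_2$ afforded by Definition \ref{blayer+}(i). By uniqueness of weak limits, Lemma \ref{trace4}(ii) identifies $v_0 = G = (\frac{1}{2}I + \mathcal{K}^{\mathcal{H}})f$.

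\textbf{Density extension.} Given $f \in L^2$ and $\epsilon > 0$, choose $g \in \mathcal{J}$ with $\|f-g\|_2 < \epsilon$, set $h := f-g$, and use linearity to bound, for $(x,t,-\lambda) \in \Gamma^-(x_0,t_0)$,
\begin{equation*}
\bigl|\mathcal{D}_{-\lambda}^{\mathcal{H}} f(x,t) - \bigl(\tfrac{1}{2}I + \mathcal{K}^{\mathcal{H}}\bigr)f(x_0,t_0)\bigr| \leq \bigl|\mathcal{D}_{-\lambda}^{\mathcal{H}} g(x,t) - \bigl(\tfrac{1}{2}I + \mathcal{K}^{\mathcal{H}}\bigr)g(x_0,t_0)\bigr| + N_\ast^-(\mathcal{D}_\sigma h)(x_0,t_0) + \bigl|\bigl(\tfrac{1}{2}I + \mathcal{K}^{\mathcal{H}}\bigr)h(x_0,t_0)\bigr|.
\end{equation*}
The first summand tends to zero non-tangentially by the previous step; the last two have $L^{2,\infty}$-norms at most $c\epsilon$ thanks to Lemma \ref{lemsl1++}(viii) and Lemma \ref{trace4}, so letting $\epsilon \to 0$ forces the $\limsup$ in $\lambda$ to vanish in measure, hence almost everywhere on $\mathbb R^{n+1}$. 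Strong $L^2$ convergence $\mathcal{D}_{-\lambda}^{\mathcal{H}} f \to (\frac{1}{2}I + \mathcal{K}^{\mathcal{H}})f$ follows analogously from the uniform bound $\sup_\lambda\|\mathcal{D}_{-\lambda}^{\mathcal{H}}\|_{2\to 2} \leq c$ of Remark \ref{resea}, the $L^2$-boundedness of $(\frac{1}{2}I + \mathcal{K}^{\mathcal{H}})$, and the strong convergence already established on $\mathcal{J}$.

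\textbf{Main obstacle.} The principal technical point is verifying that $\mathcal{J}$ is dense in $L^2$. Since $\mathcal{S}_0^{\mathcal{H}}:L^2\to\mathbb{H}$ is a bijection between spaces carrying genuinely different topologies, and is not known to be bounded as an operator $L^2 \to L^2$, the density of $\mathcal{S}_0^{\mathcal{H}}(C_0^\infty)$ in $L^2$ is not automatic. The natural remedy is to enlarge $\mathcal{J}$ to $\mathbb{H}\cap L^2$, which contains $C_0^\infty$ and is therefore dense in $L^2$, and to verify that the representation of Lemma \ref{trace6} extends by continuity from $\psi \in C_0^\infty$ to $\psi \in L^2$: the absolute convergence required in \eqref{ibyparts+} is preserved under $L^2$-approximation of $\psi$ thanks to the uniform bounds in Definition \ref{blayer+}, and the integration by parts argument of Lemma \ref{trace6} then goes through unchanged.
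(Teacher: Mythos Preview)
Your overall architecture matches the paper's: reduce to a dense subclass where Lemma \ref{trace6} furnishes the representation $\mathcal{D}_\lambda^{\mathcal{H}} f=\mathcal{S}_\lambda^{\mathcal{H}}(\partial_\nu u)$, obtain non-tangential and $L^2$ convergence on that subclass via Lemma \ref{trace2} and the Cauchy estimate respectively, and then extend using Lemma \ref{lemsl1++}(viii) and the uniform $L^2\to L^2$ bound for $\mathcal{D}_\lambda$.

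The one substantive divergence is in the choice of the dense class, and the obstacle you flag is real. Your proposed remedy --- enlarge to $\mathbb{H}\cap L^2$ and extend Lemma \ref{trace6} to $\psi\in L^2$ by approximation --- runs into a difficulty you have not addressed: if $\psi_k\in C_0^\infty$ approximate $\psi$ in $L^2$, then $\mathcal{S}_0^{\mathcal{H}}\psi_k\to\mathcal{S}_0^{\mathcal{H}}\psi$ a priori only in $\mathbb{H}$, not in $L^2$, while $\mathcal{D}_\lambda^{\mathcal{H}}$ is bounded on $L^2$ and has no obvious $\mathbb{H}\to L^2$ mapping property. So passing to the limit on the left-hand side of the identity is not justified by the bounds you invoke.

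The paper circumvents this cleanly by duality. Since $\mathcal{S}_0^{\mathcal{H}^\ast}:L^2\to\mathbb{H}$ is a bounded bijection (Definition \ref{blayer+}(xiii)), its adjoint $\mathcal{S}_0^{\mathcal{H}}:\mathbb{H}^{-1}\to L^2$ is also a bounded bijection. The Riesz representation for $\mathbb{H}^{-1}$ then shows that
\[
\bigl\{\mathcal{S}_0^{\mathcal{H}}\bigl(\div_{||}{\bf g}_{||}+D^t_{1/2}{\bf g}_{n+1}\bigr):\ {\bf g}\in C_0^\infty(\mathbb{R}^{n+1},\mathbb{C}^{n+1})\bigr\}
\]
is dense in $L^2$, because $C_0^\infty$ is dense in $L^2(\mathbb{R}^{n+1},\mathbb{C}^{n+1})$ and the composite map ${\bf g}\mapsto\mathcal{S}_0^{\mathcal{H}}(\div_{||}{\bf g}_{||}+D^t_{1/2}{\bf g}_{n+1})$ is bounded and surjective from $L^2$ onto $L^2$. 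Lemma \ref{trace6} is then invoked with $\psi=\div_{||}{\bf g}_{||}+D^t_{1/2}{\bf g}_{n+1}$; this is a mild extension of the lemma as stated (the fractional piece is not compactly supported), but its decay suffices for the integration-by-parts at infinity, and ${\bf g}\in C_0^\infty$ keeps the argument essentially unchanged. This route gives density for free and avoids the approximation issue in your fix.
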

     \begin{proof} Using Lemma \ref{trace4} we have that
     \begin{eqnarray}\label{ntq}\mathcal{D}_{\pm \lambda}^{\mathcal{H}}f\to (\mp\frac 1 2I+ \mathcal{K}^{\mathcal{H}})f
     \end{eqnarray}
     weakly in $L^2(\mathbb R^{n+1},\mathbb C)$ as $\lambda\to 0^+$. Hence, to prove the lemma it suffices to establish the existence of non-tangential limits and to establish establish the existence of the strong $L^2$-limits. We here give the proof only in the case of the
upper half-space, as the proof in the lower half-space is the same. Recall that $\mathcal{D}_{\lambda}^{\mathcal{H}}=-e_{n+1}A\mathcal{S}_\lambda^{\mathcal{H}}\cdot\nabla$. To establish the existence of non-tangential limits we observe that the operator adjoint to $\mathcal{S}_\lambda^{\mathcal{H}}\nabla$ is the operator
$(\nabla \mathcal{S}_\sigma^{\mathcal{H}^\ast})|_{\sigma=-\lambda}$ and that it is enough, by \eqref{asa1} and Lemma \ref{lemsl1++} $(viii)$, to prove the existence of non-tangential limits for $f$ in a dense subset of $L^2(\mathbb R^{n+1},\mathbb C)$. Recall the space
$\mathbb H^{-1}(\mathbb R^{n+1},\mathbb C)$ introduced in \eqref{fspace}. Embedded in \eqref{keysea} is the assumption that  $\mathcal{S}_0^{\mathcal{H}^\ast}:=\mathcal{S}_\lambda^{\mathcal{H}^\ast}|_{\lambda=0}$ is a bijection from $L^2(\mathbb R^{n+1},\mathbb C)$ to
${\mathbb H}(\mathbb R^{n+1},\mathbb C)$. Hence, by duality we have that $\mathcal{S}_0^{\mathcal{H}}:=\mathcal{S}_\lambda^{\mathcal{H}}|_{\lambda=0}$ is a bijection from ${\mathbb H}^{-1}(\mathbb R^{n+1},\mathbb C)$ to $L^2(\mathbb R^{n+1},\mathbb C)$. To proceed we need a better description of the elements in ${\mathbb H}^{-1}(\mathbb R^{n+1},\mathbb C)$ and to get this we consider ${\mathbb H}(\mathbb R^{n+1},\mathbb C)$ equipped with the inner product
$$(u,v):=\int_{\mathbb R^{n+1}}\bigl (\nabla_{||}u\cdot\nabla_{||}\bar v+D^t_{1/2}uD^t_{1/2}\bar v\bigr )\, dxdt.$$
Then ${\mathbb H}(\mathbb R^{n+1},\mathbb C)$ is a Hilbert space and by the Riesz representation theorem we see that
$${\mathbb H}^{-1}(\mathbb R^{n+1},\mathbb C)=\{\div_{||}{\bf g}_{||}+D^t_{1/2}{\bf g}_{n+1}:\ {\bf g}=({\bf g}_{||},{\bf g}_{n+1})\in L^2(\mathbb R^{n+1},\mathbb C^{n+1})\}.$$
Hence, as $C_0^\infty(\mathbb R^{n+1},\mathbb C^{n+1})$ is dense in $L^2(\mathbb R^{n+1},\mathbb C^{n+1})$ we can conclude that
      \begin{eqnarray}\label{eq4-}
L^2(\mathbb R^{n+1},\mathbb C)=\{\mathcal{S}_0^{\mathcal{H}}(\div_{||}{\bf g}_{||}+D^t_{1/2}{\bf g}_{n+1}):\ {\bf g}\in C_0^\infty(\mathbb R^{n+1},\mathbb C^{n+1})\}.
    \end{eqnarray}
    Using this, and given ${\bf g}\in C_0^\infty(\mathbb R^{n+1},\mathbb C^{n+1})$, we consider $$u(\cdot,\cdot,\lambda):=\mathcal{S}_\lambda^{\mathcal{H}}(\div_{||}{\bf g}_{||}+D^t_{1/2}{\bf g}_{n+1})$$ in $\mathbb R^{n+2}_-$ and we let
     $$f=u_0=u(\cdot,\cdot,0).$$ Using Lemma \ref{trace6} we obtain that
$$\mathcal{D}_\lambda^{\mathcal{H}} f=\mathcal{S}_\lambda^{\mathcal{H}}(\partial_\nu u).$$
Moreover, \eqref{asa1}, Lemma \ref{th0} and Lemma \ref{trace3} imply that $\partial_\nu u\in L^2(\mathbb R^{n+1},\mathbb C)$. Hence $\mathcal{S}_\lambda(\partial_\nu u)$ converges non-tangentially as $\lambda\to 0^-$. This prove the non-tangential version of the limit in \eqref{ntq} for $\mathcal{D}_{-\lambda}^{\mathcal{H}}f$ as $\lambda\to 0^+$. To establish the strong $L^2$-limits we first note that \eqref{asa1} implies, in particular, that uniform (in $\lambda$) $L^2$ bounds hold for
$\mathcal{D}_{\lambda}^{\mathcal{H}}$, see Remark \ref{resea}. Thus, again it is enough to establish convergence in a dense class. To this end, choose $f=u_0$ and $u$ as above. It suffices to show that $\mathcal{D}_{\lambda}^{\mathcal{H}}f$ is Cauchy convergent in  $L^2(\mathbb R^{n+1},\mathbb C)$, as $\lambda\to 0$. Suppose that $0<\lambda'<\lambda\to 0$, and observe, by Lemma \ref{trace6}, \eqref{asa1} and by the previous observation that $\partial_\nu u\in L^2(\mathbb R^{n+1},\mathbb C)$, that
 \begin{eqnarray}\label{ntq+}||\mathcal{D}_{\lambda}^{\mathcal{H}}f-\mathcal{D}_{\lambda'}^{\mathcal{H}}f||_2&=&\biggl\|\int_{\lambda'}^\lambda
 \partial_\sigma \mathcal{S}_\sigma^{\mathcal{H}}(\partial_\nu u)\, d\sigma\biggr \|_2\notag\\
 &\leq&(\lambda-\lambda')^{1/2}\bigl (\sup_{\lambda'<\sigma<\lambda}||\partial_\sigma \mathcal{S}_\sigma^{\mathcal{H}}(\partial_\nu u)||_2\bigr ) \to 0,
     \end{eqnarray}
     as $(\lambda-\lambda')\to 0$. This completes the proof of the lemma.\end{proof}

\begin{lemma} \label{tracce} Assume \eqref{keysea}.  Assume also that $\mathcal{H}u=0$ and that
     \begin{eqnarray}\label{claseat}\sup_{\lambda>0}||u(\cdot,\cdot,\lambda)||_2<\infty.
     \end{eqnarray}
     Then $u(\cdot,\cdot,\lambda)$ converges n.t and in $L^2(\mathbb R^{n+1},\mathbb C)$ as $\lambda\to 0^+$.
     \end{lemma}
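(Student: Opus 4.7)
The plan is to represent $u$ as a double-layer potential and then invoke the convergence statement of Lemma \ref{trace5}.

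First, since $\sup_{\lambda>0}\|u(\cdot,\cdot,\lambda)\|_2<\infty$ by \eqref{claseat}, the Banach--Alaoglu theorem yields a sequence $\lambda_k\downarrow 0$ and a function $u_0\in L^2(\mathbb R^{n+1},\mathbb C)$ with $u(\cdot,\cdot,\lambda_k)\rightharpoonup u_0$ weakly in $L^2$. By the invertibility built into \eqref{keysea} (Definition \ref{blayer+}~$(xi)$), the operator $-\tfrac12 I+\mathcal K^{\mathcal H}$ is a bijection on $L^2(\mathbb R^{n+1},\mathbb C)$; set $g:=(-\tfrac12 I+\mathcal K^{\mathcal H})^{-1}u_0$ and define $\tilde u(x,t,\lambda):=\mathcal D_\lambda^{\mathcal H}g(x,t)$ in $\mathbb R^{n+2}_+$. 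Remark \ref{resea} yields $\sup_{\lambda>0}\|\tilde u(\cdot,\cdot,\lambda)\|_2+|||\lambda\nabla\tilde u|||_+<\infty$, and Lemma \ref{trace5} shows $\tilde u\to u_0$ non-tangentially a.e.\ and in $L^2(\mathbb R^{n+1},\mathbb C)$ as $\lambda\to 0^+$. In particular, $\tilde u$ solves $(D2)$ with datum $u_0$.

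The heart of the argument is to prove $u\equiv\tilde u$. Set $v:=u-\tilde u$, so $\mathcal Hv=0$, $\sup_\lambda\|v(\cdot,\cdot,\lambda)\|_2<\infty$, and $v(\cdot,\cdot,\lambda_k)\rightharpoonup 0$ weakly in $L^2$. Caccioppoli's estimate (Lemma \ref{le1}~$(ii)$) gives $\|\nabla v(\cdot,\cdot,\lambda)\|_2\le c\lambda^{-1}\sup_\sigma\|v(\cdot,\cdot,\sigma)\|_2<\infty$, so $v(\cdot,\cdot,\lambda)$ is continuous in $\lambda>0$ as an $L^2$-valued function. To identify $v$, fix $\lambda_0>0$ and test against $\varphi\in C_0^\infty(\mathbb R^{n+1},\mathbb C)$: I would choose $w(\cdot,\cdot,\sigma):=\mathcal S_{\sigma-\lambda_0}^{\mathcal H^\ast}h$ with $h\in L^2$ built from $\varphi$ via the $L^2\to\mathbb H$ bijectivity of $\mathcal S_0^{\mathcal H^\ast}$ (Definition \ref{blayer+}~$(xiii)$). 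A Green-type integration by parts on the slab $\mathbb R^{n+1}\times(\lambda_k,2\lambda_0)$, carried out in the spirit of Lemma \ref{trace6} using \eqref{eq4} and the decay/regularity estimates of Section \ref{sec3} for $w$ and of Remark \ref{resea} for $\tilde u$, reduces the identity to a boundary term of the form $\int v(\cdot,\cdot,\lambda_k)\bar F_k\,dxdt$, where $F_k$ converges strongly in $L^2$; the weak convergence $v(\cdot,\cdot,\lambda_k)\rightharpoonup 0$ then annihilates this term as $k\to\infty$. Hence $\int v(\cdot,\cdot,\lambda_0)\bar\varphi\,dxdt=0$ for all $\varphi\in C_0^\infty$, so $v(\cdot,\cdot,\lambda_0)\equiv 0$, and since $\lambda_0$ was arbitrary, $u=\tilde u=\mathcal D_\lambda^{\mathcal H}g$ throughout $\mathbb R^{n+2}_+$. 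A second application of Lemma \ref{trace5} delivers both the non-tangential and $L^2$ convergence claimed.

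The main obstacle is precisely the identification $u=\tilde u$: one cannot apply the uniqueness statement of Lemma \ref{trace7}~$(i)$ directly, because we only have $\sup_\lambda\|u\|_2<\infty$ and lack any a priori square-function bound $|||\lambda\nabla u|||_+<\infty$ required by the definition of $(D2)$. The Green-identity approach above is designed to circumvent this: it transfers the mere weak subsequential convergence of $u$ at the boundary into an honest representation formula, using only the uniform $L^2$-bound on $u$ together with Caccioppoli on the interior and the strong quantitative estimates for $\tilde u$ and for the adjoint test solution $w$.
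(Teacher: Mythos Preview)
Your overall plan---show that $u=\mathcal D_\lambda^{\mathcal H}g$ for some $g\in L^2$ and then invoke Lemma \ref{trace5}---is exactly the paper's strategy. The substantive difference, and the place where your argument has a genuine gap, is the identification step.

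You try to prove $v:=u-\tilde u\equiv 0$ by a Green-type identity on the slab $\mathbb R^{n+1}\times(\lambda_k,2\lambda_0)$, testing against $w(\cdot,\cdot,\sigma)=\mathcal S^{\mathcal H^\ast}_{\sigma-\lambda_0}h$. The claim that this ``reduces to a boundary term of the form $\int v(\cdot,\cdot,\lambda_k)\bar F_k\,dxdt$'' is not justified: the Green identity on that slab produces, in addition, the co-normal term $\int \partial_\nu v(\cdot,\cdot,\lambda_k)\,\overline{w(\cdot,\cdot,\lambda_k)}\,dxdt$ (and analogous terms at $\sigma=2\lambda_0$). Caccioppoli only gives $\|\nabla v(\cdot,\cdot,\lambda_k)\|_2\lesssim \lambda_k^{-1}$, while $w(\cdot,\cdot,\lambda_k)=\mathcal S^{\mathcal H^\ast}_{\lambda_k-\lambda_0}h$ converges to $\mathcal S^{\mathcal H^\ast}_{-\lambda_0}h$, which does not vanish. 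So this term is not controlled, and weak convergence of $v(\cdot,\cdot,\lambda_k)$ alone does not kill it. The references to Lemma \ref{trace6} and Section \ref{sec3} do not supply the missing estimate: those results concern layer potentials, whereas here $v$ is an unstructured solution with only a uniform $L^2$ bound. You correctly identify the obstacle (no a priori square-function bound on $u$), but the proposed workaround does not close.

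The paper avoids this by an $\epsilon$-translation. For each $\epsilon>0$ set $f_\epsilon:=u(\cdot,\cdot,\epsilon)$ and $u_\epsilon:=\mathcal D^{\mathcal H}_\lambda\bigl((-\tfrac12 I+\mathcal K^{\mathcal H})^{-1}f_\epsilon\bigr)$. Then $U_\epsilon(\cdot,\cdot,\lambda):=u(\cdot,\cdot,\lambda+\epsilon)-u_\epsilon(\cdot,\cdot,\lambda)$ is a solution whose trace $f_\epsilon$ is attained \emph{strongly} in $L^2$ as $\lambda\to 0$ (by interior regularity for $u$ and Lemma \ref{trace5} for $u_\epsilon$). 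This puts $U_\epsilon$ squarely in the scope of the $(D2)$-uniqueness argument of Lemma \ref{trace7}, yielding $U_\epsilon\equiv 0$, i.e.\ $u(\cdot,\cdot,\lambda+\epsilon)=\mathcal D^{\mathcal H}_\lambda g_\epsilon$ for every $\epsilon>0$. Finally, passing to a weakly convergent subsequence $f_{\epsilon_k}\rightharpoonup f$ and running a short duality computation (test against $\adj\bigl((-\tfrac12 I+\mathcal K^{\mathcal H})^{-1}\mathcal D^{\mathcal H}_\lambda\bigr)g$) shows $u(\cdot,\cdot,\lambda)=\mathcal D^{\mathcal H}_\lambda\bigl((-\tfrac12 I+\mathcal K^{\mathcal H})^{-1}f\bigr)$. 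The $\epsilon$-shift is precisely what converts weak subsequential boundary behaviour into strong convergence for each fixed $\epsilon$, so that Lemma \ref{trace7} can be invoked and the problematic co-normal term never appears.
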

     \begin{proof} By Lemma \ref{trace5} it is enough to prove that $u(\cdot,\cdot,\lambda)=\mathcal{D}_\lambda^{\mathcal{H}} h$ for some $h\in L^2(\mathbb R^{n+1},\mathbb C)$. Let $f_\epsilon(\cdot,\cdot)=u(\cdot,\cdot,\epsilon)$ and consider
     $$u_\epsilon(x,t,\lambda)=\mathcal{D}_\lambda^{\mathcal{H}} \biggl (\biggl (-\frac 1 2I+\mathcal{K}^{\mathcal{H}}\biggr) ^{-1}f_\epsilon\biggr )(x,t).$$
     Let $U_\epsilon(x,t,\lambda)=u(x,t,\lambda+\epsilon)-u_\epsilon(x,t,\lambda)$. Then $\mathcal{H}U_\epsilon=0$ in $\mathbb R^{n+2}_+$ and
     $$\sup_{\lambda>0}||U_\epsilon(\cdot,\cdot,\lambda)||_2<\infty.$$
     Furthermore, $U_\epsilon(\cdot,\cdot,0)=0$ and $U_\epsilon(\cdot,\cdot,\lambda)\to 0$ n.t in $L^2(\mathbb R^{n+1},\mathbb C)$ by Lemma \ref{trace5}. By uniqueness in the Dirichlet problem, Lemma \ref{trace7} we see that $U_\epsilon(x,t,\lambda)\equiv 0$. Furthermore, using
     \eqref{claseat} we see that $\sup_\epsilon ||f_\epsilon||_2<\infty$. Hence a subsequence of $f_\epsilon$ converges in the weakly in $L^2(\mathbb R^{n+1},\mathbb C)$ to some $f\in L^2(\mathbb R^{n+1},\mathbb C)$. Given an arbitrary $g\in L^2(\mathbb R^{n+1},\mathbb C)$ we let $h=\adj\bigl(-\frac 1 2I+\mathcal{K}^{\mathcal{H}})^{-1}(\mathcal{D}_\lambda^{\mathcal{H}})\bigr )g$ and we observe that
     \begin{eqnarray}\label{claseat+}
     &&\int_{\mathbb R^{n+1}}\biggl (\mathcal{D}_\lambda^{\mathcal{H}}\biggl (-\frac 1 2I+\mathcal{K}^{\mathcal{H}}\biggr)^{-1}f\biggr )\bar g\, dxdt\notag\\
     &=&\int_{\mathbb R^{n+1}}f\bar h\, dxdt=
     \lim_{k\to \infty}\int_{\mathbb R^{n+1}}f_{\epsilon_k}\bar h\, dxdt\notag\\
     &=&\lim_{k\to \infty}\int_{\mathbb R^{n+1}}\biggl (\mathcal{D}_\lambda^{\mathcal{H}}\biggl (-\frac 1 2I+\mathcal{K}^{\mathcal{H}}\biggr)^{-1}f_{\epsilon_k}\biggr )\bar g\, dxdt\notag\\
     &=&\lim_{k\to \infty}\int_{\mathbb R^{n+1}}u(x,t,\lambda+\epsilon_k)\bar g\, dxdt\notag\\
     &=&\int_{\mathbb R^{n+1}}u(x,t,\lambda)\bar g\, dxdt.
     \end{eqnarray}
     As $g$ is arbitrary in this argument we can conclude that $u(\cdot,\cdot,\lambda)=\mathcal{D}_\lambda^{\mathcal{H}} h$ where $$h=\biggl (-\frac 1 2I+\mathcal{K}^{\mathcal{H}}\biggr)^{-1}f\in L^2(\mathbb R^{n+1},\mathbb C).$$ This completes the proof of the lemma.
     \end{proof}

\begin{lemma}\label{trace7-} Assume \eqref{keysea}.  Then
     \begin{eqnarray*}
     (i)&&\mathcal{P}_\lambda(\nabla_{||}\mathcal{S}_{\pm\lambda}^{\mathcal{H}} f)\to \nabla_{||} \mathcal{S}_\lambda^{\mathcal{H}}|_{\lambda=0}f,\notag\\
     (ii)&&\mathcal{P}_\lambda(H_tD^t_{1/2}\mathcal{S}_{\pm\lambda}^{\mathcal{H}} f)\to H_tD^t_{1/2}\mathcal{S}_\lambda^{\mathcal{H}}|_{\lambda=0}f,\notag\\
     (iii)&&\mathcal{P}_\lambda(\partial_\lambda\mathcal{S}_{\pm\lambda}^{\mathcal{H}} f)\to\mp\frac 12\cdot\frac{f(x,t)}{A_{n+1,n+1}(x,t)}e_{n+1}+\mathcal{T}_\perp^{\mathcal{H}}f,
     \end{eqnarray*}
     non-tangentially and in $L^2(\mathbb R^{n+1},\mathbb C)$ as $\lambda\to 0^+$ and whenever $f\in L^2(\mathbb R^{n+1},\mathbb C)$.
     \end{lemma}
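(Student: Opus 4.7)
Plan. The weak $L^2$ limits of $\nabla_{||}\mathcal{S}_{\pm\lambda}^{\mathcal{H}} f$, $H_tD^t_{1/2}\mathcal{S}_{\pm\lambda}^{\mathcal{H}} f$ and $\partial_\lambda\mathcal{S}_{\pm\lambda}^{\mathcal{H}} f$ are already provided by Lemma \ref{trace4}; under \eqref{keysea}, uniform $L^2$-bounds as well as $L^2$-bounds for the associated non-tangential maximal operators are supplied by Lemma \ref{lemsl1++} and Definition \ref{blayer+}. The role of the parabolic mollifier $\mathcal{P}_\lambda$ is precisely to upgrade these weak limits to strong $L^2$ and non-tangential almost everywhere convergence. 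Writing $T_{\pm\lambda}f$ for any of the three expressions and $T_0 f$ for its weak $L^2$-limit, I would start from the unifying decomposition
\begin{equation*}
\mathcal{P}_\lambda(T_{\pm\lambda}f)-T_0 f\;=\;\mathcal{P}_\lambda(T_{\pm\lambda}f-T_0 f)\;+\;\bigl(\mathcal{P}_\lambda T_0 f-T_0 f\bigr).
\end{equation*}
The second term tends to $0$ in $L^2$ by standard approximation-of-the-identity theory, and non-tangentially for almost every $(x_0,t_0)\in\mathbb R^{n+1}$ by parabolic Lebesgue differentiation, since $T_0 f\in L^2(\mathbb R^{n+1},\mathbb C)$.

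For the first term in cases $(i)$ and $(ii)$ I would use that $\nabla_{||}$ and $H_tD^t_{1/2}$ commute with $\mathcal{P}_\lambda$ (both being translation invariant), producing $\nabla_{||}\mathcal{P}_\lambda=\lambda^{-1}\mathcal{Q}_\lambda$ for a parabolic approximation of the zero operator $\mathcal{Q}_\lambda$, and analogously a factorisation $H_tD^t_{1/2}\mathcal{P}_\lambda=\lambda^{-1}\mathcal{Q}^t_\lambda$ with a (non-local but zero-mean) kernel of the type described in \eqref{li-}. Writing $\mathcal{S}_{\pm\lambda}^{\mathcal{H}} f-\mathcal{S}_0^{\mathcal{H}} f=\int_0^{\pm\lambda}\partial_\sigma\mathcal{S}_\sigma^{\mathcal{H}} f\,d\sigma$, the first term in case $(i)$ becomes
\begin{equation*}
\lambda^{-1}\mathcal{Q}_\lambda\int_0^{\pm\lambda}\partial_\sigma\mathcal{S}_\sigma^{\mathcal{H}} f\,d\sigma.
\end{equation*}
A density reduction, based on the bijection $\mathcal{S}_0^{\mathcal{H}}:\mathbb H^{-1}\to L^2$ already exploited in the proof of Lemma \ref{trace5}, permits me to restrict to $f$ for which $\mathcal{S}_0^{\mathcal{H}} f$ is smooth; the square-function estimate $|||\lambda\partial_\lambda^2\mathcal{S}_\lambda^{\mathcal{H}} f|||_+\leq\Gamma||f||_2$ together with Lemma \ref{trace2} $(ii)$ then closes the $L^2$ limit passage for the first term. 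For case $(iii)$, which lacks such a clean commutation, I would reduce to $(i)$ via the pointwise identity
\begin{equation*}
-A_{n+1,n+1}\partial_\lambda\mathcal{S}_\lambda^{\mathcal{H}} f=\mathcal{D}_\lambda^{\mathcal{H}}f+\sum_{j=1}^{n}A_{n+1,j}\partial_{x_j}\mathcal{S}_\lambda^{\mathcal{H}} f,
\end{equation*}
combined with Lemma \ref{trace5} (strong $L^2$ and non-tangential convergence of $\mathcal{D}_{\pm\lambda}^{\mathcal{H}} f$) and case $(i)$; the only extra issue is the commutator of the multiplication operators $A_{n+1,j}(\cdot)$ with $\mathcal{P}_\lambda$, controlled by $||A||_\infty$ together with the cancellation of $\mathcal{Q}_\lambda$.

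For the non-tangential convergence of the first term, fix $(x_0,t_0)$ outside the null set where any of the non-tangential maximal functions from Lemma \ref{lemsl1++} is infinite or where $T_0 f$ fails to be a Lebesgue point. For $(x,t,\lambda)\in\Gamma(x_0,t_0)$, exchange $\lambda^{-1}\mathcal{Q}_\lambda$ with the $\sigma$-integral, and bound pointwise
\begin{equation*}
\bigl|\mathcal{Q}_\lambda(\partial_\sigma\mathcal{S}_\sigma^{\mathcal{H}} f)(x,t)\bigr|\leq c\Bigl(\mean{Q_{c\lambda}(x,t)}|\partial_\sigma\mathcal{S}_\sigma^{\mathcal{H}} f(y,s)|^2\,dyds\Bigr)^{1/2}\leq c\,\tilde N_{\ast\ast}\bigl(\partial_\lambda\mathcal{S}_\lambda^{\mathcal{H}} f\bigr)(x_0,t_0),
\end{equation*}
using the interior De Giorgi--Moser--Nash estimate applied to $\partial_\sigma\mathcal{S}_\sigma^{\mathcal{H}} f$, which solves $\mathcal{H}u=0$ in $\mathbb R^{n+2}_+$ because $A$ is $\lambda$-independent. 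This provides a uniform pointwise bound; convergence to $0$ is then obtained by the same density reduction as before, extended from the dense subclass to arbitrary $f\in L^2$ via dominated convergence with the non-tangential maximal functions of Lemma \ref{lemsl1++} as envelope.

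The hardest step is the density reduction itself: one must identify a subclass of $f\in L^2$, dense in $L^2$, on which the error $\lambda^{-1}\mathcal{Q}_\lambda(\mathcal{S}_{\pm\lambda}^{\mathcal{H}} f-\mathcal{S}_0^{\mathcal{H}} f)$ (and its analogue in case $(ii)$) can be shown by direct calculation to vanish as $\lambda\to 0^+$, and then to verify that the extension to general $f$ is legitimate. The quantitative boundary continuity in Lemma \ref{trace2} $(ii)$ (which yields $|\mathcal{S}_\sigma^{\mathcal{H}} f(y,s)-\mathcal{S}_0^{\mathcal{H}} f(x_0,t_0)|\leq c\sigma\tilde N_\ast(\nabla\mathcal{S}^{\mathcal{H}} f)(x_0,t_0)$ for $(y,s,\sigma)\in\Gamma(x_0,t_0)$), together with the interior pointwise control on $\partial_\sigma\mathcal{S}_\sigma^{\mathcal{H}} f$ just described, is what prevents the $\lambda^{-1}$ prefactor from blowing up and ultimately forces convergence to zero.
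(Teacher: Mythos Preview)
Your overall scheme—weak limits from Lemma~\ref{trace4}, upgraded via $\mathcal{P}_\lambda$ using density plus maximal bounds—is correct, and your treatment of $(i)$ is close to the paper's. There are, however, two genuine gaps.

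For $(iii)$, the displayed identity is wrong: expanding $-e_{n+1}\cdot A\nabla\mathcal{S}_\lambda^{\mathcal{H}}f$ gives the \emph{conormal derivative} of the single layer, not $\mathcal{D}_\lambda^{\mathcal{H}}f$ (compare \eqref{assumphq} with \eqref{eq11edsea+}); these are different operators, and Lemma~\ref{trace5} applies only to the double layer, so your reduction does not close. The paper instead proves $(iii)$ \emph{first}, and directly: $u(\cdot,\cdot,\lambda):=\partial_\lambda\mathcal{S}_\lambda^{\mathcal{H}}f$ is itself a solution of $\mathcal{H}u=0$ in $\mathbb{R}^{n+2}_+$ (by $\lambda$-independence of $A$) with $\sup_{\lambda>0}\|u(\cdot,\cdot,\lambda)\|_2<\infty$, so Lemma~\ref{tracce} yields n.t.\ and $L^2$ convergence at once. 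The resulting limit $Vf$ is then used inside the proof of $(i)$ (to split off $\mathcal{Q}_\lambda(Vf-Vf(x_0,t_0))$ and handle the remainder via \eqref{assumphq}, \eqref{limqq-}, \eqref{limqq} and the $L^{2,\infty}$ bounds of Lemma~\ref{lemsl1++}~$(vii)$--$(viii)$), reversing the order you propose.

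For $(ii)$, the factorization $H_tD^t_{1/2}\mathcal{P}_\lambda=\lambda^{-1}\mathcal{Q}^t_\lambda$ does not produce a kernel of the type in \eqref{li-}: convolving $\mbox{sgn}(t)|t|^{-3/2}$ against a smooth bump of scale $\lambda^2$ yields a kernel supported in $\{|x|\lesssim\lambda\}$ that decays only like $|t|^{-3/2}$ for $|t|\gg\lambda^2$, whereas \eqref{li-} requires $\lambda(\lambda+\|(x,t)\|)^{-n-3}\sim\lambda|t|^{-(n+3)/2}$, strictly faster for $n\geq 1$. Hence your $(i)$-style argument does not transfer. The paper handles $(ii)$ by a different route: split the principal-value integral for $H_tD^t_{1/2}\mathcal{S}_\lambda f(y,s)$ into a near part $|s-\tilde t|<(K\lambda)^2$ and a far part; the near part is controlled via a time-derivative bound and \eqref{eq14+} and, after applying $\mathcal{P}_\lambda$, tends to $0$; the far part is compared with the corresponding truncation of $H_tD^t_{1/2}\mathcal{S}_0 f$, with the limit identified through Lemma~2.27 in \cite{HL}, and one lets $K\to\infty$ at the end.
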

     \begin{proof} Again we treat only the case of the upper half space, as the proof in the other case is
the same. Since the weak limits has already been established for $\nabla \mathcal{S}_{\lambda}^{\mathcal{H}}f$ and
$H_tD^t_{1/2}\mathcal{S}_{\lambda}^{\mathcal{H}}f$, see Lemma \ref{trace4}, it is easy to verify that the strong and non-tangential limits for $\mathcal{P}_\lambda(\nabla \mathcal{S}_{\lambda}^{\mathcal{H}}f)$  and
$\mathcal{P}_\lambda(H_tD^t_{1/2}\mathcal{S}_{\lambda}^{\mathcal{H}}f)$ will take the same
value, once the existence of those limits has been established. Hence, in the following we prove the existence of these limits  as $\lambda\to 0^+$. Furthermore, using Lemma \ref{lemsl1++}, \eqref{keysea}, and the dominated convergence theorem, we see that it is
enough to establish non-tangential convergence. Using \eqref{keysea} we see that the non-tangential convergence of $\partial_\lambda\mathcal{S}_{\lambda}^{\mathcal{H}}f$  follows immediately
Lemma \ref{tracce} and a simple real variable argument yields the same conclusion for $\mathcal{P}_\lambda(\partial_\lambda\mathcal{S}_{\lambda}^{\mathcal{H}}f)$. The latter proves $(iii)$ and hence we only have to prove $(i)$ and $(ii)$.

To prove $(i)$ we fix $(x_0,t_0)\in\mathbb R^{n+1}$, we consider $(x,t,\lambda)\in \Gamma(x_0,t_0)$ and we let $k\in \{1,...,n\}$. Then
\begin{eqnarray}
\P_\lambda(\partial_{x_k}\mathcal{S}^{\mathcal{H}}_\lambda f)(x,t)&=&\partial_{x_k}\P_\lambda\bigl(\int_0^\lambda\partial_\sigma\mathcal{S}^{\mathcal{H}}_\sigma f\, d\sigma\bigr )(x,t)+\P_\lambda(\partial_{x_k}\mathcal{S}^{\mathcal{H}}_0 f)(x,t)\notag\\
&=&\mathcal{Q}_\lambda\bigl(\lambda^{-1}\int_0^\lambda\partial_\sigma\mathcal{S}^{\mathcal{H}}_\sigma f\, d\sigma\bigr )(x,t)+\P_\lambda(\partial_{x_k}\mathcal{S}^{\mathcal{H}}_0 f)(x,t),
\end{eqnarray}
 where again $\mathcal{Q}_\lambda$ is a standard approximation of the zero operator. As $\P_\lambda$ is an  approximation of the identity we see that
$$\P_\lambda(\partial_{x_k}\mathcal{S}^{\mathcal{H}}_0 f)(x,t)\to (\partial_{x_k}\mathcal{S}^{\mathcal{H}}_0 f)(x_0,t_0)$$
n.t as $\lambda\to 0$. In the following we let $Vf(x_0,t_0)$ denote the non-tangential limit $\partial_\lambda\mathcal{S}_{\lambda}^{\mathcal{H}}f(x,t)$ as $(x,t,\lambda)\to (x_0,t_0,0)$ non-tangentially. Using this notation we see that
\begin{eqnarray}
\mathcal{Q}_\lambda\bigl(\lambda^{-1}\int_0^\lambda\partial_\sigma\mathcal{S}^{\mathcal{H}}_\sigma f\, d\sigma\bigr )(x,t)&=&
\mathcal{Q}_\lambda\bigl(\lambda^{-1}\int_0^\lambda(\partial_\sigma\mathcal{S}^{\mathcal{H}}_\sigma f-Vf)\, d\sigma\bigr )(x,t)\notag\\
&&+\mathcal{Q}_\lambda(Vf-Vf(x_0,t_0))(x,t)\notag\\
&=:&I_1+I_2.
\end{eqnarray}
As $Vf\in L^2(\mathbb R^{n+1},\mathbb C)$ it follows, if $(x_0,t_0)$ is a Lebesgue point for $Vf$, that $I_2\to 0$ as $\lambda\to 0$. Furthermore, using Lemma \ref{trace2} we see that
\begin{eqnarray}\label{limq}
\quad\quad\biggl |\mathcal{Q}_\lambda\bigl(\lambda^{-1}\int_0^\lambda(\mathcal{S}^{\mathcal{H}}_\sigma f-\mathcal{S}^{\mathcal{H}}_0 f)\, d\sigma\bigr )(x,t)\biggr |\leq c\lambda M(\tilde N_\ast(\nabla \mathcal{S}^{\mathcal{H}}_\lambda f))(x_0,t_0)\to 0
\end{eqnarray}
as $\lambda\to 0$ and for a.e. $(x_0,t_0)\in\mathbb R^{n+1}$. Similarly,  if ${\bf f}\in C_0^\infty (\mathbb R^{n+1},\mathbb C^n)$ then
\begin{eqnarray}\label{limqq-}
\biggl |\mathcal{Q}_\lambda\bigl(\lambda^{-1}\int_0^\lambda((\mathcal{S}^{\mathcal{H}}_\sigma\nabla_{||})\cdot {\bf f}-(\mathcal{S}^{\mathcal{H}}_0 \nabla_{||})\cdot {\bf f})\, d\sigma\bigr )(x,t)\biggr |\to 0
\end{eqnarray}
n.t as $\lambda\to 0$. By Lemma \ref{lemsl1++} $(vii)$, the density of $C_0^\infty(\mathbb R^{n+1},\mathbb C^n)$ in $L^2(\mathbb R^{n+1},\mathbb C^n)$, and the fact that $\mathcal{Q}_\lambda$ is dominated by the Hardy-
Littlewood maximal operator, which is bounded from $L^{2,\infty}$ to $L^{2,\infty}$, the latter convergence
continues to hold for ${\bf f}\in L^2(\mathbb R^{n+1},\mathbb C^n)$.  Moreover, if $u_0$ belongs to the dense class
      \begin{eqnarray*}\label{eq4-ja}
\{\mathcal{S}_0^{\mathcal{H}}(\div_{||}{\bf g}_{||}+D^t_{1/2}{\bf g}_{n+1}):\ {\bf g}=({\bf g}_{||}, {\bf g}_{n+1})\in C_0^\infty(\mathbb R^{n+1},\mathbb C^{n+1})\},
    \end{eqnarray*}
    see \eqref{eq4-}, then using Lemma \ref{tracce} and \eqref{limq} we see that
    \begin{eqnarray}\label{limqq}
\biggl |\mathcal{Q}_\lambda\bigl(\lambda^{-1}\int_0^\lambda(\mathcal{D}_\sigma u_0-g)\, d\sigma\bigr )(x,t)\biggr |\to 0
\end{eqnarray}
n.t as $\lambda\to 0$ and where $g$ is the boundary trace according to Lemma \ref{tracce}. Again this conclusion remain true whenever $u_0\in
L^2(\mathbb R^{n+1},\mathbb C)$ by Lemma \ref{lemsl1++} $(viii)$, the density of $C_0^\infty(\mathbb R^{n+1},\mathbb C^n)$ in $L^2(\mathbb R^{n+1},\mathbb C^n)$, and the fact that $\mathcal{Q}_\lambda$ is dominated by the Hardy-
Littlewood maximal operator, which is bounded from $L^{2,\infty}$ to $L^{2,\infty}$. Combining \eqref{limqq-} and \eqref{limqq}
with the adjoint version of the identity \eqref{assumphq}, we obtain convergence to 0 for the term $I_1$ as every
$f\in L^2(\mathbb R^{n+1},\mathbb C)$ can be written in the form $f=A^\ast_{n+1,n+1}h$, $h\in L^2(\mathbb R^{n+1},\mathbb C)$. This completes the proof of $(i)$.

To prove $(ii)$ we again fix $(x_0,t_0)\in\mathbb R^{n+1}$ and we consider $(x,t,\lambda)\in \Gamma(x_0,t_0)$. Given $(x,t,\lambda)$ we let $(y,s)\in\mathbb R^{n+1}$ be such that $\P_\lambda(x-y,t-s)\neq 0$. Then $||(y-x_0,s-t_0)||<8\lambda$. To complete the proof we will perform a decomposition of $H_tD_{1/2}^t(\mathcal{S}_\lambda f)(y,s)$ similar to the one in  the proof of Lemma \ref{lemsl1++} $(vi)$ and we let $K\gg 1$ be a degree of freedom to be chosen. Then
\begin{eqnarray*}
H_tD_{1/2}^t(\mathcal{S}_\lambda f)(y,s)&=&\lim_{\epsilon\to 0}\int_{\epsilon\leq |s-\tilde t|<1/\epsilon}\frac {\mbox{sgn}(s-\tilde t)}{|s-\tilde t|^{3/2}}(\mathcal{S}_\lambda f)(y,\tilde t)\, d\tilde t\notag\\
&=&\lim_{\epsilon\to 0}\int_{\epsilon\leq |s-\tilde t|<(K\lambda)^2}\frac {\mbox{sgn}(s-\tilde t)}{|s-\tilde t|^{3/2}}(\mathcal{S}_\lambda f)(y,\tilde t)\, d\tilde t\notag\\
&&+\lim_{\epsilon\to 0}\int_{(K\lambda)^2\leq |s-\tilde t|<1/\epsilon}\frac {\mbox{sgn}(s-\tilde t)}{|s-\tilde t|^{3/2}}(\mathcal{S}_\lambda f)(y,\tilde t)\, d\tilde t\notag\\
&=:&g_1(y,s,\lambda)+g_2(y,s,\lambda).
\end{eqnarray*}
We claim that
\begin{eqnarray}\label{paris}
\P_\lambda(g_1(\cdot,\cdot,\lambda))(x,t)\to 0\mbox{ as $\lambda\to 0$}.
\end{eqnarray}
To prove this we first note that
\begin{eqnarray}\label{paris1}
\P_\lambda(|g_1(\cdot,\cdot,\lambda)|)(x,t)\leq cK\lambda g_3(x_0,t_0,\lambda)
\end{eqnarray}
where
$$g_3(x_0,t_0,\lambda):=\sup_{\{z:\ |z-x_0|\leq 8\lambda\}}\sup_{\{w:\ |w-t_0|\leq (4K\lambda)^2\}}|\partial_\tau(\mathcal{S}_\lambda f)(z,w)|.$$
Furthermore, given $(z,w)$ as in the definition of $g_3(x_0,t_0,\lambda)$ we see, using \eqref{eq14+} and Lemma \ref{le1a}, that
\begin{eqnarray*}
\lambda^2|\partial_\tau(\mathcal{S}_\lambda f)(z,w)|^2\leq c\mean{W_\lambda(z,w)}|\nabla \mathcal{S}_\sigma f(\tilde z,\tilde w)-\P_\lambda(\nabla \mathcal{S}_\lambda f)(z,w)|^2\, d\tilde zd\tilde wd\sigma.
\end{eqnarray*}
Using this, \eqref{keysea}, and arguing as in the proof of $(i)$ and $(iii)$, we can then conclude that \eqref{paris} holds. To proceed we introduce, similar to the proof of Lemma \ref{lemsl1++} $(vi)$,
\begin{eqnarray*}
g_4(\bar y,\bar s,\lambda)=\lim_{\epsilon\to 0}\int_{(K\lambda)^2\leq |\tilde s-\bar s|<1/\epsilon}\frac {\mbox{sgn}
(\bar s-\tilde s)}{|\bar s-\tilde  s|^{3/2}}({\mathcal{S}}_0 f)(\bar y,\tilde  s)\, d\tilde  s.
\end{eqnarray*}
Then, see the proof of Lemma \ref{lemsl1++} $(vi)$,
\begin{eqnarray*}
|g_2(y,s,\lambda)-g_4(y,s,\lambda)|&\leq &  cK^{-1} M^t(N_\ast(\partial_\lambda {\mathcal{S}}_\lambda f)(y,\cdot))(s),
\end{eqnarray*}
where $M^t$ is the Hardy-Littlewood maximal function in $t$ only, and where we have emphasized the presence of the degree of freedom $K$. In particular,
\begin{eqnarray*}
|\P_\lambda(g_2(\cdot,\cdot,\lambda))(x,t)-\P_\lambda(g_4(\cdot,\cdot,\lambda))(x,t)|\leq   cK^{-1} M(M^t(N_\ast(\partial_\lambda {\mathcal{S}}_\lambda f)(y,\cdot))(\cdot))(x_0,t_0).
\end{eqnarray*}
Using \eqref{keysea} and Lemma \ref{lemsl1++} $(i)$ we see that the right hand side in the above display is finite a.e. Hence
\begin{eqnarray}\label{paris2}
&&\limsup_{(x,t,\lambda)\to (x_0,t_0,0)}|\P_\lambda(g_2(\cdot,\cdot,\lambda))(x,t)-\P_\lambda(g_4(\cdot,\cdot,\lambda))(x,t)|\notag\\
&\leq &  cK^{-1} M(M^t(N_\ast(\partial_\lambda {\mathcal{S}}_\lambda f)(y,\cdot))(\cdot))(x_0,t_0)<\infty.
\end{eqnarray}
However, using Lemma 2.27 in \cite{HL} it follows that
\begin{eqnarray*}
\limsup_{(x,t,\lambda)\to (x_0,t_0,0)}\P_\lambda(g_4(\cdot,\cdot,\lambda))(x,t)=H_tD_{1/2}^t{\mathcal{S}}_0 f(x_0,t_0).
\end{eqnarray*}
Hence, letting $K\to\infty$ in \eqref{paris2} we can conclude the validity of $(ii)$.\end{proof}

\section{Square function estimates for composed operators}\label{sec8}

As in the statement of Theorem \ref{thper2} we here consider two operators  $\mathcal{H}_0=\partial_t-\div A^0\nabla$, $\mathcal{H}_1=\partial_t-\div A^1\nabla$. Throughout the section we will assume that
\begin{eqnarray}\label{keyseauu}
&&\mbox{$\mathcal{H}_0$,  $\mathcal{H}_0^\ast$, $\mathcal{H}_1$,  $\mathcal{H}_1^\ast$, satisfy \eqref{eq3}-\eqref{eq4} as well as \eqref{eq14+}-\eqref{eq14++}, and that}\notag\\
&&\mbox{$\mathcal{H}_0$,  $\mathcal{H}_0^\ast$ have bounded, invertible and good layer potentials in the}\notag\\
&&\mbox{sense of Definition \ref{blayer+}, for some constant $\Gamma_0$.}
 \end{eqnarray}
 Note that \eqref{keyseauu} implies, in particular, that \eqref{asa1} holds for $\mathcal{H}_0$,  $\mathcal{H}_0^\ast$. In the following we let
     \begin{eqnarray}\label{keyestaahmo}{\bf \varepsilon}(x):=A^1(x)-A^0(x).
      \end{eqnarray}
     Then ${\bf \varepsilon}$ is a (complex) matrix valued function and we assume that
        \begin{eqnarray}\label{keyestaahmeddo}||\varepsilon||_\infty\leq\epsilon\leq \varepsilon_0.
      \end{eqnarray}
      Furthermore, we write
     ${\bf \varepsilon}=({\bf \varepsilon}_1,...,{\bf \varepsilon}_{n+1})$ where ${\bf \varepsilon}_i$, for $i\in \{1,...,n+1\}$, is a $(n+1)$-dimensional column vector. In the following we let $\tilde{\bf \varepsilon}$ be the $(n+1)\times n$ matrix defined to equal the first $n$ columns of ${\bf \varepsilon}$, i.e.,
         \begin{eqnarray}\label{keyestaahmouu}\tilde{\bf \varepsilon}=({\bf \varepsilon}_1,...,{\bf \varepsilon}_{n}).
      \end{eqnarray}
     \begin{lemma}\label{ilem1} Assume \eqref{keyseauu}. Let
            \begin{eqnarray}\label{keyestaah}
          \theta_\lambda{\bf f}:=\lambda^2\partial_\lambda^2 (\mathcal{S}_{\lambda}^{\mathcal{H}_0}\nabla)\cdot{\bf f},
     \end{eqnarray}
     whenever ${\bf f}\in L^2(\mathbb R^{n+1},\mathbb C^{n+1})$. Then Lemma \ref{le8} is applicable to the operator $\theta_\lambda$. In particular, ${\theta}_\lambda$ is a linear operator satisfying  \eqref{stand-} and \eqref{stand}, for some $d\geq 0$, and
\begin{eqnarray}\label{square}\int_0^\infty\int_{\mathbb R^{n+1}} |\theta_\lambda {\bf f}(x,t)|^2\, \frac {dxdtd\lambda}\lambda\leq \hat\Gamma||{\bf f}||_2^2,
\end{eqnarray}
whenever ${\bf f}\in L^2(\mathbb R^{n+1},\mathbb C^{n+1})$, for some constant $\hat\Gamma\geq 1$, depending  at most
     on $n$, $\Lambda$,  the De Giorgi-Moser-Nash constants and $\Gamma_0$.
     \end{lemma}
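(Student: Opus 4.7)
The plan is to verify the three hypotheses \eqref{stand-}, \eqref{stand}, and \eqref{square} of Lemma \ref{le8} for $\theta_\lambda$. I would split ${\bf f}=({\bf f}_{||},f_{n+1})\in L^2(\mathbb R^{n+1},\mathbb C^{n+1})$ and use the identity $\mathcal{S}_\lambda D_{n+1}=-\partial_\lambda \mathcal{S}_\lambda^{\mathcal{H}_0}$ (valid by $\lambda$-independence of the coefficients) to write
\begin{eqnarray*}
\theta_\lambda {\bf f}\ =\ \lambda^2 \partial_\lambda^2(\mathcal{S}_\lambda^{\mathcal{H}_0}\nabla_{||})\cdot {\bf f}_{||}\ -\ \lambda^2 \partial_\lambda^3 \mathcal{S}_\lambda^{\mathcal{H}_0} f_{n+1}\ =:\ \theta_\lambda^{||}{\bf f}_{||}+\theta_\lambda^{\perp}f_{n+1},
\end{eqnarray*}
and treat the pieces separately. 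The uniform $L^2$ bound \eqref{stand-} would follow for $\theta_\lambda^{||}$ from Lemma \ref{le5}\,(i) with $m=1$, $l=-1$ (the weight exponent $m+2l+3=2$ matches), and for $\theta_\lambda^{\perp}$ from Lemma \ref{le5}\,(iii) with $m=2$, $l=-1$ (here $m+2l+2=2$). The off-diagonal estimate \eqref{stand} with $d=1$ reads off directly from Lemma \ref{le4}\,(i),\,(iii) at the same parameter values, the factor $\lambda^4$ arising from squaring the $\lambda^2\partial_\lambda^2$ prefactor combining with the kernel scaling $(2^kl(Q))^{-4}$ to produce exactly $(\lambda/(2^kl(Q)))^{2d+2}$.

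For the square function bound \eqref{square} on the normal piece, I would note that $u_\sigma:=\partial_\sigma^2\mathcal{S}_\sigma^{\mathcal{H}_0}f_{n+1}$ is a weak solution of $\mathcal{H}_0 u=0$ in $\mathbb R_+^{n+2}$. Applying Lemma \ref{le1}\,(ii) with $\beta_1=2$ slice-by-slice on parabolic cubes of side length $\lambda$ and summing over a disjoint cover of $\mathbb R^{n+1}$ would yield
\begin{eqnarray*}
\|\partial_\lambda^3 \mathcal{S}_\lambda^{\mathcal{H}_0}f_{n+1}\|_2^2\ \leq\ c\lambda^{-3}\int_{\lambda-\lambda/16}^{\lambda+\lambda/16}\|\partial_\sigma^2 \mathcal{S}_\sigma^{\mathcal{H}_0}f_{n+1}\|_2^2\,d\sigma.
\end{eqnarray*}
Multiplying by $\lambda^3$, integrating in $d\lambda$, and swapping the order of integration via Fubini would give $\int_0^\infty\|\theta_\lambda^{\perp}f_{n+1}\|_2^2\,\tfrac{d\lambda}{\lambda}\leq c\,|||\lambda\partial_\lambda^2 \mathcal{S}_\lambda^{\mathcal{H}_0}f_{n+1}|||_+^2\leq c\Gamma_0^2\|f_{n+1}\|_2^2$, where the last inequality is precisely condition (ii) of Definition \ref{blayer+}.

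The hard part will be \eqref{square} for the tangential piece $\theta_\lambda^{||}$. The same Caccioppoli reduction, applied now to the solution $(x,t,\sigma)\mapsto\partial_\sigma(\mathcal{S}_\sigma^{\mathcal{H}_0}\nabla_{||})\cdot {\bf f}_{||}$, transforms the target into
\begin{eqnarray*}
|||\lambda\,\partial_\lambda(\mathcal{S}_\lambda^{\mathcal{H}_0}\nabla_{||})\cdot {\bf f}_{||}|||_+\ \leq\ c\,\|{\bf f}_{||}\|_2,
\end{eqnarray*}
which is not one of the explicit items in Definition \ref{blayer+}. To close this gap I would combine the hermitian-adjoint identity $((\mathcal{S}_\lambda^{\mathcal{H}_0}\nabla_{||})\cdot)^\ast=-\nabla_{||}(\mathcal{S}_\lambda^{\mathcal{H}_0})^\ast$ with the square function estimate $|||\lambda\nabla_{||}\partial_\lambda \mathcal{S}_\lambda^{\mathcal{H}_0^\ast}g|||_+\leq c\Gamma_0\|g\|_2$, which is produced by Lemma \ref{lemsl1}\,(i) applied to $\mathcal{S}_\lambda^{\mathcal{H}_0^\ast}$ under the symmetric form of \eqref{keyseauu} (so that $\Phi_+(g)\leq c\Gamma_0\|g\|_2$), and then transfer between the two families by a $T^\ast T$/Schur-type exchange argument. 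With all three conditions verified, Lemma \ref{le8} applies to $\theta_\lambda$ with constants depending only on $n$, $\Lambda$, the De Giorgi-Moser-Nash constants, and $\Gamma_0$, as claimed.
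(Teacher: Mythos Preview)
Your treatment of \eqref{stand-}, \eqref{stand}, and the normal piece $\theta_\lambda^\perp$ of the square function is correct and agrees with the paper. The Caccioppoli reduction of the tangential piece to $|||\lambda\,\partial_\lambda(\mathcal{S}_\lambda^{\mathcal{H}_0}\nabla_{||})\cdot{\bf f}_{||}|||_+$ is also what the paper does.

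The gap is the last step. A square function bound $|||\lambda\nabla_{||}\partial_\lambda\mathcal{S}_\lambda^{\mathcal{H}_0^\ast}g|||_+\leq c\|g\|_2$ does not transfer by duality to $|||\lambda\,\partial_\lambda(\mathcal{S}_\lambda^{\mathcal{H}_0}\nabla_{||})\cdot{\bf f}_{||}|||_+\leq c\|{\bf f}_{||}\|_2$. Duality for the map $f\mapsto(\lambda T_\lambda f)_{\lambda>0}$, viewed as $L^2\to L^2(\mathbb R^{n+1}\times\mathbb R_+,\tfrac{dxdtd\lambda}{\lambda})$, yields a statement about $g(\cdot,\lambda)\mapsto\int_0^\infty T_\lambda^\ast g(\cdot,\lambda)\,\tfrac{d\lambda}{\lambda}$, not about the square function of $T_\lambda^\ast$. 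A Schur or $T^\ast T$ argument would require quantitative almost-orthogonality estimates of the type $\|T_\lambda T_\sigma^\ast\|_{2\to 2}\leq c\min(\lambda/\sigma,\sigma/\lambda)^\alpha$, which are neither stated nor available from the hypotheses here. So as written the proposal does not close.

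The paper instead exploits the full strength of \eqref{keyseauu}. By ellipticity it suffices to bound $|||\lambda\,\partial_\lambda(\mathcal{S}_\lambda^{\mathcal{H}_0}\nabla_{||})\cdot A_{||}{\bf f}_{||}|||_+$. The parabolic Hodge decomposition (Lemma \ref{parahodge}) produces $u\in\mathbb H$ with $-\div_{||}(A_{||}{\bf f}_{||})=\mathcal{H}_{||}u$ and $\|u\|_{\mathbb H}\leq c\|{\bf f}\|_2$; using the equation for the fundamental solution one rewrites $\partial_\lambda(\mathcal{S}_\lambda^{\mathcal{H}_0}\nabla_{||})\cdot A_{||}{\bf f}_{||}$ as $\sum_i\partial_\lambda^2\mathcal{S}_\lambda^{\mathcal{H}_0}(A_{n+1,i}D_iu)+\partial_\lambda^2(\mathcal{S}_\lambda^{\mathcal{H}_0}\overline{\partial_{\nu^\ast}u})$. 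The first sum is controlled by Definition \ref{blayer+}\,(ii) and $\|\nabla_{||}u\|_2$. For the second, the key extra input is the invertibility $\mathcal{S}_0^{\mathcal{H}_0}:L^2\to\mathbb H$ (item (xiii) of Definition \ref{blayer+}): writing $u=\mathcal{S}_0^{\mathcal{H}_0}v$ with $v\in L^2$, Lemma \ref{trace6} converts $\mathcal{S}_\lambda^{\mathcal{H}_0}\overline{\partial_{\nu^\ast}u}=\mathcal{D}_\lambda^{\mathcal{H}_0}u$ into $\mathcal{S}_\lambda^{\mathcal{H}_0}(\partial_\nu v)$, and Definition \ref{blayer+}\,(ii) closes the estimate. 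This use of invertibility is what your duality sketch is missing.
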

     \begin{proof} Recall that the estimate
               \begin{eqnarray}\label{keyestinted}
\sup_{\lambda> 0}||\partial_\lambda \mathcal{S}_\lambda^{\mathcal{H}_0} f||_2+|||\lambda \partial_\lambda^2\mathcal{S}_{\lambda}^{\mathcal{H}_0}f|||\leq \Gamma_0||f||_2,
     \end{eqnarray}
    for $f\in L^2(\mathbb R^{n+1},\mathbb C)$, is embedded in \eqref{keyseauu}. In the following we write, for simplicity,  $\mathcal{S}_{\lambda}^0:= \mathcal{S}_{\lambda}^{\mathcal{H}_0}$. Note that
                 \begin{eqnarray*}
          \theta_\lambda{\bf f}=\lambda^2\partial_\lambda^2 (\mathcal{S}_{\lambda}^0\nabla_{||})\cdot{\bf f}_{||}-\lambda^2\partial_\lambda^3 (\mathcal{S}_{\lambda}^0{\bf f}_{n+1}),
     \end{eqnarray*}
     where ${\bf f}=({\bf f}_{||},{\bf f}_{n+1})$. That ${\theta}_\lambda$ satisfies  \eqref{stand-} follows from Lemma \ref{le5} $(i)$ and $(iii)$. That ${\theta}_\lambda$ satisfies  \eqref{stand} follow from  Lemma \ref{le4}.  Hence, we only have to prove \eqref{square}. To start the proof of
      \eqref{square} we have
         \begin{eqnarray}\label{keyestaa+}
    \int_{0}^\infty\int_{\mathbb R^{n+1}}|\theta_\lambda{\bf f}|^2\,  \frac{dxdtd\lambda}\lambda&\leq&\int_{0}^\infty\int_{\mathbb R^{n+1}}|\partial_\lambda^3 \mathcal{S}_{\lambda}^0{\bf f}_{n+1}|^2\,  \lambda^3{dxdtd\lambda}\notag\\
    &&+\int_{0}^\infty\int_{\mathbb R^{n+1}}|\partial_\lambda^2 (\mathcal{S}_{\lambda}^0\nabla_{||})\cdot{\bf f_{||}}|^2\,  \lambda^3{dxdtd\lambda}\notag\\
    &&=:I+II,
     \end{eqnarray}
     and we note that  $I\leq c||{\bf f}||_2^2$ by Lemma \ref{le1--} and \eqref{keyestinted}. To estimate $II$ we first note, using Lemma \ref{le1--} and the ellipticity of $ A_{||}$,  that to bound $II$ it suffices to bound
           \begin{eqnarray}\label{keyestaa+a}
    \widetilde {II}:=\int_{0}^\infty\int_{\mathbb R^{n+1}}|\partial_\lambda (\mathcal{S}_{\lambda}^0\nabla_{||})\cdot A_{||}{\bf f_{||}}|^2\,  \lambda{dxdtd\lambda}.
     \end{eqnarray}
     Using Lemma \ref{parahodge} we see  that there exists $u\in\mathbb H(\mathbb R^{n+1},\mathbb C)$ such that $-\div_{||}(A_{||}{\bf f_{||}})=\mathcal{H}_{||}u$ and such that
     \begin{eqnarray}\label{ua}||u||_{\mathbb H(\mathbb R^{n+1},\mathbb C)}\leq c||{\bf f}||_2.
     \end{eqnarray}
     Using this we see that
                \begin{eqnarray}\label{keyestaa+aa}
  \partial_\lambda (\mathcal{S}_{\lambda}^0\nabla_{||})\cdot A_{||}{\bf f_{||}}=\partial_\lambda (\mathcal{S}_{\lambda}^0(\mathcal{H}_{||}u)).
     \end{eqnarray}
     Using this, \eqref{eq4} and that, for $(x,t,\lambda)$ fixed,
     \begin{eqnarray}\label{keyestaa+aaa}
                 \mathcal{H}_{||}^\ast\Gamma(x,t,\lambda,y,s,\sigma)
                 &=&\sum_{i=1}^n\partial_{y_i}\bigl(\overline{A_{i,n+1}^\ast(y)}\partial_\sigma\Gamma(x,t,\lambda,y,s,\sigma)\bigr)\notag\\
                 &&+\sum_{j=1}^{n+1}\overline{A_{n+1,j}^\ast(y)}\partial_{y_i}\partial_\sigma\Gamma(x,t,\lambda,y,s,\sigma),
     \end{eqnarray}
     we see that
                \begin{eqnarray}\label{keyestaa+aab}
  \partial_\lambda (\mathcal{S}_{\lambda}^0\nabla_{||})\cdot A_{||}{\bf f_{||}}=\sum_{i=1}^n\partial_\lambda^2 \mathcal{S}_{\lambda}^0(A_{n+1,i}D_iu)+
  \partial_\lambda^2 (\mathcal{S}_{\lambda}^0\overline{\partial_{\nu^\ast}u}),\end{eqnarray}
  where $\overline{\partial_{\nu^\ast}}=-\sum_{i=1}^{n+1}\overline{A_{n+1,i}^\ast}D_i=-\sum_{i=1}^{n+1}{A_{n+1,i}}D_i$. Hence,
          \begin{eqnarray}\label{keyestaa+ak}
    \widetilde {II}&\leq &\sum_{i=1}^n\int_{0}^\infty\int_{\mathbb R^{n+1}}|\partial_\lambda^2 \mathcal{S}_{\lambda}^0(A_{n+1,i}D_iu)|^2\,  \lambda{dxdtd\lambda}\notag\\
    &&+\int_{0}^\infty\int_{\mathbb R^{n+1}}|\partial_\lambda^2 \mathcal{S}_{\lambda}^0(\overline{\partial_{\nu^\ast}u})|^2\,  \lambda{dxdtd\lambda}\notag\\
    &:=&\widetilde {II}_1+\widetilde {II}_2.
     \end{eqnarray}
          Again using \eqref{keyestinted}, and \eqref{ua},  we see that $\widetilde {II}_1\leq c||{\bf f}||_2^2$. Furthermore, as $u\in\mathbb H(\mathbb R^{n+1},\mathbb C)$ and as, by assumption,
           $\mathcal{S}_0:=\mathcal{S}_\lambda|_{\lambda=0}:L^2(\mathbb R^{n+1},\mathbb C)\to {\mathbb H}(\mathbb R^{n+1},\mathbb C)$ is invertible, we can conclude that there exists $v\in L^2(\mathbb R^{n+1},\mathbb C)$ such that
     $u=\mathcal{S}_0v$. We now let $v(\cdot,\cdot, \sigma)=\mathcal{S}_\sigma v(\cdot,\cdot)$ for $\sigma<0$ so that $v(\cdot,\cdot,0)=u(\cdot,\cdot)$. Then
     $$||v||_{\mathbb H(\mathbb R^{n+1},\mathbb C)}\leq c||u||_{\mathbb H(\mathbb R^{n+1},\mathbb C)}\leq c||{\bf f}||_2,$$
     by Theorem \ref{th0} and \eqref{ua}. Furthermore, as $(\mathcal{S}_{\lambda}^0\overline{\partial_{\nu^\ast}})=\mathcal{D}_\lambda$, Lemma \ref{trace6} implies that
     $$\partial_\lambda^2 (\mathcal{S}_{\lambda}^0\overline{\partial_{\nu^\ast}u})=\partial_\lambda^2 \mathcal{S}_{\lambda}^0(\partial_{\nu}v(\cdot,\cdot,0)).$$ Hence, using \eqref{keyestinted} once more we see that
\begin{eqnarray}\label{keyestaa+ak+}
\widetilde {II}_2&\leq& \int_{0}^\infty\int_{\mathbb R^{n+1}}|\partial_\lambda^2 \mathcal{S}_{\lambda}^0(\partial_{\nu}v(\cdot,\cdot,0))|^2\,  \lambda{dxdtd\lambda}\notag\\
&\leq&||\nabla v(\cdot,\cdot,0)||_2^2\leq c||u||_{\mathbb H(\mathbb R^{n+1},\mathbb C)}^2\leq c||{\bf f}||_2^2.
     \end{eqnarray}
     This completes the proof of \eqref{square} and the lemma.
     \end{proof}

     \begin{lemma} \label{ilem2--} Assume \eqref{keyseauu}. Let $\theta_\lambda$ be as in the Lemma \ref{ilem1}, let ${\bf \varepsilon}$, $\tilde{\bf \varepsilon}$ be as in \eqref{keyestaahmo}, \eqref{keyestaahmouu}. Let $\mathcal{E}_\lambda^1:=(I+\lambda^2(\mathcal{H}_1)_{||})^{-1}=(I+\lambda^2(\partial_t+(\mathcal{L}_1)_{||}))^{-1}$. Let $A_{||}^1=(A_{1,||}^1,...,A_{n,||}^1)$ where $A_{i,||}^1\in \mathbb C^n$ for all $i\in \{1,...,n\}$. Let
          \begin{eqnarray}\label{Udef}
\mathcal{U}_\lambda =\theta_\lambda \tilde{\bf \varepsilon}\nabla_{||}\mathcal{E}_\lambda^1\lambda^2\div_{||}.
\end{eqnarray}
and consider $\mathcal{U}_\lambda A_{||}^1:=(\mathcal{U}_\lambda A_{1,||}^1,...,\mathcal{U}_\lambda A_{n,||}^1)$.
Then
\begin{eqnarray}\label{crucacar+}\int_0^{l(Q)}\int_Q|\mathcal{U}_\lambda A_{||}^1|^2\frac {dxdtd\lambda}\lambda\leq c\varepsilon_0|Q|,
\end{eqnarray}
for all cubes $Q\subset\mathbb R^{n+1}$ and for some constant $c$ depending  at most
     on $n$, $\Lambda$,  the De Giorgi-Moser-Nash constants and $\Gamma_0$.
     \end{lemma}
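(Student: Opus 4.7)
The plan is to reduce the Carleson estimate for $\mathcal{U}_\lambda A_{||}^1$ to the square function and off-diagonal machinery for $\theta_\lambda$ supplied by Lemma~\ref{ilem1} and Lemma~\ref{le8}, together with the algebraic identity $\mathcal{H}_{||}^1 x_i=-\div_{||}A_{i,||}^1$, which holds column by column since $\mathcal{L}_{||}^1 u=-\div_{||}(A_{||}^1\nabla_{||} u)$ gives $\mathcal{L}_{||}^1 x_i=-\div_{||}A_{i,||}^1$ while $\partial_t x_i=0$. As a preliminary, Lemma~\ref{ilem1} verifies all hypotheses of Lemma~\ref{le8} for $\theta_\lambda$, so that the prototype Carleson bound
\begin{eqnarray*}
\int_0^{l(Q)}\int_Q |\theta_\lambda b|^2\,\frac{dxdtd\lambda}{\lambda}\leq c\|b\|_\infty^2|Q|,\qquad b\in L^\infty(\mathbb R^{n+1},\mathbb C^{n+1}),
\end{eqnarray*}
is available.

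Next, using the resolvent identity $\mathcal{E}_\lambda^1\lambda^2\mathcal{H}_{||}^1=I-\mathcal{E}_\lambda^1$ formally applied to $x_i$, I would obtain $\mathcal{E}_\lambda^1\lambda^2\div_{||}A_{i,||}^1=\mathcal{E}_\lambda^1 x_i-x_i$, and hence, after differentiating and composing with $\tilde{\bf\varepsilon}$ and $\theta_\lambda$, the decomposition
\begin{eqnarray*}
\mathcal{U}_\lambda A_{i,||}^1 \;=\; \theta_\lambda\bigl(\tilde{\bf\varepsilon}\,\nabla_{||}\mathcal{E}_\lambda^1 x_i\bigr)-\theta_\lambda{\bf\varepsilon}_i,\qquad i\in\{1,\ldots,n\}.
\end{eqnarray*}
The second piece is $\theta_\lambda$ applied to the $\lambda$-independent bounded function ${\bf\varepsilon}_i$ with $\|{\bf\varepsilon}_i\|_\infty\le\varepsilon_0$, and the prototype bound above yields a contribution of order $\varepsilon_0^2|Q|$, hence at most $c\varepsilon_0|Q|$ when $\varepsilon_0\le 1$.

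For the first piece, because $x_i\notin L^2$ I would localize by picking a smooth cutoff $\phi_Q$ equal to $1$ on $NQ$ and supported in $2NQ$ (for a large parameter $N$) and replacing $x_i$ by $\tilde x_i:=\phi_Q\cdot(x_i-(x_Q)_i)\in L^\infty\cap L^2$. On $NQ$ we still have $\nabla_{||}\tilde x_i=e_i$, so the identity $\mathcal{H}_{||}^1\tilde x_i=-\div_{||}A_{i,||}^1$ persists there; outside $2NQ$ both $\tilde x_i$ and its derivatives vanish. The error introduced by this substitution is supported in the transition region away from $Q$ and decays exponentially in $N$ by the off-diagonal estimates of Lemma~\ref{le8-+} for $\lambda^2\nabla_{||}\mathcal{E}_\lambda^1\div_{||}$ together with the off-diagonal estimates for $\theta_\lambda$ recorded in Lemma~\ref{ilem1}. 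The localized quantity $\theta_\lambda(\tilde{\bf\varepsilon}\,\nabla_{||}\mathcal{E}_\lambda^1\tilde x_i)$ is then amenable to the square function estimate \eqref{square} for $\theta_\lambda$, the uniform $L^2$-boundedness of $\lambda^2\nabla_{||}\mathcal{E}_\lambda^1\div_{||}$ from Lemma~\ref{le8-}, and the pointwise bound $\|\tilde{\bf\varepsilon}\|_\infty\le\varepsilon_0$, producing a further contribution of order $\varepsilon_0^2|Q|$.

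The main obstacle, in my view, is the rigorous reconciliation of the formal identity $\mathcal{E}_\lambda^1\lambda^2\div_{||}A_{i,||}^1=\mathcal{E}_\lambda^1 x_i-x_i$ with the fact that $x_i\notin L^2$: one has to design the approximation scheme above (or, alternatively, exploit the dyadic bounded approximations $f^\epsilon_{Q,w}$ of Lemma~\ref{ilem2--+}) so as to keep the localization errors decaying in $N$ without inflating the $L^\infty$ norm of the replacement, and to balance this decay against the off-diagonal constants of $\theta_\lambda$ and $\mathcal{E}_\lambda^1$ uniformly in $Q$. Once this book-keeping is carried out, the two contributions above combine to give the claimed bound $c\varepsilon_0|Q|$.
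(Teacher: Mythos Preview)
Your algebraic identity is correct: $\mathcal{U}_\lambda A_{i,||}^1 = \theta_\lambda(\tilde{\bf\varepsilon}\,\nabla_{||}\mathcal{E}_\lambda^1 x_i) - \theta_\lambda{\bf\varepsilon}_i$, and the second piece is indeed handled by Lemma~\ref{le8}. The gap is in your treatment of the first piece. Rearranged, the identity reads $\theta_\lambda(\tilde{\bf\varepsilon}\,\nabla_{||}\mathcal{E}_\lambda^1 x_i) = \mathcal{U}_\lambda A_{i,||}^1 + \theta_\lambda{\bf\varepsilon}_i$: the first term \emph{is} the quantity you are trying to bound, plus a controlled error. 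The tools you invoke do not provide an independent estimate. The square function bound \eqref{square} applies only to $\lambda$-independent inputs, whereas $\tilde{\bf\varepsilon}\,\nabla_{||}\mathcal{E}_\lambda^1\tilde x_i$ depends on $\lambda$. If instead you use the uniform operator bounds $\|\theta_\lambda\|_{2\to2}\le c$ and $\|\lambda\nabla_{||}\mathcal{E}_\lambda^1\|_{2\to2}\le c$, you arrive at $\int_0^{l(Q)}\lambda^{-2}\|\tilde x_i\|_2^2\,d\lambda/\lambda$, which diverges at $\lambda=0$. And if you split $\nabla_{||}\mathcal{E}_\lambda^1\tilde x_i = \nabla_{||}\tilde x_i + \lambda^2\nabla_{||}\mathcal{E}_\lambda^1\div_{||}(A_{||}^1\nabla_{||}\tilde x_i)$, the second summand composed with $\theta_\lambda\tilde{\bf\varepsilon}$ is precisely $\mathcal{U}_\lambda(A_{||}^1\nabla_{||}\tilde x_i)$, which on $NQ$ coincides with $\mathcal{U}_\lambda A_{i,||}^1$; the argument is circular, and the off-diagonal decay in $N$ cannot rescue it because the input $A_{i,||}^1$ has no smallness or decay to exploit.

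The paper breaks this circularity via the $T(b)$-type reduction of Lemma~\ref{ilem2--+}: it suffices to bound $\int_0^{l(Q)}\int_Q |(\mathcal{U}_\lambda A_{||}^1)\cdot\mathcal{A}_\lambda^Q\nabla_{||} f^\epsilon_{Q,w}|^2\,dxdtd\lambda/\lambda$ for the special test functions $f^\epsilon_{Q,w}$. The decisive property is Lemma~\ref{ilem2--+}~$(ii)$: $(\mathcal{L}_1)_{||}f^\epsilon_{Q,w}\in L^2$ with norm $\lesssim |Q|^{1/2}/l(Q)$, so the ``diagonal'' piece $\mathcal{U}_\lambda A_{||}^1\nabla_{||} f^\epsilon_{Q,w} = \theta_\lambda\tilde{\bf\varepsilon}\,\lambda^2\nabla_{||}\mathcal{E}_\lambda^1(\mathcal{L}_1)_{||}f^\epsilon_{Q,w}$ is controlled directly by the uniform $L^2$ bounds, with the factor $\lambda^2$ absorbing the $d\lambda/\lambda$ integration over $(0,l(Q))$. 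The remaining error terms $\mathcal{R}_\lambda^{(1)}$, $\mathcal{R}_\lambda^{(2)}$ (coming from replacing $\mathcal{A}_\lambda^Q$ by $\P_\lambda$ and freezing the symbol) are then handled by Lemma~\ref{le11+}, Lemma~\ref{le11-}, Lemma~\ref{little2}, and a further four-term decomposition. Your parenthetical suggestion to use $f^\epsilon_{Q,w}$ is in fact the correct route, but it is not an alternative bookkeeping scheme for localizing $x_i$; it is a genuinely different mechanism that exploits $L^2$ control on $(\mathcal{L}_1)_{||}f^\epsilon_{Q,w}$, which the function $x_i$ (or any cutoff thereof) does not enjoy.
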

     \begin{proof} Using Lemma \ref{ilem2--+} applied to $\gamma_\lambda=\mathcal{U}_\lambda A_{||}^1$ we see that to prove Lemma \ref{ilem2--} it suffices to prove that
\begin{eqnarray}\label{ff1}
\int _{0}^{l(Q)}\int_{Q}|(\mathcal{U}_\lambda A_{||}^1)\cdot\mathcal{A}_\lambda^Q\nabla_{||} f^{\epsilon}_{Q,w}|^2
\frac {dxdtd\lambda}\lambda\leq c|Q|
\end{eqnarray}
for all $Q\subset\mathbb R^{n+1}$ and for a constant $c$ depending only on $n$, $\Lambda$. In the following we will simply, with a slight abuse of notation but consistently, drop the $\cdot$ in \eqref{ff1}. We write
\begin{eqnarray*}
(\mathcal{U}_\lambda A_{||}^1)\mathcal{A}_\lambda^Q\nabla_{||} f^{\epsilon}_{Q,w}=\mathcal{R}_\lambda^{(1)}\nabla_{||} f^{\epsilon}_{Q,w}+\mathcal{R}_\lambda^{(2)}\nabla_{||} f^{\epsilon}_{Q,w}+\mathcal{U}_\lambda A_{||}^1\nabla f^{\epsilon}_{Q,w},
\end{eqnarray*}
where
\begin{eqnarray*}
\mathcal{R}_\lambda^{(1)}\nabla_{||} f^{\epsilon}_{Q,w}&=&(\mathcal{U}_\lambda A_{||}^1)(\mathcal{A}_\lambda^Q-\mathcal{A}_\lambda^Q \P_\lambda)\nabla_{||} f^{\epsilon}_{Q,w},\notag\\
\mathcal{R}_\lambda^{(2)}\nabla_{||} f^{\epsilon}_{Q,w}&=&((\mathcal{U}_\lambda A_{||}^1)\mathcal{A}_\lambda^Q \P_\lambda-\mathcal{U}_\lambda A_{||}^1)\nabla_{||} f^{\epsilon}_{Q,w},
\end{eqnarray*}
and where $\P_\lambda$ is a standard parabolic approximation of the identity.  We first note
that
\begin{eqnarray}
\mathcal{U}_\lambda A_{||}^1\nabla_{||}f^{\epsilon}_{Q,w}=\theta_\lambda \tilde{\bf \varepsilon} \lambda^2 \nabla_{||}\mathcal{E}_\lambda^1
(\mathcal{L}_1)_{||}f^{\epsilon}_{Q,w}.
\end{eqnarray}
Hence,  using $L^2$ boundness of $\theta_\lambda$, see Lemma \ref{ilem1}, and the $L^2$-boundedness of $\lambda\nabla_{||}\mathcal{E}_\lambda^1$, see Lemma \ref{le8-}, we see that
\begin{eqnarray*}
\int _{0}^{l(Q)}\int_{Q}|\mathcal{U}_\lambda A_{||}^1\nabla_{||} f^{\epsilon}_{Q,w}(x,t)|^2\frac {dxdtd\lambda}\lambda &\leq& c\varepsilon_0 l(Q)^2\int_{\mathbb R^{n+1}}|(\mathcal{L}_1)_{||}f^{\epsilon}_{Q,w}|\, dxdt\notag\\
&\leq& c\varepsilon_0 |Q|,
\end{eqnarray*}
where we in the last step have used Lemma \ref{ilem2--+} $(ii)$. Note that
\begin{eqnarray*}
\mathcal{R}_\lambda^{(1)}&=&(\mathcal{U}_\lambda A_{||}^1)(\mathcal{A}_\lambda^Q-\mathcal{A}_\lambda^Q \P_\lambda)=(\mathcal{U}_\lambda A_{||}^1)\mathcal{A}_\lambda^Q(\mathcal{A}_\lambda^Q-\P_\lambda).
\end{eqnarray*}
We want to apply Lemma \ref{le11+} with $\Theta_\lambda$ replaced by $ \mathcal{U}_\lambda$. $\theta_\lambda$ satisfies  \eqref{stand-} and \eqref{stand}, see Lemma \ref{ilem1}, and $\lambda^2\nabla_{||}\mathcal{E}_\lambda^1\div_{||}$ satisfies \eqref{stand-}, see Lemma \ref{le8-}. Furthermore, using Lemma \ref{le8-+} we see that the latter operator also satisfies assumption \eqref{stand+} in Lemma \ref{le9}. Hence, applying
 Lemma \ref{le9} we can first conclude that \eqref{stand-} and \eqref{stand} hold with $\tilde\Theta_\lambda$ replaced by $ \mathcal{U}_\lambda$, and hence that
 Lemma \ref{le11+} is applicable to $ \mathcal{U}_\lambda$. Using Lemma \ref{le11+}  we see that
$$||(\mathcal{U}_\lambda A_{||}^1)\mathcal{A}_\lambda^Q||_{2\to 2}\leq c\varepsilon_0.$$
Thus
\begin{eqnarray*}
\int _{0}^{l(Q)}\int_{Q}| \mathcal{R}_\lambda^{(1)}\nabla_{||} f^{\epsilon}_{Q,w}(x,t)|^2\, \frac {dxdtd\lambda}\lambda
 &\leq& c\varepsilon_0 \int_{\mathbb R^{n+2}_+}|(\mathcal{A}_\lambda^Q-\P_\lambda)\nabla_{||} f^{\epsilon}_{Q,w}(x,t)|^2\, \frac {dxdtd\lambda}\lambda\notag\\
&\leq &c\varepsilon_0 \int_{\mathbb R^{n+1}}|\nabla_{||} f^{\epsilon}_{Q,w}(x,t)|^2\, {dxdt}\leq c\varepsilon_0|Q|,
\end{eqnarray*}
where we have used the $L^2$-boundedness of the operator
$$g\to \biggl (\int_0^\infty|(\mathcal{A}_\lambda^Q-\P_\lambda)g|^2\frac {d\lambda}\lambda\biggr )^{1/2},$$
see Lemma \ref{little3}, and  Lemma \ref{ilem2--+} $(i)$.  Left to estimate is
\begin{eqnarray}
\int _{0}^{l(Q)}\int_{Q}| R_\lambda^{(2)}\nabla_{||}f^{\epsilon}_{Q,w}(x,t)|^2\, \frac {dxdtd\lambda}\lambda.
\end{eqnarray}
 Arguing as above we see that Lemma \ref{le11-} applies to the operator $\mathcal{R}_\lambda^{(2)}$. To explore this we write
$$\mathcal{R}_\lambda^{(2)}\nabla f^{\epsilon}_{Q,w}=\mathcal{R}_\lambda^{(2)}(I-\P_\lambda)\nabla f^{\epsilon}_{Q,w}+
\mathcal{R}_\lambda^{(2)}\P_\lambda\nabla f^{\epsilon}_{Q,w}.$$
Now, using Lemma \ref{le11-} we see that
\begin{eqnarray*}
&&\int _{0}^{l(Q)}\int_{Q}| \mathcal{R}_\lambda^{(2)}\P_\lambda\nabla_{||} f^{\epsilon}_{Q,w}(x,t)|^2\, \frac {dxdtd\lambda}\lambda\notag\\
&\leq&c \varepsilon_0\int _{0}^{l(Q)}\int_{Q}| \nabla \P_\lambda\nabla_{||} f^{\epsilon}_{Q,w}(x,t)|^2\, \lambda{dxdtd\lambda}\notag\\
&+&c\varepsilon_0\int _{0}^{l(Q)}\int_{Q}| \partial_t\P_\lambda\nabla_{||} f^{\epsilon}_{Q,w}(x,t)|^2\, \lambda^3{dxdtd\lambda}.
\end{eqnarray*}
In particular, by Littlewood Paley theory, see Lemma \ref{little2}, we can conclude that
\begin{eqnarray*}
\int _{0}^{l(Q)}\int_{Q}| \mathcal{R}_\lambda^{(2)}\P_\lambda\nabla_{||} f^{\epsilon}_{Q,w}(x,t)|^2\, \frac {dxdtd\lambda}\lambda&\leq&
c\varepsilon_0\int_{\mathbb R^{n+1}}|\nabla_{||} f^{\epsilon}_{Q,w}(x,t)|^2\, dxdt\notag\\
&\leq& c\varepsilon_0|Q|,
\end{eqnarray*}
where we again also have used Lemma \ref{ilem2--+} $(i)$. To continue we decompose
\begin{eqnarray*}
\mathcal{R}_\lambda^{(2)}(I-\P_\lambda)\nabla_{||} f^{\epsilon}_{Q,w}=(\mathcal{U}_\lambda A)\mathcal{A}_\lambda^Q \mathcal{Q}_\lambda \nabla_{||} f^{\epsilon}_{Q,w}-\mathcal{U}_\lambda A_{||}^1\nabla_{||} (I-\P_\lambda) f^{\epsilon}_{Q,w},
\end{eqnarray*}
where $ \mathcal{Q}_\lambda=\P_\lambda(I-\P_\lambda)$. Then, again using Lemma \ref{le11+}, standard  Littlewood Paley theory, see Lemma \ref{little2}, and
Lemma \ref{ilem2--+} $(i)$ we see that
\begin{eqnarray*}
\int _{0}^{l(Q)}\int_{Q}| (\mathcal{U}_\lambda A_{||}^1)\mathcal{A}_\lambda^Q \mathcal{Q}_\lambda \nabla_{||} f^{\epsilon}_{Q,w}(x,t)|^2\, \frac {dxdtd\lambda}\lambda&\leq&
c\varepsilon_0\int_{\mathbb R^{n+1}}|\nabla_{||} f^{\epsilon}_{Q,w}(x,t)|^2\, dxdt\notag\\
&\leq& c\varepsilon_0|Q|.
\end{eqnarray*}
Furthermore,
\begin{eqnarray}
\mathcal{U}_\lambda A_{||}^1\nabla_{||}(I-\P_\lambda) f^{\epsilon}_{Q,w}&=&
\theta_\lambda \tilde{\bf \varepsilon} \lambda^2 \nabla_{||} \mathcal{E}_\lambda^1\div_{||}(A_{||}^1\nabla_{||}(I-\P_\lambda) f^{\epsilon}_{Q,w})\notag\\
&=&\theta_\lambda \tilde{\bf \varepsilon} \lambda^2 \nabla_{||} \mathcal{E}_\lambda^1(\partial_t+(\mathcal{L}_1)_{||})(I-\P_\lambda) f^{\epsilon}_{Q,w}\notag\\
&&-\theta_\lambda \tilde{\bf \varepsilon} \lambda^2 \nabla_{||} \mathcal{E}_\lambda^1\partial_t(I-\P_\lambda) f^{\epsilon}_{Q,w}.
\end{eqnarray}
In particular,
\begin{eqnarray}
\mathcal{U}_\lambda A_{||}^1\nabla_{||}(I-\P_\lambda) f^{\epsilon}_{Q,w}=I+II+III+IV,
\end{eqnarray}
where
\begin{eqnarray}
I&=&-\theta_\lambda \tilde{\bf \varepsilon} \nabla_{||} \mathcal{E}_\lambda^1(I-\P_\lambda) f^{\epsilon}_{Q,w},\notag\\
II&=&+\theta_\lambda \tilde{\bf \varepsilon} \nabla_{||}f^{\epsilon}_{Q,w},\notag\\
III&=&-\theta_\lambda \tilde{\bf \varepsilon} \nabla_{||}\P_\lambda f^{\epsilon}_{Q,w},\notag\\
IV&=&-\theta_\lambda \tilde{\bf \varepsilon} \lambda^2 \nabla_{||} \mathcal{E}_\lambda^1\partial_t(I-\P_\lambda) f^{\epsilon}_{Q,w}.
\end{eqnarray}
Using the $L^2$-boundedness of $\theta_\lambda$ and $\nabla_{||} \mathcal{E}_\lambda^1$ we see that
\begin{eqnarray*}
\int _{0}^{l(Q)}\int_{Q}|I|^2\, \frac {dxdtd\lambda}\lambda&\leq& c\varepsilon_0\int _{0}^{l(Q)}\int_{Q}
|\lambda^{-1}(I-\P_\lambda)  f^{\epsilon}_{Q,w}|^2\, \frac {dxdtd\lambda}\lambda\notag\\
&\leq& c\varepsilon_0||\mathbb D f^{\epsilon}_{Q,w}||_2^2\leq c\varepsilon_0|Q|,
\end{eqnarray*}
by Lemma \ref{little2} and Lemma \ref{ilem2--+} $(i)$. Furthermore,
\begin{eqnarray*}
\int _{0}^{l(Q)}\int_{Q}|II|^2\, \frac {dxdtd\lambda}\lambda\leq c\varepsilon_0||\nabla _{||}f^{\epsilon}_{Q,w}||_2^2\leq c\varepsilon_0|Q|,
\end{eqnarray*}
by Lemma \ref{ilem1} and  Lemma \ref{ilem2--+} $(i)$.  To estimate $III$ we choose $\P_\lambda=\tilde \P_\lambda^2$, where $\tilde \P_\lambda$ is of the same type, and write
\begin{eqnarray*}
-III&=&\theta_\lambda \tilde{\bf \varepsilon} \P_\lambda \nabla_{||}f^{\epsilon}_{Q,w}\notag\\
&=&(\theta_\lambda \tilde{\bf \varepsilon} \P_\lambda-(\theta_\lambda \tilde{\bf \varepsilon}) \P_\lambda)\nabla_{||}f^{\epsilon}_{Q,w}+(\theta_\lambda \tilde{\bf \varepsilon}) \P_\lambda\nabla_{||}f^{\epsilon}_{Q,w}\notag\\
&=&(\theta_\lambda \tilde\varepsilon \tilde \P_\lambda-(\theta_\lambda \tilde{\bf \varepsilon}) \tilde \P_\lambda)\tilde \P_\lambda\nabla_{||}f^{\epsilon}_{Q,w}+(\theta_\lambda \tilde{\bf \varepsilon}) \P_\lambda\nabla_{||}f^{\epsilon}_{Q,w}\notag\\
&=:&\mathcal{R}_\lambda^{(3)} \tilde \P_\lambda\nabla_{||}f^{\epsilon}_{Q,w}+(\theta_\lambda \tilde{\bf \varepsilon}) \P_\lambda\nabla_{||}f^{\epsilon}_{Q,w}.
\end{eqnarray*}
Then
\begin{eqnarray*}
\int _{0}^{l(Q)}\int_{Q}|III|^2\, \frac {dxdtd\lambda}\lambda&\leq& c\int _{0}^{l(Q)}\int_{Q}|\mathcal{R}_\lambda^{(3)} \tilde \P_\lambda\nabla_{||}f^{\epsilon}_{Q,w}|^2\, \frac {dxdtd\lambda}\lambda\notag\\
&&+c\int _{0}^{l(Q)}\int_{Q}|(\theta_\lambda \tilde{\bf \varepsilon}) \P_\lambda\nabla_{||}f^{\epsilon}_{Q,w}|^2\, \frac {dxdtd\lambda}\lambda.
\end{eqnarray*}
Now Lemma \ref{le11-} applies to $\mathcal{R}_\lambda^{(3)}$ and, by Lemma \ref{ilem1}, Lemma \ref{le8} applies to $\theta_\lambda$. Hence using these results we deduce that
\begin{eqnarray*}
\int _{0}^{l(Q)}\int_{Q}|III|^2\, \frac {dxdtd\lambda}\lambda&\leq& c\varepsilon_0 \int_{\mathbb R^{n+2}_+}
|\lambda\nabla_{||}(\P_\lambda\nabla_{||} f^{\epsilon}_{Q,w})(x,t)|^2\, \frac {dxdtd\lambda}\lambda\notag\\
&&+c\varepsilon_0 \int_{\mathbb R^{n+2}_+}
|\lambda^2\partial_t(\P_\lambda\nabla_{||} f^{\epsilon}_{Q,w})(x,t)|^2\, \frac {dxdtd\lambda}\lambda\notag\\
&&+c\varepsilon_0\int_{\mathbb R^{n+1}}|\nabla_{||}f^{\epsilon}_{Q,w}|^2\,  {dxdt}\notag\\
&\leq&c\varepsilon_0\int_{\mathbb R^{n+1}}|\nabla_{||}f^{\epsilon}_{Q,w}|^2\, {dxdt}\leq c\varepsilon_0|Q|,
\end{eqnarray*}
by Lemma \ref{little2} $(i)$ and Lemma \ref{ilem2--+} $(i)$. To handle $IV$ we first note that
\begin{eqnarray}
IV=(\lambda^2\partial_t  \theta_\lambda) \tilde{\bf \varepsilon} \biggl(\lambda\nabla_{||} \mathcal{E}_\lambda^1\frac 1 \lambda(I-\P_\lambda) f^{\epsilon}_{Q,w}\biggr )
\end{eqnarray}
by the facts that $\tilde{\bf \varepsilon}$ is independent of $t$, \eqref{eq4}, and that $\partial_t$ and $\mathcal{E}_\lambda^1$ commute.
By definition $$\lambda^2\partial_t  \theta_\lambda=\lambda^4\partial_t \partial_\lambda^2 (\mathcal{S}_{\lambda}^{\mathcal{H}_0}\nabla)\cdot.$$
Hence, using Lemma \ref{le5} $(i)$ and $(ii)$  we see that $\lambda^2\partial_t  \theta_\lambda$ is uniformly (in $\lambda$) bounded on $L^2(\mathbb R^{n+1},\mathbb C)$. The same applies to $\lambda\nabla_{||} \mathcal{E}_\lambda^1$ by Lemma \ref{le8-}. Hence,
\begin{eqnarray*}
\int _{0}^{l(Q)}\int_{Q}|IV|^2\, \frac {dxdtd\lambda}\lambda&\leq& c\varepsilon_0 \int_0^\infty\int_{\mathbb R^{n+1}}|
\frac 1 \lambda \mathbb I_1(I-\P_\lambda) \mathbb D f^{\epsilon}_{Q,w}(x,t)|^2\, \frac {dxdtd\lambda}\lambda\notag\\
&\leq &c\varepsilon_0||\mathbb D f^{\epsilon}_{Q,w}||^2\leq c\varepsilon_0|Q|,
\end{eqnarray*}
by Lemma \ref{little2} and
Lemma \ref{ilem2--+} $(i)$. This completes the proof of the lemma.
\end{proof}

     \begin{lemma} \label{ilem2-} Assume \eqref{keyseauu}. Let $\theta_\lambda$ be as in the Lemma \ref{ilem1}, let ${\bf \varepsilon}$, $\tilde{\bf \varepsilon}$ be as in \eqref{keyestaahmo}, \eqref{keyestaahmouu}. Let $\mathcal{E}_\lambda^1:=(I+\lambda^2(\mathcal{H}_1)_{||})^{-1}=(I+\lambda^2(\partial_t+(\mathcal{L}_1)_{||}))^{-1}$.  Let
     \begin{eqnarray}
\mathcal{R}_\lambda=\lambda\theta_\lambda \tilde{\bf \varepsilon} \nabla_{||} \mathcal{E}_\lambda^1.
\end{eqnarray}
Then $\mathcal{R}_\lambda$ is an operator satisfying  \eqref{stand-} and \eqref{stand} for some $d\geq 0$ and $\mathcal{R}_\lambda1=0$. Furthermore,
   \begin{eqnarray}\label{cruca}
    \int_{0}^\infty\int_{\mathbb R^{n+1}}|\mathcal{R}_\lambda u|^2\,  \frac{dxdtd\lambda}{\lambda^3}\leq c\varepsilon_0||\mathbb Du||_2^2,
     \end{eqnarray}
     whenever $u\in \mathbb H(\mathbb R^{n+1},\mathbb C)$ and for some constant $c$ depending  at most
     on $n$, $\Lambda$,  the De Giorgi-Moser-Nash constants and $\Gamma_0$.
     \end{lemma}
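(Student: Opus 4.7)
The plan is to apply Lemma \ref{le11} to $R_\lambda := \mathcal{R}_\lambda$, since then the conclusion $\int_0^\infty\int|\lambda^{-1}R_\lambda u|^2\lambda^{-1}dxdtd\lambda \leq c\|\mathbb{D}u\|_2^2$ of that lemma is exactly $\int|\mathcal{R}_\lambda u|^2\lambda^{-3}dxdtd\lambda \leq c\|\mathbb{D}u\|_2^2$, which is \eqref{cruca}. This requires verifying three things: $\mathcal{R}_\lambda$ satisfies \eqref{stand-} and \eqref{stand} for some $d\geq 0$; $\mathcal{R}_\lambda 1 = 0$; and the Carleson-type bound on $\lambda^{-1}\mathcal{R}_\lambda\Psi$ with $\Psi(x,t)=x$.

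For the first, I factor $\mathcal{R}_\lambda = \theta_\lambda \tilde{\bf\varepsilon}\cdot (\lambda\nabla_{||}\mathcal{E}_\lambda^1)$. Lemma \ref{ilem1} shows $\theta_\lambda$ satisfies \eqref{stand-} and \eqref{stand}, while Lemma \ref{le8-} gives the uniform $L^2$ bound for $\lambda\nabla_{||}\mathcal{E}_\lambda^1$ and Lemma \ref{le8-+} furnishes the exponential off-diagonal decay \eqref{stand+} required in Lemma \ref{le9}. Multiplication by the bounded matrix $\tilde{\bf\varepsilon}$ (with $\|\tilde{\bf\varepsilon}\|_\infty\leq \varepsilon_0$) is absorbed into the estimates, and Lemma \ref{le9} applied to the composition yields \eqref{stand-} and \eqref{stand} for $\mathcal{R}_\lambda$ with constants proportional to $\varepsilon_0$. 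For the annihilation property, since $(\mathcal{H}_1)_{||}1 = 0$ pointwise, the uniqueness statement of Lemma \ref{parahodge+} yields $\mathcal{E}_\lambda^1 1 = 1$, so $\nabla_{||}\mathcal{E}_\lambda^1 1 = 0$ and therefore $\mathcal{R}_\lambda 1 = 0$, interpreted in the $L^2_{\mbox{loc}}$-sense of the remark preceding Lemma \ref{le11-}.

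The main step is the Carleson measure estimate. Writing $\Psi(x,t)=x = (x_1,\ldots,x_n)$ componentwise, using $\partial_t x_k = 0$ and $(\mathcal{L}_1)_{||}x_k = -\div_{||}A^1_{k,||}$, the formal resolvent identity $\mathcal{E}_\lambda^1 = I - \mathcal{E}_\lambda^1\lambda^2(\mathcal{H}_1)_{||}$ gives $\mathcal{E}_\lambda^1 x_k = x_k + \mathcal{E}_\lambda^1\lambda^2\div_{||}A^1_{k,||}$, hence after taking $\nabla_{||}$,
$$\nabla_{||}\mathcal{E}_\lambda^1 x_k = e_k + \nabla_{||}\mathcal{E}_\lambda^1\lambda^2\div_{||}A^1_{k,||},$$
and therefore
$$\lambda^{-1}\mathcal{R}_\lambda \Psi = \theta_\lambda\tilde{\bf\varepsilon} + \mathcal{U}_\lambda A_{||}^1,$$
with $\mathcal{U}_\lambda$ as in Lemma \ref{ilem2--}. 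The Carleson bound for the second term is precisely the content of Lemma \ref{ilem2--}. For the first, Lemma \ref{ilem1} puts $\theta_\lambda$ in the scope of Lemma \ref{le8}, and applying it componentwise with $b$ equal to the entries of $\tilde{\bf\varepsilon}$ yields $\int_0^{l(Q)}\int_Q|\theta_\lambda\tilde{\bf\varepsilon}|^2\lambda^{-1}dxdtd\lambda \leq c\|\tilde{\bf\varepsilon}\|_\infty^2|Q| \leq c\varepsilon_0|Q|$. Summing gives the Carleson bound for $\lambda^{-1}\mathcal{R}_\lambda\Psi$ with constant $c\varepsilon_0$, and Lemma \ref{le11} then closes the argument, producing the factor $\varepsilon_0$ on the right side of \eqref{cruca} from the $\varepsilon_0$ appearing in both the $L^2$ bounds of step one and the Carleson constant.

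The main technical obstacle is making the formal identity in the third step rigorous, since $\Psi(x,t)=x$ does not lie in $L^2(\mathbb{R}^{n+1})$. I would handle this by introducing a smooth cutoff $\chi_R$ to a large parabolic cube and applying the identity to $\chi_R\Psi$, then passing to the limit $R\to\infty$ over the fixed cube $Q$ in the Carleson integral; the off-diagonal decay for $\lambda\nabla_{||}\mathcal{E}_\lambda^1$ from Lemma \ref{le8-+} together with the uniform $L^2$-boundedness of $\lambda^2\nabla_{||}\mathcal{E}_\lambda^1\div_{||}$ from Lemma \ref{le8-} control the tails and justify the pointwise identity locally, paralleling the way $\Theta_\lambda 1 \in L^2_{\mbox{loc}}$ is defined in the remark between Lemmas \ref{le11+} and \ref{le11-}.
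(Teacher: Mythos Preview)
Your proof is correct and follows essentially the same approach as the paper: verify \eqref{stand-}--\eqref{stand} for $\mathcal{R}_\lambda$ via Lemma \ref{le9}, then reduce \eqref{cruca} to the Carleson estimate on $\lambda^{-1}\mathcal{R}_\lambda\Psi$ via Lemma \ref{le11}, and obtain the latter from the decomposition $\lambda^{-1}\mathcal{R}_\lambda\Psi = \theta_\lambda\tilde{\bf \varepsilon} + \mathcal{U}_\lambda A_{||}^1$ (the paper writes this as $\theta_\lambda\tilde{\bf \varepsilon}\nabla_{||}\Psi + \theta_\lambda\tilde{\bf \varepsilon}\nabla_{||}(\mathcal{E}_\lambda^1 - I)\Psi$, which is the same thing), handling the first term by Lemma \ref{ilem1} plus Lemma \ref{le8} and the second by Lemma \ref{ilem2--}. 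If anything, you are more explicit than the paper about $\mathcal{R}_\lambda 1 = 0$, the provenance of the $\varepsilon_0$ factor, and the cutoff justification for $\Psi\notin L^2$.
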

     \begin{proof} $\theta_\lambda$ satisfies  \eqref{stand-} and \eqref{stand}, see Lemma \ref{ilem1}, and $\lambda\nabla_{||}\mathcal{E}_\lambda^1$ satisfies \eqref{stand-}, see Lemma \ref{le8-}. Furthermore, using Lemma \ref{le8-+} we see that the latter operator also satisfies assumption \eqref{stand+} in Lemma \ref{le9}. Hence, applying
 Lemma \ref{le9} we can first conclude that \eqref{stand-} and \eqref{stand} hold with $\Theta_\lambda$ replaced by $ \mathcal{R}_\lambda$, and hence that
 Lemma \ref{le11} is applicable to $ \mathcal{R}_\lambda$. Hence, based on Lemma \ref{le11} we see that to prove \eqref{cruca} it suffices to prove that
\begin{eqnarray}\label{crucacar}\biggl|\frac 1 \lambda R_\lambda\Psi(x,t)\biggr|^2\frac {dxdtd\lambda}\lambda,
\end{eqnarray}
where $\Psi(x,t)=x$, defines a Carleson measure on $\mathbb R^{n+2}_+$ with constant bounded by $c\varepsilon_0$. We write
\begin{eqnarray}
\frac 1 \lambda R_\lambda\Psi(x,t)=\theta_\lambda \tilde{\bf \varepsilon} \nabla_{||} (\mathcal{E}_\lambda^1-I)\Psi+
\theta_\lambda \tilde{\bf \varepsilon} \nabla_{||} \Psi.
\end{eqnarray}
However, $\nabla_{||} \Psi$ is the identity matrix and hence, using Lemma \ref{ilem1} and Lemma \ref{le8} we see that
$$\int_0^{l(Q)}\int_Q|\theta_\lambda \tilde{\bf \varepsilon} \nabla_{||} \Psi(x,t)|^2\frac {dxdtd\lambda}\lambda\leq c\varepsilon_0|Q|.$$
To continue we note that
\begin{eqnarray}
\theta_\lambda \tilde{\bf \varepsilon} \nabla_{||} (\mathcal{E}_\lambda^1-I)\Psi&=&\theta_\lambda \tilde{\bf \varepsilon} \lambda^2 \nabla_{||}\mathcal{E}_\lambda^1(\partial_t+(\mathcal{L}_1)_{||})\Psi\notag\\
&=&\theta_\lambda \tilde{\bf \varepsilon} \lambda^2 \nabla_{||} \mathcal{E}_\lambda^1\div_{||}A_{||}^1=\mathcal{U}_\lambda A_{||}^1
\end{eqnarray}
as $\Psi$ is independent of $t$ and where $\mathcal{U}_\lambda$ was introduced in \eqref{Udef}.  Hence it suffices to prove the estimate
\begin{eqnarray}\label{crucacar+}\int_0^{l(Q)}\int_Q|\mathcal{U}_\lambda A_{||}^1|^2\frac {dxdtd\lambda}\lambda\leq c\varepsilon_0|Q|.
\end{eqnarray}
However, this is Lemma \ref{ilem2--}.
\end{proof}

     \begin{lemma} \label{ilem2} Assume \eqref{keyseauu}. Let $\theta_\lambda$ be as in the Lemma \ref{ilem1}, let ${\bf \varepsilon}$, $\tilde{\bf \varepsilon}$ be as in \eqref{keyestaahmo}, \eqref{keyestaahmouu}. Let $\mathcal{E}_\lambda^1:=(I+\lambda^2(\mathcal{H}_1)_{||})^{-1}=(I+\lambda^2(\partial_t+(\mathcal{L}_1)_{||}))^{-1}$. Let $\mathcal{U}_\lambda$ be as in the Lemma \ref{ilem2--}. Then
   \begin{eqnarray}\label{cruc}
    \int_{0}^\infty\int_{\mathbb R^{n+1}}|\mathcal{U}_\lambda{\bf f}|^2\,  \frac{dxdtd\lambda}\lambda\leq c||{\bf f}||_2^2
     \end{eqnarray}
     whenever ${\bf f}\in  L^2(\mathbb R^{n+1},\mathbb C^n)$ and for some constant $c$ depending  at most
     on $n$, $\Lambda$,  the De Giorgi-Moser-Nash constants and $\Gamma_0$.
     \end{lemma}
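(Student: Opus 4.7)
The plan is to deduce the $L^2$ square function bound for $\mathcal{U}_\lambda$ from the Carleson measure bound for $\mathcal{U}_\lambda A_{||}^1$ established in Lemma \ref{ilem2--}, using a $T(b)$-type argument with $b=A_{||}^1$ as the pseudo-accretive test matrix (ellipticity of $A^1$, built into \eqref{eq3}, providing the requisite lower bounds on dyadic averages).

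First I would check that $\mathcal{U}_\lambda$ itself satisfies \eqref{stand-} and \eqref{stand}. By Lemma \ref{ilem1} the operator $\theta_\lambda$ satisfies \eqref{stand-} and \eqref{stand}, while Lemma \ref{le8-} and Lemma \ref{le8-+} ensure that $\lambda^2\nabla_{||}\mathcal{E}_\lambda^1\div_{||}$ is uniformly $L^2$-bounded with exponential off-diagonal decay. Pointwise multiplication by the bounded matrix $\tilde{\bf \varepsilon}$ preserves both properties, so the composition $\mathcal{U}_\lambda=\theta_\lambda\circ(\tilde{\bf \varepsilon}\,\nabla_{||}\mathcal{E}_\lambda^1\lambda^2\div_{||})$ inherits \eqref{stand-} and \eqref{stand} via Lemma \ref{le9}.

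For the main step, given ${\bf f}\in L^2(\mathbb R^{n+1},\mathbb C^n)$ I would write ${\bf f}=A_{||}^1\,{\bf h}$ with ${\bf h}:=(A_{||}^1)^{-1}{\bf f}\in L^2(\mathbb R^{n+1},\mathbb C^n)$ (by ellipticity $\|{\bf h}\|_2\leq c\|{\bf f}\|_2$), and decompose, at each scale $\lambda$ and with $\mathcal{A}_\lambda^Q$ the dyadic averaging operator,
\begin{eqnarray*}
\mathcal{U}_\lambda {\bf f}\ =\ \bigl(\mathcal{U}_\lambda A_{||}^1\bigr)\,\mathcal{A}_\lambda^Q{\bf h}\ +\ \bigl(\mathcal{U}_\lambda(A_{||}^1\cdot)-(\mathcal{U}_\lambda A_{||}^1)\,\mathcal{A}_\lambda^Q\bigr){\bf h}.
\end{eqnarray*}
The contribution of the first piece to $\int_0^\infty\int|\mathcal{U}_\lambda{\bf f}|^2\lambda^{-1}dxdtd\lambda$ is controlled by Carleson's embedding theorem, using the Carleson measure bound of Lemma \ref{ilem2--} on $|\mathcal{U}_\lambda A_{||}^1|^2\lambda^{-1}dxdtd\lambda$ together with the $L^2$-boundedness of the nontangential maximal function of $\mathcal{A}_\lambda^Q{\bf h}$. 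The second, commutator-type piece is handled in the spirit of the treatment of $\mathcal{R}_\lambda^{(1)}$ and $\mathcal{R}_\lambda^{(2)}$ in the proof of Lemma \ref{ilem2--}: a further decomposition through a standard parabolic approximation of the identity $\P_\lambda$ and Lemma \ref{little3} (comparing $\mathcal{A}_\lambda^Q$ with $\P_\lambda$) reduces it to bounded Littlewood--Paley-type squares handled by Lemma \ref{little2}, together with  bounds of the form $\|(\mathcal{U}_\lambda A_{||}^1)\mathcal{A}_\lambda\|_{2\to2}\leq c$ supplied by Lemma \ref{le11+} (applicable because $\mathcal{U}_\lambda$ satisfies \eqref{stand-}--\eqref{stand} as already verified) and off-diagonal/cancellation estimates in the manner of Lemma \ref{le11-}.

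The main technical obstacle will be the commutator piece: because $A_{||}^1$ is only $L^\infty$, pointwise multiplication by it does not commute with $\mathcal{U}_\lambda$ in any useful pointwise sense, so we must exploit the smoothing supplied by the resolvent $\mathcal{E}_\lambda^1$ inside $\mathcal{U}_\lambda$ (through the uniform bounds of Lemma \ref{le8-} and the off-diagonal decay of Lemma \ref{le8-+}) to produce summable bookkeeping for the error. This is exactly the step where the fine parabolic estimates of Section \ref{sec7} are essential; once this is set up, summing the Carleson and commutator contributions delivers the desired bound $\int_0^\infty\int|\mathcal{U}_\lambda{\bf f}|^2\lambda^{-1}dxdtd\lambda\leq c\|{\bf f}\|_2^2$ with $c$ depending at most on $n$, $\Lambda$, the De Giorgi--Moser--Nash constants, and $\Gamma_0$.
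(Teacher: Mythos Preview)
Your approach differs substantially from the paper's, and the commutator step does not close with the tools you cite. The paper's route is much shorter: after reducing to $\mathcal{U}_\lambda A_{||}^1{\bf f}$ via ellipticity (as you also do), it invokes the parabolic Hodge decomposition of Lemma \ref{parahodge} to write $-\div_{||}(A_{||}^1{\bf f})=(\mathcal{H}_1)_{||}u$ with $\|u\|_{\mathbb H}\leq c\|{\bf f}\|_2$. The resolvent identity $\mathcal{E}_\lambda^1\lambda^2(\mathcal{H}_1)_{||}=I-\mathcal{E}_\lambda^1$ then gives
\[
\mathcal{U}_\lambda A_{||}^1{\bf f}=\theta_\lambda\tilde{\bf\varepsilon}\nabla_{||}\mathcal{E}_\lambda^1 u-\theta_\lambda\tilde{\bf\varepsilon}\nabla_{||}u,
\]
and the two terms are disposed of directly by Lemma \ref{ilem2-} (for $\lambda^{-1}\mathcal{R}_\lambda u$ with $\mathcal{R}_\lambda=\lambda\theta_\lambda\tilde{\bf\varepsilon}\nabla_{||}\mathcal{E}_\lambda^1$) and Lemma \ref{ilem1}, respectively. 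No $T(b)$ machinery is needed.

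Your decomposition requires controlling the square function of the commutator $\mathcal{R}_\lambda{\bf h}:=\mathcal{U}_\lambda(A_{||}^1{\bf h})-(\mathcal{U}_\lambda A_{||}^1)\mathcal{A}_\lambda^Q{\bf h}$ for a general ${\bf h}\in L^2$. Lemma \ref{le11-} gives only $\|\mathcal{R}_\lambda g\|_2\leq c(\|\lambda\nabla_{||}g\|_2+\|\lambda^2\partial_t g\|_2)$, and integrating this against $d\lambda/\lambda$ diverges. If you first smooth, writing $\mathcal{R}_\lambda{\bf h}=\mathcal{R}_\lambda\P_\lambda{\bf h}+\mathcal{R}_\lambda(I-\P_\lambda){\bf h}$, the first piece is fine via Lemma \ref{little2}, but expanding the second produces the term $\mathcal{U}_\lambda(A_{||}^1(I-\P_\lambda){\bf h})$; the uniform bound on $\mathcal{U}_\lambda$ together with $\int_0^\infty\|(I-\P_\lambda){\bf h}\|_2^2\,\lambda^{-1}d\lambda=\infty$ does not close it, and rewriting this term brings back $\mathcal{U}_\lambda{\bf f}$ itself. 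The analogy with the treatment of $\mathcal{R}_\lambda^{(1)},\mathcal{R}_\lambda^{(2)}$ in Lemma \ref{ilem2--} breaks down because there the inputs are the engineered test functions $f_{Q,w}^\epsilon$ satisfying $\|\mathbb D f_{Q,w}^\epsilon\|_2^2\leq c|Q|$ (Lemma \ref{ilem2--+}(i)), a regularity you do not have for ${\bf h}=(A_{||}^1)^{-1}{\bf f}$. A genuine $T(b)$-for-square-functions argument would require a quasi-orthogonality estimate not developed in the paper; the Hodge-decomposition route sidesteps all of this in a few lines.
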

     \begin{proof}
Let ${\bf f}\in  L^2(\mathbb R^{n+1},\mathbb C^n)$ and let, see Lemma \ref{parahodge},  $u\in \mathbb H(\mathbb R^{n+1},\mathbb C)$ be a weak solution to the equation
$$-\div_{||}(A_{||}^1{\bf f})=(\mathcal{H}_1)_{||}u=\partial_tu+(\mathcal{L}_1)_{||}u,$$
such that
 \begin{eqnarray}\label{cruc-}
 ||u||_{\mathbb H(\mathbb R^{n+1},\mathbb C)}\leq c||{\bf f}||_2.
 \end{eqnarray}
 Using the ellipticity of $A_{||}^1$ we see that  to prove \eqref{cruc} it suffices to prove that
  \begin{eqnarray}\label{cruc+}
    \int_{0}^\infty\int_{\mathbb R^{n+1}}|\mathcal{U}_\lambda A_{||}^1{\bf f}|^2\,  \frac{dxdtd\lambda}\lambda\leq c||{\bf f}||_2^2,
     \end{eqnarray}
     whenever ${\bf f}\in  L^2(\mathbb R^{n+1},\mathbb C^n)$. Now
\begin{eqnarray}
\mathcal{U}_\lambda A_{||}^1{\bf f}(x,t)&=&\theta_\lambda \tilde{\bf \varepsilon}\nabla_{||}\mathcal{E}_\lambda^1\lambda^2 (\partial_t+(\mathcal{L}_1)_{||})u\notag\\
&=&\theta_\lambda \tilde{\bf \varepsilon} \nabla_{||} ((I+\lambda^2(\partial_t+(\mathcal{L}_1)_{||}))^{-1}-I)u\notag\\
&=&\theta_\lambda \tilde{\bf \varepsilon} \nabla_{||} \mathcal{E}_\lambda^1u-\theta_\lambda \tilde{\bf \varepsilon} \nabla_{||} u.
\end{eqnarray}
Using Lemma \ref{ilem1} and \eqref{cruc-} we see that
  \begin{eqnarray}\label{cruc++}
    \int_{0}^\infty\int_{\mathbb R^{n+1}}|\theta_\lambda \tilde{\bf \varepsilon} \nabla_{||} u|^2\,
    \frac{dxdtd\lambda}\lambda\leq c\varepsilon_0||{\bf f}||_2^2.
     \end{eqnarray}
Hence, the new estimate we need to prove is that
  \begin{eqnarray}\label{cruc++}
    \int_{0}^\infty\int_{\mathbb R^{n+1}}|\theta_\lambda \tilde{\bf \varepsilon} \nabla_{||} \mathcal{E}_\lambda^1u|^2\,
    \frac{dxdtd\lambda}\lambda\leq c||{\bf f}||_2^2.
     \end{eqnarray}
Define $\mathcal{R}_\lambda$ through the relation
\begin{eqnarray}\label{aaa}
\theta_\lambda \tilde{\bf \varepsilon} \nabla_{||} \mathcal{E}_\lambda^1u=\frac 1 {\lambda}{\mathcal{R}_\lambda u}.
\end{eqnarray}
The estimate in \eqref{cruc++} now follows from Lemma \ref{ilem2-}.
\end{proof}

\begin{lemma}\label{th0uu} Assume \eqref{keysea}. Then
\begin{eqnarray}\label{keyestint+adase}
|||\lambda \nabla \mathcal{D}_\lambda^{\mathcal{H}} f|||_\pm+|||\lambda \nabla \mathcal{D}_\lambda^{\mathcal{H}^\ast} f|||_\pm\leq c  ||f||_2.
\end{eqnarray}
whenever $f\in L^2(\mathbb R^{n+1},\mathbb C)$ and for some constant $c$ depending  at most
     on $n$, $\Lambda$,  the De Giorgi-Moser-Nash constants and $\Gamma$.
\end{lemma}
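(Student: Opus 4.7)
I would begin by decomposing the double layer via the representation \eqref{eq11edsea},
\[
\mathcal{D}_\lambda^{\mathcal{H}} f = -\sum_{j=1}^{n+1}(\mathcal{S}_\lambda^{\mathcal{H}} D_j)(A_{n+1,j} f),
\]
so that, since $\|A_{n+1,j}\|_\infty \leq \Lambda$, the desired estimate reduces to establishing
\[
|||\lambda \nabla(\mathcal{S}_\lambda^{\mathcal{H}} D_j) g|||_+ \leq c\|g\|_2, \qquad g \in L^2(\mathbb{R}^{n+1},\mathbb{C}),
\]
for each $j \in \{1,\dots,n+1\}$. The case $j = n+1$ is immediate from the identity $\mathcal{S}_\lambda^{\mathcal{H}} D_{n+1} = -\partial_\lambda \mathcal{S}_\lambda^{\mathcal{H}}$, combined with Lemma \ref{lemsl1} $(i)$--$(ii)$ applied with $m = l = -1$, using that Definition \ref{blayer+} $(i)$--$(ii)$ guarantees $\Phi_+(g) \leq \Gamma\|g\|_2$.

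For $j \leq n$, a direct fiberwise computation yields the adjoint relation $(\mathcal{S}_\lambda^{\mathcal{H}} D_j)^\ast = D_j \mathcal{S}_\lambda^{\mathcal{H}^\ast}$ at each $\lambda > 0$, which together with Definition \ref{blayer+} $(iv)$ delivers the uniform $L^2$ boundedness of $\mathcal{S}_\lambda^{\mathcal{H}} D_j$. To upgrade this to the square-function estimate, I would apply integration by parts in $\lambda$ to $V_\lambda := (\mathcal{S}_\lambda^{\mathcal{H}} D_j) g$, which is a weak solution of $\mathcal{H} V = 0$ in $\mathbb{R}_+^{n+2}$. Provided the boundary contributions at $\lambda \to 0^+$ and $\lambda \to \infty$ can be justified, the identity
\[
\int_0^\infty\!\!\int_{\mathbb{R}^{n+1}} |\partial_\lambda V|^2 \lambda\, dxdtd\lambda = -\operatorname{Re}\!\int_0^\infty\!\!\int_{\mathbb{R}^{n+1}} \lambda^2 \partial_\lambda V\,\overline{\partial_\lambda^2 V}\, dxdtd\lambda
\]
followed by Cauchy--Schwarz yields $|||\lambda \partial_\lambda V|||_+ \leq |||\lambda^2\partial_\lambda^2 V|||_+$, and an analogous argument for the tangential part gives $|||\lambda \nabla_{||} V|||_+ \leq |||\lambda^2 \nabla_{||}\partial_\lambda V|||_+$. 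The remaining second-order square functions would then be controlled by invoking Lemma \ref{lemsl1} $(i)$ with $(m,l) = (0,-1)$ applied to $\mathcal{H}^\ast$---which provides the bound $|||\lambda^2 \nabla \partial_\lambda^2 \mathcal{S}_\lambda^{\mathcal{H}^\ast} g|||_+ \leq c\|g\|_2$---and transferring this to the $V_\lambda$-side via the adjoint relation together with an off-diagonal/Carleson transfer argument in the spirit of the composed-operator estimates of Section \ref{sec8}.

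\textbf{Main obstacle.} The principal technical difficulty lies in the limiting and integration-by-parts step: $V_\lambda = (\mathcal{S}_\lambda^{\mathcal{H}} D_j) g$ already incorporates one derivative of the fundamental solution, so it is not smooth at $\lambda = 0$ for merely $L^2$ data, and the relevant adjoint-to-square-function transfer is not a cheap duality but requires off-diagonal control. I would circumvent the first issue by first proving the estimate with $\mathcal{S}_\lambda^{\mathcal{H}}$ replaced by the smoothed family $\mathcal{S}_\lambda^{\mathcal{H},\eta}$ of \eqref{sop}: Lemma \ref{smooth1} $(v)$--$(vii)$ provide the continuity across $\{\lambda = 0\}$ needed to justify the integration by parts rigorously, while Lemma \ref{lemsl1} $(iii)$--$(iv)$ supply the corresponding smoothed second-order square-function bounds with constants independent of $\eta$. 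The contributions as $\lambda \to \infty$ are absorbed using Lemma \ref{le5} $(iii)$ and a standard cutoff argument, and one then passes to the limit $\eta \to 0^+$ to obtain the bound for general $g \in L^2$. The corresponding statement for $\mathcal{D}_\lambda^{\mathcal{H}^\ast}$ and the $|||\cdot|||_-$ norm follows by the same argument, since the hypothesis \eqref{keysea} is invariant under $\mathcal{H} \leftrightarrow \mathcal{H}^\ast$ and under the reflection $\lambda \mapsto -\lambda$.
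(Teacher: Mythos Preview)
Your overall strategy---decompose $\mathcal{D}_\lambda^{\mathcal{H}}$ via \eqref{eq11edsea}, integrate by parts in $\lambda$, and control boundary terms via Lemma \ref{le5}---mirrors the paper's argument. The $j=n+1$ case is handled correctly. The genuine gap is in the tangential case $j\leq n$: after integration by parts and Caccioppoli you are reduced to the square-function estimate
\[
|||\lambda\,\partial_\lambda(\mathcal{S}_\lambda^{\mathcal{H}}\nabla_{||})\cdot{\bf f}|||_+\leq c\|{\bf f}\|_2,
\]
and your proposed mechanism for this---``Lemma \ref{lemsl1} $(i)$ for $\mathcal{H}^\ast$ plus an adjoint relation together with an off-diagonal/Carleson transfer argument''---does not work as stated. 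The adjoint identity $(\mathcal{S}_\lambda^{\mathcal{H}}D_j)^\ast=D_j\mathcal{S}_\lambda^{\mathcal{H}^\ast}$ holds \emph{fiberwise} in $\lambda$ and therefore transfers only uniform $L^2\to L^2$ bounds; it says nothing about square functions, which integrate in $\lambda$. Lemma \ref{lemsl1} gives you $|||\lambda^2\nabla\partial_\lambda^2\mathcal{S}_\lambda^{\mathcal{H}^\ast}h|||_+\leq c\|h\|_2$, but that is a square-function bound in the \emph{input} $h$ of the adjoint operator, not in the input $g$ of $(\mathcal{S}_\lambda^{\mathcal{H}}D_j)$, and there is no Carleson embedding in Section \ref{sec8} that bridges the two.

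The paper closes exactly this gap by invoking the estimate of $\widetilde{II}$ in the proof of Lemma \ref{ilem1}. That argument is not an adjoint transfer: one first uses the parabolic Hodge decomposition (Lemma \ref{parahodge}) to write $-\div_{||}(A_{||}{\bf f})=\mathcal{H}_{||}u$, then the structural identity \eqref{keyestaa+aab} to rewrite $\partial_\lambda(\mathcal{S}_\lambda\nabla_{||})\cdot A_{||}{\bf f}$ as a sum of terms of the form $\partial_\lambda^2\mathcal{S}_\lambda(\cdot)$ acting on derivatives of $u$ and on $\overline{\partial_{\nu^\ast}u}$. The first pieces are then controlled directly by Definition \ref{blayer+} $(ii)$, while the conormal piece $\partial_\lambda^2(\mathcal{S}_\lambda\overline{\partial_{\nu^\ast}u})=\partial_\lambda^2\mathcal{D}_\lambda u$ is handled by writing $u=\mathcal{S}_0 v$---this is where the \emph{invertibility} of $\mathcal{S}_0:L^2\to\mathbb{H}$, part of the hypothesis \eqref{keysea}, is used essentially---and applying Lemma \ref{trace6} together with Definition \ref{blayer+} $(ii)$ once more. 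Your smoothed-potential regularization is not needed here; the integration by parts is carried out directly on $\mathcal{D}_\lambda f$, with boundary terms $J_\lambda=\lambda^2\|\nabla\mathcal{D}_\lambda f\|_2^2$ bounded uniformly via energy estimates and the duality \eqref{eq11edsea+}.
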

               \begin{proof} We will only prove the estimate for $|||\lambda \nabla \mathcal{D}_\lambda^{\mathcal{H}} f|||_+$. To start the proof  we first note that
     \begin{eqnarray}\label{eq11edsea+fr}
    I^2:=|||\lambda\nabla\mathcal{D}_\lambda f|||_+^2&=&-\int_0^\infty\int_{\mathbb R^{n+1}}\nabla\mathcal{D}_\lambda f\cdot \overline{\partial_\lambda\nabla\mathcal{D}_\lambda f}\, \lambda^2dxdtd\lambda\notag\\
    &&+\lim_{\epsilon\to 0} J_{1/\epsilon}-\lim_{\epsilon\to 0} J_{\epsilon},
    \end{eqnarray}
    where
    $$J_\lambda=\int_{\mathbb R^{n+1}}|\nabla\mathcal{D}_\lambda f|^2\, \lambda^2dxdt.$$
    However, by energy estimates, see Lemma \ref{le1--} and Lemma \ref{le1}, \eqref{eq11edsea} and duality we see that
     \begin{eqnarray}\label{eq11edsea+fri}
     J_\lambda\leq c\int_{\mathbb R^{n+1}}|\mathcal{D}_\lambda f|^2\, dxdt\leq c||f||_2^2.
      \end{eqnarray}
      Hence it suffices to estimate
          \begin{eqnarray}\label{eq11edsea+frii}
    |||\lambda^2\nabla\partial_\lambda\mathcal{D}_\lambda f|||_+&\leq& c|||\lambda\partial_\lambda\mathcal{D}_\lambda f|||_+\notag\\
    &=&c|||\lambda\partial_\lambda(\mathcal{S}_\lambda\nabla_{||})\cdot {\bf f}|||_++c|||\lambda\partial_\lambda^2\mathcal{S}_\lambda f|||_+
    \end{eqnarray}
    where we again have used energy estimates, see Lemma \ref{le1--},  \eqref{eq11edsea}, and where we have introduced ${\bf f}$. To complete the proof we only have
    to estimate $|||\lambda\partial_\lambda(\mathcal{S}_\lambda\nabla_{||})\cdot {\bf f}|||_+$. However this is the term $ \widetilde {II}$ introduced in
    \eqref{keyestaa+a} in the proof of Lemma \ref{ilem1}. Hence, reusing that estimate we can conclude, using \eqref{keysea}, that
           \begin{eqnarray}\label{eq11edsea+frii}
    |||\lambda^2\nabla\partial_\lambda\mathcal{D}_\lambda f|||_+\leq c||f||_2.
    \end{eqnarray}
    Hence the proof of the lemma is complete.
    \end{proof}

\section{Proof of Theorem \ref{thper2}: preliminary technical estimates}\label{sec9}
\noindent

In this section we prove a number of technical estimates to be used in the proof of Theorem \ref{thper2}. As in the statement of Theorem \ref{thper2}, and as in Section \ref{sec8}, we throughout this section consider two operators  $\mathcal{H}_0=\partial_t-\div A^0\nabla$, $\mathcal{H}_1=\partial_t-\div A^1\nabla$. We will assume  \eqref{keyseauu}. By definition, \eqref{keyseauu} implies that
          \begin{eqnarray}\label{keyestinteda}
\sup_{\lambda\neq 0}||\partial_\lambda \mathcal{S}_\lambda^{\mathcal{H}_0} f||_2+\sup_{\lambda\neq 0}||\partial_\lambda \mathcal{S}_\lambda^{\mathcal{H}_0^\ast} f||_2&\leq& \Gamma_0||f||_2\notag\\
|||\lambda \partial_\lambda^2\mathcal{S}_{\lambda}^{\mathcal{H}_0}f|||_\pm+|||\lambda \partial_\lambda^2\mathcal{S}_{\lambda}^{\mathcal{H}_0^\ast}f|||_\pm&\leq& \Gamma_0||f||_2,
     \end{eqnarray}
whenever  $f\in L^2(\mathbb R^{n+1},\mathbb C)$. We let ${\bf \varepsilon}$ be as in \eqref{keyestaahmo}, we assume
     \eqref{keyestaahmeddo} and we let $\tilde{\bf \varepsilon}$ be as introduced in \eqref{keyestaahmouu}. We also introduce
     \begin{eqnarray}\label{iest1ha}
A_\pm^{\mathcal{H}_1,\eta}(f)&:=& |||\lambda \nabla \partial_\lambda \mathcal{S}_\lambda^{\mathcal{H}_1,\eta}f|||_{\pm}+||N_\ast^\pm(\P_\lambda \partial_\lambda \mathcal{S}_\lambda^{\mathcal{H}_1,\eta}f)||_2\notag\\
&&+\sup_{\pm\lambda>0}||\mathbb D\mathcal{S}_\lambda^{\mathcal{H}_1,\eta}f||_2+\sup_{\pm\lambda> 0}||\partial_\lambda\mathcal{S}_\lambda^{\mathcal{H}_1,\eta}f||_2+||f||_2.
\end{eqnarray}
In this section we prove the following technical lemmas.
\begin{lemma}\label{telem1} Assume \eqref{keyseauu}. Let $a\in \mathbb R\setminus\{0\}$. Then there exists a constant $c$, depending at most
     on $n$, $\Lambda$,  the De Giorgi-Moser-Nash constants, $\Gamma_0$, and $a$, such that
$$|||\lambda^2(\partial_\lambda^2\mathcal{S}_{a\lambda}^{\mathcal{H}_0}\nabla) \cdot\varepsilon \nabla \mathcal{S}_\lambda^{\mathcal{H}_1,\eta}f|||_{\pm}\leq c\varepsilon_0A_\pm^{\mathcal{H}_1,\eta}(f).$$
\end{lemma}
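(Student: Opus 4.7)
I begin by noting that the chain rule yields $\partial_\lambda^2\mathcal{S}_{a\lambda}^{\mathcal{H}_0} = a^2(\partial_\mu^2\mathcal{S}_\mu^{\mathcal{H}_0})|_{\mu=a\lambda}$, so that $\lambda^2(\partial_\lambda^2\mathcal{S}_{a\lambda}^{\mathcal{H}_0}\nabla)\cdot\mathbf{g} = \theta_{a\lambda}\mathbf{g}$ in the notation of Lemma \ref{ilem1}. After a change of variable $\sigma=a\lambda$, the family $\{\theta_{a\lambda}\}$ inherits the uniform $L^2$-bounds, off-diagonal decay, and square-function bounds of Lemma \ref{ilem1}, as well as the refined composite bounds of Lemmas \ref{ilem2--}--\ref{ilem2}, with constants now depending in addition on $|a|$. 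I treat $|||\cdot|||_+$; the $-$ case is symmetric. Using $\varepsilon\nabla = \tilde\varepsilon\nabla_{||}+\varepsilon_{n+1}\partial_\lambda$, I split the integrand into a perpendicular piece involving $\varepsilon_{n+1}\partial_\lambda\mathcal{S}_\lambda^{\mathcal{H}_1,\eta}f$ and a parallel piece involving $\tilde\varepsilon\nabla_{||}\mathcal{S}_\lambda^{\mathcal{H}_1,\eta}f$.

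For the perpendicular piece, I decompose
\begin{equation*}
\theta_{a\lambda}\varepsilon_{n+1}\partial_\lambda\mathcal{S}_\lambda^{\mathcal{H}_1,\eta}f
= (\theta_{a\lambda}\varepsilon_{n+1})\mathcal{P}_\lambda\partial_\lambda\mathcal{S}_\lambda^{\mathcal{H}_1,\eta}f
+ \mathcal{R}_{a\lambda}\partial_\lambda\mathcal{S}_\lambda^{\mathcal{H}_1,\eta}f,
\end{equation*}
with $\mathcal{R}_{a\lambda}g := \theta_{a\lambda}(\varepsilon_{n+1}g)-(\theta_{a\lambda}\varepsilon_{n+1})\mathcal{P}_\lambda g$. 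The Carleson piece is handled by Lemma \ref{le8} (applicable to $\theta_{a\lambda}$ through Lemma \ref{ilem1}): the measure $|\theta_{a\lambda}\varepsilon_{n+1}|^2\lambda^{-1}dxdtd\lambda$ is Carleson of norm $\lesssim\varepsilon_0^2$, so Carleson embedding bounds that contribution by $c\varepsilon_0^2\|N_\ast^+(\mathcal{P}_\lambda\partial_\lambda\mathcal{S}_\lambda^{\mathcal{H}_1,\eta}f)\|_2^2 \le c\varepsilon_0^2 A_+^{\mathcal{H}_1,\eta}(f)^2$. Since $\mathcal{R}_{a\lambda}1=0$ and the off-diagonal estimates are supplied by Lemma \ref{le9}, Lemma \ref{le11-} gives $\|\mathcal{R}_{a\lambda}g\|_2 \le c\varepsilon_0(\|\lambda\nabla_{||}g\|_2+\|\lambda^2\partial_t g\|_2)$; taking $g=\partial_\lambda\mathcal{S}_\lambda^{\mathcal{H}_1,\eta}f$, the first term integrates to $|||\lambda\nabla\partial_\lambda\mathcal{S}_\lambda^{\mathcal{H}_1,\eta}f|||_+^2\le A_+^{\mathcal{H}_1,\eta}(f)^2$, and the second is controlled by the parabolic Caccioppoli of Lemma \ref{le1a} (for $\lambda>2\eta$, where $\partial_\lambda\mathcal{S}_\lambda^{\mathcal{H}_1,\eta}f$ solves the equation) together with the smoothed-kernel bounds of Lemma \ref{lemsl1} in the range $\lambda\le 2\eta$.

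For the parallel piece, I insert $I = \mathcal{E}_{a\lambda}^1 + \mathcal{E}_{a\lambda}^1(a\lambda)^2(\partial_t+(\mathcal{L}_1)_{||})$ and use commutation of $\mathcal{E}_{a\lambda}^1$ with $\partial_t$ and $(\mathcal{L}_1)_{||}$ to write
\begin{equation*}
\theta_{a\lambda}\tilde\varepsilon\nabla_{||}\mathcal{S}_\lambda^{\mathcal{H}_1,\eta}f
= \theta_{a\lambda}\tilde\varepsilon\nabla_{||}\mathcal{E}_{a\lambda}^1\mathcal{S}_\lambda^{\mathcal{H}_1,\eta}f
+ \theta_{a\lambda}\tilde\varepsilon\nabla_{||}\mathcal{E}_{a\lambda}^1(a\lambda)^2\partial_t\mathcal{S}_\lambda^{\mathcal{H}_1,\eta}f
- \mathcal{U}_{a\lambda}\bigl(A_{||}^1\nabla_{||}\mathcal{S}_\lambda^{\mathcal{H}_1,\eta}f\bigr),
\end{equation*}
with $\mathcal{U}_\lambda$ as in Lemma \ref{ilem2--}. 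The $\partial_t$-term is estimated using the uniform $L^2$-bounds of $\theta_{a\lambda}\tilde\varepsilon$ (Lemma \ref{ilem1}) and $\lambda\nabla_{||}\mathcal{E}_{a\lambda}^1$ (Lemma \ref{le8-}), reducing to $|||\lambda\partial_t\mathcal{S}_\lambda^{\mathcal{H}_1,\eta}f|||_+$, which is controlled by $A_+^{\mathcal{H}_1,\eta}(f)$ through a half-space analog of Lemma \ref{lemsl1}$(iv)$ with $m=-1$. The remaining two terms are designed to be the inputs to Lemmas \ref{ilem2-} and \ref{ilem2}, but those lemmas are stated only for $\lambda$-independent data.

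The main obstacle is precisely this $\lambda$-dependence of $\mathcal{S}_\lambda^{\mathcal{H}_1,\eta}f$ inside the resolvent- and $\mathcal{U}_{a\lambda}$-terms. I resolve it by splitting $\mathcal{S}_\lambda^{\mathcal{H}_1,\eta}f = \mathcal{S}_0^{\mathcal{H}_1,\eta}f + \int_0^\lambda\partial_\sigma\mathcal{S}_\sigma^{\mathcal{H}_1,\eta}f\,d\sigma$, which is legitimate since the smoothed single layer $\mathcal{S}_\lambda^{\mathcal{H}_1,\eta}$ is smooth across $\lambda=0$. The $\lambda$-independent piece $\mathcal{S}_0^{\mathcal{H}_1,\eta}f$ is then estimated directly via Lemmas \ref{ilem2-} and \ref{ilem2}, producing a contribution $\le c\varepsilon_0\|\mathbb D\mathcal{S}_0^{\mathcal{H}_1,\eta}f\|_2 \le c\varepsilon_0\sup_{\lambda>0}\|\mathbb D\mathcal{S}_\lambda^{\mathcal{H}_1,\eta}f\|_2 \le c\varepsilon_0 A_+^{\mathcal{H}_1,\eta}(f)$. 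The $\lambda$-dependent remainder is the delicate part: a crude application of the uniform $L^2$-bounds leads to a logarithmic divergence, so one must exploit the off-diagonal decay of the composite operators $\theta_{a\lambda}\tilde\varepsilon\nabla_{||}\mathcal{E}_{a\lambda}^1$ and $\mathcal{U}_{a\lambda}$ established via Lemma \ref{le9}, interchange the orders of integration in $\lambda$ and $\sigma$, and reduce the resulting double integral to a tent-space-type estimate that is controlled by $|||\lambda\nabla\partial_\lambda\mathcal{S}_\lambda^{\mathcal{H}_1,\eta}f|||_+\le A_+^{\mathcal{H}_1,\eta}(f)$ through a suitably weighted Hardy inequality. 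Once this is in place, collecting the contributions from all the pieces above delivers the claimed bound $c\varepsilon_0 A_+^{\mathcal{H}_1,\eta}(f)$, and the $-$ case is handled by the analogous argument in the lower half-space.
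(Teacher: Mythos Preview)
Your treatment of the perpendicular piece $\theta_{a\lambda}\varepsilon_{n+1}\partial_\lambda\mathcal{S}_\lambda^{\mathcal{H}_1,\eta}f$ matches the paper's exactly: Carleson measure from Lemma \ref{le8} plus the approximation-to-zero operator $\mathcal{R}_{a\lambda}$ via Lemma \ref{le11-}.

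For the parallel piece you take a genuinely different route from the paper, and this is where the gap lies. You write $(\partial_t+(\mathcal{L}_1)_{||}) = \partial_t - \div_{||}A_{||}^1\nabla_{||}$ and keep the two pieces separate, producing a term $\mathcal{U}_{a\lambda}\bigl(A_{||}^1\nabla_{||}\mathcal{S}_\lambda^{\mathcal{H}_1,\eta}f\bigr)$ with a $\lambda$-dependent \emph{tangential} gradient inside $\mathcal{U}$. The paper instead uses the equation $\mathcal{H}_1\mathcal{S}_\lambda^{\mathcal{H}_1,\eta}f=f_\eta$ to rewrite
\[
(\partial_t+(\mathcal{L}_1)_{||})\mathcal{S}_\lambda^{\mathcal{H}_1,\eta}f
=\sum_{i=1}^n D_i\bigl(A^1_{i,n+1}D_{n+1}\mathcal{S}_\lambda^{\mathcal{H}_1,\eta}f\bigr)
+\sum_{j=1}^{n+1}A^1_{n+1,j}D_{n+1}D_j\mathcal{S}_\lambda^{\mathcal{H}_1,\eta}f+f_\eta,
\]
so that the resulting $Y_\lambda$ term (the paper's analogue of your $\mathcal{U}$ and $\partial_t$ terms) involves only $\partial_\lambda\mathcal{S}_\lambda^{\mathcal{H}_1,\eta}f$, $\nabla\partial_\lambda\mathcal{S}_\lambda^{\mathcal{H}_1,\eta}f$, and $f_\eta$. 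These feed directly into $A_+^{\mathcal{H}_1,\eta}(f)$ via the Carleson/zero-operator decomposition applied once more, and there is no $\lambda$-dependent tangential data to worry about. This use of the equation is the missing idea in your argument.

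For the remaining resolvent term $Z_\lambda=\theta_\lambda\tilde\varepsilon\nabla_{||}\mathcal{E}_\lambda^1\mathcal{S}_\lambda^{\mathcal{H}_1,\eta}f$, your split $\mathcal{S}_\lambda=\mathcal{S}_0+\int_0^\lambda\partial_\sigma\mathcal{S}_\sigma\,d\sigma$ is the right start (the paper uses $\mathcal{S}_\delta$ rather than $\mathcal{S}_0$), and the frozen piece is handled by Lemma \ref{ilem2-} just as you say. But your ``suitably weighted Hardy inequality'' for the integral remainder is not a proof. The paper's concrete mechanism is to integrate by parts in $\sigma$,
\[
\int_\delta^\lambda\partial_\sigma\mathcal{S}_\sigma^{\mathcal{H}_1,\eta}f\,d\sigma
=\lambda\partial_\lambda\mathcal{S}_\lambda^{\mathcal{H}_1,\eta}f-\int_\delta^\lambda\sigma\partial_\sigma^2\mathcal{S}_\sigma^{\mathcal{H}_1,\eta}f\,d\sigma,
\]
so that $Z_\lambda^1=\Omega_\lambda^1+\Omega_\lambda^2$: the boundary term $\Omega_\lambda^1$ is $\lambda^{-1}\mathcal{R}_\lambda(\lambda\partial_\lambda\mathcal{S}_\lambda^{\mathcal{H}_1,\eta}f)$ with $\mathcal{R}_\lambda=\lambda\theta_\lambda\tilde\varepsilon\nabla_{||}\mathcal{E}_\lambda^1$, handled by Lemma \ref{ilem2-} and Lemma \ref{le11-}, while $\Omega_\lambda^2$ has the Schur form $\int_0^\lambda(\sigma/\lambda)W_{\lambda}\,\Theta_\sigma\,\frac{d\sigma}{\sigma}$ with uniformly bounded $W_\lambda$, and Lemma \ref{le11++} reduces it to $|||\lambda\partial_\lambda^2\mathcal{S}_\lambda^{\mathcal{H}_1,\eta}f|||_+$. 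Without this integration-by-parts step the naive estimate does diverge logarithmically, as you note; Lemma \ref{le11++} is precisely the tool that closes it.
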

\begin{lemma}\label{telem2} Assume \eqref{keyseauu}. Let $a\in \mathbb R\setminus\{0\}$. Then there exists a constant $c$, depending at most
     on $n$, $\Lambda$,  the De Giorgi-Moser-Nash constants, $\Gamma_0$, and $a$, such that
$$|||\lambda(\partial_\lambda \mathcal{S}_{a\lambda}^{\mathcal{H}_0}\nabla) \cdot\varepsilon \nabla \mathcal{S}_\lambda^{\mathcal{H}_1,\eta}f|||_\pm\leq c\varepsilon_0A_\pm^{\mathcal{H}_1,\eta}(f).$$
\end{lemma}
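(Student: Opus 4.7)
\textbf{Proof plan for Lemma \ref{telem2}.} The plan is to mimic the proof strategy of Lemma \ref{telem1} (and more generally the chain of arguments in Section \ref{sec8}), but with the first-order operator
$$\tilde\theta_\lambda {\bf f} := \lambda (\partial_\lambda \mathcal{S}_{a\lambda}^{\mathcal{H}_0}\nabla)\cdot {\bf f}$$
replacing $\theta_\lambda$. The first step is to establish the exact analog of Lemma \ref{ilem1} for $\tilde\theta_\lambda$: uniform $L^2$-boundedness \eqref{stand-} and the off-diagonal decay \eqref{stand} (with some $d \geq 0$) follow from Lemma \ref{le5} (i), (iii) and Lemma \ref{le4}, respectively, specialised to $m=-1,l=-1$ for the $D_{n+1}$-component and to $m=0,l=-1$ for the $\nabla_{||}$-component; the crucial square-function bound
$$\int_0^\infty\int_{\mathbb R^{n+1}} |\tilde\theta_\lambda {\bf f}|^2\,\frac{dxdt d\lambda}{\lambda} \leq c\|{\bf f}\|_2^2$$
is obtained by splitting $\tilde\theta_\lambda {\bf f}$ into a $D_{n+1}$-piece that equals $-\lambda\partial_\lambda^2\mathcal{S}_{a\lambda}^{\mathcal{H}_0}{\bf f}_{n+1}$ (controlled directly by $|||\lambda\partial_\lambda^2\mathcal{S}_\lambda^{\mathcal{H}_0}f|||_\pm\leq \Gamma_0\|f\|_2$ from \eqref{keyseauu}), and a $\nabla_{||}$-piece that is handled by the parabolic Hodge decomposition of \eqref{keyestaa+a}--\eqref{keyestaa+ak+}, writing $-\div_{||}(A_{||}{\bf f}_{||}) = \mathcal{H}_{||}u$ with $u \in \mathbb H$, invoking the identity $\mathcal{D}_{a\lambda}^{\mathcal{H}_0} = -\sum_j \mathcal{S}_{a\lambda}^{\mathcal{H}_0}\overline{A_{n+1,j}}D_j$, and reducing $\partial_\lambda(\mathcal{S}_{a\lambda}^{\mathcal{H}_0}\overline{\partial_{\nu^\ast}}u)$ to $\partial_\lambda\mathcal{S}_{a\lambda}^{\mathcal{H}_0}(\partial_\nu v(\cdot,\cdot,0))$ via Lemma \ref{trace6}, using the bijectivity $\mathcal{S}_0^{\mathcal{H}_0}:L^2\to \mathbb H$ built into \eqref{keyseauu}.

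With these structural properties of $\tilde\theta_\lambda$ in hand, the next step is to decompose exactly as in display \eqref{decc}:
$$\tilde\theta_\lambda\,\varepsilon\nabla\mathcal{S}_\lambda^{\mathcal{H}_1,\eta}f = \tilde\theta_\lambda\tilde\varepsilon\nabla_{||}\mathcal{S}_\lambda^{\mathcal{H}_1,\eta}f + \tilde{\mathcal{R}}_\lambda\,\partial_\lambda\mathcal{S}_\lambda^{\mathcal{H}_1,\eta}f + (\tilde\theta_\lambda\varepsilon_{n+1})\mathcal{P}_\lambda\,\partial_\lambda\mathcal{S}_\lambda^{\mathcal{H}_1,\eta}f,$$
with $\tilde{\mathcal{R}}_\lambda := \tilde\theta_\lambda\varepsilon_{n+1} - (\tilde\theta_\lambda\varepsilon_{n+1})\mathcal{P}_\lambda$. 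For the first piece I would insert the resolvent via $\tilde\varepsilon\nabla_{||}\mathcal{S}_\lambda^{\mathcal{H}_1,\eta}f = \tilde\varepsilon\nabla_{||}(I-\mathcal{E}_\lambda^1)\mathcal{S}_\lambda^{\mathcal{H}_1,\eta}f + \tilde\varepsilon\nabla_{||}\mathcal{E}_\lambda^1\mathcal{S}_\lambda^{\mathcal{H}_1,\eta}f$; the first summand, when composed with $\tilde\theta_\lambda$, becomes $\tilde\theta_\lambda\tilde\varepsilon\lambda^2\nabla_{||}\mathcal{E}_\lambda^1(\mathcal{H}_1)_{||}\mathcal{S}_\lambda^{\mathcal{H}_1,\eta}f$, and upon bounding $\tilde\theta_\lambda$ uniformly on $L^2$ and $\lambda\nabla_{||}\mathcal{E}_\lambda^1$ uniformly on $L^2$ (Lemma \ref{le8-}) together with $\|\varepsilon\|_\infty\leq \varepsilon_0$, produces a bound of type $c\varepsilon_0\bigl(|||\lambda\nabla\partial_\lambda\mathcal{S}_\lambda^{\mathcal{H}_1,\eta}f|||_\pm + |||\lambda\partial_t\mathcal{S}_\lambda^{\mathcal{H}_1,\eta}f|||_\pm\bigr) \leq c\varepsilon_0 A_\pm^{\mathcal{H}_1,\eta}(f)$ (after applying Lemma \ref{le1a} to trade $\partial_t$ for $\nabla$); the second summand is bounded by the analog of Lemma \ref{ilem2} for $\tilde\theta_\lambda$, which in turn rests on the analogs of Lemma \ref{ilem2--} and Lemma \ref{ilem2-}. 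Both of these transfer verbatim once the structural properties of $\tilde\theta_\lambda$ are in place, since their proofs only invoke the generic Lemmas \ref{le8}, \ref{le9}, \ref{le11+}, \ref{le11-}, \ref{le11} applied to $\tilde\theta_\lambda$ in place of $\theta_\lambda$, plus Lemma \ref{ilem2--+}.

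For the middle piece, $\tilde{\mathcal{R}}_\lambda 1 = 0$ and $\tilde{\mathcal{R}}_\lambda$ inherits \eqref{stand-}--\eqref{stand} from $\tilde\theta_\lambda$ (together with $\|\varepsilon_{n+1}\|_\infty\leq\varepsilon_0$), so Lemma \ref{le11-} yields
$$\|\tilde{\mathcal{R}}_\lambda g\|_2 \leq c\varepsilon_0 \bigl(\|\lambda\nabla_{||} g\|_2 + \|\lambda^2\partial_t g\|_2\bigr)$$
with $g = \partial_\lambda\mathcal{S}_\lambda^{\mathcal{H}_1,\eta}f$; integrating against $dxdtd\lambda/\lambda$ and using Lemma \ref{le1a} to dominate $\lambda^2\partial_t\partial_\lambda\mathcal{S}_\lambda^{\mathcal{H}_1,\eta}f$ by $\lambda\nabla\partial_\lambda\mathcal{S}_\lambda^{\mathcal{H}_1,\eta}f$ gives the required $c\varepsilon_0 A_\pm^{\mathcal{H}_1,\eta}(f)$ bound. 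The third piece reduces to the Carleson-measure estimate
$$\int_0^{l(Q)}\!\!\int_Q |\tilde\theta_\lambda\varepsilon_{n+1}|^2\,\frac{dxdt d\lambda}{\lambda}\leq c\varepsilon_0^2 |Q|,$$
which is an immediate consequence of Lemma \ref{le8} applied to $\tilde\theta_\lambda$ (with $b = \varepsilon_{n+1}$); standard Carleson control then bounds the third piece by $c\varepsilon_0 \|N_\ast^\pm(\mathcal{P}_\lambda\partial_\lambda\mathcal{S}_\lambda^{\mathcal{H}_1,\eta}f)\|_2 \leq c\varepsilon_0 A_\pm^{\mathcal{H}_1,\eta}(f)$. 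The main obstacle is the Hodge-decomposition step in the square-function estimate for $\tilde\theta_\lambda$ outlined in the first paragraph, since it is here that the bijectivity of $\mathcal{S}_0^{\mathcal{H}_0}$ on $\mathbb H$, the identity \eqref{keyestaa+aab}, and Lemma \ref{trace6} must all be combined to close the argument; once that step is secured, the rest of the proof parallels Lemma \ref{telem1} and the estimates in Section \ref{sec8} essentially line by line.
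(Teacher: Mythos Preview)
Your approach is viable but takes a genuinely different and much longer route than the paper. The paper's proof of Lemma \ref{telem2} is only a few lines: it integrates by parts in $\lambda$ in
\[
I_\delta=\int_\delta^{1/\delta}\!\!\int_{\mathbb R^{n+1}}|(\partial_\lambda \mathcal{S}_{\lambda}^{\mathcal{H}_0}\nabla)\cdot\varepsilon \nabla \mathcal{S}_\lambda^{\mathcal{H}_1,\eta}f|^2\,\lambda\,dxdtd\lambda,
\]
absorbs $\tfrac12 I_\delta$ on the left, and is left with the $\partial_\lambda^2$-term $|||\lambda^2(\partial_\lambda^2 \mathcal{S}_{\lambda}^{\mathcal{H}_0}\nabla)\cdot\varepsilon \nabla \mathcal{S}_\lambda^{\mathcal{H}_1,\eta}f|||_+^2$ (which is exactly Lemma \ref{telem1}), plus the easy cross term $|||\lambda^2(\partial_\lambda \mathcal{S}_{\lambda}^{\mathcal{H}_0}\nabla)\cdot\varepsilon\nabla\partial_\lambda\mathcal{S}_\lambda^{\mathcal{H}_1,\eta}f|||_+^2\le c\varepsilon_0^2|||\lambda\nabla\partial_\lambda\mathcal{S}_\lambda^{\mathcal{H}_1,\eta}f|||_+^2$, plus boundary terms controlled by Lemma \ref{le5}. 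In other words, Lemma \ref{telem2} is reduced to Lemma \ref{telem1} rather than re-proved from scratch. Your plan instead rebuilds the whole Section \ref{sec8} machinery (analogs of Lemmas \ref{ilem1}, \ref{ilem2--}, \ref{ilem2-}, \ref{ilem2}) for the first-order operator $\tilde\theta_\lambda$; this works, but it duplicates a great deal of effort that the integration-by-parts trick avoids entirely.

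One point of your outline is inaccurate and would need correction if you carried it out. You write that the ``first summand'' $\tilde\theta_\lambda\tilde\varepsilon\lambda^2\nabla_{||}\mathcal{E}_\lambda^1(\mathcal{H}_1)_{||}\mathcal{S}_\lambda^{\mathcal{H}_1,\eta}f$ is controlled simply by the uniform $L^2$-bounds on $\tilde\theta_\lambda$ and $\lambda\nabla_{||}\mathcal{E}_\lambda^1$, yielding $c\varepsilon_0(|||\lambda\nabla\partial_\lambda\mathcal{S}_\lambda^{\mathcal{H}_1,\eta}f|||+|||\lambda\partial_t\mathcal{S}_\lambda^{\mathcal{H}_1,\eta}f|||)$. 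That is not right: expanding $(\mathcal{H}_1)_{||}\mathcal{S}_\lambda^{\mathcal{H}_1,\eta}f$ via the equation produces the divergence-form piece $\sum_{i=1}^n D_i(A_{i,n+1}^1 D_{n+1}\mathcal{S}_\lambda^{\mathcal{H}_1,\eta}f)$, which is not an $L^2$ function (the $D_i$ hits the $L^\infty$ coefficient $A_{i,n+1}^1$), so you cannot simply peel off $\tilde\theta_\lambda$ and $\lambda\nabla_{||}\mathcal{E}_\lambda^1$ as bounded operators. In the paper's proof of Lemma \ref{telem1} this piece is exactly the term $Y_\lambda^1=\mathcal{U}_\lambda\tilde A_{n+1}^1\partial_\lambda\mathcal{S}_\lambda^{\mathcal{H}_1,\eta}$, and it \emph{requires} the Carleson/$\mathcal{U}_\lambda$-machinery (the analog of Lemma \ref{ilem2--}, via Lemma \ref{le8} and the $\mathcal{R}_\lambda^{(2)}$ decomposition), not just uniform $L^2$ bounds. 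You do invoke those analogs later, but you have mis-assigned them to the $Z_\lambda$-piece only; in fact they are needed in \emph{both} the $Y_\lambda^1$-part of the first summand and in the $Z_\lambda$-part. With that correction your strategy goes through, but the paper's integration-by-parts reduction is the cleaner path.
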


\begin{lemma} \label{telem3} Assume \eqref{keyseauu}. Let $a,b\in \mathbb R\setminus\{0\}$. Then there exists a constant $c$, depending at most
     on $n$, $\Lambda$,  the De Giorgi-Moser-Nash constants, $\Gamma_0$, and $a$, $b$, such that
\begin{eqnarray}
\sup_{0\leq\lambda_1<\lambda_2<\infty}||\int_{\lambda_1}^{\lambda_2}(D_{n+1}\mathcal{S}_{a\lambda}^{\mathcal{H}_0}\nabla)\cdot\varepsilon\nabla \mathcal{S}_{b\lambda}^{\mathcal{H}_1,\eta}f\, d\lambda||_2&\leq& c\varepsilon_0 A_+^{\mathcal{H}_1,\eta}(f),\notag\\
\sup_{0\leq\lambda_1<\lambda_2<\infty}||\int_{-\lambda_2}^{-\lambda_1}(D_{n+1}\mathcal{S}_{a\lambda}^{\mathcal{H}_0}\nabla)\cdot\varepsilon\nabla \mathcal{S}_{b\lambda}^{\mathcal{H}_1,\eta}f\, d\lambda||_2&\leq& c\varepsilon_0 A_-^{\mathcal{H}_1,\eta}(f).
\end{eqnarray}
\end{lemma}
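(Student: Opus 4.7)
The plan is to integrate by parts in $\sigma$ using the chain-rule identity $D_{n+1}\mathcal{S}_{a\sigma}^{\mathcal{H}_0}\nabla=\frac{1}{a}\partial_\sigma[\mathcal{S}_{a\sigma}^{\mathcal{H}_0}\nabla]$ and then reduce the remaining pieces either to boundary terms controlled by $A_+^{\mathcal{H}_1,\eta}(f)$ or to integrals handled by the square-function estimate of Lemma \ref{telem2}. I treat only the upper half-space case, since the lower half follows by an analogous argument with $\sigma\mapsto -\sigma$ reducing to an estimate against $A_-^{\mathcal{H}_1,\eta}(f)$. By the product rule and integration by parts in $\sigma$,
\[
\int_{\sigma_1}^{\sigma_2}(D_{n+1}\mathcal{S}_{a\sigma}^{\mathcal{H}_0}\nabla)\cdot\varepsilon\nabla\mathcal{S}_{b\sigma}^{\mathcal{H}_1,\eta}f\, d\sigma
=\frac{1}{a}\bigl[V_\sigma\bigr]_{\sigma_1}^{\sigma_2}-\frac{b}{a}Y,
\]
where $V_\sigma:=(\mathcal{S}_{a\sigma}^{\mathcal{H}_0}\nabla)\cdot\varepsilon\nabla\mathcal{S}_{b\sigma}^{\mathcal{H}_1,\eta}f$ and $Y:=\int_{\sigma_1}^{\sigma_2}(\mathcal{S}_{a\sigma}^{\mathcal{H}_0}\nabla)\cdot\varepsilon\nabla D_{n+1}\mathcal{S}_{b\sigma}^{\mathcal{H}_1,\eta}f\, d\sigma$. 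The boundary contribution at each of $\sigma_1,\sigma_2$ is controlled by $\sup_{\sigma\neq 0}\|\mathcal{S}_{a\sigma}^{\mathcal{H}_0}\nabla\|_{2\to 2}\leq c$ (which follows from Definition \ref{blayer+} $(i)$, $(iv)$ for $\mathcal{H}_0$ together with the identity $\mathcal{S}_\lambda^{\mathcal{H}_0}D_{n+1}=-\partial_\lambda\mathcal{S}_\lambda^{\mathcal{H}_0}$) combined with the uniform bound $\|\nabla\mathcal{S}_{b\sigma}^{\mathcal{H}_1,\eta}f\|_2\leq A_+^{\mathcal{H}_1,\eta}(f)$ embedded in the definition of $A_+^{\mathcal{H}_1,\eta}(f)$, giving the desired estimate $c\varepsilon_0 A_+^{\mathcal{H}_1,\eta}(f)$.

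The heart of the proof is thus to bound $\|Y\|_2$. Dualizing against $g\in L^2(\mathbb R^{n+1},\mathbb{C})$ with $\|g\|_2=1$, the pairing becomes $\int_{\sigma_1}^{\sigma_2}\int\tilde H_\sigma^g\cdot\varepsilon\nabla D_{n+1}\mathcal{S}_{b\sigma}^{\mathcal{H}_1,\eta}f\, dxdt\, d\sigma$, where $\tilde H_\sigma^g:=(\mathcal{S}_{a\sigma}^{\mathcal{H}_0}\nabla)^\ast g$ has components $-D_j\mathcal{S}_{-a\sigma}^{\mathcal{H}_0^\ast}g$ for $j\leq n$ and $\partial_\mu\mathcal{S}_\mu^{\mathcal{H}_0^\ast}g|_{\mu=-a\sigma}$ for $j=n+1$; by Definition \ref{blayer+} applied to $\mathcal{H}_0^\ast$, $\sup_\sigma\|\tilde H_\sigma^g\|_2\leq c\|g\|_2$. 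Performing a second integration by parts in $\sigma$ on the pairing, using $\nabla D_{n+1}\mathcal{S}_{b\sigma}^{\mathcal{H}_1,\eta}f=\frac{1}{b}\partial_\sigma\nabla\mathcal{S}_{b\sigma}^{\mathcal{H}_1,\eta}f$, produces an additional boundary term of the same type (bounded by $c\varepsilon_0 A_+^{\mathcal{H}_1,\eta}(f)\|g\|_2$) and a remaining integral featuring $\partial_\sigma\tilde H_\sigma^g=a[(D_{n+1}\mathcal{S}_{a\sigma}^{\mathcal{H}_0}\nabla)]^\ast g$. Moving the adjoint back onto $g$ reproduces a multiple of the original $\langle g,X\rangle$, leading to the algebraic identity $aX+bY=$ boundary terms, which by itself does \emph{not} separate $X$ from $Y$.

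The separation is achieved by using Lemma \ref{telem2}, which after the scaling reduction $\mu=b\sigma$ gives the square-function control $|||\sigma (D_{n+1}\mathcal{S}_{a\sigma}^{\mathcal{H}_0}\nabla)\cdot\varepsilon\nabla\mathcal{S}_{b\sigma}^{\mathcal{H}_1,\eta}f|||_+\leq c\varepsilon_0 A_+^{\mathcal{H}_1,\eta}(f)$. The idea is to bound $\|Y\|_2$ via Cauchy-Schwarz in $(\sigma,x,t)$, pairing the square-function bound $|||\sigma\nabla D_{n+1}\mathcal{S}_{b\sigma}^{\mathcal{H}_1,\eta}f|||_+\leq (c/b)A_+^{\mathcal{H}_1,\eta}(f)$ (built into $A_+^{\mathcal{H}_1,\eta}(f)$ by change of variable) against a suitable square-function bound on $\tilde H_\sigma^g$. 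The main obstacle is the borderline scaling: $\tilde H_\sigma^g$ is only uniformly bounded in $\sigma$, so a naive Cauchy-Schwarz diverges logarithmically. I expect to overcome this by a subtraction trick, writing $\tilde H_\sigma^g=\tilde H_{\sigma_1}^g+\int_{\sigma_1}^\sigma\partial_\tau\tilde H_\tau^g\, d\tau$: the constant part, paired with $\int\partial_\sigma\nabla\mathcal{S}_{b\sigma}^{\mathcal{H}_1,\eta}f\, d\sigma=b[\nabla\mathcal{S}_{b\sigma}^{\mathcal{H}_1,\eta}f]_{\sigma_1}^{\sigma_2}$ by the fundamental theorem of calculus, yields only boundary contributions controlled by $c\varepsilon_0 A_+^{\mathcal{H}_1,\eta}(f)\|g\|_2$, while the oscillation part satisfies the square-function estimate $|||\sigma\partial_\sigma\tilde H_\sigma^g|||_\pm\leq c\|g\|_2$ (coming from Lemma \ref{lemsl1} applied to $\mathcal{H}_0^\ast$) which pairs cleanly against the square function for $\nabla D_{n+1}\mathcal{S}_{b\sigma}^{\mathcal{H}_1,\eta}f$ via Cauchy-Schwarz. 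Combining these estimates, taking the supremum over $g$, and absorbing via the algebraic identity then closes the argument and yields $\|\int_{\sigma_1}^{\sigma_2}(D_{n+1}\mathcal{S}_{a\sigma}^{\mathcal{H}_0}\nabla)\cdot\varepsilon\nabla\mathcal{S}_{b\sigma}^{\mathcal{H}_1,\eta}f\, d\sigma\|_2\leq c\varepsilon_0 A_+^{\mathcal{H}_1,\eta}(f)$ uniformly in $0\leq\sigma_1<\sigma_2<\infty$.
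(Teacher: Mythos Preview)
Your integration by parts and subtraction trick do not close the estimate: they lead either to a tautology or to a pairing that diverges logarithmically. Concretely, after your subtraction $\tilde H_\sigma^g=\tilde H_{\sigma_1}^g+\int_{\sigma_1}^\sigma\partial_\tau\tilde H_\tau^g\,d\tau$ and Fubini, the remaining ``oscillation'' term becomes
\[
-\tfrac{1}{b}\int_{\sigma_1}^{\sigma_2}\!\!\int_{\mathbb R^{n+1}}\partial_\tau\tilde H_\tau^g\cdot\varepsilon\,\nabla\mathcal{S}_{b\tau}^{\mathcal{H}_1,\eta}f\,dxdt\,d\tau
\;=\;-\tfrac{a}{b}\,\langle g, X\rangle,
\]
where $X$ is the original quantity. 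Combined with your first identity $X=\tfrac1a[V]_{\sigma_1}^{\sigma_2}-\tfrac{b}{a}Y$, this again collapses to a relation among boundary terms only and yields no control of $X$. If instead you attempt to bound the triangular integral $\int_{\sigma_1}^{\sigma_2}\!\int_{\sigma_1}^\sigma\partial_\tau\tilde H_\tau^g\cdot\varepsilon\nabla D_{n+1}\mathcal{S}_{b\sigma}^{\mathcal{H}_1,\eta}f\,d\tau\,d\sigma$ directly by Cauchy--Schwarz, you face the critical weights: you control $\int\tau|\partial_\tau\tilde H_\tau^g|^2\,d\tau$ and $\int\sigma|\nabla D_{n+1}\mathcal{S}_{b\sigma}^{\mathcal{H}_1,\eta}f|^2\,d\sigma$, but no choice of weight makes the triangular pairing bounded uniformly in $\sigma_1,\sigma_2$ (the best you get is $(\log(\sigma_2/\sigma_1))^2$). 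So the assertion that the oscillation part ``pairs cleanly via Cauchy--Schwarz'' is precisely the gap.

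The paper's route supplies the missing idea: it uses the \emph{invertibility} part of \eqref{keyseauu}, not merely boundedness. After dualizing against $h$ and integrating by parts twice, the main term is $I_{11}=\int\lambda^2\,\nabla\partial_\lambda^3\mathcal S_{-2\lambda}^{\mathcal H_0^\ast}\bar h\cdot\varepsilon\nabla\mathcal S_\lambda^{\mathcal H_1,\eta}f$. The key step is to write, via uniqueness in $(D2)$ for $\mathcal H_0^\ast$, $\partial_\lambda^2\mathcal S_{-\lambda-\tilde\lambda}^{\mathcal H_0^\ast}\bar h=\mathcal D_{-\tilde\lambda}^{\mathcal H_0^\ast}\bigl(\tfrac12 I+\mathcal K^{\mathcal H_0^\ast}\bigr)^{-1}\!\partial_\lambda^2\mathcal S_{-\lambda}^{\mathcal H_0^\ast}\bar h$, so that $\nabla\partial_\lambda^3\mathcal S_{-2\lambda}^{\mathcal H_0^\ast}\bar h$ becomes the adjoint of $(\partial_{\nu_0}\partial_\lambda\mathcal S_\lambda^{\mathcal H_0}\nabla)$ acting on $\bigl(\tfrac12 I+\mathcal K^{\mathcal H_0^\ast}\bigr)^{-1}\partial_\lambda^2\mathcal S_{-\lambda}^{\mathcal H_0^\ast}\bar h$. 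The latter has square function bounded by $c\|h\|_2$, and $I_{11}$ is then controlled by $|||\lambda^2(\nabla\partial_\lambda\mathcal S_\lambda^{\mathcal H_0}\nabla)\cdot\varepsilon\nabla\mathcal S_\lambda^{\mathcal H_1,\eta}f|||_+$, which reduces (via energy estimates and a dyadic freezing argument) to Lemma \ref{telem2}. This use of invertibility of $\tfrac12 I+\mathcal K^{\mathcal H_0^\ast}$ is exactly what breaks the circularity that your approach runs into.
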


      Below we prove Lemma \ref{telem1}-Lemma \ref{telem3}. We will consequently only establish the estimates involving $|||\cdot|||_+$, $A^+$, as the corresponding estimates involving $|||\cdot|||_-$, $A^-$, can be proved analogously. Furthermore, we will in the case of Lemma \ref{telem1}, Lemma \ref{telem2}, only give the details assuming that $a=1$, and in the case of Lemma \ref{telem3}, we will give the details assuming that $a=2$ and $b=1$.

\subsection{Proof of Lemma \ref{telem1}} We are going to prove that
$$|||\lambda^2(\partial_\lambda^2\mathcal{S}_{\lambda}^{\mathcal{H}_0}\nabla) \cdot\varepsilon \nabla \mathcal{S}_\lambda^{\mathcal{H}_1,\eta}f|||_{+}\leq c\varepsilon_0A_+^{\mathcal{H}_1,\eta}(f).$$
Let
\begin{eqnarray*}
          \theta_\lambda{\bf f}:=\lambda^2\partial_\lambda^2 (\mathcal{S}_{\lambda}^{\mathcal{H}_0}\nabla)\cdot{\bf f},
     \end{eqnarray*}
     whenever ${\bf f}\in L^2(\mathbb R^{n+1},\mathbb C^{n+1})$. Then $\theta_\lambda$ is the operator explored in Section \ref{sec8}. We write
          \begin{eqnarray}
\lambda^2(\partial_\lambda^2\mathcal{S}_{\lambda}^{\mathcal{H}_0}\nabla) \cdot\varepsilon \nabla \mathcal{S}_\lambda^{\mathcal{H}_1,\eta}f &=&\theta_\lambda\varepsilon \nabla \mathcal{S}_\lambda^{\mathcal{H}_1,\eta}f\notag\\
&=&\theta_\lambda\tilde\varepsilon\nabla_{||}\mathcal{S}_\lambda^{\mathcal{H}_1,\eta}f+\theta_\lambda \varepsilon_{n+1}\partial_\lambda \mathcal{S}_\lambda^{\mathcal{H}_1,\eta}f,
\end{eqnarray}
and
\begin{eqnarray}
\theta_\lambda\varepsilon_{n+1}\partial_\lambda \mathcal{S}_\lambda^{\mathcal{H}_1,\eta}f&=&\mathcal{R}_\lambda\partial_\lambda \mathcal{S}_\lambda^{\mathcal{H}_1,\eta}f+(\theta_\lambda \varepsilon_{n+1})\mathcal{P}_\lambda\partial_\lambda \mathcal{S}_\lambda^{\mathcal{H}_1,\eta}f,
\end{eqnarray}
where
\begin{eqnarray}
\mathcal{R}_\lambda=\theta_\lambda\varepsilon_{n+1}-(\theta_\lambda \varepsilon_{n+1})\P_\lambda,
\end{eqnarray}
and where $\P_\lambda$ is a standard parabolic approximation of the identity. Using Lemma \ref {ilem1} we see that
 Lemma \ref{le8} is applicable to  $\theta_\lambda$ and that  Lemma \ref{le11-} is applicable to $\mathcal{R}_\lambda$.  Hence,
  \begin{eqnarray}\label{ew1}
 |||\theta_\lambda\varepsilon_{n+1}\partial_\lambda \mathcal{S}_\lambda^{\mathcal{H}_1,\eta}f|||_+\leq c\varepsilon_0 ||N_\ast(P_\lambda\partial_\lambda \mathcal{S}_\lambda^{\mathcal{H}_1,\eta}f)||_2,
\end{eqnarray}
and
  \begin{eqnarray}\label{aa-}
|||\mathcal{R}_\lambda^{(1)}\partial_\lambda \mathcal{S}_\lambda^{\mathcal{H}_1,\eta}f|||_+&\leq& c\varepsilon_0(|||\lambda\nabla\partial_\lambda \mathcal{S}_\lambda^{\mathcal{H}_1,\eta}f|||_++|||\lambda^{2}\partial_\lambda \partial_t \mathcal{S}_\lambda^{\mathcal{H}_1,\eta}f|||_+).
\end{eqnarray}
Using Lemma \ref{lemsl1} $(iv)$ we see that
  \begin{eqnarray}\label{aa-pp}|||\lambda^{2}\partial_\lambda \partial_t \mathcal{S}_\lambda^{\mathcal{H}_1,\eta}f|||_+\leq cA_+^{\mathcal{H}_1,\eta}(f),
\end{eqnarray}
and we can conclude that \begin{eqnarray}\label{aa}
|||\theta_\lambda\varepsilon_{n+1}\partial_\lambda \mathcal{S}_\lambda^{\mathcal{H}_1,\eta}f|||_++|||\mathcal{R}_\lambda^{(1)}\partial_\lambda \mathcal{S}_\lambda^{\mathcal{H}_1,\eta}f|||_+&\leq& c\varepsilon_0 A_+^{\mathcal{H}_1,\eta}(f).
\end{eqnarray}
To start the estimate of $|||\theta_\lambda\tilde\varepsilon\nabla_{||}\mathcal{S}_\lambda^{\mathcal{H}_1,\eta}f|||_+$, we let $$\mathcal{E}_\lambda^1:=(I+\lambda^2(\partial_t+(\mathcal{L}_1)_{||}))^{-1}$$ and write
 \begin{eqnarray}
 \theta_\lambda\tilde\varepsilon\nabla_{||}\mathcal{S}_\lambda^{\mathcal{H}_1,\eta}f=\theta_\lambda\tilde\varepsilon\nabla_{||}(I-\mathcal{E}_\lambda^1)\mathcal{S}_\lambda^{\mathcal{H}_1,\eta}f+ \theta_\lambda\tilde\varepsilon\nabla_{||}\mathcal{E}_\lambda^1\mathcal{S}_\lambda^{\mathcal{H}_1,\eta}f.
\end{eqnarray}
Hence,
 \begin{eqnarray}
 \theta_\lambda\tilde\varepsilon\nabla_{||}\mathcal{S}_\lambda^{\mathcal{H}_1,\eta}f=:Y_\lambda f+Z_\lambda f,
\end{eqnarray}
where
 \begin{eqnarray}
Y_\lambda &=&\theta_\lambda \tilde\varepsilon\nabla_{||}\mathcal{E}_\lambda^1\lambda^2(\partial_t+(\mathcal{L}_1)_{||})\mathcal{S}_\lambda ^{\mathcal{H}_1,\eta},\notag\\
Z_\lambda&=& \theta_\lambda\tilde\varepsilon\nabla_{||}\mathcal{E}_\lambda^1\mathcal{S}_\lambda ^{\mathcal{H}_1,\eta}.
\end{eqnarray}
Recall that  $f_\eta(x,t,\lambda)=f(x,t)\varphi_\eta(\lambda)$, see \eqref{fsmooth}, and note that
 \begin{eqnarray}
 (\partial_t+(\mathcal{L}_1)_{||})\mathcal{S}_\lambda ^{\mathcal{H}_1,\eta}f&=& \sum_{i=1}^{n}D_i(A_{i,n+1}^1D_{n+1}\mathcal{S}_\lambda ^{\mathcal{H}_1,\eta}f) \notag\\
 &&+ \sum_{j=1}^{n+1}A_{n+1,j}^1D_{n+1}D_j\mathcal{S}_\lambda ^{\mathcal{H}_1,\eta}f\notag\\
 &&+f_\eta,
 \end{eqnarray}
 in a weak sense.  As a result we  get a natural decomposition
  \begin{eqnarray}
Y_\lambda f =Y_\lambda^1 f +Y_\lambda^2 f +Y_\lambda^3 f_\eta.
\end{eqnarray}
Using the $L^2(\mathbb R^{n+1},\mathbb C)$ boundedness of $\theta_\lambda$ and $\lambda\nabla_{||}\mathcal{E}_\lambda^1$, see Lemma \ref {ilem1} and Lemma \ref{le4}, and elementary estimates for $f_\eta$, we see that
  \begin{eqnarray}
|||Y_\lambda^2 f|||_+&\leq& c\varepsilon_0|||\lambda\nabla\partial_\lambda \mathcal{S}_\lambda ^{\mathcal{H}_1,\eta}f|||_+,\notag\\
|||Y_\lambda^3 f|||_+&\leq& c\varepsilon_0|||\lambda f_\eta|||_+\leq c||f||_2.
\end{eqnarray}
To estimate $|||Y_\lambda^1 f|||_+$ we let $\tilde A_{n+1}^1=(A_{1,n+1}^1,...,A_{n,n+1}^1)$ and we let $\mathcal{U}_\lambda$ be as in the statement of Lemma \ref{ilem2--}. Using this notation we see that
\begin{eqnarray*}
Y_\lambda^1=\mathcal{U}_\lambda \tilde A_{n+1}^1\ \partial_\lambda \mathcal{S}_\lambda^{\mathcal{H}_1,\eta}.
\end{eqnarray*}
To proceed we write
\begin{eqnarray*}
Y_\lambda^1=\mathcal{R}_\lambda^{(2)} \partial_\lambda \mathcal{S}_\lambda ^{\mathcal{H}_1,\eta}+
(\mathcal{U}_\lambda \tilde A_{n+1}^1)\P_\lambda \partial_\lambda \mathcal{S}_\lambda ^{\mathcal{H}_1,\eta},
\end{eqnarray*}
where
\begin{eqnarray*}
\mathcal{R}_\lambda^{(2)}=(\mathcal{U}_\lambda \tilde A_{n+1}^1-(\mathcal{U}_\lambda\tilde A_{n+1}^1)\P_\lambda),
\end{eqnarray*}
and where again $\P_\lambda$ is a standard approximation of the identity. Again applying Lemma \ref{ilem2} we see that
 Lemma \ref{le8} is applicable to  $\mathcal{U}_\lambda$, and that Lemma \ref{le11-} is applicable to $\mathcal{R}_\lambda^{(2)}$. Hence,
 \begin{eqnarray*}
 \quad|||Y_\lambda^1f|||_+\leq c\varepsilon_0\bigl (
 ||N_\ast(P_\lambda\partial_\lambda \mathcal{S}_\lambda^{\mathcal{H}_1,\eta}f)||_2+|||\lambda \nabla\partial_\lambda \mathcal{S}_\lambda^{\mathcal{H}_1,\eta}f|||_++
 |||\lambda^2 \partial_t\partial_\lambda \mathcal{S}_\lambda^{\mathcal{H}_1,\eta}f|||_+\bigr).
\end{eqnarray*}
Putting all estimates together we can conclude, using \eqref{aa-pp}, that\begin{eqnarray}
\quad\quad|||Y_\lambda^1 f|||_++|||Y_\lambda^2 f|||+|||Y_\lambda^3 f|||_+\leq c\varepsilon_0A_+^{\mathcal{H}_1,\eta}(f).
\end{eqnarray}
 This completes the proof of $|||Y_\lambda f|||_+$. To estimate $|||Z_\lambda f|||_+$ we write
\begin{eqnarray}
Z_\lambda &=&\theta_\lambda \tilde\varepsilon\nabla_{||}\mathcal{E}_\lambda^1(\mathcal{S}_\lambda ^{\mathcal{H}_1,\eta}-\mathcal{S}_\delta^{\mathcal{H}_1,\eta})+\theta_\lambda \tilde\varepsilon\nabla_{||}\mathcal{E}_\lambda^1\mathcal{S}_\delta^{\mathcal{H}_1,\eta}\notag\\
&=:&Z_\lambda ^1+Z_\lambda ^2,
\end{eqnarray}
for some $\delta>0$ small. Furthermore,
\begin{eqnarray}
Z_\lambda^1=\theta_\lambda \tilde\varepsilon\nabla_{||}\mathcal{E}_\lambda^1\int_\delta^\lambda\partial_\sigma
\mathcal{S}_\sigma^{\mathcal{H}_1,\eta}\, d\sigma=\Omega_\lambda^1+\Omega_\lambda^2,
\end{eqnarray}
by partial integration, and where
\begin{eqnarray}
\Omega_\lambda^1&=&\theta_\lambda \tilde\varepsilon\nabla_{||}\mathcal{E}_\lambda^1\lambda\partial_\lambda \mathcal{S}_\lambda^{\mathcal{H}_1,\eta},\notag\\
\Omega_\lambda^2&=&-\theta_\lambda \tilde\varepsilon\nabla_{||}\mathcal{E}_\lambda^1\int_\delta^\lambda\sigma\partial_\sigma^2
\mathcal{S}_\sigma^{\mathcal{H}_1,\eta}\, d\sigma.
\end{eqnarray}
Now Lemma \ref{ilem2-} applies to the operator $\mathcal{R}_\lambda=\lambda\theta_\lambda \tilde\varepsilon\nabla_{||}\mathcal{E}_\lambda^1$ and hence
\begin{eqnarray}
\quad|||\Omega_\lambda^1f|||_+\leq c\varepsilon_0(|||\lambda\nabla\partial_\lambda \mathcal{S}_\lambda^{\mathcal{H}_1,\eta}|||_++
|||\lambda^2\partial_t\partial_\lambda \mathcal{S}_\lambda^{\mathcal{H}_1,\eta}|||_+)\leq c\varepsilon_0A_+^{\mathcal{H}_1,\eta}(f).
\end{eqnarray}
Furthermore,
\begin{eqnarray}
\Omega^2_\lambda=-\lambda\int_\delta^\lambda \frac \sigma\lambda
\theta_\lambda \tilde\varepsilon\nabla_{||}\mathcal{E}_\lambda^1\sigma\partial_\sigma^2
\mathcal{S}_\sigma^{\mathcal{H}_1,\eta}\, \frac {d\sigma}\sigma.
\end{eqnarray}
Hence, using Lemma \ref{le11++} we can conclude that
\begin{eqnarray}
|||\Omega_\lambda^1f|||_+&\leq&c\varepsilon_0|||\lambda^2\nabla_{||}\mathcal{E}_\lambda^1\partial_\lambda^2 \mathcal{S}_\lambda^{\mathcal{H}_1,\eta}f|||_+\notag\\
&\leq&c\varepsilon_0 |||\lambda\partial_\lambda^2 \mathcal{S}_\lambda^{\mathcal{H}_1,\eta}f|||_+\leq c\varepsilon_0A_+^{\mathcal{H}_1,\eta}(f),
\end{eqnarray}
by the $L^2(\mathbb R^{n+1},\mathbb C)$ boundedness of $\lambda \nabla_{||}\mathcal{E}_\lambda^1$.  Finally, using Lemma \ref{ilem2-} we see that
\begin{eqnarray}
|||Z_\lambda^2f|||_+\leq c\varepsilon_0(\sup_{\lambda>0}||\mathbb D\mathcal{S}_\lambda^{\mathcal{H}_1,\eta}f||_2).
\end{eqnarray}
Put together we can conclude that
\begin{eqnarray}
|||Z_\lambda f|||_+\leq |||Z_\lambda^1f|||_++|||Z_\lambda^2f|||_+\leq c\varepsilon_0A_+^{\mathcal{H}_1,\eta}(f).
\end{eqnarray}
This completes the proof of Lemma \ref{telem1}.

\subsection{Proof of Lemma \ref{telem2}} Consider $\delta>0$ and let
$$I_\delta=\int_\delta^{1/\delta}\int_{\mathbb R^{n+1}}|(\partial_\lambda \mathcal{S}_{\lambda}^{\mathcal{H}_0}\nabla)
 \cdot\varepsilon \nabla \mathcal{S}_\lambda^{\mathcal{H}_1,\eta}f|^2\, \lambda {dxdtd\lambda}.$$
Integrating by parts with respect to $\lambda$  we see that
 \begin{eqnarray}
I_\delta&=&-\int_\delta^{1/\delta}\int_{\mathbb R^{n+1}}\partial_\lambda\bigl ((\partial_\lambda \mathcal{S}_{\lambda}^{\mathcal{H}_0}\nabla)
 \cdot\varepsilon \nabla \mathcal{S}_\lambda^{\mathcal{H}_1,\eta}f\bigr )\overline{(\partial_\lambda \mathcal{S}_{\lambda}^{\mathcal{H}_0}\nabla)
 \cdot\varepsilon \nabla \mathcal{S}_\lambda^{\mathcal{H}_1,\eta}f)} \, \lambda^2 {dxdtd\lambda}\notag\\
 &&+\int_{\mathbb R^{n+1}}|(\partial_\lambda \mathcal{S}_{\lambda}^{\mathcal{H}_0}\nabla)
 \cdot\varepsilon \nabla \mathcal{S}_\lambda^{\mathcal{H}_1,\eta}f|^2 \, \lambda^2 {dxdt}\biggl |_{\lambda=1/\delta}\notag\\
  &&-\int_{\mathbb R^{n+1}}|(\partial_\lambda \mathcal{S}_{\lambda}^{\mathcal{H}_0}\nabla)
 \cdot\varepsilon \nabla \mathcal{S}_\lambda^{\mathcal{H}_1,\eta}f|^2 \, \lambda^2 {dxdt}\biggl |_{\lambda=\delta}.
\end{eqnarray}
Hence,
 \begin{eqnarray}
I_\delta&\leq& \frac 1 2I_\delta+|||\lambda^2(\partial_\lambda^2 \mathcal{S}_{\lambda}^{\mathcal{H}_0}\nabla)
 \cdot\varepsilon \nabla \mathcal{S}_\lambda^{\mathcal{H}_1,\eta}f|||_+^2+|||\lambda^2(\partial_\lambda \mathcal{S}_{\lambda}^{\mathcal{H}_0}\nabla)
 \cdot\varepsilon \nabla \partial_\lambda \mathcal{S}_\lambda^{\mathcal{H}_1,\eta}f|||_+^2\notag\\
 &&+c\sup_{\lambda>0}\int_{\mathbb R^{n+1}}|(\partial_\lambda \mathcal{S}_{\lambda}^{\mathcal{H}_0}\nabla)
 \cdot\varepsilon \nabla \mathcal{S}_\lambda^{\mathcal{H}_1,\eta}f|^2 \, \lambda^2 {dxdt}.
\end{eqnarray}
Using this and Lemma \ref{le5} we see that
 \begin{eqnarray}
I_\delta&\leq& c|||\lambda^2(\partial_\lambda^2 \mathcal{S}_{\lambda}^{\mathcal{H}_0}\nabla)
 \cdot\varepsilon \nabla \mathcal{S}_\lambda^{\mathcal{H}_1,\eta}f|||_+^2+c\varepsilon_0^2 |||\lambda \nabla \partial_\lambda \mathcal{S}_\lambda^{\mathcal{H}_1,\eta}f|||_+^2\notag\\
 &&+c\varepsilon_0^2 \sup_{\lambda>0}||\nabla \mathcal{S}_\lambda^{\mathcal{H}_1,\eta}f||^2_2.
\end{eqnarray}
Based on this we see that Lemma \ref{telem2} now follows from Lemma \ref{telem1}. This completes the proof of Lemma \ref{telem2}.

\subsection{Proof of Lemma \ref{telem3}} Fix $0\leq\lambda_1<\lambda_2<\infty$. To estimate \begin{eqnarray}
\int_{\mathbb R^{n+1}}\biggl |\int_{\lambda_1}^{\lambda_2}(D_{n+1}\mathcal{S}_{2\lambda}^{\mathcal{H}_0}\nabla)\cdot\varepsilon\nabla \mathcal{S}_{\lambda}^{\mathcal{H}_1,\eta}f\, d\lambda\biggr |^2\, dxdt
\end{eqnarray}
we will bound $|I|$ where
$$I=:\int_{\lambda_1}^{\lambda_2}\int_{\mathbb R^{n+1}}\nabla \partial_\lambda \mathcal{S}_{-2\lambda}^{\mathcal{H}_0^\ast}\bar h\cdot \varepsilon \nabla \mathcal{S}_\lambda^{\mathcal{H}_1,\eta}f\, dxdtd\lambda,$$
and where $h\in L^2(\mathbb R^{n+1},\mathbb C)$, $||h||_2=1$. To start the estimate we first integrate by parts in $I$ with respect to $\lambda$ and we see that
 \begin{eqnarray}
I&=&-\int_{\lambda_1}^{\lambda_2}\int_{\mathbb R^{n+1}}\nabla \partial_\lambda^2 \mathcal{S}_{-2\lambda}^{\mathcal{H}_0^\ast}\bar h\cdot \varepsilon \nabla \mathcal{S}_\lambda^{\mathcal{H}_1,\eta}f\, \lambda dxdtd\lambda\notag\\
&&-\int_{\lambda_1}^{\lambda_2}\int_{\mathbb R^{n+1}}\nabla \partial_\lambda \mathcal{S}_{-2\lambda}^{\mathcal{H}_0^\ast}\bar h\cdot \varepsilon \nabla\partial_\lambda \mathcal{S}_\lambda^{\mathcal{H}_1,\eta}f\, \lambda dxdtd\lambda\notag\\
&&+\int_{\mathbb R^{n+1}}\nabla \partial_\lambda \mathcal{S}_{-2\lambda}^{\mathcal{H}_0^\ast}\bar h\cdot \varepsilon \nabla \mathcal{S}_\lambda^{\mathcal{H}_1,\eta}f\, \lambda dxdt\biggl |_{\lambda=\lambda_2}\notag\\
&&-\int_{\mathbb R^{n+1}}\nabla \partial_\lambda \mathcal{S}_{-2\lambda}^{\mathcal{H}_0^\ast}\bar h\cdot \varepsilon \nabla \mathcal{S}_\lambda^{\mathcal{H}_1,\eta}f\, \lambda dxdt\biggl |_{\lambda=\lambda_1}\notag\\
&=:&I_1+I_2+I_3+I_4.
\end{eqnarray}
Again, using Lemma \ref{le5} applied $\mathcal{S}_{-2\lambda}^{\mathcal{H}_0^\ast}$ we see
 \begin{eqnarray}
|I_3+I_4|\leq c\varepsilon_0\sup_{\lambda>0}||\nabla \mathcal{S}_\lambda^{\mathcal{H}_1,\eta}f||_2.
\end{eqnarray}
Furthermore,
 \begin{eqnarray}
\quad|I_2|\leq c\varepsilon_0|||\lambda\nabla \partial_\lambda \mathcal{S}_{-2\lambda}^{\mathcal{H}_0^\ast}h|||_+ |||\lambda\nabla \partial_\lambda \mathcal{S}_\lambda^{\mathcal{H}_1,\eta}f|||_+\leq
c\varepsilon_0|||\lambda\nabla \partial_\lambda \mathcal{S}_\lambda^{\mathcal{H}_1,\eta}f|||_+,
\end{eqnarray}
where we have used \eqref{keyestinteda} and Lemma \ref{lemsl1} applied to  $\mathcal{S}_{-2\lambda}^{\mathcal{H}_0^\ast}$. To handle $I_1$ we again integrate by parts with respect to $\lambda$,
 \begin{eqnarray}
2I_1&=&\int_{\lambda_1}^{\lambda_2}\int_{\mathbb R^{n+1}}\nabla \partial_\lambda^3 \mathcal{S}_{-2\lambda}^{\mathcal{H}_0^\ast}\bar h\cdot \varepsilon \nabla \mathcal{S}_\lambda^{\mathcal{H}_1,\eta}f\, \lambda^2 dxdtd\lambda\notag\\
&&+\int_{\lambda_1}^{\lambda_2}\int_{\mathbb R^{n+1}}\nabla \partial_\lambda^2 \mathcal{S}_{-2\lambda}^{\mathcal{H}_0^\ast}\bar h\cdot \varepsilon \nabla \partial_\lambda\mathcal{S}_\lambda^{\mathcal{H}_1,\eta}f\, \lambda^2 dxdtd\lambda\notag\\
&&+\int_{\mathbb R^{n+1}}\nabla \partial_\lambda^2 \mathcal{S}_{-2\lambda}^{\mathcal{H}_0^\ast}\bar h\cdot \varepsilon \nabla \mathcal{S}_\lambda^{\mathcal{H}_1,\eta}f\, \lambda^2 dxdt\biggl |_{\lambda=\lambda_2}\notag\\
&&-\int_{\mathbb R^{n+1}}\nabla \partial_\lambda^2 \mathcal{S}_{-2\lambda}^{\mathcal{H}_0^\ast}\bar h\cdot \varepsilon \nabla \mathcal{S}_\lambda^{\mathcal{H}_1,\eta}f\, \lambda^2 dxdt\biggl |_{\lambda=\lambda_1}\notag\\
&=:&I_{11}+I_{12}+I_{13}+I_{14}.
\end{eqnarray}
Arguing as above we see that
 \begin{eqnarray}
|I_{12}+I_{13}+I_{14}| \leq
c\varepsilon_0\bigl (\sup_{\lambda>0}||\nabla \mathcal{S}_\lambda^{\mathcal{H}_1,\eta}f||_2+|||\lambda\nabla \partial_\lambda \mathcal{S}_\lambda^{\mathcal{H}_1,\eta}f|||_+\bigr),
\end{eqnarray}
and we can conclude that
$$|I-I_{11}|\leq c\varepsilon_0A_+^{\mathcal{H}_1,\eta}(f).$$
To estimate $I_{11}$ we note that
$$\partial_\lambda^3 \mathcal{S}_{-2\lambda}^{\mathcal{H}_0^\ast}\bar h=\partial_{\tilde\lambda}\partial_\lambda^2
\mathcal{S}_{-\lambda-\tilde\lambda}^{\mathcal{H}_0^\ast}\bar h\bigl |_{\tilde \lambda=\lambda}.$$
We now let, considering $\lambda\in (\lambda_1,\lambda_2)$ as fixed,   $g(x,t)=\partial_\lambda^2
\mathcal{S}_{-\lambda}^{\mathcal{H}_0^\ast}\bar h(x,t)$ and we let $u$ solve $\mathcal{H}_0^{\ast}u=0$ in $\mathbb R^{n+2}_-$ with $u(\cdot,\cdot,0)=g(\cdot,\cdot)$ on
$\mathbb R^{n+1}$. Then $u(\cdot,\cdot,-\tilde\lambda)=\partial_\lambda^2
\mathcal{S}_{-\lambda-\tilde\lambda}^{\mathcal{H}_0^\ast}\bar h(\cdot,\cdot)$  by the uniqueness in $(D2)$ for $\mathcal{H}_0^{\ast}$, see Lemma \ref{trace7}. Furthermore, by invertibility of layer potentials for $\mathcal{H}_0^{\ast}$, and uniqueness in $(D2)$ for $\mathcal{H}_0^{\ast}$,   we also have
$$u(\cdot,\cdot,-\tilde\lambda)=\mathcal{D}_{-\tilde\lambda}^{\mathcal{H}_0^{\ast}}\biggl (\frac 1 2I+\mathcal{K}^{\mathcal{H}_0^{\ast}}\biggr )^{-1}g.$$
Consequently,
$$\partial_{\tilde\lambda}\nabla u(\cdot,-\tilde\lambda)=\partial_{\tilde\lambda}\nabla
\mathcal{D}_{-\tilde\lambda}^{\mathcal{H}_0^{\ast}}\biggl (\frac 1 2I+\mathcal{K}^{\mathcal{H}_0^{\ast}}\biggr )^{-1}g=\partial_{\tilde\lambda}\nabla
\partial_\lambda^2
\mathcal{S}_{-\lambda-\tilde\lambda}^{\mathcal{H}_0^\ast}\bar h.$$
Setting $\tilde\lambda=\lambda$ we see that
 \begin{eqnarray}
\nabla \partial_\lambda^3 \mathcal{S}_{-2\lambda}^{\mathcal{H}_0^\ast}\bar h&=&-\partial_\lambda\nabla
\mathcal{D}_{-\lambda}^{\mathcal{H}_0^{\ast}}\biggl (\frac 1 2I+\mathcal{K}^{\mathcal{H}_0^{\ast}}\biggr )^{-1}g\notag\\
&=&\partial_\lambda\nabla
\mathcal{D}_{-\lambda}^{\mathcal{H}_0^{\ast}}\biggl (\frac 1 2I+\mathcal{K}^{\mathcal{H}_0^{\ast}}\biggr )^{-1}\partial_\lambda^2
\mathcal{S}_{-\lambda}^{\mathcal{H}_0^\ast}\bar h.
\end{eqnarray}
But $\mathcal{D}_{-\lambda}^{\mathcal{H}_0^{\ast}}=(\mathcal{S}_{-\lambda}^{\mathcal{H}_0^\ast}\overline{\partial_{\nu_0}})$ where $\overline{\partial_{\nu_0}}$ denotes
the conjugate exterior co-normal differentiation associated to $\mathcal{H}_0$. Thus
$$\mbox{adj}(\nabla\partial_\lambda \mathcal{D}_{-\lambda}^{\mathcal{H}_0^{\ast}})=(\partial_{\nu_0}\partial_\lambda \mathcal{S}_\lambda^{\mathcal{H}_0}\nabla).$$
In particular, using this we see that $|I_{11}|$ equals
 \begin{eqnarray*}
 \quad\biggl |\int_{\lambda_1}^{\lambda_2}\int_{\mathbb R^{n+1}}\biggl(\frac 1 2I+\mathcal{K}^{\mathcal{H}_0^{\ast}}\biggr )^{-1}\partial_\lambda^2 \mathcal{S}_{-\lambda}^{\mathcal{H}_0^\ast}\bar h\biggl ( (\partial_{\nu_0}\partial_\lambda \mathcal{S}_\lambda^{\mathcal{H}_0}\nabla)\cdot\varepsilon \nabla \mathcal{S}_\lambda^{\mathcal{H}_1,\eta}f\biggr )\, \lambda^2dxdtd\lambda\biggr |.
\end{eqnarray*}
Hence
 \begin{eqnarray}
 |I_{11}|&\leq&c|||\lambda\partial_\lambda^2 \mathcal{S}_{-\lambda}^{\mathcal{H}_0^\ast}h|||_+|||\lambda^2(\nabla\partial_\lambda \mathcal{S}_\lambda^{\mathcal{H}_0}\nabla)\cdot\varepsilon \nabla \mathcal{S}_\lambda^{\mathcal{H}_1,\eta}f|||_+\notag\\
 &\leq&c|||\lambda^2(\nabla\partial_\lambda \mathcal{S}_\lambda^{\mathcal{H}_0}\nabla)\cdot\varepsilon \nabla \mathcal{S}_\lambda^{\mathcal{H}_1,\eta}f|||_+
\end{eqnarray}
by the estimate stated in Definition \ref{blayer+} $(ix)$ applied to $\frac 1 2I+\mathcal{K}^{\mathcal{H}_0^\ast}$,  and \eqref{keyestinteda} applied to $\lambda\partial_\lambda^2 \mathcal{S}_{-\lambda}^{\mathcal{H}_0^\ast}h$. Hence it remains to estimate
$|||\lambda^2(\nabla\partial_\lambda \mathcal{S}_\lambda^{\mathcal{H}_0}\nabla)\cdot\varepsilon \nabla \mathcal{S}_\lambda^{\mathcal{H}_1,\eta}f|||_+$. However, arguing analogous to the argument below
Lemma 7.11 in \cite{AAAHK} it is easily seen that
 \begin{eqnarray}
&&|||\lambda^2(\nabla\partial_\lambda \mathcal{S}_\lambda^{\mathcal{H}_0}\nabla)\cdot\varepsilon \nabla \mathcal{S}_\lambda^{\mathcal{H}_1,\eta}f|||_+^2\notag\\
&\leq &c\varepsilon_0
|||\lambda\nabla\partial_\lambda \mathcal{S}_\lambda^{\mathcal{H}_1,\eta}f|||_+^2\notag\\
&&+c\sum_{k=-\infty}^\infty\int_{2^k}^{2^{k+1}}\int_{\mathbb R^{n+1}}
|\lambda^2(\nabla\partial_\lambda \mathcal{S}_\lambda^{\mathcal{H}_0}\nabla)\cdot\varepsilon \nabla \mathcal{S}_{2^{k-1}}^{\mathcal{H}_1,\eta}f(x,t)|^2\, \frac {dxdtd\lambda}{\lambda}.
\end{eqnarray}
Next, using that $(\partial_\lambda \mathcal{S}_\lambda^{\mathcal{H}_0}\nabla)\cdot\varepsilon \nabla \mathcal{S}_{2^{k-1}}^{\mathcal{H}_1,\eta}f$ is, for fixed $k$, a
solution to the operator $\mathcal{H}_0$ we see,  by now standard applications of energy estimates, see Lemma \ref{le1--}, that
 \begin{eqnarray}
&&\sum_{k=-\infty}^\infty\int_{2^k}^{2^{k+1}}\int_{\mathbb R^{n+1}}
|\lambda^2(\nabla\partial_\lambda \mathcal{S}_\lambda^{\mathcal{H}_0}\nabla)\cdot\varepsilon \nabla \mathcal{S}_{2^{k-1}}^{\mathcal{H}_1,\eta}f(x,t)|^2\, \frac {dxdtd\lambda}{\lambda}\notag\\
&\leq&c \sum_{k=-\infty}^\infty\int_{2^k}^{2^{k+1}}\int_{\mathbb R^{n+1}}
|\lambda(\partial_\lambda \mathcal{S}_\lambda^{\mathcal{H}_0}\nabla)\cdot\varepsilon \nabla \mathcal{S}_{2^{k-1}}^{\mathcal{H}_1,\eta}f(x,t)|^2\, \frac {dxdtd\lambda}{\lambda}.
\end{eqnarray}
Putting these estimates together, and again using a parabolic version of Lemma 7.11 in \cite{AAAHK} we can conclude that
 \begin{eqnarray}
|||\lambda^2(\nabla\partial_\lambda \mathcal{S}_\lambda^{\mathcal{H}_0}\nabla)\cdot\varepsilon \nabla \mathcal{S}_\lambda^{\mathcal{H}_1,\eta}f|||_+^2&\leq &c\varepsilon_0
|||\lambda\nabla\partial_\lambda \mathcal{S}_\lambda^{\mathcal{H}_1,\eta}f|||_+^2\notag\\
&&+c|||\lambda(\partial_\lambda \mathcal{S}_{\lambda}^{\mathcal{H}_0}\nabla) \cdot\varepsilon \nabla \mathcal{S}_\lambda^{\mathcal{H}_1,\eta}f|||_+^2.
\end{eqnarray}
Hence, summarizing our estimates we see that
$$|I|\leq c\varepsilon_0A_+^{\mathcal{H}_1,\eta}(f)+|I_{11}|\leq c\varepsilon_0A_+^{\mathcal{H}_1,\eta}(f)+c|||\lambda(\partial_\lambda \mathcal{S}_{\lambda}^{\mathcal{H}_0}\nabla) \cdot\varepsilon \nabla \mathcal{S}_\lambda^{\mathcal{H}_1,\eta}f|||_+. $$
Hence Lemma \ref{telem3} now follows by an application of Lemma \ref{telem2}. This completes the proof of Lemma \ref{telem3}.

    \section{Proof of Theorem \ref{thper2} and Corollary \ref{thper3}}\label{sec10}

    In this section we prove Theorem \ref{thper2} and Corollary \ref{thper3}. As in the statement of Theorem \ref{thper2}, and as in Section \ref{sec8}
    and Section \ref{sec9}, we throughout this section consider two operators  $\mathcal{H}_0=\partial_t-\div A^0\nabla$, $\mathcal{H}_1=\partial_t-\div A^1\nabla$. We will assume  \eqref{keyseauu} and recall that the constant $\Gamma_0$ appears in \eqref{keyseauu}.  We let ${\bf \varepsilon}$ be as in \eqref{keyestaahmo}, we assume
     \eqref{keyestaahmeddo} and we let $\tilde{\bf \varepsilon}$ be as introduced in \eqref{keyestaahmouu}. In the following we will use the notation
          \begin{eqnarray}\label{keyestint-ex+edap+}
\Phi^{\mathcal{H}_1,\eta}(f):=|||\lambda \partial_\lambda^2  \mathcal{S}_\lambda^{\mathcal{H}_1,\eta}f|||+\sup_{\lambda \neq 0} ||\partial_\lambda  \mathcal{S}_\lambda^{\mathcal{H}_1,\eta}f||_2
     \end{eqnarray}
     and
      \begin{eqnarray}\label{iest1haf}
A^{\mathcal{H}_1,\eta}(f)&:=& A_+^{\mathcal{H}_1,\eta}(f)+A_-^{\mathcal{H}_1,\eta}(f)\notag\\
&=&|||\lambda \nabla \partial_\lambda \mathcal{S}_\lambda^{\mathcal{H}_1,\eta}f|||+\sup_{\lambda\neq 0}||\nabla \mathcal{S}_\lambda^{\mathcal{H}_1,\eta}f||_2+\sup_{\lambda\neq 0}||H_tD^t_{1/2}\mathcal{S}_\lambda^{\mathcal{H}_1,\eta}f||_2\notag\\
&&+||N_\ast^{+}(\P_\lambda \partial_\lambda \mathcal{S}_\lambda^{\mathcal{H}_1,\eta}f)||_2+||N_\ast^{-}(\P_\lambda \partial_\lambda \mathcal{S}_\lambda^{\mathcal{H}_1,\eta}f)||_2+||f||_2.
\end{eqnarray}
Note that by the results of Section \ref{sec3} we always have, a priori, that $\Phi^{\mathcal{H}_1,\eta}(f)<\infty$ and $A^{\mathcal{H}_1,\eta}(f)<\infty$ whenever $f\in C_0^\infty(\mathbb R^{n+1},\mathbb C)$. Our proof of Theorem \ref{thper2} is based on the following lemma the proof of which is given below.
    \begin{lemma}\label{finall} Assume \eqref{keyseauu}. Then there exists a constant $c$, depending at most
     on $n$, $\Lambda$,  the De Giorgi-Moser-Nash constants and $\Gamma_0$, such that
    \begin{eqnarray}\label{iest1}
\Phi^{\mathcal{H}_1,\eta}(f) \leq c\varepsilon_0A^{\mathcal{H}_1,\eta}(f)+c||f||_2,
     \end{eqnarray}
   whenever $\eta\in (0,1/10)$ and $f\in C_0^\infty(\mathbb R^{n+1},\mathbb C)$.\end{lemma}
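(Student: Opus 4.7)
The plan is to exploit the representation formula from \eqref{iest1mod++preint}. From $\mathcal{H}_0-\mathcal{H}_1=\div\varepsilon\nabla$ and the identity $\mathcal{H}_i\mathcal{S}^{\mathcal{H}_i,\eta}_\lambda f = f_\eta$ one gets $\mathcal{S}^{\mathcal{H}_1,\eta}f-\mathcal{S}^{\mathcal{H}_0,\eta}f = \mathcal{H}_0^{-1}(\div\varepsilon\nabla\mathcal{S}^{\mathcal{H}_1,\eta}f)$, and using the $\lambda$-translation invariance of $\Gamma^0$ that follows from \eqref{eq4} this can be written, for appropriately derivated quantities, as
\begin{equation*}
\partial_\lambda\mathcal{S}^{\mathcal{H}_1,\eta}_\lambda f = \partial_\lambda\mathcal{S}^{\mathcal{H}_0,\eta}_\lambda f + \int_{-\infty}^\infty (\partial_\lambda \mathcal{S}^{\mathcal{H}_0}_{\lambda-\sigma}\nabla)\cdot\varepsilon\nabla\mathcal{S}^{\mathcal{H}_1,\eta}_\sigma f\, d\sigma,
\end{equation*}
with the analogous formula after a further $\lambda\partial_\lambda$. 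The reference (first) terms are bounded pointwise in $\lambda$ by $c\|f\|_2$ directly from the hypothesis \eqref{keyestinteda} embedded in \eqref{keyseauu}, combined with the comparison estimates of Lemma \ref{smooth1} between $\mathcal{S}^{\mathcal{H}_0,\eta}$ and $\mathcal{S}^{\mathcal{H}_0}$.

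For the square-function contribution $|||\lambda\partial_\lambda^2\mathcal{S}^{\mathcal{H}_1,\eta}_\lambda f|||$, I would rescale the $\sigma$-integral via $\sigma=\lambda s$, decompose dyadically in $s$ and in $|1-s|$ to separate the regions $\{|\sigma|\ll|\lambda|\}$, $\{|\sigma|\gg|\lambda|\}$, $\{\sigma\approx\lambda\}$, and $\{\sigma<0\}$, and reduce each piece to an expression of the form $|||\lambda^2(\partial_\lambda^2\mathcal{S}^{\mathcal{H}_0}_{a\lambda}\nabla)\cdot\varepsilon\nabla\mathcal{S}^{\mathcal{H}_1,\eta}_\lambda f|||_\pm$ or $|||\lambda(\partial_\lambda\mathcal{S}^{\mathcal{H}_0}_{a\lambda}\nabla)\cdot\varepsilon\nabla\mathcal{S}^{\mathcal{H}_1,\eta}_\lambda f|||_\pm$ (with $a\in\mathbb R\setminus\{0\}$ and summable geometric weights in the dyadic index). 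These are precisely what Lemma \ref{telem1} and Lemma \ref{telem2} bound by $c\varepsilon_0 A^{\mathcal{H}_1,\eta}(f)$.

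For the uniform $L^2$-contribution $\sup_\lambda\|\partial_\lambda\mathcal{S}^{\mathcal{H}_1,\eta}_\lambda f\|_2$, the same kind of rescaling yields, after dyadic decomposition, a sum of terms of the form $\|\int_{\alpha\lambda}^{\beta\lambda}(D_{n+1}\mathcal{S}^{\mathcal{H}_0}_{a\tau}\nabla)\cdot\varepsilon\nabla\mathcal{S}^{\mathcal{H}_1,\eta}_{b\tau}f\, d\tau\|_2$ (obtained by swapping the role of the outer fixed parameter and the integration variable via $\sigma=\tau$ and relabeling). These are precisely the quantities controlled by $c\varepsilon_0 A^{\mathcal{H}_1,\eta}(f)$ in Lemma \ref{telem3}, uniformly over the integration endpoints; the supremum in the outer $\lambda$ is absorbed by the supremum over $(\lambda_1,\lambda_2)$ built into Lemma \ref{telem3}. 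Adding up the resulting geometric series gives \eqref{iest1}.

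The main obstacle is the bookkeeping near $\sigma\approx\lambda$, where neither the $\mathcal{H}_0$-kernel $\partial_\lambda\mathcal{S}^{\mathcal{H}_0}_{\lambda-\sigma}$ nor the integrand $\nabla\mathcal{S}^{\mathcal{H}_1,\eta}_\sigma f$ provides any pointwise smallness; the smoothing parameter $\eta$ is kept only to make all manipulations rigorous (in particular preventing distributional difficulties across $\{\sigma=0\}$ and ensuring the a priori finiteness of $\Phi^{\mathcal{H}_1,\eta}(f)$ and $A^{\mathcal{H}_1,\eta}(f)$), and it plays no quantitative role. The genuine cancellation that makes the estimate work lies in the Carleson-measure and resolvent square-function machinery of Section \ref{sec8}--Section \ref{sec9} underlying Lemmas \ref{telem1}--\ref{telem3}; once those are in hand, the remainder of the argument—expansion of the errors in \eqref{iest1mod++preint}, dyadic splitting, change of variables, and summation—is the essentially notational manipulation familiar from \cite{FJK} and \cite{AAAHK}.
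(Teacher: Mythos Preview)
Your high-level plan is the paper's: use the representation formula \eqref{iest1mod++pre}, control the $\mathcal{H}_0$-reference terms by \eqref{keyseauu}, and feed the error into Lemmas \ref{telem1}--\ref{telem3}. But the reduction mechanism you describe has a real gap. The constants in Lemmas \ref{telem1}--\ref{telem3} are explicitly stated to depend on the scaling parameters $a$ (and $b$), so rescaling $\sigma=\lambda s$ and summing dyadically in $s$ with ``summable geometric weights in the dyadic index'' does not close unless you also track how those constants grow in $a$---and they are not uniform, since the off-diagonal decay of the underlying kernels degenerates as $a\to 0$ or $a\to\infty$. A geometric weight $2^{-k\alpha}$ in the dyadic index is useless if the lemma constant at that scale is $2^{k\beta}$ with $\beta>\alpha$.

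The paper's mechanism is different and is designed precisely to sidestep this. It is a duality argument: for the square-function term one tests against $\Psi_\delta$, represents $\nabla(\mathcal{H}_0^\ast)^{-1}D_{n+1}\Psi_\delta$ via \eqref{splitrep}, and applies the specific five-term FJK/\cite{AAAHK} splitting $e_1,\dots,e_5$. Four of the resulting pieces $\mathcal{E}_1,\mathcal{E}_3,\mathcal{E}_4,\mathcal{E}_5$ are handled by Hardy's inequality (as in Lemma 6.5 of \cite{AAAHK}) and by the $L^2$-boundedness of $\nabla\mathcal{H}_0^{-1}\div$ from Lemma \ref{gara}. Only the hard piece $\mathcal{E}_2$ collapses---after undoing the duality---to the diagonal expression $|||\lambda(\partial_\lambda\mathcal{S}^{\mathcal{H}_0}_\lambda\nabla)\cdot\varepsilon\nabla\mathcal{S}^{\mathcal{H}_1,\eta}_{\pm\lambda/2}f|||_\pm$, i.e., to Lemma \ref{telem2} at a \emph{single fixed} value of $a$. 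The sup-norm estimate is likewise dualized against $\tilde\Psi$ localized at height $\lambda_0$, split as $\tilde{\mathcal{E}}_1,\dots,\tilde{\mathcal{E}}_4$, and the main contribution $\tilde{\mathcal{E}}_{311}$ (after a further $\Theta_R',\Theta_R'',\Theta_R'''$ decomposition) reduces to Lemma \ref{telem3} at a fixed pair $(a,b)$. Your closing sentence pointing to ``the manipulation familiar from \cite{FJK} and \cite{AAAHK}'' is exactly this decomposition; your earlier dyadic-in-$s$ description is not, and does not by itself yield the claim.
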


   \subsection{Proof of Theorem \ref{thper2}}

      The proof of Lemma \ref{finall} is given below. We here use Lemma \ref{finall} to complete the proof of Theorem \ref{thper2}. Using Lemma \ref{lemsl1} and Lemma \ref{lemsl1c} we first see that
    \begin{eqnarray}\label{keyest+aaaca}
    |||\lambda \nabla \partial_\lambda \mathcal{S}_\lambda^{\mathcal{H}_1,\eta}f|||\leq c\bigl (\Phi^{\mathcal{H}_1,\eta}(f)+||f||_2\bigr),
\end{eqnarray}
     and
         \begin{eqnarray}\label{keyest+aaacafa}
    \sup_{\lambda\neq 0}||\nabla \mathcal{S}_\lambda^{\mathcal{H}_1,\eta}f||_2&\leq&c\bigl (\Phi^{\mathcal{H}_1,\eta}(f)+||N_\ast^{\pm}(\P_\lambda \partial_\lambda \mathcal{S}_\lambda^{\mathcal{H}_1,\eta}f)||_2\bigr)\notag\\
    &&+c\bigl(\sup_{\lambda\neq 0}||\partial_\lambda \mathcal{S}_\lambda^{\mathcal{H}_1,\eta}f||_2+||f||_2\bigr),\notag\\
    \sup_{\lambda\neq 0}||H_tD^t_{1/2}\mathcal{S}_\lambda^{\mathcal{H}_1,\eta}f||_2&\leq&c\bigl (\Phi^{\mathcal{H}_1,\eta}(f)+||N_\ast^{\pm}(\P_\lambda \partial_\lambda \mathcal{S}_\lambda^{\mathcal{H}_1,\eta}f)||_2\bigr)\notag\\
    &&+c\bigl(\sup_{\lambda\neq 0}||\partial_\lambda \mathcal{S}_\lambda^{\mathcal{H}_1,\eta}f||_2+||f||_2\bigr).
    \end{eqnarray}
Hence, using Lemma \ref{finall} and hiding terms, we first see that,
    \begin{eqnarray}\label{keyest+aaacaff}
    |||\lambda \nabla \partial_\lambda \mathcal{S}_\lambda^{\mathcal{H}_1,\eta}f|||\leq c\varepsilon_0\bigl (A^{\mathcal{H}_1,\eta}(f)-|||\lambda \nabla \partial_\lambda \mathcal{S}_\lambda^{\mathcal{H}_1,\eta}f|||\bigr )+c||f||_2.
     \end{eqnarray}
     Using  Lemma \ref{finall} again, as well as \eqref{keyest+aaacaff}, we can again hide terms and conclude that
              \begin{eqnarray}\label{keyest+aaacafb}
    \sup_{\lambda\neq 0}||\nabla \mathcal{S}_\lambda^{\mathcal{H}_1,\eta}f||_2&\leq&c\bigl (||N_\ast^{\pm}(\P_\lambda \partial_\lambda \mathcal{S}_\lambda^{\mathcal{H}_1,\eta}f)||_2+\sup_{\lambda\neq 0}||\partial_\lambda \mathcal{S}_\lambda^{\mathcal{H}_1,\eta}f||_2+||f||_2\bigr),\\
    \sup_{\lambda\neq 0}||H_tD^t_{1/2}\mathcal{S}_\lambda^{\mathcal{H}_1,\eta}f||_2&\leq&c\bigl (|N_\ast^{\pm}(\P_\lambda \partial_\lambda \mathcal{S}_\lambda^{\mathcal{H}_1,\eta}f)||_2+\sup_{\lambda\neq 0}||\partial_\lambda \mathcal{S}_\lambda^{\mathcal{H}_1,\eta}f||_2+||f||_2\bigr).\notag
    \end{eqnarray}
    In particular, putting the estimates in \eqref{keyest+aaacafb} in \eqref{keyest+aaacaff} we see that
        \begin{eqnarray}\label{keyest+aaacaffh}
    |||\lambda \nabla \partial_\lambda \mathcal{S}_\lambda^{\mathcal{H}_1,\eta}f|||&\leq& c\varepsilon_0\bigl (||N_\ast^{\pm}(\P_\lambda \partial_\lambda \mathcal{S}_\lambda^{\mathcal{H}_1,\eta}f)||_2+\sup_{\lambda\neq 0}||\partial_\lambda \mathcal{S}_\lambda^{\mathcal{H}_1,\eta}f||_2\bigr )\notag\\
    &&+c||f||_2.
     \end{eqnarray}
     Using Lemma \ref{finall} once more, and the above deductions, we have
             \begin{eqnarray}\label{keyest+aaacaffhha}
    \sup_{\lambda\neq 0}||\partial_\lambda \mathcal{S}_\lambda^{\mathcal{H}_1,\eta}f||_2&\leq& c\varepsilon_0\bigl (||N_\ast^{\pm}(\P_\lambda \partial_\lambda \mathcal{S}_\lambda^{\mathcal{H}_1,\eta}f)||_2+\sup_{\lambda\neq 0}||\partial_\lambda \mathcal{S}_\lambda^{\mathcal{H}_1,\eta}f||_2\bigr )\notag\\
    &&+c||f||_2.
     \end{eqnarray}
     As $f\in C_0^\infty(\mathbb R^{n+1},\mathbb C)$ the support of $f$ is contained in some cube $Q\subset\mathbb R^{n+1}$. Hence, using
     \eqref{keyest+aaacaffhha}, Lemma \ref{lemsl1++} $(iv)$ and taking the supremum over all $f\in C_0^\infty(Q,\mathbb C)$ with $||f1_Q||_2=1$, we see that
                \begin{eqnarray*}\label{keyest+aaacaffhhal}
    \sup_{\lambda\neq 0}||\partial_\lambda \mathcal{S}_\lambda^{\mathcal{H}_1,\eta}||_{L^2(Q,\mathbb C)\to L^2(\mathbb R^{n+1},\mathbb C)}\leq c\bigl (1+\varepsilon_0
    \sup_{\lambda\neq 0}||\partial_\lambda \mathcal{S}_\lambda^{\mathcal{H}_1,\eta}||_{L^2(Q,\mathbb C)\to L^2(\mathbb R^{n+1},\mathbb C)}\bigr ).
     \end{eqnarray*}
     Hence, using Lemma \ref{smooth1} $(viii)$ we can conclude that
                     \begin{eqnarray}\label{keyest+aaacaffhhal+}
    \sup_{\lambda\neq 0}||\partial_\lambda \mathcal{S}_\lambda^{\mathcal{H}_1,\eta}||_{L^2(Q,\mathbb C)\to L^2(\mathbb R^{n+1},\mathbb C)}\leq c,
     \end{eqnarray}
     uniformly with respect to $Q$. Thus, using Lemma \ref{smooth1} $(v)$, and first letting $l(Q)\to\infty$, then $\eta\to 0$, we can conclude that
                     \begin{eqnarray}\label{keyest+aaacaffhhala+}
    \sup_{\lambda\neq 0}||\partial_\lambda \mathcal{S}_\lambda^{\mathcal{H}_1}||_{2\to 2}\leq c.
     \end{eqnarray}
     In addition, using \eqref{keyest+aaacaffhhal+}, Lemma \ref{lemsl1++} and a limiting argument as $l(Q)\to\infty$, we have that
                  \begin{eqnarray}\label{keyest+aaacaffhhagg}
\sup_{\lambda_0\geq 0}||N_\ast^{\pm}(\P_{\lambda} \partial_\lambda \mathcal{S}_{\lambda\pm\lambda_0}^{\mathcal{H}_1,\eta}f)||_2\leq c||f||_2,
     \end{eqnarray}
     whenever $f\in C_0^\infty(\mathbb R^{n+1},\mathbb C)$. Putting all these conclusions together, and using Lemma \ref{smooth1}, we can conclude that there exists $\varepsilon_0$, depending at most
     on $n$, $\Lambda$,  the De Giorgi-Moser-Nash constants and $\Gamma_0$,  such that if
    $$||A^1-A^0||_\infty\leq\varepsilon_0,$$
    then
             \begin{eqnarray}\label{keyest+aaa}
 \quad|||\lambda \nabla\partial_\lambda  \mathcal{S}_\lambda^{\mathcal{H}_1}f|||+\sup_{\lambda \neq 0}||\nabla \mathcal{S}_\lambda^{\mathcal{H}_1}f||_{2}+
 \sup_{\lambda \neq 0}||H_tD^t_{1/2} \mathcal{S}_\lambda^{\mathcal{H}_1}f||_{2}\leq c||f||_2,
     \end{eqnarray}
     whenever $f\in C_0^\infty(\mathbb R^{n+1},\mathbb C)$ and for some constant $c$ having the dependence stated in Lemma \ref{finall}. Using \eqref{keyest+aaa} it follows that the statements in  Definition \ref{blayer+} $(i)-(vi)$ hold for $\mathcal{H}_1$ and for some constant $\Gamma_1$, the statements for $\mathcal{H}_1^\ast$  follow by duality. $\Gamma_1$ depends at most
     on $n$, $\Lambda$,  the De Giorgi-Moser-Nash constants and $\Gamma_0$. Furthermore, using this result and using
     \eqref{keyest+aaa}, Lemma \ref{lemsl1++},  Lemma \ref{trace4}, Lemma \ref{trace5}, and Lemma \ref{trace7-}, we can conclude there exist  operators $\mathcal{K}^{\mathcal{H}_1}$, $\tilde {\mathcal{K}}^{\mathcal{H}_1}$,
     $\nabla_{||} \mathcal{S}_\lambda^{\mathcal{H}_1}|_{\lambda=0}$, $H_tD^t_{1/2}\mathcal{S}_\lambda^{\mathcal{H}_1}|_{\lambda=0}$, in the sense of
      Lemma \ref{trace4}, Lemma \ref{trace5}, and Lemma \ref{trace7-}. Furthermore, all these operators are bounded operators on $L^2(\mathbb R^{n+1},\mathbb C)$. Hence to complete the proof of Theorem \ref{thper2} the statements in  Definition \ref{blayer+} $(viii)-(xiii)$ for $\mathcal{H}_1$ remain to be verified. To do this we need the following lemma.
     \begin{lemma}\label{analpert}  Assume \eqref{keyseauu}. There exists a constant $c$, depending at most
     on $n$, $\Lambda$,  such that if
    $$||A^1-A^0||_\infty\leq\varepsilon_0,$$
    then
     \begin{eqnarray}\label{keyest+aaaed}
     ||\mathcal{K}^{\mathcal{H}_0}-\mathcal{K}^{\mathcal{H}_1}||_{2\to 2}+||\tilde {\mathcal{K}}^{\mathcal{H}_0}-\tilde {\mathcal{K}}^{\mathcal{H}_1}||_{2\to 2}&\leq& c\varepsilon_0,\notag\\
     ||\nabla_{||} \mathcal{S}_\lambda^{\mathcal{H}_0}|_{\lambda=0}-\nabla_{||} \mathcal{S}_\lambda^{\mathcal{H}_1}|_{\lambda=0}||_{2\to 2}
     &\leq& c\varepsilon_0,\notag\\
     ||H_tD^t_{1/2}\mathcal{S}_\lambda^{\mathcal{H}_0}|_{\lambda=0}-H_tD^t_{1/2}\mathcal{S}_\lambda^{\mathcal{H}_1}|_{\lambda=0}||_{2\to 2}&\leq& c\varepsilon_0.
    \end{eqnarray}
     \end{lemma}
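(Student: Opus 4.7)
} The proof will hinge on the representation formula
\[
\mathcal{S}_\lambda^{\mathcal{H}_1,\eta}f - \mathcal{S}_\lambda^{\mathcal{H}_0,\eta}f \;=\; \mathcal{H}_0^{-1}\bigl(\mbox{div}({\bf\varepsilon}\nabla \mathcal{S}_\cdot^{\mathcal{H}_1,\eta}f)\bigr)(x,t,\lambda),
\]
which follows from the identities $\mathcal{H}_i\mathcal{S}_\lambda^{\mathcal{H}_i,\eta}f = f_\eta$ in $\mathbb R^{n+2}$ together with $\mathcal{H}_0-\mathcal{H}_1 = -\mbox{div}({\bf\varepsilon}\nabla)$. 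The quantitative backbone is Lemma \ref{gara}, which provides the $L^2(\mathbb R^{n+2})$-boundedness of $\nabla\mathcal{H}_0^{-1}\mbox{div}$ and $D_{1/2}^t\mathcal{H}_0^{-1}\mbox{div}$; this will be combined with $\|{\bf\varepsilon}\|_\infty\leq\varepsilon_0$ and the uniform-in-$\lambda$ $L^2(\mathbb R^{n+1})$-bounds on $\nabla\mathcal{S}_\lambda^{\mathcal{H}_1}f$ and $H_tD_{1/2}^t\mathcal{S}_\lambda^{\mathcal{H}_1}f$ recorded in \eqref{keyest+aaa}, which are available at this stage of the argument.

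For the bound on $\|\tilde{\mathcal{K}}^{\mathcal{H}_0}-\tilde{\mathcal{K}}^{\mathcal{H}_1}\|_{2\to 2}$, I would start from the interpretation $(\tfrac 12 I+\tilde{\mathcal{K}}^{\mathcal{H}_i})f = -e_{n+1}\cdot A^i\nabla\mathcal{S}_\lambda^{\mathcal{H}_i}f|_{\lambda=0^+}$ (as weak $L^2$-limit) supplied by Lemma \ref{trace4} $(i)$, and decompose
\[
(\tilde{\mathcal{K}}^{\mathcal{H}_1}-\tilde{\mathcal{K}}^{\mathcal{H}_0})f \;=\; -e_{n+1}\cdot{\bf\varepsilon}\nabla\mathcal{S}_\lambda^{\mathcal{H}_1}f\big|_{\lambda=0^+} \;-\; e_{n+1}\cdot A^0\nabla(\mathcal{S}_\lambda^{\mathcal{H}_1}-\mathcal{S}_\lambda^{\mathcal{H}_0})f\big|_{\lambda=0^+}.
\]
The first piece is bounded in $L^2$ by $c\varepsilon_0\|f\|_2$ immediately via \eqref{keyest+aaa} and $\|{\bf\varepsilon}\|_\infty\leq\varepsilon_0$. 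For the second piece I would test against $\phi\in C_0^\infty(\mathbb R^{n+1},\mathbb C)$ with a suitable bulk extension $\Phi\in \tilde{\mathbb H}(\mathbb R^{n+2},\mathbb C)$, and invoke the weak formulation \eqref{aaq} adapted to the non-trivial source $\mathcal{H}_0(\mathcal{S}_\lambda^{\mathcal{H}_1}-\mathcal{S}_\lambda^{\mathcal{H}_0})f = \mbox{div}({\bf\varepsilon}\nabla\mathcal{S}_\lambda^{\mathcal{H}_1}f)$ in $\mathbb R^{n+2}_+$, reducing the pairing to the bulk integral $\int_{\mathbb R^{n+2}_+}{\bf\varepsilon}\nabla\mathcal{S}_\lambda^{\mathcal{H}_1}f\cdot\overline{\nabla\Phi}\,dxdtd\lambda$, which Cauchy--Schwarz controls by $c\varepsilon_0\|f\|_2\|\phi\|_2$. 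The estimate on $\mathcal{K}^{\mathcal{H}_0}-\mathcal{K}^{\mathcal{H}_1}$ follows by the same argument applied to $\mathcal{H}_0^*,\mathcal{H}_1^*$ using the adjoint identity \eqref{eq11edsea+}; the differences of $\nabla_{||}\mathcal{S}_\lambda^{\mathcal{H}_i}|_{\lambda=0}$ and $H_tD_{1/2}^t\mathcal{S}_\lambda^{\mathcal{H}_i}|_{\lambda=0}$ are handled dually, transferring the derivatives to the adjoint single layers and then inserting the $\mathcal{H}_0^*$-version of the representation formula. A final passage $\eta\to 0$ using Lemma \ref{smooth1} transfers all bounds from smoothed to genuine single layer potentials.

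The main obstacle will be the trace-extraction step: converting the bulk $L^2(\mathbb R^{n+2})$-bounds produced by Lemma \ref{gara} into the boundary $L^2(\mathbb R^{n+1})$-bounds demanded by the statement. This requires choosing the bulk test function $\Phi$ so that $\|\nabla\Phi\|_{L^2(\mathbb R^{n+2}_+)}$ and $\|H_tD_{1/2}^t\Phi\|_{L^2(\mathbb R^{n+2}_+)}$ are controlled by $\|\phi\|_{L^2(\mathbb R^{n+1})}$ alone, rather than by a fractional Sobolev seminorm. I would accomplish this with a parabolic heat-type extension truncated into a narrow slab $\{0<\lambda<\delta\}$, with $\delta$ tuned to the square-function scale of $\nabla\mathcal{S}_\lambda^{\mathcal{H}_1}f$, and use the square-function bound $|||\lambda\nabla\partial_\lambda\mathcal{S}_\lambda^{\mathcal{H}_1}f|||\leq c\|f\|_2$ from \eqref{keyest+aaa} to absorb the resulting slab-error.
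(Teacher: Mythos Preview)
Your approach is genuinely different from the paper's, and the paper's route is both shorter and avoids the obstacle you flag. The paper does \emph{not} use the representation formula at all here. Instead, it embeds the perturbation into an analytic family $A^z = A^0 + zM$, $|z|<\varepsilon_0$, observes that the heat semigroup $e^{-t\mathcal{L}_z}$ (and hence the fundamental solution $\Gamma^{\mathcal{H}_z}$) depends analytically on $z$, and then invokes the uniform bounds \eqref{keyest+aaaedop} already established for the whole family. One checks that $z\mapsto (\mathcal{K}^{\mathcal{H}_z}f,g)$, etc., are analytic (via Montel's theorem applied to the bounded sequence $g_j(z)=(-e_{n+1}\cdot A^z\nabla\mathcal{S}_{1/j}^{\mathcal{H}_z}f,g)$), and then the operator-valued Cauchy formula gives $\|\frac{d}{dz}\mathcal{K}^{\mathcal{H}_z}\|_{2\to 2}\leq c$, which immediately yields the Lipschitz estimate. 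No trace extraction, no slab arguments.

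Your direct approach has a real gap at the step where you claim the pairing ``reduces to the bulk integral $\int_{\mathbb R^{n+2}_+}{\bf\varepsilon}\nabla\mathcal{S}_\lambda^{\mathcal{H}_1}f\cdot\overline{\nabla\Phi}$''. Writing $w=\mathcal{S}^{\mathcal{H}_1}-\mathcal{S}^{\mathcal{H}_0}$, the weak formulation with source $\mathcal{H}_0w=\mbox{div}({\bf\varepsilon}\nabla\mathcal{S}^{\mathcal{H}_1}f)$ in $\mathbb R^{n+2}_+$ produces, after integration by parts,
\[
\int_{\mathbb R^{n+1}}(-e_{n+1}\cdot A^0\nabla w)\bar\phi \;=\; \int_{\mathbb R^{n+2}_+}\bigl(A^0\nabla w\cdot\overline{\nabla\Phi}-D_{1/2}^tw\,\overline{H_tD_{1/2}^t\Phi}\bigr) \;+\; \int_{\mathbb R^{n+2}_+}{\bf\varepsilon}\nabla\mathcal{S}^{\mathcal{H}_1}f\cdot\overline{\nabla\Phi} \;+\; (\mbox{bdry}),
\]
and the \emph{first} bulk term on the right does not vanish for a generic extension $\Phi$; it carries $\nabla w$, not ${\bf\varepsilon}$, so it is not small in an obvious way. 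Your slab-truncated heat extension does not kill it either. Moreover, even for the ${\bf\varepsilon}$-integral alone, a naive Cauchy--Schwarz over the slab $\{0<\lambda<\delta\}$ produces $\sqrt{\delta}\|\nabla\Phi\|_{L^2}$, and for any reasonable extension this is controlled by $\|\phi\|_{\mathbb H^{1/2}}$ (cf.\ \eqref{trcc1}--\eqref{trcc2}), not by $\|\phi\|_2$; tuning $\delta$ does not remove the half-derivative. A direct argument along your lines would essentially have to reproduce the machinery behind Lemma \ref{finall} (Lemmas \ref{telem1}--\ref{telem3}), now with the a priori bound $A^{\mathcal{H}_1,\eta}(f)\leq c\|f\|_2$ in hand; that is doable but far heavier than the analyticity trick the paper uses.
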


     The short proof of Lemma \ref{analpert} is for completion included below. We here use Lemma \ref{analpert} to complete the proof of Theorem \ref{thper2} by verifying the statements in  Definition \ref{blayer+} $(viii)-(xiii)$ for $\mathcal{H}_1$. Let, for $\tau\in [0,1]$,
     $\mathcal{H}_\tau$ be the operator which has coefficients $(1-\tau)A^0+\tau A^1$ and let $\mathcal{K}^{\mathcal{H}_\tau}$, $\tilde {\mathcal{K}}^{\mathcal{H}_\tau}$,
     $\nabla_{||} \mathcal{S}_\lambda^{\mathcal{H}_\tau}|_{\lambda=0}$, $H_tD^t_{1/2}\mathcal{S}_\lambda^{\mathcal{H}_\tau}|_{\lambda=0}$, be the boundary operators associated to $\mathcal{H}_\tau$ and in the sense of Lemma \ref{trace4}. Let
      $\mathcal{O}_\tau$ denote any of these operators. Using  Lemma \ref{trace4} we see that any such operator $\mathcal{O}_\tau$ is a (uniformly in $\tau$) bounded operator on $L^2(\mathbb R^{n+1},\mathbb C)$.  By
      Lemma \ref{analpert} $\tau\to \mathcal{O}_\tau$ is continuous in the $2\to 2$-norm. Furthermore, by assumption
      \begin{eqnarray}\label{keyest+aaaed}\mp\frac 1 2I+\mathcal{K}^{\mathcal{H}_0}:L^2(\mathbb R^{n+1},\mathbb C)&\to& L^2(\mathbb R^{n+1},\mathbb C),\notag\\
       \pm\frac 1 2I+\tilde{\mathcal{K}}^{\mathcal{H}_0}:L^2(\mathbb R^{n+1},\mathbb C)&\to& L^2(\mathbb R^{n+1},\mathbb C),\notag\\
       \mathcal{S}^{\mathcal{H}_0}_0:=\mathcal{S}^{\mathcal{H}_0}_\lambda|_{\lambda=0}: L^2(\mathbb R^{n+1},\mathbb C)&\to&  \mathbb H(\mathbb R^{n+1},\mathbb C),
    \end{eqnarray}
    are all bounded, invertible and they satisfy, by \eqref{keyseauu}, the quantitative estimates stated in Definition \ref{blayer+}. Hence, using this, the above facts, and the method of continuity we can conclude the invertibility of
        \begin{eqnarray}\label{keyest+aaaed}\pm\frac 1 2I+\mathcal{K}^{\mathcal{H}_1}:L^2(\mathbb R^{n+1},\mathbb C)&\to& L^2(\mathbb R^{n+1},\mathbb C),\notag\\
       \pm\frac 1 2I+\tilde{\mathcal{K}}^{\mathcal{H}_1}:L^2(\mathbb R^{n+1},\mathbb C)&\to& L^2(\mathbb R^{n+1},\mathbb C),\notag\\
       \mathcal{S}^{\mathcal{H}_1}_0:=\mathcal{S}^{\mathcal{H}_1}_\lambda|_{\lambda=0}: L^2(\mathbb R^{n+1},\mathbb C)&\to&  \mathbb H(\mathbb R^{n+1},\mathbb C),
    \end{eqnarray}
    In particular, we can conclude the validity of the statements in Definition \ref{blayer+} $(viii)-(xiii)$ also for $\mathcal{H}_1$. This completes the proof of Theorem \ref{thper2} modulo  Lemma \ref{finall} and Lemma \ref{analpert}. The proof of these lemmas are given below.

        \subsection{Proof of Corollary \ref{thper3}} By Theorem \ref{thper2} we have that  if
    $$||A^1-A^0||_\infty\leq\varepsilon_0,$$
    then there exists a constant $\Gamma_1$,  depending at most
     on $n$, $\Lambda$,  the De Giorgi-Moser-Nash constants and  $\Gamma_0$, such that
     \begin{eqnarray}\label{afa+lu}
  &&\mbox{$\mathcal{H}_1$,  $\mathcal{H}_1^\ast$, have bounded, invertible and good layer potentials}\notag\\
      &&\mbox{in the sense of Definition \ref{blayer+}, with constant $\Gamma_1$}.
     \end{eqnarray}
     This implies, as discussed in Remark \ref{resea}, that we also have the for $(D2)$ relevant quantitative estimates of the double layer potential
     $\mathcal{D}^{\mathcal{H}_1}$. This, \eqref{afa+lu}, Lemma \ref{trace5} and the uniqueness result for $(D2)$ in  Lemma \ref{trace7}  prove that
      Corollary \ref{thper3} follows  from Theorem \ref{thper2} in the case of $(D2)$. Lemma \ref{trace7-}, and the uniqueness result for $(N2)$ in Lemma \ref{trace7++} prove that
      Corollary \ref{thper3} follows  from Theorem \ref{thper2} in the case of $(N2)$. Finally, Lemma \ref{trace7-}, and the uniqueness result for $(R2)$ in Lemma \ref{trace7} prove that
      Corollary \ref{thper3} follows  from Theorem \ref{thper2} in the case of $(R2)$.

  \subsection{Proof of Lemma \ref{finall}} Having developed many of the key estimates in the previous sections, at this stage the remaining arguments become quite similar to the corresponding arguments in \cite{AAAHK}. Because of this we will, at instances, be a bit brief. The proof of Lemma \ref{finall} is based on a perturbation argument using a representation formula for the difference
    \begin{eqnarray}\label{iest1mod++pre}
\quad\partial_\lambda \mathcal{S}_\lambda^{\mathcal{H}_1,\eta}f(x,t)- \partial_\lambda\mathcal{S}_\lambda^{\mathcal{H}_0,\eta}f(x,t)=\mathcal{H}_0^{-1}\div \varepsilon\nabla D_{n+1}
\mathcal{S}_\cdot^{\mathcal{H}_1,\eta}f(x,t).
     \end{eqnarray}
     We will only supply the proofs of Lemma \ref{finall} in the case of
     $$|||\lambda \partial_\lambda^2  \mathcal{S}_\lambda^{\mathcal{H}_1,\eta}f|||_{+},\ \sup_{\lambda > 0} ||\partial_\lambda  \mathcal{S}_\lambda^{\mathcal{H}_1,\eta}f||_2,$$ as the estimates of the remaining terms/cases in the definition of $\Phi^{\mathcal{H}_1,\eta}(f)$ are similar. To start the estimate of $|||\lambda \partial_\lambda^2  \mathcal{S}_\lambda^{\mathcal{H}_1,\eta}f|||_{+}$ we let
     $$\mbox{$\Psi\in C_0^\infty(\mathbb R^{n+2}_+,\mathbb C)$, $|||\Psi|||_+\leq 1$,
$\Psi_\delta(x,t,\lambda )=\varphi_\delta\ast \Psi(x,t,\cdot )(\lambda)$,}$$
for $\delta>0$ sufficiently small. To estimate $|||\lambda \partial_\lambda^2  \mathcal{S}_\lambda^{\mathcal{H}_1,\eta}f|||_{+}$
we intend to bound
   \begin{eqnarray}\label{iest1mod++pre1}
\int_0^\infty\int_{\mathbb R^{n+1}}\lambda\partial_\lambda^2\mathcal{S}_\lambda^{\mathcal{H}_1,\eta}f(x,t)\overline{\Psi_\delta(x,t,\lambda )}\, \frac {dxdtd\lambda}\lambda.
     \end{eqnarray}
Using \eqref{keyseauu} and Lemma \ref{smooth1} $(vii)$ we see that to estimate the expression in \eqref{iest1mod++pre1} we only have to bound
   \begin{eqnarray*}\label{iest1mod++pre1+}
\int_0^\infty\int_{\mathbb R^{n+1}}\lambda\partial_\lambda\bigl (\partial_\lambda\mathcal{S}_\lambda^{\mathcal{H}_1,\eta}f(x,t)-\partial_\lambda\mathcal{S}_\lambda^{\mathcal{H}_0,\eta}f(x,t)\bigr)\overline{\Psi_\delta(x,t,\lambda )}\, \frac {dxdtd\lambda}\lambda.
     \end{eqnarray*}
     Furthermore, using \eqref{iest1mod++pre} we see that it suffices to bound
     \begin{eqnarray*}\label{iest1mod++pre1+a}
\mathcal{E}:=\int_{\mathbb R^{n+2}}\varepsilon(y,s)\nabla \partial_\lambda \mathcal{S}_\lambda^{\mathcal{H}_1,\eta}f(y,s)\cdot\overline{\nabla (\mathcal{H}_0^\ast)^{-1}D_{n+1}\Psi_\delta(y,s,\lambda)}
dyds d\lambda.
     \end{eqnarray*}
We intend to prove that
        \begin{eqnarray}\label{iest1mod++pre2+aa-}
\mathcal{E}\leq c\varepsilon_0A^{\mathcal{H}_1,\eta}(f)+c||f||_2.
     \end{eqnarray}
To start the proof of \eqref{iest1mod++pre2+aa-} we note that
\begin{eqnarray}\label{splitrep}
\nabla(\mathcal{H}_0^\ast)^{-1}D_{n+1}\Psi_\delta(y,s,\lambda)&=&\int\nabla\partial_\lambda \mathcal{S}_{\lambda-\lambda'}^{\mathcal{H}_0^\ast,\delta}(\Psi(\cdot,\cdot,\lambda'))(y,s)\, d\lambda'.
\end{eqnarray}
Furthermore, using this and following \cite{FJK} and \cite{AAAHK} we first write
\begin{eqnarray}
\nabla(\mathcal{H}_0^\ast)^{-1}D_{n+1}\Psi_\delta(y,s,\lambda)&=&\int_{\lambda'>2|\lambda|}\nabla\partial_\lambda \mathcal{S}_{\lambda-\lambda'}^{\mathcal{H}_0^\ast,\delta}(\Psi(\cdot,\cdot,\lambda'))(y,s)\, d\lambda'\notag\\
&&+\int_{\lambda'\leq2|\lambda|}\nabla\partial_\lambda \mathcal{S}_{\lambda-\lambda'}^{\mathcal{H}_0^\ast,\delta}(\Psi(\cdot,\cdot,\lambda'))(y,s)\, d\lambda',
\end{eqnarray}
and then
\begin{eqnarray}
\nabla(\mathcal{H}_0^\ast)^{-1}D_{n+1}\Psi_\delta(y,s,\lambda)&=&{e}_1(y,s,\lambda)+e_2(y,s,\lambda)\notag\\
&&+e_3(y,s,\lambda)+e_4(y,s,\lambda)\notag\\
&&+e_5(y,s,\lambda),
\end{eqnarray}
where
\begin{eqnarray*}
e_1(y,s,\lambda)&=&\int_{\lambda'>2|\lambda|}\biggl (\nabla\partial_\lambda \mathcal{S}_{\lambda-\lambda'}^{\mathcal{H}_0^\ast,\delta}(\Psi(\cdot,\cdot,\lambda'))(y,s)\notag\\
&&\quad\quad\quad\quad-
\nabla\partial_\lambda \mathcal{S}_{\lambda-\lambda'}^{\mathcal{H}_0^\ast,\delta}|_{\lambda=0}(\Psi(\cdot,\cdot,\lambda'))(y,s)\biggr )\, d\lambda',\notag\\
e_2(y,s,\lambda)&=&\int_{\lambda'>2|\lambda|}\nabla\partial_\lambda \mathcal{S}_{\lambda-\lambda'}^{\mathcal{H}_0^\ast,\delta}|_{\lambda=0}(\Psi(\cdot,\cdot,\lambda'))(y,s)\, d\lambda',\notag\\
e_3(y,s,\lambda)&=&\int_{\lambda'\leq2|\lambda|}\biggl (1-\biggl (\frac {|\lambda|}{|\lambda'|}\biggr )^{1/2}\biggr )\nabla\partial_\lambda \mathcal{S}_{\lambda-\lambda'}^{\mathcal{H}_0^\ast,\delta}(\Psi(\cdot,\cdot,\lambda'))(y,s)\, d\lambda',\notag\\
e_4(y,s,\lambda)&=&\int\biggl (\frac {|\lambda|}{|\lambda'|}\biggr )^{1/2}\nabla\partial_\lambda \mathcal{S}_{\lambda-\lambda'}^{\mathcal{H}_0^\ast,\delta}(\Psi(\cdot,\cdot,\lambda'))(y,s)\, d\lambda',\notag\\
e_5(y,s,\lambda)&=&\int_{\lambda'>2|\lambda|}\biggl (\frac {|\lambda|}{|\lambda'|}\biggr )^{1/2}\nabla\partial_\lambda \mathcal{S}_{\lambda-\lambda'}^{\mathcal{H}_0^\ast,\delta}(\Psi(\cdot,\cdot,\lambda'))(y,s)\, d\lambda'.
\end{eqnarray*}
Then, using this decomposition we see that
\begin{eqnarray}
\mathcal{E}=\mathcal{E}_1+\mathcal{E}_2+\mathcal{E}_3+\mathcal{E}_4+\mathcal{E}_5,
\end{eqnarray}
where
\begin{eqnarray}
\mathcal{E}_1&=&\int_{\mathbb R^{n+2}}\varepsilon(y,s)\nabla \partial_\lambda \mathcal{S}_\lambda^{\mathcal{H}_1,\eta}f(y,s)\cdot \overline{e_1(y,s,\lambda)}\, dyds d\lambda,\notag\\
\mathcal{E}_2&=&\int_{\mathbb R^{n+2}}\varepsilon(y,s)\nabla \partial_\lambda \mathcal{S}_\lambda^{\mathcal{H}_1,\eta}f(y,s)\cdot
\overline{e_2(y,s,\lambda)}\,  dyds d\lambda,\notag\\
\mathcal{E}_3&=&\int_{\mathbb R^{n+2}}\varepsilon(y,s)\nabla \partial_\lambda \mathcal{S}_\lambda^{\mathcal{H}_1,\eta}f(y,s)\cdot
\overline{e_3(y,s,\lambda)}\, dyds d\lambda,\notag\\
\mathcal{E}_4&=&\int_{\mathbb R^{n+2}}\varepsilon(y,s)\nabla \partial_\lambda \mathcal{S}_\lambda^{\mathcal{H}_1,\eta}f(y,s)\cdot
\overline{e_4(y,s,\lambda)}\, dyds d\lambda,\notag\\
\mathcal{E}_5&=&\int_{\mathbb R^{n+2}}\varepsilon(y,s)\nabla \partial_\lambda \mathcal{S}_\lambda^{\mathcal{H}_1,\eta}f(y,s)\cdot
\overline{e_5(y,s,\lambda)}\, dyds d\lambda.
\end{eqnarray}
Using \eqref{splitrep} we see that $\mathcal{E}_4$ equals
\begin{eqnarray*}
\int_{\mathbb R^{n+2}}|\lambda|^{1/2}\varepsilon(y,s)\nabla \partial_\lambda \mathcal{S}_\lambda^{\mathcal{H}_1,\eta}f(y,s)\cdot
\overline{\nabla(\mathcal{H}_0^\ast)^{-1}(D_{n+1}(\varphi_\delta\ast(\Psi/\sqrt{\lambda'})))(y,s,\lambda)}\, dyds d\lambda.
\end{eqnarray*}
In particular,
\begin{eqnarray}
|\mathcal{E}_4|&\leq& c\varepsilon_0|||\lambda \nabla \partial_\lambda \mathcal{S}_\lambda^{\mathcal{H}_1,\eta}f|||_+\notag\\
&&\quad\quad\times\biggl (\int_{\mathbb R^{n+2}}|\nabla(\mathcal{H}_0^\ast)^{-1}(D_{n+1}(\varphi_\delta\ast(\Psi/\sqrt{\lambda'})))(y,s,\lambda)|^2\, dyds d\lambda\biggr )^{1/2}\notag\\
&\leq& c\varepsilon_0|||\lambda \nabla \partial_\lambda \mathcal{S}_\lambda^{\mathcal{H}_1,\eta}f|||_+\times\biggl (\int_{\mathbb R^{n+2}}|(\varphi_\delta\ast(\Psi/\sqrt{\lambda'})))(y,s,\lambda)|^2\, dyds d\lambda\biggr )^{1/2}\notag\\
&\leq& \varepsilon_0|||\lambda \nabla \partial_\lambda \mathcal{S}_\lambda^{\mathcal{H}_1,\eta}f|||_+,
\end{eqnarray}
as $\nabla \mathcal{H}_0^{-1}\div:L^2(\mathbb R^{n+2},\mathbb C)\to L^2(\mathbb R^{n+2},\mathbb C)$, see  Lemma \ref{gara}, and by the properties of
$\Psi$. $\mathcal{E}_1$, $\mathcal{E}_2$, $\mathcal{E}_3$, and $\mathcal{E}_5$ remain to be estimated and to estimate  $\mathcal{E}_2$ is the heart of the matter. Indeed, we claim that
\begin{eqnarray}\label{dda}
|\mathcal{E}_1|+|\mathcal{E}_3|+|\mathcal{E}_5|\leq c\varepsilon_0|||\lambda \nabla \partial_\lambda \mathcal{S}_\lambda^{\mathcal{H}_1,\eta}f|||_+,
\end{eqnarray}
and we leave it to the reader to verify, by arguing as in the proof of Lemma 6.5 in \cite{AAAHK} and by using Hardy's inequality, that \eqref{dda} holds. We will here show how to control  $\mathcal{E}_2$ using Lemma \ref{telem2}. To estimate $\mathcal{E}_2$ we first note that $\mathcal{E}_2$ equals
\begin{eqnarray*}
\int_{\mathbb R^{n+2}}\varepsilon(y,s)\nabla \partial_\lambda \mathcal{S}_\lambda^{\mathcal{H}_1,\eta}f(y,s)\biggl (\int_{\lambda'>2|\lambda|}\overline{\nabla D_{n+1} \mathcal{S}_{-\lambda'}^{\mathcal{H}_0^\ast,\delta}(\Psi(\cdot,\cdot,\lambda'))(y,s)}\, d\lambda'\biggr )\,  dyds d\lambda
\end{eqnarray*}
which in turns equals
\begin{eqnarray*}
-\int_0^\infty\int_{\mathbb R^{n+1}}(\partial_\lambda \mathcal{S}_\lambda^{\mathcal{H}_0}\nabla)\cdot\varepsilon(y,s)\nabla
(\mathcal{S}_{\lambda/2}^{\mathcal{H}_1,\eta}f-\mathcal{S}_{-\lambda/2}^{\mathcal{H}_1,\eta}f)(x,t)\overline{\Psi_\delta(x,t,\lambda)}\, dxdtd\lambda.
\end{eqnarray*}
In the latter deduction we have used that $\partial_\lambda\mathcal{S}_\lambda^\eta$ does not, for $\eta>0$,  jump across the boundary. Using that $\Psi$ is compactly supported in
$\mathbb R^{n+2}_+$ we see, for $\delta$ small enough, that
$$\lambda^{-1/2}|\Psi_\delta(x,t,\lambda)|\leq c\int\varphi(\lambda-\lambda')|\Psi(x,t,\lambda')|\lambda'^{-1/2}\, d\lambda'.$$
Using this we see that
\begin{eqnarray*}
|\mathcal{E}_2|&\leq &|||\lambda(\partial_\lambda \mathcal{S}_{\lambda}^{\mathcal{H}_0}\nabla) \cdot\varepsilon \nabla \mathcal{S}_{-\lambda/2}^{\mathcal{H}_1,\eta}f|||_++|||\lambda(\partial_\lambda \mathcal{S}_{\lambda}^{\mathcal{H}_0}\nabla) \cdot\varepsilon \nabla \mathcal{S}_{\lambda/2}^{\mathcal{H}_1,\eta}f|||_+.
\end{eqnarray*}
Applying Lemma \ref{telem2} we can therefore conclude that
\begin{eqnarray*}
|\mathcal{E}_2|\leq c\varepsilon_0A^{\mathcal{H}_1,\eta}(f)+c||f||_2,
\end{eqnarray*}
and hence that \eqref{iest1mod++pre2+aa-} holds. This completes the estimate of $\mathcal{E}$ and hence the estimate of $|||\lambda \partial_\lambda^2  \mathcal{S}_\lambda^{\mathcal{H}_1,\eta}f|||_+$.

To start the estimate of  $\sup_{\lambda > 0} ||\partial_\lambda  \mathcal{S}_\lambda^{\mathcal{H}_1,\eta}f||_2$ we intend to prove that
    \begin{eqnarray}\label{iest1mod}
\sup_{0<\eta<10^{-10}}\sup_{\lambda > 0} ||\partial_\lambda  \mathcal{S}_\lambda^{\mathcal{H}_1,\eta}f||_2&\leq& c\varepsilon_0A^{\mathcal{H}_1,\eta}(f)+c||f||_2.
     \end{eqnarray}
     By elementary estimates it is easy to see that if $0\leq\lambda<4\eta$, then
         \begin{eqnarray}\label{iest1mod+}
|\partial_\lambda  \mathcal{S}_\lambda^{\mathcal{H}_1,\eta}f(x,t)-D_{n+1}  \mathcal{S}_{4\eta}^{\mathcal{H}_1,\eta}f(x,t)|\leq M(f)(x,t),
     \end{eqnarray}
     where $M$ is the parabolic Hardy-Littlewood maximal function. Hence, from now on we consider $\lambda_0\geq 4\eta$ fixed. Using \eqref{eq14+} and \eqref{keyseauu}  we see that
              \begin{eqnarray}\label{iest1mod++}
||D_{n+1} \mathcal{S}_{\lambda_0}^{\mathcal{H}_1,\eta}||_2^2&\leq& \frac c\lambda_0\int_{\lambda_0/2}^{3\lambda_0/2}\int_{\mathbb R^{n+1}}|\partial_\lambda \mathcal{S}_\lambda^{\mathcal{H}_1,\eta}f- \partial_\lambda\mathcal{S}_\lambda^{\mathcal{H}_0,\eta}f|^2\, dxdxtd\lambda\notag\\
&&+c||f||_2^2.
     \end{eqnarray}
With  $\lambda_0\geq 4\eta$ fixed,  we let
$$\mbox{$\tilde \Psi\in C_0^\infty(\mathbb R^{n+1}\times (\lambda_0/2,3\lambda_0/2))$, $\lambda_0^{-1/2}||\tilde\Psi||_2=1$, $\tilde\Psi_\delta=\varphi_\delta\ast\tilde\Psi$.}$$ Let $f\in C_0^\infty(\mathbb R^{n+1},\mathbb C)$. Based on the above we can conclude, that to prove \eqref{iest1mod} it suffices to bound
   \begin{eqnarray}\label{iest1mod++pre2+}
\quad\quad\lambda_0^{-1}\biggl |\int_0^\infty\int_{\mathbb R^{n+1}}\bigl (\partial_\lambda\mathcal{S}_\lambda^{\mathcal{H}_1,\eta}f(x,t)-\partial_\lambda\mathcal{S}_\lambda^{\mathcal{H}_0,\eta}f(x,t)\bigr)\overline{\tilde\Psi_\delta(x,t,\lambda )}\, {dxdtd\lambda}\biggr |
     \end{eqnarray}
    by $c\varepsilon_0A^{\mathcal{H}_1,\eta}(f)+c||f||_2$. Furthermore, using this and \eqref{iest1mod++pre} we see that it suffices to bound
   \begin{eqnarray*}\label{iest1mod++pre2+a}
\tilde{\mathcal{E}}:=\lambda_0^{-1}\int_{\mathbb R^{n+2}}\varepsilon(y,s)\nabla\mathcal{S}_\lambda^{\mathcal{H}_1,\eta}f(y,s)\cdot\overline{\nabla(\mathcal{H}_0^\ast)^{-1}D_{n+1}\tilde\Psi_\delta(y,s,\lambda)}
dyds d\lambda,
     \end{eqnarray*}
     and we intend to prove that
        \begin{eqnarray}\label{iest1mod++pre2+aa}
\tilde{\mathcal{E}}\leq c\varepsilon_0A^{\mathcal{H}_1,\eta}(f)+c||f||_2.
     \end{eqnarray}
To start the estimate of $\tilde{\mathcal{E}}$ we write
     \begin{eqnarray}
\tilde{\mathcal{E}}=\tilde{\mathcal{E}}_1+\tilde{\mathcal{E}}_2+\tilde{\mathcal{E}}_3+\tilde{\mathcal{E}}_4,
\end{eqnarray}
where
\begin{eqnarray*}
\tilde{\mathcal{E}}_1&=&\lambda_0^{-1}\int_{-\lambda_0/4}^{\lambda_0/4}\int_{\mathbb R^{n+1}}\varepsilon(y,s)\nabla\mathcal{S}_\lambda^{\mathcal{H}_1,\eta}f(y,s)\cdot\overline{\nabla(\mathcal{H}_0^\ast)^{-1}D_{n+1}\tilde\Psi_\delta(y,s,\lambda)}\,
dyds d\lambda,\notag\\
\tilde{\mathcal{E}}_2&=&\lambda_0^{-1}\int_{\lambda_0/4}^{4\lambda_0}\int_{\mathbb R^{n+1}}\varepsilon(y,s)\nabla\mathcal{S}_\lambda^{\mathcal{H}_1,\eta}f(y,s)\cdot\overline{\nabla(\mathcal{H}_0^\ast)^{-1}D_{n+1}\tilde\Psi_\delta(y,s,\lambda)}\,
dyds d\lambda,\notag\\
\tilde{\mathcal{E}}_3&=&\lambda_0^{-1}\int_{4\lambda_0}^{\infty}\int_{\mathbb R^{n+1}}\varepsilon(y,s)\nabla\mathcal{S}_\lambda^{\mathcal{H}_1,\eta}f(y,s)\cdot\overline{\nabla(\mathcal{H}_0^\ast)^{-1}D_{n+1}\tilde\Psi_\delta(y,s,\lambda)}\,
dyds d\lambda,\notag\\
\tilde{\mathcal{E}}_4&=&\lambda_0^{-1}\int_{-\infty}^{-\lambda_0/4}\int_{\mathbb R^{n+1}}\varepsilon(y,s)\nabla\mathcal{S}_\lambda^{\mathcal{H}_1,\eta}f(y,s)\cdot\overline{\nabla(\mathcal{H}_0^\ast)^{-1}D_{n+1}\tilde\Psi_\delta(y,s,\lambda)}\,
dyds d\lambda.
\end{eqnarray*}
 Using Lemma \ref{gara} we see that $\nabla (\mathcal{H}_0^\ast)^{-1}\div:L^2(\mathbb R^{n+2},\mathbb C)\to L^2(\mathbb R^{n+2},\mathbb C)$, and hence
\begin{eqnarray*}
|\tilde{\mathcal{E}}_2|&=&c\varepsilon_0\bigl (\lambda_0^{-1}\int_{\lambda_0/4}^{4\lambda_0}\int_{\mathbb R^{n+1}}|\nabla\mathcal{S}_\lambda^{\mathcal{H}_1,\eta}f(y,s)|^2\, dyds\bigr )^{1/2}\leq c\varepsilon_0\sup_{\lambda>0}||\nabla \mathcal{S}_\lambda^{\mathcal{H}_1,\eta}f||_2.
\end{eqnarray*}
We next consider $\tilde{\mathcal{E}}_3$ and $\tilde{\mathcal{E}}_4$ and as these terms can be treated similarly we here only treat $\tilde{\mathcal{E}}_3$.  Using
\eqref{splitrep} we see that $\tilde{\mathcal{E}}_3$ equals
\begin{eqnarray*}
&&\lambda_0^{-1}\int\int_{4\lambda_0}^{\infty}\int_{\mathbb R^{n+1}}\varepsilon(y,s)\nabla\mathcal{S}_\lambda^{\mathcal{H}_1,\eta}f(y,s)\cdot \overline{\nabla\partial_\lambda \mathcal{S}_{\lambda-\lambda'}^{\mathcal{H}_0^\ast,\delta}(\tilde \Psi_\delta(\cdot,\cdot,\lambda'))(y,s)}\,
dyds d\lambda d\lambda'\notag\\
&=&\lambda_0^{-1}\int\int_{4\lambda_0}^{\infty}\int_{\mathbb R^{n+1}}(\partial_\lambda \mathcal{S}_{\lambda-\lambda'}^{\mathcal{H}_0}\nabla)\cdot \varepsilon(y,s)\nabla\mathcal{S}_\lambda^{\mathcal{H}_1,\eta}f(y,s)\tilde \Psi_\delta(y,s,\lambda')\,
dyds d\lambda d\lambda'\notag\\
&=:&\tilde{\mathcal{E}}_{31}+\tilde{\mathcal{E}}_{32},
\end{eqnarray*}
where
\begin{eqnarray*}
\tilde{\mathcal{E}}_{31}&=&\lambda_0^{-1}\int\int_{2\lambda'}^{\infty}\int_{\mathbb R^{n+1}}(\partial_\lambda \mathcal{S}^{\mathcal{H}_0}_{\lambda-\lambda'}\nabla)\cdot \varepsilon(y,s)\nabla\mathcal{S}_\lambda^{\mathcal{H}_1,\eta}f(y,s)\tilde \Psi_\delta(y,s,\lambda')\,
dyds d\lambda d\lambda',\notag\\
\tilde{\mathcal{E}}_{32}&=&-\lambda_0^{-1}\int\int_{2\lambda'}^{4\lambda_0}\int_{\mathbb R^{n+1}}(\partial_\lambda \mathcal{S}^{\mathcal{H}_0}_{\lambda-\lambda'}\nabla)\cdot \varepsilon (y,s)\nabla\mathcal{S}_\lambda^{\mathcal{H}_1,\eta}f(y,s)\tilde \Psi_\delta(y,s,\lambda')\,
dyds d\lambda d\lambda'.
\end{eqnarray*}
In $\tilde{\mathcal{E}}_{32}$ we see that $\lambda-\lambda'\approx\lambda\approx\lambda'\approx\lambda_0$ if $\delta$ is sufficiently small. Hence, using Lemma \ref{le5} we see that
\begin{eqnarray*}
|\tilde{\mathcal{E}}_{32}|\leq c\varepsilon_0\sup_{\lambda>0}||\nabla \mathcal{S}_\lambda^{\mathcal{H}_1,\eta}f||_2.
\end{eqnarray*}
To estimate $\tilde{\mathcal{E}}_{31}$ we let, for $R\gg 1$ large,
\begin{eqnarray*}
\Theta_R(y,s,\lambda')&=&\int_{2\lambda'}^{2R}(\partial_\lambda \mathcal{S}^{\mathcal{H}_0}_{\lambda'-\lambda}\nabla)\cdot (\varepsilon(y,s)\nabla\mathcal{S}_\lambda^{\mathcal{H}_1,\eta}f(y,s))\, d\lambda,
\end{eqnarray*}
and we note that
\begin{eqnarray*}
\tilde{\mathcal{E}}_{31}&=&\lambda_0^{-1}\int\lim_{R\to\infty}\int_{\mathbb R^{n+1}}\Theta_R(y,s,\lambda')\tilde \Psi_\delta(y,s,\lambda')\,
dydsd\lambda'.
\end{eqnarray*}
However, $\Theta_R(y,s,\lambda')$ equals
\begin{eqnarray*}
&&-\int_{\lambda'}^R\partial_{\sigma}\biggl (\int_{2\sigma}^{2R}(\partial_\sigma \mathcal{S}^{\mathcal{H}_0}_{\sigma-\lambda}\nabla)\cdot (\varepsilon(y,s)\nabla\mathcal{S}_\lambda^{\mathcal{H}_1,\eta}f(y,s))\, d\lambda\biggr )\, d\sigma\notag\\
&=&-\int_{\lambda'}^R\partial_{\sigma}\biggl (\int_{\sigma}^{2R-\sigma}(D_{n+1}\mathcal{S}^{\mathcal{H}_0}_{-\lambda}\nabla)\cdot (\varepsilon(y,s)\nabla\mathcal{S}_{\sigma+\lambda}^{\mathcal{H}_1,\eta}f(y,s))\, d\lambda\biggr )\, d\sigma.
\end{eqnarray*}
Hence,
\begin{eqnarray*}
\Theta_R(y,s,\lambda')=\Theta_R^{'}(y,s,\lambda')+\Theta_R^{''}(y,s,\lambda')+\Theta_R^{'''}(y,s,\lambda'),
\end{eqnarray*}
where
\begin{eqnarray*}
\Theta_R^{'}(y,s,\lambda')&=& \int_{\lambda'}^R(D_{n+1}\mathcal{S}^{\mathcal{H}_0}_{-\lambda}\nabla)\cdot (\varepsilon(y,s)\nabla\mathcal{S}_{2\lambda}^{\mathcal{H}_1,\eta}f(y,s))\, d\lambda, \notag\\
\Theta_R^{''}(y,s,\lambda')&=&-\int_{\lambda'}^R(D_{n+1}\mathcal{S}^{\mathcal{H}_0}_{\lambda-2R}\nabla)\cdot (\varepsilon(y,s)\nabla\mathcal{S}_{2R}^{\mathcal{H}_1,\eta}f(y,s))\, d\lambda,\notag\\
\Theta_R^{'''}(y,s,\lambda')&=&-\int_{\lambda'}^R\biggl (\int_{\sigma}^{2R-\sigma}(D_{n+1}\mathcal{S}^{\mathcal{H}_0}_{-\lambda}\nabla)\cdot (\varepsilon(y,s)\nabla\partial_\lambda\mathcal{S}_{\sigma+\lambda}^{\mathcal{H}_1,\eta}f(y,s))\, d\lambda\biggr )\, d\sigma.
\end{eqnarray*}
Using this decomposition for $\Theta_R$ we get a decomposition for $\tilde{\mathcal{E}}_{31}$:
\begin{eqnarray*}
\tilde{\mathcal{E}}_{31}=\tilde{\mathcal{E}}_{311}+\tilde{\mathcal{E}}_{312}+\tilde{\mathcal{E}}_{313}.
\end{eqnarray*}
Using that $|\sigma-2R|\approx R$ we see that it follows from Lemma \ref{le5} that
\begin{eqnarray*}
\sup_{\lambda', R:\  0<\lambda'<R}||\Theta_R^{''}(\cdot,\cdot,\lambda')||_2\leq c\varepsilon_0\sup_{\lambda>0}||\nabla \mathcal{S}_\lambda^{\mathcal{H}_1,\eta}f||_2,
\end{eqnarray*}
and hence
\begin{eqnarray*}
|\tilde{\mathcal{E}}_{312}|\leq c\varepsilon_0\sup_{\lambda>0}||\nabla \mathcal{S}_\lambda^{\mathcal{H}_1,\eta}f||_2.
\end{eqnarray*}
Furthermore, using Lemma \ref{telem3} we see that
\begin{eqnarray*}
|\tilde{\mathcal{E}}_{311}|\leq c\varepsilon_0A^{\mathcal{H}_1,\eta}(f)+c||f||_2.
\end{eqnarray*}
Hence only $\tilde{\mathcal{E}}_{313}$ remains to be estimated. Note that
\begin{eqnarray*}
\Theta_R^{'''}(y,s,\lambda')&=&-\int_{\lambda'}^R\int_{2\sigma}^{2R}(\partial_\lambda\mathcal{S}^{\mathcal{H}_0}_{\sigma-\lambda}\nabla)\cdot (\varepsilon(y,s)\partial_\lambda\nabla\mathcal{S}_{\lambda}^{\mathcal{H}_1,\eta}f(y,s))\, d\lambda d\sigma.
\end{eqnarray*}
To estimate $||\Theta_R^{'''}(\cdot,\cdot,\lambda')||_2$, consider $h\in L^2(\mathbb R^{n+1},\mathbb C)$, $||h||_2=1$. Then
\begin{eqnarray*}
&&\biggl|\int_{\mathbb R^{n+1}}\Theta_R^{'''}(y,s,\lambda')\overline{h(y,s)}\, dyds\biggr |\notag\\
&=& \biggl|\int_{\lambda'}^R\int_{2\sigma}^{2R}\int_{\mathbb R^{n+1}}
(\nabla D_{n+1}\mathcal{S}^{\mathcal{H}_0^\ast}_{\sigma-\lambda}h(y,s))\cdot \overline{(\varepsilon(y,s)\partial_\lambda\nabla\mathcal{S}_{\lambda}^{\mathcal{H}_1,\eta}f(y,s))}\, dydsd\lambda d\sigma\biggr |,
\end{eqnarray*}
where we have used that $\mbox{adj}(\mathcal{S}^{\mathcal{H}_0}_{\sigma-\lambda})=\mathcal{S}^{\mathcal{H}_0^\ast}_{\sigma-\lambda}$. Using this we see that
\begin{eqnarray*}
&&\biggl|\int_{\mathbb R^{n+1}}\Theta_R^{'''}(y,s,\lambda')\overline{h(y,s)}\, dyds\biggr |\notag\\
&\leq& c\varepsilon_0 \biggl(\int_{0}^\infty\int_{2\sigma}^{\infty}
||\nabla\partial_\lambda\mathcal{S}^{\mathcal{H}_0^\ast}_{\sigma-\lambda}h||_2^2\, d\lambda d\sigma\biggr )^{1/2}\biggl (\int_{0}^\infty
||\partial_\lambda\nabla\mathcal{S}_{\lambda}^{\mathcal{H}_1,\eta}f||_2^2\lambda\, d\lambda\biggr )^{1/2}\notag\\
&\leq& c\varepsilon_0 \biggl(\int_{0}^\infty\int_{2\sigma}^{\infty}
||\nabla\partial_\lambda\mathcal{S}^{\mathcal{H}_0^\ast}_{\sigma-\lambda}h||_2^2\, d\lambda d\sigma\biggr )^{1/2}|||\lambda\partial_\lambda\nabla\mathcal{S}_{\lambda}^{\mathcal{H}_1,\eta}f|||_+\notag\\
&\leq& c\varepsilon_0 |||\lambda\nabla\partial_\lambda\mathcal{S}^{\mathcal{H}_0^\ast}_{\lambda}h|||_+|||\lambda\partial_\lambda\nabla\mathcal{S}_{\lambda}^{\mathcal{H}_1,\eta}f|||_+\leq  c\varepsilon_0|||\lambda\nabla\partial_\lambda\mathcal{S}_{\lambda}^{\mathcal{H}_1,\eta}f|||_+,
\end{eqnarray*}
by \eqref{keyseauu} applied to $\mathcal{S}^{\mathcal{H}_0^\ast}_{\lambda}$. Hence,
\begin{eqnarray*}
|\tilde{\mathcal{E}}_{313}|\leq c\varepsilon_0A^{\mathcal{H}_1,\eta}(f),
\end{eqnarray*}
and we can conclude that
\begin{eqnarray*}
|\tilde{\mathcal{E}}-\tilde{\mathcal{E}}_1|\leq c\varepsilon_0A^{\mathcal{H}_1,\eta}(f)+c||f||_2.
\end{eqnarray*}
To estimate $\tilde{\mathcal{E}}_1$ we first see, using \eqref{splitrep} and that the support of $\tilde\Psi$, for $\delta$ small, is contained in $\{\lambda_0/2<\lambda<3\lambda_0/2\}$, that
\begin{eqnarray*}
\tilde{\mathcal{E}}_1&=&\lambda_0^{-1}\int_{-\lambda_0/4}^{\lambda_0/4}\int_{\mathbb R^{n+1}}\varepsilon(y,s)\nabla\mathcal{S}_\lambda^{\mathcal{H}_1,\eta}f(y,s)\cdot\overline{\nabla(\mathcal{H}_0^\ast)^{-1}D_{n+1}\tilde\Psi_\delta(y,s,\lambda)}
dyds d\lambda\notag\\
&=&\lambda_0^{-1}\int\int_{-\lambda_0/4}^{\lambda_0/4}\int_{\mathbb R^{n+1}}(\partial_\lambda \mathcal{S}_{\lambda-\lambda'}^{\mathcal{H}_0}\nabla)\cdot \varepsilon(y,s)\nabla\mathcal{S}_\lambda^{\mathcal{H}_1,\eta}f(y,s)\tilde \Psi_\delta(y,s,\lambda')\,
dyds d\lambda d\lambda'\notag\\
&=&\tilde{\mathcal{E}}_{11}+\tilde{\mathcal{E}}_{12},
\end{eqnarray*}
where
\begin{eqnarray*}
\tilde{\mathcal{E}}_{11}&=&\lambda_0^{-1}\int\int_{-\lambda'/2}^{\lambda'/2}\int_{\mathbb R^{n+1}}(\partial_\lambda \mathcal{S}^{\mathcal{H}_0}_{\lambda-\lambda'}\nabla)\cdot \varepsilon(y,s)\nabla\mathcal{S}_\lambda^{\mathcal{H}_1,\eta}f(y,s)\tilde \Psi_\delta(y,s,\lambda')\,
dyds d\lambda d\lambda',\notag\\
\tilde{\mathcal{E}}_{12}&=&-\lambda_0^{-1}\int\int_{\lambda_0/4<|\lambda|<\lambda'/2}\int_{\mathbb R^{n+1}}(\partial_\lambda \mathcal{S}^{\mathcal{H}_0}_{\lambda-\lambda'}\nabla)\cdot\varepsilon(y,s)\nabla\mathcal{S}_\lambda^{\mathcal{H}_1,\eta}f(y,s)\tilde \Psi_\delta(y,s,\lambda')\,
dyds d\lambda d\lambda'.
\end{eqnarray*}
Again by Cauchy-Schwarz and Lemma \ref{le5} we see, as $\lambda-\lambda'\approx\lambda_0$, that
\begin{eqnarray*}
|\tilde{\mathcal{E}}_{12}|\leq c\varepsilon_0\sup_{\lambda>0}||\nabla \mathcal{S}_\lambda^{\mathcal{H}_1,\eta}f||_2.
\end{eqnarray*}
Furthermore,
\begin{eqnarray*}
\tilde{\mathcal{E}}_{11}&=&\tilde{\mathcal{E}}_{111}+\tilde{\mathcal{E}}_{112},
\end{eqnarray*}
where
\begin{eqnarray*}
\tilde{\mathcal{E}}_{111}&=&\lambda_0^{-1}\int\int_{0}^{\lambda'/2}\int_{\mathbb R^{n+1}}(\partial_\lambda \mathcal{S}^{\mathcal{H}_0}_{\lambda-\lambda'}\nabla)\cdot \varepsilon(y,s)\nabla\mathcal{S}_\lambda^{\mathcal{H}_1,\eta}f(y,s)\tilde \Psi_\delta(y,s,\lambda')\,
dyds d\lambda d\lambda',\notag\\
\tilde{\mathcal{E}}_{112}&=&\lambda_0^{-1}\int\int_{-\lambda'/2}^{0}\int_{\mathbb R^{n+1}}(\partial_\lambda \mathcal{S}^{\mathcal{H}_0}_{\lambda-\lambda'}\nabla)\cdot \varepsilon(y,s)\nabla\mathcal{S}_\lambda^{\mathcal{H}_1,\eta}f(y,s)\tilde \Psi_\delta(y,s,\lambda')\,
dyds d\lambda d\lambda'.
\end{eqnarray*}
We only estimate $\tilde{\mathcal{E}}_{111}$, the term $\tilde{\mathcal{E}}_{112}$ being treated similar. We write
\begin{eqnarray*}
\tilde{\mathcal{E}}_{111}&=&\lambda_0^{-1}\int \int_{\mathbb R^{n+1}} F(y,s,\lambda')\tilde \Psi_\delta(y,s,\lambda')\, dyds  d\lambda',
\end{eqnarray*}
where
\begin{eqnarray*}
F(y,s,\lambda')=\int_{0}^{\lambda'/2}(\partial_\lambda \mathcal{S}^{\mathcal{H}_0}_{\lambda'-\lambda}\nabla)\cdot \varepsilon(y,s)\nabla\mathcal{S}_\lambda^{\mathcal{H}_1,\eta}f(y,s)\, d\lambda.
\end{eqnarray*}
Now
\begin{eqnarray*}
F(y,s,\lambda')=\int_{0}^{\lambda'}\partial_\sigma\biggl(\int_{0}^{\sigma/2} (\partial_\sigma \mathcal{S}^{\mathcal{H}_0}_{\sigma-\lambda}\nabla)\cdot \varepsilon(y,s)\nabla\mathcal{S}_\lambda^{\mathcal{H}_1,\eta}f(y,s)\, d\lambda\biggr )\, d\sigma.
\end{eqnarray*}
However, now using \eqref{keyseauu} and Lemma \ref{telem3}, and proceeding as in the estimates of $\Theta_R$ above, one can prove the appropriate bound for
$\tilde{\mathcal{E}}_{111}$ and $\tilde{\mathcal{E}}_{1}$. We omit further details and claim that this completes the proof of \eqref{iest1mod++pre2+aa} and hence the proof of Lemma \ref{finall}.

\subsection{Proof of Lemma \ref{analpert}} Recall that $\mathcal{H}_0=\partial_t+\mathcal{L}_0=\partial_t-\div A^0\nabla$. By assumption we have that  $A^0$ satisfies \eqref{eq3}-\eqref{eq4} as well as the De Giorgi-Moser-Nash estimates stated in \eqref{eq14+}-\eqref{eq14++}. We let
$$A^z=A^0+zM,\ z\in \mathbb C,$$
where $M$ is a $(n+1)\times (n+1)$-dimensional matrix which is measurable, bounded, complex and satisfies \eqref{eq4} and $||M||_\infty\leq 1$. We let
$$\mathcal{H}_z:=\partial_t+\mathcal{L}_z:=\partial_t-\div A^z\nabla.$$
Following \cite{A}, there exists $\varepsilon_0=\varepsilon_0(n,\Lambda)$, $0<\varepsilon_0<1$, such that if $|z|<\varepsilon_0$, then $L_z$ defines an
$L^2$-contraction semigroup $e^{-tL_z}$, for $t>0$, generated by $L_z$. $e^{-t\mathcal{L}_z}$ is defined using functional calculus, see \cite{A}, \cite{AT}, \cite{K} for instance, and the map $z\to e^{-tL_z}$ is analytic for $|z|<\varepsilon_0$. We let $K_t^z(X,Y)$ denote the distribution kernel of $e^{-t\mathcal{L}_z}$ and by definition
$$\Gamma^{\mathcal{H}_z}(X,t,Y,s)=\Gamma^{\mathcal{H}_z}(x,t,\lambda,y,s,\sigma)=K_{t-s}^z(x,\lambda,y,\sigma)=K_{t-s}^z(X,Y)$$
whenever $t-s>0$. In particular, the fundamental solution associated to $\mathcal{H}_z$, $\Gamma^{\mathcal{H}_z}$, coincides with the kernel
$K_t^z$. Furthermore, by construction the map $z\to \Gamma^{\mathcal{H}_z}(x,t,\lambda,y,s,\sigma)$ is analytic for $|z|<\varepsilon_0$. Assuming
\eqref{keyseauu} we have proved that there exists a constant $c$, depending at most
     on $n$, $\Lambda$,  such that if
    $$|z|<\varepsilon_0,$$
    then
     \begin{eqnarray}\label{keyest+aaaedop}
     ||\mathcal{K}^{\mathcal{H}_z}||_{2\to 2}+||\tilde {\mathcal{K}}^{\mathcal{H}_z}||_{2\to 2}&\leq& c,\notag\\
    \sup_{\lambda\neq 0} ||\nabla_{||} \mathcal{S}_\lambda^{\mathcal{H}_z}||_{2\to 2}+\sup_{\lambda\neq 0} ||H_tD^t_{1/2}\mathcal{S}_\lambda^{\mathcal{H}_z}||_{2\to 2}&\leq& c.
    \end{eqnarray}
    To complete the proof of Lemma \ref{analpert} it suffices to prove that
         \begin{eqnarray}\label{keyest+aaaedop1}
    (i)&& z\to\mathcal{K}^{\mathcal{H}_z},\ z\to \tilde {\mathcal{K}}^{\mathcal{H}_z},\notag\\
    (ii)&&z\to\nabla_{||} \mathcal{S}_\lambda^{\mathcal{H}_z}|_{\lambda=0},\ z\to H_tD^t_{1/2}\mathcal{S}_\lambda^{\mathcal{H}_z}|_{\lambda=0},
    \end{eqnarray}
    are analytic for $|z|<\varepsilon_0$. Indeed, if this is true, then it follows from the operator valued form of the Cauchy formula that
        \begin{eqnarray}\label{keyest+aaaedop2}
     ||\frac {d}{dz}\mathcal{K}^{\mathcal{H}_z}||_{2\to 2}+||\frac {d}{dz}\tilde {\mathcal{K}}^{\mathcal{H}_z}||_{2\to 2}&\leq& c,\notag\\
    \sup_{\lambda\neq 0} ||\frac {d}{dz}\nabla_{||} \mathcal{S}_\lambda^{\mathcal{H}_z}||_{2\to 2}+\sup_{\lambda\neq 0} ||\frac {d}{dz}H_tD^t_{1/2}\mathcal{S}_\lambda^{\mathcal{H}_z}||_{2\to 2}&\leq& c,
    \end{eqnarray}
    and it is clear that Lemma \ref{analpert} follows. To prove \eqref{keyest+aaaedop1} we first note, using that  $C_0^\infty(\mathbb R^{n+1},\mathbb C^k)$ is dense in
    $L^2(\mathbb R^{n+1},\mathbb C^k)$, and as we have proved \eqref{keyest+aaaedop}, that to prove  \eqref{keyest+aaaedop1} it suffices to verify the criterium  for analyticity stated on p. 365 in \cite{K}. Indeed, we only have to verify that
          \begin{eqnarray}\label{keyest+aaaedop3}
    (i')&& z\to( \mathcal{K}^{\mathcal{H}_z}f,g),\ z\to (\tilde {\mathcal{K}}^{\mathcal{H}_z}f,g),\notag\\
    (ii')&&z\to(\nabla_{||} \mathcal{S}_\lambda^{\mathcal{H}_z}|_{\lambda=0}f,{\bf g}),\  z\to ( H_tD^t_{1/2}\mathcal{S}_\lambda^{\mathcal{H}_z}|_{\lambda=0}f,g),
    \end{eqnarray}
    are analytic for $|z|<\varepsilon_0$ whenever $f,g\in C_0^\infty(\mathbb R^{n+1},\mathbb C)$, ${\bf g}\in C_0^\infty(\mathbb R^{n+1},\mathbb C^n)$. Here $(\cdot,\cdot)$ is the standard inner product on $L^2(\mathbb R^{n+1},\mathbb C^k)$. To prove $(i')$ it suffices, by duality, to prove that
             \begin{eqnarray}\label{keyest+aaaedop3g}
 z\to ((\frac I2+\tilde {\mathcal{K}}^{\mathcal{H}_z})f,g)\mbox{ is analytic for $|z|<\varepsilon_0$},
    \end{eqnarray}
    whenever $f,g\in C_0^\infty(\mathbb R^{n+1},\mathbb C)$. Fix $f,g\in C_0^\infty(\mathbb R^{n+1},\mathbb C)$ and let
    $$g_j(z):=(-e_{n+1}\cdot A\nabla \mathcal{S}_{1/j}^{\mathcal{H}_z}f,g),\ j\in\mathbb Z_+.$$
    Using the bounds established we have that $\{g_j\}$ is a uniformly bounded family of analytic functions in $|z|<\varepsilon_0$ and by Lemma \ref{trace4} $(i)$ we have that
    $$g_j(z)\to ((\frac I2+\tilde {\mathcal{K}}^{\mathcal{H}_z})f,g)\mbox{ for all $|z|<\varepsilon_0$ as $j\to\infty$}.$$
    Using these facts we can use Montel's theorem to conclude \eqref{keyest+aaaedop3g}. To prove $(ii')$ we can essentially argue as above using instead
    Lemma \ref{trace4} $(iii)$-$(iv)$.

\section{Proof of Theorem \ref{th2-}- Theorem \ref{th2+}}\label{sec5}

In this section we prove Theorem \ref{th2-}-Theorem \ref{th2+} using Theorem \ref{thper2} and  Corollary \ref{thper3}.

 \subsection{Proof of Theorem \ref{th2-} } Consider $\mathcal{H}^0=\partial_t+\mathcal{L}=\partial_t-\div(A^0\nabla)$ where $A^0$ now is a constant complex matrix. Let
   \begin{eqnarray}Q(\xi,\zeta)&=&A^0_{n+1,n+1}\zeta^2+\zeta\bigl(\sum_{k=1}^n\xi_k(A^0_{k,n+1}+A^0_{n+1,k})\bigr)+A^0_{||}\xi\cdot\xi
  \end{eqnarray}
  where  $(\xi,\zeta)\in \mathbb R^n\times\mathbb R$ and where again $A^0_{||}$ is the $n\times n$-dimensional sub matrix of $A^0$ defined by $\{A^0_{i,j}\}_{i,j=1}^n$. Using \eqref{eq3} we see that $\mbox{Re }A^0_{n+1,n+1}\geq\Lambda^{-1}$ and that
  $$\mbox{Re }Q(\xi,\zeta)\geq \Lambda^{-1}(|\xi|^2+|\zeta|^2).$$ The Fourier transform, with respect to the spatial variables, of the fundamental solution associated to $\mathcal{H}^0$ equals
  $\exp(-t Q(\xi,\zeta))$, and taking also the Fourier transform in the $t$-variable we see that the Fourier transform of $\Gamma$ with respect to all variables, $\hat\Gamma(\xi,\tau,\zeta)$, equals $(Q(\xi,\zeta)-i\tau)^{-1}$ which of course is the symbol associated to $\mathcal{H}^0$. We let
  $$F(\xi,\tau,\lambda)=\int_{-\infty}^\infty (Q(\xi,\zeta)-i\tau)^{-1}\exp(-i\lambda\zeta)\, d\zeta,$$
  $(\xi,\tau,\lambda)\in \mathbb R^{n-1}\times\mathbb R\times\mathbb R_+$. Then $F$ equals $\hat\Gamma$ inverted in the $\zeta$-variable only and when $\lambda\geq 0$.   In the following we write
\begin{eqnarray}Q(\xi,\zeta)-i\tau&=&A^0_{n+1,n+1}\zeta^2+\zeta\bigl(\sum_{k=1}^n\xi_k(A^0_{k,n+1}+A^0_{n+1,k})\bigr)+A^0_{||}\xi\cdot\xi\notag\\
  &=&A^0_{n+1,n+1}\biggl (\biggl (\zeta+\frac {(\xi\cdot w)}{2A^0_{n+1,n+1}}\biggr )^{2}-B(\xi,\tau)\biggr )\notag\\
  \end{eqnarray}
  where
  \begin{eqnarray}
  w_k&=&(A^0_{k,n+1}+A^0_{n+1,k})\mbox{ for $k\in \{1,...,n\}$, and}\notag\\
  B(\xi,\tau)&=&\biggl (\frac {(\xi\cdot w)}{2A^0_{n+1,n+1}}\biggr )^{2}-\frac {A^0_{||}\xi\cdot\xi}{A^0_{n+1,n+1}}+\frac {i\tau}{A^0_{n+1,n+1}},\notag\\
  \end{eqnarray}
  Then, using  the above notation we see that
    \begin{eqnarray}\label{pparis}
&&2 A^0_{n+1,n+1} \sqrt{B(\xi,\tau)}F(\xi,\tau,\lambda)\notag\\
&=&-\int_{-\infty}^\infty \frac{1}{\zeta+\frac {(\xi\cdot w)}{2A^0_{n+1,n+1}}+\sqrt{B(\xi,\tau)}}\exp(-i\lambda\zeta)\, d\zeta\notag\\
&&+\int_{-\infty}^\infty \frac{1}{\zeta+\frac {(\xi\cdot w)}{2A^0_{n+1,n+1}}-\sqrt{B(\xi,\tau)}}\exp(-i\lambda\zeta)\, d\zeta.
  \end{eqnarray}
  Hence, using the residue theorem,
  \begin{eqnarray}\label{pparis+}
&&2 A^0_{n+1,n+1} \sqrt{B(\xi,\tau)}F(\xi,\tau,\lambda)\notag\\
&=&\exp\biggl (i\lambda \frac {(\xi\cdot w)}{2A^0_{n+1,n+1}}\biggr )\biggl (\exp(-i\lambda\sqrt{B(\xi,\tau)})-\exp(i\lambda\sqrt{B(\xi,\tau)})\biggr )
  \end{eqnarray}
  Furthermore, using that
    \begin{eqnarray}\label{pparis++}
\sqrt{B(\xi,\tau)}&=&\frac 1{\sqrt{2}}\sqrt{|B(\xi,\tau)|+\mbox{Re }B(\xi,\tau)}\notag\\
&&+i\frac {\mbox{sgn}(\mbox{Im }B(\xi,\tau))}{\sqrt{2}}
\sqrt{|B(\xi,\tau)|-\mbox{Re }B(\xi,\tau)},
  \end{eqnarray}
   \eqref{pparis}, \eqref{pparis+}, and \eqref{eq3} it is not hard to see that Definition \ref{blayer+} $(i)$-$(ii)$ hold for
  some $\Gamma=\Gamma(n,\Lambda)$. Using this,  Lemma \ref{lemsl1++}, Lemma \ref{trace4}, Lemma \ref{trace5}, and Lemma \ref{trace7-}, we see that
  also Definition \ref{blayer+} $(i)$-$(vii)$ hold. Finally, evaluating \eqref{pparis} at $\lambda=0$ it also follows, similar to the corresponding argument in \cite{AAAHK}, that the conditions in Definition \ref{blayer+} $(viii)$-$(xiii)$ hold for $\mathcal{H}^0$. An application of  Theorem \ref{thper2} completes the proof of Theorem \ref{th2-}.

    \subsection{Proof of Theorem \ref{th2} } The proof of Theorem \ref{th2} is based on the following lemma proved at the end of the section.

    \begin{lemma}\label{th0++}   Assume that  $\mathcal{H}=\partial_t-\div A\nabla$ satisfies \eqref{eq3}-\eqref{eq4}. Assume that \begin{eqnarray}\label{resq} \mbox{$A$ is a real and symmetric matrix}.
       \end{eqnarray}
Then there exists a constant $\Gamma$, depending at most
     on $n$, $\Lambda$ , such that Definition \ref{blayer+} $(i)$-$(x)$ hold with this $\Gamma$.\end{lemma}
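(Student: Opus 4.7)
The plan is to derive the parts (i)--(x) of Definition \ref{blayer+} in three stages: first the core $L^2$/square-function estimates (i)--(ii) for the single layer, then the maximal-function and Sobolev consequences (iii)--(vii) via the machinery of Section \ref{sec3} and Section \ref{sec4}, and finally the invertibility/comparability estimates (viii)--(x) via parabolic Rellich identities adapted to the real symmetric setting.

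For (i)--(ii), because $A$ is real (so De Giorgi--Moser--Nash holds in all $L^p$, see Remark \ref{auscher+}), the hypotheses of the companion paper \cite{CNS} are satisfied. The key estimate
\begin{equation*}
\Phi_+(f) = \sup_{\lambda>0}\|\partial_\lambda \mathcal{S}_\lambda f\|_2 + |||\lambda\partial_\lambda^2 \mathcal{S}_\lambda f|||_+ \ \leq \ c\|f\|_2,
\end{equation*}
and its mirror for $\mathbb{R}^{n+2}_-$ and for $\mathcal{H}^*$, will be extracted from the main results of \cite{CNS} (Theorem 1.5 and Theorem 1.8, here recorded as Theorem \ref{th2ag}); the proof in \cite{CNS} goes through a local parabolic $Tb$-theorem for square functions and a parabolic version of the Fabes--Stroock \cite{FS} estimate for real symmetric $A$. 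Once (i)--(ii) are in hand, parts (iii)--(vi) follow at once from Theorem \ref{th0}, and part (vii) follows from Lemma \ref{trace4}, Lemma \ref{trace5} and Lemma \ref{trace7-}, since (i)--(ii) in particular yield the hypothesis \eqref{asa1}.

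For (viii)--(x), which are the invertibility/comparability statements, the strategy is to prove Rellich-type identities that exploit the real symmetry of $A$. Writing $u(\cdot,\cdot,\lambda)=\mathcal{S}_\lambda^{\mathcal{H}} f$ in $\mathbb{R}^{n+2}_\pm$, one integrates the identity
\begin{equation*}
\partial_\lambda \bigl( A\nabla u \cdot \nabla \bar u\bigr) \ = \ 2\,\mathrm{Re}\bigl( A\nabla u \cdot \nabla \overline{\partial_\lambda u}\bigr),
\end{equation*}
which holds because $A$ is real and symmetric and independent of $\lambda$, and then uses the equation $\partial_t u = \mathrm{div}(A\nabla u)$ to transfer a tangential divergence into a time derivative, producing the extra fractional-time term $H_t D_{1/2}^t u\cdot \overline{D_{1/2}^t u}$. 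After integration in $\lambda$ and the boundary trace formalism of Section \ref{sec4}, this yields on the boundary the two-sided estimate
\begin{equation*}
\|\partial_\nu u^\pm\|_2^2 \ \approx \ \|\nabla_{\|} u\|_2^2 + \|H_t D_{1/2}^t u\|_2^2
\end{equation*}
(with $u^\pm$ denoting the trace from above/below). Combined with the jump relations of Lemma \ref{trace4},
\begin{equation*}
\bigl(\tfrac12 I + \tilde{\mathcal{K}}^{\mathcal{H}}\bigr)f - \bigl(-\tfrac12 I + \tilde{\mathcal{K}}^{\mathcal{H}}\bigr)f \ = \ f,
\end{equation*}
and the bijectivity of $\mathcal{S}_0^{\mathcal{H}}:L^2\to\mathbb{H}$ expressed via $\|\mathbb{D}\mathcal{S}_0^{\mathcal{H}}f\|_2\approx \|f\|_2$, the Rellich identity upgrades to the comparabilities
\begin{equation*}
\|(\pm\tfrac12 I + \tilde{\mathcal{K}}^{\mathcal{H}})f\|_2 \ \approx \ \|f\|_2 \ \approx \ \|\mathbb{D}\mathcal{S}_0^{\mathcal{H}}f\|_2,
\end{equation*}
and by duality the corresponding statements for $\mathcal{K}^{\mathcal{H}}$ and for $\mathcal{H}^*$. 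This gives (viii)--(x).

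The main obstacle is the Rellich step, specifically the correct handling of the half-order time derivative: unlike in the elliptic case (\cite{JK,KP,KP1,AAAHK}), the identity produced by differentiating the Dirichlet form in $\lambda$ is not local in time, and the integration by parts in $t$ produces a $H_t D_{1/2}^t$ contribution that must be absorbed into the tangential Sobolev seminorm $\|\cdot\|_{\mathbb{H}}$. Rigorously implementing this requires the boundary trace theorems of Section \ref{sec4} (so that $u\to u_0\in\mathbb{H}$ n.t.), the uniform $L^2$ bounds from (iv)--(vi) already established above, and the control of $\sup_\lambda \|H_t D_{1/2}^t u(\cdot,\cdot,\lambda)\|_2$ by $\|\tilde N_\ast(\nabla u)\|_2$ proved within Lemma \ref{trace7++}. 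With these ingredients in place the Rellich computation goes through and the proof of Lemma \ref{th0++} is complete.
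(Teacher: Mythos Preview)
Your outline is essentially the same as the paper's: first invoke Theorem \ref{th2ag} (i.e.\ \cite{CNS}) for the core estimates (i)--(ii), then deduce (iii)--(vii) from Theorem \ref{th0}, Lemma \ref{trace4}, Lemma \ref{trace5}, Lemma \ref{trace7-}, and finally obtain (viii)--(x) via a Rellich identity exploiting the real symmetry of $A$ together with the jump relation $f=(\tfrac12 I+\tilde{\mathcal K}^{\mathcal H})f-(-\tfrac12 I+\tilde{\mathcal K}^{\mathcal H})f$.

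Two points where your sketch deviates from (or glosses over) the paper's actual argument are worth noting. First, your appeal to Lemma \ref{trace7++} for the bound $\sup_\lambda\|H_tD_{1/2}^t u\|_2\le c\|\tilde N_\ast(\nabla u)\|_2$ is circular: that lemma assumes the \emph{full} Definition \ref{blayer+}, which is what you are in the process of proving. Fortunately this is harmless, since (vi) already gives $\sup_\lambda\|H_tD_{1/2}^t\mathcal S_\lambda f\|_2\le c\|f\|_2$ directly. Second, the Rellich identity in the paper does not immediately yield a clean two-sided comparability $\|\partial_\nu u^\pm\|_2^2\approx\|\mathbb D u\|_2^2$; rather it produces
\[
\|\nabla_{\|}u_\delta^\pm\|_2^2\le c\|\partial_\nu u_\delta^\pm\|_2^2+c|I_\delta^\pm|,\qquad
\|\partial_\nu u_\delta^\pm\|_2^2\le c\|\nabla_{\|}u_\delta^\pm\|_2^2+c|I_\delta^\pm|,
\]
with error $I_\delta^\pm=\int\partial_t u_\delta^\pm\,\partial_\lambda u_\delta^\pm$. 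The paper then proves the key claim \eqref{invkey},
\[
|I_\delta^\pm|+\|D_{1/2}^t u_\delta^\pm\|_2^2\le c\|f\|_2\,\|D_{1/2}^t u_\delta^\pm\|_2^{1/2}\,\|\partial_\nu u_\delta^\pm\|_2^{1/2},
\]
by splitting through $D_{1/4}^t\partial_\lambda u$ and $D_{3/4}^t u$, bounding the former by $\|f\|_2^2$ via square-function estimates, and in the latter using the equation once more together with the symmetry of $A$ and the \emph{anti}-symmetry of $H_tD_{1/2}^t$ to cancel the purely tangential contribution. Only after this does one get the comparability with an absorbable error $\tilde\delta\|f\|_2^2$. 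Your sketch captures the spirit of this but underplays the mechanism; the cancellation you allude to happens at this second use of the equation, not directly in the first Rellich identity.
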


    We here use Lemma \ref{th0++} to complete the proof of Theorem \ref{th2}. Given $\sigma\in [0,1]$ we let
    $$A_\sigma=(1-\sigma) \mathbb I_{n+1}+\sigma A$$
    where $\mathbb I_{n+1}$ is the $(n+1)\times (n+1)$ identity matrix. Based on $A_\sigma$ we introduce $\mathcal{H}_\sigma=\partial_t-\div(A_\sigma\nabla)$. Then Lemma \ref{th0++} applies to $\mathcal{H}_\sigma$ with a constant $\Gamma$ which can be chosen independent of $\sigma$. Hence, by arguing as in the proof of  Corollary \ref{thper3} we see that to prove Theorem \ref{th2+} we only have to verify Definition \ref{blayer+} $(xi)-(xiii)$ for
    $\mathcal{H}_1$. However, by repeating the constant coefficient arguments in \cite{B} we see that Definition \ref{blayer+} $(xi)-(xiii)$ holds for $\mathcal{H}_0$. Hence, invoking Theorem \ref{thper2} we see that Definition \ref{blayer+} $(xi)-(xiii)$ holds $\mathcal{H}_\sigma$ whenever $|\sigma|\leq \tilde\varepsilon$ for some
    $\tilde\varepsilon=\tilde\varepsilon(n,\Lambda)$. Iterating this procedure step by step we see that Definition \ref{blayer+} $(xi)-(xiii)$ also hold for $\mathcal{H}_1$. This completes the proof of Proof of Theorem \ref{th2+}.
    \subsection{Proof of Theorem \ref{th2+} } Theorem \ref{th2+} follows directly from  Theorem \ref{th2}, Theorem \ref{thper2} and Corollary \ref{thper3}. Indeed, by Theorem \ref{th2} we have that $\mathcal{H}_0$ satisfies all statements of Definition \ref{blayer+}. An application of Theorem \ref{thper2} and Corollary \ref{thper3}  then completes the proof of Theorem \ref{th2+}.

    \subsection{Proof of Lemma \ref{th0++}} To start the proof we first record the following lemma proved in  \cite{CNS}.

    \begin{theorem}\label{th2ag} Assume that $\mathcal{H}$ satisfies \eqref{eq3}-\eqref{eq4}. Assume in addition that
 $A$ is real and symmetric. Let $\Phi_+(f)$ be defined as in \eqref{keyestint-ex+}. Then there exists a constant $\Gamma$, depending at most
     on $n$, $\Lambda$, such that
     $$\Phi_+(f)\leq \Gamma ||f||_2.$$ In particular,  there exists a constant $c$ depending only
     on $n$, $\Lambda$, such that
     \begin{eqnarray}\label{keyestint+ars}
||N_\ast(\partial_\lambda \mathcal{S}_\lambda f)||_2+||\tilde N_\ast(\nabla_{||}\mathcal{S}_\lambda f)||_2+||\tilde N_\ast(H_tD^t_{1/2}\mathcal{S}_\lambda f)||_2&\leq &c||f||_2,\notag\\
\sup_{\lambda> 0}||\mathbb D\mathcal{S}_{\lambda}f||_{2}&\leq &c||f||_2,
\end{eqnarray}
whenever $f\in L^2(\mathbb R^{n+1},\mathbb R)$.
     \end{theorem}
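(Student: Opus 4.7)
\smallskip
\noindent
\textbf{Proof proposal for Lemma \ref{th0++}.} The strategy is to divide the thirteen items (i)--(x) into two groups: the ``linear'' estimates (i)--(vii), which follow by combining Theorem \ref{th2ag} with the machinery of Section \ref{sec3} and Section \ref{sec4}; and the Rellich-type comparabilities (viii)--(x), which will be proved directly from a parabolic Rellich identity that exploits the real symmetry of $A$.

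First I would handle (i)--(ii). Because $A$ is real symmetric, $A^\ast = \overline{A}=A$, so $\mathcal{H}^\ast=-\partial_t-\div A\nabla$ is obtained from $\mathcal{H}$ by the change of variables $t\mapsto -t$. Consequently Theorem \ref{th2ag} applied to $\mathcal{H}$ automatically yields the analogous estimate $\Phi_-(f)\le \Gamma\|f\|_2$, and Theorem \ref{th2ag} applies verbatim to $\mathcal{H}^\ast$. This gives
$$\sup_{\lambda\neq 0}\|\partial_\lambda \mathcal{S}_\lambda^{\mathcal{H}}f\|_2+\sup_{\lambda\neq 0}\|\partial_\lambda \mathcal{S}_\lambda^{\mathcal{H}^\ast}f\|_2 +|||\lambda \partial_\lambda^2 \mathcal{S}_\lambda^{\mathcal{H}}f|||_{\pm}+|||\lambda \partial_\lambda^2\mathcal{S}_\lambda^{\mathcal{H}^\ast}f|||_{\pm}\le c\|f\|_2,$$
which is (i)--(ii). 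Feeding these bounds into Theorem \ref{th0} (and its reflection/adjoint versions) yields (iii)--(vi). Since the resulting uniform $L^2$ bounds for $\nabla\mathcal{S}_\lambda^{\mathcal{H}}$, $\nabla\mathcal{S}_\lambda^{\mathcal{H}^\ast}$, $H_tD_{1/2}^t \mathcal{S}_\lambda^{\mathcal{H}}$, and $H_tD_{1/2}^t \mathcal{S}_\lambda^{\mathcal{H}^\ast}$ verify hypothesis \eqref{asa1}, the boundary operators $\mathcal{K}^{\mathcal{H}}$, $\tilde{\mathcal{K}}^{\mathcal{H}}$, $\mathbb D\mathcal{S}_\lambda^{\mathcal{H}}|_{\lambda=0}$ and their $\mathcal{H}^\ast$-counterparts exist in the senses of Lemma \ref{trace4}, Lemma \ref{trace5} and Lemma \ref{trace7-}, with operator norms bounded by a constant of the allowed dependence. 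This is (vii) and gives the ``$\lesssim$'' half of (viii)--(x).

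The substantive part is the reverse inequalities in (viii)--(x), which I would obtain from a parabolic Rellich identity. Given $f\in L^2(\mathbb R^{n+1},\mathbb C)$, set $u(x,t,\lambda)=\mathcal{S}_\lambda^{\mathcal{H}}f(x,t)$ in $\mathbb R^{n+2}_\pm$, and apply the bilinear identity $\tilde B_{\pm}(u,\varphi)=0$, see \eqref{eq4-ed}, with the test function $\varphi(x,t,\lambda)=\partial_\lambda u(x,t,\lambda)\,\chi(\lambda)$, where $\chi$ is a smooth cutoff supported in a small one-sided neighborhood of $\lambda=0$. Using the real symmetry of $A$ and integration by parts in $\lambda$, the bulk terms $\int A\nabla u\cdot\overline{\nabla\partial_\lambda u}$ telescope into $-\frac12\partial_\lambda(A\nabla u\cdot\overline{\nabla u})$ plus a commutator involving only the $x$-derivatives of $A$ (which vanish since $A$ is $\lambda$-independent, by \eqref{eq4}). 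The time-derivative term $-\int D_{1/2}^t u\,\overline{H_t D_{1/2}^t\partial_\lambda u}$ is antisymmetric in the two factors (since $H_t$ is skew-adjoint), so its real part also becomes a total $\lambda$-derivative. Sending the support of $\chi$ to $\{\lambda=0\}$ and invoking the non-tangential/strong $L^2$ convergence supplied by Lemma \ref{trace4}, Lemma \ref{trace5}, Lemma \ref{trace7-} (already established via (i)--(vii)), the identity reduces on the boundary to
$$\mathrm{Re}\,\bigl(A_{n+1,n+1}\partial_\lambda u|_0,\partial_\lambda u|_0\bigr)_{L^2}=\mathrm{Re}\,\bigl(A_{||}\nabla_{||}u|_0,\nabla_{||}u|_0\bigr)_{L^2}+\text{lower order},$$
with the ``lower order'' terms controlled by tangential derivatives and the half-order time derivative. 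Combined with \eqref{eq3} and the jump formulas $\partial_\lambda\mathcal{S}_{\pm\lambda}^{\mathcal{H}}f\to \mp\tfrac12 A_{n+1,n+1}^{-1}f + \mathcal{T}_\perp^{\mathcal{H}}f$ from Lemma \ref{trace4}(v), this Rellich identity furnishes the two-sided bound $\|f\|_2\approx \|\nabla_{||}\mathcal{S}_\lambda^{\mathcal{H}}|_{\lambda=0}f\|_2+\|H_tD_{1/2}^t\mathcal{S}_\lambda^{\mathcal{H}}|_{\lambda=0}f\|_2\approx \|\mathbb D\mathcal{S}_\lambda^{\mathcal{H}}|_{\lambda=0}f\|_2$, proving (x). Rewriting the jump relations as $(\pm\tfrac12 I+\tilde{\mathcal{K}}^{\mathcal{H}})f = -e_{n+1}\cdot A\nabla \mathcal{S}_{\pm\lambda}^{\mathcal{H}}f|_{\lambda=0}$ and using the same Rellich comparability then gives (viii), and the analogous application to $\mathcal{H}^\ast$ (which, by the initial observation, satisfies all the same hypotheses) gives (ix).

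The principal obstacle is the correct bookkeeping of the half-order time derivative in the parabolic Rellich identity: in contrast to the elliptic setting of \cite{AAAHK}, the antisymmetry of $H_t$ only produces a total derivative after one writes $\partial_t = D_{1/2}^t H_t D_{1/2}^t$ and carries out a careful symmetrization that splits $\tilde B_\pm$ into manifestly self-adjoint and skew-adjoint pieces. One must also ensure that all boundary traces used in the limiting argument as the support of $\chi$ collapses to $\{\lambda=0\}$ are justified in the strong $L^2$ sense, which is precisely what Lemma \ref{trace4}--Lemma \ref{trace7-} provide once (i)--(vi) are in hand. With this Rellich identity in place, (viii)--(x) follow with constants of the claimed dependence, completing the proof.
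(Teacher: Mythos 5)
Your proposal does not address the statement in question. The statement asked about is Theorem \ref{th2ag}, whose entire content is the square function estimate $\Phi_+(f)\le\Gamma\|f\|_2$ for real symmetric $A$ (from which \eqref{keyestint+ars} follows by the machinery of Section \ref{sec3}). You have instead sketched a proof of the downstream Lemma \ref{th0++}, and, crucially, your very first step \emph{invokes Theorem \ref{th2ag} as an ingredient} to obtain items (i)--(ii) of Definition \ref{blayer+}. Read as a proof of Theorem \ref{th2ag}, the argument is circular: the estimate $\Phi_+(f)\lesssim\|f\|_2$ is never derived, only assumed.

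This is not a small omission. The bound $\Phi_+(f)\le\Gamma\|f\|_2$ is the genuinely hard, independent input of the parabolic theory. Nothing in your sketch --- the reflection/adjoint observations, the appeal to Theorem \ref{th0}, or the parabolic Rellich identity used for (viii)--(x) --- touches the square function estimate $|||\lambda\partial_\lambda^2\mathcal{S}_\lambda f|||_+\lesssim\|f\|_2$ or the uniform bound $\sup_\lambda\|\partial_\lambda\mathcal{S}_\lambda f\|_2\lesssim\|f\|_2$; these require a separate, substantial argument. The paper handles this by citing Theorems 1.5 and 1.8 of \cite{CNS}, which in turn rest on a local parabolic $Tb$-theorem for square functions (Theorem 8.4 in \cite{CNS}) together with a parabolic Fabes--Salsa type result (Theorem 8.7 in \cite{CNS}). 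None of that is replicated or replaced in your proposal. If you wish to prove Theorem \ref{th2ag} from scratch, you would need to develop an argument of that kind; the Rellich identity you sketch belongs to the later invertibility step (Lemma \ref{th0++}), not to the boundedness statement at hand.
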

     \begin{proof} This is Theorem 1.5 and Theorem 1.8 in \cite{CNS}. In \cite{CNS} Theorem 1.8  is proved by first establishing a local parabolic Tb-theorem for square functions, see Theorem 8.4 in \cite{CNS} , and then by establishing a version of the main result in \cite{FS} for equation of the form \eqref{eq1}, assuming in addition that $A$ is real and symmetric, see Theorem 8.7 in \cite{CNS}. Both Theorem 8.4 and Theorem 8.7 in \cite{CNS} are of independent interest.
\end{proof}

    Using Lemma \ref{th2ag} we see that Definition \ref{blayer+} $(i)-(vi)$ hold.  Definition \ref{blayer+} $(vi)$ is consequence of these estimates,  Lemma \ref{trace4}, Lemma \ref{trace5}, and Lemma \ref{trace7-}. Hence, to complete the proof of the lemma it suffices to prove Definition \ref{blayer+} $(viii)-(x)$ and to do this it suffices to prove that
          \begin{eqnarray}
(i)&& ||f||_2\leq c\min\biggl\{||\frac 1 2I+\tilde{\mathcal{K}}^{\mathcal{H}}f||_2,||-\frac 1 2I+\tilde{\mathcal{K}}^{\mathcal{H}}f||_2\biggr\},\notag\\
 (ii)&&||f||_2\leq c||\mathbb D\mathcal{S}_\lambda^{\mathcal{H}}|_{\lambda=0}f||_{2},
\end{eqnarray}
whenever $f\in L^2(\mathbb R^{n+1},\mathbb R)$. To start the proof of these two inequalities, let  $\Phi_+(f)$ be defined as in \eqref{keyestint-ex+} and let
     \begin{eqnarray}\label{keyestint-ex+sea}
\Phi_-(f):=\sup_{\lambda< 0}||\partial_\lambda \mathcal{S}_\lambda^{\mathcal{H}} f||_2+|||\lambda\partial_\lambda^2 \mathcal{S}_{\lambda}^{\mathcal{H}}f|||_-.
     \end{eqnarray}
     By  Lemma \ref{th2ag} we have
     \begin{eqnarray}\label{korees}\Phi_\pm(f)\leq \Gamma ||f||_2,
     \end{eqnarray}
     whenever $f\in L^2(\mathbb R^{n+1},\mathbb R)$. Let  $\delta>0$ be fixed. Let $u_\delta^+(x,t,\lambda)=\mathcal{S}_{\lambda+\delta}^{\mathcal{H}}f(x,t)$ whenever $(x,t,\lambda)\in \mathbb R_+^{n+2}$ and let
 $u_\delta^-(x,t,\lambda)=\mathcal{S}_{\lambda-\delta}^{\mathcal{H}}f(x,t)$ whenever $(x,t,\lambda)\in \mathbb R_-^{n+2}$. Then, simply using the equation and \eqref{eq4} we see that
\begin{eqnarray*}
\div(e_{n+1}A\nabla u_\delta^\pm \cdot{\nabla u_\delta^\pm})=2\div(\partial_\lambda u_\delta^\pm A\nabla u_\delta^\pm)+2\partial_tu_\delta^\pm\partial_\lambda u_\delta^\pm,
     \end{eqnarray*}
     in $\mathbb R_\pm^{n+2}$. Hence
       \begin{eqnarray}\label{rec1}
       -\int_{\mathbb R^{n+1}}A\nabla u_\delta^\pm\cdot{\nabla u_\delta^\pm}\, dxdt&=&2\int_{\mathbb R^{n+2}_\pm}\partial_tu_\delta^\pm\partial_\lambda u_\delta^\pm\, dxdtd\lambda\notag\\
       &&+
       2\int_{\mathbb R^{n+1}}\partial_\lambda u_\delta^\pm(-e_{n+1}\cdot A\nabla u_\delta^\pm)\, dxdt.
          \end{eqnarray}
          Let
          $$I_\delta^\pm=\int_{\mathbb R^{n+2}_\pm}\partial_tu_\delta^\pm\partial_\lambda u_\delta^\pm\, dxdtd\lambda.$$
         Then, using \eqref{rec1} we can conclude that
              \begin{eqnarray}\label{rec2-}
     ||\nabla_{||} u_\delta^\pm||_2^2&\leq& c||\partial_\nu u_\delta^\pm||_2^2+c|I_\delta^\pm|,\notag\\
     ||\partial_\nu u_\delta^\pm||_2^2&\leq& c||\nabla_{||} u_\delta^\pm||_2^2+c|I_\delta^\pm|.
          \end{eqnarray}
          We claim that
                   \begin{eqnarray}\label{invkey}
      |I_\delta^\pm|+||D_{1/2}^tu_\delta^\pm||_2^2\leq c||f||_2||D_{1/2}^t u_\delta^\pm||_2^{1/2}||\partial_\nu u_\delta^\pm||_2^{1/2}.
      \end{eqnarray}
      We postpone the proof of \eqref{invkey} for now to complete the proof of Lemma \ref{th0++}. Indeed, given a degree of freedom $\tilde\delta\in (0,1)$ we see that
\eqref{rec2-} and \eqref{invkey} imply that
        \begin{eqnarray}\label{rec2+-}
     ||\mathbb Du_\delta^\pm||_2^2&\leq& c(\tilde\delta)||\partial_\nu u_\delta^\pm||_2^2+\tilde\delta||f||_2^2,\notag\\
     ||\partial_\nu u_\delta^\pm||_2^2&\leq& c(\tilde\delta)||\mathbb Du_\delta^\pm||_2^2+\tilde\delta||f||_2^2.
          \end{eqnarray}
          Using this, letting $\delta\to 0$ and applying Lemma \ref{trace4} and Lemma \ref{trace7-}, we see that
               \begin{eqnarray*}\label{rec2+-a}
                ||\mathbb D\mathcal{S}_\lambda^{\mathcal{H}}|_{\lambda=0}f||_{2}^2\leq c(\tilde\delta)\min\biggl\{||\frac 1 2I+\tilde{\mathcal{K}}^{\mathcal{H}}f||_2^2,||-\frac 1 2I+\tilde{\mathcal{K}}^{\mathcal{H}}f||_2^2\biggr\}+\tilde\delta||f||_2^2,
          \end{eqnarray*}
          and that
                       \begin{eqnarray*}\label{rec2+-b}
     \max\biggl\{||\frac 1 2I+\tilde{\mathcal{K}}^{\mathcal{H}}f||_2^2,||-\frac 1 2I+\tilde{\mathcal{K}}^{\mathcal{H}}f||_2^2\biggr\}\leq c(\tilde\delta)||\mathbb D\mathcal{S}_\lambda^{\mathcal{H}}|_{\lambda=0}f||_{2}^2+\tilde\delta||f||_2^2.
          \end{eqnarray*}
          Using the inequalities in the last two displays and the fact that
          $$f=\frac 1 2I+\tilde{\mathcal{K}}^{\mathcal{H}}f-(-\frac 1 2I+\tilde{\mathcal{K}}^{\mathcal{H}}f),$$
          we see that Lemma \ref{th0++} $(i)$, $(ii)$ hold.

     We next prove the claim  in \eqref{invkey} and we will here only prove that
               \begin{eqnarray}\label{invkey+}
      |I_\delta^+|+||D_{1/2}^tu_\delta^+||_2^2\leq c||f||_2||D_{1/2}^t u_\delta^+||_2^{1/2}||\partial_\nu u_\delta^+||_2^{1/2},
      \end{eqnarray}
      as the corresponding estimate involving $I_\delta^-$ and $u_\delta^-$ follows similarly. Based on this we in the following let, for simplicity, $u_\delta=u_\delta^+$, and we introduce
\begin{eqnarray*}
    I_\delta&=&\int_0^\infty\int_{\mathbb R^{n+1}}|D_{1/4}^t\partial_\lambda u_\delta|^2\, dxdtd\lambda,\notag\\
    II_\delta&=&\int_0^\infty\int_{\mathbb R^{n+1}}|D_{3/4}^tu_\delta|^2\, dxdtd\lambda.
          \end{eqnarray*}
Then
           \begin{eqnarray*}
    |I_\delta^+|+||D_{1/2}^tu_\delta||_2^2\leq cI_\delta^{1/2}II_\delta^{1/2}.
          \end{eqnarray*}
          We first estimate $I_\delta$. Integrating by parts with respect to $\lambda$ twice, and using Cauchy-Schwarz,  see that
                   \begin{eqnarray*}
  I_\delta\leq c\int_0^\infty\int_{\mathbb R^{n+1}}|\partial_\lambda^2 u_\delta|\, \lambda dxdtd\lambda+c\int_0^\infty\int_{\mathbb R^{n+1}}|\partial_t\partial_\lambda u_\delta|^2\, \lambda^3 dxdtd\lambda+\tilde I_\delta,
  \end{eqnarray*}
  where     \begin{eqnarray*}
  \tilde I_\delta&=&\sup_{\lambda>0}\int_{\mathbb R^{n+1}}|D_{1/4}^t\partial_\lambda u_\delta(x,t,\lambda)|^2\, \lambda dxdt\notag\\
  &&+\sup_{\lambda>0}\int_{\mathbb R^{n+1}}|D_{1/2}^t\partial_\lambda u_\delta(x,t,\lambda)|^2\, \lambda^2 dxdt.
    \end{eqnarray*}
    Hence, using Lemma \ref{lemsl1} and \eqref{korees}  we see that
    \begin{eqnarray*}
  I_\delta\leq  c\Phi_+(f)+\tilde I_\delta\leq c||f||_2^2+\tilde I_\delta.
    \end{eqnarray*}
    However,
    \begin{eqnarray*}
&&\int_{\mathbb R^{n+1}}|D_{1/4}^t\partial_\lambda u_\delta|^2\, \lambda dxdt\notag\\
&\leq&\biggl (\int_{\mathbb R^{n+1}}|\partial_\lambda u_\delta|^2\, dxdt\biggr )^{1/2}
\biggl (\int_{\mathbb R^{n+1}}|D_{1/2}^t\partial_\lambda u_\delta|^2\, \lambda^2 dxdt\biggr )^{1/2}\notag\\
&\leq&c||f||_2\biggl (\int_{\mathbb R^{n+1}}|D_{1/2}^t\partial_\lambda u_\delta|^2\, \lambda^2 dxdt\biggr )^{1/2},
    \end{eqnarray*}
   by \eqref{korees}. Similarly,
        \begin{eqnarray*}
&&\int_{\mathbb R^{n+1}}|D_{1/2}^t\partial_\lambda u_\delta|^2\, \lambda^2 dxdt\notag\\
&\leq&\biggl (\int_{\mathbb R^{n+1}}|\partial_\lambda u_\delta|^2\, dxdt\biggr )^{1/2}
\biggl (\int_{\mathbb R^{n+1}}|\partial_t\partial_\lambda u_\delta|^2\, \lambda^4 dxdt\biggr )^{1/2}\notag\\
&\leq&c||f||_2^2
    \end{eqnarray*}
    by \eqref{korees} and Lemma \ref{le5}. Put together we can conclude that
       \begin{eqnarray}\label{estqa}
  I_\delta\leq c||f||_2^2.
    \end{eqnarray}
  To estimate  $II_\delta$ we see that
    \begin{eqnarray*}
    II_\delta&=&\int_0^\infty\int_{\mathbb R^{n+1}}(H_tD^t_{1/2}u_\delta )\partial_t{u_\delta}\, dxdtd\lambda.
    \end{eqnarray*}
    Using the equation,
      \begin{eqnarray*}
    II_\delta&=&\sum_{k,m=1}^{n+1}\int_0^\infty\int_{\mathbb R^{n+1}}(H_tD^t_{1/2}u_\delta)\partial_{x_k}( A_{k,m}\partial_{x_m}{u_\delta})\, dxdtd\lambda\notag\\
    &=&\sum_{m=1}^{n+1}\int_0^\infty\int_{\mathbb R^{n+1}}(H_tD^t_{1/2}u_\delta)\partial_{x_{n+1}}
    ( A_{n+1,m}\partial_{x_m}{u_\delta})\, dxdtd\lambda\notag\\
    &&+\sum_{k=1}^{n}\int_0^\infty\int_{\mathbb R^{n+1}}(H_tD^t_{1/2}{u_\delta})\partial_{x_k}
    ( A_{k,n+1}\partial_{x_{n+1}}{u_\delta})\, dxdtd\lambda\notag\\
    &&+\int_0^\infty\int_{\mathbb R^{n+1}}(H_tD^t_{1/2}{u_\delta})\nabla_{||}\cdot (A_{||}\nabla_{||} {u_\delta})\, dxdtd\lambda\notag\\
    &=&II_{\delta,1}+II_{\delta,2}+II_{\delta,3}.
    \end{eqnarray*}
    Using that $A$ is real and symmetric, and the anti-symmetry of $H_tD^t_{1/2}$, we see that $II_{\delta,3}=0$.
    By partial integration,
    \begin{eqnarray*}
    II_{\delta,1}&=&\sum_{m=1}^{n+1}\int_0^\infty\int_{\mathbb R^{n+1}}(H_tD^t_{1/2}(u_\delta)\partial_{x_{n+1}}
    ( A_{n+1,m}\partial_{x_m}{u_\delta})\, dxdtd\lambda\notag\\
    &=&-\sum_{m=1}^{n+1}\int_0^\infty\int_{\mathbb R^{n+1}}(H_tD^t_{1/2}\partial_{x_{n+1}}{u_\delta})  A_{n+1,m}\partial_{x_m}{u_\delta}\, dxdtd\lambda\notag\\
    &&+\lim_{R\to\infty}\sum_{m=1}^{n+1}\int_{\mathbb R^{n+1}}(H_tD^t_{1/2}{u_\delta}) A_{n+1,m}\partial_{x_m}{u_\delta}\, dxdt\biggl |_{\lambda=R}\notag\\
&&-\sum_{m=1}^{n+1}\int_{\mathbb R^{n+1}}(H_tD^t_{1/2}{u_\delta}) A_{n+1,m}\partial_{x_m}{u_\delta}\, dxdt\biggl |_{\lambda=0}\notag\\
&=&II_{\delta,11}+II_{\delta,12}+II_{\delta,13}.
    \end{eqnarray*}
    Using Lemma \ref{th2ag} we see that $II_{\delta,12}=0$. Furthermore,
    \begin{eqnarray*}
|II_{\delta,13}|\leq c||H_tD^t_{1/2}u_\delta||_2||\partial_\nu u_\delta||_2.
 \end{eqnarray*}
 Next, by definition
   \begin{eqnarray*}
 II_{\delta,2}+II_{\delta,11}&=&\sum_{k=1}^{n}\int_0^\infty\int_{\mathbb R^{n+1}}(H_tD^t_{1/2}u_\delta)\partial_{x_k}
    ( A_{k,n+1}\partial_{x_{n+1}}{u_\delta})\, dxdtd\lambda\notag\\
    &&-\sum_{m=1}^{n+1}\int_0^\infty\int_{\mathbb R^{n+1}}(H_tD^t_{1/2}\partial_{x_{n+1}}u_\delta)  A_{n+1,m}\partial_{x_m}{u_\delta}\,  dxdtd\lambda.
 \end{eqnarray*}
 Hence, integrating by parts with respect  to $x_k$ in the first term, again using the anti-symmetry of $H_tD^t_{1/2}$, \eqref{eq4} and that $A$ is symmetric, we see that
    \begin{eqnarray*}
 II_{\delta,2}+II_{\delta,11}=0.
 \end{eqnarray*}
 Put together we can conclude that
        \begin{eqnarray}\label{estqb}
|II_{\delta}|\leq c||H_tD^t_{1/2}u_\delta||_2||\partial_\nu u_\delta||_2.
    \end{eqnarray}
This completes the proof of \eqref{invkey+} and hence the proof of the claim in \eqref{invkey}.

\end{document}